\numberwithin{equation}{section}
\newtheorem{theorem}{Theorem}[section]
\newtheorem{corollary}[theorem]{Corollary}
\newtheorem{prpstn}[theorem]{Proposition}
\newtheorem{lemma}[theorem]{Lemma}
\newtheorem{definition}[theorem]{Definition}
\newtheorem{remark}[theorem]{Remark}
\newcommand{\qed}{\nobreak \ifvmode \relax \else
      \ifdim\lastskip<1.5em \hskip-\lastskip
      \hskip1.5em plus0em minus0.5em \fi \nobreak
      \vrule height0.75em width0.5em depth0.25em\fi}
\newenvironment{proof}[1][Proof]{\begin{trivlist}
\item[\hskip \labelsep {\bfseries #1}]}{\hfill\qed\end{trivlist}}
\def\no{\nonumber}
\newcommand{\R}{\mathbb{R}}
\newcommand{\Z}{\mathbb{Z}}
\newcommand{\ot}{\#}
\newcommand{\p}{\partial}
\newcommand{\mc}{\mathcal}
\renewcommand{\/}[2]{\frac{#1}{#2}}
\newcommand{\fr}{\frac}
\def\eps{\epsilon}
\def\wt{\widetilde}
\def\wh{\widehat}
\def\wb{\overline}
\DeclareFontFamily{U}{mathx}{}
\DeclareFontShape{U}{mathx}{m}{n}{<-> mathx10}{}
\DeclareSymbolFont{mathx}{U}{mathx}{m}{n}
\DeclareMathAccent{\widecheck}{0}{mathx}{"71}
\newcommand{\vertiii}[1]{{\left\vert\kern-0.25ex\left\vert\kern-0.25ex\left\vert #1 
    \right\vert\kern-0.25ex\right\vert\kern-0.25ex\right\vert}}
\newcommand{\Lip}{\mathrm{Lip}}
\author[,1]{\normalsize Alain Bensoussan\footnote{E-mail: axb046100@utdallas.edu.}}
\author[,2]{\normalsize Tak Kwong Wong\footnote{E-mail: takkwong@maths.hku.hk.}}
\author[,3]{\normalsize Sheung Chi Phillip Yam\footnote{E-mail: scpyam@sta.cuhk.edu.hk.}}
\author[,3,4]{\normalsize Hongwei Yuan\footnote{E-mail: hwyuan@link.cuhk.edu.hk; hwyuan@um.edu.mo.}}
\affil[1]{\small\it Naveen Jindal School of Management, University of Texas at Dallas}
\affil[2]{\small\it Department of Mathematics, The University of Hong Kong}
\affil[3]{\small\it Department of Statistics, The Chinese University of Hong Kong}
\affil[4]{\small\it Department of Mathematics, University of Macau}
\title{A Theory of First Order Mean Field Type Control Problems and their Equations}
\begin{document}
\maketitle

\begin{abstract}
In this article, by using several new crucial {\it a priori} estimates which are still absent in the literature, we provide a comprehensive resolution of the first order generic mean field type control problems and also establish the global-in-time existence and uniqueness of classical solutions of their Bellman and master equations. 
Rather than developing the analytical approach via tackling the Bellman and master equation directly, we apply the maximum principle approach by considering the induced forward-backward ordinary differential equation (FBODE) system;
indeed, we first show the local-in-time unique existence of the solution of the FBODE system for a variety of terminal data by Banach fixed point argument, and then provide crucial {\it a priori} estimates of bounding the sensitivity of the terminal data for the backward equation by utilizing a monotonicity condition that can be deduced from the positive definiteness of the Schur complement of the Hessian matrix of the Lagrangian in the lifted version and manipulating first order condition appropriately; this uniform bound over the whole planning horizon $[0,T]$ allows us to partition $[0,T]$ into a finite number of sub-intervals with a common small length and then glue the consecutive local-in-time solutions together to form the unique global-in-time solution of the FBODE system. The regularity of the global-in-time solution follows from that of the local ones due to the regularity assumptions on the coefficient functions. Moreover, the regularity of the value function will also be shown with the aid of the regularity of the solution couple of the FBODE system and the regularity assumptions on the coefficient functions, with which we can further deduce that this value function and its linear functional derivative satisfy the Bellman and master equations, respectively; further analysis of the unique nature of the FBODE solution implies the uniqueness of the classical solutions of these respective equations. Finally, to illustrate the effectiveness of our proposed general theory, we also provide the resolution of a non-trivial non-linear-quadratic example with non-separable Hamiltonian which cannot yet be explained in the contemporary literature. 	
\end{abstract}
\begin{center}
\begin{minipage}{5.5in}
{\bf Mathematics Subject Classifications (2020)}: Primary 93E20; Secondary 49J55, 49N80, 60H30\\
{\bf Key words}: Mean Field Type Control; Wasserstein Metric Space; Forward-Backward Differential Systems; Decoupling Field; Jacobian Flows; General Monotonicity Conditions; Bellman Equations; Master Equations; Non-linear-quadratic Examples.
\end{minipage}
\end{center}

\tableofcontents

\section{Introduction}
Mean field type control problems and games are a branch of mathematical control theory and game theory that deals with systems involving a large number of interacting agents. It focuses on studying the behavior of the system as the number of agents tends to infinity, leading to a mean field approximation. This approach allows for the analysis and understanding of complex systems with a large population of agents. In recent years, there has been significant progress in the theory of mean field type control and games, driven by both theoretical developments and practical applications. Particularly, the recent monographs of  
Cardaliaguet, Delarue, Lasry and Lions~\cite{cardaliaguet2019master}, Carmona and Delarue~\cite{carmona2018probabilistic}, Gomes, Pimentel and Voskanyan~\cite{gomes2016regularity} and Bensoussan, Frehse and Yam \cite{bensoussan2013mean} have made substantial contributions in understanding the dynamics, equilibrium, and optimization problems in such systems.
For a comprehensive understanding of the stochastic (Pontryagin) maximum principle, McKean-Vlasov dynamics, forward-backward stochastic differential equations (FBSDEs), and their connection to mean field type control and games, the following references can be consulted: \cite{andersson2011maximum} \cite{bensoussan2020control} \cite{bensoussan2016linear} \cite{homan2023game} \cite{homan2023control} \cite{bensoussan2015well} \cite{buckdahn2011general} \cite{buckdahn2009meanLim} \cite{buckdahn2017meanSDC} \cite{buckdahn2009mean} \cite{buckdahn2017mean} \cite{carmona2015forward} \cite{chassagneux2022probabilistic} \cite{cosso2023optimal} \cite{cosso2019zero} \cite{djete2022mckean} \cite{huang2006large} \cite{pardoux2005backward} \cite{pham2016linear} \cite{PHW} \cite{pham2018bellman}. For a deeper understanding of the dynamic programming principle, Bellman equations, master equations, and their connection to mean field type control and games, the following references can be helpful: \cite{bensoussan2015master} \cite{bensoussan2019control} \cite{cardaliaguet2022splitting} \cite{cosso2021master} \cite{gangbo2020global} \cite{gangbo2022mean} \cite{gangbo2015existence} \cite{lauriere2014dynamic} \cite{li2012stochastic}.
All these results contribute to the rigorous treatment of mean field games, mean field type control problems and their associated equations. 

In this paper, we consider a generic dynamics where the drift function $f(x,\mu,\alpha)$ depends on not only controls $\alpha$ but also the state variables $x$ and mean-field measure term $\mu$,
while the existing literature usually considers simpler cases, such as $f(x,\mu,\alpha)\equiv\alpha$. In addition, the running cost function $g(x,\mu,\alpha)$ that considered here 
can be both non-separable and of quadratic-growth in the whole space of $\R^{d_x}\times \mc{P}_2(\R^{d_x})$ in $(x,\mu)$, and thus altogether the Hamiltonian $\displaystyle h(x,\mu,z)= \min_{\alpha} \left(f(x,\mu,\alpha)\cdot z  + g(x,\mu,\alpha)\right)$ can also be non-separable and of quadratic-growth in the whole space $\R^{d_x}\times \mc{P}_2(\R^{d_x})\times \R^{d_x}$ in $(x,\mu,z)$, while the existing literature mostly considers compact torus space $x\in\mathbb{T}$ (see \cite{cardaliaguet2019master} for example) or only allows the Hamiltonian of linear-growth but not of quadratic-growth at the far-end filed of the space of $\R^{d_x}\times \mc{P}_2(\R^{d_x})$ (see \cite{gangbo2022mean} for example). Roughly speaking, we consider that the drift function $f(x,\mu,\alpha)$ is Lipschitz continuous and grows at most linearly, the running cost function $g(x,\mu,\alpha)$ is convex in the control $\alpha$ and satisfies a monotonicity condition that can be deduced from the positive definiteness of the Schur complement of the Hessian matrix of the Lagrangian in the lifted version (see the discussion in Section \ref{sec:motivation} for details). Our newly proposed conditions \eqref{positive_g_mu} and \eqref{positive_k_mu} are closely related to Lasry-Lions monotonicity condition and displacement monotonicity condition, which are respectively first proposed in the interesting and important works of \cite{cardaliaguet2019master,carmona2018probabilistic} and \cite{gangbo2022mean,gangbo2020global}; see Section \ref{assumptions} for our precise assumptions on the coefficient functions and Section \ref{subsec:dis} for the relationship and comparison between our convexity assumptions and others. Under these assumptions, we have the following main results: the global-in-time solvability of the generic mean field type control problems, as well as the global-in-time existence, uniqueness and classical regularity of the corresponding forward-backward ordinary differential equation (FBODE) system (see Theorem \ref{GlobalSol} for existence and Theorem \ref{Thm6_4} for uniqueness), and that of the solutions of Bellman (see Theorem \ref{GlobalSol_Bellman}) and master equations (see Theorem \ref{thm_master}). 
It follows from a verification theorem (see Theorem \ref{Verification}) that the value function defined by \eqref{definev} solves the generic mean field type control problems (see \eqref{SDE1}-\eqref{valfun}). 
Finally, to illustrate the effectiveness of our proposed general theory, we also provide the resolution of a non-trivial non-linear-quadratic example (see Section \ref{sec:nonLQ}) with non-separable Hamiltonian which can unlikely be covered in the contemporary literature.

Our strategy of solving the generic mean field type control problems (see \eqref{SDE1}-\eqref{valfun}) is as follows. Firstly, in contrast to the simpler cases such as $f(x,\mu,\alpha)\equiv\alpha$, in our generic setting of the drift and running cost functions, the optimal control cannot be solved explicitly; so we introduce a concept of {\it cone condition} (see \eqref{c_k_0}), which is a property but not an assumption that our constructed solution can be shown to fulfill (see \eqref{eq_7_44}), and it can be incorporated with the assumption on the asymptotic behavior of second order derivatives of drift function (see \eqref{bdd_d2_f}) to guarantee the solvability of the optimal control from the first order condition (see \eqref{first_order_condition}) by using a generalized version of implicit function theorem (see Theorem \ref{GIFT} in Appendix) that includes measure arguments. Second, we show a verification theorem and a maximum principle (see Theorem \ref{MPBE}) to deduce solving the generic mean field type control problems (see \eqref{SDE1}-\eqref{valfun}) to solving a FBODE system (see \eqref{MPIT}). Third, we use fixed point arguments to prove the local-in-time existence, uniqueness and regularity of solutions to a more general FBODE system \eqref{fbodesystem}, which is basically the same as the FBODE system \eqref{MPIT} except imposing a more general terminal data $p(x,\mu)$. The length of the existing time of local solution depends mainly on the bound of the derivatives of the terminal data $p(x,\mu)$, as well as some other uniform-in-time constants related to the given drift and running cost functions; see Theorem \ref{Thm6_1} and \ref{Thm6_2}. Next, to paste the local-in-time solution together to obtain the global-in-time classical solution of the FBODE system \eqref{MPIT}, the key point is to obtain {\it a priori} uniform-in-time estimates on the derivatives of the backward dynamics $Z^{t,m}_s(x)$ in initial data $x\in\R^{d_x}$ and $m\in\mc{P}_2(\R^{d_x})$, since $Z^{t,m}_{t_i}(x)$, $i=1,2,...,N+1$ are the terminal data $p$ on the sub-interval $[t_{i-1},t_i]$. Under the guidance of the calculus in the lifted Hilbert space which is much simplified and provides us useful insights of the algebraic manipulations and estimations (see Section \ref{sec:motivation}), we indeed can obtain our new crucial {\it a priori} estimates (see Theorem \ref{Crucial_Estimate}), under the assumptions that the Schur complement is positive-definite (also see \eqref{positive_g_x} and \eqref{positive_g_mu}) and the terminal cost function is convex (also see \eqref{positive_k} and \eqref{positive_k_mu}); these two assumptions are analogous to Lasry-Lions monotonicity conditions and displacement monotonicity conditions. Therefore, using both the local-in-time results and the new crucial {\it a priori} estimates, we can prove our global-in-time results. The ideas of proving local-in-time solutions of FBODE system and then pasting them together to obtain the global-in-time solution if one can uniformly bound the Lipschitz constant or first order derivatives of backward dynamics in initial data, can also be found in the interesting work of \cite{chassagneux2022probabilistic} where the authors only considered linear dynamics. In that work, the authors also give a bound on that Lipschitz constant under some relatively strong assumptions which seem not to be easily relaxed to deal with generic cases with more general dynamics. In our paper, we can deal with generic nonlinear dynamics since we can provide {\it a priori} uniform-in-time estimates on the derivatives of the backward dynamics $Z^{t,m}_s(x)$ in initial data $x\in\R^{d_x}$ and $m\in\mc{P}_2(\R^{d_x})$, which is a crucial aspect of our paper.  

Our approach is novel at least in the following four aspects. (i) We find an optimal flow of state variables $X_s$ instead of finding an optimal flow of measures $\mu_s$ (the classical approach); the state variables $X_s$ push the initial measure $m$ forward to be the optimal flow of measures $\mu_s=X_s\ot m$ by a push-forward operator $\ot$; this is inspired by the interesting idea of ``lifting'', first introduced by P.-L.~Lions \cite{LionsLecture2}, but our present approach only lifts a much smaller subset of $L^2$-random variables but not all; this approach is further developed and enhanced in \cite{bensoussan2020control}, \cite{homan2023game} and \cite{homan2023control}. (ii) We first propose the concept of {\it cone condition}s to deal with the unique solvability of optimal controls; in contrast to simpler cases such as $f(x,\mu,\alpha):=\alpha$ which the existing literature usually considers, the optimal controls can be explicitly solved from the first order condition, and hence, the {\it cone condition}s are not necessary for the simpler cases. (iii) We provide a way to manipulate the first order condition efficiently in the estimations to compensate the lack of explicit formula of the optimal control $\alpha$; as a direct consequence, the Schur complement of the Hessian matrix of the Lagrangian, which has not be found in the existing literature, arises naturally. (iv) Our new crucial {\it a priori} estimates on the derivatives of the backward dynamics $Z^{t,m}_s(x)$ are first proven in this work, and are the key estimates that guarantee the {\it cone conditions} and our the global-in-time results.

The direct consequences and advantages of our novelties are as follows. (i) By using the push-forward operator $\ot$, we can transform the study of $\mu_s$ in Wasserstein metric space to $X_s$ in the Hilbert space $L^2_m$, and avoid the difficulties caused by the lack of linear structure property in Wasserstein metric space. (ii) In principle, the optimal control $\wh{\alpha}(x,\mu,z)$ is solved from the first order condition $\p_\alpha f(x,\mu,\wh{\alpha})\cdot z+\p_\alpha g(x,\mu,\wh{\alpha})=0$ where $f$ is the draft function and $g$ is the running cost; since the drift function $f$ can be non-linear in control (namely $\p_\alpha\p_\alpha f(x,\mu,\alpha)\neq 0$) in our generic setting, when the absolute value of variable $z$ is large, we are not able to apply the implicit function theorem to guarantee the unique solvability of the optimal control from the first order condition, even if we assume $g(x,\mu,\alpha)$ is convex in $\alpha$; however, the proposed {\it cone condition}, which is shown to be fulfilled by our constructed solution of the FBODE system, basically suggests that for any given $(x,\mu)$, only those $z$'s in a bounded subset are related to the solutions of FBODE,  so we can still apply the implicit function theorem in the related region while solving the FBODE. (iii) While deriving our new crucial {\it a priori} estimations, one of the technical difficulty is the lack of explicit formula for the optimal control $\wh{\alpha}$, and hence, the derivatives of $\wh{\alpha}$ have to be obtained via differentiating the first order condition implicitly. Using these identities coming from the first order condition, we can carry on the estimations without the explicit expression of $\wh{\alpha}$. In the course of the estimations, even though the expressions are messy, from the perspective inspired by the calculus in the Hilbert space approach, the crucial term is just the Schur complement. As a result, we can use an appropriate positivity condition on the Schur complement to establish the new crucial estimates. (iv) The solution of FBODE system is proven to fulfill the {\it cone condition}s on the whole time interval $[0,T]$ and satisfy the crucial {\it a priori} estimates at the same time. By using our new crucial {\it a priori} estimates on the derivatives of the backward dynamics $Z^{t,m}_s(x)$, we can obtain an uniform lower bound on the length of the existing time of local solutions in different time sub-intervals so that we can glue these local solutions to construct a global solution in the whole time interval $[0,T]$ for any fixed large time $T$. To the best of our knowledge, showing the global-in-time existence of FBODE system in this way is also new in the literature.

Before ending this introduction, let us also emphasize that our approach is quite different from the other approaches in the literature. The Hilbert space approach used in \cite{bensoussan2020control} for instance has the advantages that the mathematical technicalities can be simplified considerably and the problem becomes more transparent, while the advantage of the classical approach in the Wasserstein
metric space is that the assumptions on the second order derivatives of coefficient functions are milder; also see Remark \ref{rem_DD} and the last paragraph in Section \ref{sec:motivation} for more discussion on the second order derivatives. In this paper, to make use of the advantages of both approaches, we first heuristically solve the mean field type control problem via the Hilbert space approach (see Section \ref{sec:motivation} for more details), and then derive the corresponding estimates and prove our results rigorously in the original Wasserstein metric space under the guidance of the calculus after that partial lifting procedure. In this new hybrid approach, we obtain useful insights on algebraic manipulations and deriving estimates from the heuristic arguments (in the Hilbert space setting), but are able to show our mian results under milder assumptions since our actual estimates are derived in the original Wasserstein metric space setting. According to our results in \cite{homan2023game}, \cite{homan2023control} and this paper, our approach can also be extended to study generic mean field game problems, especially for those with relatively small mean field effect.
Our present proposed theory can be readily extended to furnish one to tackle generic second order master equations with the present of Brownian driving noise.




The rest of this article is organized as follows.
In Section \ref{sec:ProblemSetting}, we introduce the Wasserstein space of measures, the push-forward operator and the setting of first order mean field type control problem. In Section \ref{sec:Derivatives}, we introduce various derivatives of functionals defined on the Wasserstein space.
In Section \ref{sec:motivation}, we give some motivation of our approach that was inspired by the calculus in the Hilbert space, and the reason why we must obtain our generic results by working in the Wasserstein space.
In Section \ref{sec:assumptions} we introduce the assumptions used in our results and some direct consequences of and useful properties implied by these assumptions, and we also compare these assumptions with other assumptions used in the literature.
In Section \ref{sec:Bellman}, we introduce the Bellman equation, prove the verification theorem and Pontryagin maximum principle; in particular, this explains why in order to solve the original first order mean field type control problem, it suffices to solve a forward-backward ordinary differential equations (FBODE). In Section \ref{sec:local}, we prove the existence and regularity of local-in-time solutions of the FBODE, and then of global-in-time solutions in Section \ref{sec:global} by using a new crucial {\it a priori} estimate. We also establish the uniqueness in Section \ref{sec:unique}.
In Section \ref{sec:master}, we introduce the master equation and show its global-in-time existence and uniqueness.
Finally, in Section \ref{sec:nonLQ}, we provide a non-trivial example of non-linear-quadratic mean field type control problem with non-separable Hamiltonian that satisfies our assumptions in Theorem \ref{GlobalSol}.

\section{Generic First Order Mean Field Type Control Problem}\label{sec:ProblemSetting}
Consider a probability space $\left(\Omega, \mathcal{F},\mathbb{P}\right)$\footnote{There is no need to incorporate a filtration here as this is just a first order mean field type control problem, the only randomness solely sets off at the initial state.} and all its square-integrable random $\R^d$-vectors, namely, 
\begin{align*}
\mathcal{H}^d:=L^{2}(\Omega,\mathcal{F},\mathbb{P};\mathbb{R}^{d}),
\end{align*}
which is equipped with an inner product:
\begin{align}
    \langle X,Y\rangle_{\mathcal{H}^d}:= \int_{\Omega} X(\omega)\cdot Y(\omega)d\mathbb{P}(\omega),\ \forall\ X,Y\in\mathcal{H}^d,
\end{align} 
which induces the corresponding norm denoted by $\| \cdot \|_{\mathcal{H}^d}$; for the sake of convenience, in the rest of this article we skip the subscripts of both inner product and norm if there is no cause of ambiguity. Here $X\cdot Y=X^\top Y$ stands for the inner product in $\R^d$, and we just denote the usual norm in $\R^d$ by $|\cdot|$ without any indication to the dependence on the dimension $d$. Let $\mathcal{P}_{p}(\mathbb{R}^{d})$, $p=1,2,...$, be the space of all probability measures on $\mathbb{R}^{d}$, each of which has a finite $p$-th order moment, equipped with the Wasserstein metric $W_{p}(\mu,\nu)$ defined by: 
\begin{equation} \label{eq_2_2}
W_{p}(\mu,\nu):=\bigg(\inf_{\pi\in\Pi(\mu,\nu)}\int_{\mathbb{R}^{d}\times \mathbb{R}^{d}}|\xi-\eta|^{p}d\pi(\xi,\eta)\bigg)^{\frac{1}{p}} , 
\end{equation}
where $\Pi(\mu,\nu)$ denotes the set of all joint probability measures
on $\mathbb{R}^{d}\times \mathbb{R}^{d}$ such that the marginals are the probability measures $\mu$ and $\nu$, respectively. Also define $\|\mu\|_p=\left(\int_{\R^{d}}|x|^p\,d\mu(x)\right)^{1/p}$. Clearly, $\mathcal{P}_{p'}(\mathbb{R}^d)\subset \mathcal{P}_{p}(\mathbb{R}^d)$ for any $1\leq p\leq p'<\infty$.

Fix a $m \in \mathcal{P}_p(\mathbb{R}^d)$, we denote $L^{p,d_1,d_2}_m:= L^p_m(\mathbb{R}^{d_1};\mathbb{R}^{d_2})$, the set of all measurable maps $\Phi:\R^{d_1}\rightarrow \R^{d_2}$ such that $\int_{\mathbb{R}^{d_1}}|\Phi(x)|^pd m(x)<\infty$. We equip $L^{p,d_1,d_2}_m$ with a norm $\| \cdot \|_{L^{p,d_1,d_2}_m}$ or simply $\| \cdot \|_{L^{p,d}_m}$ when $d_1=d_2=d$:
\begin{align}\label{eq_2_3}
    \|X\|_{L^{p,d_1,d_2}_m}:= \bigg(\int_{\mathbb{R}^{d_1}}|X(x)|^pd m(x)\bigg)^{\frac{1}{p}},\ \forall\ X\in L^{p,d_1,d_2}_m;
\end{align} 
clearly, $L^{p,d_1,d_2}_m$ is a Banach space and $L^{p',d_1,d_2}_m\subset L^{p,d_1,d_2}_m$ for any $1\leq p\leq p'<\infty$.
For $p=2$, we further equip $L^{2,d_1,d_2}_m$ with an inner product:
\begin{align}
    \langle X,Y\rangle_{L^{2,d_1,d_2}_m}:= \int_{\mathbb{R}^{d_1}} X(x)\cdot Y(x)d m(x),\ \forall\ X,Y\in L^{2,d_1,d_2}_m;
\end{align}
clearly, $L^{2,d_1,d_2}_m$ is a Hilbert space. In addition, we can regard the space $L^{2,d_1,d_2}_m$ as a tangent space attached to a point $m$ in the Wasserstein metric space $\mathcal{P}_2(\mathbb{R}^d)$, and we use subscript $m$ to indicate this nature.
\begin{definition}\label{defot}
Given a probability measure $m\in\mathcal{P}_2(\mathbb{R}^d)$, $X\in L^{2,d}_m$, define another one $ X \ot m  \in \mathcal{P}_2(\mathbb{R}^d)$ so that for every measurable function $\phi:\mathbb{R}^d\to\mathbb{R}$ such that ${\displaystyle \sup_{x\in\R^d}} \dfrac{|\phi(x)|}{1+|x|^2}<\infty$, 
\begin{align}
    \int_{\R^d} \phi(x)d( X \ot m )(x)=\int_{\R^d} \phi(X(x))dm(x).
\end{align}
\end{definition}
This new probability measure $ X \ot m$ is actually the push-forward map of $m$ under the map $X(\cdot)$.\footnote{More formally, a push-forward, $X\ot$, of a Borel map $X: \mathbb{R}^d \rightarrow \mathbb{R}^d$ is such that for any $\mu \in \mathcal{P}_{2}(\mathbb{R}^{d})$, the measure $X\ot \mu \in \mathcal{P}_{2}(\mathbb{R}^{d})$ is defined so that $X\ot \mu (A) := \mu(X^{-1}(A)), \text{ for any Borel set } A \subseteq \mathbb{R}^d$.} 

In this article, we denote the state by $x\in\mathbb{R}^{d_x}$ and the control by $\alpha\in\mathbb{R}^{d_\alpha}$,
also consider continuous coefficient and cost functions:
\begin{align*}
&f(x,\mu,\alpha)=(f_1,...,f_{d_x})(x,\mu,\alpha): \mathbb{R}^{d_x}\times\mathcal{P}_2(\mathbb{R}^{d_x})\times\mathbb{R}^{d_\alpha}\to\mathbb{R}^{d_x};\\
&g(x,\mu,\alpha): \mathbb{R}^{d_x}\times\mathcal{P}_2(\mathbb{R}^{d_x})\times\mathbb{R}^{d_\alpha} \to\mathbb{R};\\ &k(x,\mu) : \mathbb{R}^{d_x}\times\mathcal{P}_2(\mathbb{R}^{d_x})\to\mathbb{R},
\end{align*}
where these differentiable, in both $x$ and $\alpha$, functions can be allowed to depend on time, but we here omit this dependence to avoid unnecessary technicalities; yet all the following discussions remain valid with time dependent coefficients and cost functions. Besides, $f$ plays the role as a drift function, $g$ is the running cost, and $k$ stands for the terminal cost functional. 
The argument $\mu$ is the place for the mean field term of the state. 
The generic first order mean field type control problem is formulated as follows: for $t\in[0,T]$, $m\in\mathcal{P}_2(\R^{d_x})$ and any admissible (see definition below) control process $\alpha=(\alpha_t)_{t\in[0,T]}$, 
consider two type of controlled state equations given by:
\begin{align}\label{SDE1}
x^{t,\xi,\alpha}_s =&\ \ \xi+\int_t^s f\left(x^{t,\xi,\alpha}_\tau,\mathbb{L}_{x^{t,\xi,\alpha}_\tau},\alpha_\tau\right)d\tau,\ s\in[t,T];\\\label{SDE2}
x^{t,x,\xi,\alpha}_s =&\ \ x+\int_t^s f\left(x^{t,x,\xi,\alpha}_\tau,\mathbb{L}_{x^{t,\xi,\alpha}_\tau},\alpha_\tau\right)d\tau,\ s\in[t,T],
\end{align}
where $\xi\in\mathcal{H}^{d_x}$ such that $m=\mathbb{L}_\xi$ is the initial distribution at time $t$, here $\mathbb{L}_\xi$ denotes the law of $\xi$, and $\mathbb{L}_{x^{t,\xi,\alpha}_\tau}$ is the  probability distribution of
$x^{t,\xi,\alpha}_\tau$, which is the pushed-forward measure by the dynamics \eqref{SDE1} of $m$, now at time $\tau$. The set of admissible control processes is defined as $\mathbb{A}:=L^2([0,T];\mathcal{H}^{d_\alpha})$. The existence and uniqueness of solutions to \eqref{SDE1}-\eqref{SDE2} can be proven under standard Lipschitz and linear-growth conditions; moreover this solution satisfies $\sup_{s\in[t,T]}\mathbb{E}[|x^{t,\xi,\alpha}_s|^2]\leq C(1+\mathbb{E}|\xi|^2)<\infty$, see \cite{kurtz1999particle} and \cite{sznitman1991topics} for instance, where the constant $C$ is independent of $\xi$ or $\mathbb{L}_\xi=m$, but depends solely on $T$. For the class of the admissible controls, we only consider
all having a feedback form $\alpha(t,x,\mu)$, a continuous map from $[0,T]\times\mathbb{R}^{d_x}\times\mathcal{P}_2(\mathbb{R}^{d_x})$ to $\mathbb{R}^{d_\alpha}$; also use $\mathbb{A}$ to denote this collection. To save notation, we write $\alpha(t,x,\mu)$ or $\alpha(x,\mu)$ for simplicity in case of no cause of ambiguity. Define $x^{t,x,m,\alpha}_s:=x^{t,x,\xi,\alpha}_s$ since, under the standard mean-field type control problem setting, $x^{t,x,\xi,\alpha}_s$ depends on $\xi$ only through its law $m$; then $x^{t,\xi,\alpha}_s=x^{t,x,m,\alpha}_s\big|_{x=\xi}$ and the pair $(x^{t,x,m,\alpha}_s,x^{t,\xi,\alpha}_s)_{s\in[t,T]}$ has the flow property, which further implies that $\mathbb{L}_{x^{t,\xi,\alpha}_s}=x^{t,\cdot,m,\alpha}_s\ot m$; also see \cite{buckdahn2017mean} for a proof.

Now, we define the objective function in terms of $\alpha$, 
\begin{align}\label{cost}
j(t,m,\alpha) :=&\  \int^T_t \mathbb{E}\left[g(x^{t,\xi,\alpha}_s,\mathbb{L}_{x^{t,\xi,\alpha}_s},\alpha(s,x^{t,\xi,\alpha}_s,\mathbb{L}_{x^{t,\xi,\alpha}_s}))\right]ds +\mathbb{E}\left[k(x^{t,\xi,\alpha}_T,\mathbb{L}_{x^{t,\xi,\alpha}_T})\right],\text{ for }t\in[0,T].
\end{align}
Our main purpose in this work is to find the optimal control $\widehat{\alpha}$ such that the value function,
\begin{align}\label{valfun}
v(t,m):=j(t,m,\widehat{\alpha})=\min_{\alpha\in\mathbb{A}} j(t,m,\alpha),
\end{align}
for any $(t,m)\in[0,T]\times \mathcal{P}_2(\R^{d_x})$, that means the global unique solvability of the generic first order mean field type control problem \eqref{valfun}; in our knowledge, our solution will provide a settlement of a standing problem in this one of the most popular research areas of mean field games and mean field type control problems, and will also contribute to the mathematical understanding on the supreme efficiency of Deep Neural
Network after applying a batch normalization; indeed, we shall study the well-posedness of the generic first order mean field type control problem \eqref{SDE1}-\eqref{valfun} under Assumptions $\bf{(a1)}$-$\bf{(a3)}$,
to be specified in Section \ref{assumptions}.

\section{Derivatives in Wasserstein Metric Space}\label{sec:Derivatives}
We first list a few useful properties related to the push forward map $\ot$.
In the following, $L^2([t,T];\mathcal{B})$ stands for the set of all processes $s\in[t,T]\mapsto X(s)\in\mathcal{B}$, with a range being a Banach space $\mathcal{B}$, such that $\int_t^T \| X(s)\|_{\mathcal{B}}^2 ds <\infty$.
\begin{prpstn}
\label{property_push_forward}
The following properties hold for every $m\in\mathcal{P}_2(\R^d)$.
\begin{enumerate}
\item Let $X$, $Y\in L^{2,d}_m$, the composition, $X\circ Y\in L^{2,d}_m$, satisfies transitive property of $(X\circ Y)\ot  m = X\ot (Y\ot  m)$;
\item If $X(x) = x$ is the identity map in $L^{2,d}_m$, then $ X \ot m  =m$;
\item $\|X\ot m\|_2=\|X\|_{L^2_m}$ for any $X\in L^2_m$, that is the isometric property.
\end{enumerate}
\end{prpstn}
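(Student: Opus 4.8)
The plan is to verify the three properties directly from Definition \ref{defot}, treating each as an instance of the change-of-variables identity that defines the push-forward. The only genuinely analytic point to watch is that Definition \ref{defot} is phrased in terms of test functions $\phi$ with quadratic growth, $\sup_x |\phi(x)|/(1+|x|^2)<\infty$, so in each step I must check that the test functions I feed in actually lie in this class and that the relevant integrals are finite; this uses $X,Y\in L^{2,d}_m$ and, where needed, the fact that $L^{2,d}_m$ is closed under composition, which I would record first.

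\textbf{Step 1 (transitivity).} I would first argue that if $X,Y\in L^{2,d}_m$ then $X\circ Y\in L^{2,d}_m$: since $Y\ot m\in\mathcal P_2(\R^d)$ and $X\in L^{2,d}_{Y\ot m}$ (here one notes $L^{2,d}_m$ membership of $X$ is with respect to $m$, so strictly one needs $X\in L^2_{Y\ot m}$, which holds because $\int|X|^2\,d(Y\ot m)=\int|X\circ Y|^2\,dm$ and this is exactly what we want finite — so really the finiteness of $\|X\circ Y\|_{L^2_m}$ and the statement $X\in L^2_{Y\ot m}$ are the same assertion, which I would phrase carefully, perhaps invoking that $X$ ranges over all of $L^{2,d}_{Y\ot m}$ in the intended reading, or adding the mild standing hypothesis that compositions stay in $L^2_m$). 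Then for any admissible $\phi$, apply Definition \ref{defot} to the measure $Y\ot m$ and the map $X$ to get $\int\phi\,d\big(X\ot(Y\ot m)\big)=\int \phi(X(x))\,d(Y\ot m)(x)$; now $\phi\circ X$ has at most quadratic growth along $Y$-images in the sense that $x\mapsto\phi(X(x))$ is $m$-integrable (since $|\phi(X)|\le C(1+|X|^2)$ and $X\in L^2_m$), so applying Definition \ref{defot} again to $m$ and the map $Y$ with test function $\phi\circ X$ yields $\int\phi(X(x))\,d(Y\ot m)(x)=\int \phi(X(Y(x)))\,dm(x)=\int\phi\big((X\circ Y)(x)\big)\,dm(x)=\int\phi\,d\big((X\circ Y)\ot m\big)$. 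Since these two measures integrate every quadratic-growth $\phi$ identically, and such $\phi$ separate measures in $\mathcal P_2$, they coincide.

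\textbf{Step 2 (identity map).} With $X=\mathrm{id}$, Definition \ref{defot} gives $\int\phi\,d(X\ot m)=\int\phi(x)\,dm(x)$ for all quadratic-growth $\phi$, hence $X\ot m=m$ by the same separation argument.

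\textbf{Step 3 (isometry).} Apply Definition \ref{defot} with the test function $\phi(x)=|x|^2$, which satisfies $|\phi(x)|/(1+|x|^2)\le 1<\infty$, to get $\|X\ot m\|_2^2=\int|x|^2\,d(X\ot m)(x)=\int|X(x)|^2\,dm(x)=\|X\|_{L^2_m}^2$; taking square roots finishes it. The main (and only real) obstacle is the bookkeeping in Step 1 — confirming that $\phi\circ X$ is an admissible test function for the outer application of the definition and that $X\circ Y$ genuinely lands in $L^{2,d}_m$ — everything else is a one-line substitution.
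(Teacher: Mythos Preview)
Your proof is correct and follows essentially the same approach as the paper: for Statement 1 you chain together two applications of Definition \ref{defot} via a test function $\phi$, and for Statements 2 and 3 you make the obvious substitutions (the paper simply says these are ``clear by Definition \ref{defot}'' and omits them). You are actually more careful than the paper, which writes the transitivity computation as a one-line chain of equalities without pausing to check that $\phi\circ X$ is admissible or that $X\circ Y\in L^{2,d}_m$; your discussion of these points is accurate, and your observation that the finiteness of $\|X\circ Y\|_{L^2_m}$ and the membership $X\in L^2_{Y\ot m}$ are the same assertion is exactly the right way to frame the bookkeeping.
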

\begin{proof}
Statements 2 and 3 are clear by Definition \ref{defot}, so we omit their proofs. We only prove the first statement here. For any test function $\phi:\R^d\to \R$ with the appropriate boundedness as specified in Definition \ref{defot},
\small\begin{align*}
    \int_{\R^d} \phi(x)d( (X\circ Y) \ot m )(x)=\int_{\R^d} \phi(X(Y(x)))dm(x)=\int_{\R^d} \phi(X(x))d (Y\ot m)(x)=\int_{\R^d} \phi(x)d (X\ot (Y\ot m))(x).
\end{align*}\normalsize
\end{proof}

For the sake of convenience, we now provide a brief review on the definitions of different derivatives in the respective generic Hilbert space and Wasserstein metric space $\mathcal{P}_2(\mathbb{R}^d)$.
Given a probability measure $\mu\in\mathcal{P}_{2}(\mathbb{R}^{d})$, 
the tangent space $\mathcal{T}(\mu)$ of $\mathcal{P}_{2}(\mathbb{R}^{d})$ at $\mu$ is defined as 
\begin{equation}
\mathcal{T}(\mu):=\text{the closure of }\left\{\nabla \Phi_\mu^\nu \bigg| \nabla \Phi_\mu^\nu:=\mathop{\arg\min}_{\nabla \phi \in \mathcal{S}_\mu^\nu} \int_{\mathbb{R}^{d}}|x-\nabla \phi(x)|^{2}d\mu(x)  \text{ for some } \nu\in \mathcal{P}_{2}(\mathbb{R}^{d})\right\}\text{ in }L_\mu^{2,d},
\end{equation}
where $\mathcal{S}_\mu^\nu$ is the set of all Borel maps $T:\mathbb{R}^d \rightarrow \mathbb{R}^d$ such that $T\ot \mu =\nu$, each $\nabla \Phi_\mu^\nu$ is a Brenier's map pushing $\mu$ to $\nu$.

\begin{definition}
(Wasserstein differentiability \cite{ambrosio2005gradient}) A function $f: \mu\in \mathcal{P}_2(\mathbb{R}^{d})\mapsto f(\mu)\in \mathbb{R}$ is said to be {\it Wasserstein differentiable}
at a $\mu_0\in \mathcal{P}_2(\mathbb{R}^{d})$, in the Wasserstein metric space, if there exists a function $\nabla_\mu f(\mu_0)(\cdot)$, called the {\it Wasserstein gradient}, belonging to the
tangent space $\mathcal{T}(\mu_0)$ with the property that, for any sequence of probabilities $\mu_n\in\mathcal{P}_{2}(\mathbb{R}^{d})$ such that $W_{2}(\mu_n,\mu_0)\rightarrow0$ as $n \rightarrow \infty$, 
\begin{equation} \label{eq:4-9}
\frac{f(\mu_n)-f(\mu_0)-\displaystyle\int_{\mathbb{R}^{d}}\nabla_\mu f(\mu_0)(x)\cdot (\nabla \Phi_{\mu_0}^{\mu_n}(x)-x)d \mu_0(x)}{W_{2}(\mu_n,\mu_0)}\rightarrow0 \,\text{ as }n \rightarrow \infty. 
\end{equation}
\end{definition}

\begin{definition}\label{first_LFD}
(Linear Functional Derivative \cite{carmona2018probabilistic}). A function $f: \mu\in \mathcal{P}_2(\mathbb{R}^{d})\mapsto f(\mu)\in \mathbb{R}$ is said to have a {\it linear functional derivative} if there exists a function
\begin{align*}
    \/{\delta f}{\delta \mu}:\mathcal{P}_2(\mathbb{R}^{d})\times \mathbb{R}^{d} \to  \mathbb{R}
\end{align*}
which is continuous with respect to the product topology of $\mathcal{P}_2(\R^d)$ and $\R^d$ so that the followings hold:
\begin{enumerate}
    \item [(i)]
there is another function $c: \mathcal{P}_2(\mathbb{R}^{d})\rightarrow [0,\infty)$ which is bounded on any bounded subsets of $\mathcal{P}_2(\R^d)$, such that
\begin{align}
\left| \/{\delta f}{\delta \mu}(\mu)(x)\right|\leq c(\mu)(1+|x|^2),\text{ for any }\mu\in\mathcal{P}_2(\R^d)\text{ and }x\in \R^d;
\end{align}
\item [(ii)] for all $\nu_1$, $\nu_2\in \mathcal{P}_2(\mathbb{R}^{d})$, it also holds that
\begin{align}\label{delta_mu_f}
    f(\nu_1)-f(\nu_2) = \int_0^1 \int_{\mathbb{R}^{d}}\/{\delta f}{\delta \mu}(\theta \nu_1+(1-\theta)\nu_2)(x)d(\nu_1-\nu_2)(x)d\theta.
\end{align}
\end{enumerate}
Note that $\/{\delta f}{\delta \mu}$ defined here is unique only up to  a constant, and the following normalization condition is taken in the rest of this article:
\begin{align}\label{eq_3_5_n}
\int_{\mathbb{R}^{d}} \/{\delta f}{\delta \mu}(\mu)(x)d\mu(x)=0,
\end{align}
which in turn ensures the functional derivative of a constant function is zero. 

\end{definition}

\begin{remark}
Clearly, \eqref{delta_mu_f} implies that, for any $m$, $m'\in \mathcal{P}_2(\mathbb{R}^{d})$, 
\begin{align}\label{delta_mu_f_1}
   \lim_{s\to 0^+} \frac{f((1-s)m+sm')-f(m)}{s} = \int_{\mathbb{R}^{d}}\/{\delta f}{\delta \mu}(m)(x)d(m'-m)(x).
\end{align}
Besides, if $\mu$ has a density $\/{d\mu}{dx}$ with respect to Lebesgue measure, the functional derivative is precisely the usual functional derivative of $f$ (if we regard $f$ as a function of $\frac{d\mu}{dx}$ instead) with respect to the density $\/{d\mu}{dx}$ over the usual $L^2(\R^d)$ space, and so the notation presented in Definition \ref{first_LFD} can be regarded as a completion of the space of these usual functional derivatives.
\end{remark}

\begin{definition}
(Second Order Linear Functional Derivative \cite{bensoussan2020control}). A function $f: \mu\in \mathcal{P}_2(\mathbb{R}^{d})\mapsto f(\mu)\in \mathbb{R}$ is said to have a second order linear functional derivative if $f$ has the first order linear functional derivative as defined in Definition \ref{first_LFD}, and there exists another function $\/{\delta^2 f}{\delta \mu^2}:\mathcal{P}_2(\mathbb{R}^{d})\times\mathbb{R}^{d}\times \mathbb{R}^{d} \to \mathbb{R}$ such that it is continuous with respect to the corresponding product topology of $\mathcal{P}_2(\mathbb{R}^{d})\times\mathbb{R}^{d}\times \mathbb{R}^{d}$, and 
\begin{enumerate}
    \item [(i)] there is a functional $c: \mathcal{P}_2(\mathbb{R}^{d})\rightarrow [0,\infty)$ which is bounded on any bounded subsets, so that 
\begin{align}
\left| \/{\delta^2 f}{\delta \mu^2}(\mu)(x,\widetilde{x})\right|\leq c(\mu)(1+|x|^2+|\widetilde{x}|^2),
\end{align} 
and, together with its first order linear functional derivative, it also satisfies: 
\item[(ii)] for any $\nu_1,\nu_2 \in \mathcal{P}_2(\R^d)$,
\begin{align*}
    &f(\nu_1) - f(\nu_2) \\
    =&\ \int_{\mathbb{R}^{d}}\/{\delta f}{\delta \mu}(\nu_2)(x) d(\nu_1-\nu_2)(x)\\
& + \int_0^1\int_0^1 \int_{\mathbb{R}^{d}}\int_{\mathbb{R}^{d}}\theta \/{\delta^2 f}{\delta \mu^2}(\nu_2+\lambda \theta(\nu_1-\nu_2))(x,\widetilde{x})d(\nu_1-\nu_2)(x)d(\nu_1-\nu_2)(\widetilde{x})d\lambda d\theta.
\end{align*}
\end{enumerate}
Again, we also require the normalization conditions in addition to \eqref{eq_3_5_n} for the first order linear functional derivative:
$$\int_{\mathbb{R}^{d}} \/{\delta^2 f}{\delta \mu^2}(\mu)(x,\widetilde{x})d\mu(\widetilde{x}) = 0,\  \forall x;$$
$$\int_{\mathbb{R}^{d}} \/{\delta^2 f}{\delta \mu^2}(\mu)(x,\widetilde{x})d\mu(x) = 0,\  \forall \widetilde{x}.$$
\end{definition}
To add a point, we can also show the following symmetric property of the second-order linear functional derivatives:
\begin{align}
    \/{\delta^2 f}{\delta \mu^2}(\mu)(x,\widetilde{x})=\/{\delta^2 f}{\delta \mu^2}(\mu)(\widetilde{x},x),\text{ for any }x,\widetilde{x}\in \R^d.
\end{align}

Let $\mathcal{H}$ and $\mathcal{H}'$ be two Hilbert spaces and $\mathcal{L}(\mathcal{H};\mathcal{H}')$ be the set of all bounded linear operators from $\mathcal{H}$ to $\mathcal{H}'$ equipped with the usual operator norm: 
\begin{align}
	\|F\|_{\mathcal{L}(\mathcal{H};\mathcal{H}')} := \sup_{X\in\mathcal{H}} \/{\|F(X)\|_\mathcal{H'}}{\|X\|_\mathcal{H}}.
\end{align}
\begin{definition}
(\textit{Fr\'echet differentiability}). A function $F: \mathcal{H}\to\mathcal{H'}$ is said to be \textit{Fr\'echet differentiable} at $X$ if there is a bounded linear operator $D_XF (X) \in \mathcal{L}(\mathcal{H};\mathcal{H}')$, such that for all $Y \in \mathcal{H}$,
\begin{align*}
	\lim_{\|Y\|_\mathcal{H}\to 0}\/{\|F (X+Y)-F (X)-D_XF (X)(Y)\|_\mathcal{H'}}{\|Y\|_\mathcal{H}} = 0.
\end{align*}
\end{definition}
\begin{definition}\label{def_L_diff}
({\it L-differentiability} at a $\mu_0\in \mathcal{P}_2(\mathbb{R}^{d}$) \cite{bensoussan2017interpretation,carmona2018probabilistic}). A function $f: \mu\in \mathcal{P}_2(\mathbb{R}^{d})\mapsto f(\mu)\in \mathbb{R}$ is said to be {\it $L$-differentiable} at a $\mu_0 \in \mathcal{P}_2(\mathbb{R}^{d})$ if there exists a $X_0\in\mathcal{H}^{d}$ with the law $\mu_0$, denoted by $\mathbb{L}_{X_0} = \mu_0$, such that its lifted version $F (X):= f(\mathbb{L}_X)$ is Fr\'echet differentiable at $X_0$; and we also call this Fr\'echet derivative $D_X F(X_0)(\cdot)\in\mathcal{L}(\mathcal{H}^d;\R)$ as the {\it $L$-derivative} of $f(\mu)$ at $\mu_0$, denote it by
$\partial_\mu f(\mu_0)$.
\end{definition}
Note that this {\it L-derivative} $\partial_\mu f(\mu_0)$ is uniquely defined, except $\mathbb{P}$-null set of points, in the sense that its definition is independent of the choice of $X_0$ corresponding to the same $\mu_0$; also see Proposition 5.24 in \cite{carmona2018probabilistic} and \cite{bensoussan2017interpretation} for details. 
Generally, the function $f$ is {\it $L$-differentiable} if it is {\it $L$-differentiable} at every $\mu\in \mathcal{P}_2(\mathbb{R}^{d})$, so that the {\it $L$-derivative} as a function denoted by $\partial_\mu f(\cdot)(\cdot):(\mu,x)\in \mathcal{P}_2(\mathbb{R}^{d})\times \R^d \mapsto \partial_\mu f(\mu)(x)\in\mathbb{R}^{d}$ is jointly measurable. Certainly, we also have
\begin{equation}
\partial_{\mu}f(\mathbb{L}_X)(X)=D_{X}F(X) \:\text{a.s.}, \quad \forall X\,\in\mathcal{H}^{d}. \label{eq:4-110}
\end{equation}

In Theorem 5.64 of \cite{carmona2018probabilistic}, Carmona and Delarue have shown that the $L$-derivative coincides with the Wasserstein gradient when they are both defined; that is, $\partial_\mu f(\mu)=\nabla_\mu f(\mu)$; see also \cite{bensoussan2019control} for an alternative approach by directly linking $D_{X}F(X)$ and $\nabla_\mu f(\mu)$. 
Due to the pure metric space nature, but not a vector space, of Wasserstein metric space, using Wasserstein gradient to develop control theory is more complicated than using $L$-derivative, and so the later will be used in describing the regularity of our solution of FBODE system, see Theorem \ref{Thm6_2} for example. 
Besides, we have the following properties for linear functional derivatives and $L$-derivatives. 

\begin{prpstn}\label{prop_3_7}(Proposition 5.51 in \cite{carmona2018probabilistic}) 
Suppose that $f: \mu\in \mathcal{P}_2(\mathbb{R}^{d})\mapsto f(\mu)\in \mathbb{R}$ is {\it $L$-differentiable} on $\mathcal{P}_2(\R^d)$ and its Fr\'echet derivative $D_{X}F(\cdot)$ is uniformly Lipschitz in its argument. Also assume that its $L$-derivative $\partial_\mu f(\cdot)(\cdot):(\mu,x)\in \mathcal{P}_2(\R^d)\times \R^d\mapsto \partial_\mu f(\mu)(x)\in \R$ is jointly continuous with respect to the natural product topology. Then $f$ has a linear functional derivative $\/{\delta f}{\delta \mu}(\cdot)(\cdot)$ such that 
\begin{equation}
\partial_\mu f(\mu)(x)=\partial_x \/{\delta f}{\delta \mu}(\mu)(x),\text{ for any }   \mu\in \mathcal{P}_2(\mathbb{R}^{d}). \label{eq:4-111}
\end{equation}
\end{prpstn}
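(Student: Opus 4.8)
By \eqref{eq:4-110}, $\partial_\mu f(\mu)(x)=D_XF(X)(x)$ at any $X$ with $\mathbb{L}_X=\mu$, and the asserted identity $\partial_\mu f(\mu)(x)=\partial_x\/{\delta f}{\delta \mu}(\mu)(x)$ forces $\/{\delta f}{\delta \mu}(\mu)(\cdot)$ to be a potential of the vector field $x\mapsto\partial_\mu f(\mu)(x)$. The plan is therefore: (i) construct such a potential $G(\mu)(\cdot)$ directly from the $L$-derivative; (ii) show $G(\mu)(\cdot)\in\mathcal{C}^{1}(\R^{d})$ with $\nabla_x G(\mu)(x)=\partial_\mu f(\mu)(x)$; and (iii) verify that $G$ is a linear functional derivative in the sense of Definition~\ref{first_LFD}, whence $G=\/{\delta f}{\delta \mu}$ and the claim. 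Since the functional $X\mapsto F(X)=f(\mathbb{L}_X)$ depends on $X$ only through its law, I may enlarge $(\Omega,\mathcal{F},\mathbb{P})$ by a factor $([0,1],\mathrm{Leb})$ and so assume that for each $\epsilon\in(0,1]$ there is an event $A_\epsilon$ with $\mathbb{P}(A_\epsilon)=\epsilon$ independent of any prescribed $X\in\mathcal{H}^{d}$. A preliminary observation is the linear growth bound $|\partial_\mu f(\mu)(x)|\le c(\mu)(1+|x|)$, with $c$ bounded on bounded subsets of $\mathcal{P}_2(\R^{d})$: for $X\sim\mu$ put $X^{x,\epsilon}:=X\mathbf{1}_{A_\epsilon^{c}}+x\mathbf{1}_{A_\epsilon}$, note $\|X^{x,\epsilon}-X\|_{\mathcal{H}^{d}}^{2}=\epsilon\int_{\R^{d}}|x-y|^{2}d\mu(y)$, apply the uniform Lipschitz bound on $D_XF$ together with \eqref{eq:4-110} on $A_\epsilon$, and let $\epsilon\downarrow0$ using joint continuity of $\partial_\mu f$.

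\textbf{The gradient structure (the crux).} Fix $\mu$ and set $\mu^{x,\epsilon}:=(1-\epsilon)\mu+\epsilon\delta_x=\mathbb{L}_{X^{x,\epsilon}}$. Since $x\mapsto X^{x,\epsilon}$ is affine from $\R^{d}$ into $\mathcal{H}^{d}$ and $F$ is $\mathcal{C}^{1}$ on $\mathcal{H}^{d}$ (its Fréchet derivative being continuous), the map $x\mapsto f(\mu^{x,\epsilon})=F(X^{x,\epsilon})$ is $\mathcal{C}^{1}(\R^{d};\R)$, and differentiating via the chain rule, using \eqref{eq:4-110} and the independence of $A_\epsilon$, gives $\nabla_x f(\mu^{x,\epsilon})=\epsilon\,\partial_\mu f(\mu^{x,\epsilon})(x)$; hence for each $\epsilon$ the field $x\mapsto\partial_\mu f(\mu^{x,\epsilon})(x)$ is a gradient, with explicit potential $\epsilon^{-1}(f(\mu^{x,\epsilon})-f(\mu))$. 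Applying next the fundamental theorem of calculus for $D_XF$ along the segment $[X,X^{x,\epsilon}]$, together with \eqref{eq:4-110}, the independence of $A_\epsilon$, the growth bound above, joint continuity of $\partial_\mu f$ and dominated convergence, I would show that
\[
G(\mu)(x):=\lim_{\epsilon\downarrow0}\frac{f(\mu^{x,\epsilon})-f(\mu)}{\epsilon}=\int_{0}^{1}\!\!\int_{\R^{d}}\partial_\mu f(\mu)\bigl((1-\lambda)y+\lambda x\bigr)\cdot(x-y)\,d\mu(y)\,d\lambda
\]
exists, and that this convergence — as well as $\partial_\mu f(\mu^{x,\epsilon})(x)\to\partial_\mu f(\mu)(x)$ — is uniform for $x$ in compact sets, since $W_{2}(\mu^{x,\epsilon},\mu)\le\bigl(\epsilon\int_{\R^{d}}|x-y|^{2}d\mu(y)\bigr)^{1/2}$ and the measures $\mu^{x,\epsilon}$ stay in a $W_{2}$-compact set on which $\partial_\mu f$ is uniformly continuous. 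The classical lemma on uniform limits of $\mathcal{C}^{1}$ functions with uniformly convergent gradients then gives $G(\mu)(\cdot)\in\mathcal{C}^{1}(\R^{d})$ and $\nabla_x G(\mu)(x)=\partial_\mu f(\mu)(x)$.

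\textbf{Identifying $G$ with $\/{\delta f}{\delta \mu}$.} The growth bound $|G(\mu)(x)|\le c(\mu)(1+|x|^{2})$ and joint continuity of $(\mu,x)\mapsto G(\mu)(x)$ follow from the integral formula, the preliminary growth bound, continuity of $\partial_\mu f$, and the standard uniform-integrability estimate for functionals of $W_{2}$-convergent measures; the normalization $\int_{\R^{d}}G(\mu)(x)d\mu(x)=0$ of \eqref{eq_3_5_n} follows by antisymmetrizing the integrand under $(y,\lambda)\leftrightarrow(y',1-\lambda)$, or simply by subtracting the mean. For the increment formula \eqref{delta_mu_f}, fix $\nu,\mu$, set $m_\theta:=(1-\theta)\mu+\theta\nu$, take $X\sim\mu$ and $Y\sim\nu$ independent, and argue as above — now smearing $\mu$ and $\nu$ along $\{U\le\theta\}$ for an independent uniform $U$ — to obtain that $\theta\mapsto f(m_\theta)$ is $\mathcal{C}^{1}$ with $\frac{d}{d\theta}f(m_\theta)=\int_{0}^{1}\mathbb{E}\bigl[\partial_\mu f(m_\theta)\bigl((1-\lambda)X+\lambda Y\bigr)\cdot(Y-X)\bigr]d\lambda$; by $\nabla_x G(m_\theta)=\partial_\mu f(m_\theta)$ and the fundamental theorem of calculus in $x$ this equals $\mathbb{E}[G(m_\theta)(Y)]-\mathbb{E}[G(m_\theta)(X)]=\int_{\R^{d}}G(m_\theta)(x)\,d(\nu-\mu)(x)$, and integrating over $\theta\in[0,1]$ yields \eqref{delta_mu_f}. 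Thus $G=\/{\delta f}{\delta \mu}$, and the proposition follows from step (ii).

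\textbf{Main obstacle.} The substantive point is the gradient-structure step: recognizing that after smearing $\mu$ by an independent Bernoulli event $A_\epsilon$ the $L$-derivative becomes a genuine $\R^{d}$-gradient in the location of the added atom, hence is conservative in the limit $\epsilon\downarrow0$ and admits a potential — without this there is no candidate for $\/{\delta f}{\delta \mu}$ that is differentiable in $x$ at all. Everything else is technical but routine: the linear growth bound and joint continuity of $\partial_\mu f$ are precisely what is needed to justify the dominated-convergence passages and to upgrade pointwise to locally uniform convergence, and \eqref{delta_mu_f} requires only the auxiliary $\mathcal{C}^{1}$-in-$\theta$ statement, established by the same lift-and-FTC device.
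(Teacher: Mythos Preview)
The paper does not prove this proposition: it is quoted verbatim as Proposition~5.51 of Carmona--Delarue \cite{carmona2018probabilistic}, with only the remark ``Also see \cite{bensoussan2017interpretation} for \eqref{eq:4-111} for an alternative derivation.'' So there is no in-paper argument to compare against.

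Your proposal is a correct and essentially complete outline of the standard proof (the one given in \cite{carmona2018probabilistic}). The key device --- representing $(1-\epsilon)\mu+\epsilon\delta_x$ by $X^{x,\epsilon}=X\mathbf{1}_{A_\epsilon^c}+x\mathbf{1}_{A_\epsilon}$ with $A_\epsilon$ independent of $X$, reading off $\nabla_x f(\mu^{x,\epsilon})=\epsilon\,\partial_\mu f(\mu^{x,\epsilon})(x)$ from the chain rule in $\mathcal{H}^d$, and passing to the limit via the integral representation along the segment $[X,X^{x,\epsilon}]$ --- is exactly the mechanism used there, and your identification of the resulting potential with the linear functional derivative via the $\theta$-derivative of $f(m_\theta)$ is the standard closing step. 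One small remark: in the $\theta$-derivative step, the lift $X_\theta=X\mathbf{1}_{U>\theta}+Y\mathbf{1}_{U\le\theta}$ is only $\tfrac12$-H\"older in $\theta$ (not differentiable) in $\mathcal{H}^d$, so the phrase ``$\theta\mapsto f(m_\theta)$ is $\mathcal{C}^1$'' does not follow from a naive chain rule; but your stated formula for $\frac{d}{d\theta}f(m_\theta)$ is nonetheless correct and is obtained precisely by the fundamental-theorem-of-calculus-along-$[X_\theta,X_{\theta+h}]$ computation you allude to, after dividing by $h$ and using joint continuity and the linear growth bound to pass to the limit. The antisymmetry argument for the normalization and the growth bound for $G$ are fine.
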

Also see \cite{bensoussan2017interpretation} for \eqref{eq:4-111} for an alternative derivation.
\begin{definition}
A function $f:\mu\in\mathcal{P}_2(\mathbb{R}^{d}) \mapsto f(\mu)\in \mathbb{R}$ is called regularly linear-functionally differentiable 
in $\mu\in\mathcal{P}_2(\mathbb{R}^{d})$ if\\
(i) $f$ has a linear functional derivative $\/{\delta f}{\delta \mu}(\mu)(x)$ which, for each $\mu \in \mathcal{P}_{2}(\mathbb{R}^{d})$, is differentiable in $x$;\\
(ii) $\partial_x\/{\delta f}{\delta \mu}(\cdot)(\cdot):\mathcal{P}_2(\R^d)\times \R^d\rightarrow \R^d$ is jointly continuous with respect to the natural product topology such that
\begin{equation}
\left|\partial_x\/{\delta f}{\delta \mu}(\mu)(x)\right|\leq c(\mu)(1+|x|),\label{eq:4-113}
\end{equation}
where $c(\mu)$ is another function in $\mu$ being bounded on any bounded subsets of $\mathcal{P}_{2}(\mathbb{R}^{d})$.
\end{definition}
Without the cause of ambiguity, we simply call $f$ to be regularly differentiable in $\mu\in\mathcal{P}_2(\mathbb{R}^{d})$. The derivative $\p_x\frac{\delta f}{\delta \mu}:\mathcal{P}_2(\R^d)\times \R^d\rightarrow \R^d$ is sometimes called the intrinsic derivative, see \cite{cardaliaguet2019master} for example.

\begin{prpstn}\label{Ldif} (Propositions 5.44 and 5.48 in \cite{carmona2018probabilistic})
Suppose that  $f:\mathcal{P}_2(\mathbb{R}^{d}) \to \mathbb{R}$ is regularly differentiable in $\mu\in\mathcal{P}_2(\mathbb{R}^{d})$. Then, $f$ is $L$-differentiable such that \eqref{eq:4-111} holds. Furthermore, for any $\mu\in\mathcal{P}_2(\R^d)$, and any sequence $\mu_n$ with $W_{2}(\mu_n,\mu)\rightarrow0$ as $n \rightarrow \infty$, we also have  
\begin{equation}\label{prop33}
\frac{f(\mu_n)-f(\mu)-\int_{\mathbb{R}^{d}}\/{\delta f}{\delta \mu}(\mu)(x) d(\mu_n-\mu)(x)}{W_{2}(\mu_n,\mu)}\rightarrow0.
\end{equation}
\end{prpstn}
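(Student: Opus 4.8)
The plan is to prove the quantitative first-order expansion \eqref{prop33} first, since it carries the main analytic content, and then to deduce $L$-differentiability together with the identity \eqref{eq:4-111} as a short consequence. Fix $\mu\in\mathcal{P}_2(\R^d)$ and a sequence $\mu_n$ with $W_2(\mu_n,\mu)\to0$; abbreviate $\nu_n^\theta:=\theta\mu_n+(1-\theta)\mu$ and $G_n^\theta(z):=\partial_x\/{\delta f}{\delta \mu}(\nu_n^\theta)(z)-\partial_x\/{\delta f}{\delta \mu}(\mu)(z)$. Applying the defining relation \eqref{delta_mu_f} to $f(\mu_n)-f(\mu)$ and subtracting the linear term, the numerator of \eqref{prop33} becomes $\int_0^1\int_{\R^d}\big[\/{\delta f}{\delta \mu}(\nu_n^\theta)(x)-\/{\delta f}{\delta \mu}(\mu)(x)\big]\,d(\mu_n-\mu)(x)\,d\theta$. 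I would then pick an optimal coupling $\pi_n\in\Pi(\mu,\mu_n)$ for $W_2$, rewrite $\int g\,d(\mu_n-\mu)=\int\!\big(g(y)-g(x)\big)\,d\pi_n(x,y)$, invoke the one-dimensional fundamental theorem of calculus along the segment $[x,y]$ (legitimate since, by hypothesis, $\/{\delta f}{\delta \mu}(\nu)(\cdot)$ is $C^1$ with derivative $\partial_x\/{\delta f}{\delta \mu}(\nu)(\cdot)$), and apply Cauchy--Schwarz with respect to $\pi_n$. This bounds the absolute value of the numerator of \eqref{prop33} by
\[
W_2(\mu_n,\mu)\int_0^1\!\!\int_0^1\Big(\int_{\R^d\times\R^d}\big|G_n^\theta(x+r(y-x))\big|^2\,d\pi_n(x,y)\Big)^{1/2}\,dr\,d\theta ,
\]
so that \eqref{prop33} follows once the double integral over $(r,\theta)\in[0,1]^2$ is shown to tend to $0$.

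To establish this I would combine three ingredients. First, $W_2(\nu_n^\theta,\mu)\le\sqrt{\theta}\,W_2(\mu_n,\mu)\to0$ uniformly in $\theta$, via the coupling of $\nu_n^\theta$ and $\mu$ that uses $\pi_n$ with probability $\theta$ and the diagonal with probability $1-\theta$. Second, joint continuity and the linear-growth bound \eqref{eq:4-113} of $\partial_x\/{\delta f}{\delta \mu}$: the measures $\nu_n^\theta$, $n\ge1$, $\theta\in[0,1]$, together with $\mu$, form a precompact subset of $\mathcal{P}_2(\R^d)$, so on any compact $K\subset\R^d$ uniform continuity of $\partial_x\/{\delta f}{\delta \mu}$ on this precompact set of measures times $K$ gives $\sup_{\theta}\sup_{z\in K}|G_n^\theta(z)|\to0$, while $|G_n^\theta(z)|^2\le C(1+|z|)^2$ uniformly in $n,\theta$ (the $\nu_n^\theta$ lie in a $W_2$-bounded set, so the constant $c(\cdot)$ in \eqref{eq:4-113} is uniformly bounded there). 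Third, the pushforward of $\pi_n$ under $(x,y)\mapsto x+r(y-x)$ converges to $\mu$ in $W_2$ — because $\pi_n$ itself converges in $W_2$ to the diagonal measure on $\R^d\times\R^d$, the transport cost being $W_2(\mu_n,\mu)^2\to0$ — hence these pushforward measures have uniformly integrable second moments. Splitting $\int|G_n^\theta(x+r(y-x))|^2\,d\pi_n$ over a large ball and its complement and using these three facts shows the inner integral tends to $0$ uniformly in $(r,\theta)$; dominated convergence in $(r,\theta)$ then finishes. I expect this last step — controlling the $n$-dependence of both the integrand $G_n^\theta$ and the integrating measure at the same time, with uniformity in the auxiliary parameters $r,\theta$ — to be the main obstacle; the rest is bookkeeping.

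Granting \eqref{prop33}, I obtain the first assertion as follows. Fix $\mu_0$, choose any $X_0\in\mathcal{H}^{d}$ with $\mathbb{L}_{X_0}=\mu_0$, and take $Y\in\mathcal{H}^{d}$ with $\|Y\|$ small. Since $W_2(\mathbb{L}_{X_0+Y},\mu_0)\le\|Y\|$, applying \eqref{prop33} (valid for every $W_2$-convergent sequence) yields $f(\mathbb{L}_{X_0+Y})-f(\mu_0)=\int_{\R^d}\/{\delta f}{\delta \mu}(\mu_0)(x)\,d(\mathbb{L}_{X_0+Y}-\mu_0)(x)+o(\|Y\|)$ as $\|Y\|\to0$. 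I then rewrite the integral as $\E\big[\/{\delta f}{\delta \mu}(\mu_0)(X_0+Y)-\/{\delta f}{\delta \mu}(\mu_0)(X_0)\big]$, expand it once more by the fundamental theorem of calculus in $x$, and use continuity of $\partial_x\/{\delta f}{\delta \mu}(\mu_0)(\cdot)$ together with its linear growth (the domination $|X_0+rY|\le|X_0|+|Y|\in L^2$ supplies the uniform integrability needed as $\|Y\|\to0$) to identify it with $\E\big[\partial_x\/{\delta f}{\delta \mu}(\mu_0)(X_0)\cdot Y\big]+o(\|Y\|)$. Hence the lifted map $F(X):=f(\mathbb{L}_X)$ is Fr\'echet differentiable at $X_0$ with $D_XF(X_0)=\partial_x\/{\delta f}{\delta \mu}(\mu_0)(X_0)$, so by Definition \ref{def_L_diff} and \eqref{eq:4-110} the function $f$ is $L$-differentiable and $\partial_\mu f(\mu_0)(x)=\partial_x\/{\delta f}{\delta \mu}(\mu_0)(x)$ for $\mu_0$-a.e.\ $x$, which is \eqref{eq:4-111}. (Alternatively, once $L$-differentiability is in hand one could quote Proposition \ref{prop_3_7}, but it assumes $D_XF$ is globally Lipschitz, which is stronger than what is posited here, so I prefer the direct argument.)
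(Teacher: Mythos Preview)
The paper does not supply its own proof of this proposition; it is simply quoted from Carmona--Delarue \cite{carmona2018probabilistic} (Propositions 5.44 and 5.48 there), so there is nothing to compare against directly. Your argument is sound and follows the standard route one finds in that reference: express the remainder via \eqref{delta_mu_f}, pass to an optimal coupling, use the fundamental theorem of calculus along segments together with the growth bound \eqref{eq:4-113} and joint continuity of $\partial_x\frac{\delta f}{\delta\mu}$, and then lift to $\mathcal{H}^d$ to read off Fr\'echet differentiability. One small remark: when you invoke \eqref{prop33} to conclude $F(X_0+Y)-F(X_0)=\E[\partial_x\frac{\delta f}{\delta\mu}(\mu_0)(X_0)\cdot Y]+o(\|Y\|)$, you are implicitly using that the sequential statement upgrades to a uniform one; this is fine because Fr\'echet differentiability can be tested along sequences, but it is worth saying explicitly. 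Your final parenthetical is also apt --- Proposition \ref{prop_3_7} goes the other way and under a stronger Lipschitz hypothesis, so the direct computation you give is indeed the right choice here.
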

\begin{definition}\label{def_3_10}
A function $f:\mu\in\mathcal{P}_2(\mathbb{R}^{d}) \mapsto f(\mu)\in \mathbb{R}$ is called second-order regularly linear-functionally differentiable in $\mu\in\mathcal{P}_2(\mathbb{R}^{d})$ if\\
(i) $f$ has a linear functional derivative $\frac{\delta f}{\delta \mu}(\mu)(x)$, and also has a second-order linear functional derivative $\/{\delta^2 f}{\delta \mu^2}(\mu)(x,\wt{x})$ which, for each $\mu \in \mathcal{P}_{2}(\mathbb{R}^{d})$, has derivatives $\p_{x}\p_x\frac{\delta f}{\delta \mu}(\mu)(x)$ and $\p_x\p_{\wt{x}}\/{\delta^2 f}{\delta \mu^2}(\mu)(x,\wt{x})$;\\
(ii) these $\partial_{x}\p_x\/{\delta f}{\delta \mu}:\mathcal{P}_2(\R^d)\times \R^d\rightarrow \R^d$ and $\p_x\p_{\wt{x}}\/{\delta^2 f}{\delta \mu^2}:\mathcal{P}_2(\R^d)\times \R^d\times \R^d\rightarrow \R^d$ are jointly continuous with respect to the corresponding product topologies such that
\begin{equation}
\left|\p_{x}\p_x\/{\delta f}{\delta \mu}(\mu)(x)\right|+\left|\p_x\p_{\wt{x}}\/{\delta^2 f}{\delta \mu^2}(\mu)(x,\wt{x})\right|\leq c(\mu),
\end{equation}
where $c(\mu)$ is another function being bounded on any bounded subsets of $\mathcal{P}_{2}(\mathbb{R}^{d})$. 
\end{definition}
Without the cause of ambiguity, we simply call $f$ to be second-order regularly differentiable in $\mu\in\mathcal{P}_2(\mathbb{R}^{d})$.

Finally, let us end this section by providing the following important remark about the regularity requirement of coefficient functions.
\begin{remark}\label{rem_DD}
To get rid of possible restrictions on the assumptions on the second-order Fr\'{e}chet differentiability of coefficient functions as in \cite{bensoussan2020control} and \cite{bensoussan2019control}, we use a different approach to deal with the second-order derivatives on the Wasserstein space of measures $\mathcal{P}_2(\R^n)$. Consider a function $f:\mu\in\mathcal{P}_2(\R^n)\mapsto f(\mu)\in\R$ defined on $\mathcal{P}_2(\R^n)$, then the Fr\'{e}chet derivative $D_X F(X)$ of its lifted version $F: X\in L^{2}_m(\R^n)\to F(X):=f(X\ot m)\in\R$ in the Hilbert space $L^{2}_m(\R^n)$ projects down to a derivative $\p_x\frac{\delta}{\delta\mu} f(\mu)(x)$ of $f$ on $\mathcal{P}_2(\R^n)$, where $\frac{\delta f}{\delta \mu}:(\mu,x)\in\mathcal{P}_2(\mathbb{R}^n)\times \mathbb{R}^n \mapsto  \frac{\delta}{\delta\mu} f(\mu)(x)\in\mathbb{R}$ is a linear functional derivative of $f$; see Proposition \ref{prop_3_7} and \ref{Ldif}. 
However, when it comes to the second-order Fr\'{e}chet derivative $D_XD_XF(X)$, a restriction on the original function $f(\mu)$ intervenes; since $D_XF(X)=\p_x\frac{\delta}{\delta\mu} f(X\ot m)(X(\cdot))$, $D_XF(X)\in L^{2}_m(\R^n)$ is regarded as a lifting version of $\p_x\frac{\delta}{\delta\mu} f(\mu)(x)\in\R^n$; the argument $*$ of $\p_x\frac{\delta}{\delta\mu} f(\mu)(*)$ is changed from $x\in \R^n$ to $X\in L^{2}_m(\R^n)$ with its corresponding norms also being changed in this lifting procedure, which causes a problem when one links the second-order Fr\'{e}chet derivative $D_XD_XF(X)$ with the second-order derivatives $\p_x\p_x\frac{\delta}{\delta\mu}f(\mu)(x)$ and $\p_x\p_{\wt{x}}\frac{\delta^2}{\delta\mu^2}f(\mu)(x,\wt{x})$ on $\mathcal{P}_2(\R^n)$; roughly speaking, the existence of $\p_x\p_x\frac{\delta}{\delta\mu}f(\mu)(x)$ and $\p_x\p_{\wt{x}}\frac{\delta^2}{\delta\mu^2}f(\mu)(x,\wt{x})$ cannot imply the existence of the second-order Fr\'{e}chet derivative $D_XD_XF(X)$; also see Example 2.3 in \cite{buckdahn2017mean} and the last paragraph in Section \ref{sec:motivation}. Therefore, we use $\p_x\p_x\frac{\delta}{\delta\mu}f(\mu)(x)$ and $\p_x\p_{\wt{x}}\frac{\delta^2}{\delta\mu^2}f(\mu)(x,\wt{x})$ instead of $D_XD_XF(X)$ when it comes to the second derivative on measures; see Definition \ref{def_3_10}.
\end{remark}

In summary, we have just reviewed the concepts of Wasserstein differentiability, linear functional derivatives, $L$-derivatives, regular differentiability over the metric space $\mathcal{P}_2(\R^d)$, and their relationships. We are now ready to consider the first order mean field type control problem \eqref{SDE1}-\eqref{valfun}. For our first order problem setting, the push-forward mapping $X(\cdot)\in L^{2,d_x}_{m}$ governed by \eqref{MPIT} is deterministic
since there is no new source of randomness other than those inherited from the initial data; while in our future work on the second order case, $X$ is a random variable that obtains additional randomness, other than that of the initial data, from the Brownian motion that drives the dynamics of $X$, and so by that time we have to deal with the randomness from the Brownian motion and that from the initial data separately by only lifting to the Hilbert space that corresponds to the former only.
To this end, this is the major difference behind the lifting procedure first proposed in \cite{bensoussan2020control} from that in P.-L. Lions \cite{LionsLecture1,LionsLecture2} (or the edited version written by Cardaliaguet \cite{cardaliaguet2010notes}).

\section{Heuristics Behind Our Approach Guided by Lifting}\label{sec:motivation}
We here sketch out the main idea behind our proposed approach that involves building crucial {\it a priori} estimates in connection with the Schur complement of the matrices of coefficient functions corresponding to the lifted version of the FBODE obtained by the maximum principle, and then we further explain the difficulty encountered in face at the second-order Fr\'{e}chet differentiability of various coefficient functions, by then we have to modify our argument. More precisely, by adopting the ``lifting'' idea introduced by P.-L. Lions \cite{LionsLecture2}, we define, for each $m_0\in \mathcal{P}_2(\R^{d_x})$, the lifted coefficient functions and objective functional,
\begin{align}\label{lifted:CapitalF}
&F:(X,A)\in L^{2,d_x}_{m_0}\times L^{2,d_x,d_\alpha}_{m_0}\mapsto F(X,A)(\cdot)=f(X(\cdot),X\ot m_0,A(\cdot))\in  L^{2,d_x}_{m_0};\\\label{lifted:GBar}
&\wb{G}:(X,A)\in L^{2,d_x}_{m_0}\times L^{2,d_x,d_\alpha}_{m_0}\mapsto \wb{G}(X,A)=\int_{\R^{d_x}} g(X(x),X\ot m_0,A(x))dm_0(x)\in \R;\\\label{lifted:KBar}
&\wb{K}:X\in L^{2,d_x}_{m_0} \mapsto \wb{K}(X)=\int_{\R^{d_x}} k(X(x),X\ot m_0)dm_0(x)\in \R;
\\\label{lifted:ObjFunJ}
&J:(t,Y,A)\in [0 ,T]\times L^{2,d_x}_{m_0}\times \mathbb{A}_{m_0}\mapsto J(t,Y,A)=\int_t^T \wb{G}(X^{t,Y,A}_s,A_s) ds+\wb{K}(X^{t,Y,A}_T)\in \R.
\end{align}
We can regard $F$, $\wb{G}$ as operators acting on the ``tangent space'' (also see \cite{bensoussan2020control}) $L^{2,d_x}_{m_0}\times L^{2,d_x,d_\alpha}_{m_0}$ attached to the measure $m_0$.
For every $t\in [0 ,T]$,  and $Y\in L^{2,d_x}_{m_0}$, $A_\cdot \in \mathbb{A}_{m_0}:=L^2([0 ,T];L^{2,d_x,d_\alpha}_{m_0})$, consider the dynamics
\begin{align}\label{lifted:DynX}
X^{t,Y,A}_s(\cdot)=Y(\cdot)+\int_t^s F(X^{t,Y,A}_\tau,A_\tau)(\cdot)d\tau,\ s\in[t,T].
\end{align}
Note that, for any $m\in\mathcal{P}_2(\R^{d_x})$, there exists at least one $Y_{m_0}^m\in L^{2,d_x}_{m_0}$, for instance a Brenier's  map, such that $m=Y_{m_0}^m\ot m_0$ pushing $m_0$ forward to $m$; also see \cite{ambrosio2005gradient,BHYbook,bensoussan2017interpretation,bensoussan2015master,bensoussan2019control,carmona2018probabilistic,villani2003topics,villani2009optimal}.  Then, recalling the notations in Section \ref{sec:ProblemSetting}, we have $x^{t,\xi,\alpha}_s=x^{t,Y_{m_0}^m(x),m,\alpha}_s\big|_{x=\xi_0}=X^{t,Y_{m_0}^m,A}_s(x)\big|_{x=\xi_0}$ and $j(t,m,\alpha)=J^{m_0}(t,Y_{m_0}^m,A)$ where $m=Y_{m_0}^m\ot m_0$, $\xi=Y^m_{m_0}(\xi_0)$, $\alpha_s=A_s(\xi_0)$, 
and $\xi_0\in \mathcal{H}^{d_x}$ such that $\mathbb{L}_{\xi_0}=m_0$. After the lifting procedure, we aim to find the optimal control $\widehat{A}$ such that
\begin{align}\label{lifted:ValFun}
V(t,Y):=J(t,Y,\widehat{A})=\min_{A\in\mathbb{A}_{m_0}} J(t,Y,A).
\end{align} 
In accordance with the usual verification theorem and Pontryagin maximum principle but now in the lifted version (see Theorem \ref{Verification} and \ref{MPBE} for the unlifted version), we can convert the solving for the abstract control problem \eqref{lifted:DynX}-\eqref{lifted:ValFun} to the resolution of the lifted FBODE system in the Hilbert space of $L^{2,d_x}_{m_0}$ (see \eqref{MPIT} for the unlifted version)
\begin{align}
\label{lifted:MPIT}
\begin{cases}       
\dfrac{d}{ds}X^{t,Y}_s = F\left(X^{t,Y}_s,A\left(X^{t,Y}_s,Z^{t,Y}_s\right)\right),\text{ for any } s\in[t,T],\\
\ \ \ \,X^{t,Y}_t= Y;\\
\dfrac{d}{ds}Z^{t,Y}_s  = -D_X H\left(X^{t,Y}_s,Z^{t,Y}_s\right),\\
\ \ \ \,Z^{t,Y}_T=D_X \wb{K}(X^{t,Y}_T),
\end{cases}
\end{align}\normalsize
where $D_X$ is Fr\'{e}chet derivative over $L^{2,d_x}_{m_0}$, and $A(X,Z)$ solves for the lifted first order condition (see \eqref{first_order_condition} for the unlifted version):
\begin{align}\label{lifted:First_Order_Con}
D_A L(X,A;Z)=\left\langle D_A F(X,A), Z\right\rangle_{L^{2,d_x}_{m_0}} + D_A \wb{G}(X,A)=0,
\end{align} 
$L(X,A;Z)=\left\langle F(X,A), Z\right\rangle_{L^{2,d_x}_{m_0}} + \wb{G}(X,A)$ is the lifted Lagrangian and $$H(X,Z)= \left\langle F(X,A(X,Z)), Z\right\rangle_{L^{2,d_x}_{m_0}}  + \wb{G}(X,A(X,Z))$$ is the lifted Hamiltonian. Define
\begin{align}\label{lifted:Gamma_alpha}
\Gamma (s,Y):=D_X \wb{K}(X^{t,Y}_T)+ \int_s^T &D_X H\left(X^{s,Y}_\tau,\Gamma (\tau,X^{s,Y}_\tau)\right)d\tau,
\end{align}
and therefore $Z^{t,Y}_s=\  \Gamma(s,X^{t,Y,}_s)$. Now, we intend to derive an {\it a priori} estimate of $D_Y Z^{t,Y}_t=D_Y \Gamma(t,X^{t,Y}_t)\\=D_Y \Gamma(t,Y)$, which is crucial in getting the sensitivity and uniqueness of the FBODE system \eqref{lifted:MPIT}, based on which we can paste the local-in-time solution of the FBODE system \eqref{lifted:MPIT} together to obtain a global-in-time one. 
By taking the Fr\'{e}chet derivative with respect to $Y$ in \eqref{lifted:MPIT}, we have
the following linear FBODE system; also see \eqref{p_mX} and \eqref{p_xX} for the unlifted version, there we take the derivatives with respect to the initial state $x\in\R^{d_x}$ and the initial distribution $m\in\mathcal{P}_2(\R^{d_x})$, as this can result in better estimates as we are about to explain here:
\small\begin{align}\label{lifted:D_YXexistence}
\left\{ \begin{aligned}
\dfrac{d}{ds}D_Y X^{t,Y}_s =&\  D_X F\left(X^{t,Y}_s,A\left(X^{t,Y}_s,Z^{t,Y}_s\right)\right)(D_Y X^{t,Y}_s)\\
&+D_A F\left(X^{t,Y}_s,A\left(X^{t,Y}_s,Z^{t,Y}_s\right)\right)\left(D_X A\left(X^{t,Y}_s,Z^{t,Y}_s\right)(D_Y X^{t,Y}_s)+D_Z A\left(X^{t,Y}_s,Z^{t,Y}_s\right)(D_Y Z^{t,Y}_s)\right)\\
\ \ \ \,D_Y X^{t,Y}_t=&\  Id;\\
\dfrac{d}{ds}D_Y Z^{t,Y}_s  =&\  -D_XD_X H\left(X^{t,Y}_s,Z^{t,Y}_s\right)(D_Y X^{t,Y}_s)-D_ZD_X H\left(X^{t,Y}_s,Z^{t,Y}_s\right)(D_Y Z^{t,Y}_s)\\
\ \ \ \,D_Y Z^{t,Y}_T=&\ D_XD_X \wb{K} (X^{t,Y}_T)(D_Y X^{t,Y}_T),
\end{aligned} \right.
\end{align}\normalsize
where $Id:x\in\R^{d_x}\mapsto x\in\R^{d_x}$ is the identity mapping on $\R^{d_x}$.

Suppose for the moment that the Fr\'{e}chet derivatives up to  the second order in the following calculations exist, then, for any $\wt{Y}\in L^{2,d_x}_{m_0}$, by using \eqref{lifted:First_Order_Con} and \eqref{lifted:D_YXexistence}, we have, under the assumptions that the running cost function $\wb{G}(X,A)$ is convex in control $A$, and the second order derivative of the drift coefficient $F(X,A)$ with respect to control $A$ is bounded and decays at the proximity of infinity,
\small\begin{align}\no
&\bigg\langle D_Y Z^{t,Y}_T(\wt{Y}),D_Y X^{t,Y}_T(\wt{Y})\bigg\rangle_{ L^{2,d_x}_{m_0}}-\bigg\langle D_Y Z^{t,Y}_t(\wt{Y}),D_Y X^{t,Y}_t(\wt{Y})\bigg\rangle_{ L^{2,d_x}_{m_0}}\\\no
=&\ -\int_t^T \left(D_AD_A L\right)^{-1}\bigg(\left\langle  D_{A} F, D_Y Z^{t,Y}_s(\wt{Y})\right\rangle_{ L^{2,d_x,d_\alpha}_{m_0}},\left\langle  D_{A} F, D_Y Z^{t,Y}_s(\wt{Y})\right\rangle_{ L^{2,d_x,d_\alpha}_{m_0}}\bigg)ds\\\no
&-\int_t^T\bigg(D_XD_X L\big(D_Y X^{t,Y}_s(\wt{Y}),D_Y X^{t,Y}_s(\wt{Y})\big)\\\label{lifted:estimate_1}
&\ \ \ \ \ \ \ \ \ \ \ \ -\left(D_AD_A L\right)^{-1}\bigg(\left\langle D_AD_X L,D_Y X^{t,Y}_s(\wt{Y}) \right\rangle_{L^{2,d_x,d_\alpha}_{m_0}},\left\langle D_XD_A L,D_Y X^{t,Y}_s(\wt{Y}) \right\rangle_{L^{2,d_x,d_\alpha}_{m_0}}\bigg)ds
\end{align}\normalsize
where the arguments $\left(X^{t,Y}_s,A\left(X^{t,Y}_s,Z^{t,Y}_s\right);Z^{t,Y}_s\right)$ of the Lagrangian $L(X,A;Z)$ and the arguments $\left(X^{t,Y}_s,A\left(X^{t,Y}_s,Z^{t,Y}_s\right)\right)$ of the drift function $F(X,A)$ are omitted for simplicity if there is no cause of ambiguity; see Appendix \ref{lifted:estimate_1_detail} for the justification of the definition of $\left(D_AD_A L\right)^{-1}$ and a derivation of \eqref{lifted:estimate_1}. 
Note that the term $D_XD_X L-\left(D_AD_A L\right)^{-1}\left(D_AD_X L, D_XD_A L\right)$ is the Schur complement of the Hessian matrix of the Lagrangian $L(X,A;Z)$ in $(X,A)$, and so its convexity will imply the positive definiteness of the Schur complement. 
We point out that the Schur complement has a close relationship with the Lasry-Lions monotonicity condition; consider a special case that the drift function is simply $f(x,\mu,\alpha)=\alpha$ and the running cost function is $g(x,\mu,\alpha)=\frac{1}{2}\alpha^2+g_1(x,\mu)$, then $L(X,A;Z)=\left\langle A, Z\right\rangle_{L^{2,d_x}_{m_0}} + \frac{1}{2}\int_{\R^{d_x}} |A(x)|^2 dm_0(x)+\int_{\R^{d_x}} g_1(X(x),X\ot m_0)dm_0(x)$ and $D_AD_X L(X,A;Z)=D_XD_A L(X,A;Z)=0$; thus the Schur complement becomes 
\begin{align}\no
&D_XD_X \bigg(\int_{\R^{d_x}} g_1(X(x),X\ot m_0)dm_0(x)\bigg)\big(\wt{X},\wt{X}\big)\\\no
=&\ \int_{\R^{d_x}}\big(\wt{X}(\wt{x})\cdot\p_x\big)\big(\wt{X}(\wt{x})\cdot\p_x\big)G_1(x,\mu)\bigg|_{x=X(\wt{x}),\mu=X\ot m_0}dm_0(\wt{x})\\\label{lift:eq_4_12}
&+\int_{\R^{d_x}}\int_{\R^{d_x}}(\wt{X}(\wt{x})\cdot \p_{x})(\wt{X}(\wh{x})\cdot \p_\mu)G_1\big(x,\mu\big)\bigg|_{x=X(\wt{x}),\mu=X\ot m_0}\big(X(\wh{x})\big)dm_0(\wh{x})dm_0(\wt{x})
\end{align}
where $G_1(x,\mu):=\frac{\delta}{\delta \mu}\bigg(\int_{\R^{d_x}} g_1(\wt{x},\mu)d\mu(\wt{x})\bigg)(x)=g_1(x,\mu)+\int_{\R^{d_x}}\frac{\delta}{\delta\mu}g_1(\wt{x},\mu)(x)d\mu(\wt{x})$; the non-negativeness of the second term on the right hand side of \eqref{lift:eq_4_12} is corresponding to the Lasry-Lions monotonicity condition (2.5) in Cardaliaguet-Delarue-Lasry-Lions' monograph \cite{cardaliaguet2019master}; also see \eqref{eq_141} and \eqref{positive_g_mu_1} in Section \ref{subsec:dis} for more details. %

Under the assumptions that the Schur complement is positive-definite (also see \eqref{positive_g_x} and \eqref{positive_g_mu}), the terminal cost function $\wb{K}(X)$ is convex in $X$ (also see \eqref{positive_k} and \eqref{positive_k_mu}) and $D_{A} F(X,A)^\top D_{A} F(X,A)$ is positive-definite (also see \eqref{positive_f}), we can derive from \eqref{lifted:estimate_1} that there exists a positive constant $c$ independent of $T$ such that 
\begin{align}\no
&c\left\|D_Y X^{t,Y}_T \right\|_{\mathcal{L}(L^{2,d_x}_{m_0};L^{2,d_x}_{m_0})}^2+c\int_t^T \bigg(\left\|D_Y Z^{t,Y}_s \right\|_{\mathcal{L}(L^{2,d_x}_{m_0};L^{2,d_x}_{m_0})}^2+\left\|D_Y X^{t,Y}_s \right\|_{\mathcal{L}(L^{2,d_x}_{m_0};L^{2,d_x}_{m_0})}^2\bigg) ds\\\label{lifted:estimate_2}
\leq&\  \bigg\|D_Y Z^{t,Y}_t \bigg\|_{\mathcal{L}(L^{2,d_x}_{m_0};L^{2,d_x}_{m_0})};
\end{align}
see Appendix \ref{lifted:estimate_2_detail} for a derivation of \eqref{lifted:estimate_2} and also see \eqref{L_star_1_1} and \eqref{cru_est_2} for the corresponding unlifted version of \eqref{lifted:estimate_2}. Also note that, by using the boundness assumptions on the derivatives on the right hand side of \eqref{lifted:D_YXexistence} (also see \eqref{bdd_d1_f}, \eqref{bdd_d2_f}, \eqref{bdd_d2_g_1},\eqref{bdd_d2_g_2} and \eqref{bdd_d2_k_1}), there exists a positive constant $C$ independent of $T$ such that
\begin{align}\no
&\left\|D_Y Z^{t,Y}_t\right\|_{\mathcal{L}(L^{2,d_x}_{m_0};L^{2,d_x}_{m_0})}^2=\left\|D_Y Z^{t,Y}_T\right\|_{\mathcal{L}(L^{2,d_x}_{m_0};L^{2,d_x}_{m_0})}^2-\int_t^T \frac{d}{ds}\left\|D_Y Z^{t,Y}_s\right\|_{\mathcal{L}(L^{2,d_x}_{m_0};L^{2,d_x}_{m_0})}^2ds\\\label{lifted:estimate_3}
\leq &\ C\left\|D_Y X^{t,Y}_T\right\|_{\mathcal{L}(L^{2,d_x}_{m_0};L^{2,d_x}_{m_0})}^2+C\int_t^T \bigg(\left\|D_Y X^{t,Y}_s\right\|_{\mathcal{L}(L^{2,d_x}_{m_0};L^{2,d_x}_{m_0})}^2+\left\|D_Y Z^{t,Y}_s\right\|_{\mathcal{L}(L^{2,d_x}_{m_0};L^{2,d_x}_{m_0})}^2\bigg)ds;
\end{align}\normalsize
also see \eqref{p_x_gamma} and \eqref{p_m_gamma_int} for the corresponding unlifted version of \eqref{lifted:estimate_3}. Therefore, by using \eqref{lifted:estimate_2} and \eqref{lifted:estimate_3}, we can obtain the desired {\it a priori} estimate on $D_Y Z^{t,Y}_t$, that is, $\left\|D_Y Z^{t,Y}_t\right\|_{\mathcal{L}(L^{2,d_x}_{m_0};L^{2,d_x}_{m_0})}\leq C/c$ uniformly in $T$.

However, the above formal argument for obtaining the crucial {\it a priori} estimate in lifted version are based on the existence of the second-order Fr\'{e}chet derivatives of the lifted function $F(X,A)$, $\wb{G}(X,A)$ and $\wb{K}(X)$, and we would like to explain why the second-order Fr\'{e}chet differentiability is too restrictive, especially while considering a generic drift function. Consider a smooth and linear-growth function $f:x\in\R\mapsto f(x)\in\R$ and its lifted version $F:X\in L^2(\R)\to F(X)(\cdot):=f(X(\cdot))\in L^2(\R)$; suppose that $F$ is second-order Fr\'{e}chet differentiable, then there exists a positive constant $C$ such that $\big\|D_XD_XF(X)(\wt{X},\wh{X})\big\|_{L^2}\leq C\big\|\wt{X}\big\|_{L^2}\cdot\big\|\wh{X}\big\|_{L^2}$ for any $\wt{X}$, $\wh{X}\in L^2(\R)$; this inequality implies that $\p_x\p_x f(x)=0$ and thus $f(x)$ must be linear, since $\big\|D_XD_XF(X)(\wt{X},\wh{X})\big\|_{L^2}=\big(\int_{\R}\big|\p_x\p_xf(X(y))\wt{X}(y)\wh{X}(y)\big|^2\big)^{1/2}$; one can also see Example 2.3 in \cite{buckdahn2017mean}.
Therefore, to get rid of this restriction (so that we can also tackle the mean field type control problems with generic drift functions), we solve the generic mean field type control problem directly on the Wasserstein space by using the manipulations and estimations analogous to that in the lifted version, which is proven to be a successful strategy in the rest of this paper. For example, we shall separately estimate both the derivatives $\p_x Z^{t,m}_s(x)$ and $\p_m Z^{t,m}_s(x)$ of the pull-down version $Z^{t,m}_s(x)$ of $Z^{t,Y}_s$ instead of $D_Y Z^{t,Y}_s$, see \eqref{L_star_1} and \eqref{L_star_6_1}.

\section{Assumptions and Preliminary Results}\label{sec:assumptions}
\subsection{Assumptions}\label{assumptions}
We aim at solving the first order mean field type control problem \eqref{SDE1}-\eqref{valfun} under the following assumptions.\\
$\bf{(a1)}$ The drift function $f:(x,\mu,\alpha)\in \R^{d_x}\times \mathcal{P}_{2}(\mathbb{R}^{d_x})\times \R^{d_\alpha}\mapsto \R^{d_x}$ is second-order differentiable in $x\in \R^{d_x}$ and $\alpha\in \R^{d_\alpha}$ and is also second-order regularly differentiable in $\mu\in\mathcal{P}_2(\R^{d_x})$, so that all of the following derivatives exist and are jointly continuous in their corresponding arguments, and they also satisfy the following estimates. \\
$(i)$ For $f_i$ as the $i$-th component function of $f$,
\begin{align}\label{positive_f}
&\sum_{i=1}^{d_x}\sum_{j=1}^{d_x}\bigg(\xi_i \p_\alpha f_i(x,\mu,\alpha)\bigg)\cdot \bigg(\xi_j \p_\alpha f_j(x,\mu,\alpha)\bigg) \geq \lambda_f|\xi|^2,\ \forall \xi=(\xi_1,...,\xi_{d_x})\in\R^{d_x}.
\end{align}
$(ii)$ We have the following bounds on the first order derivatives of $f$:
\begin{align}\no&\sup_{(x,\mu,\alpha,\wt{x})\in \R^{d_x}\times \mathcal{P}_{2}(\mathbb{R}^{d_x})\times \R^{d_\alpha}\times\R^{d_x}}
\Big\{\|\p_\alpha f(x,\mu,\alpha)\|_{\mathcal{L}(\R^{d_\alpha};\R^{d_x})}\vee\|\p_\mu f(x,\mu,\alpha)(\wt{x})\|_{\mathcal{L}(\R^{d_x};\R^{d_x})}\\
\label{bdd_d1_f}&\ \ \ \ \ \ \ \ \ \ \ \ \ \ \ \ \ \ \ \ \ \ \ \ \ \ \ \ \ \ \ \ \ \ \ \ \ \ \ \ \ \vee\|\p_x f(x,\mu,\alpha)\|_{\mathcal{L}(\R^{d_x};\R^{d_x})}\Big\}
\leq \Lambda_f.
\end{align}
$(iii)$ We have the following bounds on the second order derivatives of $f$:
\begin{align}
\no\ &\sup_{(x,\mu,\alpha,\wt{x},\wh{x})\in \R^{d_x}\times \mathcal{P}_{2}(\mathbb{R}^{d_x})\times \R^{d_\alpha}\times\R^{d_x}\times\R^{d_x}}
\Big\{\|\p_\mu\p_\mu f(x,\mu,\alpha)(\wt{x},\wh{x})\|_{\mathcal{L}(\R^{d_x}\times \R^{d_x};\R^{d_x})}\vee\|\p_\alpha\p_\alpha f(x,\mu,\alpha)\|_{\mathcal{L}(\R^{d_\alpha}\times \R^{d_\alpha};\R^{d_x})}\\
\no&\vee\|\p_\mu\p_\alpha f(x,\mu,\alpha)(\wt{x})\|_{\mathcal{L}(\R^{d_\alpha}\times \R^{d_x};\R^{d_x})}\vee\|\p_\alpha\p_x f(x,\mu,\alpha)\|_{\mathcal{L}(\R^{d_x}\times \R^{d_\alpha};\R^{d_x})}\vee\|\p_x\p_x f(x,\mu,\alpha)\|_{\mathcal{L}(\R^{d_x}\times \R^{d_x};\R^{d_x})}
\\
\label{bdd_d2_f}&\vee\|\p_{\wt{x}}\p_\mu f(x,\mu,\alpha)(\wt{x})\|_{\mathcal{L}(\R^{d_x}\times \R^{d_x};\R^{d_x})}\vee\|\p_x\p_\mu f(x,\mu,\alpha)(\wt{x})\|_{\mathcal{L}(\R^{d_x}\times \R^{d_x};\R^{d_x})}
\Big\}\cdot (1+|x|+\|\mu\|_1)\leq \wb{l}_f.
\end{align}
$(iv)$ The second order derivatives are jointly Lipschitz continuous in their corresponding arguments with Lipschitz constant decaying at the proximity of infinity: for any $x,\,x',\,\wt{x},\,\wt x',\,\wh x,\,\wh x'\in\R^{d_x}$, $\alpha,\,\alpha'\in\R^{d_\alpha}$ and $\mu,\,\mu'\in\mc{P}_2(\R^{d_x})$,
\begin{align}\no
a)\ \ &\big\|\p_\mu\p_\mu f(x,\mu,\alpha)(\wt{x},\wh{x})-\p_\mu\p_\mu f(x',\mu',\alpha')(\wt{x}',\wh{x}')\big\|_{\mathcal{L}(\R^{d_x}\times \R^{d_x};\R^{d_x})}\\
\leq &\ \frac{\Lip_f}{1+\max\{|x|,|x'|\}+\max\{\|\mu\|_1,\|\mu'\|_1\}}\Big(|x-x'|+W_2(\mu,\mu')+|\alpha-\alpha'|+|\wt{x}-\wt{x}'|+|\wh{x}-\wh{x}'|\Big),\\\no 
b)\ \ &\big\|\p_\alpha\p_\alpha f(x,\mu,\alpha)-\p_\alpha\p_\alpha f(x',\mu',\alpha')\big\|_{\mathcal{L}(\R^{d_\alpha}\times \R^{d_\alpha};\R^{d_x})}\\
\leq &\ \frac{\Lip_f}{1+\max\{|x|,|x'|\}+\max\{\|\mu\|_1,\|\mu'\|_1\}}\Big(|x-x'|+W_2(\mu,\mu')+|\alpha-\alpha'|\Big),
\end{align}
and so are for $\p_\mu\p_\alpha f(x,\mu,\alpha)(\wt{x})$, $\p_\alpha\p_x f(x,\mu,\alpha)$, $\p_x\p_x f(x,\mu,\alpha)$, $\p_{\wt{x}}\p_\mu f(x,\mu,\alpha)(\wt{x})$ and $\p_x\p_\mu f(x,\mu,\alpha)(\wt{x})$;
here $\lambda_f$, $\Lambda_f$, $\wb{l}_f$ and $\Lip_f$ are some positive finite constants, and $\|\cdot\|_{\mathcal{L}(\mathcal{A};\mathcal{B})}$ is the operator norm of a linear mapping from $\mathcal{A}$ to $\mathcal{B}$.\\
$\bf{(a2)}$ The running cost function $g:(x,\mu,\alpha)\in \R^{d_x}\times \mathcal{P}_{2}(\mathbb{R}^{d_x})\times \R^{d_\alpha}\mapsto \R$ is second-order differentiable in $x\in \R^{d_x}$ and $\alpha\in \R^{d_\alpha}$ and  second-order regularly differentiable in $\mu\in\mathcal{P}_2(\R^{d_x})$ such that 
these second order derivatives are jointly Lipschitz continuous in their respective arguments, and they also satisfy the following estimates.\\ 
$(i)$ For all $\xi\in\R^{d_\alpha}$, 
\begin{align}
\label{positive_g_alpha}
\xi^\top\p_\alpha\p_\alpha g(x,\mu,\alpha)\xi\geq \lambda_g|\xi|^2.
\end{align}
$(ii)$ Define\footnote{Analogous to Lasry-Lions monotonicity condition.}
\begin{align}\label{def_G}
&G(x,\mu,\alpha):=\frac{\delta}{\delta\mu}\left(\int_{\R^{d_x}}g(\wt{x},\mu,\alpha)d\mu(\wt{x})\right)(x)=g(x,\mu,\alpha)+\int_{\R^{d_x}}\frac{\delta}{\delta\mu}g(\wt{x},\mu,\alpha)(x)d\mu(\wt{x}),
\end{align}
then, for all $\xi\in\R^{d_x}$ and any $\mu,\,\mu'\in\mathcal{P}_2(\R^{d_x})$,
\begin{align}\label{positive_g_x}
& a)\ \xi^\top\p_x\p_x G(x,\mu,\alpha)\xi\geq \lambda_{g}|\xi|^2>0,\\\label{positive_g_mu}
& b)\ \int_{\R^{d_x}} \Big(G(x,\mu',\alpha)-G(x,\mu,\alpha)\Big)d\big(\mu'-\mu\big)(x)\geq -l_g \left(\int_{\R} yd(\mu-\mu')(y)\right)^2 \text{ with }l_g<\lambda_g;
\end{align}
to simplify the computation of some constants\footnote{Indeed, our results will also be held as long as $l_g<\lambda_g$, but various constants in this paper depend on the ratio of $l_g$ to $\lambda_g$. Thus, to keep our presentation concise, we assume $l_g\leq \frac12 \lambda_g$ for simplicity.} in the proof of Theorem \ref{Crucial_Estimate}, we further assume that $l_g\leq \frac{1}{2}\lambda_g$.
\\$(iii)$ We have the following bounds on the second order derivatives of $g$:
\begin{align} \no
\ a)\ &\sup_{(x,\mu,\alpha,\wt{x},\wh{x})\in \R^{d_x}\times \mathcal{P}_{2}(\mathbb{R}^{d_x})\times \R^{d_\alpha}\times\R^{d_x}\times\R^{d_x}}
\Big\{\|\p_\mu\p_\mu g(x,\mu,\alpha)(\wt{x},\wh{x})\|_{\mathcal{L}(\R^{d_x}\times \R^{d_x};\R)}\vee\|\p_\alpha\p_\alpha g(x,\mu,\alpha)\|_{\mathcal{L}(\R^{d_\alpha}\times \R^{d_\alpha};\R)}\\
\label{bdd_d2_g_1}&\vee\|\p_x\p_\mu g(x,\mu,\alpha)(\wt{x})\|_{\mathcal{L}(\R^{d_x}\times \R^{d_x};\R)}\vee\|\p_x\p_x g(x,\mu,\alpha)\|_{\mathcal{L}(\R^{d_x}\times \R^{d_x};\R)}\vee\|\p_{\wt{x}}\p_\mu g(x,\mu,\alpha)(\wt{x})\|_{\mathcal{L}(\R^{d_x}\times \R^{d_x};\R)}\Big\}\leq \Lambda_g,\\\label{bdd_d2_g_2}
\ b)\ &\sup_{(x,\mu,\alpha,\wt{x})\in \R^{d_x}\times \mathcal{P}_{2}(\mathbb{R}^{d_x})\times \R^{d_\alpha}\times\R^{d_x}}
\Big\{\|\p_x\p_\alpha g(x,\mu,\alpha)\|_{\mathcal{L}(\R^{d_\alpha}\times \R^{d_x};\R)}\vee\|\p_\alpha\p_\mu g(x,\mu,\alpha)(\wt{x})\|_{\mathcal{L}(\R^{d_x}\times \R^{d_\alpha};\R)}\Big\}\leq \wb{l}_g,
\end{align}
here $\lambda_g$, $\Lambda_g$ and $\wb{l}_g$ are some positive finite constants, and $l_g$ is a finite non-negative constant.\\
$\bf{(a3)}$ The terminal cost function $k:(x,\mu)\in \R^{d_x}\times \mathcal{P}_{2}(\mathbb{R}^{d_x})\mapsto \R$ is second-order differentiable in $x\in \R^{d_x}$ and second-order regularly differentiable in $\mu\in\mathcal{P}_2(\R^{d_x})$ so that 
the second order derivatives are jointly Lipschitz continuous in their respective arguments; also these derivatives satisfy the following estimates: 
\\$(i)$ Define\footnote{Analogous to Lasry-Lions monotonicity condition.}
\begin{align}\label{def_K}
&K(x,\mu):=\frac{\delta}{\delta\mu}\left(\int_{\R^{d_x}}k(\wt{x},\mu)d\mu(\wt{x})\right)(x)=k(x,\mu)+\int_{\R^{d_x}}\frac{\delta}{\delta\mu}k(\wt{x},\mu)(x)d\mu(\wt{x}),
\end{align}
then, for all $\xi\in\R^{d_x}$ and any $\mu,\,\mu'\in\mathcal{P}_2(\R^{d_x})$,
\begin{align}\label{positive_k}
& a)\ \xi^\top\p_x\p_x K(x,\mu)\xi\geq \lambda_{k}|\xi|^2>0,\\\label{positive_k_mu}
& b)\ \int_{\R^{d_x}} \Big(K(x,\mu')-K(x,\mu)\Big)d\big(\mu'-\mu\big)(x)\geq -l_k \left(\int_{\R} yd(\mu-\mu')(y)\right)^2 \text{ with }l_k<\lambda_k;
\end{align}
to simplify the computation of some constants\footnote{Indeed, our results will also be held as long as $l_k<\lambda_k$, but various constants in this paper depend on the ratio of $l_k$ to $\lambda_k$. Thus, to keep our presentation concise, we assume $l_k\leq \frac12 \lambda_k$ for simplicity.} in the proof of Theorem \ref{Crucial_Estimate}, we further assume that $l_k\leq \frac{1}{2}\lambda_k$.
\\$(ii)$ We have the following bounds on the second order derivatives of $k$:
\begin{align}
\no\ &\sup_{(x,\mu,\wt{x},\wh{x})\in \R^{d_x}\times \mathcal{P}_{2}(\mathbb{R}^{d_x})\times\R^{d_x}\times\R^{d_x}}
\Big\{\|\p_\mu\p_\mu k(x,\mu)(\wt{x},\wh{x})\|_{\mathcal{L}(\R^{d_x}\times \R^{d_x};\R)}\vee\|\p_x\p_\mu k(x,\mu)(\wt{x})\|_{\mathcal{L}(\R^{d_x}\times \R^{d_x};\R)}\\
\label{bdd_d2_k_1}&\ \ \ \ \ \ \ \ \ \ \ \ \ \ \ \ \ \ \ \ \ \ \ \ \ \ \ \ \ \ \ \ \ \ \ \ \ \ \ \ \vee\|\p_x\p_x k(x,\mu)\|_{\mathcal{L}(\R^{d_x}\times \R^{d_x};\R)}\vee\|\p_{\wt{x}}\p_\mu k(x,\mu)(\wt{x})\|_{\mathcal{L}(\R^{d_x}\times \R^{d_x};\R)}\Big\}\leq \Lambda_k;
\end{align}
here $\lambda_k$ and $\Lambda_k$ are some positive finite constants, and $l_k$ is a finite non-negative constant.

\subsection{Derived Properties of Coefficient Functions and Controls}\label{properties}
Under Assumptions $\bf{(a1)}$-$\bf{(a3)}$, we have the following useful properties which play a vital role in developing the general theory. All the proofs in this section are put in Appendix \ref{App:prop}.

\begin{prpstn}\label{A1} 
The drift function $f(x,\mu,\alpha)$ is of linear-growth and Lipschitz continuous, that is, there exists a positive constant $L_f$, which can be taken as $L_f:=\max\{\Lambda_f,|f(0,\delta_0,0)|\}$, such that 
\begin{align}\label{ligf}
&|f(x,\mu,\alpha)|\leq L_f \left(1+|x|+ \|\mu\|_1+|\alpha| \right);\\\label{Lipf}
&\left|f(x,\mu,\alpha) - f(x',\mu',\alpha')\right|\leq L_f \big(|x-x'|+W_1(\mu,\mu')+|\alpha-\alpha'|\big).
\end{align} 
\end{prpstn}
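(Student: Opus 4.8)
The plan is to establish the two inequalities \eqref{ligf} and \eqref{Lipf} directly from the first-order bound \eqref{bdd_d1_f} in Assumption $\bf{(a1)}(ii)$, exploiting that all three partial derivatives $\p_x f$, $\p_\mu f(\cdot)(\wt x)$ and $\p_\alpha f$ are bounded in operator norm by $\Lambda_f$, together with the joint continuity of these derivatives that makes the fundamental theorem of calculus applicable. First I would prove the Lipschitz estimate \eqref{Lipf}: fix $(x,\mu,\alpha)$ and $(x',\mu',\alpha')$, and interpolate one argument at a time, writing
\begin{align*}
f(x,\mu,\alpha)-f(x',\mu',\alpha') = \big(f(x,\mu,\alpha)-f(x',\mu,\alpha)\big) + \big(f(x',\mu,\alpha)-f(x',\mu',\alpha)\big) + \big(f(x',\mu',\alpha)-f(x',\mu',\alpha')\big).
\end{align*}
The first and third differences are controlled by integrating $\p_x f$ along the segment $[x',x]$ and $\p_\alpha f$ along $[\alpha',\alpha]$ respectively, each yielding a bound $\Lambda_f|x-x'|$ and $\Lambda_f|\alpha-\alpha'|$. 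For the middle difference, I would use the linear functional derivative representation \eqref{delta_mu_f} — more precisely its consequence in terms of the $L$-derivative $\p_\mu f = \p_x \tfrac{\delta f}{\delta\mu}$ via Proposition \ref{prop_3_7} — to write $f(x',\mu,\alpha)-f(x',\mu',\alpha)$ as an integral of $\p_\mu f(x',\theta\mu+(1-\theta)\mu',\alpha)(\cdot)$ against $d(\mu-\mu')$, couple $\mu$ and $\mu'$ by an optimal (for $W_1$) coupling $\pi$, and bound the result by $\Lambda_f \int |\xi-\eta|\,d\pi(\xi,\eta) = \Lambda_f W_1(\mu,\mu')$. Summing the three contributions and setting $L_f := \max\{\Lambda_f, |f(0,\delta_0,0)|\}$ gives \eqref{Lipf} (the constant $|f(0,\delta_0,0)|$ is not needed for the Lipschitz part but is harmless).

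For the linear-growth bound \eqref{ligf}, I would simply apply \eqref{Lipf} with $(x',\mu',\alpha') = (0,\delta_0,0)$, obtaining
\begin{align*}
|f(x,\mu,\alpha)| \leq |f(0,\delta_0,0)| + L_f\big(|x| + W_1(\mu,\delta_0) + |\alpha|\big),
\end{align*}
and then observe that $W_1(\mu,\delta_0) = \int_{\R^{d_x}}|y|\,d\mu(y) = \|\mu\|_1$, since the only coupling of $\mu$ with a Dirac mass is the product one. Absorbing $|f(0,\delta_0,0)| \leq L_f$ into the constant yields $|f(x,\mu,\alpha)| \leq L_f(1+|x|+\|\mu\|_1+|\alpha|)$.

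The only genuinely delicate point — and the step I would treat most carefully — is the measure-argument interpolation: one must make sure that $\theta\mu+(1-\theta)\mu' \in \mc P_2(\R^{d_x})$ for $\theta\in[0,1]$ (clear, since $\mc P_2$ is convex) and that the bound \eqref{bdd_d1_f} on $\|\p_\mu f(x,\mu,\alpha)(\wt x)\|$ being uniform over \emph{all} $\wt x \in \R^{d_x}$ is exactly what lets the integral against $d(\mu-\mu')$ collapse to $W_1$ without picking up any moment-dependent constant; this uniformity is precisely why the growth in \eqref{ligf} is only linear (and why the estimate is stated with $W_1$ rather than $W_2$). Everything else is routine application of the fundamental theorem of calculus for Fréchet-differentiable maps along line segments, justified by the joint continuity of the first-order derivatives asserted in $\bf{(a1)}$.
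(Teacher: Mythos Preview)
Your proposal is correct and follows essentially the same route as the paper: the paper first isolates the measure-variable step as a standalone lemma (its inequality \eqref{eq_9_3}), obtained exactly as you describe---linear functional derivative representation \eqref{delta_mu_f}, coupling via an optimal $\pi\in\Pi(\mu,\mu')$, then the fundamental theorem of calculus in the $\wt x$-variable to bring in $\p_{\wt x}\frac{\delta f}{\delta\mu}=\p_\mu f$ and the uniform bound $\Lambda_f$---and then combines it with the segment integrals in $x$ and $\alpha$ and specializes to $(0,\delta_0,0)$ for the growth bound. One small wording point: the integral against $d(\mu-\mu')$ is of $\frac{\delta f}{\delta\mu}$, not of $\p_\mu f$ directly; the $L$-derivative $\p_\mu f$ enters only after coupling and applying the fundamental theorem in $\wt x$, which is precisely the step you flag as the delicate one.
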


\begin{prpstn}\label{A2} 
(Bounding first-order derivatives) We have the following estimates:\\
\begin{align}\label{Lipg_mu}
&\left|\p_\mu g(x,\mu,\alpha)(\wt{x}) - \p_\mu g(x',\mu',\alpha')(\wt{x}')\right|\leq \max\{\Lambda_g,\wb{l}_g\} \left(W_1(\mu,\mu')+|x-x'|+|\alpha-\alpha'|+|\wt{x}-\wt{x}'|\right);\\\label{Lipg_x}
&\left|\p_x g(x,\mu,\alpha) - \p_x g(x',\mu',\alpha')\right|\leq \max\{\Lambda_g,\wb{l}_g\} \left(W_1(\mu,\mu')+|x-x'|+|\alpha-\alpha'|\right);\\\label{Lipg_alpha}
&\left|\p_\alpha g(x,\mu,\alpha) - \p_\alpha g(x',\mu',\alpha')\right|\leq \max\{\Lambda_g,\wb{l}_g\} \left(W_1(\mu,\mu')+|x-x'|+|\alpha-\alpha'|\right);
\end{align}
\begin{align}\no
&\sup_{x\in\R^{d_x},\,\mu\in\mathcal{P}_2(\R^{d_x}),\,\alpha\in\R^{d_\alpha},\,\wt{x}\in\R^{d_x}}\frac{\left|\p_x g(x,\mu,\alpha)\right|}{1+|x|+ \|\mu\|_1+|\alpha|}
\vee\frac{\left|\p_\alpha g(x,\mu,\alpha)\right|}{1+|x|+ \|\mu\|_1+|\alpha|}\vee\frac{|\p_{\mu} g(x,\mu,\alpha)(\wt{x})|}{1+|x|+ \|\mu\|_1+|\alpha|+|\wt{x}|}\\\label{LGgDerivatives}
\leq&\ L_g:=\max\Big\{|\p_\mu g(0,\delta_0,0)(0)|,|\p_x g(0,\delta_0,0)|,|\p_\alpha g(0,\delta_0,0)|,\Lambda_g,\wb{l}_g\Big\};
\end{align}
and 
\begin{align}\label{LGkDerivatives}
\sup_{x\in\R^{d_x},\,\mu\in\mathcal{P}_2(\R^{d_x}),\,\wt{x}\in\R^{d_x}}\frac{\left|\p_x k(x,\mu)\right|}{1+|x|+ \|\mu\|_1}\vee\frac{|\p_\mu k(x,\mu)(\wt{x})|}{1+|x|+ \|\mu\|_1+|\wt{x}|}\leq L_k:=\max\Big\{|\p_\mu k(0,\delta_0)(0)|,|\p_x k(0,\delta_0)|,\Lambda_k\Big\}.
\end{align}  
\end{prpstn}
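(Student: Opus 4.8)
\textbf{Proof proposal for Proposition \ref{A2}.}

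The plan is to derive each of the Lipschitz and linear-growth estimates directly from the second-order bounds and Lipschitz continuity of the second derivatives that are packaged into Assumption $\bf{(a2)}$ and Assumption $\bf{(a3)}$, using the fundamental theorem of calculus along line segments in the Euclidean arguments and along linear interpolations of measures in the $\mu$-argument. First I would treat \eqref{Lipg_x} and \eqref{Lipg_alpha}: to compare $\p_x g(x,\mu,\alpha)$ with $\p_x g(x',\mu',\alpha')$, interpolate one coordinate at a time, writing the difference as a sum of three pieces in which only $x$, only $\alpha$, or only $\mu$ varies; the $x$- and $\alpha$-pieces are controlled by integrating $\p_x\p_x g$, $\p_x\p_\alpha g$ along the segment joining the two points, which by \eqref{bdd_d2_g_1}--\eqref{bdd_d2_g_2} contributes at most $\max\{\Lambda_g,\wb l_g\}(|x-x'|+|\alpha-\alpha'|)$; the $\mu$-piece is handled by the linear functional derivative representation \eqref{delta_mu_f} applied to $x\mapsto \p_x g(x,\cdot,\alpha)$, i.e.\ by integrating $\p_\mu \p_x g = \p_x(\p_x\frac{\delta g}{\delta\mu})$ against $d(\mu'-\mu)$ along $\theta\mapsto \theta\mu'+(1-\theta)\mu$; using the bound on $\p_x\p_\mu g$ from \eqref{bdd_d2_g_1} together with the Kantorovich--Rubinstein duality bound $\int \psi\, d(\mu'-\mu) \le \mathrm{Lip}(\psi)\,W_1(\mu,\mu')$, this piece is bounded by $\Lambda_g W_1(\mu,\mu')$. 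The same scheme, with $\p_\alpha g$ in place of $\p_x g$, gives \eqref{Lipg_alpha} and, with the extra $\wt x$-argument carried along, gives \eqref{Lipg_mu}.

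Second I would establish the linear-growth bounds \eqref{LGgDerivatives} and \eqref{LGkDerivatives}. The idea is simply to integrate the (bounded) second derivatives from a fixed base point: for instance $\p_x g(x,\mu,\alpha) = \p_x g(0,\delta_0,0) + \int_0^1 \frac{d}{dt}\,\p_x g(tx, \cdot, t\alpha)\,dt$, where the $\mu$-variable is moved from $\delta_0$ to $\mu$ via a linear interpolation and differentiated using \eqref{delta_mu_f}; each term in $\frac{d}{dt}$ is a second derivative of $g$ — namely $\p_x\p_x g$, $\p_x\p_\alpha g$, and $\p_\mu\p_x g(\cdot)(\wt x)$ integrated in $\wt x$ against the interpolating measure — and these are bounded by the constants in \eqref{bdd_d2_g_1}--\eqref{bdd_d2_g_2}. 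Pairing the bound on $\p_\mu\p_x g$ with the moment $\int(|\wt x|)\,d(\mu-\delta_0)(\wt x)\le \|\mu\|_1$ yields the $\|\mu\|_1$ contribution, and the base-point values are absorbed into the constant $L_g$. The estimate for $\p_\alpha g$ and for $\p_\mu g(x,\mu,\alpha)(\wt x)$ is identical, the latter also picking up the explicit $|\wt x|$ through the bound $|\p_{\wt x}\p_\mu g|\le \Lambda_g$ in \eqref{bdd_d2_g_1}; and \eqref{LGkDerivatives} is the same computation run on $k$ with \eqref{bdd_d2_k_1}.

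The main technical obstacle is the $\mu$-direction: one must be careful that the linear interpolation $\theta\mapsto \theta\mu'+(1-\theta)\mu$ stays in $\mathcal P_2(\R^{d_x})$ (it does, since that set is convex), that the linear functional derivative representation \eqref{delta_mu_f} may legitimately be applied to the vector-valued maps $\p_x g(\cdot,\mu,\alpha)$ and $\p_\alpha g(\cdot,\mu,\alpha)$ componentwise (this is where one uses the \emph{regular} differentiability in $\mu$ from $\bf{(a2)}$, which guarantees $\p_x\frac{\delta g}{\delta\mu}$ exists, is jointly continuous, and has the growth in \eqref{eq:4-113}), and that the growth bound $c(\mu)(1+|x|^2)$ on $\frac{\delta g}{\delta\mu}$ does not actually appear in the final estimate because we only ever use the \emph{derivative} $\p_x\frac{\delta g}{\delta\mu}$, which is globally bounded. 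A secondary subtlety is bookkeeping the constants so that $\max\{\Lambda_g,\wb l_g\}$ (and not some larger combination) genuinely suffices on the right-hand sides — this just requires grouping the $x$-, $\alpha$-, and $\mu$-contributions and noting that each individual second-derivative bound is $\le\max\{\Lambda_g,\wb l_g\}$. None of these steps is deep; the proposition is essentially a careful application of the mean value theorem in three coordinates plus Kantorovich--Rubinstein duality, and I expect the write-up to be short.
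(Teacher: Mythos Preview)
Your proposal is correct and follows essentially the same approach as the paper. The paper isolates the $\mu$-direction step as a small lemma (proved in the proof of Proposition~\ref{A1}, equation~\eqref{eq_9_3}): if $\big|\p_{\wt x}\tfrac{\delta h}{\delta\mu}(\mu)(\wt x)\big|\le l_1$ then $|h(\mu)-h(\mu')|\le l_1 W_1(\mu,\mu')$, established by a coupling argument which is exactly the Kantorovich--Rubinstein bound you invoke; the remaining one-variable-at-a-time triangle splitting, the use of \eqref{bdd_d2_g_1}--\eqref{bdd_d2_g_2}, and the specialization to the base point $(0,\delta_0,0,0)$ with $W_1(\mu,\delta_0)=\|\mu\|_1$ for the growth bounds are identical to what you describe.
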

Define, a constant 
\begin{align}\label{k_0}
k_0:=4\max\left\{\wb{L}_k,L^*_0\right\}>0,
\end{align}
where $L^*_0$ will be defined later in \eqref{L_star_0} and it is a positive constant depending only on $\lambda_k,\ \Lambda_k,\ \lambda_g,\ \Lambda_g,\ \lambda_f$ and $\Lambda_f$, and 
\begin{align}\label{wbL_k}
\wb{L}_k:=3\Lambda_k.
\end{align}
In fact, we can show later in Remark \ref{remark_lower_bdd_L_star_0} that $\wb{L}_k\leq L^*_0$ and thus $k_0=4 L^*_0$.
 We further define a cone set:
\begin{align}\label{c_k_0}
c_{k_0}:=&\ \left\{(x,\mu,z)\in \R^{d_x}\times \mathcal{P}_2(\R^{d_x})\times \R^{d_x}:|z|\leq \frac{1}{2}k_0\left(1+|x|+\|\mu\|_1\right)\right\},
\end{align}
for the constant $k_0>0$ as defined in \eqref{k_0}. In fact, we can show in Theorem \ref{Thm6_1} (local existence of solution) and Theorem \ref{GlobalSol} (global existence of solution) that $\big(X^{t,m}_s(x),X^{t,m}_s\ot m, Z^{t,m}_s(x)\big)\in c_{k_0}$, where $\big(X^{t,m}_s(x),Z^{t,m}_s(x)\big)$ is
the solution pair of the FBODE system \eqref{MPIT}; this property is used to ensure the unique existence of the optimal control $\wh{\alpha}(x,\mu,z)$ stated in Proposition \ref{A5}, in which this additional growth confinement will be stated in its hypothesis is indeed justifiable.
\begin{prpstn}\label{h3}
For each fixed $(x,\mu,z)\in \R^{d_x}\times \mathcal{P}_2(\R^{d_x})\times \R^{d_x}$, we have $\displaystyle\lim_{|\alpha|\to \infty}f(x,\mu,\alpha)\cdot z+g(x,\mu,\alpha)\to \infty$. 
\end{prpstn}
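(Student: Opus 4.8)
The plan is to exploit the growth mismatch at a fixed $(x,\mu,z)$: the map $\alpha\mapsto f(x,\mu,\alpha)\cdot z$ grows at most linearly in $|\alpha|$, while $\alpha\mapsto g(x,\mu,\alpha)$ grows at least quadratically because $g$ is uniformly convex in the control. Combining the two bounds, the sum is bounded below by a quadratic in $|\alpha|$ with positive leading coefficient, hence it tends to $+\infty$.

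First I would record the quadratic lower bound on $g$. Since $g$ is twice differentiable in $\alpha$ and $\xi^\top\partial_\alpha\partial_\alpha g(x,\mu,\alpha)\xi\geq\lambda_g|\xi|^2$ for all $\xi$ by \eqref{positive_g_alpha}, a second-order Taylor expansion with integral remainder around $\alpha=0$ gives
\begin{align*}
g(x,\mu,\alpha)\ \geq\ g(x,\mu,0)+\partial_\alpha g(x,\mu,0)\cdot\alpha+\frac{\lambda_g}{2}|\alpha|^2.
\end{align*}
At the fixed point $(x,\mu)$ the quantities $g(x,\mu,0)$ and $\partial_\alpha g(x,\mu,0)$ are finite (the latter is also controlled through \eqref{LGgDerivatives}), so there exist constants $c_1,c_2\geq0$ depending only on $(x,\mu)$ with $g(x,\mu,\alpha)\geq\tfrac{\lambda_g}{2}|\alpha|^2-c_1|\alpha|-c_2$. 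Next, by Proposition \ref{A1}, in particular the linear-growth estimate \eqref{ligf}, together with the Cauchy--Schwarz inequality in $\R^{d_x}$,
\begin{align*}
|f(x,\mu,\alpha)\cdot z|\ \leq\ L_f\big(1+|x|+\|\mu\|_1+|\alpha|\big)|z|\ =\ c_3|\alpha|+c_4,
\end{align*}
with $c_3,c_4\geq0$ depending only on $(x,\mu,z)$.

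Combining the two displays yields
\begin{align*}
f(x,\mu,\alpha)\cdot z+g(x,\mu,\alpha)\ \geq\ \frac{\lambda_g}{2}|\alpha|^2-(c_1+c_3)|\alpha|-(c_2+c_4),
\end{align*}
and since $\lambda_g>0$ the right-hand side diverges to $+\infty$ as $|\alpha|\to\infty$, which is the assertion. There is no real obstacle here: the statement is a direct consequence of the uniform convexity of $g$ in the control beating the linear growth of $f$, and the only point to keep track of is that every ``constant'' is permitted to depend on the fixed data $(x,\mu,z)$ but not on $\alpha$.
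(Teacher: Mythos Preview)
Your proof is correct and follows essentially the same approach as the paper: a second-order Taylor expansion of $g$ in $\alpha$ combined with \eqref{positive_g_alpha} gives the quadratic lower bound, the linear-growth estimate \eqref{ligf} controls $f\cdot z$, and the quadratic term wins. The only cosmetic difference is that you package the $(x,\mu,z)$-dependent quantities into constants $c_1,\dots,c_4$, whereas the paper keeps them explicit.
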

\begin{prpstn}\label{A5}
Suppose further that 
\begin{align}\label{p_aa_f}
\sup_{(x,\mu,\alpha)\in \R^{d_x}\times \mathcal{P}_{2}(\mathbb{R}^{d_x})\times \R^{d_\alpha}}\|\p_\alpha\p_\alpha f(x,\mu,\alpha)\|_{\mathcal{L}(\R^{d_\alpha}\times \R^{d_\alpha};\R^{d_x})}\cdot (1+|x|+\|\mu\|_1) \leq \frac{1}{40 \max\{\wb{L}_k,L^*_0\}}\lambda_g.
\end{align} 
Then, for each $(x,\mu,z)\in c_{k_0}$, the function $f(x,\mu,\alpha)\cdot z  + g(x,\mu,\alpha)$ has a unique minimizer $\wh{\alpha}(x,\mu,z)\in\R^{d_\alpha}$, which satisfies the first order condition 
\begin{align}\label{first_order_condition}
\p_\alpha f(x,\mu,\wh{\alpha})\cdot z  + \p_\alpha g(x,\mu,\wh{\alpha})=0,
\end{align} and it is first-order differentiable in $x$, $z\in\R^{d_x}$ and $L$-differentiable in $\mu\in\mathcal{P}_2(\R^{d_x})$ with jointly Lipschitz continuous first-order derivatives such that
\small\begin{align}\label{eq_5_25_new}
\partial_x \wh{\alpha}(x,\mu,z)=&\ -\Big(\p_\alpha\p_\alpha f(x,\mu,\wh{\alpha})\cdot z+\p_\alpha\p_\alpha g(x,\mu,\wh{\alpha})\Big)^{-1} \left(\p_x\p_\alpha f(x,\mu,\wh{\alpha})\cdot z+\p_x\p_\alpha g(x,\mu,\wh{\alpha})\right),\\
\partial_z \wh{\alpha}(x,\mu,z)=&\ -\Big(\p_\alpha\p_\alpha f(x,\mu,\wh{\alpha})\cdot z+\p_\alpha\p_\alpha g(x,\mu,\wh{\alpha})\Big)^{-1} \p_\alpha f(x,\mu,\wh{\alpha}),\\\label{eq_5_27_new}
\p_\mu \wh{\alpha}(x,\mu,z)(\wt{x})=&\ -\Big(\p_\alpha\p_\alpha f(x,\mu,\wh{\alpha})\cdot z+\p_\alpha\p_\alpha g(x,\mu,\wh{\alpha})\Big)^{-1}\left(\p_\mu\p_\alpha f(x,\mu,\wh{\alpha})(\wt{x})\cdot z+\p_\mu\p_\alpha g(x,\mu,\wh{\alpha})(\wt{x})\right).
\end{align}\normalsize
In addition, we have the following estimates:\\ 
(i) for all $(x,\mu,z)\in c_{k_0}$ and an arbitrary $\xi\in\R^{d_x}$,
\begin{align}\label{positive_h}
&\Big(\Lambda_g+\frac{1}{20}\lambda_g\Big)|\xi|^2\geq \xi^\top\Big(\p_\alpha\p_\alpha f(x,\mu,\wh{\alpha})\cdot z+\p_\alpha\p_\alpha g(x,\mu,\wh{\alpha})\Big)\xi\geq \frac{19}{20}\lambda_g |\xi|^2;
\end{align}
(ii) for all $(x,\mu,z)\in c_{k_0}$,
\begin{align}\label{p_xalpha}
&\Big\|\p_{x}\wh{\alpha}(x,\mu,z)\Big\|_{\mathcal{L}(\R^{d_x};\R^{d_\alpha})}\vee\Big\|\p_{\mu}\wh{\alpha}(x,\mu,z)(\wt{x})\Big\|_{\mathcal{L}(\R^{d_x};\R^{d_\alpha})}\leq\frac{20\big(\wb{l}_g+\frac{1}{2}k_0\cdot\wb{l}_f\big) }{19\lambda_g},\\\label{p_zalpha}
&\Big\|\p_{z}\wh{\alpha}(x,\mu,z)\Big\|_{\mathcal{L}(\R^{d_x};\R^{d_\alpha})}\leq \frac{20\Lambda_f}{19\lambda_g};
\end{align}\normalsize
(iii) for any $(x,\mu,z)\in c_{k_0}$, and any $(x',\mu',z')\in c_{k_0}$ such that $\theta (x,\mu,z)+(1-\theta)(x',\mu',z')\in c_{k_0}$ for all $\theta\in[0,1]$
\begin{align}\label{lipalpha_old}
&|\wh{\alpha}(x,\mu,z)-\wh{\alpha}(x',\mu',z')|\leq\  L_\alpha\big(|x-x'|+W_1(\mu,\mu')+|z-z'|\big),\\\label{linalpha}
&|\wh{\alpha}(x,\mu,z)|\leq\  L_\alpha\cdot(1+|x|+\|\mu\|_1+|z|),
\end{align}
where 
\begin{align}\label{L_alpha}
L_\alpha:=\max\Big\{\frac{20\Lambda_f}{19\lambda_g},\frac{20\big(\wb{l}_g+\frac{1}{2}k_0\cdot\wb{l}_f\big) }{19\lambda_g},\wh{\alpha}(0,\delta_0,0)\Big\};
\end{align}
(iv) for any function $\Gamma:(x,\mu)\in\R^{d_x}\times\mc{P}_2(\R^{d_x})\mapsto \Gamma(x,\mu)\in\R^{d_x}$ which is differentiable in $x$ and is $L$-differentiable in $\mu$ with jointly continuous derivatives such that $\big\|\p_{x}\Gamma(x,\mu)\big\|_{\mathcal{L}(\R^{d_x};\R^{d_x})}\vee\big\|\p_{\mu}\Gamma(x,\mu)(\wt{x})\big\|_{\mathcal{L}(\R^{d_x};\R^{d_x})}\leq L_\Gamma$ for some positive constant $L_\Gamma\leq \frac{1}{2}k_0$, then, $|\Gamma(x,\mu)|\leq L_\Gamma(1+|x|+\|\mu\|_1)$ and
\begin{align}\label{lipalpha}
&|\wh \alpha(x,\mu,\Gamma(x,\mu))-\wh \alpha(x',\mu',\Gamma(x',\mu'))|\leq L_\alpha(1+L_\Gamma)(|x-x'|+W_1(\mu,\mu')).
\end{align}
\end{prpstn}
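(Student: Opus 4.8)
The plan is to exploit the observation that on the cone $c_{k_0}$ the map $\alpha\mapsto f(x,\mu,\alpha)\cdot z+g(x,\mu,\alpha)$ is strongly convex in $\alpha$, uniformly in $(x,\mu,z)$, which is exactly the content of \eqref{positive_h}. Indeed, the $\alpha$-Hessian of $f(x,\mu,\alpha)\cdot z+g(x,\mu,\alpha)$ is $M(x,\mu,\alpha,z):=\p_\alpha\p_\alpha f(x,\mu,\alpha)\cdot z+\p_\alpha\p_\alpha g(x,\mu,\alpha)$; on $c_{k_0}$ one has $|z|\leq\tfrac12 k_0(1+|x|+\|\mu\|_1)$, and since $\tfrac{1}{40\max\{\wb L_k,L^*_0\}}=\tfrac{1}{10k_0}$ by \eqref{k_0}, assumption \eqref{p_aa_f} forces $\|\p_\alpha\p_\alpha f(x,\mu,\alpha)\|_{\mathcal L}\,|z|\leq\tfrac1{20}\lambda_g$ for every $\alpha$. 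Combining $|\xi^\top(\p_\alpha\p_\alpha f\cdot z)\xi|\leq\tfrac1{20}\lambda_g|\xi|^2$ with the two-sided bound $\lambda_g|\xi|^2\leq\xi^\top\p_\alpha\p_\alpha g\,\xi\leq\Lambda_g|\xi|^2$ from \eqref{positive_g_alpha} and \eqref{bdd_d2_g_1} yields \eqref{positive_h}. In particular $M$ is positive definite for all $\alpha$, so $\alpha\mapsto f\cdot z+g$ is strictly (indeed strongly) convex; together with the coercivity from Proposition \ref{h3} and continuity, this produces a unique minimizer $\wh\alpha(x,\mu,z)$, which must solve the first order condition \eqref{first_order_condition}, and conversely every solution of \eqref{first_order_condition} is that minimizer.

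For the regularity of $\wh\alpha$ I would apply the generalized implicit function theorem (Theorem \ref{GIFT}) to $\Psi(x,\mu,\alpha,z):=\p_\alpha f(x,\mu,\alpha)\cdot z+\p_\alpha g(x,\mu,\alpha)$: under Assumptions $\bf{(a1)}$ and $\bf{(a2)}$, $\Psi$ is $C^1$ in $(x,\alpha,z)$ and $L$-differentiable in $\mu$ with jointly Lipschitz derivatives, and $\p_\alpha\Psi=M$ is invertible with $\|M^{-1}\|_{\mathcal L}\leq\tfrac{20}{19\lambda_g}$ by \eqref{positive_h}. The theorem then gives $\wh\alpha$ with the claimed joint differentiability, and differentiating \eqref{first_order_condition} implicitly, using $\p_z\Psi=\p_\alpha f$, $\p_x\Psi=\p_x\p_\alpha f\cdot z+\p_x\p_\alpha g$ and $\p_\mu\Psi(\wt x)=\p_\mu\p_\alpha f(\wt x)\cdot z+\p_\mu\p_\alpha g(\wt x)$, produces \eqref{eq_5_25_new}--\eqref{eq_5_27_new}.

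The remaining estimates are then consequences of these formulas and the coefficient bounds restricted to $c_{k_0}$. For (ii): on $c_{k_0}$, \eqref{bdd_d2_f} and $|z|\leq\tfrac12 k_0(1+|x|+\|\mu\|_1)$ give $\|\p_x\p_\alpha f\cdot z\|\vee\|\p_\mu\p_\alpha f(\wt x)\cdot z\|\leq\tfrac12 k_0\wb l_f$, while $\|\p_x\p_\alpha g\|\vee\|\p_\mu\p_\alpha g(\wt x)\|\leq\wb l_g$ by \eqref{bdd_d2_g_2} and $\|\p_\alpha f\|\leq\Lambda_f$ by \eqref{bdd_d1_f}; multiplying by $\|M^{-1}\|\leq\tfrac{20}{19\lambda_g}$ gives \eqref{p_xalpha}--\eqref{p_zalpha}. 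For \eqref{lipalpha_old} one integrates the bounds of (ii) along a path in $c_{k_0}$ joining $(x,\mu,z)$ to $(x',\mu',z')$ — such a path is furnished by the stated hypothesis — noting that the uniform-in-$\wt x$ bound on $\p_\mu\wh\alpha$ is what converts the $\mu$-increment into a $W_1$-term; \eqref{linalpha} is the special case $(x',\mu',z')=(0,\delta_0,0)$ with $L_\alpha$ as in \eqref{L_alpha}. Finally, for (iv): integrating the bound on $\p\Gamma$ gives $|\Gamma(x,\mu)|\leq L_\Gamma(1+|x|+\|\mu\|_1)\leq\tfrac12 k_0(1+|x|+\|\mu\|_1)$, so along any path $(x_s,\mu_s)$ from $(x,\mu)$ to $(x',\mu')$ the ``graph'' curve $s\mapsto(x_s,\mu_s,\Gamma(x_s,\mu_s))$ lies in $c_{k_0}$; applying the chain rule to $s\mapsto\wh\alpha(x_s,\mu_s,\Gamma(x_s,\mu_s))$ together with $\|\p_x\wh\alpha\|\vee\|\p_\mu\wh\alpha\|\vee\|\p_z\wh\alpha\|\leq L_\alpha$ and $\|\p_x\Gamma\|\vee\|\p_\mu\Gamma\|\leq L_\Gamma$ then yields \eqref{lipalpha}.

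The main obstacle, as I see it, is the careful handling of the measure variable: making the implicit function theorem genuinely usable with an $L$-differentiable (rather than Fr\'echet-differentiable lifted) dependence on $\mu$, and, in (iii)--(iv), keeping track of which derivative (the $L$-/intrinsic derivative versus the linear functional derivative) controls motion along the interpolation of measures one chooses, while simultaneously verifying that the interpolating path never leaves $c_{k_0}$ — the cone is not convex, which is why the ``graph over a base path'' device is what makes (iv) work. The purely finite-dimensional content (convexity, coercivity, and the constant bookkeeping behind \eqref{positive_h}) is routine.
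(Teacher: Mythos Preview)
Your proposal is correct and follows essentially the same approach as the paper: establish \eqref{positive_h} from the cone condition and \eqref{p_aa_f} (your constant arithmetic $\tfrac{1}{40\max\{\wb L_k,L^*_0\}}=\tfrac{1}{10k_0}$ is exactly what is implicit in the paper's one-line derivation), combine strong convexity with the coercivity of Proposition \ref{h3} to get the unique minimizer, invoke Theorem \ref{GIFT} for the differentiability and the formulae \eqref{eq_5_25_new}--\eqref{eq_5_27_new}, and then read off (ii)--(iv) by bounding the ingredients on $c_{k_0}$ and integrating along admissible paths. Your identification of the main technical point --- that the implicit function theorem must be the measure-augmented version and that the interpolating paths must be kept inside the (non-convex) cone, which is precisely why (iii) carries the extra hypothesis and why (iv) uses the ``graph'' device --- matches the paper's treatment.
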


\begin{prpstn}\label{A6}
For the cone $c_{k_0}$ defined in \eqref{c_k_0} and any $\xi\in\R^{d_x}$, we have the following inequality:
\begin{align}\label{c_a7}
&\sup_{(x,\mu,z)\in c_{k_0}}\xi^\top\Big(\sum_{i=1}^{d_\alpha}\p_z \wh{\alpha}_i(x,\mu,z)\otimes \p_{\alpha_i} f(x,\mu,\wh{\alpha})\Big)\xi\leq -\frac{\lambda_f^2}{\Lambda_g+\frac{1}{2}k_0\cdot \wb{l}_f}|\xi|^2,
\end{align}\normalsize
where $\wh{\alpha}(x,\mu,z)$ is the unique minimizer solving the first order condition \eqref{first_order_condition} and we denote $a\otimes b:=(a_ib_j)_{n\times m}$ is an $n\times m$ matrix for arbitrary vectors $a\in\R^n$ and $b\in \R^m$.
\end{prpstn}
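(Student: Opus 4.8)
The plan is to substitute the closed-form expression for $\p_z\wh\alpha$ from Proposition \ref{A5} into the left-hand side of \eqref{c_a7}, recognise the resulting $d_x\times d_x$ matrix as a negative semi-definite ``sandwiched inverse-Hessian'', and then squeeze the associated quadratic form between two uniform multiples of $|\xi|^2$ using the spectral bounds on the $\alpha$-Hessian of the Lagrangian valid on the cone $c_{k_0}$ together with the non-degeneracy \eqref{positive_f}.

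Fix $(x,\mu,z)\in c_{k_0}$, write $\wh\alpha=\wh\alpha(x,\mu,z)$, and set
\begin{align*}
M&:=\p_\alpha\p_\alpha f(x,\mu,\wh\alpha)\cdot z+\p_\alpha\p_\alpha g(x,\mu,\wh\alpha)\in\R^{d_\alpha\times d_\alpha},\\
B&:=\big(\p_{\alpha_1}f(x,\mu,\wh\alpha),\dots,\p_{\alpha_{d_\alpha}}f(x,\mu,\wh\alpha)\big)\in\R^{d_x\times d_\alpha},
\end{align*}
so that $B$ is the matrix whose columns are the vectors $\p_{\alpha_i}f\in\R^{d_x}$ and, by Proposition \ref{A5}, $M$ is symmetric positive definite with $\p_z\wh\alpha=-M^{-1}B^\top$. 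A short index computation, invoking the symmetry of $M^{-1}$, then identifies
\begin{align*}
\sum_{i=1}^{d_\alpha}\p_z\wh\alpha_i(x,\mu,z)\otimes\p_{\alpha_i}f(x,\mu,\wh\alpha)=-\,B\,M^{-1}B^\top,
\end{align*}
and hence, for every $\xi\in\R^{d_x}$, with $w:=B^\top\xi=\sum_{i=1}^{d_x}\xi_i\,\p_\alpha f_i(x,\mu,\wh\alpha)\in\R^{d_\alpha}$,
\begin{align*}
\xi^\top\Big(\sum_{i=1}^{d_\alpha}\p_z\wh\alpha_i(x,\mu,z)\otimes\p_{\alpha_i}f(x,\mu,\wh\alpha)\Big)\xi=-\,w^\top M^{-1}w\le0.
\end{align*}

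To make this quantitative I would bound $M$ from above on the cone. By \eqref{bdd_d2_g_1} the operator norm of $\p_\alpha\p_\alpha g(x,\mu,\wh\alpha)$ is at most $\Lambda_g$, while \eqref{bdd_d2_f} gives $\|\p_\alpha\p_\alpha f(x,\mu,\wh\alpha)\|\le\wb{l}_f/(1+|x|+\|\mu\|_1)$; since $(x,\mu,z)\in c_{k_0}$ forces $|z|\le\tfrac12 k_0(1+|x|+\|\mu\|_1)$, the contraction $\p_\alpha\p_\alpha f(x,\mu,\wh\alpha)\cdot z$ has operator norm at most $\tfrac12 k_0\wb{l}_f$, so the largest eigenvalue of $M$ is at most $\Lambda_g+\tfrac12 k_0\wb{l}_f$. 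Combined with the uniform positive-definiteness recorded in \eqref{positive_h}, this yields $w^\top M^{-1}w\ge(\Lambda_g+\tfrac12 k_0\wb{l}_f)^{-1}|w|^2$, while \eqref{positive_f} bounds $|w|^2=\big|\sum_{i=1}^{d_x}\xi_i\,\p_\alpha f_i(x,\mu,\wh\alpha)\big|^2$ below by a fixed multiple of $|\xi|^2$ controlled by $\lambda_f$. Chaining these inequalities and then passing to the supremum over $(x,\mu,z)\in c_{k_0}$ (which is harmless, the constants being uniform over the cone) delivers \eqref{c_a7}.

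The only delicate points here are bookkeeping rather than analytic ones: getting the tensor identity $\sum_i\p_z\wh\alpha_i\otimes\p_{\alpha_i}f=-BM^{-1}B^\top$ exactly right — that is, correctly tracking which slot of $\p_\alpha f$ is $\R^{d_x}$-valued and which is $\R^{d_\alpha}$-valued, and using the symmetry of $M^{-1}$ — and verifying that the crude operator-norm estimate for $\p_\alpha\p_\alpha f(x,\mu,\wh\alpha)\cdot z$ restricted to $c_{k_0}$ reproduces precisely the constant $\Lambda_g+\tfrac12 k_0\wb{l}_f$ appearing in the statement. Beyond this, the proposition is a direct consequence of Proposition \ref{A5}, \eqref{positive_h} and \eqref{positive_f}.
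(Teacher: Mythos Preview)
Your proposal is correct and follows exactly the paper's argument: substitute $\p_z\wh\alpha=-M^{-1}B^\top$ from Proposition~\ref{A5} to rewrite the quadratic form as $-w^\top M^{-1}w$ with $w=B^\top\xi$, bound the top eigenvalue of $M$ by $\Lambda_g+\tfrac12 k_0\wb l_f$ via \eqref{bdd_d2_f}, \eqref{bdd_d2_g_1} and the cone condition, and then invoke \eqref{positive_f} for the lower bound on $|w|^2$. (One bookkeeping remark: \eqref{positive_f} as stated gives $|w|^2\ge\lambda_f|\xi|^2$, so the chain of inequalities literally yields the constant $\lambda_f/(\Lambda_g+\tfrac12 k_0\wb l_f)$ rather than $\lambda_f^2/(\Lambda_g+\tfrac12 k_0\wb l_f)$ --- the paper's own proof shares this feature.)
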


\subsection{Discussion on Our Assumptions}\label{subsec:dis}
In this subsection, we would like to provide some motivations and explanations regarding Assumptions ${\bf (a1)}$-${\bf (a3)}$ and the property \eqref{c_a7}, by comparing them with the coercivity and monotonicity conditions used in Cardaliaguet-Delarue-Lasry-Lions' monograph \cite{cardaliaguet2019master} and the displacement $\lambda$-convexity introduced in Gangbo and M\'esz\'aros \cite{gangbo2020global}. 

To simplify the calculus and computations, we consider a special case that the drift function is simply $f(x,\mu,\alpha)=\alpha$ and the running cost function is $g(x,\mu,\alpha)=\lambda\alpha^2+g_1(x,\mu)$ with a positive constant $\lambda$; one can see a more complicated example in Section \ref{sec:nonLQ} which is not in this special form. Then \eqref{positive_f}, \eqref{bdd_d1_f}, \eqref{bdd_d2_f} and \eqref{bdd_d2_g_2} are automatically satisfied with $\lambda_f=1$, $\Lambda_f=1$ and $\wb{l}_f=\wb{l}_g=0$. In addition, the optimal control $\wh{\alpha}$ can be solve explicitly via the first order condition \eqref{first_order_condition} as $\wh{\alpha}(x,\mu,z)=-\frac{1}{2\lambda}z$, and thus the left hand side of \eqref{c_a7} is equal to $-\frac{1}{2\lambda}|\xi|^2$. Here, we can see that the term $\Big(\sum_{i=1}^{d_\alpha}\p_z \wh{\alpha}_i(x,\mu,z)\otimes \p_{\alpha_i} f(x,\mu,\wh{\alpha})\Big)$ in \eqref{c_a7} is corresponding to the term $D^2_{zz}H(x,z)$ in (2.4) in Cardaliaguet-Delarue-Lasry-Lions' monograph \cite{cardaliaguet2019master} (due to the separable structure of the Hamiltonian is assumed in \cite{cardaliaguet2019master}, there is no mean field term $\mu$ in $H(x,z)$, while we can treat the non-separable case in this work), and thus \eqref{c_a7} is corresponding to the coercivity condition (2.4) in Cardaliaguet-Delarue-Lasry-Lions' monograph \cite{cardaliaguet2019master}. 

Also note that the function $G(x,\mu,\alpha)$ defined in \eqref{def_G} is corresponding to the function $F(x,\mu)$ in (2.5) in Cardaliaguet-Delarue-Lasry-Lions' monograph \cite{cardaliaguet2019master} (due to the separable structure of the running cost function $g(x,\mu,\alpha)=\lambda\alpha^2+g_1(x,\mu)$ is assumed in \cite{cardaliaguet2019master}, there is no control term $\alpha$ in $F(x,\mu)$). Therefore,  the estimate \eqref{positive_g_mu} assumed in Assumption ${\bf (a2)}$ generalizes the Lasry-Lions monotonicity assumption (see (2.5) in Cardaliaguet-Delarue-Lasry-Lions' monograph \cite{cardaliaguet2019master}), which is, for any $\mu,\,\mu'\in\mathcal{P}_2(\R^{d_x})$, 
\begin{align}\label{eq_141}
&\int_{\R^{d_x}} \Big(F(x,\mu')-F(x,\mu)\Big)d\big(\mu'-\mu\big)(x)\geq 0.
\end{align}\normalsize
Moreover, for any $m\in\mathcal{P}_2(\R^{d_x})$ and any $X,\,\wt{X}\in L^{2,d_x}_{m}$, by taking $\mu'=(X+\theta\wt{X})\ot m$ with $\theta>0$, $\mu=X\ot m$ and letting $\theta\to 0^+$ after dividing \eqref{positive_g_mu} by $\theta^2$, the estimate \eqref{positive_g_mu} implies that
\begin{align}\no
&\int_{\R^{d_x}}\int_{\R^{d_x}}(\wt{X}(\wt{x})\cdot \p_{x})(\wt{X}(\wh{x})\cdot \p_\mu)G\big(x,\mu,\alpha\big)\bigg|_{x=X(\wt{x}),\mu=X\ot m}\big(X(\wh{x})\big)dm(\wh{x})dm(\wt{x})\\\no
\geq &-l_g \lim_{\theta\to 0^+}\frac{1}{\theta^2}\left(\int_{\R} yd((X+\theta\wt{X})\ot m-X\ot m)(y)\right)^2\geq -l_g \lim_{\theta\to 0^+}\frac{1}{\theta^2}\left\|\theta\wt{X}\right\|_{L^{2,d_x}_m}^2=-l_g\left\|\wt{X}\right\|_{L^{2,d_x}_m}^2.
\end{align}
In fact, the estimate \eqref{positive_g_mu} assumed in Assumption ${\bf (a2)}$ can be replaced by
\small\begin{align}\no
&\int_{\R^{d_x}}\int_{\R^{d_x}}(\wt{X}(\wt{x})\cdot \p_{x})(\wt{X}(\wh{x})\cdot \p_\mu)G\big(x,\mu,\alpha\big)\bigg|_{x=X(\wt{x}),\mu=X\ot m}\big(X(\wh{x})\big)dm(\wh{x})dm(\wt{x})\\\no
=&\ \int_{\R^{d_x}}\int_{\R^{d_x}}\wt{X}(\wt{x})^\top\Big(\p_x\p_\mu g\big(X(\wt{x}),X\ot m,\alpha\big)\big(X(\wh{x})\big)\Big)\cdot\wt{X}(\wh{x})dm(\wh{x})dm(\wt{x})\\\no
&+\int_{\R^{d_x}}\int_{\R^{d_x}}\wt{X}(\wt{x})^\top\Big(\p_{x}\p_\mu g\big(X(\wh{x}),X\ot m,\alpha\big)\big(X(\wt{x})\big)\Big)\cdot\wt{X}(\wh{x})dm(\wh{x})dm(\wt{x})\\\no
&+\int_{\R^{d_x}}\int_{\R^{d_x}}\int_{\R^{d_x}}\wt{X}(\wt{x})^\top\Big(\p_\mu\p_\mu g\big(X(y),X\ot m,\alpha\big)\big(X(\wt{x}),X(\wh{x})\big)\Big)\cdot\wt{X}(\wh{x}) dm(y) dm(\wh{x})dm(\wt{x})\\\label{positive_g_mu_1}
\geq&\  -l_g\left\|\wt{X}\right\|_{L^{2,d_x}_m}^2,
\end{align}\normalsize
since we only use \eqref{positive_g_mu_1} in our proof of \eqref{displacement} in Theorem \ref{Crucial_Estimate} but not \eqref{positive_g_mu}. 
Also note that the estimate \eqref{positive_g_x} assumed in Assumption ${\bf (a2)}$ means that $G(x,\mu,\alpha)$ is strictly convex in $x$. 
Moreover, \eqref{positive_g_x} and \eqref{positive_g_mu} assumed in Assumption ${\bf (a2)}$ can be replaced by 
\small\begin{align}\no
&\int_{\R^{d_x}}\int_{\R^{d_x}}(\wt{X}(\wt{x})\cdot \p_{x})(\wt{X}(\wt{x})\cdot \p_x{})G\big(x,\mu,\alpha\big)\bigg|_{x=X(\wt{x}),\mu=X\ot m}dm(\wt{x})\\\label{dispalcement_lambda}
&+\int_{\R^{d_x}}\int_{\R^{d_x}}(\wt{X}(\wt{x})\cdot \p_{x})(\wt{X}(\wh{x})\cdot \p_\mu)G\big(x,\mu,\alpha\big)\bigg|_{x=X(\wt{x}),\mu=X\ot m}\big(X(\wh{x})\big)dm(\wh{x})dm(\wt{x})\geq \lambda\left\|\wt{X}\right\|_{L^{2,d_x}_m}^2,
\end{align}\normalsize
with $\lambda=\lambda_g>0$, which is $\lambda$-convexity, introduced in \cite{gangbo2020global}, of $\wb{G}(X):=\int_{\R^{d_x}}g(\wt{x},\mu,\alpha)d\mu(\wt{x})\big|_{\mu=X\ot m}$ on $X\in L^{2,d_x}_m$ for any fixed $\alpha\in\R^{d_\alpha}$ and $m\in\mc{P}_2(\R^{d_x})$, and is equivalent to the displacement $\lambda$-convexity of $\int_{\R^{d_x}}g(\wt{x},\mu,\alpha)d\mu(\wt{x})$ with $\lambda=\lambda_g$ by Lemma 3.6 in \cite{gangbo2020global}. The reasons of this replacement are as follows: i) \eqref{dispalcement_lambda} can imply \eqref{positive_g_x} by  
taking $\wt{X}(\wt{x})=\eps^{-d_x}\wh{X}(\eps^{-1} \wt{x})$ and then passing to the limit $\eps\to 0$ after multiplying \eqref{dispalcement_lambda} by $\eps^{d_x}$ (also see Proposition B.6 in \cite{gangbo2020global}); 
ii) \eqref{dispalcement_lambda} can be used to show \eqref{displacement} instead \eqref{positive_g_mu_1} in Theorem \ref{Crucial_Estimate}. On the other hand, \eqref{positive_g_x} and \eqref{positive_g_mu} can imply \eqref{dispalcement_lambda} with $\lambda=\lambda_g-l_g$; also refer to Definition 2.3 and Lemma 2.6 in \cite{gangbo2022mean} for its relationship with displacement monotonicity.

Likewise, the function $K(x,\mu)$ defined in \eqref{def_K} is corresponding to the function $G(x,\mu)$ in (2.5) in Cardaliaguet-Delarue-Lasry-Lions' monograph \cite{cardaliaguet2019master}, and thus the estimate \eqref{positive_k_mu} assumed in Assumption ${\bf (a3)}$ generalizes the Lasry-Lions monotonicity assumption (see (2.5) in Cardaliaguet-Delarue-Lasry-Lions' monograph \cite{cardaliaguet2019master}), which is
\small\begin{align}
&\int_{\R^{d_x}} \Big(G(x,\mu')-G(x,\mu)\Big)d\big(\mu'-\mu\big)(x)\geq 0,\ \forall \mu,\,\mu'\in\mathcal{P}_2(\R^{d_x}).
\end{align}
Again, for any $m\in\mathcal{P}_2(\R^{d_x})$ and any $X,\,\wt{X}\in L^{2,d_x}_{m}$, by taking $\mu'=(X+\theta\wt{X})\ot m$ with $\theta>0$, $\mu=X\ot m$ and letting $\theta\to 0^+$ after dividing \eqref{positive_k_mu} by $\theta^2$, the estimate \eqref{positive_k_mu} implies that
\begin{align}\no
&\int_{\R^{d_x}}\int_{\R^{d_x}}(\wt{X}(\wt{x})\cdot \p_{x})(\wt{X}(\wh{x})\cdot \p_\mu)K\big(x,\mu\big)\bigg|_{x=X(\wt{x}),\mu=X\ot m}\big(X(\wh{x})\big)dm(\wh{x})dm(\wt{x})\\\no
\geq &-l_k \lim_{\theta\to 0^+}\frac{1}{\theta^2}\left(\int_{\R} yd((X+\theta\wt{X})\ot m-X\ot m)(y)\right)^2\geq -l_k \lim_{\theta\to 0^+}\frac{1}{\theta^2}\left\|\theta\wt{X}\right\|_{L^{2,d_x}_m}^2=-l_k\left\|\wt{X}\right\|_{L^{2,d_x}_m}^2.
\end{align}
Again, the estimate \eqref{positive_k_mu} assumed in Assumption ${\bf (a3)}$ can be replaced by
\small\begin{align}\no
&\int_{\R^{d_x}}\int_{\R^{d_x}}(\wt{X}(\wt{x})\cdot \p_{x})(\wt{X}(\wh{x})\cdot \p_\mu)K\big(x,\mu\big)\bigg|_{x=X(\wt{x}),\mu=X\ot m}\big(X(\wh{x})\big)dm(\wh{x})dm(\wt{x})\\\no
=&\ \int_{\R^{d_x}}\int_{\R^{d_x}}\wt{X}(\wt{x})^\top\Big(\p_x\p_\mu k\big(X(\wt{x}),X\ot m\big)\big(X(\wh{x})\big)\Big)\cdot\wt{X}(\wh{x})dm(\wh{x})dm(\wt{x})\\\no
&+\int_{\R^{d_x}}\int_{\R^{d_x}}\wt{X}(\wt{x})^\top\Big(\p_{x}\p_\mu k\big(X(\wh{x}),X\ot m\big)\big(X(\wt{x})\big)\Big)\cdot\wt{X}(\wh{x})dm(\wh{x})dm(\wt{x})\\\no
&+\int_{\R^{d_x}}\int_{\R^{d_x}}\int_{\R^{d_x}}\wt{X}(\wt{x})^\top\Big(\p_\mu\p_\mu k\big(X(y),X\ot m\big)\big(X(\wt{x}),X(\wh{x})\big)\Big)\cdot\wt{X}(\wh{x}) dm(y) dm(\wh{x})dm(\wt{x})\\\label{positive_k_mu_1}
\geq&\  -l_k\left\|\wt{X}\right\|_{L^{2,d_x}_m}^2,
\end{align}\normalsize
since we only use \eqref{positive_k_mu_1} in our proof of \eqref{displacement} in Theorem \ref{Crucial_Estimate} but not \eqref{positive_k_mu}. 
Note again that the estimate \eqref{positive_k} assumed in Assumption ${\bf (a3)}$ means that $K(x,\mu)$ is strictly convex in $x$. 
Again, \eqref{positive_k} and \eqref{positive_k_mu} assumed in Assumption ${\bf (a3)}$ can also be replaced by 
\small\begin{align}\no
&\int_{\R^{d_x}}\int_{\R^{d_x}}(\wt{X}(\wt{x})\cdot \p_{x})(\wt{X}(\wt{x})\cdot \p_x{})K\big(x,\mu\big)\bigg|_{x=X(\wt{x}),\mu=X\ot m}dm(\wt{x})\\\label{dispalcement_lambda_k}
&+\int_{\R^{d_x}}\int_{\R^{d_x}}(\wt{X}(\wt{x})\cdot \p_{x})(\wt{X}(\wh{x})\cdot \p_\mu)K\big(x,\mu\big)\bigg|_{x=X(\wt{x}),\mu=X\ot m}\big(X(\wh{x})\big)dm(\wh{x})dm(\wt{x})\geq \lambda\left\|\wt{X}\right\|_{L^{2,d_x}_m}^2
\end{align}\normalsize
with $\lambda=\lambda_k>0$.

In summary, \eqref{c_a7} has a close relationship with the coercivity condition (2.4) of the Hamiltonian in Cardaliaguet-Delarue-Lasry-Lions' monograph \cite{cardaliaguet2019master}, and \eqref{positive_g_mu} and \eqref{positive_k_mu} have a close relationship with the monotonicity condition (2.5) in Cardaliaguet-Delarue-Lasry-Lions' monograph \cite{cardaliaguet2019master}. In a certain sense, we generalize the coercivity condition (2.4) of the Hamiltonian and the monotonicity condition (2.5) in Cardaliaguet-Delarue-Lasry-Lions' monograph \cite{cardaliaguet2019master} to the case with non-separable Hamiltonian and general drift function, and \eqref{positive_g_mu} and \eqref{positive_k_mu} generalize the monotonicity condition (2.5) in Cardaliaguet-Delarue-Lasry-Lions' monograph \cite{cardaliaguet2019master}. In addition, \eqref{c_a7} motivates our assumptions \eqref{positive_f}, \eqref{bdd_d2_f} and \eqref{bdd_d2_g_1}.

\section{Bellman Equation: First Order Condition and Pontryagin Maximum Principle}\label{sec:Bellman}
Applying the principle of dynamic programming, see \cite{bayraktar2018randomized,bensoussan2017interpretation,bensoussan2015master,carmona2018probabilistic} for example, 
 one can show that the value function $v(t,m)$ defined in \eqref{valfun} is expected to satisfy the Bellman equation:
\begin{align}\label{bellman}
\begin{cases}
\p_t v(t,m) + \int_{\R^{d_x}} h(x,m,\p_m v (t,m)(x))dm(x)= 0,\\
v(T,m)=\int_{\R^{d_x}} k(x,m)dm(x),
\end{cases}
\end{align} 
where the Hamiltonian $h:\R^{d_x}\times \mathcal{P}_2(\R^{d_x})\times \R^{d_x} \to\R$ is given by
\begin{equation}\label{hamilton1}
h(x,\mu,z):= \min_{\alpha\in \R^{d_\alpha}} \left(f(x,\mu,\alpha)\cdot z  + g(x,\mu,\alpha)\right).
\end{equation}
If, for each $x\in\R^{d_x}$, $\mu\in \mathcal{P}_2(\R^{d_x})$ and $z\in\R^{d_x}$, the function $f(x,\mu,\alpha)\cdot z  + g(x,\mu,\alpha)$ has a unique minimizer $\wh{\alpha}(x,\mu,z)\in\R^{d_\alpha}$ and is differentiable in $\alpha\in\R^{d_\alpha}$ (also refer to Proposition \ref{A5}), then the minimizer $\wh{\alpha}(x,\mu,z)$ satisfies the usual first order condition:
\begin{align}\label{foc}
\partial_\alpha f(x,\mu,\widehat{\alpha})\cdot z  + \partial_\alpha g(x,\mu,\widehat{\alpha})=0.
\end{align}
By substituting the optimal control $\wh{\alpha}$ in \eqref{bellman}, we obtain
\begin{align}\label{nomin_1}
\begin{cases}
\p_t v(t,m) + \int_{\R^{d_x}} \big(f\left(x,m,\widehat{\alpha}\left(x,m, \p_m v (t,m)(x)\right)\right)\cdot  \p_m v  (t,m) (x) \\
\ \ \ \ \ \ \ \ \ \ \ \ \ \ \ \ \ \ \ \ \ + g\left(x,m,\widehat{\alpha}\left(x,m, \p_m v (t,m)(x)\right)\right)\big)dm(x)= 0,\\
v(T,m)=\int_{\R^{d_x}} k(x,m)dm(x).
\end{cases}
\end{align}\normalsize
\begin{definition}\label{def_bellman_sol}
(Classical Solution to the Bellman equation). A function $v:(t,\mu)\in [0,T]\times\mathcal{P}_2(\R^{d_x})\mapsto v(t,\mu)\in \R$ is said to be a classical solution of \eqref{bellman} if it satisfies \eqref{bellman} and, is differentiable in $t$ and $L$-differentiable in $\mu$ so that the corresponding derivatives $\p_t v(t,\mu)$ and $\p_\mu v(t,\mu)(\cdot)$ exist.
\end{definition}
\begin{theorem}\label{Verification}
(Verification theorem). Let $v(t,\mu)$ be a classical solution in the sense of Definition \ref{def_bellman_sol} of \eqref{bellman}, and suppose that 
there is a unique $\widehat{\alpha}:= \widehat{\alpha}\left(x,\mu,z\right)\in\R^{d_\alpha}$ such that 
\begin{align}\no
&\displaystyle\min_{\alpha\in \R^{d_\alpha}} \left(f(x,\mu,\alpha)\cdot \p_\mu v(t,\mu)(x)  + g(x,\mu,\alpha)\right)\\\label{minH}
=&\  f(x,\mu,\wh{\alpha}(x,\mu,\p_\mu v(t,\mu)(x)))\cdot \p_\mu v(t,\mu)(x)  + g(x,\mu,\wh{\alpha}(x,\mu,\p_\mu v(t,\mu)(x))),
\end{align}
then $v(t,m)$ is the value function defined by \eqref{valfun}.
\end{theorem}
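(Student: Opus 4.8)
The plan is to establish the two inequalities $v(t,m)\le j(t,m,\alpha)$ for every admissible feedback $\alpha\in\mathbb{A}$ and $v(t,m)=j(t,m,\widehat{\alpha}^{*})$ for the candidate optimal feedback $\widehat{\alpha}^{*}(s,x,\mu):=\widehat{\alpha}\big(x,\mu,\p_\mu v(s,\mu)(x)\big)$; together these force $v(t,m)=\min_{\alpha\in\mathbb{A}}j(t,m,\alpha)$, which is \eqref{valfun}. To this end I would fix $\xi\in\mathcal{H}^{d_x}$ with $\mathbb{L}_\xi=m$ and an arbitrary $\alpha\in\mathbb{A}$, and abbreviate $x_s:=x^{t,\xi,\alpha}_s$, $\mu_s:=\mathbb{L}_{x_s}=x^{t,\cdot,m,\alpha}_s\ot m$. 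Using the $L$-differentiability of $v$ and the chain rule along the flow generated by \eqref{SDE1} --- equivalently, lifting to $V(s,X):=v(s,X\ot m)$ on $L^{2,d_x}_m$ and differentiating $s\mapsto V(s,x^{t,\cdot,m,\alpha}_s)$ --- I would obtain
\begin{align*}
\frac{d}{ds}v(s,\mu_s)=\p_t v(s,\mu_s)+\int_{\R^{d_x}}\p_\mu v(s,\mu_s)(x)\cdot f\big(x,\mu_s,\alpha(s,x,\mu_s)\big)\,d\mu_s(x).
\end{align*}
Integrating over $[t,T]$, then inserting the Bellman equation \eqref{bellman} in the form $\p_t v(s,\mu_s)=-\int_{\R^{d_x}} h\big(x,\mu_s,\p_\mu v(s,\mu_s)(x)\big)\,d\mu_s(x)$ together with the terminal condition $v(T,\mu_T)=\int_{\R^{d_x}} k(x,\mu_T)\,d\mu_T(x)$, yields
\begin{align*}
v(t,m)=\int_{\R^{d_x}} k(x,\mu_T)\,d\mu_T(x)+\int_t^T\!\!\int_{\R^{d_x}}\Big(h\big(x,\mu_s,\p_\mu v(s,\mu_s)(x)\big)-\p_\mu v(s,\mu_s)(x)\cdot f\big(x,\mu_s,\alpha(s,x,\mu_s)\big)\Big)\,d\mu_s(x)\,ds.
\end{align*}

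Next I would invoke the definition \eqref{hamilton1} of $h$: since $h(x,\mu,z)\le f(x,\mu,a)\cdot z+g(x,\mu,a)$ for every $a\in\R^{d_\alpha}$, the integrand above is pointwise bounded by $g\big(x,\mu_s,\alpha(s,x,\mu_s)\big)$, and because $\mathbb{L}_{x_s}=\mu_s$ the right-hand side is $\le\int_t^T\E\big[g(x_s,\mu_s,\alpha(s,x_s,\mu_s))\big]\,ds+\E\big[k(x_T,\mu_T)\big]=j(t,m,\alpha)$, which proves $v(t,m)\le j(t,m,\alpha)$. For the matching equality I would rerun the computation with $\alpha=\widehat{\alpha}^{*}$: by hypothesis \eqref{minH} the minimum defining $h\big(x,\mu_s,\p_\mu v(s,\mu_s)(x)\big)$ is attained at $\widehat{\alpha}\big(x,\mu_s,\p_\mu v(s,\mu_s)(x)\big)$, so the integrand becomes exactly $g\big(x,\mu_s,\widehat{\alpha}^{*}(s,x,\mu_s)\big)$ and hence $v(t,m)=j(t,m,\widehat{\alpha}^{*})$; in particular $\widehat{\alpha}^{*}$ is an optimal control and $v$ coincides with the value function.

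I expect the main obstacle to be the rigorous justification of the chain rule for $s\mapsto v(s,\mu_s)$: Definition \ref{def_bellman_sol} only postulates the bare existence of $\p_t v$ and $\p_\mu v$, so I would additionally need joint continuity of $\p_\mu v(s,\cdot)(\cdot)$ and an at most quadratic growth bound on it --- which, combined with the linear growth of $f$ (Proposition \ref{A1}) and the uniform-in-$s$ bound $\sup_{s}\E|x_s|^2<\infty$, makes all the integrals above finite --- and then appeal to a flow/It\^o-type chain rule on $\mathcal{P}_2(\R^{d_x})$, or, as sketched above, to the Fr\'echet-differentiable lift $V(s,\cdot)$ on $L^{2,d_x}_m$ and the differentiability of $s\mapsto x^{t,\cdot,m,\alpha}_s$ in $L^{2,d_x}_m$. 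A secondary point is to check that $\widehat{\alpha}^{*}$ is genuinely admissible, i.e. a continuous feedback for which the closed-loop equation \eqref{SDE1} with drift $f\big(x,\mu,\widehat{\alpha}(x,\mu,\p_\mu v(s,\mu)(x))\big)$ is well-posed; this follows once $\p_\mu v$ is continuous and $\widehat{\alpha}$ is Lipschitz as in Proposition \ref{A5}, and in the abstract statement it is most naturally read as part of the hypothesis on the pair $(v,\widehat{\alpha})$.
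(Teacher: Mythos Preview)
Your proposal is correct and follows essentially the same approach as the paper's proof: differentiate $s\mapsto v(s,\mu_s)$ along the controlled flow, substitute the Bellman equation to obtain the inequality $v(t,m)\le j(t,m,\alpha)$ via $h\le f\cdot z+g$, and then show equality for the feedback $\widehat{\alpha}^{*}(s,x,\mu)=\widehat{\alpha}(x,\mu,\p_\mu v(s,\mu)(x))$. Your identification of the chain-rule justification and the admissibility of $\widehat{\alpha}^{*}$ as the points requiring care is apt; the paper itself is brief on these and simply writes the derivative $\frac{d}{ds}v(s,X_s^{t,m,\alpha}\ot m)$ without further comment.
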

See Appendix Section \ref{proofVerification} for a proof.

In conclusion, under the validity of Proposition \ref{A5}, i.e. the unique existence of  minimizer $\wh{\alpha}$ of the Hamiltonian, based on this new version of verification theorem, it is equivalent to solving the control problem \eqref{SDE1}-\eqref{valfun} by tackling the associated  Bellman equation \eqref{nomin_1}.

We now relate the following mean field version of Pontryagin maximum principle \eqref{MPIT} to the Bellman equation \eqref{bellman}. 
To avoid too much cumbersome of notation, we use $\alpha$ in place of $\wh{\alpha}$ to denote the optimal feedback control which satisfies the first order condition \eqref{first_order_condition} in the rest of this article. Firstly, we define the forward-backward ``deterministic'' process $(X_s^{t,m}(x),Z_s^{t,m}(x))_{x\in\R^{d_x},s\in[t,T]}$ as follows:
for a given $m \in \mathcal{P}_2(\R^{d_x})$ as the initial distribution, and a time $t \in [0,T]$ as the starting time, for all $s\in[t,T]$ and $x\in\R^{d_x}$,\small
\begin{align}
\label{MPIT}
\begin{cases}       
\dfrac{d}{ds}X^{t,m}_s(x) = f\left(X^{t,m}_s(x),X^{t,m}_s\ot m,\alpha(X^{t,m}_s(x),X^{t,m}_s\ot m,Z^{t,m}_s(x))\right),\\
\ \ \ \,X^{t,m}_t(x)= x;\\
\dfrac{d}{ds}Z^{t,m}_s(x) = -\displaystyle \int_{\R^{d_x}}\Bigg(\p_\mu f\left(X^{t,m}_s(\wt{x}),X^{t,m}_s\ot m,\alpha\left(X^{t,m}_s(\wt{x}),X^{t,m}_s\ot m, Z^{t,m}_s(\wt{x})\right)\right)(X^{t,m}_s(x))\cdot  Z^{t,m}_s(\wt{x})\\
\ \ \ \ \ \ \ \ \ \ \ \ \ \ \ \ \ \ \ \ \ \ + \p_\mu g\left(X^{t,m}_s(\wt{x}),X^{t,m}_s\ot m,\alpha\left(X^{t,m}_s(\wt{x}),X^{t,m}_s\ot m, Z^{t,m}_s(\wt{x}))\right)\right)(X^{t,m}_s(x))\Bigg)dm(\wt{x})\\
\ \ \ \ \ \ \ \ \ \ \ \ \ \ \ \ \ \ -\p_x f(X^{t,m}_s(x),X^{t,m}_s\ot m,\alpha(X^{t,m}_s(x),X^{t,m}_s\ot m,Z^{t,m}_s(x)))\cdot Z^{t,m}_s(x)\\
\ \ \ \ \ \ \ \ \ \ \ \ \ \ \ \ \ \ -\p_x g(X^{t,m}_s(x),X^{t,m}_s\ot m,\alpha(X^{t,m}_s(x),X^{t,m}_s\ot m,Z^{t,m}_s(x))),\\
\ \ \ \,Z^{t,m}_T(x)=\displaystyle\int_{\R^{d_x}} \p_\mu k(X^{t,m}_T(\wt{x}),X^{t,m}_T\ot m)(X^{t,m}_T(x))dm(\wt{x})+\p_x k(X^{t,m}_T(x),X^{t,m}_T\ot m),
\end{cases}
\end{align}\normalsize
and recall that as an optimal control
$\alpha(x,\mu,z)$ solves the first order condition \eqref{first_order_condition}, i.e.
\begin{align}\label{usolve}
\partial_\alpha f(x,\mu,\alpha)\cdot z  + \partial_\alpha g(x,\mu,\alpha)=0,\text{ for any }(x,\mu,z)\in c_{k_0}.
\end{align}
\begin{remark}\label{collect_x}
For any fixed $t\in[0,T]$ and $m\in\mc{P}_2(\R^{d_x})$, when we say that $(X_s^{t,m}(x),Z_s^{t,m}(x))_{x\in\R^{d_x},s\in[t,T]}$ solves \eqref{MPIT}, it means that $(X_s^{t,m}(x),Z_s^{t,m}(x))_{x\in\R^{d_x},s\in[t,T]}$ solves \eqref{MPIT} collectively for all $x\in\R^{d_x}$. 
\end{remark}
As expected, we have the following theorem about the connection between Maximum Principle and the Bellman Equation.
\begin{theorem}\label{MPBE}
(Maximum Principle and Bellman Equation). Assume that $f(x,\mu,\alpha)$, $g(x,\mu,\alpha)$ and $k(x,\mu)$ are second-order differentiable in $x\in\R^{d_x}$ and $\alpha\in\R^{d_\alpha}$ and second-order regularly differentiable in $\mu\in\mathcal{P}_2(\R^{d_x})$, and $\alpha(x,\mu,z)$ is first-order differentiable in $(x,z)$ and $L$-differentiable in $\mu$. Assume also that, given any $t \in [0,T]$, $m \in \mathcal{P}_2(\R^{d_x})$, the couple $(X_s^{t,m}(x),Z_s^{t,m}(x))_{x\in\R^{d_x},s\in[t,T]}$ solves \eqref{MPIT}, and it is differentiable in $(t,x)\in[0,T]\times \R^{d_x}$ and is $L$-differentiable in $m\in\mathcal{P}_2(\R^{d_x})$; also denote the corresponding derivatives by $\Big(\p_t X^{t,m}_s(x), \p_t Z^{t,m}_s(x)\Big)$, $\Big(\p_m \big(X^{t,m}_s(x)\big)(\wt{x}),\p_m \big(Z^{t,m}_s(x)\big)(\wt{x})\Big)$ and $\Big(\p_x\big(X^{t,m}_s(x)\big),\p_x\big(Z^{t,m}_s(x)\big)\Big)$, respectively. Moreover, assume that all derivatives are differentiable in $s$. Define
\begin{align}\nonumber
v(t,m):
=&\   \int_{\R^{d_x}} k(X^{t,m}_T(\wt{x}),X^{t,m}_T\ot m)dm(\wt{x})\\\label{definev}
&+ \int_t^T \int_{\R^{d_x}} g\bigg(X^{t,m}_s(\wt{x}),X^{t,m}_s\ot m,\alpha\big(X^{t,m}_s(\wt{x}), X^{t,m}_s\ot m,Z^{t,m}_s(\wt{x})\big)\bigg)d m(\wt{x})ds.
\end{align}
Then $v(t,m)$ is a classical solution to the Bellman equation \eqref{nomin_1}. 
\end{theorem}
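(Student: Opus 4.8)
The plan is to verify the two requirements of Definition \ref{def_bellman_sol} for the function $v$ in \eqref{definev} — the terminal condition and the PDE — after first recording the regularity. Differentiability of $v$ in $t$ and $L$-differentiability in $m$ follow from the assumed joint regularity of $\big(X^{t,m}_s(x),Z^{t,m}_s(x)\big)$ in $(t,x,m)$ and the smoothness of $f$, $g$, $k$, $\alpha$, by differentiating \eqref{definev} under the integral sign; for the $m$-derivative one uses the chain rule while tracking the three occurrences of $m$ — the integrating measure $dm(\wt x)$, the mean-field argument $X^{t,m}_s\ot m$, and the $m$-dependence of the flows $X^{t,m}_s$ and $Z^{t,m}_s$ — the interchange being justified by the growth and Lipschitz bounds of Propositions \ref{A2} and \ref{A5}. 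The terminal condition is immediate: with $t=T$ the time integral in \eqref{definev} is empty, and since $X^{T,m}_T(\wt x)=\wt x$ and $X^{T,m}_T\ot m=m$ one gets $v(T,m)=\int_{\R^{d_x}}k(\wt x,m)\,dm(\wt x)$.

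The heart of the proof is the adjoint identity $\p_m v(t,m)(x)=Z^{t,m}_t(x)$ for $m$-a.e.\ $x$. I would derive it by the forward--backward duality. Differentiating \eqref{definev} in $m$ yields an expression in which $\p_x g$, $\p_\mu g$, $\p_\alpha g$, $\p_x k$, $\p_\mu k$ act on the linearized forward flows $\p_x\big(X^{t,m}_s(\wt x)\big)$, $\p_m\big(X^{t,m}_s(\wt x)\big)(x)$ and on the linearized backward flow $\p_m\big(Z^{t,m}_s(\wt x)\big)(x)$. The decisive simplification is the first order condition \eqref{first_order_condition}: all terms carrying the factor $\p_\alpha f\cdot Z^{t,m}_s+\p_\alpha g$ — in particular every contribution of $\p_x\alpha$, $\p_\mu\alpha$ and $\p_z\alpha$ — cancel, which is the envelope property making $f\cdot z+g$ behave along the flow like the Hamiltonian $h$ of \eqref{hamilton1}. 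After this cancellation the remaining integrand is an exact $s$-derivative of the natural pairing between $Z^{t,m}_s$ and the forward-linearized flow (the unlifted analogue of the computation around \eqref{lifted:D_YXexistence}); integrating in $s$ over $[t,T]$ and substituting the backward equation and terminal datum of \eqref{MPIT}, the endpoint at $s=T$ cancels the $m$-derivative of $\int_{\R^{d_x}}k(X^{t,m}_T(\wt x),X^{t,m}_T\ot m)\,dm(\wt x)$, while at $s=t$, where $X^{t,m}_t(\wt x)=\wt x$ and $\p_m\big(X^{t,m}_t(\wt x)\big)(x)=0$, only the term $Z^{t,m}_t(x)$ survives.

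For the PDE, differentiate \eqref{definev} in $t$. The explicit $t$ in the lower limit gives $-\int_{\R^{d_x}}g\big(\wt x,m,\alpha(\wt x,m,Z^{t,m}_t(\wt x))\big)\,dm(\wt x)$ (using $X^{t,m}_t=\mathrm{Id}$), and the rest comes from $\p_t X^{t,m}_s$ and $\p_t Z^{t,m}_s$, which solve the $t$-linearization of \eqref{MPIT} with forward datum $\p_t X^{t,m}_s(\wt x)\big|_{s=t}=-f(\wt x,m,\alpha(\wt x,m,Z^{t,m}_t(\wt x)))$ (obtained by differentiating $X^{t,m}_t(\wt x)=\wt x$ in $t$). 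Running the same duality telescoping, with the same first-order-condition cancellations, collapses these contributions to $-\int_{\R^{d_x}}f\big(\wt x,m,\alpha(\wt x,m,Z^{t,m}_t(\wt x))\big)\cdot Z^{t,m}_t(\wt x)\,dm(\wt x)$; adding the two pieces and substituting $Z^{t,m}_t=\p_m v(t,m)$ from the adjoint identity gives
\[
\p_t v(t,m)=-\int_{\R^{d_x}}\Big(f\big(\wt x,m,\alpha(\wt x,m,\p_m v(t,m)(\wt x))\big)\cdot\p_m v(t,m)(\wt x)+g\big(\wt x,m,\alpha(\wt x,m,\p_m v(t,m)(\wt x))\big)\Big)\,dm(\wt x),
\]
which is exactly \eqref{nomin_1}; when $\big(X^{t,m}_s(x),X^{t,m}_s\ot m,Z^{t,m}_s(x)\big)$ lies in the cone $c_{k_0}$ of \eqref{c_k_0}, Proposition \ref{A5} guarantees $\alpha$ is the actual minimizer, so this also matches the form \eqref{bellman}. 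Alternatively, this last step can be done by differentiating the dynamic-programming relation $v(t,m)=\int_t^r\!\int_{\R^{d_x}}g\,dm\,ds+v(r,X^{t,m}_r\ot m)$ at $r=t$ and invoking the adjoint identity at time $r$, once the flow property of the FBODE is at hand.

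The step I expect to be the main obstacle is making the forward--backward duality rigorous: correctly propagating the $L$-derivatives $\p_m\big(X^{t,m}_s(\wt x)\big)(x)$ and $\p_m\big(Z^{t,m}_s(\wt x)\big)(x)$ through the three roles of $m$ in \eqref{definev}, justifying differentiation under the integral sign, and checking that the first-order-condition cancellations are complete so that what remains is genuinely an exact $s$-derivative whose endpoints read off $Z^{t,m}_t$. The algebraic envelope structure is robust; the delicate part is the measure-theoretic bookkeeping, carried out here directly on $\mathcal{P}_2(\R^{d_x})$ rather than after lifting.
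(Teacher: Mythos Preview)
Your proposal is correct and takes essentially the same approach as the paper: both arguments differentiate \eqref{definev} in $t$ and in $m$, use the backward equation of \eqref{MPIT} to replace the $\p_x g,\p_\mu g$ terms, integrate by parts in $s$ pairing $Z^{t,m}_s$ against the linearized forward flow, and invoke the first order condition \eqref{first_order_condition} to kill all $\p_\alpha f\cdot Z+\p_\alpha g$ contributions, leaving exactly $Z^{t,m}_t(x)=\p_m v(t,m)(x)$ and the PDE \eqref{nomin_1}. The paper computes $\p_t v$ first (via $v(T,m)-v(t,m)=\int_t^T\p_s v(s,m)\,ds$) and then $\p_m v$, whereas you reverse the order, but the substance is identical.
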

\begin{remark}
The regularity of $(X^{t,m}_s(x), Z^{t,m}_s(x))$, namely their differentiability in $m$, $t$, $x$ and $s$, mentioned above can be proven in Theorem \ref{Thm6_2} and Theorem \ref{GlobalSol}.
\end{remark}
See Appendix Section \ref{App:MPBE} for a proof of Theorem \ref{MPBE}.

In accordance with Theorem \ref{MPBE}, to solve for the optimal control problem \eqref{SDE1}-\eqref{valfun}, as in the classical setting, it suffices to consider the existence, uniqueness, and classical regularity of the forward-backward system \eqref{MPIT} after Pontryagin maximum principle.

\section{Local Existence of FBODE \eqref{fbodesystem}}\label{sec:local}
\subsection{Main Results}
For any fixed $0\leq t\leq \wt{T}\leq T$ and $m\in \mathcal{P}_2(\R^{d_x})$, consider the following forward-backward ordinary differential equation (FBODE) system, now with possibly a generic terminal cost functional $p$:  for all $x\in\R^{d_x}$ and $s\in[t,\wt{T}]$,
\footnotesize\begin{align}\label{fbodesystem}
\begin{cases}       
\dfrac{d}{ds}X^{t,m}_s(x) = f\left(X^{t,m}_s(x),X^{t,m}_s\ot m,\alpha(X^{t,m}_s(x),X^{t,m}_s\ot m,Z^{t,m}_s(x))\right),\ 
X^{t,m}_t(x)= x,\\
\,\dfrac{d}{ds}Z^{t,m}_s(x) =-\p_x f(X^{t,m}_s(x),X^{t,m}_s\ot m,\alpha(X^{t,m}_s(x),X^{t,m}_s\ot m,Z^{t,m}_s(x)))\cdot Z^{t,m}_s(x)\\
\ \ \ \ \ \ \ \ \ \ \ \ \ \ \ \ \ \ -\p_x g(X^{t,m}_s(x),X^{t,m}_s\ot m,\alpha(X^{t,m}_s(x),X^{t,m}_s\ot m,Z^{t,m}_s(x)))\\
\ \ \ \ \ \ \ \ \ \ \ \ \ \ \ \ \ \ -\displaystyle \int_{\R^{d_x}}\Bigg(\p_\mu f\left(X^{t,m}_s(\wt{x}),X^{t,m}_s\ot m,\alpha\left(X^{t,m}_s(\wt{x}),X^{t,m}_s\ot m, Z^{t,m}_s(\wt{x})\right)\right)(X^{t,m}_s(x))\cdot  Z^{t,m}_s(\wt{x})\\
\ \ \ \ \ \ \ \ \ \ \ \ \ \ \ \ \ \ + \p_\mu g\left(X^{t,m}_s(\wt{x}),X^{t,m}_s\ot m,\alpha\left(X^{t,m}_s(\wt{x}),X^{t,m}_s\ot m, Z^{t,m}_s(\wt{x}))\right)\right)(X^{t,m}_s(x))\Bigg)dm(\wt{x}),\\ 
\ \ \ \ Z^{t,m}_{\wt{T}}(x)=p(X^{t,m}_{\wt{T}}(x),X^{t,m}_{\wt{T}}\ot m),
\end{cases}
\end{align}\normalsize
where the optimal control $\alpha(x,\mu,z)$ solves the first order condition \eqref{first_order_condition},
and the terminal function $p$, which can be arbitrary now, satisfies the following assumptions:\\ 
${\bf(P)}$ The terminal function $p:(x,\mu)\in \R^{d_x}\times \mathcal{P}_2(\R^{d_x})\mapsto p(x,\mu)\in \R^{d_x}$ is first-order differentiable in $x\in\R^{d_x}$ and $L$-differentiable in $\mu\in\mathcal{P}_2(\R^{d_x})$ with their first-order derivatives being Lipschitz continuous in their corresponding arguments and satisfying, for some finite positive constants $L_p$ and $\wb{L}_p$,
\begin{align}\label{p1}
&\sup_{x\in\R^{d_x},\,\mu\in\mathcal{P}_2(\R^{d_x}),\,\wt{x}\in\R^{d_x}} \left\|\p_x p(x,\mu)\right\|_{\mathcal{L}(\R^{d_x};\R^{d_x})}\vee\left\|\p_\mu p(x,\mu)(\wt{x})\right\|_{\mathcal{L}(\R^{d_x};\R^{d_x})}
\leq L_p,\\\label{p2}
&|p(x,\mu)|\leq \wb{L}_p\cdot(1+|x|+\|\mu\|_1).
\end{align}
The Assumption ${\bf(P)}$ echoes Assumption ${\bf(a3)}$ about the nature of the terminal data, and this structure will be preserved, upon enlarging the bound $L_p$ gradually, while solving the FBODE over consecutive sub-intervals backwardly. In addition, \eqref{p1} implies that, by using \eqref{eq_9_3} in Appendix \ref{App:prop}, for any $x,\,x'\in\R^{d_x}$ and $\mu,\,\mu'\in\mathcal{P}_2(\R^{d_x})$,
\begin{align}\label{lipp}
\left|p(x,\mu)-p(x',\mu')\right|\leq L_p \left(|x-x'|+W_1(\mu,\mu')\right).
\end{align}
Then, clearly, we see that \eqref{p2} is valid 
for any finite constant $\wb{L}_p\geq \max\{|p(0,\delta_0)|,L_p\}$. For the sake of convenience, we also define 
\begin{align}\label{L_B}
L_B:=L_f(1+L_\alpha+2L_pL_\alpha)\text{ and }\wb{L}_B:=L_f(1+L_\alpha+2\wb{L}_pL_\alpha).
\end{align}
For local (in time) solutions of \eqref{MPIT}, $p(x,\mu)$ is taken to be $\int_{\R^{d_x}} \p_\mu k( \wt{x} ,\mu)(x)d\mu(\wt{x})+\p_x k(x,\mu)$ and thus, by \eqref{bdd_d2_k_1}, one can set $L_p=\wb{L}_k$ defined in \eqref{wbL_k}. In the proof for global (in time) existence theorem~\ref{GlobalSol}, we shall glue consecutively together local (in time) solutions of \eqref{MPIT} so as to obtain a global (in time) solution of \eqref{MPIT} over $[0,T]$ of arbitrary length $T$; for a variety of terminal data $p$ in different small time intervals $[t_i,t_{i+1}]$, $i=1,2,...,N$, by using Theorem \ref{Crucial_Estimate} to be proven later in Section \ref{sec:global}, one can set $\wb{L}_p=\max\left\{\wb{L}_k,L^*_0\right\}=L^*_0$ uniformly for various terminal data $p$, provided that $L^*_0$ will be defined later in \eqref{L_star_0}. In these cases, by \eqref{cone_ZX}, one has, for all $(s,x,m)\in[t,{\wt{T}}] \times \R^{d_x}\times \mathcal{P}_2(\R^{d_x})$, $\left(X^{t,m}_s(x),X^{t,m}_s\ot m,Z^{t,m}_s(x)\right)\in  c_{k_0} $, where the cone $c_{k_0}$ was defined in \eqref{c_k_0} and the constant $k_0$ was defined in \eqref{k_0}, namely $k_0=4\wb{L}_p$. Thus, when \eqref{p_aa_f} is valid, by Proposition \ref{A5}, the optimal control $\alpha\left(X^{t,m}_s(x),X^{t,m}_s\ot m,Z^{t,m}_s(x)\right)$ is well-defined and satisfies the first order condition \eqref{first_order_condition}.

Define
\begin{align}\label{Gamma_1}
\text{(a norm) }\vertiii{\gamma }_{1,[t,T]}:=&\  \sup_{s\in[t,T],\,x\in \R^{d_x},\,\mu\in\mathcal{P}_2(\R^{d_x})}\frac{|\gamma(s,x,\mu)|}{1+|x|+\|\mu\|_1},\\\label{Gamma_2}
\text{(a semi-norm) }\vertiii{\gamma }_{2,[t,T]}:=&\  \sup_{s\in[t,T],x\in\R^{d_x},\,\mu\in\mathcal{P}_2(\R^{d_x}),\,\wt{x}\in\R^{d_x}} \left\|\p_x \gamma(s,x,\mu)\right\|_{\mathcal{L}(\R^{d_x};\R^{d_x})}\vee\left\|\p_\mu \gamma(s,x,\mu)(\wt{x})\right\|_{\mathcal{L}(\R^{d_x};\R^{d_x})}.
\end{align}
When there is no cause of ambiguity, we denote $\vertiii{\gamma }_{i,[t,T]}$ by $\vertiii{\gamma }_{i}$, for $i=1,\,2$ for simplicity of notations. Before stating our main results in this section, let us first recall the following concept.

\begin{definition}\label{decouple}(Decoupling field)
We call a function $\gamma:[t,\wt{T}]\times \R^{d_x}\times \mc{P}_2(\R^{d_x})\to \R^{d_x}$ a decoupling field for the FBODE system \eqref{fbodesystem} on $[t,\wt{T}]$ if $\gamma$ solves the following system for all $m\in\mc{P}_2(\R^{d_x})$, $x\in\R^{d_x}$ and $s\in[t,\wt{T}]$:
\small\begin{align}\no
\gamma (s,x,m)
= &\ p(X^{s,m}_{\wt{T}}(x),X^{s,m}_{\wt{T}}\ot m)
\\\no
&+ \int_s^{\wt{T}} \Bigg(\int_{\R^{d_x}}\bigg(\p_\mu f\left(X^{s,m }_\tau(\wt{x}),X^{s,m }_\tau\ot m,\alpha\left(X^{s,m }_\tau(\wt{x}),X^{s,m }_\tau\ot m, Z^{s,m }_\tau(\wt{x})\right)\right)(X^{s,m }_\tau(x))\cdot  Z^{s,m }_\tau(\wt{x})\\\no
&\ \ \ \ \ \ \ \ \ \ \ \ \ \ \ \ \ \ \ \ \ \ + \p_\mu g\left(X^{s,m }_\tau(\wt{x}),X^{s,m }_\tau\ot m,\alpha\left(X^{s,m }_\tau(\wt{x}),X^{s,m }_\tau\ot m, Z^{s,m }_\tau(\wt{x}))\right)\right)(X^{s,m }_\tau(x))\bigg)dm(\wt{x})\\\no
&\ \ \ \ \ \ \ \ \ \ \ \ \ \ +\p_x f(X^{s,m }_\tau(x),X^{s,m }_\tau\ot m,\alpha(X^{s,m }_\tau(x),X^{s,m }_\tau\ot m,Z^{s,m }_\tau(x)))\cdot Z^{s,m }_\tau(x)\\\label{gammaeq} 
&\ \ \ \ \ \ \ \ \ \ \ \ \ \ +\p_x g(X^{s,m }_\tau(x),X^{s,m }_\tau\ot m,\alpha(X^{s,m }_\tau(x),X^{s,m }_\tau\ot m,Z^{s,m }_\tau(x)))\Bigg)d\tau,
\end{align}\normalsize
where $Z^{s,m }_\tau(x):=\gamma\big(\tau,X^{s,m }_\tau(x),X^{s,m }_\tau\ot m\big)$,
\small\begin{align}
\label{xeq}       
X^{s,m}_\tau(x) = x+\int_s^\tau f\Big(X^{s,m }_{\wt{\tau}}(x),X^{s,m }_{\wt{\tau}}\ot m,\alpha\big(X^{s,m }_{\wt{\tau}}(x),X^{s,m }_{\wt{\tau}}\ot m,\gamma(\tau,X^{s,m }_{\wt{\tau}}(x),X^{s,m }_{\wt{\tau}}\ot m)\big)\Big)d\wt{\tau},
\end{align}\normalsize
and $\alpha(x,\mu,z)$ is the optimal control solved via the first order condition \eqref{first_order_condition}.
\end{definition}
\begin{remark}
In other words, the decoupling field $\gamma:=\gamma(s,x,m)$ has the following property: using the given $\gamma$, one can first solve for $X^{s,m}_\tau(x)$ in \eqref{xeq}, and then define $Z^{s,m }_\tau(x):=\gamma\big(\tau,X^{s,m }_\tau(x),X^{s,m }_\tau\ot m\big)$. Then, by using \eqref{coro_6_4}, it can be checked directly that the couple $\big(X^{t,m}_s(x),Z^{t,m}_s(x)\big)_{x\in\R^{d_x},s\in[t,\wt{T}]}$is a solution to the FBODE system \eqref{fbodesystem}.
\end{remark}


Our main results in this section are as follows.
\begin{theorem}\label{Thm6_1} (Local-in-time existence)
Assume that the drift function $f$ and the running cost $g$ satisfy Assumptions ${\bf(a1)}$ and ${\bf(a2)}$ respectively, the terminal function $p$ satisfies the Assumption ${\bf(P)}$ with $L_p\leq \wb{L}_p:=\max\left\{\wb{L}_k,L^*_0\right\}$ and the relationship \eqref{p_aa_f} among $f$, $g$ and $p$ is valid, then there exists a constant $\eps_1=\eps_1(\wb{L}_p; L_f,\Lambda_f,\wb{l}_f,L_g,\Lambda_g,\wb{l}_g,L_\alpha)>0$ (also see \eqref{eps_1}), such that, for any $0\leq t\leq \wt{T}\leq T$ with $\wt{T}-t\leq \eps_1$, there exists a unique continuous decoupling field $\gamma:(s,x,m)\in [t,\wt{T}]\times \R^{d_x}\times \mathcal{P}_2(\R^{d_x})\mapsto \gamma(s,x,m)\in \R^{d_x}$ for the FBODE system \eqref{fbodesystem} on $[t,\wt{T}]$ which satisfies $\vertiii{\gamma}_{1,[t,\wt{T}]}\leq 2\wb{L}_p$. Morover, for any $m\in \mathcal{P}_2(\R^{d_x})$, define $\big(X^{t,m}_s(x),Z^{t,m}_s(x)\big)$ by \eqref{xeq} and $Z^{t,m}_s(x)=\gamma(s,X^{t,m}_s(x),X^{t,m}_s\ot m)$, then the pair $(X_s^{t,m}(x),Z_s^{t,m}(x))_{x\in\R^{d_x},s\in[t,\wt{T}]}$ is a solution of FBODE system \eqref{fbodesystem}, each of them is continuous in $(t,m,x)\in[t,\wt{T}]\times\mc{P}_2(\R^{d_x})\times\R^{d_x}$ and is continuously differentiable in $s\in[t,\wt{T}]$; and also satisfies the following estimate:
\begin{align}\label{cone_ZX}
\left|Z^{t,m}_s(x)\right|\leq 2\wb{L}_p\cdot\left(1+\left|X^{t,m}_s(x)\right|+\left\|X^{t,m}_s\right\|_{L^{1,d_x}_m}\right),
\end{align}
which means that $(X_s^{t,m}(x),Z_s^{t,m}(x))$ satisfies the cone condition $(X_s^{t,m}(x),X_s^{t,m}\ot m, Z_s^{t,m}(x))\in c_{k_0}$ with the cone set $c_{k_0}$ defined in \eqref{c_k_0}. In addition, $X^{t,m}_s(x)$ has the flow property $X^{t,m }_\tau(x)=X^{s,X^{t,m }_s\ot m }_\tau\big(X^{t,m }_s(x)\big)$ for any $0\leq t\leq s\leq \tau\leq T$.
\end{theorem}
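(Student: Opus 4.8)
The plan is to run a Banach fixed-point argument on the map $\Phi$ that sends a candidate decoupling field $\gamma$ to the field obtained by first solving the forward ODE \eqref{xeq} with $Z^{s,m}_\tau(x)=\gamma(\tau,X^{s,m}_\tau(x),X^{s,m}_\tau\ot m)$ plugged in, and then reading off the right-hand side of \eqref{gammaeq}. First I would fix the closed ball $\mathcal{B}:=\{\gamma\in C([t,\wt T]\times\R^{d_x}\times\mathcal{P}_2(\R^{d_x});\R^{d_x}) : \vertiii{\gamma}_{1,[t,\wt T]}\le 2\wb L_p\}$ and check it is mapped into itself: for $\gamma\in\mathcal{B}$, since $\vertiii{\gamma}_1\le 2\wb L_p$ and $L_\Gamma=2\wb L_p\le\tfrac12 k_0$ fails unless $k_0=4\wb L_p$ (which is exactly \eqref{k_0} with $\wb L_p=\max\{\wb L_k,L^*_0\}$), one gets $(X,X\ot m,Z)\in c_{k_0}$ along the flow, so Proposition \ref{A5} applies and the optimal control $\alpha$ is well-defined, Lipschitz, and of linear growth via \eqref{linalpha}. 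Then a Gr\"onwall estimate on \eqref{xeq} using the linear growth \eqref{ligf} of $f$ and \eqref{linalpha} gives $|X^{s,m}_\tau(x)|\le C(1+|x|+\|m\|_1)e^{C(\wt T-t)}$, and plugging this together with the growth bounds \eqref{LGgDerivatives}, \eqref{LGkDerivatives}, \eqref{bdd_d1_f} into the right-hand side of \eqref{gammaeq} shows $\vertiii{\Phi(\gamma)}_{1,[t,\wt T]}\le \wb L_p\, e^{C(\wt T-t)}+C(\wt T-t)(1+\cdots)$; choosing $\eps_1$ small enough depending only on the listed constants forces this to be $\le 2\wb L_p$. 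The contraction estimate is analogous: for $\gamma_1,\gamma_2\in\mathcal{B}$, the difference of the two forward flows is controlled by $\sup|\gamma_1-\gamma_2|$ (use Lipschitz continuity of $f$, \eqref{lipalpha_old}, and Gr\"onwall), and then the difference of $\Phi(\gamma_1)-\Phi(\gamma_2)$ picks up an extra factor $(\wt T-t)$ from the time integral in \eqref{gammaeq}, so $\Phi$ is a contraction once $\eps_1$ is small. This yields the unique continuous $\gamma$ with $\vertiii{\gamma}_{1,[t,\wt T]}\le 2\wb L_p$.

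Once $\gamma$ is fixed, I would \emph{define} $X^{t,m}_s(x)$ by \eqref{xeq} and $Z^{t,m}_s(x):=\gamma(s,X^{t,m}_s(x),X^{t,m}_s\ot m)$, and verify the pair solves \eqref{fbodesystem}: the forward equation is \eqref{xeq} by construction, the terminal condition $Z^{t,m}_{\wt T}(x)=p(X^{t,m}_{\wt T}(x),X^{t,m}_{\wt T}\ot m)$ follows from the value of \eqref{gammaeq} at $s=\wt T$, and the backward ODE is obtained by differentiating $s\mapsto \gamma(s,X^{t,m}_s(x),X^{t,m}_s\ot m)$ — here I would invoke the identity referenced as \eqref{coro_6_4} (the chain rule / consistency relation for the decoupling field along its own flow) mentioned in the Remark after Definition \ref{decouple}. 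The cone estimate \eqref{cone_ZX} is then immediate: $|Z^{t,m}_s(x)|=|\gamma(s,X^{t,m}_s(x),X^{t,m}_s\ot m)|\le \vertiii{\gamma}_1(1+|X^{t,m}_s(x)|+\|X^{t,m}_s\ot m\|_1)\le 2\wb L_p(1+|X^{t,m}_s(x)|+\|X^{t,m}_s\|_{L^{1,d_x}_m})$, using the isometry/identity $\|X^{t,m}_s\ot m\|_1=\|X^{t,m}_s\|_{L^{1,d_x}_m}$ from the push-forward definition; since $k_0=4\wb L_p$, this is exactly membership in $c_{k_0}$. The flow property $X^{t,m}_\tau(x)=X^{s,X^{t,m}_s\ot m}_\tau(X^{t,m}_s(x))$ follows from uniqueness of solutions to the forward ODE \eqref{xeq} together with uniqueness of the decoupling field on sub-intervals (both solve the same well-posed problem with the same data at time $s$).

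For the regularity assertions — continuity of $(X^{t,m}_s(x),Z^{t,m}_s(x))$ in $(t,m,x)$ and continuous differentiability in $s$ — the $s$-differentiability is clear since $X$ solves an ODE with continuous right-hand side, and $Z$ is then $C^1$ in $s$ by the backward ODE. Joint continuity in $(t,m,x)$ would come from the continuity of $\gamma$ (already established from the fixed-point in the sup-norm topology, after checking $\Phi$ preserves continuity) combined with continuous dependence of the ODE \eqref{xeq} on its data $(t,m,x)$, which holds by the Lipschitz estimates and Gr\"onwall; one needs $m\mapsto X\ot m$ and $m\mapsto\|m\|_1$ continuous in $W_1$, which is standard. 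I expect the main obstacle to be the self-consistency bookkeeping in the contraction estimate: because the argument of $\gamma$ inside \eqref{gammaeq} and \eqref{xeq} is itself the flow generated by $\gamma$, the Lipschitz bound on $\Phi(\gamma_1)-\Phi(\gamma_2)$ must absorb both the direct dependence on $\gamma$ and the indirect dependence through $X^{s,m}_\tau(x)$ and the measure argument $X^{s,m}_\tau\ot m$ — keeping all the constants ($L_f,\Lambda_f,\wb l_f,L_g,\Lambda_g,\wb l_g,L_\alpha$, and crucially $\wb L_p$ through the cone confinement) uniform and independent of $T$, while checking that the measure-argument differences are controlled in $W_1$ by $L^{1}_m$-differences of the flows, is the delicate part; this is where the precise choice of $\eps_1$ in \eqref{eps_1} is pinned down.
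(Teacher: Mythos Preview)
Your overall strategy---fixed-point on a ball of candidate decoupling fields, then define $(X,Z)$ through $\gamma$---matches the paper's, but there is a genuine gap in the choice of the ball. Working only in $\mathcal{B}=\{\gamma:\vertiii{\gamma}_{1}\le 2\wb L_p\}$ is not enough: with merely continuous, linearly growing $\gamma$, the forward ODE \eqref{xeq} need not have a \emph{unique} solution (the composite drift $x\mapsto f(x,\mu,\alpha(x,\mu,\gamma(\tau,x,\mu)))$ is only Lipschitz if $\gamma$ is), so $\Phi$ is not even well defined as a single-valued map. More importantly, the contraction estimate cannot close: to compare $\Phi(\gamma_1)$ and $\Phi(\gamma_2)$ you must bound
\[
\big|\gamma_1(\tau,X^{(1)}_\tau,X^{(1)}_\tau\ot m)-\gamma_2(\tau,X^{(2)}_\tau,X^{(2)}_\tau\ot m)\big|,
\]
and splitting this requires a uniform Lipschitz bound on (at least one of) $\gamma_1,\gamma_2$ in $(x,\mu)$, which $\vertiii{\cdot}_1$ alone does not provide. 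Your sentence ``the difference of the two forward flows is controlled by $\sup|\gamma_1-\gamma_2|$'' hides exactly this missing ingredient.

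The paper handles this by iterating in the smaller set $\mathcal{I}_1$ that imposes \emph{both} $\vertiii{\gamma}_1\le 2\wb L_p$ and the derivative bound $\vertiii{\gamma}_2\le 2L_p$. It then shows (Step~4) that the iteration preserves $\vertiii{\cdot}_2\le 2L_p$, and uses this Lipschitz bound in Step~5 to obtain contraction in the $\vertiii{\cdot}_1$-norm only (see the estimate \eqref{6_35}). The limit $\gamma$ therefore lies in the $\vertiii{\cdot}_1$-closure of $\mathcal{I}_1$ and satisfies $\vertiii{\gamma}_1\le 2\wb L_p$, which is all Theorem~\ref{Thm6_1} claims; its differentiability is \emph{not} recovered here (the paper's Remark after Step~6 explains that $\mathcal{I}_1$ is not closed in $\vertiii{\cdot}_1$ and no weak-compactness argument is available), and is proved separately in Theorem~\ref{Thm6_2}. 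To fix your argument, add the condition $\vertiii{\gamma}_2\le 2L_p$ to your ball and verify it is preserved by $\Phi$; the rest of your outline then goes through essentially as in the paper.
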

The proof of Theorem \ref{Thm6_1} will be given in Section \ref{subsec:7_2}.

\begin{theorem}\label{Thm6_2} (Regularity of the local solution)
Assume that the drift function $f$ and the running cost $g$ satisfy Assumptions ${\bf(a1)}$ and ${\bf(a2)}$ respectively, the terminal function $p$ satisfies the Assumption ${\bf(P)}$ with $L_p\leq \wb{L}_p:=\max\left\{\wb{L}_k,L^*_0\right\}$ and the relationship \eqref{p_aa_f} among $f$, $g$ and $p$ is valid, then there exists a constant $\eps_2=\eps_2(\wb{L}_p; L_f,\Lambda_f,\wb{l}_f,L_g,\Lambda_g,\wb{l}_g,L_\alpha)>0$, such that, for any $0\leq t\leq \wt{T}\leq T$ with $\wt{T}-t\leq \eps_2$, the continuous decoupling field $\gamma(s,x,\mu)$ for the FBODE system \eqref{fbodesystem} on $[t,\wt{T}]$ constructed in Theorem \ref{Thm6_1} has the following regularity:\\
(i) $\gamma(s,x,\mu)$ is differentiable in $s\in[\wt{T}-\eps_2,\wt{T}]$, $x\in\R^{d_x}$ and $L$-differentiable in $\mu\in\mc{P}_2(\R^{d_x})$, and its derivatives $\p_x\gamma(s,x,\mu)$ and $\p_\mu\gamma(s,x,\mu)(\wt{x})$ are jointly continuous in their corresponding arguments $(s,x,\mu)\in [\wt{T}-\eps_2,\wt{T}]\times \R^{d_x} \times \mathcal{P}_2(\R^{d_x})$ and $(s,x,\mu, \wt{x})\in [\wt{T}-\eps_2,\wt{T}]\times \R^{d_x} \times \mathcal{P}_2(\R^{d_x})\times \R^{d_x}$ respectively; they also satisfy the following estimates:
\begin{align}\label{p_x_gamma_bdd}
\sup_{s\in[\wt{T}-\eps_2,\wt{T}],\,x\in\R^{d_x},\,\mu\in\mathcal{P}_2(\R^{d_x})} \left\|\p_x\gamma(s,x,\mu)\right\|_{\mathcal{L}(\R^{d_x};\R^{d_x})}\leq 2L_p,\\
\label{p_m_gamma_bdd}
\sup_{s\in[\wt{T}-\eps_2,\wt{T}],\,x\in\R^{d_x},\,\mu\in\mathcal{P}_2(\R^{d_x}),\,\wt{x}\in\R^{d_x}} \left\|\p_\mu \gamma(s,x,\mu)(\wt{x})\right\|_{\mathcal{L}(\R^{d_x};\R^{d_x})}\leq 2L_p.
\end{align}
Moreover, the solution pair $\big(X^{t,m}_s(x),Z^{t,m}_s(x)=\gamma(s,X^{t,m}_s(x),X^{t,m}_s\ot m)\big)_{x\in\R^{d_x},s\in[\wt{T}-\eps_2,\wt{T}]}$ of FBODE system \eqref{fbodesystem} constructed in Theorem \ref{Thm6_1} through $\gamma$ has the following regularity:\\
(ii) $X^{t,m}_s(x)$ is differentiable in $x\in\R^{d_x}$ and $L$-differentiable in $m\in\mc{P}_2(\R^{d_x})$, and its derivatives $\p_x\big(X^{t,m}_s(x)\big)$ and $\p_m\big(X^{t,m}_s(x)\big)(y)$ are jointly continuous in their corresponding arguments $(t,m,x)\in [\wt{T}-\eps_2,\wt{T}]\times \mathcal{P}_2(\R^{d_x})\times \R^{d_x}$ and $(t,m,x,y)\in [\wt{T}-\eps_2,\wt{T}]\times \mathcal{P}_2(\R^{d_x})\times \R^{d_x}\times \R^{d_x}$ respectively; they are also continuously differentiable in $s\in[t,\wt{T}]$ and satisfy the following estimates:
\begin{align}\label{p_x_X_bdd}
&\left\|\p_x\big(X^{t,m}_s(x)\big)\right\|_{\mc{L}(\R^{d_x};\R^{d_x})}\leq \exp\big(L_B(s-t)\big),\\\label{p_m_X_bdd}
&\left\|\p_m\big(X^{t,m}_s(x)\big)(y)\right\|_{\mc{L}(\R^{d_x};\R^{d_x})}\leq L_B(s-t)\Big(L_B(s-t)\exp\Big(2L_B(s-t)\Big)+1\Big)\exp\Big(2L_B(s-t)\Big);
\end{align}
(iii) $X^{t,m}_s(x)$ is differentiable in $t\in[\wt{T}-\eps_2,\wt{T}]$; its derivative $\p_t\big(X^{t,m}_s(x)\big)$ is jointly continuous in its arguments $(t,m,x)\in [\wt{T}-\eps_2,\wt{T}]\times \mathcal{P}_2(\R^{d_x})\times \R^{d_x}$; it is also continuously differentiable in $s\in[t,\wt{T}]$ and satisfies the following estimates:
\begin{align}\label{p_t_X_bdd}
\left|\p_t\big(X^{t,m}_s(x)\big)\right|\leq \wb{L}_B\big(1+|x|+\|m\|_1\big)\Big(L_B(s-t)\exp\Big(2L_B(s-t)\Big)+1\Big)\exp\Big(L_B(s-t)\Big);
\end{align}
here $L_B$ and $\wb{L}_B$ are constants defined in \eqref{L_B}.\\
In particular, $Z^{t,m}_s(x)=\gamma(s,X^{t,m}_s(x),X^{t,m}_s\ot m)$ is differentiable in $t\in[\wt{T}-\eps_2,\wt{T}]$ and $x\in\R^{d_x}$, and $L$-differentiable in $m\in\mc{P}_2(\R^{d_x})$; and its derivatives $\p_t\big(Z^{t,m}_s(x)\big)$, $\p_x\big(Z^{t,m}_s(x)\big)$ and $\p_m\big(Z^{t,m}_s(x)\big)(y)$ are jointly continuous in their corresponding arguments $(t,m,x)\in [\wt{T}-\eps_2,\wt{T}]\times \mathcal{P}_2(\R^{d_x})\times \R^{d_x}$ and $(t,m,x,y)\in [\wt{T}-\eps_2,\wt{T}]\times \mathcal{P}_2(\R^{d_x})\times \R^{d_x}\times \R^{d_x}$ respectively; they also continuously differentiable in $s\in[t,\wt{T}]$ and satisfy the following estimates:
\footnotesize\begin{align}\label{p_x_Z_bdd_n}
&\left\|\p_x\big(Z^{t,m}_s(x)\big)\right\|_{\mc{L}(\R^{d_x};\R^{d_x})}\leq 2L_p\exp\big(L_B(s-t)\big);\\\label{p_m_Z_bdd_n}
&\left\|\p_m\big(Z^{t,m}_s(x)\big)(y)\right\|_{\mc{L}(\R^{d_x};\R^{d_x})}\leq 4L_pL_B(s-t)\Big(L_B(s-t)\exp\Big(2L_B(s-t)\Big)+1\Big)\exp\Big(2L_B(s-t)\Big)+2L_p\exp\big(L_B(s-t)\big);\\\label{p_t_Z_bdd_n}
&\left|\p_t\big(Z^{t,m}_s(x)\big)\right|\leq 4L_p\wb{L}_B\big(1+|x|+\|m\|_1\big)\Big(L_B(s-t)\exp\Big(2L_B(s-t)\Big)+1\Big)\exp\Big(L_B(s-t)\Big).
\end{align}\normalsize
\end{theorem}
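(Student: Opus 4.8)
The plan is to bootstrap the Banach fixed-point construction of Theorem \ref{Thm6_1}: instead of working only in the space of continuous decoupling fields with $\vertiii{\gamma}_1\le 2\wb{L}_p$, I would run the same Picard iteration inside the subclass $\mathcal{E}$ of those $\gamma$ that are in addition $C^1$ in $x$ and $L$-differentiable in $\mu$ with $\vertiii{\gamma}_2\le 2L_p$, show that $\mathcal{E}$ is invariant under the fixed-point map and that the derivatives of the iterates are equicontinuous, and then read off the derivative bounds for the solution couple by the chain rule. As a first step, fix $\gamma\in\mathcal{E}$ and consider the forward flow $X^{t,m}_\tau(x)$ defined by \eqref{xeq}. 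Differentiating this integral equation in $x$, in $m$ (in the $L$-sense) and in $t$, and using the explicit formulas \eqref{eq_5_25_new}--\eqref{eq_5_27_new} for $\p_x\wh\alpha,\p_z\wh\alpha,\p_\mu\wh\alpha$ with the bounds \eqref{p_xalpha}--\eqref{p_zalpha} of Proposition \ref{A5} and the first-order bound \eqref{bdd_d1_f} on $f$, one sees that $\p_x X^{t,m}_\tau(x)$ solves a linear matrix ODE whose coefficient has operator norm $\le L_B$, so Gr\"onwall gives \eqref{p_x_X_bdd}; the quantities $\p_m X^{t,m}_\tau(x)(y)$ and $\p_t X^{t,m}_\tau(x)$ satisfy linear ODEs with the same homogeneous part but with an inhomogeneity controlled by $\p_x X$ integrated against $m$, and a second application of Gr\"onwall produces exactly the nested-exponential bounds \eqref{p_m_X_bdd} and \eqref{p_t_X_bdd}. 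Joint continuity in $(t,m,x)$ and $(t,m,x,y)$ follows from continuous dependence of linear ODEs on parameters plus the joint continuity of $\gamma$ and its derivatives, and $C^1$-dependence in $s$ is immediate from continuity of the right-hand sides.

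Next, let $\Phi$ denote the map sending $\gamma$ to the right-hand side of \eqref{gammaeq} after solving \eqref{xeq}. Using the previous step and the chain rule, $\Phi[\gamma]$ is again $C^1$ in $x$ and $L$-differentiable in $\mu$, and I would choose a (possibly smaller) constant $\eps_2$ so that $\vertiii{\Phi[\gamma]}_2\le 2L_p$: the terminal contribution $\p_x p\cdot\p_x X^{t,m}_{\wt T}(x)$, together with its $L$-analogue, is bounded by $L_p\exp(L_B(\wt T-t))\le\tfrac{3}{2}L_p$ once $\wt T-t$ is small, while the running integral over $[s,\wt T]$ contributes a term of order $(\wt T-s)$ times a constant built from \eqref{bdd_d2_f}, \eqref{bdd_d2_g_1}, \eqref{bdd_d2_g_2}, \eqref{p_xalpha}--\eqref{p_zalpha} and the bounds just obtained, hence $\le\tfrac{1}{2}L_p$ for $\eps_2$ small. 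Combining this invariance with the contraction property underlying Theorem \ref{Thm6_1} and an entirely analogous Lipschitz estimate for $\p_x\Phi[\gamma]$ and $\p_\mu\Phi[\gamma]$ — which consumes the Lipschitz hypotheses in ${\bf(a1)}$, ${\bf(a2)}$ and the Lipschitz parts of Proposition \ref{A5}, and yields uniform equicontinuity of the derivatives of the Picard iterates — one concludes, via Arzel\`a--Ascoli applied to the derivatives and uniqueness of the fixed point, that the decoupling field $\gamma$ of Theorem \ref{Thm6_1} in fact belongs to $\mathcal{E}$; this is precisely part (i), i.e.\ \eqref{p_x_gamma_bdd}--\eqref{p_m_gamma_bdd}.

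With part (i) available, $\gamma$ is genuinely $C^1$ in $x$ and $L$-differentiable in $\mu$ with the stated bounds, so the argument of the first paragraph applied once more yields parts (ii) and (iii). Finally, differentiating the identity $Z^{t,m}_s(x)=\gamma(s,X^{t,m}_s(x),X^{t,m}_s\ot m)$ by the chain rule — so that $\p_x Z^{t,m}_s(x)=\p_x\gamma\cdot\p_x X^{t,m}_s(x)$, $\p_m Z^{t,m}_s(x)(y)$ is a combination of $\p_x\gamma$ paired with $\p_m X^{t,m}_s(x)(y)$ and of $\p_\mu\gamma$ composed with the $L$-derivative of the pushforward $X^{t,m}_s\ot m$, and $\p_t Z^{t,m}_s(x)$ is the analogue with $\p_t X$ in place of $\p_m X$ — and inserting \eqref{p_x_gamma_bdd}--\eqref{p_m_gamma_bdd} together with \eqref{p_x_X_bdd}--\eqref{p_t_X_bdd} produces \eqref{p_x_Z_bdd_n}--\eqref{p_t_Z_bdd_n}; differentiability of $Z$ in $s$ follows directly from the FBODE \eqref{fbodesystem}, whose right-hand side is continuous in $s$.

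I expect the main obstacle to be the measure-derivative bookkeeping underlying the two steps above: one must differentiate the composition $\tau\mapsto\gamma(\tau,X^{s,m}_\tau(x),X^{s,m}_\tau\ot m)$ and the pushforward $X^{s,m}_\tau\ot m$ with respect to $m$ while carefully distinguishing the ``spatial'' argument of $\gamma$ from its ``measure'' argument, and then check that the resulting linear system for $(\p_m X,\p_m Z)$ closes with coefficients controlled solely by the structural constants. Designing a Banach norm on $\mathcal{E}$ in which $\Phi$ is simultaneously a contraction and leaves $\{\vertiii{\cdot}_2\le 2L_p\}$ invariant, and establishing the Lipschitz-in-parameters estimates needed for equicontinuity of the iterates' derivatives, is where essentially all the analytic work lies; everything else is Gr\"onwall bookkeeping.
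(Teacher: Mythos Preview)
Your outline has a genuine gap at the step where you hope to pass from invariance of $\mathcal E$ and contraction in $\vertiii{\cdot}_1$ to $C^1$-regularity of the fixed point via ``an entirely analogous Lipschitz estimate for $\p_x\Phi[\gamma]$'' plus Arzel\`a--Ascoli. The paper itself explicitly flags this obstruction (see the remark at the end of the proof of Theorem~\ref{Thm6_1}): the original Picard map is \emph{not} contractive in $\vertiii{\cdot}_2$, and one cannot extract compactness of the derivatives because $\mathcal P_2(\R^{d_x})$ is not locally compact and the relevant $C^1$-space is not a dual space. Concretely, differentiating \eqref{bodesystem} in $x$ produces terms such as $(\p_x f)^{s,m,(n)}_\tau(x)\cdot\p_x\gamma^{(n-1)}\big(\tau,X^{s,m,(n)}_\tau(x),X^{s,m,(n)}_\tau\ot m\big)$, so comparing $\p_x\gamma^{(n+1)}$ with $\p_x\gamma^{(n)}$ forces you to compare $\p_x\gamma^{(n-1)}$ evaluated at two \emph{different} flows $X^{s,m,(n+1)}_\tau$ and $X^{s,m,(n)}_\tau$; this requires a uniform modulus of continuity of $\p_x\gamma^{(n-1)}$, which the class $\mathcal E$ does not provide. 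Your proposed Lipschitz estimate therefore does not close without adding (and propagating) a second-order/Lipschitz-derivative bound on $\gamma$, which is substantially more than you indicate.

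The paper circumvents this by a different mechanism. Having the continuous fixed point $\gamma$ in hand, it writes down the \emph{linearised} FBODE that the pair $(\p_x X,\p_x\gamma)$ would have to satisfy (equations \eqref{p_xgamma_eq_2}--\eqref{p_xX_eq_2}), with $X$, $Z=\gamma\circ X$ and $\gamma$ now \emph{frozen} as known coefficients, and sets up a \emph{new} Picard iteration directly on the candidate derivative $\gamma_x$ in the ball $\mathcal I_2=\{\|\cdot\|_\infty\le 2L_p\}$. Because this auxiliary system is linear in $\gamma_x$ with bounded coefficients, contraction in the sup norm holds for $T-t$ small, producing a unique continuous $\gamma_x$; a direct difference-quotient Gr\"onwall argument (Step~5 of Part~1) then verifies $\gamma_x=\p_x\gamma$. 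The $L$-derivative $\p_\mu\gamma$ is handled by the same device with the system \eqref{p_mgamma_eq_2}--\eqref{p_mX_eq_2}. Once $\gamma\in\mathcal I_1$ is established this way, Lemma~\ref{lem6_3} gives parts (ii)--(iii), and the bounds \eqref{p_x_Z_bdd_n}--\eqref{p_t_Z_bdd_n} follow from the chain rule exactly as you sketch in your last paragraph. The key idea you are missing is this ``freeze $\gamma$, solve a separate contraction for its derivative'' decoupling; your equicontinuity/compactness route does not go through as stated.
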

The proof of Theorem \ref{Thm6_2} will be given in Section \ref{subsec:7_3}.
\begin{remark}
The assumption on the validity of \eqref{p_aa_f} is only used to guarantee the unique solvability of the optimal control $\alpha(x,\mu,z)$, which is only sufficient but not necessary, yet it eases the involved calculus; see Proposition \ref{A5}. If the optimal control can be solved explicitly and is regular enough, the assumption on the validity of \eqref{p_aa_f} in Theorem \ref{Thm6_1} and \ref{Thm6_2} can be removed.
\end{remark}

In the rest of this section, we use $T$ instead $\wt{T}$ without cause of ambiguity since $\wt{T}$ is fixed in the whole section.

\subsection{Proof of Local Existence Theorem \ref{Thm6_1}}\label{subsec:7_2}
Let $C([t,T]\times \R^{d_x}\times\mathcal{P}_2(\R^{d_x});\R^{d_x})$ denotes the vector space of continuous functions each of which maps from $[t,T]\times \R^{d_x}\times\mathcal{P}_2(\R^{d_x})$ to $\R^{d_x}$.
By equipping this vector space $C([t,T]\times \R^{d_x}\times\mathcal{P}_2(\R^{d_x});\R^{d_x})$ with the norm $\vertiii{\cdot}_1$ defined in \eqref{Gamma_1}, then it becomes a Banach space. We next consider
a convex subset of this Banach space for the domain and range for an iteration map:
\small\begin{align}\no
\mathcal{I}_1:=\bigg\{\gamma(s,x,\mu)\in C([t,T]\times \R^{d_x}\times\mathcal{P}_2(\R^{d_x});\R^{d_x}):\ \gamma\text{ is differentiable in }x\text{ and $L$-differentiable in }\mu&\\\no
\text{ with its derivatives being continuous in their}&\\
\text{arguments and }\vertiii{\gamma}_1\leq 2\wb{L}_p, \vertiii{\gamma}_2\leq 2L_p&\bigg\}.
\end{align}\normalsize

\textbf{Step 1}. Set the initial seed $\gamma^{(0)}(s,x,\mu):= p(x,\mu)$ which clearly belongs to $\mathcal{I}_1$ by using \eqref{p1} and \eqref{p2}. Assume that $\gamma^{(n-1)}(s,x,\mu)\in \mathcal{I}_1$ and define $\gamma^{(n)}(s,x,\mu)$, for $n=1,2,...$, iteratively as follows:\\
(i) solve the following forward ODE equation with a mean-field term and an exogenous driving term $\gamma^{(n-1)}$ in the integral representation for $X^{t,m,(n)}_s(x)$:
\footnotesize\begin{align}
\label{fodesystem}       
X^{t,m,(n)}_s(x) = x+\int_t^s f\Big(X^{t,m,(n)}_\tau(x),X^{t,m,(n)}_\tau\ot m,\alpha\big(X^{t,m,(n)}_\tau(x),X^{t,m,(n)}_\tau\ot m,\gamma^{(n-1)}(\tau,X^{t,m,(n)}_\tau(x),X^{t,m,(n)}_\tau\ot m)\big)\Big)d\tau;
\end{align}\normalsize
(ii) define the next step backward solution approximation $\gamma^{(n)}$ explicitly in terms of $\gamma^{(n-1)}$ and $X^{t,m,(n)}_s(x)$ as follows:
\footnotesize\begin{align}\no
&\gamma^{(n)} (s,x,m)\\\no
= &\ \,p(X^{s,m,(n)}_T(x),X^{s,m,(n)}_T\ot m)\\\no
&+ \int_s^T \Bigg(\int_{\R^{d_x}}\bigg(\p_\mu f\left(X^{s,m,(n)}_\tau(\wt{x}),X^{s,m,(n)}_\tau\ot m,\alpha\left(X^{s,m,(n)}_\tau(\wt{x}),X^{s,m,(n)}_\tau\ot m, Z^{s,m,(n)}_\tau(\wt{x})\right)\right)(X^{s,m,(n)}_\tau(x))\cdot  Z^{s,m,(n)}_\tau(\wt{x})\\\no
&\ \ \ \ \ \ \ \ \ \ \ \ \ \ \ \ \ \ \ \ \ \ + \p_\mu g\left(X^{s,m,(n)}_\tau(\wt{x}),X^{s,m,(n)}_\tau\ot m,\alpha\left(X^{s,m,(n)}_\tau(\wt{x}),X^{s,m,(n)}_\tau\ot m, Z^{s,m,(n)}_\tau(\wt{x}))\right)\right)(X^{s,m,(n)}_\tau(x))\bigg)dm(\wt{x})\\\no
&\ \ \ \ \ \ \ \ \ \ \ \ \ \ +\p_x f(X^{s,m,(n)}_\tau(x),X^{s,m,(n)}_\tau\ot m,\alpha(X^{s,m,(n)}_\tau(x),X^{s,m,(n)}_\tau\ot m,Z^{s,m,(n)}_\tau(x)))\cdot Z^{s,m,(n)}_\tau(x)\\\label{bodesystem} 
&\ \ \ \ \ \ \ \ \ \ \ \ \ \ +\p_x g(X^{s,m,(n)}_\tau(x),X^{s,m,(n)}_\tau\ot m,\alpha(X^{s,m,(n)}_\tau(x),X^{s,m,(n)}_\tau\ot m,Z^{s,m,(n)}_\tau(x)))\Bigg)d\tau,
\end{align}\normalsize
where $Z^{s,m,(n)}_\tau(x):=\gamma^{(n-1)}\big(\tau,X^{s,m,(n)}_\tau(x),X^{s,m,(n)}_\tau\ot m\big)$. Here, the $\gamma^{(n)} (s,x,m)$ on the left hand side of \eqref{bodesystem} is an approximation of $Z^{s,m}_s(x)$ indeed. It is worth noting that $\alpha(X^{s,m,(n)}_\tau(x),X^{s,m,(n)}_\tau\ot m,Z^{s,m,(n)}_\tau(x))$ is well-defined by the validity of Proposition \ref{A5} since $\vertiii{\gamma^{(n-1)}}_1\leq 2\wb{L}_p$ and thus $(X^{s,m,(n)}_\tau(x),X^{s,m,(n)}_\tau\ot m,\\Z^{s,m,(n)}_\tau(x))$ belongs to the cone $c_{k_0}$ defined in \eqref{c_k_0}, and this is a key point in designing such an iteration mapping \eqref{bodesystem}. In the following steps $2$-$6$, we shall show that $X^{t,m,(n)}_s(x)$ is well-defined, $\gamma^{(n)}(s,x,\mu)$, which was defined by \eqref{bodesystem}, belongs to $\mathcal{I}_1$ and the iteration mapping $\gamma^{(n-1)}\mapsto \gamma^{(n)}$ is a contraction under the norm $\vertiii{\cdot}_1$.

\textbf{Step 2}. In this step, we first prove the following lemma to show that $X^{t,m,(n)}_s(x)$ is well-defined and satisfies the estimates \eqref{eq_6_14_1}-\eqref{Bp_tX}.

\begin{lemma}\label{lem6_3}
Under the assumptions of Theorem \ref{Thm6_1}, then, for each $t\in[0 ,T]$, $m\in\mathcal{P}_2(\R^{d_x})$ and $\gamma\in \mathcal{I}_1$, the equation 
\small\begin{align}
\label{fodesystem_lem}       
X^{t,m}_s(x) = x+\int_t^s f\Big(X^{t,m}_\tau(x),X^{t,m}_\tau\ot m,\alpha\big(X^{t,m}_\tau(x),X^{t,m}_\tau\ot m,\gamma(\tau,X^{t,m}_\tau(x),X^{t,m}_\tau\ot m)\big)\Big)d\tau;
\end{align}\normalsize 
has a unique solution $X^{t,m}_\cdot(\cdot):(s,x)\in[t,T]\times \R^{d_x}\mapsto X^{t,m}_s(x)\in \R^{d_x}$, which belongs to $C\big([t,T]\times \R^{d_x}; \R^{d_x}\big)$. Moreover, $X^{t,m}_s(x)$ is Lipschitz continuous and is also differentiable in $t\in[0 ,T]$ and $x\in\R^{d_x}$, and $L$-differentiable in $m\in\mathcal{P}_2(\R^{d_x})$; its derivatives $\p_t\big(X^{t,m}_s(x)\big)$, $\p_x\big(X^{t,m}_s(x)\big)$ and $\p_m\big(X^{t,m}_s(x)\big)(\wt{x})$ are jointly continuous in their corresponding arguments $(t,m,x)\in[0,T]\times \mathcal{P}_2(\R^{d_x})\times \R^{d_x}$ and $(t,m,x,\wt{x})\in[0,T]\times \mathcal{P}_2(\R^{d_x})\times \R^{d_x}\times \R^{d_x}$ respectively,
and they are also continuously differentiable in $s\in[t,T]$. Furthermore, the unique solution $X^{t,m}_s(x)$ satisfies the following estimates:\\
(i) for $t\in[0,T]$, $m\in\mc{P}_2(\R^{d_x})$, $x\in\R^{d_x}$,
\begin{align}\no
\sup_{s\in[t,T]}\left\|X^{t,m}_s \right\|_{L^{1,d_x}_m}
\leq&\ \Big(\left\|m\right\|_1+\wb{L}_B(T-t)\Big)\exp\Big(2\wb{L}_B(T-t)\Big)\text{ and }\\\label{eq_6_14_1}
\sup_{s\in[t,T]}\left|X^{t,m}_s (x)\right|\leq&\ \Big(\left|x\right|+\wb{L}_B\big(1+\sup_{s\in[t,T]}\left\|X^{t,m}_s \right\|_{L^{1,d_x}_m}\big)(T-t)\Big)
\exp\Big(\wb{L}_B(T-t)\Big).
\end{align}\normalsize
(ii) for $0\leq t\leq t'\leq s\leq T$, $m,\,m'\in\mc{P}_2(\R^{d_x})$, $x,\,x'\in\R^{d_x}$,
\small\begin{align}\label{XLipinx}     
\left|X^{t,m}_s(x')-X^{t,m}_s(x) \right|\leq&\ \left|x'-x\right|\exp\Big(L_B(s-t)\Big);
\\\label{XLipinm}     
\left|X^{t,m'}_s(x)-X^{t,m}_s(x) \right|
\leq&\ W_1(m',m)L_B(s-t)\exp\Big(2L_B(s-t)\Big)\Big(1+L_B(s-t)\exp\Big(2L_B(s-t)\Big)\Big);
\\\label{XLipint}    
\left|X^{t',m}_s(x)-X^{t,m}_s(x) \right|\leq&\ (t'-t)\wb{L}_B\exp\Big(L_B(s-t')\Big)\bigg(1+\sup_{\tau\in[t,t']}\left| X^{t,m}_\tau(x) \right|+\sup_{\tau\in[t,t']}\left\| X^{t,m}_\tau \right\|_{L^{1,d_x}_m}\\\no
&\ \ \ \ \ \ \ \ \ \ \ \ \ \ \ \ \ \ \ \ \ \ \ \ \ \ \ \ \ \ \ \ \ \ +L_B(s-t')\Big(1+2\sup_{\tau\in[t,t']}\left\| X^{t,m}_\tau \right\|_{L^{1,d_x}_m} \Big)\exp\Big(2L_B(s-t')\Big)\bigg).
\end{align}\normalsize
(iii) for $t\in[0,T]$, $m\in\mc{P}_2(\R^{d_x})$, $x,\,y\in\R^{d_x}$, $s\in[t,T]$,
\begin{align}\label{Bp_xX}
&\left\|\p_x\big(X^{t,m}_s(x)\big)\right\|_{\mc{L}(\R^{d_x};\R^{d_x})}\leq \exp\Big(L_B(s-t)\Big);
\\\label{Bp_mX}
&\left\|\p_m\big(X^{t,m}_s(x)\big)(y)\right\|_{\mc{L}(\R^{d_x};\R^{d_x})}\leq L_B(s-t)\exp\Big(2L_B(s-t)\Big)\Big(1+L_B(s-t)\exp\Big(2L_B(s-t)\Big)\Big);
\\\label{Bp_tX}
&\left|\p_t\big(X^{t,m}_s(x)\big)\right|\leq \wb{L}_B\big(1+|x|+\|m\|_1\big)\Big(L_B(s-t)\exp\Big(2L_B(s-t)\Big)+1\Big)\exp\Big(L_B(s-t)\Big).
\end{align}
\end{lemma}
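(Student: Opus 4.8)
The plan is to first absorb the optimal control into the drift: for $(\tau,y,\mu)\in[t,T]\times\R^{d_x}\times\mc{P}_2(\R^{d_x})$ set $b(\tau,y,\mu):=f\big(y,\mu,\alpha(y,\mu,\gamma(\tau,y,\mu))\big)$. Since $\gamma\in\mc{I}_1$ obeys $\vertiii{\gamma}_{1}\le 2\wb{L}_p$ and $k_0=4\wb{L}_p$, one has $|\gamma(\tau,y,\mu)|\le\tfrac12 k_0(1+|y|+\|\mu\|_1)$, so $(y,\mu,\gamma(\tau,y,\mu))\in c_{k_0}$ for every $(y,\mu)$, and by Proposition \ref{A5} (applicable because \eqref{p_aa_f} is in force) $\alpha$ and its first-order derivatives are well defined and Lipschitz there. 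Combining \eqref{ligf}, \eqref{Lipf} with \eqref{linalpha} and \eqref{lipalpha} (the latter applied to $\Gamma=\gamma(\tau,\cdot,\cdot)$, for which $L_\Gamma=\vertiii{\gamma}_{2}\le 2L_p\le\tfrac12 k_0$), together with \eqref{bdd_d1_f}, \eqref{p_xalpha} and \eqref{p_zalpha}, one checks that $b$ has linear growth $|b(\tau,y,\mu)|\le\wb{L}_B(1+|y|+\|\mu\|_1)$, is Lipschitz in $(y,\mu)$ in the $|\cdot|+W_1$-metric with constant $L_B$, and has first-order derivatives in $y$ and in $\mu$ that are bounded by $L_B$ and jointly continuous; here $L_B,\wb{L}_B$ are as in \eqref{L_B}. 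Thus \eqref{fodesystem_lem} is the flow of a standard McKean--Vlasov ODE with coefficient $b$. To solve it, fix $\xi\in\mc{H}^{d_x}$ with $\mathbb{L}_\xi=m$; a Banach fixed point argument in $C([t,T];\mc{H}^{d_x})$ with the weighted norm $\sup_{s}e^{-\lambda(s-t)}\|\cdot\|_{\mc{H}^{d_x}}$ ($\lambda>L_B$), as in \cite{kurtz1999particle,sznitman1991topics}, gives a unique $Y_\cdot$ solving $Y_s=\xi+\int_t^s b(\tau,Y_\tau,\mathbb{L}_{Y_\tau})\,d\tau$; put $\mu_\tau:=\mathbb{L}_{Y_\tau}$. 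For each fixed $x$, the decoupled equation $X^{t,m}_s(x)=x+\int_t^s b(\tau,X^{t,m}_\tau(x),\mu_\tau)\,d\tau$ then has a unique solution on $[t,T]$ by Picard--Lindel\"of, continuously differentiable in $s$; taking $x=\xi(\omega)$ and invoking uniqueness gives $X^{t,m}_s(\xi)=Y_s$ a.s., hence $\mathbb{L}_{X^{t,m}_s(\xi)}=\mu_s=X^{t,m}_s\ot m$, so $X^{t,m}_\cdot(\cdot)$ solves \eqref{fodesystem_lem}, and uniqueness for \eqref{fodesystem_lem} follows the same way. The flow property $X^{t,m}_\tau(x)=X^{s,X^{t,m}_s\ot m}_\tau(X^{t,m}_s(x))$ is then a consequence of uniqueness of the McKean--Vlasov ODE restarted at time $s$ from the random variable $X^{t,m}_s(\xi)$.

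\textbf{A priori estimates.} Estimate \eqref{eq_6_14_1} follows by taking the $L^{1,d_x}_m$-norm in \eqref{fodesystem_lem}, using $|b|\le\wb{L}_B(1+|y|+\|\mu\|_1)$ with $\|\mu_\tau\|_1=\|X^{t,m}_\tau\|_{L^{1,d_x}_m}$, and Gr\"onwall, then repeating pointwise in $x$. For \eqref{XLipinx}: subtract the equations at $x'$ and $x$ (the measure argument is common) and apply Gr\"onwall with constant $L_B$. For \eqref{XLipinm}: with an optimal coupling of $(m',m)$ bound $W_1(X^{t,m'}_\tau\ot m',X^{t,m}_\tau\ot m)\le e^{L_B(\tau-t)}W_1(m,m')+\sup_{y}\big|X^{t,m'}_\tau(y)-X^{t,m}_\tau(y)\big|$ via \eqref{XLipinx}, run a first Gr\"onwall on $\sup_{y}\big|X^{t,m'}_\tau(y)-X^{t,m}_\tau(y)\big|$ (producing the factor $L_B(s-t)e^{2L_B(s-t)}$) and a second one at fixed $x$. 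For \eqref{XLipint}: use the flow property to split $X^{t',m}_s(x)-X^{t,m}_s(x)=\big(X^{t',m}_s(x)-X^{t',m}_s(X^{t,m}_{t'}(x))\big)+\big(X^{t',m}_s(X^{t,m}_{t'}(x))-X^{t',X^{t,m}_{t'}\ot m}_s(X^{t,m}_{t'}(x))\big)$ and combine \eqref{XLipinx}, \eqref{XLipinm}, the bound $|x-X^{t,m}_{t'}(x)|\le(t'-t)\wb{L}_B\big(1+\sup_{[t,t']}|X^{t,m}_\tau(x)|+\sup_{[t,t']}\|X^{t,m}_\tau\|_{L^{1,d_x}_m}\big)$, and $W_1(X^{t,m}_{t'}\ot m,m)\le\|X^{t,m}_{t'}(\cdot)-(\cdot)\|_{L^{1,d_x}_m}$. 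Joint continuity of $X^{t,m}_s(x)$ in $(t,m,x)$ and continuous differentiability in $s$ are immediate from these estimates and the integral equation.

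\textbf{Differentiability and the derivative estimates.} Since $b(\tau,\cdot,\cdot)$ has bounded, jointly continuous first-order derivatives, classical ODE perturbation theory (Gr\"onwall estimate on the remainder of the first-order Taylor expansion) shows $X^{t,m}_s(x)$ is $C^1$ in $x$, with $\p_x\big(X^{t,m}_s(x)\big)$ solving the linear equation $\tfrac{d}{ds}\p_x\big(X^{t,m}_s(x)\big)=D_y b\big(s,X^{t,m}_s(x),\mu_s\big)\,\p_x\big(X^{t,m}_s(x)\big)$, $\p_x\big(X^{t,m}_t(x)\big)=Id$; as $\|D_y b\|\le L_B$, Gr\"onwall gives \eqref{Bp_xX}. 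For $L$-differentiability in $m$ and differentiability in $t$, differentiate \eqref{fodesystem_lem} formally: because $\mu_\tau$ depends on $m$ (resp.\ on $t$) through the whole population, $\p_m\big(X^{t,m}_s(x)\big)(\wt{x})$ and $\p_t\big(X^{t,m}_s(x)\big)$ satisfy linear but non-local equations, driven respectively by $\int_{\R^{d_x}}\p_\mu b\big(\tau,X^{t,m}_\tau(x),\mu_\tau\big)\big(X^{t,m}_\tau(y')\big)\,\p_m\big(X^{t,m}_\tau(y')\big)(\wt{x})\,dm(y')$ (plus a contribution transporting $\delta_{\wt{x}}$) and by $-b(t,x,m)+\int_{\R^{d_x}}\p_\mu b\big(\tau,X^{t,m}_\tau(x),\mu_\tau\big)\big(X^{t,m}_\tau(y')\big)\,\p_t\big(X^{t,m}_\tau(y')\big)\,dm(y')$; these systems have bounded, jointly continuous coefficients, so a fixed point / Gr\"onwall argument in the appropriate space gives unique solutions, and a uniform difference-quotient argument identifies them as the genuine derivatives, jointly continuous in all arguments. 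The bounds \eqref{Bp_mX} and \eqref{Bp_tX} follow from Gr\"onwall applied to $\sup_x\big\|\p_m\big(X^{t,m}_s(x)\big)(\wt{x})\big\|$ and to $\big\|\p_t X^{t,m}_s\big\|_{L^{1,d_x}_m}$ (then pointwise in $x$), using $\|D_y b\|\vee\|\p_\mu b\|\le L_B$ and $|b(t,x,m)|\le\wb{L}_B(1+|x|+\|m\|_1)$.

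\textbf{Main obstacle.} The hardest part is the $L$-differentiability in $m$ together with the joint continuity of the three derivatives: the mean-field coupling turns the variational equations into non-local linear systems on $\R^{d_x}\times\R^{d_x}$, so one must choose the correct Banach space for the ``derivative fields'' $\wt{x}\mapsto\p_m\big(X^{t,m}_s(x)\big)(\wt{x})$, prove well-posedness and the Gr\"onwall bounds there, and then carefully justify -- via uniform convergence of difference quotients, which rests on the continuity and Lipschitz properties of $f$, $\alpha$ and their first derivatives furnished by Assumptions ${\bf(a1)}$, ${\bf(a2)}$ and Proposition \ref{A5} -- that the candidate fields are indeed the $L$-derivatives. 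Fitting the constants into exactly the exponential form displayed in \eqref{Bp_mX}--\eqref{Bp_tX}, and tracking the interplay with the cone confinement that keeps $\alpha$ admissible throughout, is the remaining bookkeeping.
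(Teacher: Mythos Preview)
Your proposal is correct and follows the same overall strategy as the paper: package the drift as a Lipschitz McKean--Vlasov coefficient $b$, obtain well-posedness by a fixed point, derive the growth and Lipschitz estimates by Gr\"onwall, and identify the derivatives as solutions of linear variational equations verified through difference-quotient remainders. Three implementation choices differ from the paper and are worth noting. First, for well-posedness you lift to $C([t,T];\mc{H}^{d_x})$ and then decouple pointwise in $x$, whereas the paper runs a direct Banach fixed point in a weighted function space $\{(s,x)\mapsto X^{t,m}_s(x)\}$ for a short time and then extends via the a~priori bound \eqref{eq_6_14_1}; both routes are standard. Second, for the $m$-Lipschitz estimate \eqref{XLipinm} and the bound \eqref{Bp_mX} you run Gr\"onwall directly on $\sup_x$, while the paper first integrates to an $L^{1,d_x}_m$-bound and then substitutes back pointwise; your route actually yields a slightly sharper constant (it drops the factor $1+L_B(s-t)e^{2L_B(s-t)}$), so it certainly implies the stated bounds. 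Third, for \eqref{XLipint} you decompose via the flow property and reuse \eqref{XLipinx}--\eqref{XLipinm}, whereas the paper subtracts the integral equations directly and repeats the $L^1$-then-pointwise scheme; your argument is tidier but uses the flow property, which in the paper is stated as a consequence of uniqueness only after the lemma --- this is harmless since you establish uniqueness first.
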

\begin{remark}
By the statement 1 in Proposition \ref{property_push_forward} and the uniqueness of solutions of \eqref{fodesystem_lem}, we have the following flow property of $X^{t,m }_s(x)$, for any $0\leq t\leq s\leq \tau\leq T$,
\begin{align}\label{coro_6_4}
X^{t,m }_\tau(x)=X^{s,X^{t,m }_s\ot m }_\tau\big(X^{t,m }_s(x)\big).
\end{align} 
\end{remark}
\begin{remark}
Note that $X^{t,m}_s(x)$ is Lipschitz continuous in $m\in\mc{P}_2(\R^{d_x})$ under $W_1$-metric by \eqref{XLipinm}, which is stronger than that under $W_2$-metric since $W_1(\mu,\nu)\leq W_2(\mu,\nu)$ for any $\mu,\,\nu\in\mc{P}_2(\R^{d_x})\subset \mc{P}_1(\R^{d_x})$ by \eqref{eq_2_2}.
\end{remark}

\begin{proof}[Proof of Lemma \ref{lem6_3}]
Step 1 (Local existence).
For any fixed $(t,m)\in[0 ,T]\times \mathcal{P}_2(\R^{d_x})$, define the working space for the iteration as
\begin{align*}
\mathcal{I}_2^\delta:=\bigg\{X^{t,m}_\cdot(\cdot)\in C\big([t,t+\delta]\times \R^{d_x}; \R^{d_x}\big):\ \sup_{s\in[t,t+\delta],\, x\in\R^{d_x}}\frac{|X^{t,m}_s(x)|}{1+|x|+\|m\|_1}\leq 2\bigg\},
\end{align*}
where $\delta$ is a small positive constant to be determined later and the expression of supremum is a proper norm.
Set the initial seed $X^{t,m,(0)}_s(x)\equiv x\in \mathcal{I}_2^\delta$. Assume that $X^{t,m,(n-1)}_\cdot(\cdot)\in \mathcal{I}_2^\delta$ and define $X^{t,m,(n)}_\cdot(\cdot)$, for $n=1,2,...$, iteratively as follows: for $s\in[t,t+\delta]$,
\footnotesize\begin{align}\no
X^{t,m,(n)}_s(x) = x+\int_t^s f\Big(X^{t,m,(n-1)}_\tau(x),X^{t,m,(n-1)}_\tau\ot m,\alpha\big(X^{t,m,(n-1)}_\tau(x),X^{t,m,(n-1)}_\tau\ot m,\gamma(\tau,X^{t,m,(n-1)}_\tau(x),X^{t,m,(n-1)}_\tau\ot m)\big)\Big)d\tau.
\end{align}\normalsize 
Clearly, $X^{t,m,(n)}_\cdot(\cdot)\in C\big([t,t+\delta]\times \R^{d_x}; \R^{d_x}\big)$ since $f(x,\mu,\alpha)$, $\alpha(x,\mu,z)$ (for $(x,\mu,z)\in c_{k_0}$), $\gamma(s,x,\mu)$ are continuous in their corresponding arguments and $X^{t,m,(n-1)}_s(x)$ is continuous in $(s,x)$ by the inductive hypothesis. Then, for small enough $\delta>0$ that will be determined below, $X^{t,m,(n)}_\cdot(\cdot)\in \mathcal{I}_2^\delta$ by checking that it truly fulfills the properties required in $\mathcal{I}_2^\delta$ in the following:\\\small
\begin{align*}
\sup_{s\in[t,t+\delta]}\left|X^{t,m,(n)}_s(x) \right|
\leq&\ \left|x\right|+\delta \cdot L_f(1+L_\alpha)\cdot \sup_{s\in[t,t+\delta]}\left(1+|X^{t,m,(n-1)}_s(x)|+\|X^{t,m,(n-1)}_s\|_{L^{1,d_x}_m}\right)\\
&+\delta \cdot L_fL_\alpha\cdot\sup_{s\in[t,t+\delta]}|\gamma(s,X^{t,m,(n-1)}_s(x),X^{t,m,(n-1)}_s\ot m)|\ \ \ (\text{by using \eqref{ligf} and \eqref{linalpha}})\\
\leq&\ \left|x\right|+\delta \cdot \wb{L}_B\cdot \sup_{s\in[t,t+\delta]}\left(1+|X^{t,m,(n-1)}_s(x)|+\|X^{t,m,(n-1)}_s\|_{L^{1,d_x}_m}\right)\ \ \ (\text{by using $\gamma\in \mathcal{I}_1$})\\
\leq&\ \left|x\right|+\delta \cdot \wb{L}_B\cdot \left(5+2|x|+6\|m\|_1\right)\ \ \ (\text{ by using $X^{t,m,(n-1)}_\cdot(\cdot)\in \mathcal{I}_2^\delta$})\\
\leq&\ 2(1+|x|+\|m\|_1),
\end{align*}\normalsize
by selecting a $\delta\leq \frac{1}{3\wb{L}_B}$.
For a small enough $\delta$ to be determined later, we next show that the iteration mapping $X^{t,m,(n-1)}_\cdot(\cdot)\in \mathcal{I}_2^\delta\mapsto X^{t,m,(n)}_\cdot(\cdot)\in \mathcal{I}_2^\delta$ is contractive: 
\footnotesize\begin{align*}
&\sup_{s\in[t,t+\delta]} \left|X^{t,m,(n+1)}_s(x)-X^{t,m,(n)}_s(x)\right|\\
\leq&\ L_B\delta\cdot\sup_{\tau\in[t,t+\delta]}  \bigg(\left|X^{t,m,(n)}_\tau(x)-X^{t,m,(n-1)}_\tau(x) \right|+\left\|X^{t,m,(n)}_\tau-X^{t,m,(n-1)}_\tau \right\|_{L^{1,d_x}_m}\bigg)\ \ (\text{by using \eqref{Lipf}, \eqref{lipalpha} and $\gamma\in \mathcal{I}_1$})\\
\leq&\ L_B\delta\cdot\sup_{\tau\in[t,t+\delta]}  \bigg(\left|X^{t,m,(n)}_\tau(x)-X^{t,m,(n-1)}_\tau(x) \right|+\sup_{x\in\R^{d_x}}\frac{|X^{t,m,(n)}_\tau(x)-X^{t,m,(n-1)}_\tau(x) |}{1+|x|+\|m\|_1}\cdot(1+2\|m\|_1)\bigg)\\
\leq &\ L_B\delta\cdot3(1+|x|+\|m\|_1)\cdot\sup_{\tau\in[t,t+\delta],\,x\in\R^{d_x}}\frac{|X^{t,m,(n)}_\tau(x)-X^{t,m,(n-1)}_\tau(x) |}{1+|x|+\|m\|_1}\\
\leq &\ \frac{1}{2}(1+|x|+\|m\|_1)\cdot\sup_{\tau\in[t,t+\delta],\,x\in\R^{d_x}}\frac{|X^{t,m,(n)}_\tau(x)-X^{t,m,(n-1)}_\tau(x) |}{1+|x|+\|m\|_1},
\end{align*}\normalsize
which holds for any $\delta\leq  \frac{1}{6L_B}$.
Therefore, by Banach Fixed Point Theorem, for any $$0<\delta\leq  \delta_0:=\min\left\{\frac{1}{3\wb{L}_B},\frac{1}{6L_B}\right\},$$
which does not depend on the value of $(t,m)$, there exists a unique $X^{t,m}_\cdot(\cdot)\in \mathcal{I}_2^\delta$ satisfying \eqref{fodesystem_lem}.\\
Step 2 ({\it a priori} estimate and global existence) Assume that for some $T_*<T$, $X^{t,m}_\cdot(\cdot)\in C\big([t,T_*]\times \R^{d_x}; \R^{d_x}\big)$ is a solution to \eqref{fodesystem_lem}. Then, for $s\in[t,T_*]$,
\begin{align}\no
\left|X^{t,m}_s(x) \right|\leq&\ \left|x\right|+L_f(1+L_\alpha)\cdot \int_t^s\left(1+|X^{t,m}_\tau(x)|+\|X^{t,m}_\tau\|_{L^{1,d_x}_m}\right)d\tau\\\no
&+L_fL_\alpha\cdot\int_t^s|\gamma(\tau,X^{t,m}_\tau(x),X^{t,m}_\tau\ot m)|d\tau\ \ (\text{by using \eqref{ligf} and \eqref{linalpha}})\\\label{eq_6_12_1}
\leq&\ \left|x\right|+\wb{L}_B\cdot \int_t^s\left(1+|X^{t,m}_\tau(x)|+\|X^{t,m}_\tau\|_{L^{1,d_x}_m}\right)d\tau\ \ (\text{by using $\gamma\in \mathcal{I}_1$}),
\end{align}\normalsize
which implies, by integrating \eqref{eq_6_12_1} with respect to $x$,
\begin{align}\label{eq_6_13_1}
&\left\|X^{t,m}_s\right\|_{L^{1,d_x}_m}\leq\left\|m\right\|_1+\wb{L}_B\cdot \int_t^s\left(1+2\|X^{t,m}_\tau\|_{L^{1,d_x}_m}\right)d\tau.
\end{align}\normalsize
By applying Gr\"{o}nwall's inequality to \eqref{eq_6_12_1} and \eqref{eq_6_13_1}, we have the following {\it a priori} estimates for different norms:
\footnotesize\begin{align}
\sup_{s\in[t,T_*]}\left\|X^{t,m}_s \right\|_{L^{1,d_x}_m}\leq&\ \Big(\left\|m\right\|_1+\wb{L}_B(T_*-t)\Big)\exp\Big(2\wb{L}_B(T_*-t)\Big)
\leq\Big(\left\|m\right\|_1+\wb{L}_B(T-t)\Big)\exp\Big(2\wb{L}_B(T-t)\Big),\\\no
\sup_{s\in[t,T_*]}\left|X^{t,m}_s (x)\right|\leq&\ \bigg(\left|x\right|+\wb{L}_B\Big(1+\Big(\left\|m\right\|_1+\wb{L}_B(T-t)\Big)\exp\Big(2\wb{L}_B(T-t)\Big)\Big)(T-t)\bigg)
\exp\Big(\wb{L}_B(T-t)\Big)\\\no
\leq &\ \max\bigg\{1,\Big(1+\max\Big\{1,\wb{L}_B(T-t)\Big\}\exp\Big(2\wb{L}_B(T-t)\Big)\Big)\wb{L}_B(T-t)\bigg\}\exp\Big(\wb{L}_B(T-t)\Big)\\\label{eq_6_14_1_n}
&\cdot (1+|x|+\|m\|_1),
\end{align}\normalsize
where the right hand side of \eqref{eq_6_14_1_n} is clearly independent of the choice $T_*$. By the local existence in Step 1 and the standard continuous induction argument, the solution $X^{t,m}_\cdot(\cdot)\in C\big([t,T_*]\times \R^{d_x}; \R^{d_x}\big)$ can then be uniquely extended beyond $T_*$. Therefore, there exists a unique solution $X^{t,m}_\cdot(\cdot)\in C\big([t,T]\times \R^{d_x}; \R^{d_x}\big)$ to \eqref{fodesystem_lem} over the whole $[t,T]$, for any $t\in[0 ,T]$.\\
Step 3 (Lipschitz continuity in $x\in\R^{d_x}$ of the forward solution $X^{t,m}_s(x)$) For $x,\,x'\in \R^{d_x}$,
\small\begin{align*}     
&\left|X^{t,m}_s(x')-X^{t,m}_s(x) \right|\\
\leq &\ \left|x'-x\right|+\int_t^s \Big|f\left(X^{t,m}_s(x'),X^{t,m}_s\ot m,\alpha(X^{t,m}_s(x'),X^{t,m}_s\ot m,\gamma(s,X^{t,m}_s(x'),X^{t,m}_s\ot m))\right)\\
&\ \ \ \ \ \ \ \ \ \ \ \ \ \ \ \ \ \ \ \ \ -f\left(X^{t,m}_s(x),X^{t,m}_s\ot m,\alpha(X^{t,m}_s(x),X^{t,m}_s\ot m,\gamma(s,X^{t,m}_s(x),X^{t,m}_s\ot m))\right)\Big|d\tau\\
\leq&\ \left|x'-x\right|+L_B\int_t^s \left|X^{t,m}_\tau(x')-X^{t,m}_\tau(x)\right|d\tau\ (\text{by using \eqref{Lipf}, \eqref{lipalpha} and $\gamma\in \mathcal{I}_1$}),
\end{align*}\normalsize
which implies \eqref{XLipinx} by Gr\"{o}nwall's inequality.
\\Step 4 (Lipschitz continuity in $m\in\mathcal{P}_2(\R^{d_x})$ of the forward solution $X^{t,m}_s(x)$) First, note that, by using \eqref{XLipinx}, for $m,\,m'\in \mathcal{P}_2(\R^{d_x})$,
\begin{align}\no
W_1\Big(X^{t,m'}_\tau\ot m',X^{t,m}_\tau\ot m\Big)\leq&\  W_1\Big(X^{t,m'}_\tau\ot m',X^{t,m'}_\tau\ot m\Big)+W_1\Big(X^{t,m'}_\tau\ot m,X^{t,m}_\tau\ot m\Big)\\\no
\leq &\  W_1(m',m)\cdot\exp\Big(L_B(\tau-t)\Big)+\Big\|X^{t,m'}_\tau-X^{t,m}_\tau \Big\|_{L^{1,d_x}_m}.
\end{align}
Thus, we have
\footnotesize\begin{align*}     
\left|X^{t,m'}_s(x)-X^{t,m}_s(x) \right|
\leq &\ \int_t^s \Big|f\left(X^{t,m'}_s(x),X^{t,m'}_s\ot m',\alpha(X^{t,m'}_s(x),X^{t,m'}_s\ot m',\gamma(s,X^{t,m'}_s(x),X^{t,m'}_s\ot m'))\right)\\
&\ \ \ \ \ \ \ \ \ -f\left(X^{t,m}_s(x),X^{t,m}_s\ot m,\alpha(X^{t,m}_s(x),X^{t,m}_s\ot m,\gamma(s,X^{t,m}_s(x),X^{t,m}_s\ot m))\right)\Big|d\tau\\
\leq&\ L_B\int_t^s \Big(\left|X^{t,m'}_\tau(x)-X^{t,m}_\tau(x)\right|+W_1\Big(X^{t,m'}_\tau\ot m',X^{t,m}_\tau\ot m\Big)\Big)d\tau\ \ \ (\text{by using \eqref{Lipf}, \eqref{lipalpha} and $\gamma\in \mathcal{I}_1$})\\
\leq&\ L_B\int_t^s \Big(\left|X^{t,m'}_\tau(x)-X^{t,m}_\tau(x)\right|+ W_1(m',m)\cdot\exp\Big(L_B(\tau-t)\Big)+\Big\|X^{t,m'}_\tau-X^{t,m}_\tau \Big\|_{L^{1,d_x}_m}\Big)d\tau,
\end{align*}\normalsize
which implies, by integrating with respect to $x$,
\begin{align*}     
\left\|X^{t,m'}_s-X^{t,m}_s \right\|_{L^{1,d_x}_m}\leq&\ L_BW_1(m',m)\cdot\exp\Big(L_B(s-t)\Big)(s-t)+2L_B\int_t^s \left\|X^{t,m'}_\tau-X^{t,m}_\tau\right\|_{L^{1,d_x}_m}d\tau.
\end{align*}
By Gr\"{o}nwall's inequality,
\begin{align}\label{XLipinm_int}  
\left\|X^{t,m'}_s-X^{t,m}_s\right\|_{L^{1,d_x}_m}\leq&\  L_BW_1(m',m)\cdot\exp\Big(3L_B(s-t)\Big)(s-t),\ s\in[t,T].
\end{align}
Therefore, \footnotesize$\left|X^{t,m'}_s(x)-X^{t,m}_s(x) \right|
\leq L_B\int_t^s \Big(\left|X^{t,m'}_\tau(x)-X^{t,m}_\tau(x)\right|+ W_1(m',m)\cdot\exp\Big(L_B(\tau-t)\Big)+ W_1(m',m)\cdot L_B(\tau-t)\exp\Big(3L_B(\tau-t)\Big)\Big)d\tau$\normalsize,
which implies, by Gr\"{o}nwall's inequality, for any $s\in[t,T]$,
\begin{align}\no   
\left|X^{t,m'}_s(x)-X^{t,m}_s(x) \right|
\leq&\ W_1(m',m)L_B(s-t)\exp\Big(2L_B(s-t)\Big)\Big(1+L_B(s-t)\exp\Big(2L_B(s-t)\Big)\Big),
\end{align}
as claimed in \eqref{XLipinm}.\\
Step 5 (Lipschitz continuity in $t\in[0,T]$ of the forward solution $X^{t,m}_s(x)$) For $0 \leq t\leq t'\leq s\leq T$,
\footnotesize\begin{align*}     
&\left|X^{t',m}_s(x)-X^{t,m}_s(x) \right|\\
\leq &\ \int_t^{t'}\left|f\left(X^{t,m}_s(x),X^{t,m}_s\ot m,\alpha(X^{t,m}_s(x),X^{t,m}_s\ot m,\gamma(s,X^{t,m}_s(x),X^{t,m}_s\ot m))\right)\right|d\tau\\
&+\int_{t'}^s \Big|f\big(X^{t',m}_s(x),X^{t',m}_s\ot m,\alpha(X^{t',m}_s(x),X^{t',m}_s\ot m,\gamma(s,X^{t',m}_s(x),X^{t',m}_s\ot m))\big)\\
&\ \ \ \ \ \ \ \ \ \ \ \ \ -f\big(X^{t,m}_s(x),X^{t,m}_s\ot m,\alpha(X^{t,m}_s(x),X^{t,m}_s\ot m,\gamma(s,X^{t,m}_s(x),X^{t,m}_s\ot m))\big)\Big|d\tau\\
\leq&\ \wb{L}_B\int_t^{t'}\Big(1+\left| X^{t,m}_\tau(x) \right|+\left\| X^{t,m}_\tau \right\|_{L^{1,d_x}_m}\Big)d\tau+L_B\int_{t'}^s \Big(\left|X^{t',m}_\tau(x)-X^{t,m}_\tau(x)\right|+\left\|X^{t',m}_\tau-X^{t,m}_\tau\right\|_{L^{1,d_x}_m}\Big)d\tau,
\end{align*}\normalsize
where the last inequality follows by using \eqref{ligf}, \eqref{Lipf}, \eqref{linalpha}, \eqref{lipalpha} and $\gamma\in \mathcal{I}_1$.
Again, by integrating with respect to $x$, we obtain
\begin{align*}     
\left\|X^{t',m}_s-X^{t,m}_s \right\|_{L^{1,d_x}_m}\leq&\ \wb{L}_B\int_t^{t'}\Big(1+2\left\| X^{t,m}_\tau \right\|_{L^{1,d_x}_m}\Big)d\tau+2L_B\int_{t'}^s \left\|X^{t',m}_\tau-X^{t,m}_\tau\right\|_{L^{1,d_x}_m}d\tau,
\end{align*}
which implies, by Gr\"{o}nwall's inequality,
\footnotesize\begin{align} \no 
\left\|X^{t',m}_s(x)-X^{t,m}_s(x) \right\|_{L^{1,d_x}_m}
\leq&\  \wb{L}_B(t'-t)\cdot\Big(1+2\sup_{\tau\in[t,t']}\left\| X^{t,m}_\tau \right\|_{L^{1,d_x}_m} \Big)\exp\Big(2L_B(s-t')\Big)\\\label{XLinint_int}
\leq &\  \wb{L}_B(t'-t)\cdot\Big(1+2\Big(\left\|m\right\|_1+\wb{L}_B(t'-t)\Big)\exp\Big(2\wb{L}_B(t'-t)\Big)\Big)\exp\Big(2L_B(s-t')\Big),
\end{align}\normalsize
where we had substituted \eqref{eq_6_14_1} into the second factor in the first line to obtain the last inequality. Therefore,
\footnotesize\begin{align*}     
\left|X^{t',m}_s(x)-X^{t,m}_s(x) \right|
\leq&\ \,\wb{L}_B(t'-t)\cdot\Big(1+\sup_{\tau\in[t,t']}\left| X^{t,m}_\tau(x) \right|+\sup_{\tau\in[t,t']}\left\| X^{t,m}_\tau \right\|_{L^{1,d_x}_m}\Big)\\
&+L_B(s-t')\cdot \sup_{\tau\in[t',s]}\left\|X^{t',m}_\tau-X^{t,m}_\tau\right\|_{L^{1,d_x}_m}+L_B\int_{t'}^s \left|X^{t',m}_\tau(x)-X^{t,m}_\tau(x)\right|d\tau\\
\leq&\ \,\wb{L}_B(t'-t)\cdot\Big(1+\sup_{\tau\in[t,t']}\left| X^{t,m}_\tau(x) \right|+\sup_{\tau\in[t,t']}\left\| X^{t,m}_\tau \right\|_{L^{1,d_x}_m}\Big)\\
&+L_B(s-t')\cdot \wb{L}_B(t'-t)\cdot\Big(1+2\sup_{\tau\in[t,t']}\left\| X^{t,m}_\tau \right\|_{L^{1,d_x}_m} \Big)\exp\Big(2L_B(s-t')\Big)\\
&+L_B\int_{t'}^s \left|X^{t',m}_\tau(x)-X^{t,m}_\tau(x)\right|d\tau,
\end{align*}\normalsize
which implies, by Gr\"{o}nwall's inequality,
\footnotesize\begin{align*}     
&\left|X^{t',m}_s(x)-X^{t,m}_s(x) \right|\\
\leq&\ \wb{L}_B(t'-t)\cdot\bigg(1+\sup_{\tau\in[t,t']}\left| X^{t,m}_\tau(x) \right|+\sup_{\tau\in[t,t']}\left\| X^{t,m}_\tau \right\|_{L^{1,d_x}_m}+L_B(s-t')\Big(1+2\sup_{\tau\in[t,t']}\left\| X^{t,m}_\tau \right\|_{L^{1,d_x}_m} \Big)\exp\Big(2L_B(s-t')\Big)\bigg)\\
&\cdot \exp\Big(L_B(s-t')\Big),
\end{align*}\normalsize
where $\displaystyle\sup_{\tau\in[t,t']}\left| X^{t,m}_\tau(x) \right|$ and $\displaystyle\sup_{\tau\in[t,t']}\left\| X^{t,m}_\tau \right\|_{L^{1,d_x}_m}$ are bounded by \eqref{eq_6_14_1}. This establishes \eqref{XLipint}.\\
Step 6 (Continuous Differentiability in $s\in[t,T]$ of the forward solution $X^{t,m}_s(x)$) By \eqref{fodesystem_lem}, $X^{t,m}_s(x)$ is differentiable in $s\in[t,T]$ and 
\begin{align}
\begin{cases}       
\dfrac{d}{ds}X^{t,m}_s(x) = f\left(X^{t,m}_s(x),X^{t,m}_s\ot m,\alpha(X^{t,m}_s(x),X^{t,m}_s\ot m,\gamma(s,X^{t,m}_s(x),X^{t,m}_s\ot m))\right),\\
X^{t,m}_t(x)= x.\\
\end{cases}
\end{align}
Thanks to the continuity of $f(x,\mu,\alpha)$, $\alpha(x,\mu,z)$ (for $(x,\mu,z)\in c_{k_0}$), $\gamma(s,x,\mu)$ and $X^{t,m}_s(x)$, and so $\dfrac{d}{ds}X^{t,m}_s(x)$ is continuous in $(t,x,m)\in[0,T]\times \R^{d_x}\times\mathcal{P}_2(\R^{d_x})$ and $s\in[t,T]$.\\
Step 7 (Continuous differentiability in $x\in\R^{d_x}$ of the forward solution $X^{t,m}_s(x)$) Recall the notations defined in Appendix \eqref{notation_1} and Table \ref{notation_2}, and replace $Z^{t,m}_s(x)$ by $\gamma(s,X^{t,m}_s(x),X^{t,m}_s\ot m)$ in these notations. Consider the following linear forward ODE system
\small\begin{align}
\label{p_xFODESYSTEMX}
\begin{cases}       
\displaystyle\frac{d}{ds}\wb{\mc{X}}^{t,m}_s(x)= \bigg(\big(\p_x f\big)^{t,m}_s(x)+\big(\p_\alpha f\big)^{t,m}_s(x)\Big(\big(\p_x  \alpha\big)^{t,m}_s(x)+\big(\p_z \alpha\big)^{t,m}_s(x)\big(\p_x\gamma\big)^{t,m}_s(x)\Big)\bigg)\wb{\mc{X}}^{t,m}_s(x),\\
	\wb{\mc{X}}^{t,m}_t(x) =\mathcal{I}_{d_x\times d_x},
\end{cases}
\end{align}\normalsize
where $\big(\p_x\gamma\big)^{t,m}_s(x):=\p_x\gamma(s,x,\mu)\Big|_{(x,\mu)=(X^{t,m}_s(x),X^{t,m}_s\ot m)}$ and $\mathcal{I}_{d_x\times d_x}$ is the $d_x\times d_x$ identity matrix. As one may note that the initial-value problem \eqref{p_xFODESYSTEMX} is the dynamics governing the evolution of the Jacobian flow $\p_x\big(X^{t,m}_s(x)\big)$. Here, we shall first solve for this $\wb{\mc{X}}^{t,m}_s(x)$, and then verify the claim accordingly. By an argument similar to that from Step 1 to Step 6, the linear ODE system \eqref{p_xFODESYSTEMX} has a unique solution $\wb{\mc{X}}^{t,m}_s(x)$ which is continuous in $(t,x,m)\in[0,T]\times \R^{d_x}\times\mathcal{P}_2(\R^{d_x})$ and is continuously differentiable in $s\in[t,T]$; however, it is not necessarily Lipschitz continuous in either $t$, $x$ or $m$ since we assume that $\p_x \gamma(s,x,\mu)$ is only continuous in $x$ and $\mu$ but nothing more.  
Moreover, since we have
\scriptsize\begin{align}\no
\left\|\wb{\mc{X}}^{t,m}_s(x)\right\|_{\mc{L}(\R^{d_x};\R^{d_x})}
=&\ \bigg\|\mathcal{I}_{d_x\times d_x}+\int_t^s \bigg(\big(\p_x f\big)^{t,m}_\tau(x)+\big(\p_\alpha f\big)^{t,m}_\tau(x)\Big(\big(\p_x  \alpha\big)^{t,m}_\tau(x)+\big(\p_z \alpha\big)^{t,m}_\tau(x) \big(\p_x\gamma\big)^{t,m}_\tau(x)\Big)\bigg)\wb{\mc{X}}^{t,m}_\tau(x)d\tau\bigg\|_{\mc{L}(\R^{d_x};\R^{d_x})}\\\label{eq_6_23}
\leq&\  1+L_B\int_t^s \left\|\wb{\mc{X}}^{t,m}_\tau(x)\right\|_{\mc{L}(\R^{d_x};\R^{d_x})}d\tau\ \ \ \ \ \ \ \ \text{ (by using \eqref{bdd_d1_f}, \eqref{p_xalpha}, \eqref{p_zalpha} and $\gamma\in \mathcal{I}_1$)},
\end{align}\normalsize
therefore, by Gr\"{o}nwall's inequality, we obtain
\footnotesize\begin{align}\label{eq_6_34}
\left\|\wb{\mc{X}}^{t,m}_s(x)\right\|_{\mc{L}(\R^{d_x};\R^{d_x})}\leq \exp\Big(L_B(s-t)\Big).
\end{align}\normalsize
Note that, we have, for any $x,\,\wt{x}\in\R^{d_x}$,
\scriptsize\begin{align*}
&\left|X^{t,m}_s(x+\wt{x})-X^{t,m}_s(x)-\wb{\mc{X}}^{t,m}_s(x)\wt{x}\right|\\
=&\ \bigg|\int_t^s f^{t,m}_\tau(x+\wt{x})-f^{t,m}_\tau(x)-\big(\p_x f\big)^{t,m}_\tau(x)\wb{\mc{X}}^{t,m}_\tau(x)\wt{x}-\big(\p_\alpha f\big)^{t,m}_\tau(x)\Big(\big(\p_x  \alpha\big)^{t,m}_\tau(x)+\big(\p_z \alpha\big)^{t,m}_\tau(x)\big(\p_x\gamma\big)^{t,m}_\tau(x)\Big)\wb{\mc{X}}^{t,m}_\tau(x)\wt{x}d\tau\bigg|\\
\leq &\ \int_t^s\bigg|\Big(\big(\p_x f\big)^{t,m}_\tau(x)+\big(\p_\alpha f\big)^{t,m}_\tau(x)\Big(\big(\p_x  \alpha\big)^{t,m}_\tau(x)+\big(\p_z \alpha\big)^{t,m}_\tau(x)\big(\p_x\gamma\big)^{t,m}_\tau(x)\Big)\Big)\Big(X^{t,m}_\tau(x+\wt{x})-X^{t,m}_\tau(x)-\wb{\mc{X}}^{t,m}_\tau(x)\wt{x}\Big)\bigg|d\tau+R_1(\wt{x})\\
\leq &\ L_B \int_t^s \Big|X^{t,m}_\tau(x+\wt{x})-X^{t,m}_\tau(x)-\wb{\mc{X}}^{t,m}_\tau(x)\wt{x}\Big| d\tau+R_1(\wt{x}),
\end{align*}\normalsize
where the last inequality is obtained by using \eqref{bdd_d1_f}, \eqref{p_xalpha}, \eqref{p_zalpha} and $\gamma\in \mathcal{I}_1$. The remainder term $R_1(\wt{x})$ has the following expression
\footnotesize\begin{align}\no
R_1(\wt{x}):=&\ \int_t^s\int_0^1\bigg(\Big|\p_x f(\Theta_1)-\big(\p_x f\big)^{t,m}_\tau(x)\Big|+\Big|\p_\alpha f(\Theta_1)\p_x  \alpha(\Theta_2)-\big(\p_\alpha f\big)^{t,m}_\tau(x)\big(\p_x  \alpha\big)^{t,m}_\tau(x)\Big|\\\no
&\ \ \ \ \ \ \ \ \ \ \ \ +\Big|\p_\alpha f(\Theta_1)\p_z  \alpha(\Theta_2)\p_x\gamma(\Theta_3)-\big(\p_\alpha f\big)^{t,m}_\tau(x)\big(\p_z  \alpha\big)^{t,m}_\tau(x)\big(\p_x\gamma\big)^{t,m}_\tau(x)\Big|\bigg)d\theta \Big|X^{t,m}_\tau(x+\wt{x})-X^{t,m}_\tau(x)\Big|d\tau\\\no
\leq &\  \left|\wt{x}\right|\exp\Big(L_B(T-t)\Big)\int_t^s\int_0^1\bigg(\Big|\p_x f(\Theta_1)-\big(\p_x f\big)^{t,m}_\tau(x)\Big|+\Big|\p_\alpha f(\Theta_1)\p_x  \alpha(\Theta_2)-\big(\p_\alpha f\big)^{t,m}_\tau(x)\big(\p_x  \alpha\big)^{t,m}_\tau(x)\Big|\\\label{R_1}
&\ \ \ \ \ \ \ \ \ \ \ \ \ \ \ \ \ \ \ \ \ \ \ \ \ \ \ \ \ \ \ \ \ \ \ \ \ \ \ +\Big|\p_\alpha f(\Theta_1)\p_z  \alpha(\Theta_2)\p_x\gamma(\Theta_3)-\big(\p_\alpha f\big)^{t,m}_\tau(x)\big(\p_z  \alpha\big)^{t,m}_\tau(x)\big(\p_x\gamma\big)^{t,m}_\tau(x)\Big|\bigg)d\theta d\tau,
\end{align}\normalsize
since $X^{t,m}_s(x)$ is Lipschitz continuous in $x$ by Step 3 (see \eqref{XLipinx}), 
where the arguments $\Theta_1:=\Big(\theta X^{t,m}_\tau(x+\wt{x})+(1-\theta)X^{t,m}_\tau(x),X^{t,m}_\tau\ot m, \alpha\big(\theta X^{t,m}_\tau(x+\wt{x})+(1-\theta)X^{t,m}_\tau(x),X^{t,m}_\tau\ot m,\gamma(\tau,\theta X^{t,m}_\tau(x+\wt{x})+(1-\theta)X^{t,m}_\tau(x),X^{t,m}_\tau\ot m)\big)\Big)$, $\Theta_2:=\Big(\theta X^{t,m}_\tau(x+\wt{x})+(1-\theta)X^{t,m}_\tau(x),X^{t,m}_\tau\ot m,\gamma(\tau,\theta X^{t,m}_\tau(x+\wt{x})+(1-\theta)X^{t,m}_\tau(x),X^{t,m}_\tau\ot m)\Big)$ and $\Theta_3:=\Big(\tau,\theta X^{t,m}_\tau(x+\wt{x})+(1-\theta)X^{t,m}_\tau(x),X^{t,m}_\tau\ot m\Big)$. Since $\p_x f(x,\mu,\alpha)$, $\p_\alpha f(x,\mu,\alpha)$, $\p_x \alpha(x,\mu,z)$, $\p_z \alpha(x,\mu,z)$ and $\p_x \gamma(s,x,\mu)$ are uniformly bounded and are also continuous in their corresponding arguments, thus the integrand in \eqref{R_1} converges to $0$ as $\big|\wt{x}\big|\to 0$ and we can use Lebesgue's dominated convergence theorem to interchange the order of the limit taking $\big|\wt{x}\big|\to 0$ and the integration in the integral of \eqref{R_1}, which implies that $R_1(\wt{x})=o\Big(\big|\wt{x}\big|\Big)$ with the small-$o\Big(\big|\wt{x}\big|\Big)$ meaning $o\Big(\big|\wt{x}\big|\Big)/\big|\wt{x}\big|\to 0$ as $\big|\wt{x}\big|\to 0$.
By using Gr\"{o}nwall's inequality, we have $$\left|X^{t,m}_s(x+\wt{x})-X^{t,m}_s(x)-\wb{\mc{X}}^{t,m}_s(x)\wt{x}\right|=o\Big(\big|\wt{x}\big|\Big).$$ Therefore, $X^{t,m}_s(x)$ is differentiable in $x\in\R^{d_x}$ and $\p_x\big(X^{t,m}_s(x)\big)=\wb{\mc{X}}^{t,m}_s(x)$, which is continuous in $(t,x,m)\in[0,T]\times \R^{d_x}\times\mathcal{P}_2(\R^{d_x})$ and is continuously differentiable in $s\in[t,T]$. Hence, the estimate \eqref{eq_6_34} is indeed the desired estimate of \eqref{Bp_xX}.\\
Step 8 (Continuous $L$-Differentiability in $m\in\mathcal{P}_2(\R^{d_x})$ of the forward solution $X^{t,m}_s(x)$) Again, recall the notations defined in Appendix \eqref{notation_1} and Table \ref{notation_2}, and also replace $Z^{t,m}_s(x)$ by $\gamma(s,X^{t,m}_s(x),X^{t,m}_s\ot m)$ in these notations. Consider the following linear forward ODE system
\small\begin{align}
\label{p_mFODESYSTEMX}
\begin{cases}       
\displaystyle\frac{d}{ds}\mc{X}^{t,m}_s(x,y)=&\ \big(\p_x f\big)^{t,m}_s(x) \mc{X}^{t,m}_s(x,y)+\big(\p_\mu f\big)^{t,m}_s(x,y)  \p_{y}\big(X^{t,m}_s(y)\big)\\
&\displaystyle+\int_{\R^{d_x}} \big(\p_\mu f\big)^{t,m}_s(x,\wh{x}) \mc{X}^{t,m}_s(\wh{x},y) \ dm(\wh{x})\\
&\displaystyle+ \big(\p_\alpha f\big)^{t,m}_s(x)  \bigg(\big(\p_x  \alpha\big)^{t,m}_s(x) \mc{X}^{t,m}_s(x,y)+\big(\p_\mu \alpha\big)^{t,m}_s(x,y)  \p_{y}\big(X^{t,m}_s(y)\big)\\
&\ \ \ \ \ \ \ \ \ \ \ \ \ \ \ \ \ \ \ \ \ \ \ \ \displaystyle+\int_{\R^{d_x}}\big(\p_\mu \alpha\big)^{t,m}_s(x,\wh{x})  \mc{X}^{t,m}_s(\wh{x},y)\ dm(\wh{x})\bigg)\\
&\displaystyle+ \big(\p_\alpha f\big)^{t,m}_s(x)  \big(\p_z \alpha\big)^{t,m}_s(x)\bigg(\big(\p_x  \gamma\big)^{t,m}_s(x) \mc{X}^{t,m}_s(x,y)+\big(\p_\mu \gamma\big)^{t,m}_s(x,y)  \p_{y}\big(X^{t,m}_s(y)\big)\\
&\ \ \ \ \ \ \ \ \ \ \ \ \ \ \ \ \ \ \ \ \ \ \ \ \ \ \ \ \ \ \ \ \ \ \ \ \displaystyle+\int_{\R^{d_x}}\big(\p_\mu \gamma\big)^{t,m}_s(x,\wh{x})  \mc{X}^{t,m}_s(\wh{x},y)\ dm(\wh{x})\bigg),\\
	\mc{X}^{t,m}_t(x,y) =0,\\
\end{cases}
\end{align}\normalsize
where \footnotesize $\big(\p_x\gamma\big)^{t,m}_s(x):=\p_x\gamma(s,x,\mu)\Big|_{(x,\mu)=(X^{t,m}_s(x),X^{t,m}_s\ot m)}$\normalsize and \footnotesize $\big(\p_\mu \gamma\big)^{t,m}_s(x,y):=\p_\mu\gamma(s,x,\mu)\big(\wt{x}\big)\Big|_{(x,\mu,\wt{x})=(X^{t,m}_s(x),X^{t,m}_s\ot m,X^{t,m}_s(y))}$\normalsize. Again, as one may note that the initial-value problem \eqref{p_mFODESYSTEMX} is the dynamics governing the evolution of the Jacobian flow $\p_m\big(X^{t,m}_s(x)\big)(y)$. By an argument similar to that from Step 1 to Step 6, the linear ODE system \eqref{p_mFODESYSTEMX} has a unique solution $\mc{X}^{t,m}_s(x,y)$ which is continuous in $(t,x,m,y)\in[0,T]\times \R^{d_x}\times\mathcal{P}_2(\R^{d_x})\times\R^{d_x}$ and is continuously differentiable in $s\in[t,T]$; however, it is not necessarily Lipschitz continuous in either $t$, $x$, $y$ or $m$ since we assume that $\p_x \gamma(s,x,\mu)$ and $\p_\mu \gamma(s,x,\mu)(y)$ are only continuous in $x$, $y$ and $\mu$.  
Moreover, since we have
\footnotesize\begin{align}\no
&\left\|\mc{X}^{t,m}_s(x,y)\right\|_{\mc{L}(\R^{d_x};\R^{d_x})}\\\no
=&\ \bigg\|\int_t^s \Bigg(\bigg(\big(\p_x f\big)^{t,m}_\tau(x)+\big(\p_\alpha f\big)^{t,m}_\tau(x)\Big(\big(\p_x  \alpha\big)^{t,m}_\tau(x)+\big(\p_z \alpha\big)^{t,m}_\tau(x) \big(\p_x\gamma\big)^{t,m}_\tau(x)\Big)\bigg)\mc{X}^{t,m}_\tau(x,y)\\\no
&\ \ \ \ \ \ \ \ +\int_{\R^{d_x}}\Big(\big(\p_\mu f\big)^{t,m}_\tau(x,\wh{x})+\big(\p_\alpha f\big)^{t,m}_\tau(x)\big(\p_\mu \alpha\big)^{t,m}_\tau(x,\wh{x})+\big(\p_\alpha f\big)^{t,m}_\tau(x)\big(\p_z \alpha\big)^{t,m}_\tau(x)\big(\p_\mu \gamma\big)^{t,m}_\tau(x,\wh{x})\Big) \mc{X}^{t,m}_\tau(\wh{x},y)dm(\wh{x})\\\no
&\ \ \ \ \ \ \ \ +\Big(\big(\p_\mu f\big)^{t,m}_\tau(x,y)+ \big(\p_\alpha f\big)^{t,m}_\tau(x)\big(\p_\mu \alpha\big)^{t,m}_\tau(x,y)+\big(\p_\alpha f\big)^{t,m}_\tau(x)\big(\p_z \alpha\big)^{t,m}_\tau(x)\big(\p_\mu \gamma\big)^{t,m}_\tau(x,y) \Big)\p_{y}\big(X^{t,m}_\tau(y)\big)\Bigg)d\tau\bigg\|_{\mc{L}(\R^{d_x};\R^{d_x})}\\\no
\leq&\  L_B\int_t^s \bigg(\left\|\mc{X}^{t,m}_\tau(x,y)\right\|_{\mc{L}(\R^{d_x};\R^{d_x})}+\int_{\R^{d_x}}\left\|\mc{X}^{t,m}_\tau(\wh{x},y)\right\|_{\mc{L}(\R^{d_x};\R^{d_x})}dm(\wh{x})+\left\|\p_{y}\big(X^{t,m}_\tau(y)\big)\right\|_{\mc{L}(\R^{d_x};\R^{d_x})}\bigg)d\tau\\\no
&\text{ (by using \eqref{bdd_d1_f}, \eqref{p_xalpha}, \eqref{p_zalpha} and $\gamma\in \mathcal{I}_1$)},
\end{align}\normalsize
therefore, by first integrating with respect to $x$ and then using Gr\"{o}nwall's inequality and \eqref{Bp_xX}, we obtain
\footnotesize\begin{align}\label{eq_6_26_1}
\int_{\R^{d_x}}\left\|\mc{X}^{t,m}_s(x,y)\right\|_{\mc{L}(\R^{d_x};\R^{d_x})}dm(x)\leq L_B(s-t)\exp\Big(3L_B(s-t)\Big).
\end{align}
Therefore,
\begin{align}\label{eq_6_27_1}
\left\|\mc{X}^{t,m}_s(x,y)\right\|_{\mc{L}(\R^{d_x};\R^{d_x})}\leq L_B(s-t)\exp\Big(2L_B(s-t)\Big)\Big(1+L_B(s-t)\exp\Big(2L_B(s-t)\Big)\Big).
\end{align}
To show that $X^{t,m}_s(x)$ is $L$-differentiable at $m\in\mc{P}_2(\R^{d_x})$ and $\p_m\big(X^{t,m}_s(x)\big)(y)=\mc{X}^{t,m}_s(x,y)$, it suffices to show that, for any $Y\in L^{2,d_x}_m$, 
\begin{align*}
\bigg|X^{t,Y\ot m}_s(x)-X^{t,m}_s(x)-\int_{\R^{d_x}}\mc{X}^{t,m}_s(x,y)\big(Y(y)-y\big)dm(y)\bigg|=o\Big(\|Y-Id\|_{L^{2,d_x}_m}\Big),
\end{align*}
where the small-$o$ function $o\Big(\|Y-Id\|_{L^{2,d_x}_m}\Big)$ means that the term tends to $0$ as $\|Y-Id\|_{L^{2,d_x}_m}$ goes to $0$.
Denote $m_Y:=Y\ot m\in\mc{P}_2(\R^{d_x})$ and clearly $W_1(m_Y,m)\leq W_2(m_Y,m)\leq \|Y-Id\|_{L^{2,d_x}_m}$. Note that, since all $\p_x f(x,\mu,\alpha)$, $\p_\mu f(x,\mu,\alpha)(\wt{x})$, $\p_\alpha f(x,\mu,\alpha)$, $\p_x \alpha(x,\mu,z)$, $\p_\mu \alpha(x,\mu,z)(\wt{x})$, $\p_z \alpha(x,\mu,z)$, $\p_x \gamma(s,x,\mu)$ and $\p_\mu \gamma(s,x,\mu)(\wt{x})$ are uniformly bounded and are also continuous in their corresponding arguments, and $X^{t,m}_s(x)$ is Lipschitz continuous, continuously differentiable in $x$ by Step 3 and 7, and it is also Lipschitz continuous in $m$ by Step 4, we have, for any $Y\in L^{2,d_x}_m$,
\footnotesize\begin{align}\no
&\ \bigg|X^{t,m_Y}_s(x)-X^{t,m}_s(x)-\int_{\R^{d_x}}\mc{X}^{t,m}_s(x,y)\big(Y(y)-y\big)dm(y)\bigg|\\\no
=&\ \Bigg|\int_t^s \Bigg(f^{t,m_Y}_\tau(x)-f^{t,m}_\tau(x)-\big(\p_x f\big)^{t,m}_\tau(x)\int_{\R^{d_x}}\mc{X}^{t,m}_\tau(x,y)\big(Y(y)-y\big)dm(y)\\\no
&\ \ \ \ \ \ \ \ \ \ -\int_{\R^{d_x}}\big(\p_\mu f\big)^{t,m}_\tau(x,y)  \p_{y}\big(X^{t,m}_\tau(y)\big)\big(Y(y)-y\big)dm(y)\\\no
&\ \ \ \ \ \ \ \ \ \ -\int_{\R^{d_x}} \int_{\R^{d_x}}\big(\p_\mu f\big)^{t,m}_\tau(x,\wh{x}) \mc{X}^{t,m}_\tau(\wh{x},y) \ \big(Y(y)-y\big)dm(\wh{x})dm(y)\\\no
&\ \ \ \ \ \ \ \ \ \ -\big(\p_\alpha f\big)^{t,m}_\tau(x)  \int_{\R^{d_x}}\bigg(\big(\p_x  \alpha\big)^{t,m}_\tau(x) \mc{X}^{t,m}_\tau(x,y)+\big(\p_\mu \alpha\big)^{t,m}_\tau(x,y)  \p_{y}\big(X^{t,m}_\tau(y)\big)\\\no
&\ \ \ \ \ \ \ \ \ \ \ \ \ \ \ \ \ \ \ \ \ \ \ \ \ \ \ \ \ \ \ \ \ \ \ \ \ \ \ +\int_{\R^{d_x}}\big(\p_\mu \alpha\big)^{t,m}_\tau(x,\wh{x})  \mc{X}^{t,m}_\tau(\wh{x},y)\ dm(\wh{x})\bigg)\big(Y(y)-y\big)dm(y)\\\no
&\ \ \ \ \ \ \ \ \ \ - \big(\p_\alpha f\big)^{t,m}_\tau(x)  \big(\p_z \alpha\big)^{t,m}_\tau(x)\int_{\R^{d_x}}\bigg(\big(\p_x  \gamma\big)^{t,m}_\tau(x) \mc{X}^{t,m}_\tau(x,y)+\big(\p_\mu \gamma\big)^{t,m}_\tau(x,y)  \p_{y}\big(X^{t,m}_\tau(y)\big)\\\no
&\ \ \ \ \ \ \ \ \ \ \ \ \ \ \ \ \ \ \ \ \ \ \ \ \ \ \ \ \ \ \ \ \ \ \ \ \ \ \ \ \ \ \ \ \ \ \ \ \ \ \ \ \ \ +\int_{\R^{d_x}}\big(\p_\mu \gamma\big)^{t,m}_\tau(x,\wh{x})  \mc{X}^{t,m}_\tau(\wh{x},y)\ dm(\wh{x})\bigg)\big(Y(y)-y\big)dm(y)\Bigg)d\tau\Bigg|\\\no
\leq&\ \int_t^s \Bigg|\bigg(\big(\p_x f\big)^{t,m}_\tau(x)+\big(\p_\alpha f\big)^{t,m}_\tau(x)\big(\p_x  \alpha\big)^{t,m}_\tau(x)+\big(\p_\alpha f\big)^{t,m}_\tau(x)  \big(\p_z \alpha\big)^{t,m}_\tau(x)\big(\p_x  \gamma\big)^{t,m}_\tau(x)\bigg)\\\no
&\ \ \ \ \ \ \ \ \cdot\bigg(X^{t,m_Y}_\tau(x)-X^{t,m}_\tau(x)-\int_{\R^{d_x}}\mc{X}^{t,m}_\tau(x,y)\big(Y(y)-y\big)dm(y)\bigg)\\\no
&\ \ \ \ \ \ \ \ \ \ -\int_{\R^{d_x}}\bigg(\big(\p_\mu f\big)^{t,m}_\tau(x,\wh{x})+\big(\p_\alpha f\big)^{t,m}_\tau(x)\big(\p_\mu \alpha\big)^{t,m}_\tau(x,\wh{x})+\big(\p_\alpha f\big)^{t,m}_\tau(x)  \big(\p_z \alpha\big)^{t,m}_\tau(x)\big(\p_\mu \gamma\big)^{t,m}_\tau(x,\wh{x})\bigg)\\\no
& \ \ \ \ \ \ \ \ \ \ \ \ \ \ \ \ \ \ \ \ \cdot\bigg(X^{t,m_Y}_\tau(\wh{x})-X^{t,m}_\tau(\wh{x})-\int_{\R^{d_x}}\mc{X}^{t,m}_\tau(\wh{x},y)\big(Y(y)-y\big)dm(y)\bigg)dm(\wh{x})\Bigg|d\tau+o\Big(\|Y-Id\|_{L^{2,d_x}_m}\Big)\\\no
\leq &\ L_B\int_t^s \bigg|X^{t,m_Y}_\tau(x)-X^{t,m}_\tau(x)-\int_{\R^{d_x}}\mc{X}^{t,m}_\tau(x,y)\big(Y(y)-y\big)dm(y)\bigg|d\tau\\\label{eq_7_43_new}
&\ +L_B\int_t^s \int_{\R^{d_x}}\bigg|X^{t,m_Y}_\tau(\wh{x})-X^{t,m}_\tau(\wh{x})-\int_{\R^{d_x}}\mc{X}^{t,m}_\tau(\wh{x},y)\big(Y(y)-y\big)dm(y)\bigg|dm(\wh{x})d\tau+o\Big(\|Y-Id\|_{L^{2,d_x}_m}\Big),
\end{align}\normalsize
where the last inequality is obtained by using \eqref{bdd_d1_f}, \eqref{p_xalpha}, \eqref{p_zalpha} and $\gamma\in \mathcal{I}_1$, the small-$o\Big(\|Y-Id\|_{L^{2,d_x}_m}\Big)$ means $o\Big(\|Y-Id\|_{L^{2,d_x}_m}\Big)/\|Y-Id\|_{L^{2,d_x}_m}\to 0$ as $\|Y-Id\|_{L^{2,d_x}_m}\to 0$, and this small-$o$ nature of $o\Big(\|Y-Id\|_{L^{2,d_x}_m}\Big)$ can be again deduced by using \eqref{XLipinm} together with Lebesgue's dominated convergence theorem through an argument similar to that for \eqref{R_1}.
By integrating with respect to $x$ and using Gr\"{o}nwall's inequality, we have 
\begin{align}\label{eq_7_44_new}
\int_{\R^{d_x}}\bigg|X^{t,m_Y}_s(\wh{x})-X^{t,m}_s(\wh{x})-\int_{\R^{d_x}}\mc{X}^{t,m}_s(\wh{x},y)\big(Y(y)-y\big)dm(y)\bigg|dm(\wh{x})=o\Big(\|Y-Id\|_{L^{2,d_x}_m}\Big),
\end{align}
 and then, by substituting \eqref{eq_7_44_new} into \eqref{eq_7_43_new} and using Gr\"{o}nwall's inequality again, we obtain $$\bigg|X^{t,m_Y}_s(x)-X^{t,m}_s(x)-\int_{\R^{d_x}}\mc{X}^{t,m}_s(x,y)\big(Y(y)-y\big)dm(y)\bigg|=o\Big(\|Y-Id\|_{L^{2,d_x}_m}\Big).$$ Therefore, $X^{t,m}_s(x)$ is $L$-differentiable in $m\in\mc{P}_2(\R^{d_x})$ and $\p_m\big(X^{t,m}_s(x)\big)(y)=\mc{X}^{t,m}_s(x,y)$, which is continuous in $(t,x,m,y)\in[0,T]\times \R^{d_x}\times\mathcal{P}_2(\R^{d_x})\times\R^{d_x}$ and is continuously differentiable in $s\in[t,T]$. Hence, the estimate \eqref{eq_6_27_1} is indeed the desired estimate of \eqref{Bp_mX}.\\
Step 9 (Continuous differentiability in $t\in[0,T]$ of the forward solution $X^{t,m}_s(x)$) Consider the following linear forward ODE system
\small\begin{align}
\label{p_tFODESYSTEMX}
\begin{cases}       
\displaystyle\frac{d}{ds}\wt{\mc{X}}^{t,m}_s(x)=\big(\p_x f\big)^{t,m}_s(x) \wt{\mc{X}}^{t,m}_s(x)+\int_{\R^{d_x}} \big(\p_\mu f\big)^{t,m}_s(x,\wh{x}) \wt{\mc{X}}^{t,m}_s(\wh{x}) \ dm(\wh{x})\\
\ \ \ \ \ \ \ \ \ \ \ \ \ \ \ \ \ \ \ \displaystyle+ \big(\p_\alpha f\big)^{t,m}_s(x)  \bigg(\big(\p_x  \alpha\big)^{t,m}_s(x) \wt{\mc{X}}^{t,m}_s(x)+\int_{\R^{d_x}}\big(\p_\mu \alpha\big)^{t,m}_s(x,\wh{x})  \wt{\mc{X}}^{t,m}_s(\wh{x})\ dm(\wh{x})\bigg)\\
\ \ \ \ \ \ \ \ \ \ \ \ \ \ \ \ \ \ \ \displaystyle+ \big(\p_\alpha f\big)^{t,m}_s(x)  \big(\p_z \alpha\big)^{t,m}_s(x)\bigg(\big(\p_x  \gamma\big)^{t,m}_s(x) \wt{\mc{X}}^{t,m}_s(x)+\int_{\R^{d_x}}\big(\p_\mu \gamma\big)^{t,m}_s(x,\wh{x})  \wt{\mc{X}}^{t,m}_s(\wh{x})\ dm(\wh{x})\bigg),\\
\displaystyle\wt{\mc{X}}^{t,m}_t(x)=-f^{t,m}_t(x)=-f\Big(x,m,\alpha\big(x,m,\gamma(t,x,m)\big)\Big).
\end{cases}
\end{align}\normalsize
Again, as one may also note that the initial-value problem \eqref{p_tFODESYSTEMX} is the dynamics governing the evolution of the Jacobian flow $\p_t\big(X^{t,m}_s(x)\big)$. By the argument similar to that from Step 1 to Step 6, the linear ODE system \eqref{p_tFODESYSTEMX} has a unique solution $\wt{\mc{X}}^{t,m}_s(x)$ which is continuous in $(t,x,m)\in[0,T]\times \R^{d_x}\times\mathcal{P}_2(\R^{d_x})$ and is continuously differentiable in $s\in[t,T]$; however, it is not necessarily Lipschitz continuous in either $t$, $x$ or $m$, since $\p_x \gamma(s,x,\mu)$ and $\p_\mu \gamma(s,x,\mu)(y)$ are only continuous in $x$, $y$ and $\mu$. 
Moreover, since we have
\footnotesize\begin{align}\no
&\left|\wt{\mc{X}}^{t,m}_s(x)\right|\\\no
=&\ \bigg|-f\Big(x,m,\alpha\big(x,m,\gamma(t,x,m)\big)\Big)+\int_t^s \Bigg(\bigg(\big(\p_x f\big)^{t,m}_\tau(x)+\big(\p_\alpha f\big)^{t,m}_\tau(x)\Big(\big(\p_x  \alpha\big)^{t,m}_\tau(x)+\big(\p_z \alpha\big)^{t,m}_\tau(x) \big(\p_x\gamma\big)^{t,m}_\tau(x)\Big)\bigg)\wt{\mc{X}}^{t,m}_\tau(x)\\\no
&\ \ \ \ \ \ \ \ +\int_{\R^{d_x}}\Big(\big(\p_\mu f\big)^{t,m}_\tau(x,\wh{x})+\big(\p_\alpha f\big)^{t,m}_\tau(x)\big(\p_\mu \alpha\big)^{t,m}_\tau(x,\wh{x})+\big(\p_\alpha f\big)^{t,m}_\tau(x)\big(\p_z \alpha\big)^{t,m}_\tau(x)\big(\p_\mu \gamma\big)^{t,m}_\tau(x,\wh{x})\Big) \wt{\mc{X}}^{t,m}_\tau(\wh{x})dm(\wt{x})\Bigg)d\tau\bigg|\\\label{eq_7_46_new}
\leq&\  \wb{L}_B\big(1+|x|+\|m\|_1\big)+L_B\int_t^s \bigg(\left|\wt{\mc{X}}^{t,m}_\tau(x)\right|+\int_{\R^{d_x}}\left|\wt{\mc{X}}^{t,m}_\tau(\wh{x})\right|dm(\wh{x})\bigg)d\tau\ \\\no
&\ \text{ (by using \eqref{bdd_d1_f}, \eqref{ligf}, \eqref{p_xalpha}, \eqref{p_zalpha}, \eqref{linalpha} and $\gamma\in \mathcal{I}_1$)},
\end{align}\normalsize
therefore, by first integrating with respect to $x$ and then using Gr\"{o}nwall's inequality, we obtain
\begin{align}\label{eq_6_30_1}
\int_{\R^{d_x}}\left|\wt{\mc{X}}^{t,m}_s(x)\right|dm(x)\leq \wb{L}_B\big(1+|x|+\|m\|_1\big)\exp\Big(2L_B(s-t)\Big).
\end{align}
Substituting \eqref{eq_6_30_1} into \eqref{eq_7_46_new} and using Gr\"{o}nwall's inequality again, we finally obtain
\begin{align}\label{eq_6_31_1}
\left|\wt{\mc{X}}^{t,m}_s(x)\right|\leq \wb{L}_B\big(1+|x|+\|m\|_1\big)\Big(L_B(s-t)\exp\Big(2L_B(s-t)\Big)+1\Big)\exp\Big(L_B(s-t)\Big).
\end{align}
By an argument similar to Steps 7 and 8, one can show that $X^{t,m}_s(x)$ is differentiable in $t\in[0,T]$ and $\p_t\big(X^{t,m}_s(x)\big)=\wt{\mc{X}}^{t,m}_s(x)$, which is continuous in $(t,x,m)\in[0,T]\times \R^{d_x}\times\mathcal{P}_2(\R^{d_x})$ and is continuously differentiable in $s\in[t,T]$. Hence, the estimate \eqref{eq_6_31_1} is indeed the desired estimate of \eqref{Bp_tX}.
\end{proof}

For the sake of convenience, we introduce the following notations similar to those defined in Appendix \eqref{notation_1} and Table \ref{notation_2}:
\begin{align}\label{notation_1_1}
&\begin{cases}
\alpha^{t,m,(n)}_s(x):= \alpha(X^{t,m,(n)}_s(x), X^{t,m,(n)}_s\ot m,Z^{t,m,(n)}_s(x)),\\ 
f^{t,m,(n)}_s(x):=f(X^{t,m,(n)}_s(x),X^{t,m,(n)}_s\ot m,\alpha^{t,m,(n)}_s(x)),\\
g^{t,m,(n)}_s(x):=g(X^{t,m,(n)}_s(x),X^{t,m,(n)}_s\ot m,\alpha^{t,m,(n)}_s(x)),\\ 
k^{t,m,(n)}_s(x):=k(X^{t,m,(n)}_s(x),X^{t,m,(n)}_s\ot m),\\
p^{t,m,(n)}_s(x):=p(X^{t,m,(n)}_s(x),X^{t,m,(n)}_s\ot m);
\end{cases}\ \ \ \  
\end{align} 
\begin{align}\label{notation_2_1}
\begin{cases}
\big(\p_x\alpha\big)^{t,m,(n)}_s(y):= \p_x \alpha(x,\mu,z)\Big|_{(x,\mu,z)=(X^{t,m,(n)}_s(y), X^{t,m,(n)}_s\ot m,Z^{t,m,(n)}_s(y))},\\ 
\big(\p_\mu\alpha\big)^{t,m,(n)}_s(y,\wt{y}):= \p_\mu \alpha(x,\mu,z)(\wt{x})\Big|_{(x,\mu,z,\wt{x})=(X^{t,m,(n)}_s(y), X^{t,m,(n)}_s\ot m,Z^{t,m,(n)}_s(y),X^{t,m,(n)}_s(\wt{y}))},\\ 
\big(\p_z\alpha\big)^{t,m,(n)}_s(y):= \p_z \alpha(x,\mu,z)\Big|_{(x,\mu,z)=(X^{t,m,(n)}_s(y), X^{t,m,(n)}_s\ot m,Z^{t,m,(n)}_s(y))},\\ 
\end{cases}
\end{align} 
and so are the derivatives of $f$, $g$, $k$ and $p$, for example, $$\big(\p_x f\big)^{t,m,(n)}_s(y):= \p_x f(x,\mu,\alpha)\Big|_{(x,\mu,\alpha)=(X^{t,m,(n)}_s(y), X^{t,m,(n)}_s\ot m,\alpha^{t,m,(n)}_s(y))}.$$

\textbf{Step 3} (Aims to prove the claim that $\vertiii{\gamma^{(n)}}_1\leq 2\wb{L}_p$). 
For any $t\in[0,T]$ such that\small
\begin{align}\label{eps_3}
T-t\leq \min\bigg\{&\frac{1}{6\wb{L}_B},\frac{\wb{L}_p}{50\Big(2\wb{L}_p\Lambda_f+L_g(1+L_\alpha+2\wb{L}_pL_\alpha)\Big)}\bigg\}=:\eps_3,
\end{align}\normalsize
we have 
\begin{align}\label{small_1}
\wb{L}_B(T-t)\leq \frac{1}{6},\ \exp\Big(\wb{L}_B(T-t)\Big)\leq \frac{6}{5},\ \exp\Big(2\wb{L}_B(T-t)\Big)\leq \frac{3}{2},
\end{align}
and thus, for any $\gamma^{(n-1)}\in \mathcal{I}_1$, $s\in[t,T]$, we obtain
\small\begin{align}\no
&\big|\gamma^{(n)} (s,x,m)\big|\\\no
\leq &\ \wb{L}_p\bigg(1+\big|X^{s,m,(n)}_T(x)\big|+\big\|X^{s,m,(n)}_T\big\|_{L^{1,d_x}_m}\bigg)\\\no
&+ \int_s^T \Bigg|\int_{\R^{d_x}}\bigg(\big(\p_\mu f\big)^{s,m,(n)}_\tau(\wt{x},x)\cdot  Z^{s,m,(n)}_\tau(\wt{x})+ \big(\p_\mu g\big)^{s,m,(n)}_\tau(\wt{x},x)\bigg)dm(\wt{x})\\\no
&\ \ \ \ \ \ \ \ \ \ \ \ \ \ +\big(\p_x f\big)^{s,m,(n)}_\tau(x)\cdot Z^{s,m,(n)}_\tau(x)+\big(\p_x g\big)^{s,m,(n)}_\tau(x)\Bigg|d\tau\text{  (by using \eqref{p2})}\\\no
\leq &\ \wb{L}_p\bigg(1+\big|X^{s,m,(n)}_T(x)\big|+\big\|X^{s,m,(n)}_T\big\|_{L^{1,d_x}_m}\bigg)\\\no
&+ \int_s^T \bigg(\Lambda_f\Big(\big\|Z^{s,m,(n)}_\tau\big\|_{L^{1,d_x}_m}+\big|Z^{s,m,(n)}_\tau(x)\big|\Big)\\\no
&\ \ \ \ + L_g\left(1+2\|X^{s,m,(n)}_\tau\|_{L^{1,d_x}_m}+\int_{\R^{d_x}}\Big|\alpha^{s,m,(n)}_\tau(\wt{x})\Big|dm(\wt{x})+\left|X^{s,m,(n)}_\tau(x)\right|\right)\\\no
&\ \ \ \ + L_g\left(1+|X^{s,m,(n)}_\tau(x)|+\|X^{s,m,(n)}_\tau\|_{L^{1,d_x}_m}+\Big|\alpha^{s,m,(n)}_\tau(x)\Big|\right)\bigg)d\tau\\\no
&\text{(by using \eqref{bdd_d1_f} and \eqref{LGgDerivatives})}\\\no
\leq &\ \wb{L}_p\bigg(1+\big|X^{s,m,(n)}_T(x)\big|+\big\|X^{s,m,(n)}_T\big\|_{L^{1,d_x}_m}\bigg)\\\no
&+ \int_s^T \bigg(\Big(2\wb{L}_p\Lambda_f+L_g(1+L_\alpha+2\wb{L}_pL_\alpha)\Big)\left(2+|X^{s,m,(n)}_\tau(x)|+3\|X^{s,m,(n)}_\tau\|_{L^{1,d_x}_m}\right)+L_g\left|X^{s,m,(n)}_\tau(x)\right|\bigg)d\tau\\\no
&\text{(by using \eqref{linalpha} and $\gamma^{(n-1)}\in \mathcal{I}_1$)}\\\no
\leq&\ \frac{9}{5}\wb{L}_p\bigg(1+|x|+\|m\|_1\bigg)+5\Big(2\wb{L}_p\Lambda_f+L_g(1+L_\alpha+2\wb{L}_pL_\alpha)\Big)(T-t)\left(1+|x|+\|m\|_1\right)+\frac{6}{5}L_g(T-t)(1+|x|+\|m\|_1)\\\no
&\text{(by using \eqref{eq_6_14_1} and \eqref{small_1})}\\\no
\leq &\ 2\wb{L}_p\bigg(1+|x|+\|m\|_1\bigg),
\end{align}\normalsize
which implies that $\vertiii{\gamma^{(n)}}_1\leq 2\wb{L}_p$.

\textbf{Step 4} (Aims to prove the claim that $\vertiii{\gamma^{(n)}}_2\leq 2L_p$). Since $\gamma^{(0)}(s,x,\mu)= p(x,\mu)$ belongs to $\mathcal{I}_1$ and, by Lemma \ref{lem6_3}, we know that the forward process $X^{s,m,(1)}_\tau(x)$ is differentiable in $x\in\R^{d_x}$ and $L$-differentiable in $m\in\mathcal{P}_2(\R^{d_x})$ with its derivatives being continuous in their corresponding arguments, therefore $\gamma^{(1)}(s,x,\mu)$ is also differentiable in $x\in\R^{d_x}$ and $L$-differentiable in $\mu\in\mathcal{P}_2(\R^{d_x})$ with its derivatives being continuous in their corresponding arguments. Inductively, we also know that $\gamma^{(n)}(s,x,\mu)$ is differentiable in $x\in\R^{d_x}$ and $L$-differentiable in $\mu\in\mathcal{P}_2(\R^{d_x})$ with its derivatives being continuous in their corresponding arguments, since all $p(x,\mu)$, $\p_\mu f(x,\mu,\alpha)(\wt{x})$, $\p_\mu g(x,\mu,\alpha)(\wt{x})$, $\p_x f(x,\mu,\alpha)$, $\p_x g(x,\mu,\alpha)$, $\alpha(x,\mu,z)$, and by induction hypothesis, $\gamma^{(n-1)}(s,x,\mu)$ are differentiable in $x,\,\wt{x},\,z\in\R^{d_x}$, $\alpha\in\R^{d_\alpha}$ and $L$-differentiable in $\mu\in\mathcal{P}_2(\R^{d_x})$ with its derivatives being continuous in their corresponding arguments.

Recall the notations introduced in \eqref{notation_1_1} and \eqref{notation_2_1}. For any $t\in[0,T]$ such that\small
\begin{align}\label{eps_4}
T-t\leq \min\bigg\{&\eps_3,
\frac{7L_p}{73\big(3(2\wb{L}_p\wb{l}_f+\Lambda_g)+3(2\wb{L}_p\wb{l}_f+\wb{l}_g)L_\alpha(1+2L_p)+6\Lambda_fL_p\big)}\bigg\}=:\eps_4,
\end{align}\normalsize
and, for any $\gamma^{(n-1)}\in \mathcal{I}_1$, $s\in[t,T]$, we have
\begin{align*}
&\|\p_x\gamma^{(n)} (s,x,m)\|_{\mc{L}(\R^{d_x};\R^{d_x})}\\
=&\ \bigg\|\big(\p_x p\big)^{s,m,(n)}_T(x)\p_x\big(X^{s,m,(n)}_T(x)\big)\\
&+\int_s^T\bigg(\int_{\R^{d_x}}\Big(\big(\p_{\wt{x}}\p_\mu f\big)^{s,m,(n)}_\tau(\wt{x},x)\cdot  Z^{s,m,(n)}_\tau(\wt{x})+ \big(\p_{\wt{x}}\p_\mu g\big)^{s,m,(n)}_\tau(\wt{x},x)\Big)dm(\wt{x})\\
&\ \ \ \ \ \ \ \ \ \ +\big(\p_x f\big)^{s,m,(n)}_\tau(x)\cdot \big(\p_x\gamma^{(n-1)}\big)^{s,m,(n)}_\tau(x)+\Big(\big(\p_x\p_x f\big)^{s,m,(n)}_\tau(x)\cdot Z^{s,m,(n)}_\tau(x)+\big(\p_x\p_x g\big)^{s,m,(n)}_\tau(x)\Big)\\
&\ \ \ \ \ \ \ \ \ \ +\Big(\big(\p_\alpha\p_x f\big)^{s,m,(n)}_\tau(x)\cdot Z^{s,m,(n)}_\tau(x)+\big(\p_\alpha\p_x g\big)^{s,m,(n)}_\tau(x)\Big)\\
&\ \ \ \ \ \ \ \ \ \ \ \ \ \ \cdot\Big(\big(\p_x\alpha\big)^{s,m,(n)}_\tau(x)+\big(\p_z\alpha\big)^{s,m,(n)}_\tau(x)\big(\p_x\gamma^{(n-1)}\big)^{s,m,(n)}_\tau(x)\Big)\bigg)\p_x\big(X^{s,m,(n)}_\tau(x)\big)d\tau\bigg\|_{\mc{L}(\R^{d_x};\R^{d_x})}\\
\leq &\  L_p\left\|\p_x\big(X^{s,m,(n)}_T(x)\big)\right\|_{\mc{L}(\R^{d_x};\R^{d_x})}\\
&+\big(4\wb{L}_p\wb{l}_f+2\Lambda_g+2L_p\Lambda_f+(2\wb{L}_p\wb{l}_f+\wb{l}_g)L_\alpha(1+2L_p)\big)\int_s^T\left\|\p_x\big(X^{s,m,(n)}_\tau(x)\big)\right\|_{\mc{L}(\R^{d_x};\R^{d_x})}d\tau\\
&\text{(by using \eqref{bdd_d1_f}, \eqref{bdd_d2_f}, \eqref{bdd_d2_g_1}, \eqref{bdd_d2_g_2}, \eqref{p_xalpha}, \eqref{p_zalpha}, \eqref{p1} and $\gamma^{(n-1)}\in \mathcal{I}_1$)}\\
\leq &\  \frac{6}{5}\Big(L_p+\big(4\wb{L}_p\wb{l}_f+2\Lambda_g+2L_p\Lambda_f+(2\wb{L}_p\wb{l}_f+\wb{l}_g)L_\alpha(1+2L_p)\big)(T-s)\Big)\\
&\text{(by using \eqref{Bp_xX} and \eqref{small_1})}\\
\leq &\ 2L_p,
\end{align*}
where $\big(\p_x\gamma^{(n-1)}\big)^{s,m,(n)}_\tau(x):=\p_x\gamma^{(n-1)}(s,x,\mu)\Big|_{(x,\mu)=(X^{s,m,(n)}_\tau(x),X^{s,m,(n)}_\tau\ot m)}$.
In addition, we also have
\footnotesize\begin{align*}
&\|\p_\mu\gamma^{(n)} (s,x,m)(y)\|_{\mc{L}(\R^{d_x};\R^{d_x})}\\
=&\ \bigg\|\big(\p_x p\big)^{s,m,(n)}_T(x)\p_m\big(X^{s,m,(n)}_T(x)\big)(y)+\big(\p_\mu p\big)^{s,m,(n)}_T(x,y)\p_y\big(X^{s,m,(n)}_T(y)\big)\\
&+\int_{\R^{d_x}}\big(\p_\mu p\big)^{s,m,(n)}_T(x,\wt{x})\p_{m}\big(X^{s,m,(n)}_T(\wt{x})\big)(y)dm(\wt{x})\\
&+\int_s^T\bigg(\int_{\R^{d_x}}\bigg(\big(\p_x\p_\mu f\big)^{s,m,(n)}_\tau(\wt{x},x)\cdot  Z^{s,m,(n)}_\tau(\wt{x})+ \big(\p_x\p_\mu g\big)^{s,m,(n)}_\tau(\wt{x},x)\bigg)\p_m\big(X^{s,m,(n)}_\tau(\wt{x})\big)(y)dm(\wt{x})\\
&\ \ \ \ \ \ \ \ \ \ +\int_{\R^{d_x}}\bigg(\big(\p_{\wt{x}}\p_\mu f\big)^{s,m,(n)}_\tau(\wt{x},x)\cdot  Z^{s,m,(n)}_\tau(\wt{x})+ \big(\p_{\wt{x}}\p_\mu g\big)^{s,m,(n)}_\tau(\wt{x},x)\bigg)dm(\wt{x})\p_m\big(X^{s,m,(n)}_\tau(x)\big)(y)\\
&\ \ \ \ \ \ \ \ \ \ +\int_{\R^{d_x}}\int_{\R^{d_x}}\bigg(\big(\p_\mu\p_\mu f\big)^{s,m,(n)}_\tau(\wt{x},x,\wh{x})\cdot  Z^{s,m,(n)}_\tau(\wt{x})+ \big(\p_\mu\p_\mu g\big)^{s,m,(n)}_\tau(\wt{x},x,\wh{x})\bigg)\p_m\big(X^{s,m,(n)}_\tau(\wh{x})\big)(y)dm(\wh{x})dm(\wt{x})\\
&\ \ \ \ \ \ \ \ \ \ +\int_{\R^{d_x}}\bigg(\big(\p_\mu\p_\mu f\big)^{s,m,(n)}_\tau(\wt{x},x,y)\cdot  Z^{s,m,(n)}_\tau(\wt{x})+ \big(\p_\mu\p_\mu g\big)^{s,m,(n)}_\tau(\wt{x},x,y)\bigg)dm(\wt{x})\p_y\big(X^{s,m,(n)}_\tau(y)\big)\\
&\ \ \ \ \ \ \ \ \ \ +\int_{\R^{d_x}}\bigg(\big(\p_\alpha\p_\mu f\big)^{s,m,(n)}_\tau(\wt{x},x)\cdot  Z^{s,m,(n)}_\tau(\wt{x})+ \big(\p_\alpha\p_\mu g\big)^{s,m,(n)}_\tau(\wt{x},x)\bigg)\p_m\big(\alpha^{s,m,(n)}_\tau(\wt{x})\big)(y)dm(\wt{x})
\\
&\ \ \ \ \ \ \ \ \ \ +\int_{\R^{d_x}} \big(\p_\mu f\big)^{s,m,(n)}_\tau(\wt{x},x)\cdot  \p_m\big(Z^{s,m,(n)}_\tau(\wt{x})\big)(y)dm(\wt{x})+\big(\p_x f\big)^{s,m,(n)}_\tau(x)\cdot \p_m\big(Z^{s,m,(n)}_\tau(x)\big)(y)\\
&\ \ \ \ \ \ \ \ \ \ +\bigg(\big(\p_x\p_x f\big)^{s,m,(n)}_\tau(x)\cdot Z^{s,m,(n)}_\tau(x)+\big(\p_x\p_x g\big)^{s,m,(n)}_\tau(x)\bigg)\p_m\big(X^{s,m,(n)}_\tau(x)\big)(y)\\
&\ \ \ \ \ \ \ \ \ \ +\int_{\R^{d_x}}\bigg(\big(\p_\mu\p_x f\big)^{s,m,(n)}_\tau(x,\wt{x})\cdot Z^{s,m,(n)}_\tau(x)+\big(\p_\mu\p_x g\big)^{s,m,(n)}_\tau(x,\wt{x})\bigg)\p_m\big(X^{s,m,(n)}_\tau(\wt{x})\big)(y)dm(\wt{x})\\
&\ \ \ \ \ \ \ \ \ \ +\bigg(\big(\p_\mu\p_x f\big)^{s,m,(n)}_\tau(x,y)\cdot Z^{s,m,(n)}_\tau(x)+\big(\p_\mu\p_x g\big)^{s,m,(n)}_\tau(x,y)\bigg)\p_y\big(X^{s,m,(n)}_\tau(y)\big)\\
&\ \ \ \ \ \ \ \ \ \ +\bigg(\big(\p_\alpha\p_x f\big)^{s,m,(n)}_\tau(x)\cdot Z^{s,m,(n)}_\tau(x)+\big(\p_\alpha\p_x g\big)^{s,m,(n)}_\tau(x)\bigg)\p_m\big(\alpha^{s,m,(n)}_\tau(x)\big)(y)\bigg)d\tau\bigg\|_{\mc{L}(\R^{d_x};\R^{d_x})}\\
\leq &\  L_p\bigg(\left\|\p_y\big(X^{s,m,(n)}_T(y)\big)\right\|_{\mc{L}(\R^{d_x};\R^{d_x})}+\left\|\p_m\big(X^{s,m,(n)}_T(x)\big)(y)\right\|_{\mc{L}(\R^{d_x};\R^{d_x})}+\int_{\R^{d_x}}\left\|\p_m\big(X^{s,m,(n)}_T(\wt{x})\big)(y)\right\|_{\mc{L}(\R^{d_x};\R^{d_x})}dm(\wt{x})\bigg)\\
&+\big(2(2\wb{L}_p\wb{l}_f+\Lambda_g)+2(2\wb{L}_p\wb{l}_f+\wb{l}_g)L_\alpha(1+2L_p)+4\Lambda_fL_p\big)\int_s^T\left\|\p_y\big(X^{s,m,(n)}_\tau(y)\big)\right\|_{\mc{L}(\R^{d_x};\R^{d_x})}d\tau\\
&+\big(2(2\wb{L}_p\wb{l}_f+\Lambda_g)+(2\wb{L}_p\wb{l}_f+\wb{l}_g)L_\alpha(1+2L_p)+2\Lambda_fL_p\big)\int_s^T\left\|\p_m\big(X^{s,m,(n)}_\tau(x)\big)(y)\right\|_{\mc{L}(\R^{d_x};\R^{d_x})}d\tau\\
&+\big(3(2\wb{L}_p\wb{l}_f+\Lambda_g)+3(2\wb{L}_p\wb{l}_f+\wb{l}_g)L_\alpha(1+2L_p)+6\Lambda_fL_p\big)\int_s^T\int_{\R^{d_x}}\left\|\p_m\big(X^{s,m,(n)}_\tau(\wt{x})\big)(y)\right\|_{\mc{L}(\R^{d_x};\R^{d_x})}dm(\wt{x})d\tau\\
&\text{(by using \eqref{bdd_d1_f}, \eqref{bdd_d2_f}, \eqref{bdd_d2_g_1}, \eqref{bdd_d2_g_2}, \eqref{p_xalpha}, \eqref{p_zalpha}, \eqref{p1} and $\gamma^{(n-1)}\in \mathcal{I}_1$))}\\
\leq &\  \frac{6}{5}\Big(L_p+\big(2(2\wb{L}_p\wb{l}_f+\Lambda_g)+2(2\wb{L}_p\wb{l}_f+\wb{l}_g)L_\alpha(1+2L_p)+4\Lambda_fL_p\big)(T-s)\Big)\\
&+\frac{5}{16}\Big(L_p+\big(2(2\wb{L}_p\wb{l}_f+\Lambda_g)+(2\wb{L}_p\wb{l}_f+\wb{l}_g)L_\alpha(1+2L_p)+2\Lambda_fL_p\big)(T-s)\Big)\\
&+\frac{5}{16}\Big(L_p+\big(3(2\wb{L}_p\wb{l}_f+\Lambda_g)+3(2\wb{L}_p\wb{l}_f+\wb{l}_g)L_\alpha(1+2L_p)+6\Lambda_fL_p\big)(T-s)\Big)\text{  (by using \eqref{Bp_xX}, \eqref{Bp_mX} and \eqref{small_1})}\\
\leq &\ \frac{73}{40}\Big(L_p+\big(3(2\wb{L}_p\wb{l}_f+\Lambda_g)+3(2\wb{L}_p\wb{l}_f+\wb{l}_g)L_\alpha(1+2L_p)+6\Lambda_fL_p\big)(T-s)\Big)\\
\leq &\  2L_p,
\end{align*}\normalsize
where 
\small\begin{align*}
&\p_m\big(\alpha^{s,m,(n)}_\tau(x)\big)(y)=\big(\p_x  \alpha\big)^{s,m,(n)}_\tau(x)\p_m\big(X^{s,m,(n)}_\tau(x)\big)(y)+\big(\p_\mu \alpha\big)^{s,m,(n)}_\tau(x,y)\p_{y}\big(X^{s,m,(n)}_\tau(y)\big)\\
&\ \ \ \ \ \ \ \ \ \ \ \ \ \ \ \ \ \ \ \ \ \ \ \ \ \ \ \ +\displaystyle\int_{\R^{d_x}}\big(\p_\mu \alpha\big)^{s,m,(n)}_\tau(x,\wh{x})\p_m\big(X^{s,m,(n)}_\tau(\wh{x})\big)(y)dm(\wh{x})+\big(\p_z \alpha\big)^{s,m,(n)}_\tau(x)\p_m\big(Z^{s,m,(n)}_\tau(x)\big)(y),\\
&\p_m\big(Z^{s,m,(n)}_\tau(x)\big)(y)=\big(\p_x  \gamma^{(n-1)}\big)^{s,m,(n)}_\tau(x)\p_m\big(X^{s,m,(n)}_\tau(x)\big)(y)+\big(\p_\mu \gamma^{(n-1)}\big)^{s,m,(n)}_\tau(x,y)\p_{y}\big(X^{s,m,(n)}_\tau(y)\big)\\
&\ \ \ \ \ \ \ \ \ \ \ \ \ \ \ \ \ \ \ \ \ \ \ \ \ \ \ \ +\displaystyle\int_{\R^{d_x}}\big(\p_\mu \gamma^{(n-1)}\big)^{s,m,(n)}_\tau(x,\wh{x})\p_m\big(X^{s,m,(n)}_\tau(\wh{x})\big)(y)dm(\wh{x}),
\end{align*}
such that
\begin{align*}
&\big(\p_x\gamma^{(n-1)}\big)^{s,m,(n)}_\tau(x):=\p_x\gamma^{(n-1)}(s,x,\mu)\Big|_{(x,\mu)=(X^{s,m,(n)}_\tau(x),X^{s,m,(n)}_\tau\ot m)},\\
&\big(\p_\mu \gamma^{(n-1)}\big)^{s,m,(n)}_\tau(x,y):=\p_\mu\gamma(s,x,\mu)\big(\wt{x}\big)\Big|_{(x,\mu,\wt{x})=(X^{s,m,(n)}_\tau(x),X^{s,m,(n)}_\tau\ot m,X^{s,m,(n)}_\tau(y))}.
\end{align*}\normalsize
Therefore, we have $\vertiii{\gamma^{(n)}}_2\leq 2L_p$.

\textbf{Step 5} (showing the iteration mapping $\gamma^{(n-1)}\in\mc{I}_1\mapsto \gamma^{(n)}\in\mc{I}_1$ is contractive under the norm $\vertiii{\cdot}_1$). Recall the notations introduced in \eqref{notation_1_1} and \eqref{notation_2_1}. For any $t\in[0,T]$ such that\small
\begin{align}\no
T-t\leq \min\bigg\{&\eps_4,\frac{1}{2\big(8L_p L_fL_\alpha+5\big(\Lambda_f+\big(2\wb{L}_p\wb{l}_f+\max\{\Lambda_g,\wb{l}_g\}\big)L_\alpha\big)\big)},\\\label{eps_1}
&\frac{1}{2\sqrt{L_fL_\alpha}\sqrt{34L_p\Lambda_f+\big(34L_pL_\alpha+21+17L_\alpha\big)\big(2\wb{L}_p\wb{l}_f+\max\{\Lambda_g,\wb{l}_g\}\big)}}\bigg\}=:\eps_1,
\end{align}\normalsize
and, for any $\gamma^{(n-1)},\,\gamma^{(n)}\in \mathcal{I}_1$, $s\in[t,T]$, $\tau\in[s,T]$, $m\in\mc{P}_2(\R^{d_x})$ and $x\in\R^{d_x}$, we have, 
\begin{align}\no
&\Big|Z^{s,m,(n+1)}_\tau(x)-Z^{s,m,(n)}_\tau(x)\Big|\\\no
=&\ \Big|\gamma^{(n)}\big(\tau,X^{s,m,(n+1)}_\tau(x),X^{s,m,(n+1)}_\tau\ot m\big)-\gamma^{(n-1)}\big(\tau,X^{s,m,(n)}_\tau(x),X^{s,m,(n)}_\tau\ot m\big)\Big|\\\no
\leq&\ \Big|\gamma^{(n)}\big(\tau,X^{s,m,(n+1)}_\tau(x),X^{s,m,(n+1)}_\tau\ot m\big)-\gamma^{(n)}\big(\tau,X^{s,m,(n)}_\tau(x),X^{s,m,(n)}_\tau\ot m\big)\Big|\\\no
&+\Big|\gamma^{(n)}\big(\tau,X^{s,m,(n)}_\tau(x),X^{s,m,(n)}_\tau\ot m\big)-\gamma^{(n-1)}\big(\tau,X^{s,m,(n)}_\tau(x),X^{s,m,(n)}_\tau\ot m\big)\Big|\\\no
\leq&\  2L_p\bigg(\Big| X^{s,m,(n+1)}_\tau(x) -X^{s,m,(n)}_\tau(x)\Big|+\Big\| X^{s,m,(n+1)}_\tau -X^{s,m,(n)}_\tau\Big\|_{L^{1,d_x}_m}\bigg)\\\label{6_35}
&+\vertiii{\gamma^{(n)}-\gamma^{(n-1)}}_1\bigg(1+\Big| X^{s,m,(n)}_\tau(x)\Big|+\Big\| X^{s,m,(n)}_\tau\Big\|_{L^{1,d_x}_m}\bigg)\\\no
&\text{(by using the definition of $\vertiii{\cdot}_1$ and $\vertiii{\cdot}_2$ in \eqref{Gamma_1} and \eqref{Gamma_2} respectively and the very definition of $\mathcal{I}_1$)};
\end{align}\normalsize
for simplicity of notations, denote $\Theta(\theta):=\Big(\theta X^{s,m,(n+1)}_\tau(x)+(1-\theta)X^{s,m,(n)}_\tau(x),\theta \big(X^{s,m,(n+1)}_\tau\ot m\big)+(1-\theta)\big(X^{s,m,(n)}_\tau\ot m\big)\Big)$ and $\Psi_\beta(y):=\beta X^{s,m,(n+1)}_\tau(y)+(1-\beta)X^{s,m,(n)}_\tau(y)$, and then 
\footnotesize\begin{align}\no
&\Bigg|\big(\p_x f\big)^{s,m,(n+1)}_\tau(x)\cdot Z^{s,m,(n+1)}_\tau(x)-\big(\p_x f\big)^{s,m,(n)}_\tau(x)\cdot Z^{s,m,(n)}_\tau(x)\Bigg|\\\no
=&\ \Bigg|\p_x f\Big(\Theta(1),\alpha\big(\Theta(1),\gamma^{(n)}(\tau,\Theta(1))\big)\Big)\cdot \gamma^{(n)}(\tau,\Theta(1))-\p_x f\Big(\Theta(0),\alpha\big(\Theta(0),\gamma^{(n)}(\tau,\Theta(0))\big)\Big)\cdot \gamma^{(n)}(\tau,\Theta(0))\\\no
&+ \p_x f\Big(\Theta(0),\alpha\big(\Theta(0),\gamma^{(n)}(\tau,\Theta(0))\big)\Big)\cdot \gamma^{(n)}(\tau,\Theta(0))-\p_x f\Big(\Theta(0),\alpha\big(\Theta(0),\gamma^{(n-1)}(\tau,\Theta(0))\big)\Big)\cdot \gamma^{(n-1)}(\tau,\Theta(0))\Bigg|\\\no
=&\ \Bigg|\int_0^1 \Bigg(\bigg(\p_x\p_x f\Big(\Theta(\theta),\alpha\big(\Theta(\theta),\gamma^{(n)}(\tau,\Theta(\theta))\big)\Big)\cdot \gamma^{(n)}(\tau,\Theta(\theta))\bigg)\cdot \Big(X^{s,m,(n+1)}_\tau(x)-X^{s,m,(n)}_\tau(x)\Big)\\\no
&\ \ \ \ \ \ \ \ +\int_{\R^{d_x}}\bigg(\int_0^1\p_\mu\p_x f\Big(\Theta(\theta),\alpha\big(\Theta(\theta),\gamma^{(n)}(\tau,\Theta(\theta))\big)\Big)(\Psi_\beta(y))d\beta\cdot \gamma^{(n)}(\tau,\Theta(\theta))\bigg)\cdot \Big(X^{s,m,(n+1)}_\tau(y)-X^{s,m,(n)}_\tau(y)\Big)dm(y)\\\no
&\ \ \ \ \ \ \ \ +\bigg(\p_\alpha\p_x f\Big(\Theta(\theta),\alpha\big(\Theta(\theta),\gamma^{(n)}(\tau,\Theta(\theta))\big)\Big)\cdot \gamma^{(n)}(\tau,\Theta(\theta))\bigg)\\\no
&\ \ \ \ \ \ \ \ \ \ \ \ \cdot \bigg(\p_x\alpha\big(\Theta(\theta),\gamma^{(n)}(\tau,\Theta(\theta))\big)\cdot\Big(X^{s,m,(n+1)}_\tau(x)-X^{s,m,(n)}_\tau(x)\Big)\\\no
&\ \ \ \ \ \ \ \ \ \ \ \ \ \ \ \ +\int_{\R^{d_x}}\int_0^1\p_\mu\alpha\big(\Theta(\theta),\gamma^{(n)}(\tau,\Theta(\theta))\big)(\Psi_\beta(y))d\beta\cdot\Big(X^{s,m,(n+1)}_\tau(y)-X^{s,m,(n)}_\tau(y)\Big)dm(y)\\\no
&\ \ \ \ \ \ \ \ \ \ \ \ \ \ \ \ +\p_z\alpha\big(\Theta(\theta),\gamma^{(n)}(\tau,\Theta(\theta))\big)\cdot\p_x \gamma^{(n)}(\tau,\Theta(\theta))\cdot\Big(X^{s,m,(n+1)}_\tau(x)-X^{s,m,(n)}_\tau(x)\Big)\\\no
&\ \ \ \ \ \ \ \ \ \ \ \ \ \ \ \ +\p_z\alpha\big(\Theta(\theta),\gamma^{(n)}(\tau,\Theta(\theta))\big)\cdot\int_{\R^{d_x}}\int_0^1\p_\mu \gamma^{(n)}(\tau,\Theta(\theta))(\Psi_\beta(y))d\beta\cdot\Big(X^{s,m,(n+1)}_\tau(y)-X^{s,m,(n)}_\tau(y)\Big)dm(y)\bigg)\\\no
&\ \ \ \ \ \ \ \ +\p_x f\Big(\Theta(\theta),\alpha\big(\Theta(\theta),\gamma^{(n)}(\tau,\Theta(\theta))\big)\Big)\cdot\p_x \gamma^{(n)}(\tau,\Theta(\theta))\cdot\Big(X^{s,m,(n+1)}_\tau(x)-X^{s,m,(n)}_\tau(x)\Big)\\\no
&\ \ \ \ \ \ \ \ +\p_x f\Big(\Theta(\theta),\alpha\big(\Theta(\theta),\gamma^{(n)}(\tau,\Theta(\theta))\big)\Big)\cdot\int_{\R^{d_x}}\int_0^1\p_\mu \gamma^{(n)}(\tau,\Theta(\theta))(\Psi_\beta(y))d\beta\cdot\Big(X^{s,m,(n+1)}_\tau(y)-X^{s,m,(n)}_\tau(y)\Big)dm(y)\Bigg)d\theta\\\no
&+\int_0^1 \Bigg(\bigg(\p_\alpha\p_x f\Big(\Theta(0),\alpha\big(\Theta(0),\theta \gamma^{(n)}(\tau,\Theta(0))+(1-\theta)\gamma^{(n-1)}(\tau,\Theta(0))\big)\Big)\cdot \Big(\theta \gamma^{(n)}(\tau,\Theta(0))+(1-\theta)\gamma^{(n-1)}(\tau,\Theta(0))\Big)\bigg)\\\no
&\ \ \ \ \ \ \ \ \ \ \cdot \p_z \alpha\Big(\Theta(0),\theta \gamma^{(n)}\big(\tau,\Theta(0)\big)+(1-\theta)\gamma^{(n-1)}\big(\tau,\Theta(0)\big)\Big)\cdot \Big(\gamma^{(n)}\big(\tau,\Theta(0)\big)-\gamma^{(n-1)}\big(\tau,\Theta(0)\big)\Big)\\\no
&\ \ \ \ \ \ \ \ +\p_x f\Big(\Theta(0),\alpha\big(\Theta(0),\theta \gamma^{(n)}(\tau,\Theta(0))+(1-\theta)\gamma^{(n-1)}(\tau,\Theta(0))\big)\Big)\cdot \Big(\gamma^{(n)}\big(\tau,\Theta(0)\big)-\gamma^{(n-1)}\big(\tau,\Theta(0)\big)\Big)\Bigg)d\theta\Bigg|
\\\no
\leq&\ \Big(2\wb{L}_p\wb{l}_f(1+L_\alpha+2L_pL_\alpha)+2L_p\Lambda_f\Big)\cdot\Big(\left|  X^{s,m,(n+1)}_\tau(x) -X^{s,m,(n)}_\tau(x)\right|+\left\|  X^{s,m,(n+1)}_\tau -X^{s,m,(n)}_\tau\right\|_{L^{1,d_x}_m}\Big)\\\label{eq_7_54_new}
&+\Big(2\wb{L}_p\wb{l}_fL_\alpha+\Lambda_f\Big)\cdot\vertiii{\gamma^{(n)}-\gamma^{(n-1)}}_1\bigg(1+\Big| X^{s,m,(n)}_\tau(x)\Big|+\Big\| X^{s,m,(n)}_\tau\Big\|_{L^{1,d_x}_m}\bigg)\\\no
&\text{ (by using \eqref{bdd_d1_f}, \eqref{bdd_d2_f}, \eqref{p_xalpha}, \eqref{p_zalpha} and $\gamma^{(n-1)}, \gamma^{(n)}\in \mathcal{I}_1$)};
\end{align}\normalsize
similarly, we also have
\small\begin{align}\no
&\bigg|\big(\p_\mu f\big)^{s,m,(n+1)}_\tau(\wt{x},x)\cdot Z^{s,m,(n+1)}_\tau(\wt{x})-\big(\p_\mu f\big)^{s,m,(n)}_\tau(\wt{x},x)\cdot Z^{s,m,(n)}_\tau(\wt{x})\bigg|\\\no
\leq&\ \Big(2\wb{L}_p\wb{l}_f(1+L_\alpha+2L_pL_\alpha)+2L_p\Lambda_f\Big)\cdot\Big(\left|  X^{s,m,(n+1)}_\tau(\wt{x}) -X^{s,m,(n)}_\tau(\wt{x})\right|+\left\|  X^{s,m,(n+1)}_\tau -X^{s,m,(n)}_\tau\right\|_{L^{1,d_x}_m}\Big)\\\no
&+\Big(2\wb{L}_p\wb{l}_fL_\alpha+\Lambda_f\Big)\cdot\vertiii{\gamma^{(n)}-\gamma^{(n-1)}}_1\bigg(1+\Big| X^{s,m,(n)}_\tau(\wt{x})\Big|+\Big\| X^{s,m,(n)}_\tau\Big\|_{L^{1,d_x}_m}\bigg)\\\label{eq_7_55_new}
&+2\wb{L}_p\wb{l}_f\left|  X^{s,m,(n+1)}_\tau(x) -X^{s,m,(n)}_\tau(x)\right|;
\end{align}\normalsize
furthermore, we have,  
\small\begin{align}\no
&\Big| X^{t,m,(n+1)}_s(x) -X^{t,m,(n)}_s(x)\Big|\\\no
\leq&\   L_B\int_t^s \bigg(\Big| X^{t,m,(n+1)}_\tau(x) -X^{t,m,(n)}_\tau(x)\Big|+\left\|X^{t,m,(n+1)}_\tau-X^{t,m,(n)}_\tau\right\|_{L^{1,d_x}_m}\bigg)d\tau\\\label{eq_6_44}
&+L_fL_\alpha\vertiii{\gamma^{(n)}-\gamma^{(n-1)}}_1\bigg(1+\sup_{\tau\in[t,s]}\Big| X^{t,m,(n)}_\tau(x)\Big|+\sup_{\tau\in[t,s]}\Big\| X^{t,m,(n)}_\tau\Big\|_{L^{1,d_x}_m}\bigg)(s-t)\\\no
&\text{ (by using \eqref{Lipf}, \eqref{lipalpha}, \eqref{fodesystem} and \eqref{6_35})}.
\end{align}\normalsize
A direct integration with respect to $x$ yields
\begin{align*}
\left\|X^{t,m,(n+1)}_s-X^{t,m,(n)}_s\right\|_{L^{1,d_x}_m}
\leq&\   2L_B\int_t^s \left\|X^{t,m,(n+1)}_\tau-X^{t,m,(n)}_\tau\right\|_{L^{1,d_x}_m}d\tau\\
&+L_fL_\alpha\vertiii{\gamma^{(n)}-\gamma^{(n-1)}}_1\bigg(1+2\sup_{\tau\in[t,s]}\Big\| X^{t,m,(n)}_\tau\Big\|_{L^{1,d_x}_m}\bigg)(s-t),
\end{align*}
and hence, by Gr\"{o}nwall's inequality,
\begin{align}\no
\left\|X^{t,m,(n+1)}_s-X^{t,m,(n)}_s\right\|_{L^{1,d_x}_m}\leq&\  L_fL_\alpha\vertiii{\gamma^{(n)}-\gamma^{(n-1)}}_1\Big(1+2\sup_{\tau\in[t,s]}\Big\| X^{t,m,(n)}_\tau\Big\|_{L^{1,d_x}_m}\Big)(s-t)\exp\big(2L_B(s-t)\big)\\\label{eq_6_35}
\leq &\  \frac{3}{2}\Big(\frac{3}{2}+3\|m\|_1\Big)L_fL_\alpha(s-t)\vertiii{\gamma^{(n)}-\gamma^{(n-1)}}_1,
\end{align}
by using \eqref{eq_6_14_1} and \eqref{small_1} in the last inequality.
Substituting \eqref{eq_6_35} into \eqref{eq_6_44}, we obtain
\begin{align}\no
&\Big| X^{t,m,(n+1)}_s(x) -X^{t,m,(n)}_s(x)\Big|\\\no
\leq&\   L_B\int_t^s \Big| X^{t,m,(n+1)}_\tau(x) -X^{t,m,(n)}_\tau(x)\Big|d\tau\\\no
&+L_fL_\alpha\vertiii{\gamma^{(n)}-\gamma^{(n-1)}}_1\bigg(1+\sup_{\tau\in[t,s]}\Big| X^{t,m,(n)}_\tau(x)\Big|+\sup_{\tau\in[t,s]}\Big\| X^{t,m,(n)}_\tau\Big\|_{L^{1,d_x}_m}\\\no
&\ \ \ \ \ \ \ \ \ \ \ \ \ \ \ \ \ \ \ \ \ \ \ \ \ \ \ \ \ \ \ \ \ \ \ \ +L_B\Big(1+2\sup_{\tau\in[t,s]}\Big\| X^{t,m,(n)}_\tau\Big\|_{L^{1,d_x}_m}\Big)(s-t)\exp\big(2L_B(s-t)\big)\bigg)(s-t).
\end{align}
Again, by Gr\"{o}nwall's inequality,
\begin{align}\no
\Big| X^{t,m,(n+1)}_s(x) -X^{t,m,(n)}_s(x)\Big|
\leq&\  \bigg(1+\sup_{\tau\in[t,s]}\Big| X^{t,m,(n)}_\tau(x)\Big|+\sup_{\tau\in[t,s]}\Big\| X^{t,m,(n)}_\tau\Big\|_{L^{1,d_x}_m}\\\no
&\ \ +L_B\Big(1+2\sup_{\tau\in[t,s]}\Big\| X^{t,m,(n)}_\tau\Big\|_{L^{1,d_x}_m}\Big)(s-t)\exp\big(2L_B(s-t)\big)\bigg)(s-t)\\\no
&\cdot L_fL_\alpha\vertiii{\gamma^{(n)}-\gamma^{(n-1)}}_1\exp\big(L_B(s-t)\big)\\\no
\leq &\ \frac{6}{5}\bigg(1+\frac{6}{5}\Big(|x|+\frac{5}{24}+\frac{1}{4}\|m\|_1\Big)+\frac{3}{2}\Big(\|m\|_1+\frac{1}{6}\Big)+\frac{1}{4}\Big(\frac{3}{2}+3\|m\|_1\Big)\bigg)(s-t)\\\no
&\cdot L_fL_\alpha\vertiii{\gamma^{(n)}-\gamma^{(n-1)}}_1\\\label{eq_6_47}
=&\ \frac{6}{5}\bigg(\frac{15}{8}+\frac{6}{5}|x|+\frac{51}{20}\|m\|_1\bigg)L_fL_\alpha(s-t)\vertiii{\gamma^{(n)}-\gamma^{(n-1)}}_1,
\end{align}
by using \eqref{eq_6_14_1}, \eqref{small_1} and the assumption $L_p\leq \wb{L}_p$ in the last inequality.
Therefore, we have, for $t\leq s\leq T$, $m\in \mc{P}_2(\R^{d_x})$ and $x\in\R^{d_x}$,
\footnotesize\begin{align}\no
&\left|\gamma^{(n+1)} (s,x,m)-\gamma^{(n)} (s,x,m)\right|\\\no
\leq &\ \left|p^{s,m,(n+1)}_T-p^{s,m,(n)}_T\right|\\\no
&+ \int_s^T \bigg| \int_{\R^{d_x}}\bigg(\big(\p_\mu f\big)^{s,m,(n+1)}_\tau(\wt{x},x)\cdot Z^{s,m,(n+1)}_\tau(\wt{x})-\big(\p_\mu f\big)^{s,m,(n)}_\tau(\wt{x},x)\cdot Z^{s,m,(n)}_\tau(\wt{x})\\\no
&\ \ \ \ \ \ \ \ \ \ \ \ \ \ \ \ \ \ \ \ +\big(\p_\mu g\big)^{s,m,(n+1)}_\tau(\wt{x},x)-\big(\p_\mu g\big)^{s,m,(n)}_\tau(\wt{x},x)\bigg)dm(\wt{x})\\\no
&\ \ \ \ \ \ \ \ \ \ \ +\big(\p_x f\big)^{s,m,(n+1)}_\tau(x)\cdot Z^{s,m,(n+1)}_\tau(x)-\big(\p_x f\big)^{s,m,(n)}_\tau(x)\cdot Z^{s,m,(n)}_\tau(x)\\\no
&\ \ \ \ \ \ \ \ \ \ \ +\big(\p_x g\big)^{s,m,(n+1)}_\tau(x)-\big(\p_x g\big)^{s,m,(n)}_\tau(x)\bigg|d\tau\\\no
\leq&\  L_p \Big(\left| X^{s,m,(n+1)}_T(x)-X^{s,m,(n)}_T(x) \right|+\left\| X^{s,m,(n+1)}_T-X^{s,m,(n)}_T \right\|_{L^{1,d_x}_m}\Big)\\\no
&+\int_s^T \Bigg(\Big(\big(2\wb{L}_p\wb{l}_f+\max\{\Lambda_g,\wb{l}_g\}\big)(1+L_\alpha+2L_pL_\alpha)+2L_p\Lambda_f\Big)\cdot\Big(\left|  X^{s,m,(n+1)}_\tau(x) -X^{s,m,(n)}_\tau(x)\right|+3\left\|  X^{s,m,(n+1)}_\tau -X^{s,m,(n)}_\tau\right\|_{L^{1,d_x}_m}\Big)\\\no
&+\Big(\big(2\wb{L}_p\wb{l}_f+\max\{\Lambda_g,\wb{l}_g\}\big)L_\alpha+\Lambda_f\Big)\cdot\vertiii{\gamma^{(n)}-\gamma^{(n-1)}}_1\bigg(2+\Big| X^{s,m,(n)}_\tau(x)\Big|+3\Big\| X^{s,m,(n)}_\tau\Big\|_{L^{1,d_x}_m}\bigg)\\\no
& +\big(2\wb{L}_p\wb{l}_f+\max\{\Lambda_g,\wb{l}_g\}\big)\left|  X^{s,m,(n+1)}_\tau(x) -X^{s,m,(n)}_\tau(x)\right|\Bigg)d\tau\\\no
&\text{ (by using \eqref{bdd_d2_g_1}, \eqref{bdd_d2_g_2}, \eqref{p_xalpha}, \eqref{p_zalpha}, \eqref{lipp}, \eqref{eq_7_54_new}, \eqref{eq_7_55_new}, and $\gamma^{(n-1)},\, \gamma^{(n)}\in \mathcal{I}_1$)}\\\no
=&\  L_p \Big(\left| X^{s,m,(n+1)}_T(x)-X^{s,m,(n)}_T(x) \right|+\left\| X^{s,m,(n+1)}_T-X^{s,m,(n)}_T \right\|_{L^{1,d_x}_m}\Big)\\\no
& +\big(\Lambda_f+\big(2\wb{L}_p\wb{l}_f+\max\{\Lambda_g,\wb{l}_g\}\big)L_\alpha\big) \int_s^T   \Bigg(2L_p\bigg(\Big| X^{s,m,(n+1)}_\tau(x) -X^{s,m,(n)}_\tau(x)\Big|+3\Big\| X^{s,m,(n+1)}_\tau -X^{s,m,(n)}_\tau\Big\|_{L^{1,d_x}_m}\bigg)\\\no
&\ \ \ \ \ \ \ \ \ \ \ \ \ \ \ \ \ \ \ \ \ \ \ \ \ \ \ \ \ \ \ \ \ \ \ \ \ \ \ \ \ \ \ \ \ \ \ \ \ \ \ \ \ \ \ \ \ \ \ \ +\vertiii{\gamma^{(n)}-\gamma^{(n-1)}}_1\bigg(2+\Big| X^{s,m,(n)}_\tau(x)\Big|+3\Big\| X^{s,m,(n)}_\tau\Big\|_{L^{1,d_x}_m}\bigg)\Bigg)d\tau\\\no
&+\big(2\wb{L}_p\wb{l}_f+\max\{\Lambda_g,\wb{l}_g\}\big)\int_s^T \bigg((1+L_\alpha)\Big(\left|  X^{s,m,(n+1)}_\tau(x) -X^{s,m,(n)}_\tau(x)\right|+3\left\|  X^{s,m,(n+1)}_\tau -X^{s,m,(n)}_\tau\right\|_{L^{1,d_x}_m}\Big)\\\no
&\ \ \ \ \ \ \ \ \ \ \ \ \ \ \ \ \ \ \ \ \ \ \ \ \ \ \ \ \ \ \ \ \ \ \ \ \ \ \ \ \ +\left|  X^{s,m,(n+1)}_\tau(x) -X^{s,m,(n)}_\tau(x)\right|\bigg)d\tau
\\\no
\leq&\  L_p \Bigg(\frac{6}{5}\bigg(\frac{15}{8}+\frac{6}{5}|x|+\frac{51}{20}\|m\|_1\bigg)+\frac{3}{2}\Big(\frac{3}{2}+3\|m\|_1\Big)\Bigg)L_fL_\alpha(T-t)\vertiii{\gamma^{(n)}-\gamma^{(n-1)}}_1\\\no
& +2L_p\big(\Lambda_f+\big(2\wb{L}_p\wb{l}_f+\max\{\Lambda_g,\wb{l}_g\}\big)L_\alpha\big)L_fL_\alpha (T-t)^2\Bigg(\frac{6}{5}\bigg(\frac{15}{8}+\frac{6}{5}|x|+\frac{51}{20}\|m\|_1\bigg)+\frac{9}{2}\Big(\frac{3}{2}+3\|m\|_1\Big)\Bigg)\vertiii{\gamma^{(n)}-\gamma^{(n-1)}}_1\\\no
&+\big(\Lambda_f+\big(2\wb{L}_p\wb{l}_f+\max\{\Lambda_g,\wb{l}_g\}\big)L_\alpha\big)(T-t)\vertiii{\gamma^{(n)}-\gamma^{(n-1)}}_1\bigg(2+\frac{6}{5}\Big(|x|+\frac{5}{24}+\frac{1}{4}\|m\|_1\Big)+\frac{9}{2}\Big(\|m\|_1+\frac{1}{6}\Big)\bigg)\\\no
&+(1+L_\alpha)\big(2\wb{L}_p\wb{l}_f+\max\{\Lambda_g,\wb{l}_g\}\big)L_fL_\alpha(T-t)^2\Bigg(\frac{6}{5}\bigg(\frac{15}{8}+\frac{6}{5}|x|+\frac{51}{20}\|m\|_1\bigg)+\frac{9}{2}\Big(\frac{3}{2}+3\|m\|_1\Big)\Bigg)\vertiii{\gamma^{(n)}-\gamma^{(n-1)}}_1\\\no
&+\big(2\wb{L}_p\wb{l}_f+\max\{\Lambda_g,\wb{l}_g\}\big)L_fL_\alpha(T-t)^2\frac{6}{5}\bigg(\frac{15}{8}+\frac{6}{5}|x|+\frac{51}{20}\|m\|_1\bigg)\vertiii{\gamma^{(n)}-\gamma^{(n-1)}}_1\\\no
&\text{ (by using \eqref{eq_6_14_1}, \eqref{small_1}, \eqref{eq_6_35} and \eqref{eq_6_47})}
\\\no
\leq&\  8L_p L_fL_\alpha(T-t)(1+|x|+\|m\|_1)\vertiii{\gamma^{(n)}-\gamma^{(n-1)}}_1\\\no
& +34L_p\big(\Lambda_f+\big(2\wb{L}_p\wb{l}_f+\max\{\Lambda_g,\wb{l}_g\}\big)L_\alpha\big)L_fL_\alpha (T-t)^2(1+|x|+\|m\|_1)\vertiii{\gamma^{(n)}-\gamma^{(n-1)}}_1\\\no
&+5\big(\Lambda_f+\big(2\wb{L}_p\wb{l}_f+\max\{\Lambda_g,\wb{l}_g\}\big)L_\alpha\big)(T-t)\vertiii{\gamma^{(n)}-\gamma^{(n-1)}}_1(1+|x|+\|m\|_1)\\\no
&+17(1+L_\alpha)\big(2\wb{L}_p\wb{l}_f+\max\{\Lambda_g,\wb{l}_g\}\big)L_fL_\alpha(T-t)^2(1+|x|+\|m\|_1)\vertiii{\gamma^{(n)}-\gamma^{(n-1)}}_1\\\no
&+4\big(2\wb{L}_p\wb{l}_f+\max\{\Lambda_g,\wb{l}_g\}\big)L_fL_\alpha(T-t)^2(1+|x|+\|m\|_1)\vertiii{\gamma^{(n)}-\gamma^{(n-1)}}_1\\\no
\leq &\ \frac{3}{4}(1+|x|+\|m\|_1)\vertiii{\gamma^{(n)}-\gamma^{(n-1)}}_1,
\end{align}\normalsize 
by using \eqref{eps_1} in the last inequality,
which implies that $\vertiii{\gamma^{(n+1)}-\gamma^{(n)}}_1\leq \frac{3}{4} \vertiii{\gamma^{(n)}-\gamma^{(n-1)}}_1$, that is, the iteration mapping $\gamma^{(n-1)}\in\mc{I}_1\mapsto \gamma^{(n)}\in\mc{I}_1$ is contractive under the norm $\vertiii{\cdot}_1$. 

\textbf{Step 6} For any $t\in[0,T]$ with $T-t\leq \eps_1$, where $\eps_1$ was defined in \eqref{eps_1}, it follows from Steps 1-5 that the iteration mapping $\gamma^{(n-1)}\in\mc{I}_1\mapsto \gamma^{(n)}\in\mc{I}_1$ is well-defined for all $n\in\mathbb{N}$ and contractive under the norm $\vertiii{\cdot}_1$, thus $\{\gamma^{(n)}\}_{n=0}^{\infty}$ is a Cauchy sequence in the Banach space $C([t,T]\times \R^{d_x}\times\mathcal{P}_2(\R^{d_x});\R^{d_x})$ equipped with the norm $\vertiii{\cdot}_1$, which warrants that there exists a unique $\gamma \in C([t,T]\times \R^{d_x}\times\mathcal{P}_2(\R^{d_x});\R^{d_x})$ (but may not in $\mc{I}_1$) satisfying $\vertiii{\gamma}_1\leq 2\wb{L}_p$, $\vertiii{\gamma^{(n)}-\gamma}_1\to 0$ as $n\to\infty$ and for any $(s,x,m)\in [t,T]\times \R^{d_x}\times\mathcal{P}_2(\R^{d_x})$,
\small\begin{align}\no
\gamma (s,x,m)
= &\ p(X^{s,m}_T(x),X^{s,m}_T\ot m)\\\no
&+ \int_s^T \Bigg(\int_{\R^{d_x}}\bigg(\p_\mu f\left(X^{s,m }_\tau(\wt{x}),X^{s,m }_\tau\ot m,\alpha\left(X^{s,m }_\tau(\wt{x}),X^{s,m }_\tau\ot m, Z^{s,m }_\tau(\wt{x})\right)\right)(X^{s,m }_\tau(x))\cdot  Z^{s,m }_\tau(\wt{x})\\\no
&\ \ \ \ \ \ \ \ \ \ \ \ \ \ \ \ \ \ \ \ \ \ + \p_\mu g\left(X^{s,m }_\tau(\wt{x}),X^{s,m }_\tau\ot m,\alpha\left(X^{s,m }_\tau(\wt{x}),X^{s,m }_\tau\ot m, Z^{s,m }_\tau(\wt{x}))\right)\right)(X^{s,m }_\tau(x))\bigg)dm(\wt{x})\\\no
&\ \ \ \ \ \ \ \ \ \ \ \ \ \ +\p_x f(X^{s,m }_\tau(x),X^{s,m }_\tau\ot m,\alpha(X^{s,m }_\tau(x),X^{s,m }_\tau\ot m,Z^{s,m }_\tau(x)))\cdot Z^{s,m }_\tau(x)\\\no
&\ \ \ \ \ \ \ \ \ \ \ \ \ \ +\p_x g(X^{s,m }_\tau(x),X^{s,m }_\tau\ot m,\alpha(X^{s,m }_\tau(x),X^{s,m }_\tau\ot m,Z^{s,m }_\tau(x)))\Bigg)d\tau,
\end{align}\normalsize
where $Z^{s,m }_\tau(x):=\gamma\big(\tau,X^{s,m }_\tau(x),X^{s,m }_\tau\ot m\big)$ and
\small\begin{align}\no
X^{s,m}_\tau(x) = x+\int_s^\tau f\Big(X^{s,m }_{\wt{\tau}}(x),X^{s,m }_{\wt{\tau}}\ot m,\alpha\big(X^{s,m }_{\wt{\tau}}(x),X^{s,m }_{\wt{\tau}}\ot m,\gamma(\tau,X^{s,m }_{\wt{\tau}}(x),X^{s,m }_{\wt{\tau}}\ot m)\big)\Big)d\wt{\tau},
\end{align}\normalsize
which means that $\gamma$ is a decoupling field for the FBODE system \eqref{fbodesystem} on $[t,T]$ in the sense of Definition \ref{decouple}.
By using \eqref{coro_6_4}, it can be checked that the couple $\big(X^{t,m}_s(x),Z^{t,m}_s(x)\big)_{x\in\R^{d_x},s\in[t,T]}$ solves the FBODE system \eqref{fbodesystem} and the estimate \eqref{cone_ZX} is valid since $\vertiii{\gamma}_1\leq 2\wb{L}_p$.

\begin{remark}
We emphasize here that we cannot obtain the contractive property of the iteration mapping $\gamma^{(n-1)}\in\mc{I}_1\mapsto \gamma^{(n)}\in\mc{I}_1$ under the norm $\vertiii{\cdot}_2$ due to the term $Z^{s,m,(n)}_\tau(x)=\gamma^{(n-1)}\big(\tau,X^{s,m,(n)}_\tau(x),X^{s,m,(n)}_\tau\ot m\big)$ in \eqref{bodesystem}. In addition, since the Banach space $C^1([t,T]\times \R^{d_x}\times\mathcal{P}_2(\R^{d_x});\R^{d_x})$ equipped with the norm $\vertiii{\cdot}_1+\vertiii{\cdot}_2$ is not a dual space of any Banach space, we cannot extract a weak or weak* convergent subsequence from $\{\gamma^{(n)}\}_{n=0}^\infty\subset \mc{I}_1$; thus, we cannot obtain $\gamma\in\mc{I}_1$ by the uniqueness of weak or weak* limit. Therefore, we have to show the differentiability of $\gamma(s,x,\mu)$ with respect to $x$ and $\mu$ separately, in Theorem \ref{Thm6_2}.
\end{remark}

\subsection{Proof of Regularity Theorem \ref{Thm6_2}}\label{subsec:7_3}
Recall the notations defined in Appendix \eqref{notation_1} and Table \ref{notation_2}, and we can rewrite \eqref{gammaeq} and \eqref{xeq} as follows:
\small\begin{align}\no
\gamma (s,x,m)
= &\ p^{s,m}_T(x)+ \int_s^T \Bigg(\int_{\R^{d_x}}\bigg(\big(\p_\mu f\big)^{s,m }_\tau(\wt{x},x)\cdot  Z^{s,m }_\tau(\wt{x}) + \big(\p_\mu g\big)^{s,m }_\tau(\wt{x},x)\bigg)dm(\wt{x})\\\label{gammaeq_1}
&\ \ \ \ \ \ \ \ \ \ \ \ \ \ \ \ \ \ \ \ \ \ \ \ \ \ \ \ \ \ \ +\big(\p_x f\big)^{s,m }_\tau(x)\cdot Z^{s,m }_\tau(x)+\big(\p_x g\big)^{s,m }_\tau(x)\Bigg)d\tau,
\end{align}\normalsize
where $Z^{s,m }_\tau(x)=\gamma\big(\tau,X^{s,m }_\tau(x),X^{s,m }_\tau\ot m\big)$ and
\small\begin{align}
\label{xeq_1}       
X^{t,m }_s(x) = x+\int_t^s f^{t,m }_\tau(x) d\tau.
\end{align}\normalsize
In addition, if the derivatives of $\gamma (s,x,m)$ with respect to $x$ and $m$ exist, they should satisfy the following systems:
\small\begin{align}\no
&\p_x\gamma (s,x,m)\\\no
= &\ \big(\p_x p\big)^{s,m}_T(x)\p_x\big(X^{s,m}_T(x)\big)\\\no
&+ \int_s^T \Bigg(\int_{\R^{d_x}}\bigg(\big(\p_{\wt{x}}\p_\mu f\big)^{s,m }_\tau(\wt{x},x)\cdot  Z^{s,m }_\tau(\wt{x}) + \big(\p_{\wt{x}}\p_\mu g\big)^{s,m }_\tau(\wt{x},x)\bigg)dm(\wt{x})\\\no
&\ \ \ \ \ \ \ \ \ \ \ \ +\Big(\big(\p_\alpha \p_x f\big)^{s,m }_\tau(x)\cdot Z^{s,m }_\tau(x)+\big(\p_\alpha\p_x g\big)^{s,m }_\tau(x)\Big)\Big(\big(\p_x\alpha\big)^{s,m }_\tau(x)+\big(\p_z\alpha\big)^{s,m }_\tau(x)\big(\p_x\gamma\big)^{s,m }_\tau(x)\Big)\\\label{p_xgamma_eq_1}   
&\ \ \ \ \ \ \ \ \ \ \ \ +\big(\p_x f\big)^{s,m }_\tau(x)\cdot \big(\p_x\gamma\big)^{s,m }_\tau(x)+\big(\p_x\p_x f\big)^{s,m }_\tau(x)\cdot Z^{s,m }_\tau(x)+\big(\p_x \p_x g\big)^{s,m }_\tau(x)\Bigg)\p_x\big(X^{s,m}_\tau(x)\big)d\tau,
\end{align}\normalsize
where $Z^{s,m }_\tau(x)=\gamma\big(\tau,X^{s,m }_\tau(x),X^{s,m }_\tau\ot m\big)$,
\small\begin{align}
\label{p_xX_eq_1}       
\p_x \big(X^{t,m }_s(x)\big) = \mathcal{I}_{d_x\times d_x}+\int_t^s \bigg(\big(\p_x f\big)^{t,m }_\tau(x)+\big(\p_\alpha f\big)^{t,m }_\tau(x)\Big(\big(\p_x \alpha\big)^{t,m }_\tau(x)+\big(\p_z \alpha\big)^{t,m }_\tau(x)\big(\p_x \gamma\big)^{t,m }_\tau(x)\Big)\bigg)\p_x \big(X^{t,m }_\tau(x)\big) d\tau,
\end{align}\normalsize
and $\mathcal{I}_{d_x\times d_x}$ is the $d_x\times d_x$ identity matrix;
\footnotesize\begin{align}\no
&\p_\mu\gamma (s,x,m)(y)\\\no
= &\ \big(\p_x p\big)^{s,m}_T(x)\p_m\big(X^{s,m}_T(x)\big)(y)+\int_{\R^{d_x}}\big(\p_\mu p\big)^{s,m}_T(x,\wh{x})\p_m\big(X^{s,m}_T(\wh{x})\big)(y)dm(\wh{x})+\big(\p_\mu p\big)^{s,m}_T(x,y)\p_y\big(X^{s,m}_T(y)\big)\\\no
&+ \int_s^T \Bigg(\int_{\R^{d_x}}\bigg(\big(\p_{\wt{x}}\p_\mu f\big)^{s,m }_\tau(\wt{x},x)\cdot  Z^{s,m }_\tau(\wt{x}) + \big(\p_{\wt{x}}\p_\mu g\big)^{s,m }_\tau(\wt{x},x)\bigg)dm(\wt{x})\\\no
&\ \ \ \ \ \ \ \ \ \ \ \ +\Big(\big(\p_\alpha \p_x f\big)^{s,m }_\tau(x)\cdot Z^{s,m }_\tau(x)+\big(\p_\alpha\p_x g\big)^{s,m }_\tau(x)\Big)\Big(\big(\p_x\alpha\big)^{s,m }_\tau(x)+\big(\p_z\alpha\big)^{s,m }_\tau(x)\big(\p_x\gamma\big)^{s,m }_\tau(x)\Big)\\\no 
&\ \ \ \ \ \ \ \ \ \ \ \ +\big(\p_x f\big)^{s,m }_\tau(x)\cdot \big(\p_x\gamma\big)^{s,m }_\tau(x)+\big(\p_x\p_x f\big)^{s,m }_\tau(x)\cdot Z^{s,m }_\tau(x)+\big(\p_x \p_x g\big)^{s,m }_\tau(x)\Bigg)\p_m\big(X^{s,m}_\tau(x)\big)(y)d\tau
\\\no
&+ \int_s^T \Bigg(\int_{\R^{d_x}}\bigg(\big(\p_x\p_\mu f\big)^{s,m }_\tau(\wt{x},x)\cdot  Z^{s,m }_\tau(\wt{x}) + \big(\p_x\p_\mu g\big)^{s,m }_\tau(\wt{x},x)+\big(\p_\mu\p_x f\big)^{s,m }_\tau(x,\wt{x})\cdot  Z^{s,m }_\tau(x) + \big(\p_\mu\p_x g\big)^{s,m }_\tau(x,\wt{x})\\\no
&\ \ \ \ \ \ \ \ \ \ \ \ \ \ \ \ \ \ \ \ +\int_{\R^{d_x}}\Big(\big(\p_\mu\p_\mu f\big)^{s,m }_\tau(\wh{x},x,\wt{x})\cdot  Z^{s,m }_\tau(\wh{x}) + \big(\p_\mu\p_\mu g\big)^{s,m }_\tau(\wh{x},x,\wt{x})\Big)dm(\wh{x})\\\no
&\ \ \ \ \ \ \ \ \ \ \ \ \ \ \ \ \ \ \ \ +\Big(\big(\p_\alpha \p_\mu f\big)^{s,m }_\tau(\wt{x},x)\cdot Z^{s,m }_\tau(\wt{x})+\big(\p_\alpha\p_\mu g\big)^{s,m }_\tau(\wt{x},x)\Big)\Big(\big(\p_x\alpha\big)^{s,m }_\tau(\wt{x})+\big(\p_z\alpha\big)^{s,m }_\tau(\wt{x})\big(\p_x\gamma\big)^{s,m }_\tau(\wt{x})\Big)\\\no
&\ \ \ \ \ \ \ \ \ \ \ \ \ \ \ \ \ \ \ \ +\int_{\R^{d_x}}\Big(\big(\p_\alpha \p_\mu f\big)^{s,m }_\tau(\wh{x},x)\cdot Z^{s,m }_\tau(\wh{x})+\big(\p_\alpha\p_\mu g\big)^{s,m }_\tau(\wh{x},x)\Big)\Big(\big(\p_\mu\alpha\big)^{s,m }_\tau(\wh{x},\wt{x})+\big(\p_z\alpha\big)^{s,m }_\tau(\wh{x})\big(\p_\mu\gamma\big)^{s,m }_\tau(\wh{x},\wt{x})\Big)dm(\wh{x})\\\no
&\ \ \ \ \ \ \ \ \ \ \ \ \ \ \ \ \ \ \ \ +\big(\p_\mu f\big)^{s,m }_\tau(\wt{x},x)\cdot\big(\p_x \gamma\big)^{s,m }_\tau(\wt{x})+\int_{\R^{d_x}}\big(\p_\mu f\big)^{s,m }_\tau(\wh{x},x)\cdot\big(\p_\mu \gamma\big)^{s,m }_\tau(\wh{x},\wt{x})dm(\wh{x})\\\no
&\ \ \ \ \ \ \ \ \ \ \ \ \ \ \ \ \ \ \ \ +\Big(\big(\p_\alpha \p_x f\big)^{s,m }_\tau(x)\cdot Z^{s,m }_\tau(x)+\big(\p_\alpha\p_x g\big)^{s,m }_\tau(x)\Big)\Big(\big(\p_\mu\alpha\big)^{s,m }_\tau(x,\wt{x})+\big(\p_z\alpha\big)^{s,m }_\tau(x)\big(\p_\mu\gamma\big)^{s,m }_\tau(x,\wt{x})\Big)\\\no 
&\ \ \ \ \ \ \ \ \ \ \ \ \ \ \ \ \ \ \ \ +\big(\p_x f\big)^{s,m }_\tau(x)\cdot \big(\p_\mu\gamma\big)^{s,m }_\tau(x,\wt{x})\bigg)\p_m\big(X^{s,m}_\tau(\wt{x})\big)(y)dm(\wt{x})\Bigg)d\tau\\\no
&+ \int_s^T \bigg(\big(\p_\mu\p_x f\big)^{s,m }_\tau(x,y)\cdot  Z^{s,m }_\tau(x) + \big(\p_\mu\p_x g\big)^{s,m }_\tau(x,y)\\\no
&\ \ \ \ \ \ \ \ \ \ \ \ +\int_{\R^{d_x}}\Big(\big(\p_\mu\p_\mu f\big)^{s,m }_\tau(\wh{x},x,y)\cdot  Z^{s,m }_\tau(\wh{x}) + \big(\p_\mu\p_\mu g\big)^{s,m }_\tau(\wh{x},x,y)\Big)dm(\wh{x})\\\no
&\ \ \ \ \ \ \ \ \ \ \ \ +\int_{\R^{d_x}}\Big(\big(\p_\alpha \p_\mu f\big)^{s,m }_\tau(\wh{x},x)\cdot Z^{s,m }_\tau(\wh{x})+\big(\p_\alpha\p_\mu g\big)^{s,m }_\tau(\wh{x},x)\Big)\Big(\big(\p_\mu\alpha\big)^{s,m }_\tau(\wh{x},y)+\big(\p_z\alpha\big)^{s,m }_\tau(\wh{x})\big(\p_\mu\gamma\big)^{s,m }_\tau(\wh{x},y)\Big)dm(\wh{x})\\\no
&\ \ \ \ \ \ \ \ \ \ \ \ +\int_{\R^{d_x}}\big(\p_\mu f\big)^{s,m }_\tau(\wh{x},x)\cdot\big(\p_\mu \gamma\big)^{s,m }_\tau(\wh{x},y)dm(\wh{x})+\big(\p_x f\big)^{s,m }_\tau(x)\cdot \big(\p_\mu\gamma\big)^{s,m }_\tau(x,y)\\\label{p_mgamma_eq_1}  
&\ \ \ \ \ \ \ \ \ \ \ \ +\Big(\big(\p_\alpha \p_x f\big)^{s,m }_\tau(x)\cdot Z^{s,m }_\tau(x)+\big(\p_\alpha\p_x g\big)^{s,m }_\tau(x)\Big)\Big(\big(\p_\mu\alpha\big)^{s,m }_\tau(x,y)+\big(\p_z\alpha\big)^{s,m }_\tau(x)\big(\p_\mu\gamma\big)^{s,m }_\tau(x,y)\Big)\bigg)\p_y\big(X^{s,m}_\tau(y)\big)d\tau,
\end{align}\normalsize
where $Z^{s,m }_\tau(x)=\gamma\big(\tau,X^{s,m }_\tau(x),X^{s,m }_\tau\ot m\big)$ and
\small\begin{align}\no
&\p_m \big(X^{t,m }_s(x)\big)(y)\\\no
=&\  \int_t^s \bigg(\big(\p_x f\big)^{t,m }_\tau(x)+\big(\p_\alpha f\big)^{t,m }_\tau(x)\Big(\big(\p_x \alpha\big)^{t,m }_\tau(x)+\big(\p_z \alpha\big)^{t,m }_\tau(x)\big(\p_x \gamma\big)^{t,m }_\tau(x)\Big)\bigg)\p_m \big(X^{t,m }_\tau(x)\big)(y) d\tau\\\no
&+\int_t^s\int_{\R^{d_x}}\bigg(\big(\p_\mu f\big)^{t,m }_\tau(x,\wt{x})+\big(\p_\alpha f\big)^{t,m }_\tau(x)\Big(\big(\p_\mu \alpha\big)^{t,m }_\tau(x,\wt{x})+\big(\p_z \alpha\big)^{t,m }_\tau(x)\big(\p_\mu \gamma\big)^{t,m }_\tau(x,\wh{x})\Big)\bigg)\p_m\big(X^{t,m }_\tau(\wh{x})\big)(y)dm(\wh{x})d\tau\\\label{p_mX_eq_1}   
&+\int_t^s\bigg(\big(\p_\mu f\big)^{t,m }_\tau(x,y)+\big(\p_\alpha f\big)^{t,m }_\tau(x)\Big(\big(\p_\mu \alpha\big)^{t,m }_\tau(x,y)+\big(\p_z \alpha\big)^{t,m }_\tau(x)\big(\p_\mu \gamma\big)^{t,m }_\tau(x,y)\Big)\bigg)\p_y\big(X^{t,m }_\tau(y)\big)d\tau.
\end{align}\normalsize

\textbf{Part 1}. (Differentiability in $x\in \R^{d_x}$ of $\gamma(s,x,m)$ and $X^{t,m}_s(x)$ )
Recall the notations defined in Appendix \eqref{notation_1} and Table \ref{notation_2} and consider the following FBODE system for $\big(\wb{\mc{X}}^{t,m }_s(x),\gamma_x (s,x,m)\big)$:
\small\begin{align}\no
&\gamma_x (s,x,m)\\\no
= &\ \big(\p_x p\big)^{s,m}_T(x)\wb{\mc{X}}^{s,m }_T(x)\\\no
&+ \int_s^T \Bigg(\int_{\R^{d_x}}\bigg(\big(\p_{\wt{x}}\p_\mu f\big)^{s,m }_\tau(\wt{x},x)\cdot  Z^{s,m }_\tau(\wt{x}) + \big(\p_{\wt{x}}\p_\mu g\big)^{s,m }_\tau(\wt{x},x)\bigg)dm(\wt{x})\\\no
&\ \ \ \ \ \ \ \ \ \ \ \ +\Big(\big(\p_\alpha \p_x f\big)^{s,m }_\tau(x)\cdot Z^{s,m }_\tau(x)+\big(\p_\alpha\p_x g\big)^{s,m }_\tau(x)\Big)\Big(\big(\p_x\alpha\big)^{s,m }_\tau(x)+\big(\p_z\alpha\big)^{s,m }_\tau(x)\gamma_x\big(\tau,X^{s,m }_\tau(x),X^{s,m }_\tau\ot m\big)\Big)\\\label{p_xgamma_eq_2}
&\ \ \ \ \ \ \ \ \ \ \ \ +\big(\p_x f\big)^{s,m }_\tau(x)\cdot \gamma_x\big(\tau,X^{s,m }_\tau(x),X^{s,m }_\tau\ot m\big)+\big(\p_x\p_x f\big)^{s,m }_\tau(x)\cdot Z^{s,m }_\tau(x)+\big(\p_x \p_x g\big)^{s,m }_\tau(x)\Bigg)\wb{\mc{X}}^{s,m }_\tau(x)d\tau,
\end{align}\normalsize
where the  couple $\big(X^{t,m}_s(x),Z^{t,m}_s(x)\big)$ is the unique solution of FBODE \eqref{fbodesystem} constructed in Theorem \ref{Thm6_1} and 
\footnotesize\begin{align}
\label{p_xX_eq_2}       
\wb{\mc{X}}^{t,m }_s(x)= \mathcal{I}_{d_x\times d_x}+\int_t^s \bigg(\big(\p_x f\big)^{t,m }_\tau(x)+\big(\p_\alpha f\big)^{t,m }_\tau(x)\Big(\big(\p_x \alpha\big)^{t,m }_\tau(x)+\big(\p_z \alpha\big)^{t,m }_\tau(x) \gamma_x\big(\tau,X^{t,m }_\tau(x),X^{t,m }_\tau\ot m\big)\Big)\bigg)\wb{\mc{X}}^{t,m }_\tau(x) d\tau.
\end{align}\normalsize
As one may note that the FBODE system \eqref{p_xgamma_eq_2}-\eqref{p_xX_eq_2} is the system governing the evolution of \\
$\Big(\p_x \big(X^{t,m}_s(x)\big),\p_x \gamma(s,x,m)\Big)$ (see \eqref{p_xgamma_eq_1}-\eqref{p_xX_eq_1}). Here, we shall first solve for this $\big(\wb{\mc{X}}^{t,m }_s(x),\gamma_x (s,x,m)\big)$ in Steps 1-4, and then verify the claim that $\Big(\p_x \big(X^{t,m}_s(x)\big),\p_x \gamma(s,x,m)\Big)=\big(\wb{\mc{X}}^{t,m }_s(x),\gamma_x (s,x,m)\big)$ in Step 5. Let $C\big([t,T]\times \R^{d_x}\times\mathcal{P}_2(\R^{d_x});\mc{L}(\R^{d_x};\R^{d_x})\big)$ denotes the vector space of functions in $(s,x,\mu)$, each of which maps from $[t,T]\times \R^{d_x}\times\mathcal{P}_2(\R^{d_x})$ to $\mc{L}(\R^{d_x};\R^{d_x})$ and is continuous in $(s,x,\mu)$; this vector space $C\big([t,T]\times \R^{d_x}\times\mathcal{P}_2(\R^{d_x});\mc{L}(\R^{d_x};\R^{d_x})\big)$ is equipped with the norm $\sup_{s\in[t,T],\,x\in\R^{d_x},\,\mu\in\mathcal{P}_2(\R^{d_x})} \left\|\cdot \right\|_{\mathcal{L}(\R^{d_x};\R^{d_x})}$, and itself can be easily seen as a Banach space. Define a convex subset of this Banach space for iteration:
\begin{align}\no
\mathcal{I}_2:=\bigg\{\gamma_x(s,x,\mu)\in C\big([t,T]\times \R^{d_x}\times\mathcal{P}_2(\R^{d_x});\mc{L}(\R^{d_x};\R^{d_x})\big):&\\\label{wbI_2}
\sup_{s\in[t,T],\,x\in\R^{d_x},\,\mu\in\mathcal{P}_2(\R^{d_x})} \left\|\gamma_x(s,x,\mu)\right\|_{\mathcal{L}(\R^{d_x};\R^{d_x})}\leq 2L_p&\bigg\}.
\end{align}

Step 1. Set $\gamma_x^{(0)}(s,x,\mu):=\p_x p(x,\mu)$ which clearly belongs to $\mathcal{I}_2$ by using \eqref{p1}. Assume that $\gamma_x^{(n-1)}(s,x,\mu)\in \mathcal{I}_2$ and define $\gamma_x^{(n)}(s,x,\mu)$, for $n=1,2,...$, iteratively as follows:\\
(i) solve the following linear forward ODE equation with a mean-field term and an exogenous driving term $\gamma_x^{(n-1)}$ in integral form for $\wb{\mc{X}}^{t,m,(n)}_s(x)$:
\scriptsize\begin{align}
\label{p_xX_eq_2_1}       
\wb{\mc{X}}^{t,m,(n)}_s(x)= \mathcal{I}_{d_x\times d_x}+\int_t^s \bigg(\big(\p_x f\big)^{t,m }_\tau(x)+\big(\p_\alpha f\big)^{t,m }_\tau(x)\Big(\big(\p_x \alpha\big)^{t,m }_\tau(x)+\big(\p_z \alpha\big)^{t,m }_\tau(x) \gamma_x^{(n-1)}\big(\tau,X^{t,m }_\tau(x),X^{t,m }_\tau\ot m\big)\Big)\bigg)\wb{\mc{X}}^{t,m,(n) }_\tau(x) d\tau.
\end{align}\normalsize
(ii) solve the following linear backward ODE equation with a mean-field term and an exogenous driving term $\wb{\mc{X}}^{t,m,(n)}_s(x)$ in integral form for $\gamma_x^{(n)}$:
\footnotesize\begin{align}\no
&\gamma_x^{(n)} (s,x,m)\\\no
= &\ \big(\p_x p\big)^{s,m}_T(x)\wb{\mc{X}}^{s,m,(n) }_T(x)\\\no
&+ \int_s^T \Bigg(\int_{\R^{d_x}}\bigg(\big(\p_{\wt{x}}\p_\mu f\big)^{s,m }_\tau(\wt{x},x)\cdot  Z^{s,m }_\tau(\wt{x}) + \big(\p_{\wt{x}}\p_\mu g\big)^{s,m }_\tau(\wt{x},x)\bigg)dm(\wt{x})\\\no
&\ \ \ \ \ \ \ \ \ \ \ \ +\Big(\big(\p_\alpha \p_x f\big)^{s,m }_\tau(x)\cdot Z^{s,m }_\tau(x)+\big(\p_\alpha\p_x g\big)^{s,m }_\tau(x)\Big)\Big(\big(\p_x\alpha\big)^{s,m }_\tau(x)+\big(\p_z\alpha\big)^{s,m }_\tau(x)\gamma_x^{(n)}\big(\tau,X^{s,m }_\tau(x),X^{s,m }_\tau\ot m\big)\Big)\\\label{p_xgamma_eq_2_1}   
&\ \ \ \ \ \ \ \ \ \ \ \ +\big(\p_x f\big)^{s,m }_\tau(x)\cdot \gamma_x^{(n)}\big(\tau,X^{s,m }_\tau(x),X^{s,m }_\tau\ot m\big)+\big(\p_x\p_x f\big)^{s,m }_\tau(x)\cdot Z^{s,m }_\tau(x)+\big(\p_x \p_x g\big)^{s,m }_\tau(x)\Bigg)\wb{\mc{X}}^{s,m,(n) }_\tau(x)d\tau,
\end{align}\normalsize
where the couple $\big(X^{t,m}_s(x),Z^{t,m}_s(x)\big)$ is the unique solution of FBODE \eqref{fbodesystem} constructed in Theorem \ref{Thm6_1} and $\alpha(X^{s,m}_\tau(x),X^{s,m}_\tau\ot m,Z^{s,m}_\tau(x))$ is well-defined by the validity of \eqref{cone_ZX}. In the following steps $2$-$6$, we shall show that $\wb{\mc{X}}^{t,m,(n)}_s(x)$ is well-defined by \eqref{p_xX_eq_2_1} , $\gamma_x^{(n)}(s,x,\mu)$ is well-defined by \eqref{p_xgamma_eq_2_1} and belongs to $\mathcal{I}_2$, and the iteration mapping $\gamma_x^{(n-1)}\mapsto \gamma_x^{(n)}$ is contractive under the norm $\sup_{s\in[t,T],\,x\in\R^{d_x},\,\mu\in\mathcal{P}_2(\R^{d_x})} \left\|\cdot \right\|_{\mathcal{L}(\R^{d_x};\R^{d_x})}$.

Step 2. Note that \eqref{p_xX_eq_2_1} is a linear forward ODE in integral form and the following term is bounded, after using \eqref{bdd_d1_f}, \eqref{p_xalpha}, \eqref{p_zalpha} and $\gamma_x^{(n-1)}\in \mathcal{I}_2$:
\small\begin{align}
\bigg\|\big(\p_x f\big)^{t,m }_\tau(x)+\big(\p_\alpha f\big)^{t,m }_\tau(x)\Big(\big(\p_x \alpha\big)^{t,m }_\tau(x)+\big(\p_z \alpha\big)^{t,m }_\tau(x) \gamma_x^{(n-1)}\big(\tau,X^{s,m }_\tau(x),X^{s,m }_\tau\ot m\big)\Big)\bigg\|_{\mc{L}(\R^{d_x};\R^{d_x})}\leq L_B,
\end{align}\normalsize
where $L_B$ was defined in \eqref{L_B}. Thus, by a classical method of solving linear ODE (an argument which is similar to steps 1 and 2 in the proof of Lemma \ref{lem6_3} but is simpler), one can show that $\wb{\mc{X}}^{t,m,(n)}_s(x)$ is well-defined for all $s\in[t,T]$ and $t\in[T-\eps_1,T]$, and it also satisfies the following estimates:
\begin{align}\label{eq_6_56_1}
&\left\|\wb{\mc{X}}^{t,m,(n)}_s(x)\right\|_{\mc{L}(\R^{d_x};\R^{d_x})}\leq \exp\big(L_B(s-t)\big),\\\no
&\left\|\wb{\mc{X}}^{t,m,(n+1) }_s(x)-\wb{\mc{X}}^{t,m,(n) }_s(x)\right\|_{\mc{L}(\R^{d_x};\R^{d_x})}\\\label{eq_6_57_1}
\leq &\ (s-t)\exp\big(2L_B(s-t)\big)\Lambda_fL_\alpha\sup_{\tau\in[t,s],\,x\in\R^{d_x},\,\mu\in\mathcal{P}_2(\R^{d_x})}\left\|\gamma_x^{(n)}\big(\tau,x,\mu\big)-\gamma_x^{(n-1)}\big(\tau,x,\mu\big)\right\|_{\mc{L}(\R^{d_x};\R^{d_x})}.
\end{align}

Step 3 ($\gamma_x^{(n)}\in\mc{I}_2$). Note that \eqref{p_xgamma_eq_2_1} is a linear backward ODE in the integral form. Since 
\small\begin{align}\no
&\left\|\gamma_x^{(n)} (s,x,m)\right\|_{\mc{L}(\R^{d_x};\R^{d_x})}\\\no
\leq &\ \left\|\big(\p_x p\big)^{s,m}_T(x)\right\|_{\mc{L}(\R^{d_x};\R^{d_x})}\left\|\wb{\mc{X}}^{s,m,(n) }_T(x)\right\|_{\mc{L}(\R^{d_x};\R^{d_x})}\\\no
&+ \int_s^T \Bigg\|\int_{\R^{d_x}}\bigg(\big(\p_{\wt{x}}\p_\mu f\big)^{s,m }_\tau(\wt{x},x)\cdot  Z^{s,m }_\tau(\wt{x}) + \big(\p_{\wt{x}}\p_\mu g\big)^{s,m }_\tau(\wt{x},x)\bigg)dm(\wt{x})\\\no
&\ \ \ \ \ \ \ \ \ \ \ \ +\Big(\big(\p_\alpha \p_x f\big)^{s,m }_\tau(x)\cdot Z^{s,m }_\tau(x)+\big(\p_\alpha\p_x g\big)^{s,m }_\tau(x)\Big)\Big(\big(\p_x\alpha\big)^{s,m }_\tau(x)+\big(\p_z\alpha\big)^{s,m }_\tau(x)\gamma_x^{(n)}\big(\tau,X^{s,m }_\tau(x),X^{s,m }_\tau\ot m\big)\Big)\\\no 
&\ \ \ \ \ \ \ \ \ \ \ \ +\big(\p_x f\big)^{s,m }_\tau(x)\cdot \gamma_x^{(n)}\big(\tau,X^{s,m }_\tau(x),X^{s,m }_\tau\ot m\big)+\big(\p_x\p_x f\big)^{s,m }_\tau(x)\cdot Z^{s,m }_\tau(x)+\big(\p_x \p_x g\big)^{s,m }_\tau(x)\Bigg\|_{\mc{L}(\R^{d_x};\R^{d_x})}\\\no
&\ \ \ \ \ \ \ \ \ \ \cdot\left\|\wb{\mc{X}}^{s,m,(n) }_\tau(x)\right\|_{\mc{L}(\R^{d_x};\R^{d_x})}d\tau\\\no
\leq &\  L_p\exp\big(L_B(T-s)\big)\\\no
&+\exp\big(L_B(T-s)\big)\int_s^T\bigg(2\big(2\wb{L}_p\wb{l}_f+\Lambda_g\big)+\big(2\wb{L}_p\wb{l}_f+\Lambda_g\big)L_\alpha\Big(1+\|\gamma_x^{(n)}\big(\tau,X^{s,m }_\tau(x),X^{s,m }_\tau\ot m\big)\|_{\mc{L}(\R^{d_x};\R^{d_x})}\Big)\\\no
&\ \ \ \ \ \ \ \ \ \ \ \ \ \ \ \ \ \ \ \ \ \ \ \ \ \ \ \ \ \ \ \ +\Lambda_f\|\gamma_x^{(n)}\big(\tau,X^{s,m }_\tau(x),X^{s,m }_\tau\ot m\big)\|_{\mc{L}(\R^{d_x};\R^{d_x})}\bigg)d\tau\\\no
&(\text{by using \eqref{bdd_d1_f}, \eqref{bdd_d2_f}, \eqref{bdd_d2_g_1}, \eqref{bdd_d2_g_2}, \eqref{p_xalpha}, \eqref{p_zalpha}, \eqref{p1} and \eqref{cone_ZX}}),
\end{align}\normalsize
by using Gr\"{o}nwall's inequality and \eqref{coro_6_4}, we have
\begin{align}\no
\left\|\gamma_x^{(n)} (s,x,m)\right\|_{\mc{L}(\R^{d_x};\R^{d_x})}\leq &\ \Big(L_p+\big(2+L_\alpha\big)\big(2\wb{L}_p\wb{l}_f+\Lambda_g\big)(T-s)\Big)\exp\big(L_B(T-s)\big)\\
&\cdot\exp\Big(\big(\Lambda_f+\big(2\wb{L}_p\wb{l}_f+\Lambda_g\big)L_\alpha\big)(T-s)\exp\big(L_B(T-s)\Big).
\end{align}
Thus, by a classical method of solving linear ODE (an argument which is similar to steps 1 and 2 in the proof of Lemma \ref{lem6_3} but is simpler), one can show that $\gamma_x^{(n)} (s,x,m)$ is well-defined for all $s\in[t,T]$ and $t\in[T-\eps_1,T]$. Moreover, for any $t\in[0,T]$ such that\small
\begin{align}\label{eps_5}
T-t\leq \min\bigg\{&\eps_1,\frac{L_p}{3\big(2+L_\alpha\big)\big(2\wb{L}_p\wb{l}_f+\Lambda_g\big)},\frac{5\ln(5/4)}{6\big(\Lambda_f+\big(2\wb{L}_p\wb{l}_f+\Lambda_g\big)L_\alpha\big)}\bigg\}=:\eps_5,
\end{align}\normalsize
we have $\left\|\gamma_x^{(n)} (s,x,m)\right\|_{\mc{L}(\R^{d_x};\R^{d_x})}\leq 2L_p$ and thus $\gamma_x^{(n)}\in\mc{I}_2$.

Step 4 (contractive property of the iteration mapping $\gamma_x^{(n-1)}(s,x,\mu)\in \mathcal{I}_2\mapsto\gamma_x^{(n)}(s,x,\mu)\in \mathcal{I}_2$). Since
\begin{align}\no
&\left\|\gamma_x^{(n+1)}\big(\tau,X^{s,m }_\tau(x),X^{s,m }_\tau\ot m\big)\wb{\mc{X}}^{s,m,(n+1) }_\tau(x)-\gamma_x^{(n)}\big(\tau,X^{s,m }_\tau(x),X^{s,m }_\tau\ot m\big)\wb{\mc{X}}^{s,m,(n) }_\tau(x)\right\|_{\mc{L}(\R^{d_x};\R^{d_x})}\\\no
\leq &\ \left\|\gamma_x^{(n+1)}\big(\tau,X^{s,m }_\tau(x),X^{s,m }_\tau\ot m\big)\wb{\mc{X}}^{s,m,(n+1) }_\tau(x)-\gamma_x^{(n)}\big(\tau,X^{s,m }_\tau(x),X^{s,m }_\tau\ot m\big)\wb{\mc{X}}^{s,m,(n+1) }_\tau(x)\right\|_{\mc{L}(\R^{d_x};\R^{d_x})}\\\no
&+\left\|\gamma_x^{(n)}\big(\tau,X^{s,m }_\tau(x),X^{s,m }_\tau\ot m\big)\wb{\mc{X}}^{s,m,(n+1) }_\tau(x)-\gamma_x^{(n)}\big(\tau,X^{s,m }_\tau(x),X^{s,m }_\tau\ot m\big)\wb{\mc{X}}^{s,m,(n) }_\tau(x)\right\|_{\mc{L}(\R^{d_x};\R^{d_x})}\\\no
\leq &\  \exp\big(L_B(\tau-s)\big)\left\|\gamma_x^{(n+1)}\big(\tau,X^{s,m }_\tau(x),X^{s,m }_\tau\ot m\big)-\gamma_x^{(n)}\big(\tau,X^{s,m }_\tau(x),X^{s,m }_\tau\ot m\big)\right\|_{\mc{L}(\R^{d_x};\R^{d_x})}\\\no
&+2L_p\left\|\wb{\mc{X}}^{s,m,(n+1) }_\tau(x)-\wb{\mc{X}}^{s,m,(n) }_\tau(x)\right\|_{\mc{L}(\R^{d_x};\R^{d_x})}\\\label{eq_6_60_1}
&(\text{by using \eqref{eq_6_56_1} and $\gamma_x^{(n)}\in\mc{I}_2$}),
\end{align}
we have
\footnotesize\begin{align}\no
&\left\|\gamma_x^{(n+1)} (s,x,m)-\gamma_x^{(n)} (s,x,m)\right\|_{\mc{L}(\R^{d_x};\R^{d_x})}\\\no
\leq &\ L_p\left\|\wb{\mc{X}}^{s,m,(n+1) }_T(x)-\wb{\mc{X}}^{s,m,(n) }_T(x)\right\|_{\mc{L}(\R^{d_x};\R^{d_x})}\\\no
&+ \int_s^T \Big(2\big(2\wb{L}_p\wb{l}_f+\Lambda_g\big)+\big(2\wb{L}_p\wb{l}_f+\wb{l}_g\big)L_\alpha\Big)\left\|\wb{\mc{X}}^{s,m,(n+1) }_\tau(x)-\wb{\mc{X}}^{s,m,(n) }_\tau(x)\right\|_{\mc{L}(\R^{d_x};\R^{d_x})}d\tau\\\no
&+ \int_s^T \Big(\Lambda_f+\big(2\wb{L}_p\wb{l}_f+\wb{l}_g\big)L_\alpha\Big)\bigg(\exp\big(L_B(\tau-s)\big)\left\|\gamma_x^{(n+1)}\big(\tau,X^{s,m }_\tau(x),X^{s,m }_\tau\ot m\big)-\gamma_x^{(n)}\big(\tau,X^{s,m }_\tau(x),X^{s,m }_\tau\ot m\big)\right\|_{\mc{L}(\R^{d_x};\R^{d_x})}\\\no
&\ \ \ \ \ \ \ \ \ \ \ \ \ \ \ \ \ \ \ \ \ \ \ \ \ \ \ \ \ \ \ \ \ \ \ \ \ \ \ \ \ \ \ \ +2L_p\left\|\wb{\mc{X}}^{s,m,(n+1) }_\tau(x)-\wb{\mc{X}}^{s,m,(n) }_\tau(x)\right\|_{\mc{L}(\R^{d_x};\R^{d_x})}\bigg)d\tau\\\no
&(\text{by using \eqref{bdd_d1_f}, \eqref{bdd_d2_f}, \eqref{bdd_d2_g_1}, \eqref{bdd_d2_g_2}, \eqref{p_xalpha}, \eqref{p_zalpha}, \eqref{p1},  \eqref{cone_ZX} and \eqref{eq_6_60_1}})\\\no
\leq &\ \bigg(L_p+\Big(2\big(2\wb{L}_p\wb{l}_f+\Lambda_g\big)+\big(2\wb{L}_p\wb{l}_f+\wb{l}_g\big)L_\alpha(1+2L_p)+2L_p\Lambda_f\Big)(T-s)\bigg)(T-s)\exp\big(2L_B(T-s)\big)\Lambda_fL_\alpha\\\no
&\cdot\sup_{\tau\in[s,T],\,x\in\R^{d_x},\,\mu\in\mathcal{P}_2(\R^{d_x})}\left\|\gamma_x^{(n)}\big(\tau,x,\mu\big)-\gamma_x^{(n-1)}\big(\tau,x,\mu\big)\right\|_{\mc{L}(\R^{d_x};\R^{d_x})}\\\no
&+ \Big(\Lambda_f+\big(2\wb{L}_p\wb{l}_f+\wb{l}_g\big)L_\alpha\Big)\exp\big(L_B(T-s)\big)\int_s^T \left\|\gamma_x^{(n+1)}\big(\tau,X^{s,m }_\tau(x),X^{s,m }_\tau\ot m\big)-\gamma_x^{(n)}\big(\tau,X^{s,m }_\tau(x),X^{s,m }_\tau\ot m\big)\right\|_{\mc{L}(\R^{d_x};\R^{d_x})}d\tau\\\no
&(\text{by using \eqref{eq_6_57_1}}),
\end{align}\normalsize
which implies, by Gr\"{o}nwall's inequality and \eqref{coro_6_4}, 
\footnotesize\begin{align}\no
&\left\|\gamma_x^{(n+1)} (s,x,m)-\gamma_x^{(n)} (s,x,m)\right\|_{\mc{L}(\R^{d_x};\R^{d_x})}\\\no
\leq &\ \bigg(L_p+\Big(2\big(2\wb{L}_p\wb{l}_f+\Lambda_g\big)+\big(2\wb{L}_p\wb{l}_f+\wb{l}_g\big)L_\alpha(1+2L_p)+2L_p\Lambda_f\Big)(T-s)\bigg)(T-s)\exp\big(2L_B(T-s)\big)\Lambda_fL_\alpha\\\no
&\cdot\sup_{\tau\in[s,T],\,x\in\R^{d_x},\,\mu\in\mathcal{P}_2(\R^{d_x})}\left\|\gamma_x^{(n)}\big(\tau,x,\mu\big)-\gamma_x^{(n-1)}\big(\tau,x,\mu\big)\right\|_{\mc{L}(\R^{d_x};\R^{d_x})}\exp\Big(\Big(\Lambda_f+\big(2\wb{L}_p\wb{l}_f+\wb{l}_g\big)L_\alpha\Big)(T-s)\exp\big(L_B(T-s)\big)\Big).
\end{align}\normalsize
Thus, for any $t\in[0,T]$ such that\small
\begin{align}\no
T-t\leq \min\Bigg\{&\eps_5,\frac{5}{18\big(\Lambda_f+\big(2\wb{L}_p\wb{l}_f+\wb{l}_g\big)L_\alpha\big)},\\\label{eps_6}
&\frac{2}{9\Lambda_fL_\alpha\Big(L_p+\big(2\big(2\wb{L}_p\wb{l}_f+\Lambda_g\big)+\big(2\wb{L}_p\wb{l}_f+\wb{l}_g\big)L_\alpha(1+2L_p)+2L_p\Lambda_f\big)/(6\wb{L}_B)\Big)}\Bigg\}=:\eps_6,
\end{align}\normalsize
using \eqref{small_1} and taking the supremum over $s$, $x$ and $\mu$, we have \begin{align}\no
&\sup_{s\in[t,T],\,x\in\R^{d_x},\,\mu\in\mathcal{P}_2(\R^{d_x})}\left\|\gamma_x^{(n+1)} (s,x,m)-\gamma_x^{(n)} (s,x,m)\right\|_{\mc{L}(\R^{d_x};\R^{d_x})}
\\
\leq &\ \frac{1}{2}\sup_{s\in[t,T],\,x\in\R^{d_x},\,\mu\in\mathcal{P}_2(\R^{d_x})}\left\|\gamma_x^{(n)}\big(s,x,\mu\big)-\gamma_x^{(n-1)}\big(s,x,\mu\big)\right\|_{\mc{L}(\R^{d_x};\R^{d_x})}.
\end{align}\normalsize
Therefore, by a simple application of Banach Fixed Point Theorem warrants, for any $t\in[0,T]$ with $T-t\leq \eps_6$, a unique $\gamma_x \in \mc{I}_2$ satisfying \eqref{p_xgamma_eq_2}.

Step 5. In this step, we shall show that $\p_x \gamma(s,x,m)=\gamma_x(s,x,m)$ and $\p_x\big(X^{t,m }_s(x)\big)=\wb{\mc{X}}^{t,m }_s(x)$. First, it follows from the assumptions that $\p_x f(x,\mu,\alpha)$, $\p_\alpha f(x,\mu,\alpha)$, $\p_x\alpha(x,\mu,z)$ and $\p_z\alpha(x,\mu,z)$ are uniformly bounded and are also Lipschitz continuous in their corresponding arguments. Furthermore, we have already shown that $X^{t,m}_s(x)$ and $\gamma(s,x,m)$ are continuous in $x$, and $\gamma_x(s,x,m)$ and $\wb{\mc{X}}^{s,m}_\tau(x)$ are uniformly bounded. Using \eqref{xeq_1} and \eqref{p_xX_eq_2}, we have
\small\begin{align*}
&\ \Big|X^{t,m }_s(x+y)-X^{t,m }_s(x)-\wb{\mc{X}}^{t,m }_s(x)y\Big|\\
\leq&\ \int_t^s \bigg|\Big(\big(\p_x f\big)^{t,m }_\tau(x)+\big(\p_\alpha f\big)^{t,m }_\tau(x)\big(\p_x \alpha\big)^{t,m }_\tau(x)\Big)\Big(X^{t,m }_\tau(x+y)-X^{t,m }_\tau(x)-\wb{\mc{X}}^{t,m }_\tau(x)y\Big)\bigg| d\tau\\
&\ +\int_t^s \bigg|\big(\p_\alpha f\big)^{t,m }_\tau(x)\big(\p_z \alpha\big)^{t,m }_\tau(x) \Big(\gamma^{t,m }_\tau(x+y)-\gamma^{t,m }_\tau(x)-\gamma_x\big(\tau,X^{t,m }_\tau(x),X^{t,m }_\tau\ot m\big)\wb{\mc{X}}^{t,m }_\tau(x)y\Big)\bigg| d\tau+o(|y|)\\
\leq&\ \Lambda_f(1+L_\alpha)\int_t^s \bigg|X^{t,m }_\tau(x+y)-X^{t,m }_\tau(x)-\wb{\mc{X}}^{t,m }_\tau(x)y\bigg| d\tau\\
&\ +\Lambda_fL_\alpha\int_t^s \bigg|\gamma^{t,m }_\tau(x+y)-\gamma^{t,m }_\tau(x)-\gamma_x\big(\tau,X^{t,m }_\tau(x),X^{t,m }_\tau\ot m\big)\wb{\mc{X}}^{t,m }_\tau(x)y\bigg| d\tau+o(|y|)\\\no
&(\text{by using \eqref{bdd_d1_f}, \eqref{p_xalpha} and \eqref{p_zalpha}}),
\end{align*}\normalsize
where the small-$o\Big(\big|y\big|\Big)$ means $o\Big(\big|y\big|\Big)/\big|y\big|\to 0$ as $\big|y\big|\to 0$, the remainder term above being $o\Big(\big|y\big|\Big)$ can be shown by using Lebesgue's dominated convergence theorem and a similar argument leading to the $o\Big(\big|\wt{x}\big|\Big)$ nature of the remainder term $R_1(\wt{x})$ defined in \eqref{R_1}, also see Step 7 of the proof of Lemma \ref{lem6_3} for details.
Thus, by using Gr\"{o}nwall's inequality,
\small\begin{align}\no
&\ \Big|X^{t,m }_s(x+y)-X^{t,m }_s(x)-\wb{\mc{X}}^{t,m }_s(x)y\Big|\\\label{eq_6_63}
\leq&\ \Lambda_fL_\alpha\exp\Big(\Lambda_f(1+L_\alpha)(s-t)\Big)\int_t^s \bigg|\gamma^{t,m }_\tau(x+y)-\gamma^{t,m }_\tau(x)-\gamma_x\big(\tau,X^{t,m }_\tau(x),X^{t,m }_\tau\ot m\big)\wb{\mc{X}}^{t,m }_\tau(x)y\bigg| d\tau+o(|y|).
\end{align}\normalsize
Next, since $\p_x p(x,\mu)$, $\p_x\p_x f(x,\mu,\alpha)$, $\p_\alpha\p_x f(x,\mu,\alpha)$, $\p_{\wt{x}}\p_\mu f(x,\mu,\alpha)(\wt{x})$, $\p_x\p_x g(x,\mu,\alpha)$, $\p_\alpha\p_x g(x,\mu,\alpha)$, $\p_{\wt{x}}\p_\mu g(x,\mu,\alpha)(\wt{x})$, $\p_x f(x,\mu,\alpha)$, $\p_x \alpha(x,\mu,z)$ and $\p_z \alpha(x,\mu,z)$ are uniformly bounded and are also Lipschitz continuous in their corresponding arguments, $X^{t,m}_s(x)$ and $\gamma(s,x,m)$ are continuous in $x$, and $\gamma_x(s,x,m)$ and $\wb{\mc{X}}^{s,m}_\tau(x)$ are uniformly bounded, so it follows from \eqref{gammaeq_1} and \eqref{p_xgamma_eq_2} that

\footnotesize\begin{align*}
&\ \Big|\gamma(s,x+y,m)-\gamma(s,x,m)-\gamma_x(s,x,m)y\Big|\\
=&\ \Bigg|p^{s,m}_T(x+y)-p^{s,m}_T(x)-\big(\p_x p\big)^{s,m}_T(x)\wb{\mc{X}}^{s,m }_T(x)y\\
&+\int_s^T \Bigg(\int_{\R^{d_x}}\bigg(\big(\p_\mu f\big)^{s,m }_\tau(\wt{x},x+y)\cdot  Z^{s,m }_\tau(\wt{x}) + \big(\p_\mu g\big)^{s,m }_\tau(\wt{x},x+y)-\big(\p_\mu f\big)^{s,m }_\tau(\wt{x},x)\cdot  Z^{s,m }_\tau(\wt{x}) - \big(\p_\mu g\big)^{s,m }_\tau(\wt{x},x)\\
&\ \ \ \ \ \ \ \ \ \ \ \ \ \ \ \ \ \ \ \ \ -\Big(\big(\p_{\wt{x}}\p_\mu f\big)^{s,m }_\tau(\wt{x},x)\cdot  Z^{s,m }_\tau(\wt{x}) + \big(\p_{\wt{x}}\p_\mu g\big)^{s,m }_\tau(\wt{x},x)\Big)\wb{\mc{X}}^{s,m }_\tau(x)y\bigg)dm(\wt{x})\\
&\ \ \ \ \ \ \ \ \ \ \ \ +\big(\p_x f\big)^{s,m }_\tau(x+y)\cdot Z^{s,m }_\tau(x+y)+\big(\p_x g\big)^{s,m }_\tau(x+y)-\big(\p_x f\big)^{s,m }_\tau(x)\cdot Z^{s,m }_\tau(x)-\big(\p_x g\big)^{s,m }_\tau(x)\\
&\ \ \ \ \ \ \ \ \ \ \ \ -\bigg(\Big(\big(\p_\alpha \p_x f\big)^{s,m }_\tau(x)\cdot Z^{s,m }_\tau(x)+\big(\p_\alpha\p_x g\big)^{s,m }_\tau(x)\Big)\Big(\big(\p_x\alpha\big)^{s,m }_\tau(x)+\big(\p_z\alpha\big)^{s,m }_\tau(x)\gamma_x\big(\tau,X^{s,m }_\tau(x),X^{s,m }_\tau\ot m\big)\Big)\\ 
&\ \ \ \ \ \ \ \ \ \ \ \ \ \ \ \ +\big(\p_x f\big)^{s,m }_\tau(x)\cdot \gamma_x\big(\tau,X^{s,m }_\tau(x),X^{s,m }_\tau\ot m\big)+\big(\p_x\p_x f\big)^{s,m }_\tau(x)\cdot Z^{s,m }_\tau(x)+\big(\p_x \p_x g\big)^{s,m }_\tau(x)\bigg)\wb{\mc{X}}^{s,m }_\tau(x)y\Bigg)d\tau\Bigg|\\
\leq &\ \Bigg|\big(\p_x p\big)^{s,m}_T(x)\Big(X^{s,m}_T(x+y)-X^{s,m}_T(x)-\wb{\mc{X}}^{s,m }_T(x)y\Big)\Bigg|\\
&+\int_s^T \Bigg(\int_{\R^{d_x}}\Bigg|\Big(\big(\p_{\wt{x}}\p_\mu f\big)^{s,m }_\tau(\wt{x},x)\cdot  Z^{s,m }_\tau(\wt{x}) + \big(\p_{\wt{x}}\p_\mu g\big)^{s,m }_\tau(\wt{x},x)\Big)\Big(X^{s,m }_\tau(x+y)-X^{s,m }_\tau(x)-\wb{\mc{X}}^{s,m }_\tau(x)y\Big)\Bigg|dm(\wt{x})\\
&\ \ \ \ \ \ \ \ \ \ \ \ +\Bigg|\bigg(\Big(\big(\p_\alpha \p_x f\big)^{s,m }_\tau(x)\cdot Z^{s,m }_\tau(x)+\big(\p_\alpha\p_x g\big)^{s,m }_\tau(x)\Big)\big(\p_x\alpha\big)^{s,m }_\tau(x)\\ 
&\ \ \ \ \ \ \ \ \ \ \ \ \ \ \ \ +\big(\p_x\p_x f\big)^{s,m }_\tau(x)\cdot Z^{s,m }_\tau(x)+\big(\p_x \p_x g\big)^{s,m }_\tau(x)\bigg)\Big(X^{s,m }_\tau(x+y)-X^{s,m }_\tau(x)-\wb{\mc{X}}^{s,m }_\tau(x)y\Big)\Bigg|\\
&\ \ \ \ \ \ \ \ \ \ \ \ +\bigg(\Big(\big(\p_\alpha \p_x f\big)^{s,m }_\tau(x)\cdot Z^{s,m }_\tau(x)+\big(\p_\alpha\p_x g\big)^{s,m }_\tau(x)\Big)\big(\p_z\alpha\big)^{s,m }_\tau(x)\\ 
&\ \ \ \ \ \ \ \ \ \ \ \ \ \ \ \ +\big(\p_x f\big)^{s,m }_\tau(x)^\top\bigg)\Big(\gamma^{s,m }_\tau(x+y)-\gamma^{s,m }_\tau(x)- \gamma_x\big(\tau,X^{s,m }_\tau(x),X^{s,m }_\tau\ot m\big)\wb{\mc{X}}^{s,m }_\tau(x)y\Big)\Bigg)d\tau\Bigg|+o\Big(\big|y\big|\Big)\\
\leq&\ \bigg(\Big(L_p+\big(2(2\wb{L}_p\wb{l}_f+\Lambda_g)+(2\wb{L}_p\wb{l}_f+\wb{l}_g)L_\alpha\big)(T-s)\Big)\Lambda_fL_\alpha\exp\Big(\Lambda_f(1+L_\alpha)(T-s)\Big)+(2\wb{L}_p\wb{l}_f+\wb{l}_g)L_\alpha+\Lambda_f\bigg)\\
&\ \cdot\int_s^T \bigg|\gamma^{s,m }_\tau(x+y)-\gamma^{s,m }_\tau(x)-\gamma_x\big(\tau,X^{s,m }_\tau(x),X^{s,m }_\tau\ot m\big)\wb{\mc{X}}^{s,m }_\tau(x)y\bigg| d\tau+o\Big(\big|y\big|\Big)\\
&(\text{by using \eqref{bdd_d2_f}, \eqref{bdd_d2_g_1}, \eqref{bdd_d2_g_2}, \eqref{p1}, \eqref{cone_ZX} and \eqref{eq_6_63}}),
\end{align*}\normalsize
where 
the remainder term being $o\Big(\big|y\big|\Big)$ can also be shown by using Lebesgue's dominated convergence theorem through an argument similar to that for \eqref{R_1}.
Thus, by Gr\"{o}nwall's inequality and \eqref{coro_6_4}, we have $$\Big|\gamma(s,x+y,m)-\gamma(s,x,m)-\gamma_x(s,x,m)y\Big|=o\Big(\big|y\big|\Big).$$ Therefore, $\gamma(s,x,m)$ is differentiable in $x\in\R^{d_x}$ and $\p_x\gamma(s,x,m)=\gamma_x(s,x,m)\in\mc{I}_2$, which is continuous in $s$, $x$ and $m$ and satisfies \eqref{p_x_gamma_bdd}. Moreover, by \eqref{eq_6_63}, we also have $$\Big|X^{t,m }_s(x+y)-X^{t,m }_s(x)-\wb{\mc{X}}^{t,m }_s(x)y\Big|=o\Big(\big|y\big|\Big).$$ Thus $X^{t,m }_s(x)$ is differentiable in $x\in\R^{d_x}$ and $\p_x\big(X^{t,m }_s(x)\big)=\wb{\mc{X}}^{t,m }_s(x)$, which is continuous in $t$, $x$ and $m$, is continuously differentiable in $s\in[t,T]$ and satisfies \eqref{p_x_X_bdd}.

\textbf{Part 2}. ($L$-Differentiability in $m\in\mathcal{P}_2(\R^{d_x})$ of $\gamma(s,x,m)$ and $X^{t,m}_s(x)$)
Consider the following FBODE system for $\big(\mc{X}^{t,m }_s(x,y),
\gamma_\mu (s,x,m)(y)\big)$:
\footnotesize\begin{align}\no
&\gamma_\mu (s,x,m)(y)\\\no
= &\ \big(\p_x p\big)^{s,m}_T(x)\mc{X}^{s,m}_T(x,y)+\int_{\R^{d_x}}\big(\p_\mu p\big)^{s,m}_T(x,\wh{x})\mc{X}^{s,m}_T(\wh{x},y)dm(\wh{x})+\big(\p_\mu p\big)^{s,m}_T(x,y)\p_y\big(X^{s,m}_T(y)\big)\\\no
&+ \int_s^T \Bigg(\int_{\R^{d_x}}\bigg(\big(\p_{\wt{x}}\p_\mu f\big)^{s,m }_\tau(\wt{x},x)\cdot  Z^{s,m }_\tau(\wt{x}) + \big(\p_{\wt{x}}\p_\mu g\big)^{s,m }_\tau(\wt{x},x)\bigg)dm(\wt{x})\\\no
&\ \ \ \ \ \ \ \ \ \ \ \ +\Big(\big(\p_\alpha \p_x f\big)^{s,m }_\tau(x)\cdot Z^{s,m }_\tau(x)+\big(\p_\alpha\p_x g\big)^{s,m }_\tau(x)\Big)\Big(\big(\p_x\alpha\big)^{s,m }_\tau(x)+\big(\p_z\alpha\big)^{s,m }_\tau(x)\big(\p_x\gamma\big)^{s,m }_\tau(x)\Big)\\\no 
&\ \ \ \ \ \ \ \ \ \ \ \ +\big(\p_x f\big)^{s,m }_\tau(x)\cdot \big(\p_x\gamma\big)^{s,m }_\tau(x)+\big(\p_x\p_x f\big)^{s,m }_\tau(x)\cdot Z^{s,m }_\tau(x)+\big(\p_x \p_x g\big)^{s,m }_\tau(x)\Bigg)\mc{X}^{s,m}_\tau(x,y)d\tau\\\no
&+ \int_s^T \Bigg(\int_{\R^{d_x}}\bigg(\big(\p_x\p_\mu f\big)^{s,m }_\tau(\wt{x},x)\cdot  Z^{s,m }_\tau(\wt{x}) + \big(\p_x\p_\mu g\big)^{s,m }_\tau(\wt{x},x)+\big(\p_\mu\p_x f\big)^{s,m }_\tau(x,\wt{x})\cdot  Z^{s,m }_\tau(x) + \big(\p_\mu\p_x g\big)^{s,m }_\tau(x,\wt{x})\\\no
&\ \ \ \ \ \ \ \ \ \ \ \ \ \ \ \ \ \ \ \ +\int_{\R^{d_x}}\Big(\big(\p_\mu\p_\mu f\big)^{s,m }_\tau(\wh{x},x,\wt{x})\cdot  Z^{s,m }_\tau(\wh{x}) + \big(\p_\mu\p_\mu g\big)^{s,m }_\tau(\wh{x},x,\wt{x})\Big)dm(\wh{x})\\\no
&\ \ \ \ \ \ \ \ \ \ \ \ \ \ \ \ \ \ \ \ +\Big(\big(\p_\alpha \p_\mu f\big)^{s,m }_\tau(\wt{x},x)\cdot Z^{s,m }_\tau(\wt{x})+\big(\p_\alpha\p_\mu g\big)^{s,m }_\tau(\wt{x},x)\Big)\Big(\big(\p_x\alpha\big)^{s,m }_\tau(\wt{x})+\big(\p_z\alpha\big)^{s,m }_\tau(\wt{x})\big(\p_x\gamma\big)^{s,m }_\tau(\wt{x})\Big)\\\no
&\ \ \ \ \ \ \ \ \ \ \ \ \ \ \ \ \ \ \ \ +\int_{\R^{d_x}}\Big(\big(\p_\alpha \p_\mu f\big)^{s,m }_\tau(\wh{x},x)\cdot Z^{s,m }_\tau(\wh{x})+\big(\p_\alpha\p_\mu g\big)^{s,m }_\tau(\wh{x},x)\Big)\Big(\big(\p_\mu\alpha\big)^{s,m }_\tau(\wh{x},\wt{x})+\big(\p_z\alpha\big)^{s,m }_\tau(\wh{x})\big(\gamma_\mu\big)^{s,m }_\tau(\wh{x},\wt{x})\Big)dm(\wh{x})\\
\no
&\ \ \ \ \ \ \ \ \ \ \ \ \ \ \ \ \ \ \ \ +\big(\p_\mu f\big)^{s,m }_\tau(\wt{x},x)\cdot\big(\p_x \gamma\big)^{s,m }_\tau(\wt{x})+\int_{\R^{d_x}}\big(\p_\mu f\big)^{s,m }_\tau(\wh{x},x)\cdot\big(\gamma_\mu\big)^{s,m }_\tau(\wh{x},\wt{x})dm(\wh{x})\\\no
&\ \ \ \ \ \ \ \ \ \ \ \ \ \ \ \ \ \ \ \ +\Big(\big(\p_\alpha \p_x f\big)^{s,m }_\tau(x)\cdot Z^{s,m }_\tau(x)+\big(\p_\alpha\p_x g\big)^{s,m }_\tau(x)\Big)\Big(\big(\p_\mu\alpha\big)^{s,m }_\tau(x,\wt{x})+\big(\p_z\alpha\big)^{s,m }_\tau(x)\big(\gamma_\mu\big)^{s,m }_\tau(x,\wt{x})\Big)\\\no 
&\ \ \ \ \ \ \ \ \ \ \ \ \ \ \ \ \ \ \ \ +\big(\p_x f\big)^{s,m }_\tau(x)\cdot \big(\gamma_\mu\big)^{s,m }_\tau(x,\wt{x})\bigg)\mc{X}^{s,m}_\tau(\wt{x},y)dm(\wt{x})\Bigg)d\tau\\\no
&+ \int_s^T \bigg(\big(\p_\mu\p_x f\big)^{s,m }_\tau(x,y)\cdot  Z^{s,m }_\tau(x) + \big(\p_\mu\p_x g\big)^{s,m }_\tau(x,y)\\\no
&\ \ \ \ \ \ \ \ \ \ \ \ +\int_{\R^{d_x}}\Big(\big(\p_\mu\p_\mu f\big)^{s,m }_\tau(\wh{x},x,y)\cdot  Z^{s,m }_\tau(\wh{x}) + \big(\p_\mu\p_\mu g\big)^{s,m }_\tau(\wh{x},x,y)\Big)dm(\wh{x})\\\no
&\ \ \ \ \ \ \ \ \ \ \ \ +\int_{\R^{d_x}}\Big(\big(\p_\alpha \p_\mu f\big)^{s,m }_\tau(\wh{x},x)\cdot Z^{s,m }_\tau(\wh{x})+\big(\p_\alpha\p_\mu g\big)^{s,m }_\tau(\wh{x},x)\Big)\Big(\big(\p_\mu\alpha\big)^{s,m }_\tau(\wh{x},y)+\big(\p_z\alpha\big)^{s,m }_\tau(\wh{x})\big(\gamma_\mu\big)^{s,m }_\tau(\wh{x},y)\Big)dm(\wh{x})\\\no
&\ \ \ \ \ \ \ \ \ \ \ \ +\int_{\R^{d_x}}\big(\p_\mu f\big)^{s,m }_\tau(\wh{x},x)\cdot\big(\gamma_\mu\big)^{s,m }_\tau(\wh{x},y)dm(\wh{x})+\big(\p_x f\big)^{s,m }_\tau(x)\cdot \big(\gamma_\mu\big)^{s,m }_\tau(x,y)\\\label{p_mgamma_eq_2}  
&\ \ \ \ \ \ \ \ \ \ \ \ +\Big(\big(\p_\alpha \p_x f\big)^{s,m }_\tau(x)\cdot Z^{s,m }_\tau(x)+\big(\p_\alpha\p_x g\big)^{s,m }_\tau(x)\Big)\Big(\big(\p_\mu\alpha\big)^{s,m }_\tau(x,y)+\big(\p_z\alpha\big)^{s,m }_\tau(x)\big(\gamma_\mu\big)^{s,m }_\tau(x,y)\Big)\bigg)\p_y\big(X^{s,m}_\tau(y)\big)d\tau,
\end{align}\normalsize
where $\big(\gamma_\mu\big)^{s,m }_\tau(x,y):=\gamma_\mu\big(\tau,X^{s,m }_\tau(x),X^{s,m }_\tau\ot m\big)\big(X^{s,m }_\tau(y)\big)$, the  couple $\big(X^{t,m}_s(x),Z^{t,m}_s(x)\big)$ is the unique solution of FBODE \eqref{fbodesystem} constructed in Theorem \ref{Thm6_1} and
\small\begin{align}\no
&\mc{X}^{t,m }_s(x,y)\\\no
=&\  \int_t^s \bigg(\big(\p_x f\big)^{t,m }_\tau(x)+\big(\p_\alpha f\big)^{t,m }_\tau(x)\Big(\big(\p_x \alpha\big)^{t,m }_\tau(x)+\big(\p_z \alpha\big)^{t,m }_\tau(x)\big(\p_x \gamma\big)^{t,m }_\tau(x)\Big)\bigg)\mc{X}^{t,m }_\tau(x,y) d\tau\\\no
&+\int_t^s\int_{\R^{d_x}}\bigg(\big(\p_\mu f\big)^{t,m }_\tau(x,\wt{x})+\big(\p_\alpha f\big)^{t,m }_\tau(x)\Big(\big(\p_\mu \alpha\big)^{t,m }_\tau(x,\wt{x})+\big(\p_z \alpha\big)^{t,m }_\tau(x)\big(\gamma_\mu\big)^{t,m }_\tau(x,\wt{x})\Big)\bigg)\mc{X}^{t,m }_\tau(\wt{x},y)dm(\wt{x})d\tau\\\label{p_mX_eq_2}   
&+\int_t^s\bigg(\big(\p_\mu f\big)^{t,m }_\tau(x,y)+\big(\p_\alpha f\big)^{t,m }_\tau(x)\Big(\big(\p_\mu \alpha\big)^{t,m }_\tau(x,y)+\big(\p_z \alpha\big)^{t,m }_\tau(x)\big(\gamma_\mu\big)^{t,m }_\tau(x,y)\Big)\bigg)\p_y\big(X^{t,m }_\tau(y)\big)d\tau.
\end{align}\normalsize
As one may also note that the FBODE system \eqref{p_mgamma_eq_2}-\eqref{p_mX_eq_2} is the system governing the evolution of \\
$\Big(\p_m \big(X^{t,m}_s(x)\big)(y),\p_\mu \gamma(s,x,m)(y)\Big)$ (see \eqref{p_mgamma_eq_1}-\eqref{p_mX_eq_1}). We can first solve for this $\big(\mc{X}^{t,m }_s(x,y),
\gamma_\mu (s,x,m)(y)\big)$ by an argument similar to Step 1-Step 4 in \textbf{Part 1}, and then verify that $\big(\p_m \big(X^{t,m}_s(x)\big)(y),\p_m \gamma(s,x,m)(y)\big)\\=\big(\mc{X}^{t,m }_s(x,y),
\gamma_\mu (s,x,m)(y)\big)$ as well as estimates \eqref{p_m_gamma_bdd} and \eqref{p_m_X_bdd} by an argument similar to Step 5 in \textbf{Part 1} and Step 8 in the proof of Lemma \ref{lem6_3}. We leave the details to interested readers. 

\textbf{Part 3}. (Differentiability in $t\in[T-\eps_2,T]$ of $\gamma(t,x,m)$ and $X^{t,m}_s(x)$) By \textbf{Part 1} and \textbf{Part 2}, we have $\gamma\in\mc{I}_1$, thus, by Lemma \ref{lem6_3}, $X^{t,m}_s(x)$ is continuously differentiable in $t\in[T-\eps_2,T]$, and its derivative $\p_t \big(X^{t,m}_s(x)\big)$ is continuously differentiable in $s\in[t,T]$ and satisfies \eqref{p_t_X_bdd}. In particular, the properties of $Z^{t,m}_s(x)=\gamma(s,X^{t,m}_s(x),X^{t,m}_s\ot m)\big)$, including but not limited to \eqref{p_x_Z_bdd_n}-\eqref{p_t_Z_bdd_n}, follow immediately from the properties of $\gamma(s,x,\mu)$ and $X^{t,m}_s(x)$. Moreover, the differentiability of $\gamma(s,x,m)$ in $s\in[T-\eps_2,T]$ follows immediately from \eqref{gammaeq}, the differentiability of $X^{s,m}_\tau(x)$, $Z^{s,m}_\tau(x)$ in $s$ and the differentiability of $p(x,\mu)$, $\p_\mu f(x,\mu,\alpha)(y)$, $\p_\mu g(x,\mu,\alpha)(y)$, $\p_x f(x,\mu,\alpha)$, $\p_x g(x,\mu,\alpha)$ and $\alpha(x,\mu,z)$ in their corresponding arguments.

\section{Crucial Estimate and Global-in-time Existence}\label{sec:global}
To obtain a global-in-time solution of \eqref{MPIT}, it suffices to show the global existence of decoupling field $\gamma$ for the FBODE system \eqref{MPIT} on $[0,T]$ which satisfies \eqref{gammaeq}, since the pair $\big(X^{t,m}_s(x),Z^{t,m}_s(x)\big)$ defined by \eqref{xeq} and $Z^{t,m}_s(x)=\gamma(s,X^{t,m}_s(x),X^{t,m}_s\ot m)$ is a solution of FBODE system \eqref{fbodesystem}, and the FBODE system \eqref{fbodesystem} is the system \eqref{MPIT} when $\wt{T}=T$ and $p(x,\mu)=\int_{\R^{d_x}} \p_\mu k( \wt{x} ,\mu)(x)d\mu(\wt{x})+\p_x k(x,\mu)$.

To extend the local-in-time solution of \eqref{gammaeq} to a global one over the whole time horizon $[0,T]$, we shall have to derive some important new {\it a priori} estimates, which play a crucial role in extending the existence lifespan and can be established under Assumption ${\bf(a1)}$-${\bf(a3)}$ and the following
\textbf{Hypotheses}:\\ 
$\bf{(h1)}$ $f(0,\delta_0,0)=0$, $\p_x g(0,\delta_0,0)=\p_\alpha g(0,\delta_0,0)=0$, $\p_\mu g(0,\delta_0,0)(0)=0$, $\p_x k(0,\delta_0)=0$ and $\p_\mu k(0,\delta_0)(0)=0$, where $\delta_0$ is the Dirac measure with a point mass at $0$.\\
$\bf{(h2)}$ Recall the constants defined in ${\bf(a1)}-{\bf(a3)}$, assume that $8\wb{l}_g\leq \lambda_g$ and $\wb{l}_f\leq \frac{1}{40 \max\{\wb{L}_k,L^*_0\}}\lambda_g$,
where $\wb{L}_k$ was defined in \eqref{wbL_k}, and the positive constant $L^*_0$ defined in \eqref{L_star_0} depends only on $\lambda_k,\ \Lambda_k,\ \lambda_g,\ \Lambda_g,\ \lambda_f$ and $\Lambda_f$ but not on the initial data $m_0$ and $\wb{l}_f$.\\

\begin{remark}
(i) Hypotheses $\bf{(h1)}$ and $\bf{(h2)}$ are satisfied in many interesting cases including Linear-Quadratic setting (see \cite{bensoussan2016linear}), also see an immediate non-linear quadratic examples in Section \ref{sec:nonLQ}. For the case with linear drift function $f$, we can even drop Hypothesis $\bf{(h1)}$, and $\wb{l}_f=0$ so that $\bf{(h2)}$ is automatically partially satisfied. \\
(ii) Under Assumptions ${\bf(a1)}$-${\bf(a3)}$ and Hypothesis ${\bf(h2)}$, we have the results stated in Proposition \ref{A1}-\ref{A6} since $\wb{l}_f\leq \frac{1}{40 \max\{\wb{L}_k,L^*_0\}}\lambda_g$ implies \eqref{p_aa_f}.
\\ (iii) The condition $\wb{l}_f\leq \frac{1}{40 \max\{\wb{L}_k,L^*_0\}}\lambda_g$ in Hypothesis $\bf{(h2)}$ means that the rate of the second order derivatives of drift function $f$ is relatively small, that stands for the case that the drift rate cannot have a large quadratic-growth effect; even for the classical control problems, the drift function with quadratic-growth may also lead to ill-posedness, and this philosophy can also be observed in a lot of applications, see the monograph \cite{bensoussan2018estimation} for instance. The inequalities assumed in Hypotheses $\bf{(h2)}$ are not optimal but they are convenient for our calculus.
\end{remark}

By using Hypothesis ${\bf(h1)}$, we have the following proposition which guides us with better linear-growth estimates.
\begin{prpstn}
Under Assumptions ${\bf(a1)}$-${\bf(a3)}$ and Hypotheses ${\bf(h1)}$-${\bf(h2)}$, then $\wh{\alpha}(0,\delta_0,0)=0$ and the following linear-growth estimates hold:
\begin{align}\label{ligfb}
&i)\ |f(x,\mu,\alpha)|\leq L_f \left(|x|+|\alpha| + \|\mu\|_1 \right);\\\label{LGgDerivativesb}
&ii)\ \sup_{x\in\R^{d_x},\,\mu\in\mathcal{P}_2(\R^{d_x}),\,\alpha\in\R^{d_\alpha},\,\wt{x}\in\R^{d_x}}\frac{\left|\p_x g(x,\mu,\alpha)\right|}{|x|+ \|\mu\|_1+|\alpha|}
\vee\frac{\left|\p_\alpha g(x,\mu,\alpha)\right|}{|x|+ \|\mu\|_1+|\alpha|}\vee\frac{|\p_{\mu} g(x,\mu,\alpha)(\wt{x})|}{|x|+ \|\mu\|_1+|\alpha|+|\wt{x}|}\leq L_g;\\\label{LGkDerivativesb}
&iii)\ \sup_{x\in\R^{d_x},\,\mu\in\mathcal{P}_2(\R^{d_x}),\,\wt{x}\in\R^{d_x}}\frac{\left|\p_x k(x,\mu)\right|}{|x|+ \|\mu\|_1}\vee\frac{|\p_\mu k(x,\mu)(\wt{x})|}{|x|+ \|\mu\|_1+|\wt{x}|}\leq L_k;\\\label{linalphab}
&iv)\ |\wh{\alpha}(x,\mu,z)|\leq L_\alpha (|x|+\|\mu\|_1+|z|),
\end{align}
where $L_f$, $L_g$, $L_k$ and $L_\alpha$ were defined in Proposition \ref{A1}, \ref{A2} and \ref{A5}  respectively.
\end{prpstn}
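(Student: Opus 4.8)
The plan is to derive each linear-growth bound from its affine counterpart in Propositions \ref{A1}, \ref{A2} and \ref{A5} by exploiting that Hypothesis ${\bf(h1)}$ forces all the relevant functions (and the optimal control) to vanish at the base point. The first step is to establish that $\wh{\alpha}(0,\delta_0,0)=0$: by ${\bf(h1)}$ we have $\p_\alpha f(0,\delta_0,0)\cdot 0+\p_\alpha g(0,\delta_0,0)=0$, so $\alpha=0$ solves the first order condition \eqref{first_order_condition} at $(x,\mu,z)=(0,\delta_0,0)$; since Proposition \ref{A5} guarantees the minimizer is unique at points of the cone $c_{k_0}$ and $(0,\delta_0,0)\in c_{k_0}$, we conclude $\wh{\alpha}(0,\delta_0,0)=0$. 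This in turn collapses the constant $L_\alpha$ in \eqref{L_alpha} to $L_\alpha=\max\left\{\frac{20\Lambda_f}{19\lambda_g},\frac{20(\wb{l}_g+\frac12 k_0\wb{l}_f)}{19\lambda_g}\right\}$, although we do not actually need this simplification for the estimates themselves.

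Next I would prove $(i)$ by applying the Lipschitz estimate \eqref{Lipf} of Proposition \ref{A1} with $(x',\mu',\alpha')=(0,\delta_0,0)$: $|f(x,\mu,\alpha)|=|f(x,\mu,\alpha)-f(0,\delta_0,0)|\leq L_f(|x|+W_1(\mu,\delta_0)+|\alpha|)$, and since $W_1(\mu,\delta_0)=\int_{\R^{d_x}}|y|\,d\mu(y)=\|\mu\|_1$, this is exactly \eqref{ligfb}. The same device handles $(ii)$: each of $\p_x g$, $\p_\alpha g$ and $\p_\mu g(\cdot)(\wt x)$ vanishes at $(0,\delta_0,0)$ (respectively $(0,\delta_0,0)(0)$) by ${\bf(h1)}$, so the Lipschitz bounds \eqref{Lipg_x}, \eqref{Lipg_alpha}, \eqref{Lipg_mu} evaluated against the base point give $|\p_x g(x,\mu,\alpha)|\leq \max\{\Lambda_g,\wb l_g\}(\|\mu\|_1+|x|+|\alpha|)\leq L_g(|x|+\|\mu\|_1+|\alpha|)$ and similarly for the other two, using $\max\{\Lambda_g,\wb l_g\}\leq L_g$ from \eqref{LGgDerivatives}; the extra $|\wt x|$ term for $\p_\mu g$ comes from the $|\wt x - 0|$ slot in \eqref{Lipg_mu}. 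Statement $(iii)$ is identical, using \eqref{lipp}-type reasoning — more precisely \eqref{LGkDerivatives} together with the Lipschitz continuity of $\p_x k$ and $\p_\mu k$ asserted in Assumption ${\bf(a3)}$ — evaluated at $(0,\delta_0)$ where both derivatives vanish by ${\bf(h1)}$.

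Finally $(iv)$ follows by the same subtraction trick applied to the optimal control: once $\wh{\alpha}(0,\delta_0,0)=0$ is known, I would like to write $|\wh\alpha(x,\mu,z)|=|\wh\alpha(x,\mu,z)-\wh\alpha(0,\delta_0,0)|\leq L_\alpha(|x|+W_1(\mu,\delta_0)+|z|)=L_\alpha(|x|+\|\mu\|_1+|z|)$ using the Lipschitz estimate \eqref{lipalpha_old}. The one subtlety — and the only place that needs care — is that \eqref{lipalpha_old} requires the whole segment $\theta(x,\mu,z)+(1-\theta)(0,\delta_0,0)$ to lie in $c_{k_0}$; since $c_{k_0}$ is defined by $|z|\leq\frac12 k_0(1+|x|+\|\mu\|_1)$ and is a convex set containing $(0,\delta_0,0)$, this holds automatically whenever $(x,\mu,z)\in c_{k_0}$, which is the regime in which $\wh\alpha$ is defined in the first place. (One checks convexity of the cone directly: the constraint function $z\mapsto |z| - \frac12 k_0(1+|x|+\|\mu\|_1)$ is jointly convex along the relevant interpolation because $W_1(\theta\mu,\delta_0)\le\theta\|\mu\|_1$ and the Euclidean norm is convex.) So the main obstacle is merely bookkeeping the cone-membership hypothesis of \eqref{lipalpha_old}, not any genuine analytic difficulty; everything else is a direct translation of the already-established affine-growth bounds via ${\bf(h1)}$ and the identity $W_1(\mu,\delta_0)=\|\mu\|_1$.
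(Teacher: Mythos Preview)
Your proposal is correct and follows exactly the paper's approach: the paper likewise first shows $\wh{\alpha}(0,\delta_0,0)=0$ from the first order condition and uniqueness, then observes that the proofs of \eqref{ligf}, \eqref{LGgDerivatives}, \eqref{LGkDerivatives}, \eqref{linalpha} go through verbatim with the ``$+1$'' dropped once the base-point values vanish by ${\bf(h1)}$.

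One small imprecision to flag: the set $c_{k_0}$ is \emph{not} convex in general (take $x$ and $-x$ with $|z|$ saturating the bound; the midpoint fails). What you actually need, and what you effectively check, is only that the particular segment $\theta(x,\mu,z)+(1-\theta)(0,\delta_0,0)=(\theta x,\,\theta\mu+(1-\theta)\delta_0,\,\theta z)$ stays in $c_{k_0}$; this holds because $\|\theta\mu+(1-\theta)\delta_0\|_1=\theta\|\mu\|_1$ and $\theta|z|\le\theta\cdot\tfrac12 k_0(1+|x|+\|\mu\|_1)\le\tfrac12 k_0(1+\theta|x|+\theta\|\mu\|_1)$. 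The paper makes the same segment-check in its proof of \eqref{linalpha}.
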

\begin{proof}
By using Hypothesis ${\bf(h1)}$ and the first order condition \eqref{first_order_condition}, $\alpha=0$ is a zero to the first-order condition \eqref{first_order_condition} at the point $(x,\mu,z)=(0,\delta_0,0)$, therefore in light of the uniqueness stated in Proposition \ref{A5}, we know that the unique minimizer $\alpha(0,\delta_0,0)=0$. By using Hypothesis ${\bf(h1)}$ together with the arguments leading to \eqref{ligf} \eqref{LGgDerivatives}, \eqref{LGkDerivatives} and \eqref{linalpha}, we can similarly deduce \eqref{ligfb}-\eqref{linalphab}; in a certain sense, the proof of \eqref{ligfb}-\eqref{linalphab} are slightly simpler that those for \eqref{ligf} \eqref{LGgDerivatives}, \eqref{LGkDerivatives} and \eqref{linalpha}.
\end{proof}

For the sake of convenience, we use the notations defined in Appendix \eqref{notation_1} and Table \ref{notation_2}. Now, we are ready to establish new {\it a priori} estimates.
For any fixed $m\in \mathcal{P}_2(\R^{d_x})$, it follows from Theorem \ref{Thm6_1} and \ref{Thm6_2} that there is a (local-in-time) solution pair $\big(X^{t,m}_s(x),Z^{t,m}_s(x)=\gamma(s,X^{t,m}_s(x),X^{t,m}_s\ot m)\big)$ to \eqref{MPIT}, which is continuously differentiable in $x\in\R^{d_x}$ and continuously $L$-differentiable in $m\in\mc{P}_2(\R^{d_x})$ with the corresponding derivatives being continuously differentiable in $s\in [t,T]$. Then the pairs $\big(\p_m\big(X^{t,m}_s(x)\big)(y),\p_m\big(Z^{t,m}_s(x)\big)(y)\big)$ and $\big(\p_x\big(X^{t,m}_s(x)\big)$, $\p_x\big(Z^{t,m}_s(x)\big)\big)$ satisfy the linear FBODE systems respectively:
\scriptsize\begin{align}\label{eq_8_5_new}
\left\{ \begin{aligned}
	\frac{d}{ds}\p_m\big(X^{t,m}_s(x)\big)(y) =&\  \p_m \big(f^{t,m}_s(x)\big)(y),\\
	\p_m \big(X^{t,m}_t(x)\big)(y) =&\ 0,\\
-\frac{d}{ds}\p_m\big(Z^{t,m}_s(x)\big)(y)=&\  \int_{\R^{d_x}}\p_m\big(X^{t,m}_s(\wt{x})\big)(y)^\top\bigg(\big(\p_x\p_\mu f\big)^{t,m}_s(\wt{x},x)\cdot  Z^{t,m}_s(\wt{x})+ \big(\p_x\p_\mu g\big)^{t,m}_s(\wt{x},x)\bigg)dm(\wt{x})\\
&+\p_m\big(X^{t,m}_s(x)\big)(y)^\top\int_{\R^{d_x}}\bigg(\big(\p_{\wt{x}}\p_\mu f\big)^{t,m}_s(\wt{x},x)\cdot  Z^{t,m}_s(\wt{x})+ \big(\p_{\wt{x}}\p_\mu g\big)^{t,m}_s(\wt{x},x)\bigg)dm(\wt{x})\\
&+\int_{\R^{d_x}}\int_{\R^{d_x}}\p_m\big(X^{t,m}_s(\wh{x})\big)(y)^\top\bigg(\big(\p_\mu\p_\mu f\big)^{t,m}_s(\wt{x},x,\wh{x})\cdot  Z^{t,m}_s(\wt{x})+ \big(\p_\mu\p_\mu g\big)^{t,m}_s(\wt{x},x,\wh{x})\bigg)dm(\wh{x})dm(\wt{x})\\
&+\int_{\R^{d_x}}\p_y\big(X^{t,m}_s(y)\big)^\top\bigg(\big(\p_\mu\p_\mu f\big)^{t,m}_s(\wt{x},x,y)\cdot  Z^{t,m}_s(\wt{x})+ \big(\p_\mu\p_\mu g\big)^{t,m}_s(\wt{x},x,y)\bigg)dm(\wt{x})\\
&+\int_{\R^{d_x}}\p_m\big(\alpha^{t,m}_s(\wt{x})\big)(y)^\top\bigg(\big(\p_\alpha\p_\mu f\big)^{t,m}_s(\wt{x},x)\cdot  Z^{t,m}_s(\wt{x})+ \big(\p_\alpha\p_\mu g\big)^{t,m}_s(\wt{x},x)\bigg)dm(\wt{x})\\
&+\int_{\R^{d_x}} \big(\p_\mu f\big)^{t,m}_s(\wt{x},x)\cdot  \p_m\big(Z^{t,m}_s(\wt{x})(y)\big)dm(\wt{x})+\big(\p_x f\big)^{t,m}_s(x)\cdot \p_m\big(Z^{t,m}_s(x)\big)(y)\\
&+\p_m\big(X^{t,m}_s(x)\big)(y)^\top\bigg(\big(\p_x\p_x f\big)^{t,m}_s(x)\cdot Z^{t,m}_s(x)+\big(\p_x\p_x g\big)^{t,m}_s(x)\bigg)\\
&+\int_{\R^{d_x}}\p_m\big(X^{t,m}_s(\wt{x})\big)(y)^\top\bigg(\big(\p_\mu\p_x f\big)^{t,m}_s(x,\wt{x})\cdot Z^{t,m}_s(x)+\big(\p_\mu\p_x g\big)^{t,m}_s(x,\wt{x})\bigg)dm(\wt{x})\\
&+\p_y\big(X^{t,m}_s(y)\big)^\top\bigg(\big(\p_\mu\p_x f\big)^{t,m}_s(x,y)\cdot Z^{t,m}_s(x)+\big(\p_\mu\p_x g\big)^{t,m}_s(x,y)\bigg)\\
&+\p_m\big(\alpha^{t,m}_s(x)\big)(y)^\top\bigg(\big(\p_\alpha\p_x f\big)^{t,m}_s(x)\cdot Z^{t,m}_s(x)+\big(\p_\alpha\p_x g\big)^{t,m}_s(x)\bigg),\\
\p_m\big(Z^{t,m}_T(x)\big)(y)=&\  \displaystyle\int_{\R^{d_x}} \big(\p_x\p_\mu k\big)^{t,m}_T(\wt{x},x)\cdot \p_m\big(X^{t,m}_T(\wt{x})\big)(y)dm(\wt{x})+\displaystyle\int_{\R^{d_x}} \big(\p_{\wt{x}}\p_\mu k\big)^{t,m}_T(\wt{x},x)dm(\wt{x})\cdot \p_m\big(X^{t,m}_T(x)\big)(y)\\
&+\displaystyle\int_{\R^{d_x}} \int_{\R^{d_x}}\big(\p_\mu\p_\mu k\big)^{t,m}_T(\wt{x},x,\wh{x})\cdot \p_m\big(X^{t,m}_T(\wh{x})\big)(y)dm(\wh{x})dm(\wt{x})\\
&+\displaystyle\int_{\R^{d_x}} \big(\p_\mu\p_\mu k\big)^{t,m}_T(\wt{x},x,y)\cdot \p_y\big(X^{t,m}_T(y)\big)dm(\wt{x})+\big(\p_x\p_x k\big)^{t,m}_T(x)\cdot \p_m\big(X^{t,m}_T(x)\big)(y)\\
&+\displaystyle\int_{\R^{d_x}}\big(\p_\mu\p_x k\big)^{t,m}_T(x,\wt{x})\cdot \p_m\big(X^{t,m}_T(\wt{x})\big)(y)dm(\wt{x})+\big(\p_\mu\p_x k\big)^{t,m}_T(x,y)\cdot \p_y\big(X^{t,m}_T(y)\big);
\end{aligned} \right.
\end{align}\normalsize 
\small\begin{align}\label{eq_8_6_new}
\left\{ \begin{aligned}
	\frac{d}{ds}\p_x\big(X^{t,m}_s(x)\big) =&\  \p_x \big(f^{t,m}_s(x)\big),\\
	\p_x \big(X^{t,m}_t(x)\big) =&\ \mathcal{I}_{d_x\times d_x},\\
	-\frac{d}{ds}\p_x \big(Z^{t,m}_s(x)\big)  =&\ \p_x\big(X^{t,m}_s(x)\big)^\top\int_{\R^{d_x}}\bigg(\big(\p_{\wt{x}}\p_\mu f\big)^{t,m}_s(\wt{x},x)\cdot  Z^{t,m}_s(\wt{x})+ \big(\p_{\wt{x}}\p_\mu g\big)^{t,m}_s(\wt{x},x)\bigg)dm(\wt{x})\\
&+\big(\p_x f\big)^{t,m}_s(x)\cdot \p_x\big(Z^{t,m}_s(x)\big)+\p_x\big(X^{t,m}_s(x)\big)^\top\bigg(\big(\p_x\p_x f\big)^{t,m}_s(x)\cdot Z^{t,m}_s(x)+\big(\p_x\p_x g\big)^{t,m}_s(x)\bigg)\\
&+\p_x\big(\alpha^{t,m}_s(x)\big)^\top\bigg(\big(\p_\alpha\p_x f\big)^{t,m}_s(x)\cdot Z^{t,m}_s(x)+\big(\p_\alpha\p_x g\big)^{t,m}_s(x)\bigg),\\
\p_x\big(Z^{t,m}_T(x)\big)=&\  \displaystyle\int_{\R^{d_x}} \big(\p_{\wt{x}}\p_\mu k\big)^{t,m}_T(\wt{x},x)dm(\wt{x})\cdot \p_x\big(X^{t,m}_T(x)\big)+\big(\p_x\p_x k\big)^{t,m}_T(x)\cdot \p_x\big(X^{t,m}_T(x)\big);
\end{aligned} \right.
\end{align}\normalsize
where $\mathcal{I}_{d_x\times d_x}$ is the $d_x\times d_x$ identity matrix, $\p_m \big(f^{t,m}_s(x)\big)(y)$ and $\p_x \big(f^{t,m}_s(x)\big)$ are computed in \eqref{calculus2}.

For the sake of convenience, we denote
\begin{align}\label{eq_7_6}
L^*:=\vertiii{\gamma }_{2,[t,T]}<\infty,
\end{align}
where $\vertiii{\cdot}_2$ was defined in \eqref{Gamma_2}.
Here, the finiteness of $L^*$ is guaranteed by the construction of the local-in-time solution, which exists at least for $t\in[0,T]$ with $T-t\leq \eps_2$, where $\eps_2$ was given in Theorem \ref{Thm6_2}. Furthermore, the function $\gamma$ in \eqref{eq_7_6} was constructed in Theorem \ref{Thm6_1}. Then, by using \eqref{eq_9_3} in Appendix, 
\begin{align}\label{eq_8_8_new}
&\big|Z^{t,m}_t(x)-Z^{t,m_0}_t(x_0)\big|=\big|\gamma(t,x,m)-\gamma(t,x_0,m_0)\big|
\leq \ L^*\Big(|x-x_0|+W_1(m,m_0)\Big).
\end{align}
By using Hypothesis ${\bf (h1)}$, one can easily check that, for any fixed $t\in[0,T]$, the pair $\Big(X^{t,\delta_0}_s(0)\equiv 0,Z^{t,\delta_0}_s(0)\equiv 0\Big)$ solves the FBODE \eqref{MPIT} with fixed $x=0$ and $m=\delta_0$. Therefore, we have $\big|Z^{t,m}_t(x)\big|=\big|\gamma(t,x,m)\big|\leq L^*\Big(|x|+\|m\|_1\Big)$, which further implies that $\big\|Z^{t,m}_t\big\|_{L^{1,d_x}_{m}}=\big\|\gamma(t,\cdot,m)\big\|_{L^{1,d_x}_{m}}\leq 2L^*\|m\|_1$ by taking an integration. Furthermore, we also have 
\begin{align}\label{eq_8_9_cone}
&\big|Z^{t,m}_s(x)\big|=\big|\gamma(s,X^{t,m}_s(x),X^{t,m}_s\ot m)\big|\leq L^*\Big(|X^{t,m}_s(x)|+\|X^{t,m}_s\|_{L^{1,d_x}_{m}}\Big),\\
&\big\|Z^{t,m}_s\big\|_{L^{1,d_x}_{m}}\leq 2 L^*\|X^{t,m}_s\|_{L^{1,d_x}_{m}}.
\end{align}

Now, we have the following crucial {\it a priori} estimates.
\begin{theorem}\label{Crucial_Estimate}
Under Assumptions ${\bf(a1)}$-${\bf(a3)}$ and Hypothesis ${\bf(h1)}$-${\bf(h2)}$, suppose that $\gamma(s,x,m)$ is a decoupling field for the FBODE system \eqref{fbodesystem} on $[t,T]$ with terminal data $p(x,\mu)=\int_{\R^{d_x}} \p_\mu k( \wt{x} ,\mu)(x)d\mu(\wt{x})+\p_x k(x,\mu)$, which is differentiable in 
$x\in\R^{d_x}$ and $L$-differentiable in $\mu\in\mc{P}_2(\R^{d_x})$, and suppose also that $L^*\leq 2\max\{L^*_0,\wb{L}_k\}$, we then have the following estimate:
\begin{align}\label{JFE}
L^*\leq L^*_0,
\end{align}
where $L^*$ is defined in \eqref{eq_7_6} and $L^*_0$ is defined in \eqref{L_star_0}.
\end{theorem}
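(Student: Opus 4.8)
# Proof Proposal for Theorem \ref{Crucial_Estimate}

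\textbf{Overall strategy.} The plan is to run the Hilbert-space heuristic from Section \ref{sec:motivation} rigorously but \emph{directly on the Wasserstein space}, estimating the two pull-down Jacobian flows $\p_x\big(Z^{t,m}_s(x)\big)$ and $\p_m\big(Z^{t,m}_s(x)\big)(y)$ separately rather than a single Fréchet derivative $D_YZ$. The key algebraic identity is the analogue of \eqref{lifted:estimate_1}: using the linear FBODE systems \eqref{eq_8_5_new} and \eqref{eq_8_6_new}, together with the first-order condition \eqref{first_order_condition} and its differentiated forms \eqref{eq_5_25_new}--\eqref{eq_5_27_new} for the derivatives of $\wh\alpha$, one computes $\frac{d}{ds}\big\langle \p_\cdot Z, \p_\cdot X\big\rangle$ (summed over the $x$- and $m$-sensitivities, integrated against $m$) and finds that the integrand organizes into (i) a term built from $(D_AD_AL)^{-1}$ applied to $\langle D_AF, \p Z\rangle$, which is \emph{non-negative} because $\p_\alpha\p_\alpha g$ dominates by \eqref{positive_h}, and (ii) the Schur complement $D_XD_XL - (D_AD_AL)^{-1}(D_AD_XL, D_XD_AL)$ evaluated on $\p X$, whose relevant positivity on measures is exactly the content of \eqref{positive_g_x}, \eqref{positive_g_mu_1}/\eqref{dispalcement_lambda}, \eqref{positive_k}, \eqref{positive_k_mu_1}. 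This yields an inequality of the shape of \eqref{lifted:estimate_2}: there is a constant $c>0$ depending only on $\lambda_f,\lambda_g,\lambda_k,\Lambda_g,k_0,\wb l_f$ (hence on $\lambda_k,\Lambda_k,\lambda_g,\Lambda_g,\lambda_f,\Lambda_f$ once \eqref{positive_h} and Hypothesis ${\bf(h2)}$ are invoked) such that
\[
c\,\Big\|\p X_T\Big\|^2 + c\int_t^T\Big(\|\p Z_s\|^2+\|\p X_s\|^2\Big)\,ds \;\le\; \|\p Z_t\|^2,
\]
where $\|\cdot\|$ abbreviates the appropriate combination of operator norms and $L^2_m$-integrals of the $x$- and $m$-Jacobians; the convexity of $k$ in $x$ and the sign condition \eqref{positive_k_mu}/\eqref{positive_k_mu_1} control the terminal boundary term.

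\textbf{The counterpart upper bound.} Next I would establish the analogue of \eqref{lifted:estimate_3}: by differentiating the backward equations of \eqref{eq_8_5_new}--\eqref{eq_8_6_new} in $s$ and using only the \emph{boundedness} of the first- and second-order coefficients --- \eqref{bdd_d1_f}, \eqref{bdd_d2_f}, \eqref{bdd_d2_g_1}, \eqref{bdd_d2_g_2}, \eqref{bdd_d2_k_1}, \eqref{p_xalpha}, \eqref{p_zalpha}, the cone confinement \eqref{eq_8_9_cone}, and the forward estimates \eqref{p_x_X_bdd}, \eqref{p_m_X_bdd} --- one gets a constant $C>0$ depending on the same data plus $\wb l_f$ (but, crucially, the ratio $C/c$ will be bounded by a quantity independent of $\wb l_f$ once ${\bf(h2)}$ forces $\tfrac12 k_0\wb l_f$ to be small relative to $\lambda_g$) with
\[
\|\p Z_t\|^2 \;=\; \|\p Z_T\|^2 - \int_t^T \frac{d}{ds}\|\p Z_s\|^2\,ds \;\le\; C\,\|\p X_T\|^2 + C\int_t^T\Big(\|\p X_s\|^2+\|\p Z_s\|^2\Big)\,ds.
\]
Combining the two displays gives $\|\p Z_t\|^2 \le (C/c)\,\|\p Z_t\|^2$... which is the wrong direction; the correct combination, exactly as in the heuristic, is to feed \eqref{lifted:estimate_3}-type bound into the left side of \eqref{lifted:estimate_2}-type bound, concluding $\|\p Z_t\| \le C/c$ \emph{uniformly in $T$ and $t$}. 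Translating $\|\p Z_t\|$ back to the seminorm $\vertiii{\gamma}_{2,[t,T]} = L^*$ --- using that $Z_t^{t,m}(x)=\gamma(t,x,m)$ so $\p_x Z_t = \p_x\gamma$, $\p_m Z_t = \p_\mu\gamma$ --- and setting $L^*_0$ (as in \eqref{L_star_0}) to be this ratio $C/c$ multiplied by a harmless universal constant, one obtains $L^*\le L^*_0$. The hypothesis $L^*\le 2\max\{L^*_0,\wb L_k\}$ is used as the \emph{a priori} bound needed to close the cone argument: it guarantees $(X_s^{t,m}(x),X_s^{t,m}\ot m,Z_s^{t,m}(x))\in c_{k_0}$ with $k_0=4\max\{\wb L_k,L^*_0\}$, which in turn legitimizes Propositions \ref{A5}--\ref{A6} and the bounds \eqref{p_xalpha}--\eqref{p_zalpha}, \eqref{c_a7}, \eqref{positive_h} used throughout.

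\textbf{Order of steps.} (1) Record the differentiated first-order conditions and the Schur-complement identity; (2) compute $\frac{d}{ds}$ of the bilinear pairing of the $x$- and $m$-Jacobian flows and integrate from $t$ to $T$, handling the terminal term via convexity of $k$ and \eqref{positive_k_mu_1}; (3) invoke \eqref{positive_g_x}, \eqref{positive_g_mu_1} (resp. \eqref{dispalcement_lambda}), \eqref{positive_h}, \eqref{c_a7}, and $l_g\le\tfrac12\lambda_g$ to extract the coercive lower bound \eqref{lifted:estimate_2}-type; (4) derive the growth bound \eqref{lifted:estimate_3}-type using only boundedness plus \eqref{eq_8_9_cone} and the forward Jacobian estimates; (5) combine and define $L^*_0$ so that $L^*\le L^*_0$; (6) check that $L^*_0$ depends only on $\lambda_k,\Lambda_k,\lambda_g,\Lambda_g,\lambda_f,\Lambda_f$ and not on $m_0$ or $\wb l_f$, as promised in ${\bf(h2)}$.

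\textbf{Main obstacle.} The hard part is step (2)--(3): keeping the bookkeeping honest in the Wasserstein setting. Unlike the lifted computation, the backward equations \eqref{eq_8_5_new}--\eqref{eq_8_6_new} for $\p_m Z$ involve nested $dm(\wt x)$-, $dm(\wh x)$-integrals of $\p_\mu\p_\mu f$, $\p_\mu\p_\mu g$, $\p_x\p_\mu$, $\p_{\wt x}\p_\mu$ terms, and one must verify that, after integrating against $m$ and pairing with $\p_m X$, these reassemble into precisely the quadratic form whose positivity is assumed in \eqref{positive_g_mu_1}/\eqref{positive_k_mu_1} --- this is the point where the symmetry of $\tfrac{\delta^2 f}{\delta\mu^2}$ and the normalization \eqref{eq_3_5_n} are essential, and where a sign error would be fatal. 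A secondary difficulty is ensuring all constants in \eqref{lifted:estimate_2}-type and \eqref{lifted:estimate_3}-type are genuinely $T$-independent, which is why the coercivity must produce a \emph{positive coefficient on the $\int_t^T$ term}, not merely a boundary-term inequality; this is exactly what the strict inequalities $l_g<\lambda_g$, $l_k<\lambda_k$ (sharpened to $\le\tfrac12$) buy us.
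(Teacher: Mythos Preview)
Your proposal is essentially the paper's approach, and your identification of the main obstacle (the Wasserstein bookkeeping in the $\p_m$-sector) is accurate. Two clarifications are worth making.

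First, the exponent slip that triggered your ``wrong direction'' worry: in the coercive inequality the right-hand side should be $\|\p Z_t\|$ to the \emph{first} power, not squared, because the boundary term is the bilinear pairing $\langle \p Z_t,\p X_t\rangle$ with $\p X_t=\mathcal I_{d_x\times d_x}$; compare \eqref{lifted:estimate_2} in the heuristic and \eqref{cru_est_1}--\eqref{L_star_1_1} in the actual proof. With that fix the combination reads $\|\p Z_t\|^2\le (C/c)\,\|\p Z_t\|$, hence $\|\p Z_t\|\le C/c$, exactly as you conclude.

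Second, a structural point you gloss over: the $\p_x$- and $\p_m$-estimates are not fully symmetric. The $\p_m$-pairing computation (paper's Step~4) leaves residual cross-terms involving $\p_y X_s(y)$, the $x$-Jacobian at the new variable $y$; consequently the $\p_x$-bound must be established \emph{first} (Step~3, giving \eqref{L_star_1_1}) and then fed into the $\p_m$-estimate (yielding \eqref{cru_est_2}, \eqref{L_star_4_1}). After that, a further pass (\eqref{L_star_5_1}--\eqref{L_star_6_1}) upgrades the $L^2_m$-control on $\p_{m}\gamma$ to the pointwise-in-$(x,y)$ bound that $\vertiii{\gamma}_2$ demands. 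Your outline treats the two Jacobians as parallel; in execution they are sequential, with the $x$-flow feeding the $m$-flow.
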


\begin{remark}
Indeed, we shall show that the constant $L^*_0$ defined in \eqref{L_star_0} is always larger than or equal to $\wb{L}_k$; see Remark \ref{remark_lower_bdd_L_star_0} for details. In the statement of Theorem \ref{Crucial_Estimate}, the main reason for writing $\max\{L^*_0,\wb{L}_k\}$ (instead of just $L^*_0$) is to strongly emphasize that the quantity $L^*$  must be larger than or equal to $\wb{L}_k$. This fact is extremely important since the estimates of $\gamma$ must strongly depend on the terminal cost function $k$.
\end{remark}

\begin{proof}
Define the pair $\big(X^{t,m}_s(x),Z^{t,m}_s(x)\big)$ by \eqref{xeq} and $Z^{t,m}_s(x)=\gamma(s,X^{t,m}_s(x),X^{t,m}_s\ot m)$, then it is a solution pair of FBODE system \eqref{fbodesystem} by \eqref{gammaeq}. The differentiabilities of $\big(X^{t,m}_s(x),Z^{t,m}_s(x)\big)$ that were stated in Theorem \ref{Thm6_2} follows by the properties of $\gamma(s,x,\mu)$.

In principle, this proof is an unlifted version of the argument that we presented in Section \ref{sec:motivation}, in which the lifting procedure highly simplify the algebraic manipulations but creates more than necessary requirement on the differentiability of the coefficient functions. To release the regularity requirement, this proof, as an enhanced version of Section \ref{sec:motivation}, will derive estimates on the solution to the Jacobian flows \eqref{eq_8_5_new} and \eqref{eq_8_6_new} without using the lifting procedure. The unlifted/original problem allows us to have more flexibility on using various norms in our estimations; however, the calculus will be much more involved, because (i) there are actually two Jacobian flows (with respect to $\big(\p_m\big(X^{t,m}_s(x)\big)(y),\p_m\big(Z^{t,m}_s(x)\big)(y)\big)$ and $\big(\p_x\big(X^{t,m}_s(x)\big)$, $\p_x\big(Z^{t,m}_s(x)\big)\big)$) in the original setting instead of one Jacobian flow (with respect to $(D_YX^{t,Y}_s,D_YZ^{t,Y}_s)$) in the lifted Hilbert space, and (ii) expressions of various derivatives are highly simplified in the lifted Hilbert space notations.

This proof will be separated into five steps. In Step 1 we shall first provide some calculus identities and elementary estimates that are useful in deriving estimates on the solutions to the Jacobian flows. In Step 2 preliminary estimates, which are analogous to \eqref{lifted:estimate_3}, on different derivatives of $\gamma$ will be derived by using the backward ODE only. To control the upper bounds of these preliminary estimates, we shall derive our major estimates (e.g., \eqref{L_star_1_1} and \eqref{cru_est_2}) that are analogous to \eqref{lifted:estimate_2} by integrating the inner products of forward and backward flows in Step 3 and 4. Finally, in Step 5 we shall combine all the above mentioned estimates to obtain a uniform (in $T$) bound on the derivatives of $\gamma$.

It follows from \eqref{k_0} that $\frac{1}{2}k_0=2\max\{L^*_0,\wb{L}_k\}\geq L^*$, so using \eqref{c_k_0} and \eqref{eq_8_9_cone}, we know that $\big(X^{t,m}_s(x),X^{t,m}_s\ot m, Z^{t,m}_s(x)\big)\in c_{k_0}$, and hence the unique minimizer $\alpha^{t,m}_s(x):=\alpha\big(X^{t,m}_s(x),X^{t,m}_s\ot m, Z^{t,m}_s(x)\big)$ always exists according to Proposition \ref{A5}. Furthermore, Hypothesis ${\bf (h2)}$ implies that $\wb{l}_f L^* \leq \frac{1}{2}k_0\wb{l}_f \leq \frac{1}{20}\lambda_g$ and $\wb{l}_g \leq \frac{1}{8} \lambda_g$.
Denote $\Lambda_h:=\frac{1}{2}k_0\wb{l}_f+\Lambda_g\leq \frac{1}{20}\lambda_g+\Lambda_g$ and $\wb{l}_h:=\frac{1}{2}k_0\wb{l}_f+\wb{l}_g\leq \frac{1}{20}\lambda_g+\frac{1}{8}\lambda_g<\frac{1}{5}\lambda_g$. 

\textbf{Step 1} (Calculus identities and some elementary estimates) In this step, we shall first introduce a few calculus identities that will be useful in Step 3-4, as well as derive some elementary estimates that will be used in Step 2-5.
First of all, differentiating \eqref{first_order_condition} with respect to $x$, $\mu$ and $z$ respectively yields
\begin{align}\no
&\p_x\p_\alpha f(x,\mu,\alpha(x,\mu,z))\cdot z  + \p_x\p_\alpha g(x,\mu,\alpha(x,\mu,z))\\\label{p_x_foc}
&+\Big(\p_\alpha\p_\alpha f(x,\mu,\alpha(x,\mu,z))\cdot z  + \p_\alpha\p_\alpha g(x,\mu,\alpha(x,\mu,z))\Big)\cdot \p_x\alpha(x,\mu,z)=0,\\\no
&\p_\mu\p_\alpha f(x,\mu,\alpha(x,\mu,z))(\wt{x})\cdot z  + \p_\mu\p_\alpha g(x,\mu,\alpha(x,\mu,z))(\wt{x})\\\label{p_mu_foc}
&+\Big(\p_\alpha\p_\alpha f(x,\mu,\alpha(x,\mu,z))\cdot z  + \p_\alpha\p_\alpha g(x,\mu,\alpha(x,\mu,z))\Big)\cdot \p_\mu\alpha(x,\mu,z)(\wt{x})=0,\\\label{p_z_foc}
&\p_\alpha f(x,\mu,\alpha(x,\mu,z))+\Big(\p_\alpha\p_\alpha f(x,\mu,\alpha(x,\mu,z))\cdot z  + \p_\alpha\p_\alpha g(x,\mu,\alpha(x,\mu,z))\Big)\cdot \p_z\alpha(x,\mu,z)=0,
\end{align}
and hence,
\begin{align}\no
&\p_\alpha f(x,\mu,\alpha(x,\mu,z))\cdot \p_x\alpha(x,\mu,z)\\\label{eq_7_17_1}
&-\Big(\p_x\p_\alpha f(x,\mu,\alpha(x,\mu,z))\cdot z  + \p_x\p_\alpha g(x,\mu,\alpha(x,\mu,z))\Big)\cdot \p_z\alpha(x,\mu,z)=0,\\\no
&\p_\alpha f(x,\mu,\alpha(x,\mu,z))\cdot \p_\mu\alpha(x,\mu,z)(\wt{x})\\\label{eq_7_18_1}
&-\Big(\p_\mu\p_\alpha f(x,\mu,\alpha(x,\mu,z))(\wt{x})\cdot z  + \p_\mu\p_\alpha g(x,\mu,\alpha(x,\mu,z))(\wt{x})\Big)\cdot \p_z\alpha(x,\mu,z)=0.
\end{align}
Next, we are going to derive some elementary estimates and understand how these estimates depend on $L^*$ as follows. It follows from \eqref{bdd_d2_f}, \eqref{positive_g_alpha} and \eqref{eq_8_9_cone} that for any $\xi\in\R^{d_\alpha}$, 
\small\begin{align}\label{eq_8_11_new}
\xi^\top\Big((\p_\alpha\p_\alpha f)^{t,m}_s(x)\cdot Z^{t,m}_s(x)+(\p_\alpha\p_\alpha g)^{t,m}_s(x)\Big)\xi \geq \Big(\lambda_g-\wb{l}_fL^*\Big)|\xi|^2\geq \Big(\lambda_g-\frac{1}{2}k_0\wb{l}_f\Big)|\xi|^2\geq \frac{19}{20}\lambda_g|\xi|^2>0.
\end{align}\normalsize
To understand more on the role played by $L^*$ in our estimates, we prefer using the lower bound $\lambda_g-\wb{l}_fL^*$ to $\frac{19}{20}\lambda_g$ for estimating the lower bound of the term $(\p_\alpha\p_\alpha f)^{t,m}_s(x)\cdot Z^{t,m}_s(x)+(\p_\alpha\p_\alpha g)^{t,m}_s(x)$ that appears in \eqref{eq_5_25_new}-\eqref{eq_5_27_new}.
By \eqref{bdd_d1_f}, \eqref{bdd_d2_f}, \eqref{bdd_d2_g_2}, \eqref{eq_8_9_cone} and the first inequality in \eqref{eq_8_11_new}, we have
\begin{align}\label{p_xalpha_new}
&\Big\|(\p_x \alpha)^{t,m}_s(x)\Big\|_{\mathcal{L}(\R^{d_x};\R^{d_\alpha})}\vee\Big\|(\p_{\mu} \alpha)^{t,m}_s(x,\wt{x})\Big\|_{\mathcal{L}(\R^{d_x};\R^{d_\alpha})}\leq\frac{\wb{l}_h}{\lambda_g-\wb{l}_fL^*},\\\label{p_zalpha_new}
&\Big\|(\p_{z} \alpha)^{t,m}_s(x)\Big\|_{\mathcal{L}(\R^{d_x};\R^{d_\alpha})}\leq \frac{\Lambda_f}{\lambda_g-\wb{l}_fL^*}.
\end{align}\normalsize
Furthermore, applying \eqref{bdd_d1_f}, \eqref{p_xalpha_new} and \eqref{p_zalpha_new} to \eqref{calculus2} yields
\begin{align}\no
\Big|\p_m\big(\alpha^{t,m}_s(x)\big)(y)\Big|\leq&\ \frac{\wb{l}_h}{\lambda_g-\wb{l}_fL^*}\bigg(\Big|\p_m\big(X^{t,m}_s(x)\big)(y)\Big|+\Big|\p_{y}\big(X^{t,m}_s(y)\big)\Big|+\int_{\R^{d_x}}\Big|\p_m\big(X^{t,m}_s(\wt{x})\big)(y)\Big|dm(\wt{x})\bigg)\\\label{p_m_alpha_1}
&+\frac{\Lambda_f}{\lambda_g-\wb{l}_fL^*}\Big|\p_m\big(Z^{t,m}_s(x)\big)(y)\Big|;\\\label{p_x_alpha_1}
\Big|\p_x\big(\alpha^{t,m}_s(x)\big)\Big|\leq&\ \ \frac{\wb{l}_h}{\lambda_g-\wb{l}_fL^*}\Big|\p_x\big(X^{t,m}_s(x)\big)\Big|+\frac{\Lambda_f}{\lambda_g-\wb{l}_fL^*}\Big|\p_x\big(Z^{t,m}_s(x)\big)\Big|;\end{align}
\begin{align}\no
\Big|\p_m\big(f^{t,m}_s(x)\big)(y)\Big|\leq&\ \Lambda_f\bigg(\Big|\p_m\big(X^{t,m}_s(x)\big)(y)\Big|+\Big| \p_{y}\big(X^{t,m}_s(y)\big)\Big|+\int_{\R^{d_x}} \Big|\p_m\big(X^{t,m}_s(\wt{x})\big)(y)\Big| dm(\wt{x})\\\label{p_m_f_1}
&\ \ \ \ \ \ \ \ + \Big|\p_m\big(\alpha^{t,m}_s(x)\big)(y)\Big|\bigg);\\\label{p_x_f_1}
\Big|\p_x \big(f^{t,m}_s(x)\big)\Big|\leq&\ \Lambda_f\bigg(\Big|\p_x \big(X^{t,m}_s(x)\big)\Big|+ \Big|\p_x \big(\alpha^{t,m}_s(x)\big)\Big|\bigg).
\end{align}

\textbf{Step 2} (Preliminary estimates on derivatives of $\gamma$) 
For $i=1,2,3,...,d_x$, denote $\p_{m_i}\gamma(t,x,m)(y):=\p_{y_i}\frac{\delta}{\delta m}\gamma(t,x,m)(y)$. By using Assumptions ${\bf(a1)}$-${\bf(a3)}$ and Hypotheses ${\bf(h1)}$-${\bf(h2)}$, we can obtain, after using \eqref{eq_8_5_new}, for any $i=1,2,3,...,d_x$, 
\small\begin{align}\no
&\bigg|\p_{m_i}\gamma(t,x,m)(y)\bigg|^2=\bigg|\p_{m_i}\big(Z^{t,m}_t(x)\big)(y)\bigg|^2\\\no
=&\ \bigg|\p_{m_i}\big(Z^{t,m}_T(x)\big)(y)\bigg|^2-2\int_t^T \p_{m_i}\big(Z^{t,m}_s(x)\big)(y)\cdot \frac{d}{ds}\p_{m_i}\big(Z^{t,m}_s(x)\big)(y) ds\\\no
\leq &\ \bigg(3\Lambda_k\int_{\R^{d_x}}\Big|\p_{m_i}\big(X^{t,m}_T(\wt{x})\big)(y)\Big|dm(\wt{x})+2\Lambda_k\Big|\p_{m_i}\big(X^{t,m}_T(x)\big)(y)\Big|+2\Lambda_k\Big|\p_{y_i}\big(X^{t,m}_T(y)\big)\Big|\bigg)^2\\\no
&+2\int_t^T\bigg(3\Lambda_h \int_{\R^{d_x}}\Big|\p_{m_i}\big(X^{t,m}_s(\wt{x})\big)(y)\Big|dm(\wt{x})+2\Lambda_h\Big|\p_{m_i}\big(X^{t,m}_s(x)\big)(y)\Big|\\\no
&\ \ \ \ \ \ \ \ \ \ \ \ +2\Lambda_h\Big|\p_{y_i}\big(X^{t,m}_s(y)\big)\Big|+\wb{l}_h \int_{\R^{d_x}}\Big|\p_{m_i}\big(\alpha^{t,m}_s(\wt{x})\big)(y)\Big|dm(\wt{x})+\wb{l}_h \Big|\p_{m_i}\big(\alpha^{t,m}_s(x)\big)(y)\Big|\\\no
&\ \ \ \ \ \ \ \ \ \ \ \ +\Lambda_f\int_{\R^{d_x}}\Big|\p_{m_i}\big(Z^{t,m}_s(\wt{x})\big)(y)\Big|dm(\wt{x})+\Lambda_f\Big|\p_{m_i}\big(Z^{t,m}_s(x)\big)(y)\Big|\bigg)\cdot\Big|\p_{m_i}\big(Z^{t,m}_s(x)\big)(y)\Big|ds\\\no
&\text{(by using \eqref{bdd_d1_f}, \eqref{bdd_d2_f}, \eqref{bdd_d2_g_1}, \eqref{bdd_d2_g_2}, \eqref{bdd_d2_k_1} and \eqref{eq_8_9_cone})}\\\no
\leq &\ \bigg(3\Lambda_k \int_{\R^{d_x}}\Big|\p_{m_i}\big(X^{t,m}_T(\wt{x})\big)(y)\Big|dm(\wt{x})+2\Lambda_k\Big|\p_{m_i}\big(X^{t,m}_T(x)\big)(y)\Big|+2\Lambda_k\Big|\p_{y_i}\big(X^{t,m}_T(y)\big)\Big|\bigg)^2\\\no
&+2\int_t^T\bigg(\Big(3\Lambda_h+\frac{3\wb{l}_h^2}{\lambda_g-\wb{l}_fL^*}\Big) \int_{\R^{d_x}}\Big|\p_{m_i}\big(X^{t,m}_s(\wt{x})\big)(y)\Big|dm(\wt{x})+\Big(2\Lambda_h+\frac{\wb{l}_h^2}{\lambda_g-\wb{l}_fL^*}\Big)\Big|\p_{m_i}\big(X^{t,m}_s(x)\big)(y)\Big|\\\no
&\ \ \ \ \ \ \ \ \ \ \ \ +\Big(2\Lambda_h+\frac{2\wb{l}_h^2}{\lambda_g-\wb{l}_fL^*}\Big)\Big|\p_{y_i}\big(X^{t,m}_s(y)\big)\Big|+\Big(\Lambda_f+\wb{l}_h\frac{\Lambda_f}{\lambda_g-\wb{l}_fL^*}\Big)\int_{\R^{d_x}}\Big|\p_{m_i}\big(Z^{t,m}_s(\wt{x})\big)(y)\Big|dm(\wt{x})\\\no
&\ \ \ \ \ \ \ \ \ \ \ \ +\Big(\Lambda_f+\wb{l}_h\frac{\Lambda_f}{\lambda_g-\wb{l}_fL^*}\Big)\Big|\p_{m_i}\big(Z^{t,m}_s(x)\big)(y)\Big|\bigg)\cdot\Big|\p_{m_i}\big(Z^{t,m}_s(x)\big)(y)\Big|ds\\\no
&\text{(by using \eqref{p_m_alpha_1})}\\\no
\leq &\ 27\Lambda_k^2 \int_{\R^{d_x}}\Big|\p_{m_i}\big(X^{t,m}_T(\wt{x})\big)(y)\Big|^2dm(\wt{x})+12\Lambda_k^2\Big|\p_{m_i}\big(X^{t,m}_T(x)\big)(y)\Big|^2+12\Lambda_k^2\Big|\p_{y_i}\big(X^{t,m}_T(y)\big)\Big|^2\\\no
&+\int_t^T\Bigg(\Big(3\Lambda_h+\frac{3\wb{l}_h^2}{\lambda_g-\wb{l}_fL^*}\Big)\int_{\R^{d_x}}\Big|\p_{m_i}\big(X^{t,m}_s(\wt{x})\big)(y)\Big|^2dm(\wt{x})+\Big(2\Lambda_h+\frac{\wb{l}_h^2}{\lambda_g-\wb{l}_fL^*}\Big)\Big|\p_{m_i}\big(X^{t,m}_s(x)\big)(y)\Big|^2\\\no
&\ \ \ \ \ \ \ \ \ \ \ \ +\Big(2\Lambda_h+\frac{2\wb{l}_h^2}{\lambda_g-\wb{l}_fL^*}\Big)\Big|\p_{y_i}\big(X^{t,m}_s(y)\big)\Big|^2+\Big(\Lambda_f+\wb{l}_h\frac{\Lambda_f}{\lambda_g-\wb{l}_fL^*}\Big)\int_{\R^{d_x}}\Big|\p_{m_i}\big(Z^{t,m}_s(\wt{x})\big)(y)\Big|^2dm(\wt{x})\\\label{p_m_gamma}
&\ \ \ \ \ \ \ \ \ \ \ \ +\bigg(3\Lambda_f+3\wb{l}_h\frac{\Lambda_f}{\lambda_g-\wb{l}_fL^*}+7\Lambda_h+\frac{6\wb{l}_h^2}{\lambda_g-\wb{l}_fL^*}\bigg)\Big|\p_{m_i}\big(Z^{t,m}_s(x)\big)(y)\Big|^2\Bigg)ds
\end{align}\normalsize
and, after using \eqref{eq_8_6_new}, for any $i=1,2,3,...,d_x$,
\small\begin{align}\no
&\bigg|\p_{x_i}\gamma(t,x,m)\bigg|^2=\bigg|\p_{x_i}\big(Z^{t,m}_t(x)\big)\bigg|^2\\\no
=&\ \bigg|\p_{x_i}\big(Z^{t,m}_T(x)\big)\bigg|^2-2\int_t^T \p_{x_i}\big(Z^{t,m}_s(x)\big)\cdot \frac{d}{ds}\p_{x_i}\big(Z^{t,m}_s(x)\big) ds\\\no
\leq &\ 4\Lambda_k^2\Big|\p_{x_i}\big(X^{t,m}_T(x)\big)\Big|^2\\\no
&+2\int_t^T\bigg(2\Lambda_h\Big|\p_{x_i}\big(X^{t,m}_s(x)\big)\Big|+\wb{l}_h \Big|\p_{x_i}\big(\alpha^{t,m}_s(x)\big)\Big|+\Lambda_f\Big|\p_{x_i}\big(Z^{t,m}_s(x)\big)\Big|\bigg)\cdot\Big|\p_{x_i}\big(Z^{t,m}_s(x)\big)\Big|ds\\\no
&\text{(by using \eqref{bdd_d1_f}, \eqref{bdd_d2_f}, \eqref{bdd_d2_g_1}, \eqref{bdd_d2_g_2}, \eqref{bdd_d2_k_1} and \eqref{eq_8_9_cone})}\\\no
\leq &\ 4\Lambda_k^2\Big|\p_{x_i}\big(X^{t,m}_T(x)\big)\Big|^2\\\no
&+2\int_t^T\Bigg(\bigg(2\Lambda_h+\frac{\wb{l}_h^2}{\lambda_g-\wb{l}_fL^*}\bigg)\Big|\p_{x_i}\big(X^{t,m}_s(x)\big)\Big|+\bigg(\wb{l}_h\frac{\Lambda_f}{\lambda_g-\wb{l}_fL^*}+\Lambda_f\bigg)\Big|\p_{x_i}\big(Z^{t,m}_s(x)\big)\Big|\Bigg)\cdot\Big|\p_{x_i}\big(Z^{t,m}_s(x)\big)\Big|ds\\\no
&\text{(by using \eqref{p_x_alpha_1})}\\\no
\leq &\ 4\Lambda_k^2\Big|\p_{x_i}\big(X^{t,m}_T(x)\big)\Big|^2+\bigg(2\Lambda_h+\frac{\wb{l}_h ^2}{\lambda_g-\wb{l}_fL^*}\bigg)\int_t^T \Big|\p_{x_i}\big(X^{t,m}_s(x)\big)\Big|^2ds\\\label{p_x_gamma}
&+\bigg(2\wb{l}_h\frac{\Lambda_f}{\lambda_g-\wb{l}_fL^*}+2\Lambda_f+2\Lambda_h+ \frac{\wb{l}_h ^2}{\lambda_g-\wb{l}_fL^*}\bigg)\int_t^T\Big|\p_{x_i}\big(Z^{t,m}_s(x)\big)\Big|^2ds.
\end{align}\normalsize
Then, integrating \eqref{p_m_gamma} with respect to $x$, we have, for any $i=1,2,3,...,d_x$, 
\small\begin{align}\no
&\bigg\|\p_{m_i}\gamma(t,\cdot,m)(y)\bigg\|_{L^{2,d_x}_m}^2=\bigg\|\p_{m_i}\big(Z^{t,m}_t(\cdot)\big)(y)\bigg\|_{L^{2,d_x}_m}^2\\\no
\leq &\ 39\Lambda_k^2\Big\|\p_{m_i}\big(X^{t,m}_T(\cdot)\big)(y)\Big\|_{L^{2,d_x}_m}^2+12\Lambda_k^2\Big|\p_{y_i}\big(X^{t,m}_T(y)\big)\Big|^2\\\no
&+\int_t^T\Bigg(\Big(5\Lambda_h+\frac{4\wb{l}_h^2}{\lambda_g-\wb{l}_fL^*}\Big)\Big\|\p_{m_i}\big(X^{t,m}_s(\cdot)\big)(y)\Big\|_{L^{2,d_x}_m}^2+\Big(2\Lambda_h+\frac{2\wb{l}_h^2}{\lambda_g-\wb{l}_fL^*}\Big)\Big|\p_{y_i}\big(X^{t,m}_s(y)\big)\Big|^2\\\label{p_m_gamma_int}
&\ \ \ \ \ \ \ \ \ \ \ \ +\bigg(4\Lambda_f+4\wb{l}_h\frac{\Lambda_f}{\lambda_g-\wb{l}_fL^*}+7\Lambda_h+\frac{6\wb{l}_h^2}{\lambda_g-\wb{l}_fL^*}\bigg)\Big\|\p_{m_i}\big(Z^{t,m}_s(\cdot)\big)(y)\Big\|_{L^{2,d_x}_m}^2\Bigg)ds.
\end{align}\normalsize

To bound the right hand sides of \eqref{p_m_gamma}-\eqref{p_m_gamma_int}, we need the following major estimates.

\textbf{Step 3} (Major  estimate 1 of the following \eqref {L_star_1_1}) For any $i=1,2,3,...,d_x$, using \eqref{eq_8_6_new} yields
\scriptsize\begin{align*}
&\p_{x_i}\big(Z^{t,m}_T(x)\big)\cdot \p_{x_i}\big(X^{t,m}_T(x)\big)-\p_{x_i}\big(Z^{t,m}_t(x)\big)\cdot \p_{x_i}\big(X^{t,m}_t(x)\big) \\
=&\ \int_t^T \bigg(\p_{x_i}\big(Z^{t,m}_s(x)\big) \cdot \frac{d}{ds}\p_{x_i}\big(X^{t,m}_s(x)\big) +\p_{x_i}\big(X^{t,m}_s(x)\big) \cdot \frac{d}{ds}\p_{x_i}\big(Z^{t,m}_s(x)\big) \bigg)ds\\
=&\ \int_t^T \Bigg(\p_{x_i}\big(Z^{t,m}_s(x)\big) \cdot \bigg({\color{green}\big(\p_x f\big)^{t,m}_s(x)\cdot\p_{x_i} \big(X^{t,m}_s(x)\big)}+ \big(\p_\alpha f\big)^{t,m}_s(x)\cdot \Big({\color{green}\big(\p_x  \alpha\big)^{t,m}_s(x)\cdot\p_{x_i}\big(X^{t,m}_s(x)\big)}+\big(\p_z \alpha\big)^{t,m}_s(x)\cdot\p_{x_i}\big(Z^{t,m}_s(x)\big)\Big)\bigg)\\
&\ \ \ \ \ \ \ \ -\p_{x_i}\big(X^{t,m}_s(x)\big)^\top\int_{\R^{d_x}}\bigg(\big(\p_{\wt{x}}\p_\mu f\big)^{t,m}_s(\wt{x},x)\cdot  Z^{t,m}_s(\wt{x})+ \big(\p_{\wt{x}}\p_\mu g\big)^{t,m}_s(\wt{x},x)\bigg)dm(\wt{x})\p_{x_i}\big(X^{t,m}_s(x)\big)\\
&\ \ \ \ \ \ \ \ {\color{green}-\p_{x_i}\big(X^{t,m}_s(x)\big)^\top\big(\p_x f\big)^{t,m}_s(x)\cdot \p_{x_i}\big(Z^{t,m}_s(x)\big)}-\p_{x_i}\big(X^{t,m}_s(x)\big)^\top\bigg(\big(\p_x\p_x f\big)^{t,m}_s(x)\cdot Z^{t,m}_s(x)+\big(\p_x\p_x g\big)^{t,m}_s(x)\bigg)\p_{x_i}\big(X^{t,m}_s(x)\big)\\
&\ \ \ \ \ \ \ \ -\Big(\big(\p_x  \alpha\big)^{t,m}_s(x)\cdot\p_{x_i}\big(X^{t,m}_s(x)\big){\color{green}+\big(\p_z \alpha\big)^{t,m}_s(x)\cdot\p_{x_i}\big(Z^{t,m}_s(x)\big)}\Big)^\top\bigg(\big(\p_\alpha\p_x f\big)^{t,m}_s(x)\cdot Z^{t,m}_s(x)+\big(\p_\alpha\p_x g\big)^{t,m}_s(x)\bigg)\p_{x_i}\big(X^{t,m}_s(x)\big)\Bigg)ds\\
&\text{(by using \eqref{eq_7_17_1}, \eqref{calculus2} and we can cancel the terms in green)}\\
\leq&\ \int_t^T \Bigg(-\frac{\lambda_f^2}{\Lambda_h}\Big|\p_{x_i}\big(Z^{t,m}_s(x)\big) \Big| ^2-\big(\lambda_g-k_0\wb{l}_f-\frac{\wb{l}_h^2}{\lambda_g-\wb{l}_fL^*}\big)\Big|\p_{x_i}\big(X^{t,m}_s(x)\big)\Big|^2\Bigg)ds\\
&\text{(by using \eqref{bdd_d2_f}, \eqref{positive_g_x}, \eqref{bdd_d2_g_2}, \eqref{c_a7}, \eqref{eq_8_9_cone} and \eqref{p_xalpha_new}),}
\end{align*}\normalsize
which implies, by using \eqref{positive_k} on the left hand side,
\begin{align}\no
&\lambda_k\Big|\p_{x_i}\big(X^{t,m}_T(x)\big)\Big|^2-\p_{x_i}\big(Z^{t,m}_t(x)\big)_i\\\no
\leq&\ \int_t^T \Bigg(-\frac{\lambda_f^2}{\Lambda_h}\Big|\p_{x_i}\big(Z^{t,m}_s(x)\big) \Big| ^2-\bigg(\lambda_g-k_0\wb{l}_f-\frac{\wb{l}_h^2}{\lambda_g-\wb{l}_fL^*}\bigg)\Big|\p_{x_i}\big(X^{t,m}_s(x)\big)\Big|^2\Bigg)ds,\\\label{cru_est_1}
\leq&\ \int_t^T \Bigg(-\lambda_z\Big|\p_{x_i}\big(Z^{t,m}_s(x)\big) \Big| ^2-\lambda_x\Big|\p_{x_i}\big(X^{t,m}_s(x)\big)\Big|^2\Bigg)ds,
\end{align}\normalsize
where $\big(Z^{t,m}_t(x)\big)_i$ is the $i$-th component of $Z^{t,m}_t(x)$ and
\begin{align}\label{lambda_z}
\lambda_z:=&\ \frac{\lambda_f^2}{\Lambda_g+\lambda_g/20}\leq \frac{\lambda_f^2}{\Lambda_h},\\\label{lambda_x}
\lambda_x:=&\ \frac{17}{20}\lambda_g=\lambda_g-\frac{1}{10}\lambda_g-\frac{1}{20}\lambda_g\leq \lambda_g-k_0\wb{l}_f-\frac{\wb{l}_h^2}{\lambda_g-\wb{l}_fL^*}.
\end{align}
Thus, by using \eqref{p_x_gamma} and \eqref{cru_est_1}, we can obtain
\begin{align}\no
&\bigg|\p_{x_i}\gamma(t,x,m)\bigg|^2=\bigg|\p_{x_i}\big(Z^{t,m}_t(x)\big)\bigg|^2\\\no
\leq &\ \frac{4\Lambda_k^2}{\lambda_k}\lambda_k\Big|\p_{x_i}\big(X^{t,m}_T(x)\big)\Big|^2+\frac{1}{\lambda_z}\bigg(2\Lambda_h+\frac{\wb{l}_h ^2}{\lambda_g-\wb{l}_fL^*}\bigg)\lambda_z\int_t^T \Big|\p_{x_i}\big(X^{t,m}_s(x)\big)\Big|^2ds\\\no
&+\frac{1}{\lambda_x}\bigg(2\wb{l}_h\frac{\Lambda_f}{\lambda_g-\wb{l}_fL^*}+2\Lambda_f+2\Lambda_h+ \frac{\wb{l}_h ^2}{\lambda_g-\wb{l}_fL^*}\bigg)\lambda_x\int_t^T\Big|\p_{x_i}\big(Z^{t,m}_s(x)\big)\Big|^2ds\\\no
\leq &\ \max\bigg\{\frac{4\Lambda_k^2}{\lambda_k},\frac{1}{\lambda_z}\bigg(2\Lambda_h+\frac{\wb{l}_h ^2}{\lambda_g-\wb{l}_fL^*}\bigg),\frac{1}{\lambda_x}\bigg(2\wb{l}_h\frac{\Lambda_f}{\lambda_g-\wb{l}_fL^*}+2\Lambda_f+2\Lambda_h+ \frac{\wb{l}_h ^2}{\lambda_g-\wb{l}_fL^*}\bigg)\bigg\}\Big|\p_{x_i}\big(Z^{t,m}_t(x)\big)\Big|,
\end{align}\normalsize
which implies the following {\it a priori} estimate, for any $i=1,2,3,...,d_x$,
\begin{align}\no
&\bigg|\p_{x_i}\gamma(t,x,m)\bigg|=\bigg|\p_{x_i}\big(Z^{t,m}_t(x)\big)\bigg|\\\no
\leq &\ \max\bigg\{\frac{4\Lambda_k^2}{\lambda_k},\frac{1}{\lambda_z}\bigg(2\Lambda_h+\frac{\wb{l}_h ^2}{\lambda_g-\wb{l}_fL^*}\bigg),\frac{1}{\lambda_x}\bigg(2\wb{l}_h\frac{\Lambda_f}{\lambda_g-\wb{l}_fL^*}+2\Lambda_f+2\Lambda_h+ \frac{\wb{l}_h ^2}{\lambda_g-\wb{l}_fL^*}\bigg)\bigg\}\\\label{L_star_1}
\leq &\ \max\bigg\{\frac{4\Lambda_k^2}{\lambda_k},\frac{1}{\lambda_z}\bigg(2\Lambda_h+\frac{1}{20}\lambda_g\bigg),\frac{1}{\lambda_x}\bigg(\frac{5}{2}\Lambda_f+2\Lambda_h+\frac{1}{20}\lambda_g\bigg)\bigg\}=:L^*_1,
\end{align}\normalsize
since $\wb{l}_fL^*\leq \frac{1}{20} \lambda_g$, $\wb{l}_h<\frac{1}{5}\lambda_g$ and $\frac{\wb{l}_h}{\lambda_g-\wb{l}_fL^*}\leq \frac{1/5\lambda_g}{\lambda_g-\lambda_g/20}<\frac{1}{4}$. 
Therefore, we finally have
\begin{align}\label{L_star_1_1}
&\lambda_k\Big|\p_{x_i}\big(X^{t,m}_T(x)\big)\Big|^2+\int_t^T \Bigg(\lambda_z\Big|\p_{x_i}\big(Z^{t,m}_s(x)\big) \Big| ^2+\lambda_x\Big|\p_{x_i}\big(X^{t,m}_s(x)\big)\Big|^2\Bigg)ds\leq L^*_1.
\end{align}\normalsize

\textbf{Step 4} (Major estimate 2 of the following \eqref {cru_est_2}) For any $i=1,2,3,...,d_x$ and any $y\in\R^{d_x}$, using \eqref{eq_8_5_new} yields
\footnotesize\begin{align*}
&\p_{m_i}\big(Z^{t,m}_T(x)\big)(y)\cdot \p_{m_i}\big(X^{t,m}_T(x)\big)(y)-\p_{m_i}\big(Z^{t,m}_t(x)\big)(y)\cdot \p_{m_i}\big(X^{t,m}_t(x)\big)(y)\\
=&\ \int_t^T \bigg(\p_{m_i}\big(Z^{t,m}_s(x)\big)(y)\cdot \frac{d}{ds}\p_{m_i}\big(X^{t,m}_s(x)\big)(y)+\p_{m_i}\big(X^{t,m}_s(x)\big)(y)\cdot \frac{d}{ds}\p_{m_i}\big(Z^{t,m}_s(x)\big)(y)\bigg)ds\\
=&\ \int_t^T \Bigg(\p_{m_i}\big(Z^{t,m}_s(x)\big)(y)\cdot \bigg({\color{green}\big(\p_x f\big)^{t,m}_s(x) \p_{m_i}\big(X^{t,m}_s(x)\big)(y)}+\big(\p_\mu f\big)^{t,m}_s(x,y)  \p_{y_i}\big(X^{t,m}_s(y)\big)\\
&\ \ \ \ \ \ \  \ \ \ \ \ \ \  \ \ \ \ \ \ \ \ \ \ \ \ \ \ \ \ \ \ \ \ \ {\color{orange}+\int_{\R^{d_x}} \big(\p_\mu f\big)^{t,m}_s(x,\wh{x}) \p_{m_i}\big(X^{t,m}_s(\wh{x})\big)(y) dm(\wh{x})}\\
&\ \ \ \ \ \ \  \ \ \ \ \ \ \  \ \ \ \ \ \ \ \ \ \ \ \ \ \ \ \ \ \ \ \ \ + \big(\p_\alpha f\big)^{t,m}_s(x)\cdot \Big({\color{green}\big(\p_x \alpha\big)^{t,m}_s(x)\cdot\p_{m_i}\big(X^{t,m}_s(x)\big)(y)}+\big(\p_\mu \alpha\big)^{t,m}_s(x,y)\cdot \p_{y_i}\big(X^{t,m}_s(y)\big)\\
&\ \ \ \ \ \ \  \ \ \ \ \ \ \  \ \ \ \ \ \ \ \ \ \ \ \ \ \ \ \ \ \ \ \ \ \ \ \ \ \ \ \ \ \ \ \ \ \ \ \ \ \ \ \ \ \ \ {\color{orange}+\int_{\R^{d_x}}\big(\p_\mu \alpha\big)^{t,m}_s(x,\wh{x})\cdot \p_{m_i}\big(X^{t,m}_s(\wh{x})\big)(y)dm(\wh{x})}+\big(\p_z \alpha\big)^{t,m}_s(x)\cdot\p_{m_i}\big(Z^{t,m}_s(x)\big)(y)\Big)\bigg)\\
&\ \ \ \ \ \ \ \ -\int_{\R^{d_x}}\p_{m_i}\big(X^{t,m}_s(\wt{x})\big)(y)^\top\bigg(\big(\p_x\p_\mu f\big)^{t,m}_s(\wt{x},x)\cdot  Z^{t,m}_s(\wt{x})+ \big(\p_x\p_\mu g\big)^{t,m}_s(\wt{x},x)\bigg)dm(\wt{x})\p_{m_i}\big(X^{t,m}_s(x)\big)(y)\\
&\ \ \ \ \ \ \ \ -\p_{m_i}\big(X^{t,m}_s(x)\big)(y)^\top\int_{\R^{d_x}}\bigg(\big(\p_{\wt{x}}\p_\mu f\big)^{t,m}_s(\wt{x},x)\cdot  Z^{t,m}_s(\wt{x})+ \big(\p_{\wt{x}}\p_\mu g\big)^{t,m}_s(\wt{x},x)\bigg)dm(\wt{x})\p_{m_i}\big(X^{t,m}_s(x)\big)(y)\\
&\ \ \ \ \ \ \ \ -\int_{\R^{d_x}}\int_{\R^{d_x}}\p_{m_i}\big(X^{t,m}_s(\wh{x})\big)(y)^\top\bigg(\big(\p_\mu\p_\mu f\big)^{t,m}_s(\wt{x},x,\wh{x})\cdot  Z^{t,m}_s(\wt{x})+ \big(\p_\mu\p_\mu g\big)^{t,m}_s(\wt{x},x,\wh{x})\bigg)dm(\wh{x})dm(\wt{x})\p_{m_i}\big(X^{t,m}_s(x)\big)(y)\\
&\ \ \ \ \ \ \ \ -\int_{\R^{d_x}}\p_{y_i}\big(X^{t,m}_s(y)\big)^\top\bigg(\big(\p_\mu\p_\mu f\big)^{t,m}_s(\wt{x},x,y)\cdot  Z^{t,m}_s(\wt{x})+ \big(\p_\mu\p_\mu g\big)^{t,m}_s(\wt{x},x,y)\bigg)dm(\wt{x})\p_{m_i}\big(X^{t,m}_s(x)\big)(y)\\
&\ \ \ \ \ \ \ \ -\int_{\R^{d_x}}\bigg(\big(\p_x  \alpha\big)^{t,m}_s(\wt{x})\cdot\p_{m_i}\big(X^{t,m}_s(\wt{x})\big)(y)+\big(\p_\mu \alpha\big)^{t,m}_s(\wt{x},y)\cdot \p_{y_i}\big(X^{t,m}_s(y)\big)\\
&\ \ \ \ \ \ \ \ \ \ \ \ \ \  \ \ \ \ \ \ \ +\int_{\R^{d_x}}\big(\p_\mu \alpha\big)^{t,m}_s(\wt{x},\wh{x})\cdot \p_{m_i}\big(X^{t,m}_s(\wh{x})\big)(y)dm(\wh{x}){\color{orange}+\big(\p_z \alpha\big)^{t,m}_s(\wt{x})\cdot\p_{m_i}\big(Z^{t,m}_s(\wt{x})\big)(y)}\bigg)\\
&\ \ \ \ \ \ \ \ \ \ \ \ \ \ \ \ \cdot\bigg(\big(\p_\alpha\p_\mu f\big)^{t,m}_s(\wt{x},x)\cdot  Z^{t,m}_s(\wt{x})+ \big(\p_\alpha\p_\mu g\big)^{t,m}_s(\wt{x},x)\bigg)dm(\wt{x})\p_{m_i}\big(X^{t,m}_s(x)\big)(y)\\
&\ \ \ \ \ \ \ \ {\color{orange}-\p_{m_i}\big(X^{t,m}_s(x)\big)(y)^\top\int_{\R^{d_x}} \big(\p_\mu f\big)^{t,m}_s(\wt{x},x)\cdot  \p_{m_i}\big(Z^{t,m}_s(\wt{x})(y)\big)dm(\wt{x})}{\color{green}-\p_{m_i}\big(X^{t,m}_s(x)\big)(y)^\top\big(\p_x f\big)^{t,m}_s(x)\cdot \p_{m_i}\big(Z^{t,m}_s(x)\big)(y)}\\
&\ \ \ \ \ \ \ \ -\p_{m_i}\big(X^{t,m}_s(x)\big)(y)^\top\bigg(\big(\p_x\p_x f\big)^{t,m}_s(x)\cdot Z^{t,m}_s(x)+\big(\p_x\p_x g\big)^{t,m}_s(x)\bigg)\p_{m_i}\big(X^{t,m}_s(x)\big)(y)\\
&\ \ \ \ \ \ \ \ -\int_{\R^{d_x}}\p_{m_i}\big(X^{t,m}_s(\wt{x})\big)(y)^\top\bigg(\big(\p_\mu\p_x f\big)^{t,m}_s(x,\wt{x})\cdot Z^{t,m}_s(x)+\big(\p_\mu\p_x g\big)^{t,m}_s(x,\wt{x})\bigg)dm(\wt{x})\p_{m_i}\big(X^{t,m}_s(x)\big)(y)\\
&\ \ \ \ \ \ \ \ -\p_{y_i}\big(X^{t,m}_s(y)\big)^\top\bigg(\big(\p_\mu\p_x f\big)^{t,m}_s(x,y)\cdot Z^{t,m}_s(x)+\big(\p_\mu\p_x g\big)^{t,m}_s(x,y)\bigg)\p_{m_i}\big(X^{t,m}_s(x)\big)(y)\\
&\ \ \ \ \ \ \ \ -\bigg(\big(\p_x  \alpha\big)^{t,m}_s(x)\cdot\p_{m_i}\big(X^{t,m}_s(x)\big)(y)+\big(\p_\mu \alpha\big)^{t,m}_s(x,y)\cdot \p_{y_i}\big(X^{t,m}_s(y)\big)\\
&\ \ \ \ \ \ \ \ \ \ \ \ \ \  \ \ \ \ \ \ \ +\int_{\R^{d_x}}\big(\p_\mu \alpha\big)^{t,m}_s(x,\wh{x})\cdot \p_{m_i}\big(X^{t,m}_s(\wh{x})\big)(y)dm(\wh{x}){\color{green}+\big(\p_z \alpha\big)^{t,m}_s(x)\cdot\p_{m_i}\big(Z^{t,m}_s(x)\big)(y)}\bigg)\\
&\ \ \ \ \ \ \ \ \ \ \ \ \cdot\bigg(\big(\p_\alpha\p_x f\big)^{t,m}_s(x)\cdot Z^{t,m}_s(x)+\big(\p_\alpha\p_x g\big)^{t,m}_s(x)\bigg)\p_{m_i}\big(X^{t,m}_s(x)\big)(y)\Bigg)ds\\
&\text{(by using \eqref{calculus2}, \eqref{eq_7_17_1} and we can cancel the terms in green)}
\\
\leq &\ \int_t^T \Bigg(\Lambda_f\Big(1+\frac{\wb{l}_h}{\lambda_g-\wb{l}_fL^*}\Big)\Big|\p_{m_i}\big(Z^{t,m}_s(x)\big)(y)\Big|\Big| \p_{y_i}\big(X^{t,m}_s(y)\big)\Big|{\color{orange}+\p_{m_i}\big(Z^{t,m}_s(x)\big)(y)\cdot\int_{\R^{d_x}} \big(\p_\mu f\big)^{t,m}_s(x,\wh{x}) \p_{m_i}\big(X^{t,m}_s(\wh{x})\big)(y) dm(\wh{x})}\\
&\ \ \ \ \ \ \ \ {\color{orange}+\p_{m_i}\big(Z^{t,m}_s(x)\big)(y)\cdot\big(\p_\alpha f\big)^{t,m}_s(x)\int_{\R^{d_x}}\big(\p_\mu \alpha\big)^{t,m}_s(x,\wh{x})\p_{m_i}\big(X^{t,m}_s(\wh{x})\big)(y)dm(\wh{x})}\\
&\ \ \ \ \ \ \ \ -\lambda_z\Big|\p_{m_i}\big(Z^{t,m}_s(x)\big)(y)\Big|^2-\lambda_x\Big|\p_{m_i}\big(X^{t,m}_s(x)\big)(y)\Big|^2\\
&\ \ \ \ \ \ \ \ +\frac{3\wb{l}_h^2}{\lambda_g-\wb{l}_fL^*}\int_{\R^{d_x}}\Big|\p_{m_i}\big(X^{t,m}_s(\wt{x})\big)(y)\Big|dm(\wt{x})\Big|\p_{m_i}\big(X^{t,m}_s(x)\big)(y)\Big|\\
&\ \ \ \ \ \ \ \ -\int_{\R^{d_x}}\p_{m_i}\big(X^{t,m}_s(\wt{x})\big)(y)^\top\bigg(\big(\p_x\p_\mu f\big)^{t,m}_s(\wt{x},x)\cdot  Z^{t,m}_s(\wt{x})+ \big(\p_x\p_\mu g\big)^{t,m}_s(\wt{x},x)\bigg)dm(\wt{x})\p_{m_i}\big(X^{t,m}_s(x)\big)(y)\\
&\ \ \ \ \ \ \ \ -\int_{\R^{d_x}}\int_{\R^{d_x}}\p_{m_i}\big(X^{t,m}_s(\wh{x})\big)(y)^\top\bigg(\big(\p_\mu\p_\mu f\big)^{t,m}_s(\wt{x},x,\wh{x})\cdot  Z^{t,m}_s(\wt{x})+ \big(\p_\mu\p_\mu g\big)^{t,m}_s(\wt{x},x,\wh{x})\bigg)dm(\wh{x})dm(\wt{x})\p_{m_i}\big(X^{t,m}_s(x)\big)(y)\end{align*}
\begin{align*}
&\ \ \ \ \ \ \ \ -\int_{\R^{d_x}}\p_{m_i}\big(X^{t,m}_s(\wt{x})\big)(y)^\top\bigg(\big(\p_\mu\p_x f\big)^{t,m}_s(x,\wt{x})\cdot Z^{t,m}_s(x)+\big(\p_\mu\p_x g\big)^{t,m}_s(x,\wt{x})\bigg)dm(\wt{x})\p_{m_i}\big(X^{t,m}_s(x)\big)(y)\\
&\ \ \ \ \ \ \ \ +\Big(2\Lambda_h+\frac{2\wb{l}_h^2}{\lambda_g-\wb{l}_fL^*}\Big)\Big|\p_{y_i}\big(X^{t,m}_s(y)\big)\Big|\Big|\p_{m_i}\big(X^{t,m}_s(x)\big)(y)\Big|\\
&\ \ \ \ \ \ \ \ {\color{orange}-\int_{\R^{d_x}}\bigg(\big(\p_z \alpha\big)^{t,m}_s(\wt{x})\p_{m_i}\big(Z^{t,m}_s(\wt{x})\big)(y)\bigg)\cdot\bigg(\big(\p_\alpha\p_\mu f\big)^{t,m}_s(\wt{x},x)\cdot  Z^{t,m}_s(\wt{x})+ \big(\p_\alpha\p_\mu g\big)^{t,m}_s(\wt{x},x)\bigg)dm(\wt{x})\p_{m_i}\big(X^{t,m}_s(x)\big)(y)}\\
&\ \ \ \ \ \ \ \ {\color{orange}-\p_{m_i}\big(X^{t,m}_s(x)\big)(y)^\top\int_{\R^{d_x}} \big(\p_\mu f\big)^{t,m}_s(\wt{x},x)\cdot  \p_{m_i}\big(Z^{t,m}_s(\wt{x})(y)\big)dm(\wt{x})}\Bigg)ds\\
&\text{(by using \eqref{bdd_d1_f}, \eqref{bdd_d2_f}, \eqref{positive_g_x}, \eqref{c_a7}, \eqref{eq_8_9_cone}, \eqref{p_xalpha_new}, \eqref{lambda_z} and \eqref{lambda_x}),}
\end{align*}\normalsize
which implies, by using \eqref{positive_k_mu} and \eqref{bdd_d2_k_1} on the left hand side,
\footnotesize\begin{align}\no
&\lambda_k\Big|\p_{m_i}\big(X^{t,m}_T(x)\big)(y)\Big|^2-2\Lambda_k\Big|\p_{y_i}\big(X^{t,m}_T(y)\big)\Big|\Big|\p_{m_i}\big(X^{t,m}_T(x)\big)(y)\Big|\\\no
&+\int_{\R^{d_x}} \p_{m_i}\big(X^{t,m}_T(x)\big)(y)^\top\big(\p_x\p_\mu k\big)^{t,m}_T(\wt{x},x)\p_{m_i}\big(X^{t,m}_T(\wt{x})\big)(y)dm(\wt{x})\\\no
&+\int_{\R^{d_x}} \p_{m_i}\big(X^{t,m}_T(x)\big)(y)^\top\big(\p_\mu\p_x k\big)^{t,m}_T(x,\wt{x})\p_{m_i}\big(X^{t,m}_T(\wt{x})\big)(y)dm(\wt{x})\\\no
&+\int_{\R^{d_x}} \int_{\R^{d_x}}\p_{m_i}\big(X^{t,m}_T(x)\big)(y)^\top\big(\p_\mu\p_\mu k\big)^{t,m}_T(\wt{x},x,\wh{x}) \p_{m_i}\big(X^{t,m}_T(\wh{x})\big)(y)dm(\wh{x})dm(\wt{x})\\\no
\leq &\ \int_t^T \Bigg(\Lambda_f\Big(1+\frac{\wb{l}_h}{\lambda_g-\wb{l}_fL^*}\Big)\Big|\p_{m_i}\big(Z^{t,m}_s(x)\big)(y)\Big|\Big| \p_{y_i}\big(X^{t,m}_s(y)\big)\Big|{\color{orange}+\p_{m_i}\big(Z^{t,m}_s(x)\big)(y)\cdot\int_{\R^{d_x}} \big(\p_\mu f\big)^{t,m}_s(x,\wh{x}) \p_{m_i}\big(X^{t,m}_s(\wh{x})\big)(y) dm(\wh{x})}\\\no
&\ \ \ \ \ \ \ \ {\color{orange}+\p_{m_i}\big(Z^{t,m}_s(x)\big)(y)\cdot\big(\p_\alpha f\big)^{t,m}_s(x)\int_{\R^{d_x}}\big(\p_\mu \alpha\big)^{t,m}_s(x,\wh{x})\p_{m_i}\big(X^{t,m}_s(\wh{x})\big)(y)dm(\wh{x})}\\\no
&\ \ \ \ \ \ \ \ -\lambda_z\Big|\p_{m_i}\big(Z^{t,m}_s(x)\big)(y)\Big|^2-\lambda_x\Big|\p_{m_i}\big(X^{t,m}_s(x)\big)(y)\Big|^2\\\no
&\ \ \ \ \ \ \ \ +\frac{3\wb{l}_h^2}{\lambda_g-\wb{l}_fL^*}\int_{\R^{d_x}}\Big|\p_{m_i}\big(X^{t,m}_s(\wt{x})\big)(y)\Big|dm(\wt{x})\Big|\p_{m_i}\big(X^{t,m}_s(x)\big)(y)\Big|\\\no
&\ \ \ \ \ \ \ \ -\int_{\R^{d_x}}\p_{m_i}\big(X^{t,m}_s(\wt{x})\big)(y)^\top\bigg(\big(\p_x\p_\mu f\big)^{t,m}_s(\wt{x},x)\cdot  Z^{t,m}_s(\wt{x})+ \big(\p_x\p_\mu g\big)^{t,m}_s(\wt{x},x)\bigg)dm(\wt{x})\p_{m_i}\big(X^{t,m}_s(x)\big)(y)\\\no
&\ \ \ \ \ \ \ \ -\int_{\R^{d_x}}\int_{\R^{d_x}}\p_{m_i}\big(X^{t,m}_s(\wh{x})\big)(y)^\top\bigg(\big(\p_\mu\p_\mu f\big)^{t,m}_s(\wt{x},x,\wh{x})\cdot  Z^{t,m}_s(\wt{x})+ \big(\p_\mu\p_\mu g\big)^{t,m}_s(\wt{x},x,\wh{x})\bigg)dm(\wh{x})dm(\wt{x})\p_{m_i}\big(X^{t,m}_s(x)\big)(y)\\\no
&\ \ \ \ \ \ \ \ -\int_{\R^{d_x}}\p_{m_i}\big(X^{t,m}_s(\wt{x})\big)(y)^\top\bigg(\big(\p_\mu\p_x f\big)^{t,m}_s(x,\wt{x})\cdot Z^{t,m}_s(x)+\big(\p_\mu\p_x g\big)^{t,m}_s(x,\wt{x})\bigg)dm(\wt{x})\p_{m_i}\big(X^{t,m}_s(x)\big)(y)\\\no
&\ \ \ \ \ \ \ \ +\Big(2\Lambda_h+\frac{2\wb{l}_h^2}{\lambda_g-\wb{l}_fL^*}\Big)\Big|\p_{y_i}\big(X^{t,m}_s(y)\big)\Big|\Big|\p_{m_i}\big(X^{t,m}_s(x)\big)(y)\Big|\\\no
&\ \ \ \ \ \ \ \ {\color{orange}-\int_{\R^{d_x}}\bigg(\big(\p_z \alpha\big)^{t,m}_s(\wt{x})\p_{m_i}\big(Z^{t,m}_s(\wt{x})\big)(y)\bigg)\cdot\bigg(\big(\p_\alpha\p_\mu f\big)^{t,m}_s(\wt{x},x)\cdot  Z^{t,m}_s(\wt{x})+ \big(\p_\alpha\p_\mu g\big)^{t,m}_s(\wt{x},x)\bigg)dm(\wt{x})\p_{m_i}\big(X^{t,m}_s(x)\big)(y)}\\\label{eq_7_26_1}
&\ \ \ \ \ \ \ \ {\color{orange}-\p_{m_i}\big(X^{t,m}_s(x)\big)(y)^\top\int_{\R^{d_x}} \big(\p_\mu f\big)^{t,m}_s(\wt{x},x)\cdot  \p_{m_i}\big(Z^{t,m}_s(\wt{x})(y)\big)dm(\wt{x})}\Bigg)ds.
\end{align}\normalsize
Then, integrating \eqref{eq_7_26_1} with respective to $x$, using \eqref{eq_7_18_1} and then canceling the terms in orange, we have
\scriptsize\begin{align*}
&\lambda_k\Big\|\p_{m_i}\big(X^{t,m}_T(\cdot)\big)(y)\Big\|_{L^{2,d_x}_m}^2-2\Lambda_k\Big|\p_{y_i}\big(X^{t,m}_T(y)\big)\Big|\Big\|\p_{m_i}\big(X^{t,m}_T(\cdot)\big)(y)\Big\|_{L^{1,d_x}_m}\\\no
&+\int_{\R^{d_x}}\int_{\R^{d_x}} \p_{m_i}\big(X^{t,m}_T(x)\big)(y)^\top\big(\p_x\p_\mu k\big)^{t,m}_T(\wt{x},x) \p_{m_i}\big(X^{t,m}_T(\wt{x})\big)(y)dm(\wt{x})dm(x)\\\no
&+\int_{\R^{d_x}}\int_{\R^{d_x}} \p_{m_i}\big(X^{t,m}_T(x)\big)(y)^\top\big(\p_\mu\p_x k\big)^{t,m}_T(x,\wt{x}) \p_{m_i}\big(X^{t,m}_T(\wt{x})\big)(y)dm(\wt{x})dm(x)\\\no
&+\int_{\R^{d_x}}\int_{\R^{d_x}} \int_{\R^{d_x}}\p_{m_i}\big(X^{t,m}_T(x)\big)(y)^\top\big(\p_\mu\p_\mu k\big)^{t,m}_T(\wt{x},x,\wh{x})\p_{m_i}\big(X^{t,m}_T(\wh{x})\big)(y)dm(\wh{x})dm(\wt{x})dm(x)\\\no
\leq &\ \int_t^T \Bigg(\Lambda_f\Big(1+\frac{\wb{l}_h}{\lambda_g-\wb{l}_fL^*}\Big)\Big\|\p_{m_i}\big(Z^{t,m}_s(\cdot)\big)(y)\Big\|_{L^{1,d_x}_m}\Big| \p_{y_i}\big(X^{t,m}_s(y)\big)\Big|\\\no
&\ \ \ \ \ \ \ \ -\lambda_z\Big\|\p_{m_i}\big(Z^{t,m}_s(\cdot)\big)(y)\Big\|_{L^{2,d_x}_m}^2-\lambda_x\Big\|\p_{m_i}\big(X^{t,m}_s(\cdot)\big)(y)\Big\|_{L^{2,d_x}_m}^2\\\no
&\ \ \ \ \ \ \ \ +\frac{3\wb{l}_h^2}{\lambda_g-\wb{l}_fL^*} \Big\|\p_{m_i}\big(X^{t,m}_s(\cdot)\big)(y)\Big\|_{L^{1,d_x}_m}^2\\\no
&\ \ \ \ \ \ \ \ -\int_{\R^{d_x}}\int_{\R^{d_x}}\p_{m_i}\big(X^{t,m}_s(\wt{x})\big)(y)^\top\bigg(\big(\p_x\p_\mu f\big)^{t,m}_s(\wt{x},x)\cdot  Z^{t,m}_s(\wt{x})+ \big(\p_x\p_\mu g\big)^{t,m}_s(\wt{x},x)\bigg)dm(\wt{x})\p_{m_i}\big(X^{t,m}_s(x)\big)(y)dm(x)\\\no
&\ \ \ \ \ \ \ \ -\int_{\R^{d_x}}\int_{\R^{d_x}}\int_{\R^{d_x}}\p_{m_i}\big(X^{t,m}_s(\wh{x})\big)(y)^\top\bigg(\big(\p_\mu\p_\mu f\big)^{t,m}_s(\wt{x},x,\wh{x})\cdot  Z^{t,m}_s(\wt{x})+ \big(\p_\mu\p_\mu g\big)^{t,m}_s(\wt{x},x,\wh{x})\bigg)dm(\wh{x})dm(\wt{x})\p_{m_i}\big(X^{t,m}_s(x)\big)(y)dm(x)\\\no
&\ \ \ \ \ \ \ \ -\int_{\R^{d_x}}\int_{\R^{d_x}}\p_{m_i}\big(X^{t,m}_s(\wt{x})\big)(y)^\top\bigg(\big(\p_\mu\p_x f\big)^{t,m}_s(x,\wt{x})\cdot Z^{t,m}_s(x)+\big(\p_\mu\p_x g\big)^{t,m}_s(x,\wt{x})\bigg)dm(\wt{x})\p_{m_i}\big(X^{t,m}_s(x)\big)(y)dm(x)\\\no
&\ \ \ \ \ \ \ \ +\Big(2\Lambda_h+\frac{2\wb{l}_h^2}{\lambda_g-\wb{l}_fL^*}\Big)\Big|\p_{y_i}\big(X^{t,m}_s(y)\big)\Big|\Big\|\p_{m_i}\big(X^{t,m}_s(x)\big)(y)\Big\|_{L^{1,d_x}_m}\Bigg)ds,
\end{align*}\normalsize
which implies, by using \eqref{positive_g_mu_1}, \eqref{positive_k_mu_1} and using Jensen's inequality to replace $L^1$-norm by $L^2$-norm,
\footnotesize\begin{align}\no
&\Big(\lambda_k-l_k-\varepsilon_1\Big)\Big\|\p_{m_i}\big(X^{t,m}_T(\cdot)\big)(y)\Big\|_{L^{2,d_x}_m}^2-\frac{1}{\varepsilon_1}\Lambda_k^2\Big|\p_{y_i}\big(X^{t,m}_T(y)\big)\Big|^2\\\no
\leq &\ \int_t^T \Bigg(\frac{1}{4\varepsilon_1}\bigg(\Lambda_f^2\Big(1+\frac{\wb{l}_h}{\lambda_g-\wb{l}_fL^*}\Big)^2+\Big(2\Lambda_h+\frac{2\wb{l}_h^2}{\lambda_g-\wb{l}_fL^*}\Big)^2\bigg)\Big| \p_{y_i}\big(X^{t,m}_s(y)\big)\Big|^2-\Big(\lambda_z-\varepsilon_1\Big)\Big\|\p_{m_i}\big(Z^{t,m}_s(\cdot)\big)(y)\Big\|_{L^{2,d_x}_m}^2\\\label{displacement}
&\ \ \ \ \ \ \ \ -\Big(\lambda_x-\frac{3\wb{l}_h^2}{\lambda_g-\wb{l}_fL^*}-\frac{3}{2}k_0\wb{l}_f-l_g-\varepsilon_1\Big)\Big\|\p_{m_i}\big(X^{t,m}_s(\cdot)\big)(y)\Big\|_{L^{2,d_x}_m}^2\Bigg)ds.
\end{align}\normalsize
Here, we set 
\begin{align}\label{vareps_1}
\varepsilon_1:=\min\Big\{\frac{1}{4}\lambda_k,\frac{1}{2}\lambda_z,\frac{1}{40}\lambda_g\Big\},
\end{align}
and thus we can define the following positive constants, under \eqref{lambda_z}, \eqref{lambda_x}, the Hypotheses ${\bf(h2)}$ and the assumptions that $l_g\leq \frac{1}{2}\lambda_g$ and $l_k\leq \frac{1}{2}\lambda_k$,
\begin{align}\no
&\wb{\lambda}_k:=\frac{1}{4}\lambda_k\leq\lambda_k-l_k-\varepsilon_1,\ \wb{\lambda}_z:=\frac{1}{2}\lambda_z\leq \lambda_z-\varepsilon_1,\\\label{def_wb_lambda}
&\wb{\lambda}_x:=\frac{1}{40}\lambda_g\leq \lambda_x-\frac{3}{20}\lambda_g-\frac{3}{20}\lambda_g-\frac{1}{2}\lambda_g-\varepsilon_1\leq \lambda_x-\frac{3\wb{l}_h^2}{\lambda_g-\wb{l}_fL^*}-\frac{3}{2}k_0\wb{l}_f-l_g-\varepsilon_1.
\end{align}
Therefore, we have 
\footnotesize\begin{align}\no
&\wb{\lambda}_k\Big\|\p_{m_i}\big(X^{t,m}_T(\cdot)\big)(y)\Big\|_{L^{2,d_x}_m}^2+\int_t^T\Bigg(\wb{\lambda}_z\Big\|\p_{m_i}\big(Z^{t,m}_s(\cdot)\big)(y)\Big\|_{L^{2,d_x}_m}^2+\wb{\lambda}_x\Big\|\p_{m_i}\big(X^{t,m}_s(\cdot)\big)(y)\Big\|_{L^{2,d_x}_m}^2\Bigg)ds\\\label{cru_est_2}
\leq &\ \frac{1}{4\varepsilon_1}\bigg(\Lambda_f^2\Big(1+\frac{\wb{l}_h}{\lambda_g-\wb{l}_fL^*}\Big)^2+\Big(2\Lambda_h+\frac{2\wb{l}_h^2}{\lambda_g-\wb{l}_fL^*}\Big)^2\bigg)\int_t^T \Big| \p_{y_i}\big(X^{t,m}_s(y)\big)\Big|^2ds+\frac{1}{\varepsilon_1}\Lambda_k^2\Big|\p_{y_i}\big(X^{t,m}_T(y)\big)\Big|^2.
\end{align}\normalsize 

\textbf{Step 5} (Implications of major estimates obtained in Steps 3 and 4)
Thus, by using \eqref{p_m_gamma_int}, \eqref{L_star_1_1} and \eqref{cru_est_2}, we can obtain the following {\it a priori} estimate, for any $i=1,2,3,...,d_x$, 
\small\begin{align}\no
&\bigg\|\p_{m_i}\gamma(t,\cdot,m)(y)\bigg\|_{L^{2,d_x}_m}^2=\bigg\|\p_{m_i}\big(Z^{t,m}_t(\cdot)\big)(y)\bigg\|_{L^{2,d_x}_m}^2\\\no
\leq &\ \frac{39\Lambda_k^2}{\wb{\lambda}_k}\wb{\lambda}_k\Big\|\p_{m_i}\big(X^{t,m}_T(\cdot)\big)(y)\Big\|_{L^{2,d_x}_m}^2+12\Lambda_k^2\Big|\p_{y_i}\big(X^{t,m}_T(y)\big)\Big|^2\\\no
&+\int_t^T\Bigg(\Big(5\Lambda_h+\frac{4\wb{l}_h^2}{\lambda_g-\wb{l}_fL^*}\Big)\frac{1}{\wb{\lambda}_x}\wb{\lambda}_x\Big\|\p_{m_i}\big(X^{t,m}_s(\cdot)\big)(y)\Big\|_{L^{2,d_x}_m}^2+\Big(2\Lambda_h+\frac{2\wb{l}_h^2}{\lambda_g-\wb{l}_fL^*}\Big)\Big|\p_{y_i}\big(X^{t,m}_s(y)\big)\Big|^2\\\no
&\ \ \ \ \ \ \ \ \ \ \ \ +\bigg(4\Lambda_f+4\wb{l}_h\frac{\Lambda_f}{\lambda_g-\wb{l}_fL^*}+7\Lambda_h+\frac{6\wb{l}_h^2}{\lambda_g-\wb{l}_fL^*}\bigg)\frac{1}{\wb{\lambda}_z}\wb{\lambda}_z\Big\|\p_{m_i}\big(Z^{t,m}_s(\cdot)\big)(y)\Big\|_{L^{2,d_x}_m}^2\Bigg)ds\\\no
\leq &\  \Big(12+\frac{L^*_2}{\varepsilon_1}\Big)\Lambda_k^2\frac{1}{\lambda_k}\lambda_k\Big|\p_{y_i}\big(X^{t,m}_T(y)\big)\Big|^2\\\no
&+\int_t^T \bigg(2\Lambda_h+\frac{2\wb{l}_h^2}{\lambda_g-\wb{l}_fL^*}+\frac{L^*_2}{4\varepsilon_1}\Big(\Lambda_f^2\Big(1+\frac{\wb{l}_h}{\lambda_g-\wb{l}_fL^*}\Big)^2+\Big(2\Lambda_h+\frac{2\wb{l}_h^2}{\lambda_g-\wb{l}_fL^*}\Big)^2\Big)\bigg)\frac{1}{\lambda_x}\lambda_x\Big|\p_{y_i}\big(X^{t,m}_s(y)\big)\Big|^2 ds\\\label{L_star_3_1}
\leq &\ L^*_1L^*_3,
\end{align}\normalsize
where $L^*_1$ was defined in \eqref{L_star_1} and 
\footnotesize\begin{align}\label{L_star_2}
L^*_2:=\max\Bigg\{&\frac{39\Lambda_k^2}{\wb{\lambda}_k},\Big(5\Lambda_g+\frac{1}{4}\lambda_g\Big)\frac{1}{\wb{\lambda}_x},\bigg(5\Lambda_f+7\Lambda_g+\frac{13}{20}\lambda_g\bigg)\frac{1}{\wb{\lambda}_z}\Bigg\}\\\no
\geq \max\Bigg\{&\frac{39\Lambda_k^2}{\wb{\lambda}_k},\Big(5\Lambda_h+\frac{4\wb{l}_h^2}{\lambda_g-\wb{l}_fL^*}\Big)\frac{1}{\wb{\lambda}_x},\bigg(4\Lambda_f+4\wb{l}_h\frac{\Lambda_f}{\lambda_g-\wb{l}_fL^*}+7\Lambda_h+\frac{6\wb{l}_h^2}{\lambda_g-\wb{l}_fL^*}\bigg)\frac{1}{\wb{\lambda}_z}\Bigg\},
\end{align}\normalsize
and\footnotesize\begin{align}\label{L_star_3}
L^*_3:=\max\Bigg\{&\Big(12+\frac{L^*_2}{\varepsilon_1}\Big)\Lambda_k^2\frac{1}{\lambda_k},\bigg(2\Lambda_g+\frac{1}{5}\lambda_g+\frac{L^*_2}{4\varepsilon_1}\Big(\frac{25}{16}\Lambda_f^2+\Big(2\Lambda_g+\frac{1}{5}\lambda_g\Big)^2\Big)\bigg)\frac{1}{\lambda_x}\Bigg\}\\\no
\geq \max\Bigg\{&\Big(12+\frac{L^*_2}{\varepsilon_1}\Big)\Lambda_k^2\frac{1}{\lambda_k},\bigg(2\Lambda_h+\frac{2\wb{l}_h^2}{\lambda_g-\wb{l}_fL^*}+\frac{L^*_2}{4\varepsilon_1}\Big(\Lambda_f^2\Big(1+\frac{\wb{l}_h}{\lambda_g-\wb{l}_fL^*}\Big)^2+\Big(2\Lambda_h+\frac{2\wb{l}_h^2}{\lambda_g-\wb{l}_fL^*}\Big)^2\Big)\bigg)\frac{1}{\lambda_x}\Bigg\}.
\end{align}\normalsize
In addition, using \eqref{L_star_1_1} and \eqref{cru_est_2}, we have
\footnotesize\begin{align}\no
&\wb{\lambda_k}\Big\|\p_{m_i}\big(X^{t,m}_T(\cdot)\big)(y)\Big\|_{L^{2,d_x}_m}^2+\wb{\lambda}_z\int_t^T\Big\|\p_{m_i}\big(Z^{t,m}_s(\cdot)\big)(y)\Big\|_{L^{2,d_x}_m}^2ds+\wb{\lambda}_x\int_t^T\Big\|\p_{m_i}\big(X^{t,m}_s(\cdot)\big)(y)\Big\|_{L^{2,d_x}_m}^2ds\\\no
\leq &\ \frac{1}{\varepsilon_1}\Lambda_k^2\frac{1}{\lambda_k}\lambda_k\Big|\p_{y_i}\big(X^{t,m}_T(y)\big)\Big|^2+\frac{1}{4\varepsilon_1}\bigg(\Lambda_f^2\Big(1+\frac{\wb{l}_h}{\lambda_g-\wb{l}_fL^*}\Big)^2+\Big(2\Lambda_h+\frac{2\wb{l}_h^2}{\lambda_g-\wb{l}_fL^*}\Big)^2\bigg)\frac{1}{\lambda_x}\lambda_x\int_t^T \Big| \p_{y_i}\big(X^{t,m}_s(y)\big)\Big|^2ds\\\label{L_star_4_1}
\leq &\ L^*_1L^*_4,
\end{align}\normalsize
where 
\footnotesize\begin{align}\label{L_star_4}
L^*_4:=\max\Bigg\{&\frac{1}{\varepsilon_1}\Lambda_k^2\frac{1}{\lambda_k},\frac{1}{4\varepsilon_1}\bigg(\frac{25}{16}\Lambda_f^2+\Big(2\Lambda_g+\frac{1}{5}\lambda_g\Big)^2\bigg)\frac{1}{\lambda_x}\Bigg\}\\\no
\geq \max\Bigg\{&\frac{1}{\varepsilon_1}\Lambda_k^2\frac{1}{\lambda_k},\frac{1}{4\varepsilon_1}\bigg(\Lambda_f^2\Big(1+\frac{\wb{l}_h}{\lambda_g-\wb{l}_fL^*}\Big)^2+\Big(2\Lambda_h+\frac{2\wb{l}_h^2}{\lambda_g-\wb{l}_fL^*}\Big)^2\bigg)\frac{1}{\lambda_x}\Bigg\}.
\end{align}\normalsize

Now, we go back to \eqref{eq_7_26_1}. Using \eqref{bdd_d1_f}, \eqref{bdd_d2_f}, \eqref{bdd_d2_g_1}, \eqref{bdd_d2_g_2}, \eqref{bdd_d2_k_1}, \eqref{p_xalpha_new} and \eqref{p_zalpha_new}, we obtain
\footnotesize\begin{align}\no
&\lambda_k\Big|\p_{m_i}\big(X^{t,m}_T(x)\big)(y)\Big|^2-3\Lambda_k\int_{\R^{d_x}} \Big|\p_{m_i}\big(X^{t,m}_T(\wt{x})\big)(y)\Big|dm(\wt{x})\Big|\p_{m_i}\big(X^{t,m}_T(x)\big)(y)\Big|-2\Lambda_k\Big|\p_{y_i}\big(X^{t,m}_T(y)\big)\Big|\Big|\p_{m_i}\big(X^{t,m}_T(x)\big)(y)\Big|\\\no
\leq &\ \int_t^T \Bigg(\Lambda_f\Big(1+\frac{\wb{l}_h}{\lambda_g-\wb{l}_fL^*}\Big)\cdot\bigg(\Big|\p_{m_i}\big(Z^{t,m}_s(x)\big)(y)\Big|\Big| \p_{y_i}\big(X^{t,m}_s(y)\big)\Big|+\Big|\p_{m_i}\big(Z^{t,m}_s(x)\big)(y)\Big|\int_{\R^{d_x}} \Big|\p_{m_i}\big(X^{t,m}_s(\wh{x})\big)(y)\Big| dm(\wh{x})\\\no
&\ \ \ \ \ \ \ \ \ \ \ \ \ \ \ \ \ \ \ \ \ \ \ \ \ \ \ \ \ \ \ \ \ \ \ \ \ \ \ \ +\Big|\p_{m_i}\big(X^{t,m}_s(x)\big)(y)\Big|\int_{\R^{d_x}} \Big|\p_{m_i}\big(Z^{t,m}_s(\wh{x})\big)(y)\Big| dm(\wh{x})\bigg)\\\no
&\ \ \ \ \ \ \ \ -\lambda_z\Big|\p_{m_i}\big(Z^{t,m}_s(x)\big)(y)\Big|^2-\lambda_x\Big|\p_{m_i}\big(X^{t,m}_s(x)\big)(y)\Big|^2\\\no
&\ \ \ \ \ \ \ \ +\Big(3\Lambda_h+\frac{3\wb{l}_h^2}{\lambda_g-\wb{l}_fL^*}\Big)\int_{\R^{d_x}}\Big|\p_{m_i}\big(X^{t,m}_s(\wt{x})\big)(y)\Big|dm(\wt{x})\Big|\p_{m_i}\big(X^{t,m}_s(x)\big)(y)\Big|\\\label{eq_8_40_new}
&\ \ \ \ \ \ \ \ +\Big(2\Lambda_h+\frac{2\wb{l}_h^2}{\lambda_g-\wb{l}_fL^*}\Big)\Big|\p_{y_i}\big(X^{t,m}_s(y)\big)\Big|\Big|\p_{m_i}\big(X^{t,m}_s(x)\big)(y)\Big|\Bigg)ds.
\end{align}\normalsize
Here, it is worth noting that unlike the derivation of \eqref{cru_est_2}, the orange terms in \eqref{eq_7_26_1} do not cancel out with each other, so we have to bound them appropriately as above. Applying Young's inequality to \eqref{eq_8_40_new} yields 
\footnotesize\begin{align}\no
&\big(\lambda_k-2\varepsilon_2\big)\Big|\p_{m_i}\big(X^{t,m}_T(x)\big)(y)\Big|^2-\frac{9}{4\varepsilon_2}\Lambda_k^2\Big\|\p_{m_i}\big(X^{t,m}_T(\cdot)\big)(y)\Big\|_{L^{1,d_x}_m}^2-\frac{1}{\varepsilon_2}\Lambda_k^2\Big|\p_{y_i}\big(X^{t,m}_T(y)\big)\Big|^2\\\no
\leq &\ \int_t^T \Bigg(\frac{1}{4\varepsilon_2}\Lambda_f^2\Big(1+\frac{\wb{l}_h}{\lambda_g-\wb{l}_fL^*}\Big)^2\cdot\bigg(\Big| \p_{y_i}\big(X^{t,m}_s(y)\big)\Big|^2+ \Big\|\p_{m_i}\big(X^{t,m}_s(\cdot)\big)(y)\Big\|_{L^{1,d_x}_m}^2+ \Big\|\p_{m_i}\big(Z^{t,m}_s(\cdot)\big)(y)\Big\|_{L^{1,d_x}_m}^2\bigg)\\\no
&\ \ \ \ \ \ \ \ -\big(\lambda_z-2\varepsilon_2\big)\Big|\p_{m_i}\big(Z^{t,m}_s(x)\big)(y)\Big|^2-\Big(\lambda_x-3\varepsilon_2\Big)\Big|\p_{m_i}\big(X^{t,m}_s(x)\big)(y)\Big|^2\\\no
&\ \ \ \ \ \ \ \ +\frac{9}{4\varepsilon_2}\bigg(\Lambda_h+\frac{\wb{l}_h^2}{\lambda_g-\wb{l}_fL^*}\bigg)^2 \Big\|\p_{m_i}\big(X^{t,m}_s(\cdot)\big)(y)\Big\|_{L^{1,d_x}_m}^2+\frac{1}{\varepsilon_2}\Big(\Lambda_h+\frac{\wb{l}_h^2}{\lambda_g-\wb{l}_fL^*}\Big)^2\Big|\p_{y_i}\big(X^{t,m}_s(y)\big)\Big|^2\Bigg)ds,
\end{align}\normalsize
which implies, by using \eqref{L_star_1_1} and \eqref{L_star_4_1},
\footnotesize\begin{align}\no
&\big(\lambda_k-2\varepsilon_2\big)\Big|\p_{m_i}\big(X^{t,m}_T(x)\big)(y)\Big|^2+\big(\lambda_z-2\varepsilon_2\big)\int_t^T\Big|\p_{m_i}\big(Z^{t,m}_s(x)\big)(y)\Big|^2ds+\Big(\lambda_x-3\varepsilon_2\Big)\int_t^T\Big|\p_{m_i}\big(X^{t,m}_s(x)\big)(y)\Big|^2ds\\\no
\leq &\ \frac{9}{4\varepsilon_2}\Lambda_k^2\frac{1}{\wb{\lambda}_k}\wb{\lambda}_k\Big\|\p_{m_i}\big(X^{t,m}_T(\cdot)\big)(y)\Big\|_{L^{1,d_x}_m}^2+\frac{1}{\varepsilon_2}\Lambda_k^2\frac{1}{\lambda_k}\lambda_k\Big|\p_{y_i}\big(X^{t,m}_T(y)\big)\Big|^2\\\no
&+\int_t^T \Bigg(\frac{1}{4\varepsilon_2}\Lambda_f^2\Big(1+\frac{\wb{l}_h}{\lambda_g-\wb{l}_fL^*}\Big)^2\cdot\bigg(\frac{1}{\lambda_x}\lambda_x\Big| \p_{y_i}\big(X^{t,m}_s(y)\big)\Big|^2+ \frac{1}{\wb{\lambda}_x}\wb{\lambda}_x\Big\|\p_{m_i}\big(X^{t,m}_s(\cdot)\big)(y)\Big\|_{L^{1,d_x}_m}^2+ \frac{1}{\wb{\lambda}_z}\wb{\lambda}_z\Big\|\p_{m_i}\big(Z^{t,m}_s(\cdot)\big)(y)\Big\|_{L^{1,d_x}_m}^2\bigg)\\\no
&\ \ \ \ \ \ \ \ \ \ +\frac{9}{4\varepsilon_2}\bigg(\Lambda_h+\frac{\wb{l}_h^2}{\lambda_g-\wb{l}_fL^*}\bigg)^2 \frac{1}{\wb{\lambda}_x}\wb{\lambda}_x\Big\|\p_{m_i}\big(X^{t,m}_s(\cdot)\big)(y)\Big\|_{L^{1,d_x}_m}^2+\frac{1}{\varepsilon_2}\Big(\Lambda_h+\frac{\wb{l}_h^2}{\lambda_g-\wb{l}_fL^*}\Big)^2\frac{1}{\lambda_x}\lambda_x\Big|\p_{y_i}\big(X^{t,m}_s(y)\big)\Big|^2\Bigg)ds\\\label{L_star_5_1}
\leq&\  L^*_1L^*_4\big(1+L^*_5\big),
\end{align}\normalsize
where
\footnotesize\begin{align}\label{L_star_5}
L^*_5:=\max\Bigg\{&\frac{9}{4\varepsilon_2}\Lambda_k^2\frac{1}{\wb{\lambda}_k},\frac{25}{64\varepsilon_2\wb{\lambda}_z}\Lambda_f^2,\frac{1}{4\varepsilon_2\wb{\lambda}_x}\bigg(\frac{25}{16}\Lambda_f^2+9\bigg(\frac{1}{10}\lambda_g+\Lambda_g\bigg)^2\bigg)\Bigg\}\\\no
\geq \max\Bigg\{&\frac{9}{4\varepsilon_2}\Lambda_k^2\frac{1}{\wb{\lambda}_k},\frac{1}{4\varepsilon_2\wb{\lambda}_z}\Lambda_f^2\Big(1+\frac{\wb{l}_h}{\lambda_g-\wb{l}_fL^*}\Big)^2,\frac{1}{4\varepsilon_2\wb{\lambda}_x}\bigg(\Lambda_f^2\Big(1+\frac{\wb{l}_h}{\lambda_g-\wb{l}_fL^*}\Big)^2+9\bigg(\Lambda_h+\frac{\wb{l}_h^2}{\lambda_g-\wb{l}_fL^*}\bigg)^2\bigg)\Bigg\}.
\end{align}\normalsize
Here, we choose
\begin{align}\label{vareps_2}
\varepsilon_2:=\min\Big\{\frac{1}{4}\lambda_k,\frac{1}{4}\lambda_z,\frac{1}{5}\lambda_g\Big\},
\end{align}
and thus, $\lambda_k-2\varepsilon_2\geq \frac{1}{2}\lambda_k =2\wb{\lambda}_k>0$, $\lambda_z-2\varepsilon_2\geq \wb{\lambda}_z>0$ and $\lambda_x-3\varepsilon_2\geq \wb{\lambda}_x>0$ are all positive.
Thus, by using \eqref{p_m_gamma}, \eqref{L_star_1_1}, \eqref{L_star_4_1} and \eqref{L_star_5_1}, we can obtain the following {\it a priori} estimate, for $i=1,2,3,...,d_x$, 
\small\begin{align}\no
&\bigg|\p_{m_i}\gamma(t,x,m)(y)\bigg|^2=\bigg|\p_{m_i}\big(Z^{t,m}_t(x)\big)(y)\bigg|^2\\\no
\leq &\ 27\Lambda_k^2\frac{1}{\wb{\lambda}_k}\wb{\lambda}_k \Big\|\p_{m_i}\big(X^{t,m}_T(\cdot)\big)(y)\Big\|_{L^{2,d_x}_m}^2 +6\Lambda_k^2\frac{1}{\wb{\lambda}_k}\cdot 2\wb{\lambda}_k\Big|\p_{m_i}\big(X^{t,m}_T(x)\big)(y)\Big|^2+12\Lambda_k^2\frac{1}{\lambda_k}\lambda_k\Big|\p_{y_i}\big(X^{t,m}_T(y)\big)\Big|^2\\\no
&+\int_t^T\Bigg(\Big(3\Lambda_h+\frac{3\wb{l}_h^2}{\lambda_g-\wb{l}_fL^*}\Big) \frac{1}{\wb{\lambda}_x}\wb{\lambda}_x\Big\|\p_{m_i}\big(X^{t,m}_s(\cdot)\big)(y)\Big\|_{L^{2,d_x}_m}^2+\Big(2\Lambda_h+\frac{\wb{l}_h^2}{\lambda_g-\wb{l}_fL^*}\Big)\frac{1}{\wb{\lambda}_x}\wb{\lambda}_x\Big|\p_{m_i}\big(X^{t,m}_s(x)\big)(y)\Big|^2\\\no
&\ \ \ \ \ \ \ \ \ \ \ \ +\Big(2\Lambda_h+\frac{2\wb{l}_h^2}{\lambda_g-\wb{l}_fL^*}\Big)\frac{1}{\lambda_x}\lambda_x\Big|\p_{y_i}\big(X^{t,m}_s(y)\big)\Big|^2+\Big(\Lambda_h+\frac{\wb{l}_h^2}{\lambda_g-\wb{l}_fL^*}\Big) \frac{1}{\wb{\lambda}_z}\wb{\lambda}_z \Big\|\p_{m_i}\big(Z^{t,m}_s(\cdot)\big)(y)\Big\|_{L^{2,d_x}_m}^2 \\\no
&\ \ \ \ \ \ \ \ \ \ \ \ +\bigg(3\Lambda_f+3\wb{l}_h\frac{\Lambda_f}{\lambda_g-\wb{l}_fL^*}+7\Lambda_h+6\frac{\wb{l}_h^2}{\lambda_g-\wb{l}_fL^*}\bigg)\frac{1}{\wb{\lambda}_z}\wb{\lambda}_z\Big|\p_{m_i}\big(Z^{t,m}_s(x)\big)(y)\Big|^2\Bigg)ds\\\label{L_star_6_1}
\leq &\  \Big(L^*_4\big(2+L^*_5\big)+1\Big)L^*_1L^*_6,
\end{align}\normalsize
where
\footnotesize\begin{align}\no
L^*_6:=\max\Bigg\{&27\Lambda_k^2\frac{1}{\wb{\lambda}_k},\Big(\Lambda_g+\frac{1}{10}\lambda_g\Big) \frac{1}{\wb{\lambda}_x},\Big(\Lambda_g+\frac{1}{10}\lambda_g\Big) \frac{1}{\wb{\lambda}_z},\\\no
&12\Lambda_k^2\frac{1}{\lambda_k},\Big(2\Lambda_g+\frac{1}{5}\lambda_g\Big)\frac{1}{\lambda_x},6\Lambda_k^2\frac{1}{\wb{\lambda}_k},\Big(2\Lambda_g+\frac{3}{20}\lambda_g\Big)\frac{1}{\wb{\lambda}_x},\\\label{L_star_6}
&\bigg(\frac{15}{4}\Lambda_f+7\Lambda_g+\frac{13}{20}\lambda_g\bigg)\frac{1}{\wb{\lambda}_z}\Bigg\}\\\no
\geq \max\Bigg\{&27\Lambda_k^2\frac{1}{\wb{\lambda}_k},\Big(\Lambda_h+\frac{\wb{l}_h^2}{\lambda_g-\wb{l}_fL^*}\Big) \frac{1}{\wb{\lambda}_x},\Big(\Lambda_h+\frac{\wb{l}_h^2}{\lambda_g-\wb{l}_fL^*}\Big) \frac{1}{\wb{\lambda}_z},\\\no
&12\Lambda_k^2\frac{1}{\lambda_k},\Big(2\Lambda_h+\frac{2\wb{l}_h^2}{\lambda_g-\wb{l}_fL^*}\Big)\frac{1}{\lambda_x},6\Lambda_k^2\frac{1}{\wb{\lambda}_k},\Big(2\Lambda_h+\frac{\wb{l}_h^2}{\lambda_g-\wb{l}_fL^*}\Big)\frac{1}{\wb{\lambda}_x},\\\no
&\bigg(3\Lambda_f+3\wb{l}_h\frac{\Lambda_f}{\lambda_g-\wb{l}_fL^*}+7\Lambda_h+6\frac{\wb{l}_h^2}{\lambda_g-\wb{l}_fL^*}\bigg)\frac{1}{\wb{\lambda}_z}\Bigg\}.
\end{align}\normalsize

In conclusion, by \eqref{L_star_1} and \eqref{L_star_6_1}, we can obtain,
\begin{align}\no
&\sup_{x\in\R^{d_x},m\in\mc{P}_2(\R^{d_x}),y\in\R^{d_x}}\|\p_x\gamma(t,x,m)\|_{\mathcal{L}(\R^{d_x};\R^{d_x})}\vee \|\p_m\gamma(t,x,m)(y)\|_{\mathcal{L}(\R^{d_x};\R^{d_x})}\\\label{L_star_0}
\leq&\ d_x\cdot\max\Big\{L^*_1,\sqrt{\Big(L^*_4\big(2+L^*_5\big)+1\Big)L^*_1L^*_6}\Big\}=:L^*_0.
\end{align}
Since $\gamma(s,x,m)$ is also a decoupling field for the FBODE system \eqref{fbodesystem} on $[t,T]$, for any arbitrary $s\in[t,T]$, $(X^{s,m}_\tau(x),Z^{s,m}_\tau(x)=\gamma(\tau,X^{s,m}_\tau(x),X^{s,m}_\tau\ot m))_{x\in\R^{d_x},\tau\in[s,T]}$ is also a solution pair to \eqref{fbodesystem} over the time horizon $[s,T]$, so the above estimates are also valid for $\gamma(s,x,m)=\gamma(s,X^{s,m}_s(x),X^{s,m}_s\ot m)=Z^{s,m}_s(x)$; in other words, we still have
\begin{align}\no
&\sup_{x\in\R^{d_x},m\in\mc{P}_2(\R^{d_x}),y\in\R^{d_x}}\|\p_x\gamma(s,x,m)\|_{\mathcal{L}(\R^{d_x};\R^{d_x})}\vee \|\p_m\gamma(s,x,m)(y)\|_{\mathcal{L}(\R^{d_x};\R^{d_x})}\leq L^*_0.
\end{align}
Therefore, $L^*=\vertiii{\gamma }_{2,[t,T]}\leq L^*_0$.
We emphasize here that the positive constants $L^*_i,\,i=0,1,2,3,4,5,6$, which are defined in \eqref{L_star_1}, \eqref{L_star_2}, \eqref{L_star_3}, \eqref{L_star_4}, \eqref{L_star_5}, \eqref{L_star_6} and \eqref{L_star_0}, only depend on $\lambda_f$, $\Lambda_f$, $\lambda_g$, $\Lambda_g$, $\lambda_k$ and $\Lambda_k$ but not $L^*$, $\wb{l}_f$ and $T$.

\begin{remark}\label{remark_lower_bdd_L_star_0}
It is worth noting that for our choice of $L^*_0$, we always have $L^*_0\geq \wb{L}_k$. More precisely, it follows from \eqref{positive_k}, \eqref{bdd_d2_k_1} and \eqref{def_wb_lambda} that $\lambda_k\leq 2\Lambda_k$ and $\wb{\lambda}_k\leq \frac{1}{2}\Lambda_k$. Thus, by \eqref{L_star_1} and \eqref{L_star_6}, we have 
\begin{align}\label{lower_bdd_L_star_1}
L^*_1\geq \frac{4\Lambda_k^2}{\lambda_k}\geq 2\Lambda_k\text{ and } L^*_6\geq \frac{27\Lambda_k^2}{\wb{\lambda}_k}\geq 54\Lambda_k,
\end{align}
and hence, 
\begin{align}\label{lower_bdd_L_star_0}
L^*_0\geq d_x\cdot\sqrt{\Big(L^*_4\big(2+L^*_5\big)+1\Big)L^*_1L^*_6}\geq \sqrt{L^*_1L^*_6}\geq \sqrt{108}\Lambda_k\geq 3\Lambda_k=\wb{L}_k.
\end{align}
\end{remark}

\begin{theorem}\label{GlobalSol}
(Global existence in time of the FBODE system \eqref{MPIT}). Under Assumption ${\bf(a1)}$-${\bf(a3)}$ and Hypothesis ${\bf(h1)}$-${\bf(h2)}$, then, for any $T>0$, there exists a global decoupling field $\gamma(s,x,\mu)$ for the FBODE system \eqref{MPIT} on $[0,T]$ with terminal data $p(x,\mu)=\int_{\R^{d_x}} \p_\mu k( \wt{x} ,\mu)(x)d\mu(\wt{x})+\p_x k(x,\mu)$, which is differentiable in $s\in[0,T]$, $x\in\R^{d_x}$ and $L$-differentiable in $\mu\in\mc{P}_2(\R^{d_x})$, and its derivatives $\p_x\gamma(s,x,\mu)$ and $\p_\mu\gamma(s,x,\mu)(\wt{x})$ are jointly continuous in their corresponding arguments $(s,x,\mu)\in [t,T]\times \R^{d_x} \times \mathcal{P}_2(\R^{d_x})$ and $(s,x,\mu, \wt{x})\in [t,T]\times \R^{d_x} \times \mathcal{P}_2(\R^{d_x})\times \R^{d_x}$ respectively; it also satisfies $\vertiii{\gamma }_{2,[0,T]}\leq L^*_0$ with $L^*_0$ defined in \eqref{L_star_0}.
Moreover, for any $t\in[0,T]$ and $m\in\mc{P}_2(\R^{d_x})$, there exists a solution pair $\big(X^{t,m}_s(x),Z^{t,m}_s(x)\big)_{x\in\R^{d_x},s\in[t,T]}$ of the FBODE system \eqref{MPIT}, which is defined by solving for $X^{t,m}_s(x)$ in \eqref{xeq} and $Z^{t,m}_s(x):=\gamma(s,X^{t,m}_s(x),X^{t,m}_s\ot m)$, such that, for each $s\in[t ,T]$ and $x\in\R^{d_x}$,
\begin{align}\label{eq_7_43}
&\big|Z^{t,m}_s(x)-Z^{t,m_0}_s(x_0)\big|
\leq \ L^*_0\Big(\big|X^{t,m}_s(x)-X^{t,m_0}_s(x_0)\big|+W_1(X^{t,m}_s\ot m,X^{t,m_0}_s\ot m_0)\Big),\\\label{eq_7_44}
&\left|Z^{t,m}_s(x)\right|\leq L^*_0 \left(\left|X^{t,m}_s(x)\right|+\left\|X^{t,m}_s\right\|_{L^{1,d_x}_m}\right).
\end{align} 
Furthermore, both $X^{t,m}_s(x)$ and $Z^{t,m}_s(x)$ are differentiable in $t\in[0 ,T]$, $x\in\R^{d_x}$ and are $L$-differentiable in $m\in\mc{P}_2(\R^{d_x})$ with their corresponding derivatives $\Big(\p_t\big(X^{t,m}_s(x)\big),\p_t\big(Z^{t,m}_s(x)\big)\Big)$, $\Big(\p_x\big(X^{t,m}_s(x)\big),\p_x\big(Z^{t,m}_s(x)\big)\Big)$ and $\Big(\p_m\big(X^{t,m}_s(x)\big)(y), \p_m\big(Z^{t,m}_s(x)\big)(y)\Big)$ being continuous in their corresponding arguments, and these derivatives are also continuously differentiable in $s\in[t,T]$. In addition, the following estimates hold:
\begin{align}\label{Bp_xX_G}
&\left\|\p_x\big(X^{t,m}_s(x)\big)\right\|_{\mc{L}(\R^{d_x};\R^{d_x})}\leq \exp\Big(L_B'(s-t)\Big);
\\\label{Bp_mX_G}
&\left\|\p_m\big(X^{t,m}_s(x)\big)(y)\right\|_{\mc{L}(\R^{d_x};\R^{d_x})}\leq L_M^{(s-t)};
\\\label{Bp_tX_G}
&\left|\p_t\big(X^{t,m}_s(x)\big)\right|\leq L_B'\big(1+|x|+\|m\|_1\big)\Big(L_B'(s-t)\exp\Big(2L_B'(s-t)\Big)+1\Big)\exp\Big(L_B'(s-t)\Big);\\\label{Bp_xZ_G}
&\left\|\p_x\big(Z^{t,m}_s(x)\big)\right\|_{\mc{L}(\R^{d_x};\R^{d_x})}\leq L^*_0\exp\Big(L_B'(s-t)\Big);
\\\label{Bp_mZ_G}
&\left\|\p_m\big(Z^{t,m}_s(x)\big)(y)\right\|_{\mc{L}(\R^{d_x};\R^{d_x})}\leq 2L^*_0L_M^{(s-t)}+L^*_0\exp\Big(L_B'(s-t)\Big);
\\\label{Bp_tZ_G}
&\left|\p_t\big(Z^{t,m}_s(x)\big)\right|\leq 2L^*_0L_B'\big(1+|x|+\|m\|_1\big)\Big(L_B'(s-t)\exp\Big(2L_B'(s-t)\Big)+1\Big)\exp\Big(L_B'(s-t)\Big),
\end{align}
where 
\begin{align}\label{L_Bp}
L_B':=&\ \Lambda_f(1+L_\alpha+L^*_0L_\alpha),\\\label{L_M^{(s-t)}}
L_M^{(\tau)}:=&\ L_B'\tau\Big(L_B'\tau\cdot\exp\Big(2L_B'\tau\Big)+1\Big)\exp\Big(2L_B'\tau\Big).
\end{align}
\end{theorem}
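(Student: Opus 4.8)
The plan is to produce the global decoupling field by partitioning $[0,T]$ into finitely many sub-intervals of a common, $T$-independent length and then pasting together, backwards from $s=T$, the local decoupling fields furnished by Theorems \ref{Thm6_1}--\ref{Thm6_2}. The device that makes the pasting non-degenerate is the crucial \emph{a priori} estimate of Theorem \ref{Crucial_Estimate}: whenever a decoupling field on $[t,T']$ has $\vertiii{\gamma}_{2,[t,T']}\le 2\max\{L^*_0,\wb{L}_k\}=2L^*_0$, it in fact satisfies $\vertiii{\gamma}_{2,[t,T']}\le L^*_0$, where $L^*_0$ (see \eqref{L_star_0}) depends only on $\lambda_f,\Lambda_f,\lambda_g,\Lambda_g,\lambda_k,\Lambda_k$ and neither on $T$ nor on the initial measure; by Remark \ref{remark_lower_bdd_L_star_0}, $L^*_0\ge\wb{L}_k$, so $\wb{L}_p:=\max\{\wb{L}_k,L^*_0\}=L^*_0$. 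Consequently the lifespan $\eps:=\min\{\eps_1,\eps_2\}$ supplied by Theorems \ref{Thm6_1}--\ref{Thm6_2} (with $\wb{L}_p=L^*_0$) is a \emph{fixed} positive number depending only on the structural constants and $L^*_0$, which is exactly why the sub-intervals will not shrink as we glue.

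Concretely, fix $N:=\lceil T/\eps\rceil$ and a partition $0=t_0<t_1<\cdots<t_N=T$ with $t_i-t_{i-1}\le\eps$. Starting with the terminal datum $p_N(x,\mu):=\int_{\R^{d_x}}\p_\mu k(\wt{x},\mu)(x)d\mu(\wt{x})+\p_x k(x,\mu)$, which by \eqref{bdd_d2_k_1} and \eqref{wbL_k} satisfies Assumption ${\bf(P)}$ with $L_{p_N}=\wb{L}_k\le L^*_0$ and, using Hypothesis ${\bf(h1)}$, the linear bound $|p_N(x,\mu)|\le\wb{L}_k(|x|+\|\mu\|_1)$, we apply Theorems \ref{Thm6_1} and \ref{Thm6_2} on $[t_{N-1},t_N]$ to get a continuous decoupling field $\gamma$ there, differentiable in $s,x$ and $L$-differentiable in $\mu$, with $\vertiii{\gamma}_{2,[t_{N-1},t_N]}\le 2L^*_0$; Theorem \ref{Crucial_Estimate} then upgrades this to $\vertiii{\gamma}_{2,[t_{N-1},t_N]}\le L^*_0$, while \eqref{eq_8_9_cone} together with ${\bf(h1)}$ gives $|\gamma(s,x,\mu)|\le L^*_0(|x|+\|\mu\|_1)$. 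Hence $p_{N-1}(x,\mu):=\gamma(t_{N-1},x,\mu)$ again satisfies ${\bf(P)}$ with $L_{p_{N-1}}\le L^*_0$ and the same linear growth, so the hypotheses for the step on $[t_{N-2},t_{N-1}]$ are met verbatim. Iterating $N$ times and concatenating yields $\gamma$ on $[0,T]$: continuous, solving \eqref{gammaeq}, differentiable in $s,x$ and $L$-differentiable in $\mu$ with jointly continuous $\p_x\gamma$, $\p_\mu\gamma$, and $\vertiii{\gamma}_{2,[0,T]}\le L^*_0$. Since $k_0=4L^*_0$ and $|\gamma(s,x,\mu)|\le L^*_0(|x|+\|\mu\|_1)$, the cone condition $(X^{t,m}_s(x),X^{t,m}_s\ot m,Z^{t,m}_s(x))\in c_{k_0}$ holds throughout, the optimal control in \eqref{MPIT} is well-defined by Proposition \ref{A5} (its hypothesis \eqref{p_aa_f} being part of Hypothesis ${\bf(h2)}$), and, recalling that \eqref{fbodesystem} is \eqref{MPIT} for $\wt{T}=T$ and this particular $p$, the pair $(X^{t,m}_s(x),Z^{t,m}_s(x))$ built through \eqref{xeq} and $Z^{t,m}_s(x)=\gamma(s,X^{t,m}_s(x),X^{t,m}_s\ot m)$ solves \eqref{MPIT}.

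The remaining regularity and the explicit bounds \eqref{Bp_xX_G}--\eqref{Bp_tZ_G} are inherited from the local theory with $L^*_0$ as the global bound on $\vertiii{\gamma}_2$: Lemma \ref{lem6_3} and Theorem \ref{Thm6_2} give the differentiability of $X^{t,m}_s(x)$ in $t,x,m$ and of $Z^{t,m}_s(x)$, and the estimates follow by Gr\"onwall applied to the Jacobian flows \eqref{eq_8_5_new}--\eqref{eq_8_6_new}, where under Hypothesis ${\bf(h1)}$ the sharper linear-growth bounds \eqref{ligfb}--\eqref{linalphab} allow us to replace $L_f$ by $\Lambda_f$ and $2\wb{L}_p$ by $L^*_0$, producing exactly $L'_B=\Lambda_f(1+L_\alpha+L^*_0L_\alpha)$ and $L_M^{(\tau)}$ as in \eqref{L_Bp}--\eqref{L_M^{(s-t)}}; estimates \eqref{eq_7_43}--\eqref{eq_7_44} are immediate from $\vertiii{\gamma}_{2,[0,T]}\le L^*_0$ via \eqref{eq_9_3} and from ${\bf(h1)}$ applied to the zero solution $(X^{t,\delta_0}_s(0),Z^{t,\delta_0}_s(0))\equiv(0,0)$.

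I expect the main obstacle to be the bookkeeping of \emph{uniformity} in the gluing: one must verify at each step that the new terminal datum $\gamma(t_i,\cdot,\cdot)$ lands back in precisely the same class ${\bf(P)}$ with the same constants $L^*_0$ (the derivative bound \emph{and} the linear-growth bound), so that the lifespan $\eps$ never shrinks and only finitely many sub-intervals are required; this is exactly where Theorem \ref{Crucial_Estimate} and the $T$-independence of $L^*_0$ are indispensable. A secondary point is checking that the concatenated $\gamma$ is genuinely $C^1$ in $s$ across the partition points $t_i$ — which holds because $\gamma(s,\cdot,\cdot)$ coincides with the backward component $Z^{s,m}_s(\cdot)$ of the now-global FBODE, whose right-hand side is continuous — and that the flow property \eqref{coro_6_4} makes the local pieces agree on overlaps of their domains so that the pasting is unambiguous.
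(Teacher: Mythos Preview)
Your strategy is exactly the paper's — partition, build local decoupling fields via Theorems \ref{Thm6_1}--\ref{Thm6_2}, invoke Theorem \ref{Crucial_Estimate} to keep the derivative bound at $L^*_0$, and glue — but there is a genuine gap in how you invoke the crucial estimate on the second and subsequent sub-intervals.

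Theorem \ref{Crucial_Estimate} is stated only for decoupling fields on an interval $[t,T]$ whose terminal datum at the \emph{true} terminal time $T$ is $p_k(x,\mu)=\int\p_\mu k(\wt{x},\mu)(x)\,d\mu(\wt{x})+\p_x k(x,\mu)$. It does \emph{not} apply to a local piece on $[t_{N-2},t_{N-1}]$ with the intermediate terminal datum $p_{N-1}=\gamma(t_{N-1},\cdot,\cdot)$. So your sentence ``the hypotheses for the step on $[t_{N-2},t_{N-1}]$ are met verbatim. Iterating $N$ times'' does not yield $L_{p_{N-2}}\le L^*_0$: from Theorem \ref{Thm6_2} alone you only get $\vertiii{\gamma}_{2,[t_{N-2},t_{N-1}]}\le 2L_{p_{N-1}}\le 2L^*_0$, and if this were all, the bound would double at every step and the lifespan $\eps$ would collapse. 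The fix is that, after building the local piece on $[t_{N-2},t_{N-1}]$, you must first \emph{concatenate} it with the already-constructed $\gamma$ on $[t_{N-1},T]$, verify that the concatenation is a decoupling field on the whole of $[t_{N-2},T]$ with terminal datum $p_k$ at $T$ and with $\vertiii{\gamma}_{2,[t_{N-2},T]}\le 2L^*_0$, and only \emph{then} apply Theorem \ref{Crucial_Estimate} to the concatenated field to recover $\vertiii{\gamma}_{2,[t_{N-2},T]}\le L^*_0$.

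The paper implements precisely this, but with a device you did not use: it partitions into \emph{overlapping} intervals $[t_{N-1},T]$, $[t_{N-2},t_N]$, $[t_{N-3},t_{N-1}]$, \ldots\ of common length $\wb{\eps}_2$, shifted by $\tfrac12\wb{\eps}_2$. On the overlap $[t_{N-1},t_N]$ the two local decoupling fields $\gamma^{(N)}$ and $\gamma^{(N-1)}$ solve the same equation with the same terminal value $\gamma^{(N)}(t_N,\cdot,\cdot)$, so the local uniqueness built into Theorem \ref{Thm6_1} forces them to coincide there; extending $\gamma^{(N-1)}$ by $\gamma^{(N)}$ on $[t_N,T]$ then gives a genuine decoupling field on $[t_{N-2},T]$ with terminal datum $p_k$, and Theorem \ref{Crucial_Estimate} applies. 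Your non-overlapping partition can also be made to work, but you must explicitly carry out the concatenation-then-Crucial-Estimate step at each stage and check (via the flow property \eqref{coro_6_4}) that the glued object really is a decoupling field all the way to $T$; your closing remark about ``overlaps of their domains'' does not apply, since in your scheme the pieces meet only at single points $t_i$. Once this is corrected, the rest of your proposal — including the derivation of the explicit bounds \eqref{Bp_xX_G}--\eqref{Bp_tZ_G} with $L_B'$ in place of $L_B$ — matches the paper.
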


\begin{proof}
Define $\wb{\eps}_2=:\eps_2(L^*_0; L_f,\Lambda_f,\wb{l}_f,L_g,\Lambda_g,\wb{l}_g,L_\alpha)$ where $\eps_2$ is given in Theorem \ref{Thm6_2}. Recall that $L^*_0$ was defined in \eqref{L_star_0}. Partition the interval $[0 ,T]$ into sub-intervals with endpoints $0  < t_1 <...<t_{N-1}:=T-\wb{\eps}_2<t_N:=T-\frac{1}{2}\wb{\eps}_2<t_{N+1} =:T$ where $t_{N-i}=T-\frac{i+1}{2}\wb{\eps}_2$, $i=0,1,...,N-1$ and $N$ is the smallest integer such that $N\geq 2T/\wb{\eps}_2-1$. Note that the sub-interval $[0 ,t_1=T-\frac{N}{2}\wb{\eps}_2]$ has a length not longer than those other intervals $[t_1,t_2]$,..., and $[t_{N-1},t_N]$, everyone of which has a common length of size $\frac{1}{2}\wb{\eps}_2$. Now, we are ready to paste the local solution $\gamma$ of \eqref{gammaeq} together to obtain a global one by using the idea depicted in Figure 1.

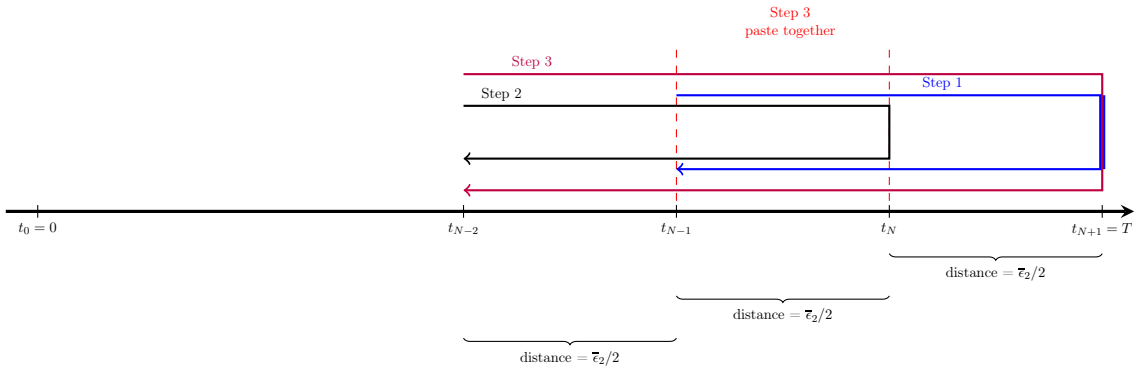
\begin{figure}[h!]\label{fig_1}
	\centering
	\begin{tikzpicture}[scale=1.4]
		\draw[-{stealth[scale=4]}, very thick, black] (-0.3, 0) -- (10.3, 0);
		\draw (0, -0.06) node[below, scale=0.5]{$t_0 = 0$} -- (0, 0.06);
		\draw (4, -0.06) node[below, scale=0.5]{$t_{N-2}$} -- (4, 0.06);
		\draw (6, -0.06) node[below, scale=0.5]{$t_{N-1}$} -- (6, 0.06);
		\draw (8, -0.06) node[below, scale=0.5]{$t_{N}$} -- (8, 0.06);
		\draw (10, -0.06) node[below, scale=0.5]{$t_{N+1}=T$} -- (10, 0.06);
		
		\draw [decorate, decoration={brace,mirror}]
		(8, -0.4) -- node[below=0.1cm, scale=0.5]{distance = $\wb{\eps}_2/2$} (10, -0.4);
		\draw [decorate, decoration={brace,mirror}]
		(6, -0.8) -- node[below=0.1cm, scale=0.5]{distance = $\wb{\eps}_2/2$} (8, -0.8);
		\draw [decorate, decoration={brace,mirror}]
		(4, -1.2) -- node[below=0.1cm, scale=0.5]{distance = $\wb{\eps}_2/2$} (6, -1.2);

\draw[color=red, dashed] (6, 0.1) -- (6, 1.6) node[above, xshift=1.5cm, align=center, scale=0.5, color=red]{Step 3\\paste together};
		\draw[color=red, dashed] (8, 0.1) -- (8, 1.6);
	
		\draw[->, color=black, thick] (4, 1.0) node[above, scale=0.5, xshift=1cm, color=black]{Step 2}--(8,1.0)--(8,0.5)--(4, 0.5);
           
		\draw[-, color=blue, thick] (6, 1.1) node[above, scale=0.5, xshift=7cm, color=blue]{Step 1}--(10,1.1);
\draw[-, color=blue, line width=0.8mm] (10,1.1) node[above, scale=0.5, xshift=7cm, color=blue]{}--(10,0.4);
\draw[->, color=blue, thick] (10,0.4) node[above, scale=0.5, xshift=7cm, color=blue]{}--(6, 0.4);
\draw[->, color=purple, thick] (4, 1.3) node[above, scale=0.5, xshift=1.8cm, color=purple]{Step 3} --(10,1.3)--(10,0.2)--(4, 0.2);
		
	\end{tikzpicture}
	\caption{Algorithm for pasting the local-in-time solutions.}
\end{figure}

Step 1. On the interval $ [t_{N-1}=T-\wb{\eps}_2,T]$, take $p(x,\mu)=\int_{\R^{d_x}} \p_\mu k(\wt{x},\mu)(x)d\mu(\wt{x})+\p_x k(x,\mu)$, and apply Theorem \ref{Thm6_1} and \ref{Thm6_2}, then, the equation \eqref{gammaeq} has a unique solution $\gamma^{(N)}(s,x,\mu)$ on the time interval $[t_{N-1},T]$ with the regularity $(i)$ stated in Theorem \ref{Thm6_2} and satisfying $\vertiii{\gamma^{(N)}}_{2,[t_{N-1},T]}\leq 2\max\{\wb{L}_k,L^*_0\}=2L^*_0$. Moreover, applying Theorem \ref{Crucial_Estimate}, we actually have a better estimate $\vertiii{\gamma^{(N)}}_{2,[t_{N-1},T]}\leq L^*_0$.

Step 2. On the interval $ [t_{N-2}=t_N-\wb{\eps}_2,t_N]$, take $p(x,\mu)=\gamma^{(N)}(t_N,x,\mu)$, and thus by using $L_p\leq L^*_0$ and applying Theorem \ref{Thm6_1} and \ref{Thm6_2}, then, the equation \eqref{gammaeq} has a unique solution $\gamma^{(N-1)}(s,x,\mu)$ on the time interval $[t_{N-2},t_N]$ with the regularity $(i)$ stated in Theorem \ref{Thm6_2} and satisfying $\vertiii{\gamma^{(N-1)}}_{2,[t_{N-2},t_N]}\leq 2L^*_0$. 

Step 3. By the local uniqueness of $\gamma$ that we have shown in Theorem \ref{Thm6_1}, for $s\in [t_{N-1},t_N]$, 
\small\begin{align}\no
\gamma^{(N-1)}(s,x,\mu)=\gamma^{(N)}(s,x,\mu),
\end{align}\normalsize
since both of them satisfy the following equation: 
for any $(s,x,m)\in [t_{N-1},t_N]\times \R^{d_x}\times\mathcal{P}_2(\R^{d_x})$,
\small\begin{align}\no
\gamma (s,x,m)
= &\ \gamma^{(N)}(t_N,X^{s,m}_{t_N}(x),X^{s,m}_{t_N}\ot m)\\\no
&+ \int_s^{t_N} \Bigg(\int_{\R^{d_x}}\bigg(\p_\mu f\left(X^{s,m }_\tau(\wt{x}),X^{s,m }_\tau\ot m,\alpha\left(X^{s,m }_\tau(\wt{x}),X^{s,m }_\tau\ot m, Z^{s,m }_\tau(\wt{x})\right)\right)(X^{s,m }_\tau(x))\cdot  Z^{s,m }_\tau(\wt{x})\\\no
&\ \ \ \ \ \ \ \ \ \ \ \ \ \ \ \ \ \ \ \ \ \ + \p_\mu g\left(X^{s,m }_\tau(\wt{x}),X^{s,m }_\tau\ot m,\alpha\left(X^{s,m }_\tau(\wt{x}),X^{s,m }_\tau\ot m, Z^{s,m }_\tau(\wt{x}))\right)\right)(X^{s,m }_\tau(x))\bigg)dm(\wt{x})\\\no
&\ \ \ \ \ \ \ \ \ \ \ \ \ \ +\p_x f(X^{s,m }_\tau(x),X^{s,m }_\tau\ot m,\alpha(X^{s,m }_\tau(x),X^{s,m }_\tau\ot m,Z^{s,m }_\tau(x)))\cdot Z^{s,m }_\tau(x)\\
&\ \ \ \ \ \ \ \ \ \ \ \ \ \ +\p_x g(X^{s,m }_\tau(x),X^{s,m }_\tau\ot m,\alpha(X^{s,m }_\tau(x),X^{s,m }_\tau\ot m,Z^{s,m }_\tau(x)))\Bigg)d\tau,
\end{align}\normalsize
where $Z^{s,m }_\tau(x):=\gamma\big(\tau,X^{s,m }_\tau(x),X^{s,m }_\tau\ot m\big)$ and
\small\begin{align}     
X^{s,m}_\tau(x) = x+\int_s^\tau f\Big(X^{s,m }_{\wt{\tau}}(x),X^{s,m }_{\wt{\tau}}\ot m,\alpha\big(X^{s,m }_{\wt{\tau}}(x),X^{s,m }_{\wt{\tau}}\ot m,\gamma(\tau,X^{s,m }_{\wt{\tau}}(x),X^{s,m }_{\wt{\tau}}\ot m)\big)\Big)d\wt{\tau}.
\end{align}\normalsize
Therefore, for any $t\in [t_{N-1},t_N]$, we paste together the local solutions $\gamma^{(N)}(s,x,\mu)$ and $\gamma^{(N-1)}(s,x,\mu)$ consecutively in order by extending $\gamma^{(N-1)}(s,x,\mu)$ as follows: for any $s\in[t_N,T]$,  $\gamma^{(N-1)}(s,x,\mu)=\gamma^{(N)}(s,x,\mu)$.
We still denote the extended solution over the time horizon up to  the terminal time $[t_{N-2},T]$ by $\gamma^{(N-1)}(s,x,\mu)$ without much cause of ambiguity, which thus solves the equation \eqref{gammaeq} over the time horizon up to  the terminal time $[t_{N-2},T]$. In addition, $\gamma^{(N-1)}(s,x,\mu)$ on the time interval $[t_{N-2},t_N]$ still has the regularity $(i)$ stated in Theorem \ref{Thm6_2} and satisfies $\vertiii{\gamma^{(N-1)}}_{2,[t_{N-2},T]}\leq 2L^*_0$. Moreover, applying Theorem \ref{Crucial_Estimate} again, we also have $\vertiii{\gamma^{(N-1)}}_{2,[t_{N-2},T]}\leq L^*_0$. 

Step 4. On the interval $[t_{N-3},t_{N-1}]$, we repeat Step 2-Step 3. More precisely, due to the uniform (in time) estimate \eqref{JFE} in Theorem \ref{Crucial_Estimate}, the minimal lifespan $\wb{\eps}_2$ is uniform in all sub-intervals, so we can do the same pasting procedure for all the rest of the other intervals $[t_{N-4},t_{N-2}]$,...,$[0 ,t_2]$ in a backward manner. As a result, we can obtain a global solution $\gamma(s,x,\mu)$ of the equation \eqref{gammaeq} on the whole time horizon $[0,T]$, so that it has the regularity $(i)$ stated in Theorem \ref{Thm6_2} and satisfies $\vertiii{\gamma}_{2,[0,T]}\leq L^*_0$ as well. 

Step 5. For any $t\in[0,T]$ and $m\in\mc{P}_2(\R^{d_x})$, define the pair $\big(X^{t,m}_s(x),Z^{t,m}_s(x)\big)$ by \eqref{xeq} and $Z^{t,m}_s(x):=\gamma(s,X^{t,m}_s(x),X^{t,m}_s\ot m)$ for all $s\in[t,T]$ and $x\in\R^{d_x}$, then it is a solution pair of the FBODE system \eqref{MPIT}, and \eqref{eq_7_43} and \eqref{eq_7_44} are valid. The regularity of $\big(X^{t,m}_s(x),Z^{t,m}_s(x)\big)$ stated in this theorem statement is directly follows from that of $\gamma$ stated there. In addition, by the same argument as that in the proof of \eqref{Bp_xX}-\eqref{Bp_tX}, we have \eqref{Bp_xX_G}-\eqref{Bp_tX_G} and then \eqref{Bp_xZ_G}-\eqref{Bp_tZ_G} by using $\vertiii{\gamma}_{2,[0,T]}\leq L^*_0$.
\end{proof}

\begin{theorem}\label{GlobalSol_Bellman}
(Global existence in time of the Bellman equation \eqref{bellman}). Under Assumption ${\bf(a1)}$-${\bf(a3)}$ and Hypothesis ${\bf(h1)}$-${\bf(h2)}$, then there exists a global solution $v(t,m)$, which is given by \eqref{definev}, of the Bellman equation \eqref{bellman}, which is differentiable in $t\in[0 ,T]$ and is $L$-differentiable in $m\in\mc{P}_2(\R^{d_x})$ with its corresponding derivatives $\p_t v(t,m)$ and $\p_m v(t,m)(x)$ being continuous in their corresponding arguments. In addition, $\p_t v(t,m)$ is differentiable in $t\in[0,T]$ and $L$-differentiable in $m\in\mc{P}_2(\R^{d_x})$, and $\p_m v(t,m)(x)$ is differentiable in $t\in[0,T]$, $x\in\R^{d_x}$ and $L$-differentiable in $m\in\mc{P}_2(\R^{d_x})$.
\end{theorem}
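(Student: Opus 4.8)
The plan is to obtain Theorem \ref{GlobalSol_Bellman} as a direct consequence of the global solvability of the forward--backward system together with the verification-type link to the Bellman equation. First I would invoke Theorem \ref{GlobalSol}: under Assumptions ${\bf(a1)}$--${\bf(a3)}$ and Hypotheses ${\bf(h1)}$--${\bf(h2)}$, for every $T>0$ there is a global decoupling field $\gamma(s,x,\mu)$ for \eqref{MPIT} on $[0,T]$, and for every $(t,m)$ a solution pair $\big(X^{t,m}_s(x),Z^{t,m}_s(x)=\gamma(s,X^{t,m}_s(x),X^{t,m}_s\ot m)\big)$ of \eqref{MPIT} which is differentiable in $(t,x)$ and $L$-differentiable in $m$, all these derivatives being jointly continuous and continuously differentiable in $s\in[t,T]$, with the uniform estimates \eqref{Bp_xX_G}--\eqref{Bp_tZ_G} and the cone estimate \eqref{eq_7_44}. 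Moreover, since ${\bf(h2)}$ implies \eqref{p_aa_f}, Proposition \ref{A5} guarantees that the optimal feedback $\alpha(x,\mu,z)$ is well defined on $c_{k_0}$, first-order differentiable in $(x,z)$ and $L$-differentiable in $\mu$ with jointly Lipschitz derivatives, and $\big(X^{t,m}_s(x),X^{t,m}_s\ot m,Z^{t,m}_s(x)\big)$ stays in $c_{k_0}$ by \eqref{eq_7_44} together with \eqref{k_0}--\eqref{c_k_0}.

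Second, these are exactly the hypotheses of Theorem \ref{MPBE}: the coefficient functions have the stated second-order (regular) differentiability by ${\bf(a1)}$--${\bf(a3)}$; $\alpha$ has the required regularity by Proposition \ref{A5}; and the couple $\big(X^{t,m}_s(x),Z^{t,m}_s(x)\big)$ has, thanks to Theorem \ref{GlobalSol}, the differentiability in $(t,x)$, $L$-differentiability in $m$, and $s$-differentiability of all its derivatives demanded there. Hence Theorem \ref{MPBE} applies, and the function $v(t,m)$ defined by \eqref{definev} is a classical solution of \eqref{nomin_1}, hence of \eqref{bellman}; in particular $v$ is differentiable in $t$ and $L$-differentiable in $m$ with $\p_t v(t,m)$ and $\p_m v(t,m)(x)$ jointly continuous.

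Third, I would upgrade to the higher regularity in the statement. The proof of Theorem \ref{MPBE} identifies $\p_m v(t,m)(x)=Z^{t,m}_t(x)=\gamma(t,x,m)$; therefore the differentiability of $\p_m v(t,m)(x)$ in $t\in[0,T]$, in $x\in\R^{d_x}$, and its $L$-differentiability in $m$, with joint continuity of the resulting derivatives, follow verbatim from the corresponding properties of $\gamma$ established in Theorems \ref{Thm6_2} and \ref{GlobalSol}. For $\p_t v$ the Bellman equation gives
\begin{align*}
\p_t v(t,m)=-\int_{\R^{d_x}}\Big(f\big(x,m,\alpha(x,m,\gamma(t,x,m))\big)\cdot\gamma(t,x,m)+g\big(x,m,\alpha(x,m,\gamma(t,x,m))\big)\Big)dm(x);
\end{align*}
differentiating under the integral sign via the chain rule, using the first-order differentiability of $f$, $g$, $\alpha$, the differentiability of $\gamma(t,x,m)$ in $t$ and its $L$-differentiability in $m$, and the linear-growth bounds of Propositions \ref{A1}, \ref{A2}, \ref{A5} together with $|\gamma(t,x,m)|\le L^*_0(|x|+\|m\|_1)$ (from \eqref{eq_7_44} at $s=t$) to produce a dominating function, yields the differentiability of $\p_t v$ in $t$ and its $L$-differentiability in $m$ with jointly continuous derivatives.

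The part I expect to demand the most care is this last step: justifying differentiation of the measure-integral $\int(\cdots)\,dm(x)$ with respect to $m$, where $m$ enters simultaneously as the integrating measure, as an explicit argument of $f$, $g$, $\gamma$, and through the hidden $m$-dependence of $X^{t,m}$ inside $\gamma$. This requires combining the $L$-differentiability of each factor (all available from Theorem \ref{GlobalSol}) with the product/chain rule for $L$-derivatives and a dominated-convergence argument of exactly the type carried out for the remainder terms in the proof of Lemma \ref{lem6_3}; once the growth control is in place the computation is routine, but tracking the contributions is where the bookkeeping is heaviest. The analogous, slightly simpler bookkeeping handles the $L$-differentiability in $m$ of $\p_t v$ and the differentiability in $(t,x)$ and $L$-differentiability in $m$ of $\p_m v$.
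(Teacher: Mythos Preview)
Your proposal is correct and follows essentially the same approach as the paper: invoke Theorem \ref{GlobalSol} for the global FBODE solution, apply Theorem \ref{MPBE} to conclude that $v$ in \eqref{definev} solves the Bellman equation, then use the identification $\p_m v(t,m)(x)=\gamma(t,x,m)$ and the expression \eqref{p_t_v} for $\p_t v$ to read off the higher regularity from that of $\gamma$. The paper additionally exploits the first order condition \eqref{first_order_condition} to simplify the expressions for $\p_t\p_t v$ and $\p_m(\p_t v)$ (the $\p_z\alpha$-contributions cancel), which streamlines the bookkeeping you flag as the most delicate part.
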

\begin{proof}
Under Assumption ${\bf(a1)}$-${\bf(a3)}$ and Hypothesis ${\bf(h1)}$-${\bf(h2)}$, by Theorem \ref{GlobalSol}, we then have a global solution pair $\big(X^{t,m}_s(x),Z^{t,m}_s(x)\big)_{x\in\R^{d_x},s\in[t,T]}$ of the FBODE system \eqref{MPIT} for any $t\in[0 ,T]$ and $m\in\mc{P}_2(\R^{d_x})$. Define $v(t,m)$ by \eqref{definev}, then, by Theorem \ref{MPBE}, this $v(t,m)$ is a global solution of the Bellman equation \eqref{bellman}, and its regularity directly follows from the regularity of $\big(X^{t,m}_s(x),Z^{t,m}_s(x)\big)$ by using \eqref{definev}, \eqref{p_t_v} and \eqref{p_m_v}. Moreover, by \eqref{first_order_condition}, \eqref{p_t_v} and \eqref{p_m_v}, 
\begin{align*}
&\p_t\p_tv(t,m)=-\int_{\R^{d_x}}\p_t\Big(Z^{t,m}_t(x)\Big)\cdot f(x,m,\alpha(x,m,Z^{t,m}_t(x)))
dm(x)\\
&\ \ \ \ \ \ \ \ \ \ \ \ \ \ =-\int_{\R^{d_x}}\p_t\gamma(t,x,m)\cdot f(x,m,\alpha(x,m,\gamma(t,x,m)))
dm(x);\\
&\p_m\Big(\p_tv(t,m)\Big)(y)=-\int_{\R^{d_x}}\bigg(\p_m\Big(Z^{t,m}_t(x)\Big)(y)\cdot f(x,m,\alpha(x,m,Z^{t,m}_t(x)))\\
&\ \ \ \ \ \ \ \ \ \ \ \ \ \ \ \ \ \ \ \ \ \ \ \ \ \ \ \ \ \ \ \ \ \ \ \ \ \ +Z^{t,m}_t(x)\cdot \p_m f(x,m,\alpha(x,m,Z^{t,m}_t(x)))(y)\\
&\ \ \ \ \ \ \ \ \ \ \ \ \ \ \ \ \ \ \ \ \ \ \ \ \ \ \ \ \ \ \ \ \ \ \ \ \ \ +\p_m g(x,m,\alpha(x,m,Z^{t,m}_t(x)))(y)\bigg)dm(x)\\
&\ \ \ \ \ \ \ \ \ \ \ \ \ \ \ \ \ \ \ \ \ \ \ =-\int_{\R^{d_x}}\bigg(\p_m\gamma(t,x,m)(y)\cdot f(x,m,\alpha(x,m,\gamma(t,x,m)))\\
&\ \ \ \ \ \ \ \ \ \ \ \ \ \ \ \ \ \ \ \ \ \ \ \ \ \ \ \ \ \ \ \ \ \ \ \ \ \ +\gamma(t,x,m)\cdot \p_m f(x,m,\alpha(x,m,\gamma(t,x,m)))(y)\\
&\ \ \ \ \ \ \ \ \ \ \ \ \ \ \ \ \ \ \ \ \ \ \ \ \ \ \ \ \ \ \ \ \ \ \ \ \ \ +\p_m g(x,m,\alpha(x,m,\gamma(t,x,m)))(y)\bigg)dm(x);\\
&\p_m\Big(\p_m v(t,m)(x)\Big)(y)=\p_m \Big(Z^{t,m}_t(x)\Big)(y)=\p_m \gamma(t,x,m)(y);\\ 
&\p_x\Big(\p_m v(t,m)(x)\Big)=\p_x \Big(Z^{t,m}_t(x)\Big)=\p_x \gamma(t,x,m);\\
&\p_t\Big(\p_m v(t,m)(x)\Big)=\p_t \Big(Z^{t,m}_t(x)\Big)=\p_t\gamma(t,x,m).
\end{align*}
Hence, the regularities of $\p_t v$ and $\p_m v$ also follow from that of $\gamma$ in Theorem \ref{GlobalSol} immediately.
\end{proof}

\section{Local and Global Uniqueness}\label{sec:unique}

\begin{theorem}\label{Thm6_3} (Local uniqueness of solutions to the FBODE system \eqref{fbodesystem})
Assume that the drift function $f$ and the running cost $g$ satisfy Assumptions ${\bf(a1)}$ and ${\bf(a2)}$ respectively, the terminal function $p$ satisfies Assumption ${\bf(P)}$ with $L_p\leq \wb{L}_p:=\max\left\{\wb{L}_k,L^*_0\right\}$ and the relationship \eqref{p_aa_f} among $f$, $g$ and $p$ is valid, then there exists a constant $\eps_7=\eps_7(\wb{L}_p; L_f,\Lambda_f,\wb{l}_f,L_g,\Lambda_g,\wb{l}_g,L_\alpha)>0$, such that, for any fixed $0\leq t\leq \wt{T}\leq T$ with $\wt{T}-t\leq \eps_7$ and $m\in \mathcal{P}_2(\R^{d_x})$, if there exist two solution pairs $\big(X^{t,m}_s(x),Z^{t,m}_s(x)\big)_{x\in\R^{d_x},s\in[t,\wt{T}]}$ and $\big(\wt{X}^{t,m}_s(x),\wt{Z}^{t,m}_s(x)\big)_{x\in\R^{d_x},s\in[t,\wt{T}]}$ of FBODE system \eqref{fbodesystem}, each of them is continuously differentiable in $s\in[t,\wt{T}]$, then they must be equal for all $s\in[t,\wt{T}]$ and $x\in\R^{d_x}$.
\end{theorem}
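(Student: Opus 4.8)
The plan is to run a direct Grönwall--contraction argument over the short interval $[t,\wt{T}]$, comparing the two solution pairs without passing through the decoupling field; the computation parallels the contraction estimate in Step 5 of the proof of Theorem \ref{Thm6_1}. Write $\delta X_s(x):=X^{t,m}_s(x)-\wt{X}^{t,m}_s(x)$ and $\delta Z_s(x):=Z^{t,m}_s(x)-\wt{Z}^{t,m}_s(x)$, and measure their sizes in the weighted sup-norm $\|\phi\|_*:=\sup_{s\in[t,\wt{T}],\,x\in\R^{d_x}}\frac{|\phi_s(x)|}{1+|x|+\|m\|_1}$ ($m$ fixed), which is finite because both components have at most linear growth in $x$, as recorded next.

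First I would secure the a priori bounds that any solution pair of \eqref{fbodesystem} satisfies on a short interval. Since the feedback $\alpha(x,\mu,z)$ appearing in \eqref{fbodesystem} is the unique minimizer furnished by Proposition \ref{A5}, which is available only on the cone $c_{k_0}$, the notion of solution forces $\big(X^{t,m}_s(x),X^{t,m}_s\ot m,Z^{t,m}_s(x)\big)\in c_{k_0}$ (if one prefers not to build this into the definition, it is recovered for $\wt{T}-t$ small by a continuity/bootstrap argument in the backward equation, using $|p(x,\mu)|\le\wb{L}_p(1+|x|+\|\mu\|_1)$ from \eqref{p2} and the slack $\wb{L}_p<\frac12 k_0$). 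In particular $|Z^{t,m}_s(x)|\le 2\wb{L}_p\big(1+|X^{t,m}_s(x)|+\|X^{t,m}_s\|_{L^{1,d_x}_m}\big)$, and similarly for the tilde pair; inserting this, together with \eqref{ligf} and \eqref{linalpha}, into the forward equation and using the same Grönwall argument as in Lemma \ref{lem6_3} (the cone bound on $Z$ playing the role of $\gamma\in\mathcal{I}_1$) yields $\sup_s|X^{t,m}_s(x)|\le C(1+|x|+\|m\|_1)$ and $\sup_s\|X^{t,m}_s\|_{L^{1,d_x}_m}\le C(1+\|m\|_1)$ on $[t,\wt{T}]$, with $C$ depending only on $\wb{L}_p$ and the structural constants $L_f,\Lambda_f,\wb{l}_f,L_g,\Lambda_g,\wb{l}_g,L_\alpha$. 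I would also record the pointwise comparison of the two feedback values: subtracting the first order conditions \eqref{first_order_condition} for $a:=\alpha(X^{t,m}_\tau(x),X^{t,m}_\tau\ot m,Z^{t,m}_\tau(x))$ and $\wt{a}:=\alpha(\wt{X}^{t,m}_\tau(x),\wt{X}^{t,m}_\tau\ot m,\wt{Z}^{t,m}_\tau(x))$, expanding the $\alpha$-difference with a Taylor remainder in $\alpha$, and using that $\xi^\top\big(\p_\alpha\p_\alpha f(\cdot,\cdot,\beta)\cdot z+\p_\alpha\p_\alpha g(\cdot,\cdot,\beta)\big)\xi\ge\frac{19}{20}\lambda_g|\xi|^2$ for \emph{every} $\beta$ once the base point is in $c_{k_0}$ (by \eqref{bdd_d2_f}, \eqref{positive_g_alpha} and \eqref{p_aa_f}, as in \eqref{eq_8_11_new}), gives $|a-\wt{a}|\le C\big(|\delta X_\tau(x)|+\|\delta X_\tau\|_{L^{1,d_x}_m}+|\delta Z_\tau(x)|\big)$ with $C$ structural. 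This replaces \eqref{lipalpha_old}, whose hypothesis that the segment between the two argument triples remains in $c_{k_0}$ is not automatic, since $c_{k_0}$ is not convex in $z$.

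Next I would derive the two coupled difference estimates. For the forward part, subtracting the two forward equations and using \eqref{Lipf}, the feedback comparison above, and $W_1(X^{t,m}_\tau\ot m,\wt{X}^{t,m}_\tau\ot m)\le\|\delta X_\tau\|_{L^{1,d_x}_m}$, then integrating in $x$ against $m$ and applying Grönwall twice, produces $\|\delta X\|_*\le C(\wt{T}-t)\,\|\delta Z\|_*$. For the backward part, subtracting the two backward equations of \eqref{fbodesystem}: the terminal contribution is bounded by $L_p\big(|\delta X_{\wt{T}}(x)|+\|\delta X_{\wt{T}}\|_{L^{1,d_x}_m}\big)$ via \eqref{lipp}, while the running contribution is a finite sum of terms of the shape (difference of a first- or second-order derivative of $f$ or $g$)$\,\times Z$ plus (that derivative)$\,\times\delta Z$, each controlled by \eqref{bdd_d1_f}, \eqref{bdd_d2_f}, \eqref{bdd_d2_g_1}, \eqref{bdd_d2_g_2}, \eqref{LGgDerivatives}, the cone bound on $|Z|$, the Lipschitz continuity of these coefficients in $(x,\mu,\alpha)$, and the feedback comparison; a backward Grönwall then gives $\|\delta Z\|_*\le L_p\,\|\delta X\|_*+C(\wt{T}-t)\big(\|\delta X\|_*+\|\delta Z\|_*\big)$. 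Substituting the forward estimate, $\|\delta Z\|_*\le\big(C(\wt{T}-t)+C(\wt{T}-t)^2\big)\|\delta Z\|_*$, so it suffices to take $\eps_7=\eps_7(\wb{L}_p;L_f,\Lambda_f,\wb{l}_f,L_g,\Lambda_g,\wb{l}_g,L_\alpha)>0$ small enough that $C\eps_7+C\eps_7^2<1$; then $\|\delta Z\|_*=0$, hence $\|\delta X\|_*=0$, i.e.\ $X^{t,m}\equiv\wt{X}^{t,m}$ and $Z^{t,m}\equiv\wt{Z}^{t,m}$ on $[t,\wt{T}]\times\R^{d_x}$.

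The hard part will be the bookkeeping in the backward-difference step: because the ``driving term'' here is $Z$ itself rather than a fixed exogenous $\gamma$ (as in the existence proof), one must track $\delta X$ and $\delta Z$ simultaneously, at both the base point $x$ and the integration variable $\wt{x}$, and check that every constant produced depends only on $\wb{L}_p$ and the structural constants, so that $\eps_7$ is genuinely uniform in $T$ and in $m$ --- just as $\eps_1,\dots,\eps_6$ are in Theorems \ref{Thm6_1}--\ref{Thm6_2}. The only other delicate point is the one already flagged, namely obtaining the Lipschitz comparison of the feedback values directly from the first order condition rather than from \eqref{lipalpha_old}.
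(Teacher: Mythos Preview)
Your proposal is correct and follows the same overall architecture as the paper's proof: derive short-time a priori linear-growth and cone bounds for \emph{any} solution pair, subtract the two FBODE systems, and close a Gr\"onwall--contraction loop on $(\delta X,\delta Z)$ in a weighted sup-norm to force both differences to vanish when $\wt{T}-t\le\eps_7$.

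The one genuine methodological difference is in the feedback comparison. The paper does \emph{not} avoid \eqref{lipalpha_old}; instead its Step~1 spends effort showing that the entire segment $\theta\big(X^{t,m}_s(x),X^{t,m}_s\ot m,Z^{t,m}_s(x)\big)+(1-\theta)\big(\wt X^{t,m}_s(x),\wt X^{t,m}_s\ot m,\wt Z^{t,m}_s(x)\big)$ stays in $c_{k_0}$, via a case split: for $|x|\le 0.8(1+\|m\|_1)$ both $Z$'s are bounded independently of $x$, while for $|x|\ge 0.8(1+\|m\|_1)$ one checks $X^{t,m}_s(x)$ and $\wt X^{t,m}_s(x)$ have the same sign. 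Only then is \eqref{lipalpha_old} invoked. Your alternative---subtracting the first order conditions and using that the Hessian lower bound $\xi^\top\big(\p_\alpha\p_\alpha f(x,\mu,\beta)\cdot z+\p_\alpha\p_\alpha g(x,\mu,\beta)\big)\xi\ge\frac{19}{20}\lambda_g|\xi|^2$ holds for \emph{every} $\beta$ once $(x,\mu,z)\in c_{k_0}$ (this is exactly how \eqref{positive_h} is derived in the proof of Proposition~\ref{A5}, not just at $\beta=\wh\alpha$)---is cleaner and sidesteps the segment-in-cone verification entirely. Conversely, the paper's Step~1 carries out the cone/linear-growth bootstrap in full numerical detail rather than the one-line gesture you give; if you write this up, that part deserves the same care, since the theorem does not assume the cone condition a priori.
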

\begin{proof}Step 1 (Both solutions belong to the same cone $c_{k_0}$) 
By \eqref{bdd_d1_f}, \eqref{ligf}, \eqref{LGgDerivatives}, \eqref{linalpha}, \eqref{fbodesystem} and \eqref{p2}, we have
\footnotesize\begin{align}\label{eq_9_1}
\begin{cases}       
|X^{t,m}_s(x)-x| \leq (s-t)\cdot \sup_{\tau\in[t,s]}\bigg(L_f(1+L_\alpha)\Big(1+|X^{t,m}_\tau(x)|+\|X^{t,m}_\tau\|_{L^1_m}\Big)+L_fL_\alpha|Z^{t,m}_\tau(x)|\bigg),\\
|Z^{t,m}_s(x)| \leq \wb{L}_p\Big(1+|X^{t,m}_{\wt{T}}(x)|+\|X^{t,m}_{\wt{T}}\|_{L^1_m}\Big)\\
\ \ \ \ \ \ \ \ \ \ \ \ \ \ \ \ +(\wt{T}-s)\cdot\sup_{\tau\in[s,\wt{T}]}\bigg(\Lambda_f\Big(|Z^{t,m}_\tau(x)|+\|Z^{t,m}_\tau\|_{L^1_m}\Big)+L_g(1+L_\alpha)\Big(2+|X^{t,m}_\tau(x)|+3\|X^{t,m}_\tau\|_{L^1_m}\Big)\\
\ \ \ \ \ \ \ \ \ \ \ \ \ \ \ \ \ \ \ \ \ \ \ \ \ \ \ \ \ \ \ \ \ \ \ \ \ \ \ \ \ \ \ \ \ \ \ \ +L_gL_\alpha|Z^{t,m}_\tau(x)|+L_gL_\alpha\|Z^{t,m}_\tau\|_{L^1_m}+L_g|X^{t,m}_\tau(x)|\bigg),
\end{cases}
\end{align}\normalsize
and thus, by integrating with respect to $x$,
\begin{align}
\begin{cases}       
\|X^{t,m}_s\|_{L^1_m} \leq \|m\|_1+(s-t)\cdot \sup_{\tau\in[t,s]}\bigg(L_f(1+L_\alpha)\Big(1+2\|X^{t,m}_\tau\|_{L^1_m}\Big)+L_fL_\alpha\|Z^{t,m}_\tau\|_{L^1_m}\bigg),\\
\|Z^{t,m}_s\|_{L^1_m} \leq \wb{L}_p\Big(1+2\|X^{t,m}_{\wt{T}}\|_{L^1_m}\Big)\\
\ \ \ \ \ \ \ \ \ \ \ \ \ \ \ \ +(\wt{T}-s)\cdot\sup_{\tau\in[s,\wt{T}]}\bigg(2\Lambda_f \|Z^{t,m}_\tau\|_{L^1_m}+L_g(1+L_\alpha)\Big(2+4\|X^{t,m}_\tau\|_{L^1_m}\Big)\\
\ \ \ \ \ \ \ \ \ \ \ \ \ \ \ \ \ \ \ \ \ \ \ \ \ \ \ \ \ \ \ \ \ \ \ \ \ \ \ \ \ \ \ \ \ \ \ \ +2L_gL_\alpha\|Z^{t,m}_\tau\|_{L^1_m}+L_g\|X^{t,m}_\tau\|_{L^1_m}\bigg),
\end{cases}
\end{align}\normalsize
which implies that
\begin{align}
\begin{cases}       
\sup_{\tau\in[t,\wt{T}]}\|X^{t,m}_\tau\|_{L^1_m} \leq \frac{1}{1-2L_f(1+L_\alpha)\eps_7}\|m\|_1+\frac{L_f(1+L_\alpha)\eps_7}{1-2L_f(1+L_\alpha)\eps_7}+\frac{L_fL_\alpha\eps_7}{{1-2L_f(1+L_\alpha)\eps_7}}\sup_{\tau\in[t,\wt{T}]}\|Z^{t,m}_\tau\|_{L^1_m};\\
\sup_{\tau\in[t,\wt{T}]}\|Z^{t,m}_\tau\|_{L^1_m} \leq \frac{\wb{L}_p+2L_g(1+L_\alpha)\eps_7}{1-2(\Lambda_f+L_gL_\alpha)\eps_7}+\frac{2\wb{L}_p+4L_g(1+L_\alpha)\eps_7+L_g\eps_7}{1-2(\Lambda_f+L_gL_\alpha)\eps_7}\sup_{\tau\in[t,\wt{T}]}\|X^{t,m}_\tau\|_{L^1_m}.
\end{cases}
\end{align}\normalsize
Therefore, for any 
\footnotesize\begin{align}\label{eps_8}
\eps_7\leq\min\bigg\{\frac{1}{800L_f(1+L_\alpha)},\frac{1}{800(\Lambda_f+L_gL_\alpha)},\frac{1}{800L_g(1+L_\alpha)},\frac{1}{400L_g},\frac{1}{400(2\wb{L}_p+1)L_fL_\alpha}\bigg\}=:\eps_8,
\end{align}\normalsize
we have
\begin{align}
\begin{cases}       
\sup_{\tau\in[t,\wt{T}]}\|X^{t,m}_\tau\|_{L^1_m} \leq \frac{400}{399}\|m\|_1+\frac{1}{798}+\frac{1}{399(2\wb{L}_p+1)}\sup_{\tau\in[t,\wt{T}]}\|Z^{t,m}_\tau\|_{L^1_m};\\
\sup_{\tau\in[t,\wt{T}]}\|Z^{t,m}_\tau\|_{L^1_m} \leq \frac{400}{399}(\wb{L}_p+\frac{1}{400})+\frac{400}{399}(2\wb{L}_p+\frac{3}{400})\sup_{\tau\in[t,\wt{T}]}\|X^{t,m}_\tau\|_{L^1_m},
\end{cases}
\end{align}\normalsize
which implies that
\begin{align}\label{eq_9_6}
\begin{cases}       
\sup_{\tau\in[t,\wt{T}]}\|X^{t,m}_\tau\|_{L^1_m} \leq 1.006\|m\|_1+0.003;\\
\sup_{\tau\in[t,\wt{T}]}\|Z^{t,m}_\tau\|_{L^1_m} \leq 1.003(\wb{L}_p+0.004)(1.006+2.012\|m\|_1),
\end{cases}
\end{align}\normalsize
where in the rest of this proof, we use some constants in numerical form instead of the precise fractional expressions so as to simplify our presentation without cause of any numerical errors.  
Thus, by \eqref{eq_9_1}, \eqref{eps_8} and \eqref{eq_9_6}, 
\footnotesize\begin{align}
\begin{cases}       
\sup_{\tau\in[t,\wt{T}]}|X^{t,m}_\tau(x)| \leq \frac{800}{799}|x|+\frac1{799}\Big(1+1.006\|m\|_1+0.003\Big)+\frac{2}{799(2\wb{L}_p+1)}\sup_{\tau\in[t,\wt{T}]}|Z^{t,m}_\tau(x)|;\\
\sup_{\tau\in[t,\wt{T}]}|Z^{t,m}_\tau(x)| \leq \frac{800}{799}(\wb{L}_p+\frac3{800})\Big(1+\sup_{\tau\in[t,\wt{T}]}|X^{t,m}_\tau(x)|+1.006\|m\|_1+0.003\Big)+0.002(\wb{L}_p+0.004)(1.006+2.012\|m\|_1),
\end{cases}
\end{align}\normalsize
which implies that
\begin{align}\label{eq_9_8}
\begin{cases}       
\sup_{\tau\in[t,\wt{T}]}|X^{t,m}_\tau(x)| \leq 1.005|x|+0.005\Big(1+\|m\|_1\Big);\\
\sup_{\tau\in[t,\wt{T}]}|Z^{t,m}_\tau(x)| \leq 1.05(\wb{L}_p+0.008)(1+|x|+\|m\|_1).
\end{cases}
\end{align}\normalsize
Therefore, by \eqref{eq_9_1}, \eqref{eps_8}, \eqref{eq_9_6} and \eqref{eq_9_8}, 
\begin{align}\label{eq_9_9}
|X^{t,m}_s(x)-x| \leq 0.008(1+|x|+\|m\|_1),
\end{align}\normalsize
which implies, after integrating with respect to $x$,
\begin{align}\label{eq_9_10}
\|X^{t,m}_s\|_{L^1_m} \geq \|m\|_1-0.008(1+2\|m\|_1).
\end{align}
By \eqref{eq_9_8}-\eqref{eq_9_10}, we can obtain
\footnotesize\begin{align}
|Z^{t,m}_s(x)|\leq 1.1(\wb{L}_p+0.008)\Big(1+|X^{t,m}_s(x)|+\|X^{t,m}_s\|_{L^1_m}\Big)\leq 2\wb{L}_p\Big(1+|X^{t,m}_s(x)|+\|X^{t,m}_s\|_{L^1_m}\Big).
\end{align}\normalsize
Here, we can assume, without loss of generality, that $\wb{L}_p\geq \wb{L}_k\geq 1$. 
Thus we have $\big(X^{t,m}_s(x),X^{t,m}_s\ot m,Z^{t,m}_s(x)\big)\in c_{k_0}$ with $k_0=4\wb{L}_p$, and the same estimates also hold for $\big(\wt{X}^{t,m}_s(x),\wt X^{t,m}_s\ot m,\wt{Z}^{t,m}_s(x)\big)$. 
In addition, by using \eqref{eq_9_8}, \eqref{eq_9_10} and $\wb{L}_p\geq 1$, we can obtain, for all $|x|\leq 0.8(1+\|m\|_1)$, $|Z^{t,m}_s(x)|\leq 2\wb{L}_p\Big(1+\|X^{t,m}_s\|_{L^1_m}\Big)$ and $|\wt Z^{t,m}_s(x)|\leq 2\wb{L}_p\Big(1+\|\wt X^{t,m}_s\|_{L^1_m}\Big)$, and thus $\theta \big(X^{t,m}_s(x),X^{t,m}_s\ot m,Z^{t,m}_s(x)\big)+(1-\theta)\big(\wt{X}^{t,m}_s(x),\wt X^{t,m}_s\ot m,\wt{Z}^{t,m}_s(x)\big)\in c_{k_0}$ for all $\theta\in[0,1]$. On the other hand, for all $|x|\geq 0.8(1+\|m\|_1)$, by using \eqref{eq_9_9}, we have $X^{t,m}_s(x), \wt X^{t,m}_s(x)\geq 0.7(1+\|m\|_1)>0$ when $x\geq 0.8(1+\|m\|_1)$, and $X^{t,m}_s(x), \wt X^{t,m}_s(x)\leq 0.7(1+\|m\|_1)>0$ when $x\leq 0.8(1+\|m\|_1)$, which implies that $X^{t,m}_s(x)$ and $\wt X^{t,m}_s(x)$ always have the same sign; thus, we also have $\theta \big(X^{t,m}_s(x),X^{t,m}_s\ot m,Z^{t,m}_s(x)\big)+(1-\theta)\big(\wt{X}^{t,m}_s(x),\wt X^{t,m}_s\ot m,\wt{Z}^{t,m}_s(x)\big)\in c_{k_0}$ for all $\theta\in[0,1]$.

Since $\big(X^{t,m}_s(x),Z^{t,m}_s(x)\big)\in c_{k_0}$ and $\big(\wt{X}^{t,m}_s(x),\wt{Z}^{t,m}_s(x)\big)\in c_{k_0}$, by Proposition \ref{A5}, the optimal control $\alpha(x,\mu,z)$ is uniquely solvable in the lifespan $[t,\wt{T}]$ of these two solutions via the first order condition \eqref{first_order_condition}.

Step 2 (Uniqueness) Let $\wt{T}-t\leq \eps_7$, where $\eps_7$ is suitably small depending only on $\wb{L}_p,\ L_f,\ \Lambda_f,\ \wb{l}_f,\ L_g,\ \Lambda_g,\ \wb{l}_g$ and $L_\alpha$. Recall the notations defined in Appendix \eqref{notation_1} and Table \ref{notation_2}, and we use similar notations for 
\begin{align}
&\wt{f}^{t,m}_s(x):=f\left(\wt{X}^{t,m}_s(x),\wt{X}^{t,m}_s\ot m,\alpha(\wt{X}^{t,m}_s(x),\wt{X}^{t,m}_s\ot m,\wt{Z}^{t,m}_s(x))\right),\text{ and }\\
&\wt{g}^{t,m}_s(x):=g\left(\wt{X}^{t,m}_s(x),\wt{X}^{t,m}_s\ot m,\alpha(\wt{X}^{t,m}_s(x),\wt{X}^{t,m}_s\ot m,\wt{Z}^{t,m}_s(x))\right),
\end{align}
by replacing all $X^{t,m}_s(x)$'s and $Z^{t,m}_s(x)$'s by $\wt{X}^{t,m}_s(x)$'s and $\wt{Z}^{t,m}_s(x)$'s respectively; the same custom applies to $(\p_x f)^{t,m}_s(x)$, $(\p_x g)^{t,m}_s(x)$ and others.
Denote $\big(\Delta X^{t,m}_s(x),\Delta Z^{t,m}_s(x)\big):=\big(X^{t,m}_s(x),Z^{t,m}_s(x)\big)-\big(\wt{X}^{t,m}_s(x),\wt{Z}^{t,m}_s(x)\big)$, then, by \eqref{fbodesystem}, we have, 
\footnotesize\begin{align}\label{fbodesystem_dif}
\begin{cases}       
\dfrac{d}{ds}\Delta X^{t,m}_s(x) = \Delta f^{t,m}_s(x),\ 
\Delta X^{t,m}_t(x)= 0,\\
\,\dfrac{d}{ds}\Delta Z^{t,m}_s(x) =-  \Delta(\p_xf)^{t,m}_s(x)\cdot \wt{Z}^{t,m}_s(x)-  (\p_xf)^{t,m}_s(x)\cdot \Delta Z^{t,m}_s(x)-\Delta (\p_x g)^{t,m}_s(x)\\
\ \ \ \ \ \ \ \ \ \ \ \ \ \ \ \ \ \ \ \ \ -\displaystyle \int_{\R^{d_x}}\Bigg(\Delta  (\p_\mu f)^{t,m}_s(\wt{x},x)\cdot  \wt{Z}^{t,m}_s(\wt{x})+(\p_\mu f)^{t,m}_s(\wt{x},x)\cdot  \Delta  Z^{t,m}_s(\wt{x})+ \Delta(\p_\mu g)^{t,m}_s(\wt{x},x)\Bigg)dm(\wt{x}),\\ 
\ \ \ \ \Delta Z^{t,m}_{\wt{T}}(x)=p(X^{t,m}_{\wt{T}}(x),X^{t,m}_{\wt{T}}\ot m)-p(\wt{X}^{t,m}_{\wt{T}}(x),\wt{X}^{t,m}_{\wt{T}}\ot m),
\end{cases}
\end{align}\normalsize
where 
\footnotesize\begin{align}\no
&\Delta f^{t,m}_s(x):=f^{t,m}_s(x)-\wt{f}^{t,m}_s(x),\ \Delta (\p_xf)^{t,m}_s(x):=(\p_x f)^{t,m}_s(x)-(\p_x \wt{f})^{t,m}_s(x);\\\no 
&\Delta  (\p_\mu f)^{t,m}_s(\wt{x},x):=(\p_\mu f)^{t,m}_s(\wt{x},x)-(\p_\mu \wt{f})^{t,m}_s(\wt{x},x);\\\no
&\Delta (\p_x g)^{t,m}_s(x):=(\p_x g)^{t,m}_s(x)-(\p_x \wt{g})^{t,m}_s(x),\ \Delta (\p_\mu g)^{t,m}_s(x,y):=(\p_\mu g)^{t,m}_s(x,y)-(\p_\mu \wt{g})^{t,m}_s(x,y).
\end{align}
\normalsize
By \eqref{bdd_d2_f}, \eqref{Lipf}-\eqref{Lipg_x}, \eqref{lipalpha_old}, \eqref{lipp} and \eqref{eq_9_10}, we have, for a suitably small $\eps_7$ and for any $s\in[t,\wt{T}]$,
\footnotesize\begin{align*}
&|\Delta f^{t,m}_s(x)|\leq L_f(1+L_\alpha) \left(\left|\Delta X^{t,m}_s(x)\right|+\left\|\Delta X^{t,m}_s\right\|_{L^1_m}\right)+L_f L_\alpha\left|\Delta Z^{t,m}_s(x)\right|,\\
&|\Delta (\p_xf)^{t,m}_s(x)|\leq \frac{2\wb{l}_f(1+L_\alpha)}{1+|x|+\|m\|_1} \left(\left|\Delta X^{t,m}_s(x)\right|+\left\|\Delta X^{t,m}_s\right\|_{L^1_m}\right)+\frac{2\wb{l}_f L_\alpha}{1+|x|+\|m\|_1}\left|\Delta Z^{t,m}_s(x)\right|,\\
&|\Delta (\p_\mu f)^{t,m}_s(x,y)|\leq \frac{2\wb{l}_f(1+L_\alpha)}{1+|x|+\|m\|_1} \left(\left|\Delta X^{t,m}_s(x)\right|+\left\|\Delta X^{t,m}_s\right\|_{L^1_m}\right)+\frac{2\wb{l}_f}{{1+|x|+\|m\|_1}}\left|\Delta X^{t,m}_s(y)\right|+\frac{2\wb{l}_f L_\alpha}{1+|x|+\|m\|_1}\left|\Delta Z^{t,m}_s(x)\right|;\\
&|\Delta (\p_x g)^{t,m}_s(x)|\leq \max\{\Lambda_g,\wb{l}_g\}(1+L_\alpha) \left(\left|\Delta X^{t,m}_s(x)\right|+\left\|\Delta X^{t,m}_s\right\|_{L^1_m}\right)+\max\{\Lambda_g,\wb{l}_g\} L_\alpha\left|\Delta Z^{t,m}_s(x)\right|;\\
&|\Delta (\p_\mu g)^{t,m}_s(x,y)|\leq \max\{\Lambda_g,\wb{l}_g\}(1+L_\alpha) \left(\left|\Delta X^{t,m}_s(x)\right|+\left\|\Delta X^{t,m}_s\right\|_{L^1_m}\right)+\max\{\Lambda_g,\wb{l}_g\}\left|\Delta X^{t,m}_s(y)\right|+\max\{\Lambda_g,\wb{l}_g\} L_\alpha\left|\Delta Z^{t,m}_s(x)\right|;\\
&|\Delta Z^{t,m}_{\wt{T}}(x)|\leq L_p \left(\left|\Delta X^{t,m}_{\wt{T}}(x)\right|+\left\|\Delta X^{t,m}_{\wt{T}}\right\|_{L^1_m}\right).
\end{align*}\normalsize
Therefore,
\small\begin{align}\label{eq_9_15_1}
\begin{cases}       
|\Delta X^{t,m}_s(x)| \leq \int_t^s  \bigg(L_f(1+L_\alpha)\left(\left|\Delta X^{t,m}_\tau(x)\right|+\left\|\Delta X^{t,m}_\tau\right\|_{L^1_m}\right)+L_f L_\alpha\left|\Delta Z^{t,m}_\tau(x)\right|\bigg)\,d\tau,\\
\ \ \ \ \ \ \ \ \ \ \ \ \ \ \ \ \leq \eps_7\sup_{\tau\in[t,\wt{T}]}\bigg(L_f(1+L_\alpha)\left(\left|\Delta X^{t,m}_\tau(x)\right|+\left\|\Delta X^{t,m}_\tau\right\|_{L^1_m}\right)+L_f L_\alpha\left|\Delta Z^{t,m}_\tau(x)\right|\bigg)\\
|\Delta Z^{t,m}_s(x)| \leq L_p \left(\left|\Delta X^{t,m}_{\wt{T}}(x)\right|+\left\|\Delta X^{t,m}_{\wt{T}}\right\|_{L^1_m}\right)\\
\ \ \ \ \ \ \ \ \ \ \ \ \ \ \ \ \ \ +\eps_7\cdot\sup_{s\in[t,\wt{T}]}\bigg(\Big(4\wb{l}_f(1+L_\alpha)\wb{L}_p+\max\{\Lambda_g,\wb{l}_g\}(1+L_\alpha)\Big) \left(2\left|\Delta X^{t,m}_s(x)\right|+3\left\|\Delta X^{t,m}_s\right\|_{L^1_m}\right)\\
\ \ \ \ \ \ \ \ \ \ \ \ \ \ \ \ \ \ \ \ \ \ \ \ \ \ \ \ \ \ \ \ \ \ \ \ \ \ \ \ +\Big(4\wb{l}_f L_\alpha\wb{L}_p+2\Lambda_f+\max\{\Lambda_g,\wb{l}_g\} L_\alpha\Big)\left(\left|\Delta Z^{t,m}_s(x)\right|+\left\|\Delta Z^{t,m}_s\right\|_{L^1_m}\right)\bigg).
\end{cases}
\end{align}\normalsize
By an argument similar to establishing \eqref{eq_9_1}-\eqref{eq_9_8}, we can show that $\sup_{x\in\R^{d_x},s\in[t,\wt{T}]}|\Delta X^{t,m}_s(x)|=0$ and $\sup_{x\in\R^{d_x},s\in[t,\wt{T}]}|\Delta Z^{t,m}_s(x)|=0$ for a suitably small $\eps_7$ depending only on $\wb{L}_p,\ L_f,\ \Lambda_f,\ \wb{l}_f,\ L_g,\ \Lambda_g,\ \wb{l}_g$ and $L_\alpha$. More precisely, one can integrate \eqref{eq_9_15_1} with respect to $x$, and then eliminate $\sup_{s\in[t,\wt{T}]} \| \Delta X^{t,m}_s \|_{L^1_m}$ to obtain $\sup_{s\in[t,\wt{T}]} \| \Delta Z^{t,m}_s \|_{L^1_m}= O(\epsilon_7) \sup_{s\in[t,\wt{T}]} \| \Delta Z^{t,m}_s \|_{L^1_m}$, which implies $\sup_{s\in[t,\wt{T}]} \| \Delta Z^{t,m}_s \|_{L^1_m}= 0$ (and hence, $\sup_{s\in[t,\wt{T}]} \| \Delta X^{t,m}_s \|_{L^1_m}= 0)$,  if we choose $\epsilon_7$ to be sufficiently small so that $O(\epsilon_7) < 1$. Substituting $\sup_{s\in[t,\wt{T}]} \| \Delta X^{t,m}_s \|_{L^1_m}= \sup_{s\in[t,\wt{T}]} \| \Delta Z ^{t,m}_s \|_{L^1_m}= 0$ into \eqref{eq_9_15_1}, and repeating the same elimination argument similar to establishing \eqref{eq_9_8}, one can finally show that $\sup_{s\in[t,\wt{T}],x\in\R^{d_x}} | \Delta X ^{t,m}_s(x) | = \sup_{s\in[t,\wt{T}],x\in\R^{d_x}} | \Delta Z ^{t,m}_s(x) | = 0$ by choosing $\epsilon_7$ sufficiently small.
\end{proof}


\begin{theorem}\label{Thm6_4} (Global uniqueness of solution to the FBODE system \eqref{MPIT})
Under Assumption ${\bf(a1)}$-${\bf(a3)}$ and Hypothesis ${\bf(h1)}$-${\bf(h2)}$, then, for any fixed terminal time $T>0$ and initial distribution $m\in\mc{P}_2(\R^{d_x})$, if there exists a continuously differentiable, in $s\in[0,T]$, solution pair $\big(\wt{X}^{0,m}_s(x),\wt{Z}^{0,m}_s(x)\big)_{x\in\R^{d_x},s\in[0,T]}$ of FBODE system \eqref{MPIT} subject to the initial distribution $m$ at the initial time $t=0$, then it must be equal to the solution pair $\big(X^{0,m}_s(x),Z^{0,m}_s(x):=\gamma(s,X^{0,m}_s(x),X^{0,m}_s\ot m)\big)_{x\in\R^{d_x},s\in[0,T]}$, where the decoupling field $\gamma$ was constructed in Theorem \ref{GlobalSol}.
\end{theorem}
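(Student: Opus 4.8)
The plan is to paste the local uniqueness of Theorem~\ref{Thm6_3} across a finite partition of $[0,T]$, using the global decoupling field $\gamma$ produced in Theorem~\ref{GlobalSol} as the common reference object. Since $\vertiii{\gamma}_{1,[0,T]}\vee\vertiii{\gamma}_{2,[0,T]}\le L^*_0$ by \eqref{eq_7_44} and Theorem~\ref{GlobalSol}, the constant $\eps_7=\eps_7(L^*_0;L_f,\Lambda_f,\wb{l}_f,L_g,\Lambda_g,\wb{l}_g,L_\alpha)>0$ of Theorem~\ref{Thm6_3} and the constant $\wb{\eps}_2$ from the proof of Theorem~\ref{GlobalSol} do not depend on $T$; I would set $\wb{\eps}:=\min\{\eps_7,\wb{\eps}_2\}$ and fix $0=s_0<s_1<\dots<s_N=T$ with common mesh $s_j-s_{j-1}\le\wb{\eps}$. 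A preliminary point is that the given solution $\big(\wt{X}^{0,m}_s(x),\wt{Z}^{0,m}_s(x)\big)$ is confined to the cone $c_{k_0}$ with $k_0=4L^*_0$, so that the feedback $\alpha$ of Proposition~\ref{A5} is available and Lipschitz along it: on $[s_{N-1},T]$ the terminal datum of \eqref{MPIT} is $p(x,\mu)=\int_{\R^{d_x}}\p_\mu k(\wt{x},\mu)(x)\,d\mu(\wt{x})+\p_x k(x,\mu)$ with $|p(x,\mu)|\le\wb{L}_k(1+|x|+\|\mu\|_1)$ and $\wb{L}_k\le L^*_0$ (Remark~\ref{remark_lower_bdd_L_star_0}), so the Gr\"onwall bootstrap of Step~1 in the proof of Theorem~\ref{Thm6_3} gives $|\wt{Z}^{0,m}_s(x)|\le 2\wb{L}_k\big(1+|\wt{X}^{0,m}_s(x)|+\|\wt{X}^{0,m}_s\|_{L^{1,d_x}_m}\big)\le\tfrac12 k_0\big(1+|\wt{X}^{0,m}_s(x)|+\|\wt{X}^{0,m}_s\|_{L^{1,d_x}_m}\big)$ there; on the earlier sub-intervals the same confinement is recovered from the induction below, which identifies $\wt{Z}$ with $\gamma$ evaluated along $\wt{X}$, together with $|\gamma|\le L^*_0(\,\cdot\,)$ from \eqref{eq_7_44}.

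The core is a backward induction on $j=N,N-1,\dots,1$ of the assertion
\[
H_j:\qquad \wt{Z}^{0,m}_s(x)=\gamma\big(s,\wt{X}^{0,m}_s(x),\wt{X}^{0,m}_s\ot m\big)\quad\text{for all }s\in[s_j,T],\ x\in\R^{d_x}.
\]
The base case $H_N$ is immediate because $\gamma(T,\cdot,\cdot)=p$ is precisely the terminal datum of \eqref{MPIT}. For the inductive step, let $\wt{m}_{j-1}:=\wt{X}^{0,m}_{s_{j-1}}\ot m\in\mathcal{P}_2(\R^{d_x})$ and let $\big(X^{s_{j-1},\wt{m}_{j-1}}_s(x),\,Z^{s_{j-1},\wt{m}_{j-1}}_s(x):=\gamma(s,X^{s_{j-1},\wt{m}_{j-1}}_s(x),X^{s_{j-1},\wt{m}_{j-1}}_s\ot\wt{m}_{j-1})\big)$ be obtained by closing the forward mean-field ODE \eqref{xeq} with $\gamma$ from initial law $\wt{m}_{j-1}$ at time $s_{j-1}$; this exists and is unique by Lemma~\ref{lem6_3} (note $\gamma\in\mathcal{I}_1$ on $[0,T]$ by \eqref{eq_7_44} and Theorem~\ref{GlobalSol}) and it solves \eqref{fbodesystem} on $[s_{j-1},s_j]$ with terminal function $p_j:=\gamma(s_j,\cdot,\cdot)$, which satisfies Assumption~${\bf(P)}$ with $L_{p_j}\le L^*_0=\wb{L}_p$, and \eqref{p_aa_f} holds under Hypothesis~${\bf(h2)}$. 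Using the transitivity of the push-forward (Proposition~\ref{property_push_forward}), the relabelled pair $X'_s(x):=X^{s_{j-1},\wt{m}_{j-1}}_s\big(\wt{X}^{0,m}_{s_{j-1}}(x)\big)$, $Z'_s(x):=Z^{s_{j-1},\wt{m}_{j-1}}_s\big(\wt{X}^{0,m}_{s_{j-1}}(x)\big)$ again solves the forward--backward ODEs of \eqref{fbodesystem} on $[s_{j-1},s_j]$, now with mean-field term $X'_s\ot m=X^{s_{j-1},\wt{m}_{j-1}}_s\ot\wt{m}_{j-1}$, it shares the left-endpoint value $X'_{s_{j-1}}(x)=\wt{X}^{0,m}_{s_{j-1}}(x)$ with the given solution, and $Z'_{s_j}(x)=p_j\big(X'_{s_j}(x),X'_{s_j}\ot m\big)$, whereas $H_j$ gives $\wt{Z}^{0,m}_{s_j}(x)=p_j\big(\wt{X}^{0,m}_{s_j}(x),\wt{X}^{0,m}_{s_j}\ot m\big)$ — i.e. the two solutions carry the same terminal relation at $s_j$. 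Both lie in $c_{k_0}$ on $[s_{j-1},s_j]$ (for $X',Z'$ because $Z'=\gamma(X')$ and $|\gamma|\le L^*_0(\,\cdot\,)$; for the given pair by the confinement above), and both are continuously differentiable in $s$; since $s_j-s_{j-1}\le\eps_7$, a routine adaptation of the Gr\"onwall argument in the proof of Theorem~\ref{Thm6_3}, now with the common left-endpoint map $\wt{X}^{0,m}_{s_{j-1}}(\cdot)$ in place of the identity, forces $\wt{X}^{0,m}_s(x)=X'_s(x)$ and $\wt{Z}^{0,m}_s(x)=Z'_s(x)$ on $[s_{j-1},s_j]$, hence $\wt{Z}^{0,m}_s(x)=\gamma\big(s,\wt{X}^{0,m}_s(x),\wt{X}^{0,m}_s\ot m\big)$ there; combined with $H_j$ this is $H_{j-1}$.

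Running the induction down to $j=0$ yields $\wt{Z}^{0,m}_s(x)=\gamma\big(s,\wt{X}^{0,m}_s(x),\wt{X}^{0,m}_s\ot m\big)$ for all $s\in[0,T]$ and $x$. Substituting this into the forward equation of \eqref{MPIT} shows that $\wt{X}^{0,m}_\cdot(\cdot)$ solves the forward mean-field ODE \eqref{xeq} closed by $\gamma$ with datum $x$ at time $0$ and initial law $m$; since $\gamma\in\mathcal{I}_1$ on $[0,T]$, Lemma~\ref{lem6_3} gives uniqueness of that ODE, so $\wt{X}^{0,m}_s(x)=X^{0,m}_s(x)$ for all $s,x$, and therefore $\wt{Z}^{0,m}_s(x)=\gamma\big(s,X^{0,m}_s(x),X^{0,m}_s\ot m\big)=Z^{0,m}_s(x)$, which is the claim. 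The main obstacle is the inductive step: one must exhibit the given solution on each sub-interval as genuinely comparable, through Theorem~\ref{Thm6_3}, to the decoupling-field closure — matching the left-endpoint data and the induced mean-field curves via push-forward transitivity and re-establishing membership in $c_{k_0}$ so that $\alpha$ is available — and verify that the local-uniqueness Gr\"onwall estimate of Theorem~\ref{Thm6_3} is insensitive to replacing the identity left-endpoint value by the common non-identity map $\wt{X}^{0,m}_{s_{j-1}}(\cdot)$; the uniform-in-$T$ bound $\vertiii{\gamma}_{2,[0,T]}\le L^*_0$ from Theorem~\ref{Crucial_Estimate} is what keeps the mesh $\wb{\eps}$, and hence the number of pasting steps, finite.
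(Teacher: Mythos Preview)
Your proposal is correct and follows essentially the same strategy as the paper: partition $[0,T]$ into sub-intervals of length at most $\eps_7$, and prove by backward induction that $\wt{Z}^{0,m}_s(x)=\gamma(s,\wt{X}^{0,m}_s(x),\wt{X}^{0,m}_s\ot m)$ on each piece using the local uniqueness of Theorem~\ref{Thm6_3} with the successive terminal data $p_j=\gamma(s_j,\cdot,\cdot)$, then conclude $\wt{X}=X$ via uniqueness of the forward ODE \eqref{xeq} closed by $\gamma$ (Lemma~\ref{lem6_3}).

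The one place where the paper is cleaner concerns the obstacle you flag at the end. Rather than adapting the Gr\"onwall argument of Theorem~\ref{Thm6_3} to a non-identity left-endpoint map, the paper simply changes variables: on $[s_{j-1},s_j]$ it sets $y:=\wt{X}^{0,m}_{s_{j-1}}(x)$ and $\wt{m}:=\wt{X}^{0,m}_{s_{j-1}}\ot m$, so that both the given solution (relabelled in $y$) and the $\gamma$-constructed solution $(\wh{X}^{s_{j-1},\wt{m}}_s(y),\wh{Z}^{s_{j-1},\wt{m}}_s(y))$ are genuine solutions of \eqref{fbodesystem} with the identity initial condition at $s_{j-1}$ and initial law $\wt{m}$. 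Theorem~\ref{Thm6_3} then applies verbatim as a black box, with no adaptation needed; the footnote in the paper's Step~1 handles the measure-zero issue that $y$ ranges only over $\wt{X}^{0,m}_{s_{j-1}}(\R^{d_x})$. This change of variables is exactly equivalent to your relabelling-plus-adaptation, but it sidesteps the need to reopen Theorem~\ref{Thm6_3}. Your explicit final step invoking Lemma~\ref{lem6_3} to pass from $\wt{Z}=\gamma(\wt{X})$ to $\wt{X}=X$ is a nice touch that the paper leaves implicit.
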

\begin{proof}
Under Assumption ${\bf(a1)}$-${\bf(a3)}$ and Hypothesis ${\bf(h1)}$-${\bf(h2)}$, according to Theorem \ref{GlobalSol}, we have a global decoupling field $\gamma(s,x,\mu)$ for the FBODE system \eqref{MPIT} on $[0,T]$ satisfying the equation \eqref{gammaeq}. Let $\wb{L}_p:=\max\left\{\wb{L}_k,L^*_0\right\}=L^*_0$ and $\eps_7=\eps_7(\wb{L}_p; L_f,\Lambda_f,\wb{l}_f,L_g,\Lambda_g,\wb{l}_g,L_\alpha)>0$ given in Theorem \ref{Thm6_3}. Divide the interval $[0 ,T]$ into $0  < t_1 <...<t_{N-1}:=T-\eps_7<t_N =:T$, where $t_{N-i}=T-i\cdot \eps_7$, $i=0,1,...,N-1$ and $N$ is the smallest integer such that $N\geq T/\eps_7$. Note that the interval $[0 ,t_1=T-(N-1)\eps_7]$ has a length not longer than those other intervals $[t_1,t_2]$,..., and $[t_{N-1},t_N]$, everyone of which has a common length of size $\eps_7$. \\

Step 1. On the interval $[t_{N-1},t_N=T]$, define $y:=\wt{X}^{0,m}_{t_{N-1}}(x)$, $\wt{m}:=\wt{X}^{0,m}_{t_{N-1}}\ot m$ and $p(x,\mu):=\int_{\R^{d_x}} \p_\mu k( \wt{x} ,\mu)(x)d\mu(\wt{x})+\p_x k(x,\mu)$, then $\big(X^{t_{N-1},\wt{m}}_{s}(y),Z^{t_{N-1},\wt{m}}_{s}(y)\big):=\big(\wt{X}^{0,m}_s(x),\wt{Z}^{0,m}_s(x)\big)$ \footnote{Since the initial measure at time $t_N$ is $\wt{m}=\wt{X}^{0,m}_{t_{N-1}}\ot m$ where $\wt{X}^{0,m}_{t_{N-1}}(\cdot)$ is a push-forward mapping, so the complement set $S^c$ of the set $S:=\{y:y=\wt{X}^{0,m}_{t_{N-1}}(x),x\in\R^{d_x}\}=\wt{X}^{0,m}_{t_{N-1}}(\R^{d_x})$ is $\wt{m}$-measure zero, that is $\wt{m}(S^c)=m(\emptyset)=0$, and thus we can ignore those elements $y\in S^c$.} is a solution pair of FBODE system \eqref{fbodesystem} with the terminal data $Z^{t_{N-1},\wt{m}}_{T}(y)=p(X^{t_{N-1},\wt{m}}_{T}(y),X^{t_{N-1},\wt{m}}_{T}\ot \wt{m})$ and initial data $X^{t_{N-1},\wt{m}}_{t_{N-1}}(y)=y$. Note that,  for any $0\leq s\leq \tau \leq T$, $\wh{x}\in\R^{d_x}$ and $\wh{m}\in\mc{P}_2(\R^{d_x})$, the decoupling field $\gamma$ solves \eqref{gammaeq} with \eqref{xeq}:\\
\small\begin{align}\no
\gamma (s,\wh{x},\wh{m})
= &\ p(\wh{X}^{s,\wh{m}}_T(\wh{x}),\wh{X}^{s,\wh{m}}_T\ot \wh{m})\\\no
&+ \int_s^T \Bigg(\int_{\R^{d_x}}\bigg(\p_\mu f\left(\wh{X}^{s,\wh{m}}_\tau(\wt{x}),\wh{X}^{s,\wh{m}}_\tau\ot \wh{m},\alpha\left(\wh{X}^{s,\wh{m}}_\tau(\wt{x}),\wh{X}^{s,\wh{m}}_\tau\ot \wh{m}, \wh{Z}^{s,\wh{m}}_\tau(\wt{x})\right)\right)(\wh{X}^{s,\wh{m}}_\tau(\wh{x}))\cdot  \wh{Z}^{s,\wh{m}}_\tau(\wt{x})\\\no
&\ \ \ \ \ \ \ \ \ \ \ \ \ \ \ \ \ \ \ \ \ \ + \p_\mu g\left(\wh{X}^{s,\wh{m}}_\tau(\wt{x}),\wh{X}^{s,\wh{m}}_\tau\ot \wh{m},\alpha\left(\wh{X}^{s,\wh{m}}_\tau(\wt{x}),\wh{X}^{s,\wh{m}}_\tau\ot \wh{m}, \wh{Z}^{s,\wh{m}}_\tau(\wt{x}))\right)\right)(\wh{X}^{s,\wh{m}}_\tau(\wh{x}))\bigg)dm(\wt{x})\\\no
&\ \ \ \ \ \ \ \ \ \ \ \ \ \ +\p_x f(\wh{X}^{s,\wh{m}}_\tau(\wh{x}),\wh{X}^{s,\wh{m}}_\tau\ot \wh{m},\alpha(\wh{X}^{s,\wh{m}}_\tau(\wh{x}),\wh{X}^{s,\wh{m}}_\tau\ot \wh{m},\wh{Z}^{s,\wh{m}}_\tau(\wh{x})))\cdot \wh{Z}^{s,\wh{m}}_\tau(\wh{x})\\\no
&\ \ \ \ \ \ \ \ \ \ \ \ \ \ +\p_x g(\wh{X}^{s,\wh{m}}_\tau(\wh{x}),\wh{X}^{s,\wh{m}}_\tau\ot \wh{m},\alpha(\wh{X}^{s,\wh{m}}_\tau(\wh{x}),\wh{X}^{s,\wh{m}}_\tau\ot \wh{m},\wh{Z}^{s,\wh{m}}_\tau(\wh{x})))\Bigg)d\tau,
\end{align}\normalsize
where $\wh{Z}^{s,\wh{m}}_\tau(\wh{x}):=\gamma\big(\tau,\wh{X}^{s,\wh{m}}_\tau(\wh{x}),\wh{X}^{s,\wh{m}}_\tau\ot \wh{m}\big)$ and
\small\begin{align} \no    
\wh{X}^{s,\wh{m}}_\tau(\wh{x}) = \wh{x}+\int_s^\tau f\Big(\wh{X}^{s,\wh{m}}_{\wt{\tau}}(\wh{x}),\wh{X}^{s,\wh{m}}_{\wt{\tau}}\ot \wh{m},\alpha\big(\wh{X}^{s,\wh{m}}_{\wt{\tau}}(\wh{x}),\wh{X}^{s,\wh{m}}_{\wt{\tau}}\ot \wh{m},\gamma(\tau,\wh{X}^{s,\wh{m}}_{\wt{\tau}}(\wh{x}),\wh{X}^{s,\wh{m}}_{\wt{\tau}}\ot \wh{m})\big)\Big)d\wt{\tau}.
\end{align}\normalsize
Then, one can check directly that $\big(\wh{X}^{t_{N-1},\wt{m}}_s(y),\wh{Z}^{t_{N-1},\wt{m}}_s(y)\big)$ is also a solution pair of the FBODE system \eqref{fbodesystem} on the interval $[t_{N-1},t_N=T]$ with the same initial and terminal data as $\big(X^{t_{N-1},\wt{m}}_{s}(y),Z^{t_{N-1},\wt{m}}_{s}(y)\big)$. By Theorem \ref{Thm6_3}, $\big(\wh{X}^{t_{N-1},\wt{m}}_s(y),\wh{Z}^{t_{N-1},\wt{m}}_s(y)\big)=\big(X^{t_{N-1},\wt{m}}_{s}(y),Z^{t_{N-1},\wt{m}}_{s}(y)\big)=\big(\wt{X}^{0,m}_s(x),\wt{Z}^{0,m}_s(x)\big)$ and thus $\wt{Z}^{0,m}_s(x)=\gamma(s,\wt{X}^{0,m}_s(x),\wt{X}^{0,m}_s\ot m)$ on the interval $s\in[t_{N-1},t_N=T]$.

Step 2. On the interval $[t_{N-2},t_{N-1}]$, define $y:=\wt{X}^{0,m}_{t_{N-2}}(x)$, $\wt{m}:=\wt{X}^{0,m}_{t_{N-2}}\ot m$ and $p(x,\mu):=\gamma(t_{N-1},x,\mu)$, then $\big(X^{t_{N-2},\wt{m}}_{s}(y),Z^{t_{N-2},\wt{m}}_{s}(y)\big):=\big(\wt{X}^{0,m}_s(x),\wt{Z}^{0,m}_s(x)\big)$ is a solution pair of FBODE system \eqref{fbodesystem} with terminal data $Z^{t_{N-2},\wt{m}}_{t_{N-1}}(y)=p(X^{t_{N-2},\wt{m}}_{t_{N-1}}(y),X^{t_{N-2},\wt{m}}_{t_{N-1}}\ot \wt{m})$ and initial data $X^{t_{N-2},\wt{m}}_{t_{N-2}}(y)=y$. Same as Step 1, one can also check directly that $\big(\wh{X}^{t_{N-2},\wt{m}}_s(y),\wh{Z}^{t_{N-2},\wt{m}}_s(y)\big)$ is also a solution pair of FBODE system \eqref{fbodesystem} on the interval $[t_{N-2},t_{N-1}]$ with the same initial and terminal data as $\big(X^{t_{N-2},\wt{m}}_{s}(y),Z^{t_{N-2},\wt{m}}_{s}(y)\big)$. Since $\vertiii{\gamma }_{2,[0,T]}\leq L^*_0$, by Theorem \ref{Thm6_3} again, $\big(\wh{X}^{t_{N-2},\wt{m}}_s(y),\wh{Z}^{t_{N-2},\wt{m}}_s(y)\big)=\big(X^{t_{N-2},\wt{m}}_{s}(y),Z^{t_{N-2},\wt{m}}_{s}(y)\big)=\big(\wt{X}^{0,m}_s(x),\wt{Z}^{0,m}_s(x)\big)$, and thus $\wt{Z}^{0,m}_s(x)=\gamma(s,\wt{X}^{0,m}_s(x),\wt{X}^{0,m}_s\ot m)$ on the interval $s\in[t_{N-2},t_{N-1}]$. We can repeat the same procedure for all the rest of the other intervals $[t_{N-3},t_{N-2}]$,...,$[0 ,t_1]$ in a backward manner. As a result, we can obtain $\wt{Z}^{0,m}_s(x)=\gamma(s,\wt{X}^{0,m}_s(x),\wt{X}^{0,m}_s\ot m)$ on the whole interval $s\in[0,T]$.
\end{proof}

\begin{remark}
Recall that we have used the monotonicity conditions \eqref{positive_g_x}, \eqref{positive_g_mu}, \eqref{positive_k} and \eqref{positive_k_mu} in the proof of global existence of continuously differentiable decoupling field $\gamma$, so it is not surprising that the global existence of such a decoupling field $\gamma$ can implies the global uniqueness of the solution to the FBODE system \eqref{MPIT}.  
\end{remark}

\begin{theorem}\label{Unique_B} (Global uniqueness of the solution to the Bellman equation \eqref{bellman})
Under Assumption ${\bf(a1)}$-${\bf(a3)}$ and Hypothesis ${\bf(h1)}$-${\bf(h2)}$, the Bellman equation \eqref{bellman} has at most one solution $v(t,m)$ that has the regularity stated in Theorem \ref{GlobalSol_Bellman}.
\end{theorem}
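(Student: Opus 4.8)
The plan is to prove uniqueness by reducing any classical solution $v$ of the Bellman equation \eqref{bellman} to the FBODE system \eqref{MPIT}, and then invoking the global uniqueness of that system (Theorem \ref{Thm6_4}). First I would take an arbitrary classical solution $v(t,m)$ enjoying the regularity in Theorem \ref{GlobalSol_Bellman}; in particular $\p_m v(t,m)(x)$ is differentiable in $x$, so the intrinsic derivative $\p_x\p_m v(t,m)(x)$ makes sense and, by the minimization structure \eqref{hamilton1}--\eqref{foc}, the optimal feedback $\wh\alpha\bigl(x,m,\p_m v(t,m)(x)\bigr)$ is well-defined via Proposition \ref{A5} (one needs the cone condition to apply Proposition \ref{A5}, which I address below). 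Then I would \emph{construct} from $v$ a solution of the FBODE: fix $(t,m)$, solve the forward equation
\begin{align*}
X_s(x) = x + \int_t^s f\bigl(X_\tau(x), X_\tau\ot m, \wh\alpha(X_\tau(x), X_\tau\ot m, \p_m v(\tau, X_\tau\ot m)(X_\tau(x)))\bigr)d\tau,
\end{align*}
which has a unique solution by the Lipschitz and linear-growth properties of $f$, $\wh\alpha$, and $\p_m v$, and then set $Z_s(x) := \p_m v(s, X_s\ot m)(X_s(x))$. The key computational step is to differentiate $Z_s(x)$ in $s$ along the flow: using the chain rule for $L$-derivatives, the PDE \eqref{nomin_1} satisfied by $v$, and the envelope/first-order condition \eqref{foc}, one checks that $Z_s(x)$ solves exactly the backward equation in \eqref{MPIT} with the correct terminal condition $Z_T(x) = \p_m v(T,m)(X_T(x)) = \int \p_\mu k(X_T(\wt x), X_T\ot m)(X_T(x))dm(\wt x) + \p_x k(X_T(x), X_T\ot m)$, coming from $v(T,m) = \int k(x,m)dm(x)$ and the formula for the $L$-derivative of $m\mapsto\int k\,dm$. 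This identity — that $\p_s\bigl(\p_m v(s,\mu_s)(X_s(x))\bigr)$ equals the drift of $Z$ in \eqref{MPIT} — is the heart of the argument and is essentially a differentiated form of the Hamilton--Jacobi equation; I expect it to be the main obstacle, because it requires carefully commuting the $s$-derivative with the $L$-derivative $\p_m$ and with the spatial derivative, i.e. a second-order chain rule on $\mc P_2(\R^{d_x})$, and one must justify that all the relevant second derivatives of $v$ exist and are continuous — which is exactly what the regularity in Theorem \ref{GlobalSol_Bellman} (differentiability of $\p_t v$ and $\p_m v$) is designed to supply.

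Once this construction is in place, I would argue as follows. Suppose $v_1$ and $v_2$ are two classical solutions with the stated regularity. Fix any $(t,m)$, in particular $t=0$. Building the forward flow from $v_1$ gives a pair $\bigl(X^{(1)}_s(x), Z^{(1)}_s(x)\bigr)$ that solves \eqref{MPIT}, and likewise $v_2$ gives $\bigl(X^{(2)}_s(x), Z^{(2)}_s(x)\bigr)$ solving \eqref{MPIT} with the same initial data $(0,m)$. Both pairs are continuously differentiable in $s$ on $[0,T]$ by construction (the forward ODE is autonomous-in-structure and smooth in its data, and $Z$ inherits differentiability from $v_i$). By the global uniqueness Theorem \ref{Thm6_4}, both must coincide with the canonical solution $\bigl(X^{0,m}_s(x), \gamma(s,X^{0,m}_s(x),X^{0,m}_s\ot m)\bigr)$; hence $Z^{(1)}_s(x) = Z^{(2)}_s(x)$ for all $s,x$. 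Evaluating at $s=t$ (the start of each sub-interval, or directly using the decoupling field) yields $\p_m v_1(s,\mu)(x) = \gamma(s,x,\mu) = \p_m v_2(s,\mu)(x)$ for all $(s,x,\mu)$ reachable, and a standard argument (any $\mu\in\mc P_2$ is reachable by choosing appropriate initial data, or one works directly with arbitrary $(t,m)$ as initial condition) extends this to all $(t,m,x)$. Therefore $\p_m v_1 = \p_m v_2$ identically.

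Finally, to upgrade equality of $\p_m v_1$ and $\p_m v_2$ to equality of $v_1$ and $v_2$, I would use the Bellman PDE itself: since $v_1$ and $v_2$ share the same $\p_m v(t,m)(\cdot)$, the integrand $\int_{\R^{d_x}} h(x,m,\p_m v(t,m)(x))\,dm(x)$ in \eqref{bellman} is the same for both, so $\p_t v_1(t,m) = \p_t v_2(t,m)$ for every $(t,m)$; integrating in $t$ from $T$ and using the common terminal condition $v_1(T,m) = v_2(T,m) = \int k(x,m)\,dm(x)$ gives $v_1 \equiv v_2$. Alternatively, and more directly, $v$ itself is representable by the verification-type formula \eqref{definev} once the FBODE solution is pinned down, so uniqueness of the FBODE solution immediately forces uniqueness of $v$. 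A small technical point I would not skip: one must check the cone condition $\bigl(X_s(x), X_s\ot m, Z_s(x)\bigr)\in c_{k_0}$ along the constructed flow so that Proposition \ref{A5} genuinely applies; this follows because $Z_s(x) = \p_m v(s,\cdot)(X_s(x))$ and the regularity of $v$ forces $|\p_m v(s,\mu)(x)| \le \wb L_p(1+|x|+\|\mu\|_1)$ with $\wb L_p \le \max\{\wb L_k, L^*_0\}$ — this bound is exactly the reason the paper phrases the cone condition as a property satisfied by the constructed solution rather than an a priori hypothesis, and it is available here because any classical solution of \eqref{bellman} must have $\p_m v$ agreeing with the decoupling field $\gamma$, which satisfies $\vertiii{\gamma}_{2,[0,T]}\le L^*_0$.
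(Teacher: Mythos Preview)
Your approach is essentially identical to the paper's: set $\gamma := \p_m v$, build the forward flow, take $\p_m$ of the Bellman equation and evaluate along characteristics to recover the backward equation of \eqref{MPIT}, invoke Theorem \ref{Thm6_4} to conclude $\p_m v_1 = \p_m v_2$, and then integrate $\p_t(v_1-v_2)=0$ from the common terminal data. Your closing remark on the cone condition is circular (you appeal to $\p_m v = \gamma$ to justify the bound needed to establish it), but the paper's own proof does not address this point explicitly either --- it is implicitly folded into the regularity class within which uniqueness is asserted.
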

\begin{proof}
Suppose that there exist two solutions $v(t,m)$ and $\wb{v}(t,m)$ to the Bellman equation \eqref{bellman}, both of which have the regularity stated in Theorem \ref{GlobalSol_Bellman}. Define $\gamma(t,x,m):=\p_m v(t,m)(x)$, $\wb{\gamma}(t,x,m):=\p_m \wb{v}(t,m)(x)$, and
\begin{align*}
X^{t,m}_s(x):= x+\int_t^s f\Big(X^{t,m}_\tau(x),X^{t,m}_\tau\ot m,\alpha\big(X^{t,m}_\tau(x),X^{t,m}_\tau\ot m,\gamma(\tau,X^{t,m}_\tau(x),X^{t,m}_\tau\ot m)\big)\Big)d\tau,\\
\wb{X}^{t,m}_s(x):= x+\int_t^s f\Big(\wb{X}^{t,m}_\tau(x),\wb{X}^{t,m}_\tau\ot m,\alpha\big(\wb{X}^{t,m}_\tau(x),\wb{X}^{t,m}_\tau\ot m,\wb{\gamma}(\tau,\wb{X}^{t,m}_\tau(x),\wb{X}^{t,m}_\tau\ot m)\big)\Big)d\tau,
\end{align*}
$Z^{t,m}_s(x):=\gamma(s,X^{t,m}_s(x),X^{t,m}_s\ot m)$ and $\wb{Z}^{t,m}_s(x):=\wb{\gamma}(s,\wb{X}^{t,m}_s(x),\wb{X}^{t,m}_s\ot m)$. Then, by taking the $L$-derivative $\p_m$ to the Bellman equation \eqref{nomin_1} and using the first order condition \eqref{first_order_condition} and $\gamma(t,x,m)=\p_m v(t,m)(x)$, we have 
\footnotesize\begin{align}\no
&\p_t \gamma(t,x,m)+f(x,m,\alpha(x,m,\gamma(t,x,m)))\cdot \p_x\gamma(t,x,m)+\int_{\R^{d_x}}f(y,m,\alpha(y,m,\gamma(t,y,m)))\cdot \p_\mu\gamma(t,x,m)(y)dm(y)\\\no
=&\ -\p_x f(x,m,\alpha(x,m,\gamma(t,x,m)))\cdot \gamma(t,x,m)-\p_x g(x,m,\alpha(x,m,\gamma(t,x,m)))\\
&\ -\int_{\R^{d_x}}\p_\mu f(y,m,\alpha(y,m,\gamma(t,y,m)))(x)\cdot \gamma(t,y,m)dm(y)-\int_{\R^{d_x}}\p_\mu g(y,m,\alpha(y,m,\gamma(t,y,m)))(x)dm(y),
\end{align}\normalsize
which implies that, by taking $t=s$, $x=X^{t,m}_s(x)$ and $m=X^{t,m}_s\ot m$,
\footnotesize\begin{align}\no
&\p_s \gamma(s,X^{t,m}_s(x),X^{t,m}_s\ot m)+f(X^{t,m}_s(x),X^{t,m}_s\ot m,\alpha(X^{t,m}_s(x),X^{t,m}_s\ot m,\gamma(s,X^{t,m}_s(x),X^{t,m}_s\ot m)))\cdot \p_x\gamma(s,X^{t,m}_s(x),X^{t,m}_s\ot m)\\\no
&+\int_{\R^{d_x}}f(X^{t,m}_s(y),X^{t,m}_s\ot m,\alpha(X^{t,m}_s(y),X^{t,m}_s\ot m,\gamma(s,X^{t,m}_s(y),X^{t,m}_s\ot m)))\cdot \p_\mu\gamma(s,X^{t,m}_s(x),X^{t,m}_s\ot m)(X^{t,m}_s(y))dm(y)\\\no
=&\ -\p_x f(X^{t,m}_s(x),X^{t,m}_s\ot m,\alpha(X^{t,m}_s(x),X^{t,m}_s\ot m,\gamma(s,X^{t,m}_s(x),X^{t,m}_s\ot m)))\cdot \gamma(s,X^{t,m}_s(x),X^{t,m}_s\ot m)\\\no
&\ -\p_x g(X^{t,m}_s(x),X^{t,m}_s\ot m,\alpha(X^{t,m}_s(x),X^{t,m}_s\ot m,\gamma(s,X^{t,m}_s(x),X^{t,m}_s\ot m)))\\\no
&\ -\int_{\R^{d_x}}\p_\mu f(X^{t,m}_s(y),X^{t,m}_s\ot m,\alpha(X^{t,m}_s(y),X^{t,m}_s\ot m,\gamma(s,X^{t,m}_s(y),X^{t,m}_s(\ot m)))(X^{t,m}_s(x))\cdot \gamma(s,X^{t,m}_s(y),X^{t,m}_s\ot m)dm(y)\\\label{eq_9_17_1}
&\ -\int_{\R^{d_x}}\p_\mu g(X^{t,m}_s(y),X^{t,m}_s\ot m,\alpha(X^{t,m}_s(y),X^{t,m}_s\ot m,\gamma(s,X^{t,m}_s(y),X^{t,m}_s\ot m)))(x)dm(y).
\end{align}\normalsize 
And then applying the characteristics method to the derived equation \eqref{eq_9_17_1}, that means we can see that the left hand side of \eqref{eq_9_17_1} is exactly equal to $\frac{d}{ds}Z^{t,m}_s(x)$ and the corresponding characteristic equations (that both $\Big(X^{t,m}_s(x),Z^{t,m}_s(x)\Big)$ and $\Big(\wb{X}^{t,m}_s(x),\wb{Z}^{t,m}_s(x)\Big)$ satisfy) are exactly the same as that of FBODE system \eqref{MPIT}, which implies that $\Big(X^{t,m}_s(x),Z^{t,m}_s(x)\Big)=\Big(\wb{X}^{t,m}_s(x),\wb{Z}^{t,m}_s(x)\Big)$ by Theorem \ref{Thm6_4}. Thus, $\p_m v(t,m)(x)=\gamma(t,x,m)=Z^{t,m}_t(x)=\wb{Z}^{t,m}_t(x)=\wb{\gamma}(t,x,m)=\p_m \wb{v}(t,m)(x)$. Therefore, by using the Bellman equation \eqref{nomin_1} again, we have $\p_t \big(v(t,m)-\wb{v}(t,m)\big)=0$ for all $t\in[0,T]$ and $v(T,m)-\wb{v}(T,m)=0$, and hence, $v(t,m)=\wb{v}(t,m)$, for all $t$.
\end{proof}

\section{Master Equation}\label{sec:master}
For if the value function $v(t,m):[0 ,T]\times\mathcal{P}_2(\R^{d_x})\to\R$ possesses a linear functional derivative $u(t,x,m):=\frac{\delta}{\delta m}v(t,m)(x):[0 ,T]\times \R^{d_x}\times \mathcal{P}_2(\R^{d_x})\to \R$, then, taking the linear functional derivative of the Bellman equation \eqref{bellman}, we obtain the following master equation (see \cite{bensoussan2015master}): 
\begin{align}\label{master}
\begin{cases}
\p_t u(t,x,m) + h(x,m,\p_x u(t,x,m))+\int_{\R^{d_x}} \Big(\frac{\delta}{\delta m} h(\wt{x},m,\p_{\wt{x}} u(t,\wt{x},m))(x)\\
\ \ \ \ \ \ \ \ \ \ \ \ \ \ \ \ \ \ \ \ \ \ \ \ \ \ \ \ \ \ \ \ +\p_z h(\wt{x},m,\p_{\wt{x}} u(t,\wt{x},m))\cdot \p_m u(t,x,m)(\wt{x})\Big) dm(\wt{x})= 0,\\
u(T,x,m)=k(x,m)+\int_{\R^{d_x}} \frac{\delta}{\delta m}k(\wt{x},m)(x)dm(\wt{x}),
\end{cases}
\end{align}
where $h(x,\mu,z)$ is the Hamiltonian defined in \eqref{hamilton1}.
Since, by Theorem \ref{GlobalSol_Bellman} and \ref{Unique_B}, we have already had a global unique solution to the Bellman equation \eqref{bellman}, to obtain a global unique solution to the master equation \eqref{master}, it suffices to show the linear functional differentiability of both $v(t,m)$ and $\p_m v(t,m)$. Since $\p_m v (t,m)(x)=Z^{t,m}_t(x)$ (see \eqref{p_m_v} in Appendix) and we have \eqref{eq_7_43}, by Proposition \ref{prop_3_7}, $v(t,m)$ has a linear functional derivative $\frac{\delta }{\delta m}v(t,m)(\cdot)$ such that
\begin{equation}
\partial_m v(t,m)(x)=\partial_x \frac{\delta }{\delta m}v(t,m)(x).
\end{equation}

Now, we consider the linear functional differentiability in $m$ of $\p_m v (t,m)(x)$. Although $\p_m v (t,m)(x)=Z^{t,m}_t(x)$ is $L$-differentiable in $m$, the Fr\'echet derivative of its lifted version may not be uniformly Lipschitz in its argument and thus we cannot apply Proposition \ref{prop_3_7} to obtain the linear functional derivative of $\p_m v (t,m)(x)$ directly. Instead, we shall use an argument which is similar to that in the proof of $L$-differentiability of $Z^{t,m}_t(x)$, since $f(x,\mu,\alpha)$, $g(x,\mu,\alpha)$ and $k(x,\mu,\alpha)$ are assumed to be regularly differentiable in $\mu\in\mathcal{P}_2(\R^{d_x})$ up to  the second order in Assumptions $\bf{(a1)}$-$\bf{(a3)}$. 
\begin{theorem}
Under Assumption ${\bf(a1)}$-${\bf(a3)}$ and Hypothesis ${\bf(h1)}$-${\bf(h2)}$, the unique solution pair $\big(X^{t,m}_s(x),\\Z^{t,m}_s(x)\big)$ of FBODE system \eqref{MPIT} constructed in Theorem \ref{GlobalSol} has a linear functional derivative\\ $\Big(\frac{\delta}{\delta m}\big(X^{t,m}_s(x)\big)(y),\frac{\delta}{\delta m}\big(Z^{t,m}_s(x)\big)(y)\Big)$ such that
\begin{equation}\label{eq_8_3_1}
\Big(\p_m\big(X^{t,m}_s(x)\big)(y),\p_m\big(Z^{t,m}_s(x)\big)(y)\Big)= \Big(\p_y\frac{\delta}{\delta m}\big(X^{t,m}_s(x)\big)(y),\p_y\frac{\delta}{\delta m}\big(Z^{t,m}_s(x)\big)(y)\Big),\ \text{for all }y\in\R^{d_x}.
\end{equation}
\end{theorem}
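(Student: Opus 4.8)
The plan is to build the two linear functional derivatives by hand, as the unique solution of a linear forward--backward system obtained by differentiating the decoupling-field equations \eqref{gammaeq}--\eqref{xeq} (equivalently the FBODE \eqref{MPIT}) with respect to $m$ in the linear-functional sense, and then to verify, first, that this solution satisfies the defining integral identity of Definition \ref{first_LFD}, and second, the intrinsic relation \eqref{eq_8_3_1}. As already flagged just before the statement, one cannot simply invoke Proposition \ref{prop_3_7}, because the lifted Fr\'echet derivative of $Z^{t,m}_s(x)$ need not be uniformly Lipschitz; the argument must therefore parallel the one used for the $L$-differentiability of $Z^{t,m}_s(x)$ in Step 8 of the proof of Lemma \ref{lem6_3} and in Part 2 of the proof of Theorem \ref{Thm6_2}, rather than quote a general result. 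The candidate system is the linear-functional analogue of the linear FBODE \eqref{eq_8_5_new}--\eqref{eq_8_6_new} satisfied by $\big(\p_m\big(X^{t,m}_s(x)\big)(y),\p_m\big(Z^{t,m}_s(x)\big)(y)\big)$: it is linear in the unknowns $\frac{\delta}{\delta m}\big(X^{t,m}_s(x)\big)(y)$ and $\frac{\delta}{\delta m}\big(Z^{t,m}_s(x)\big)(y)$, with coefficients assembled from the uniformly bounded first- and second-order derivatives of $f$, $g$, $k$, $\alpha$ and from the already-constructed $X$, $Z$. The one structural novelty is that $m$ enters the FBODE in three distinct places --- the state slot $X^{t,m}_s(x)$, the measure slot $X^{t,m}_s\ot m$, and the integrating measure $dm(\wt{x})$ appearing in the backward drift and terminal data --- and differentiating the integrating measure produces extra ``evaluation at $y$'' forcing terms, in the spirit of $\frac{\delta}{\delta m}\big(\int\psi(x)\,dm(x)\big)(y)=\psi(y)+\int\frac{\delta\psi}{\delta m}(x)(y)\,dm(x)$; this is precisely what makes the new system slightly larger than, but still of the same linear type as, \eqref{eq_8_5_new}--\eqref{eq_8_6_new}.

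\emph{Well-posedness of the candidate system.} Local existence, uniqueness and the required regularity (joint continuity in $(t,m,x,y)$, continuous differentiability in $s$) follow by the same Banach fixed point scheme used to solve for $\gamma_\mu$ in Part 2 of the proof of Theorem \ref{Thm6_2}, since all coefficients are uniformly bounded thanks to Assumptions ${\bf(a1)}$--${\bf(a3)}$, Hypotheses ${\bf(h1)}$--${\bf(h2)}$, the cone condition $\big(X^{t,m}_s(x),X^{t,m}_s\ot m,Z^{t,m}_s(x)\big)\in c_{k_0}$ and the estimates of Theorem \ref{GlobalSol}. A global-in-time solution on $[0,T]$ is then obtained by pasting over the subintervals of uniform length $\wb{\eps}_2$ exactly as in the proof of Theorem \ref{GlobalSol}; no estimate beyond those already available is needed, because the forcing term carried along in the pasting is controlled by the uniform decoupling-field bound $L^*_0$ of Theorem \ref{Crucial_Estimate}. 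At this stage one also records the linear (hence quadratic-in-$y$) growth bound demanded by Definition \ref{first_LFD}(i), and normalizes by subtracting the $m$-mean so that \eqref{eq_3_5_n} holds.

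\emph{Identification as linear functional derivatives.} Fix $\nu_1,\nu_2\in\mathcal{P}_2(\R^{d_x})$ and set $m_\theta:=\theta\nu_1+(1-\theta)\nu_2$. The core step is to show
\[
\frac{d}{d\theta}X^{t,m_\theta}_s(x)=\int_{\R^{d_x}}\frac{\delta}{\delta m}\big(X^{t,m_\theta}_s(x)\big)(y)\,d(\nu_1-\nu_2)(y),\qquad \frac{d}{d\theta}Z^{t,m_\theta}_s(x)=\int_{\R^{d_x}}\frac{\delta}{\delta m}\big(Z^{t,m_\theta}_s(x)\big)(y)\,d(\nu_1-\nu_2)(y),
\]
which is carried out by a Gronwall estimate on the remainder $\big|X^{t,m_{\theta'}}_s(x)-X^{t,m_\theta}_s(x)-(\theta'-\theta)\int_{\R^{d_x}}\frac{\delta}{\delta m}\big(X^{t,m_\theta}_s(x)\big)(y)\,d(\nu_1-\nu_2)(y)\big|$ and its $Z$-counterpart, using the integral equations \eqref{gammaeq}--\eqref{xeq}, the Lipschitz dependence of $X^{t,m}_s(x)$ and $Z^{t,m}_s(x)$ on $m$ in $W_1$ (cf.\ \eqref{eq_7_43} and \eqref{XLipinm}), and the joint continuity of the coefficient derivatives, with the error terms driven to zero by Lebesgue dominated convergence exactly as the term $R_1$ in \eqref{R_1} was handled. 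Integrating in $\theta$ over $[0,1]$ then yields the identity \eqref{delta_mu_f}, so the functions constructed in the previous step are genuine linear functional derivatives of $X^{t,m}_s(x)$ and $Z^{t,m}_s(x)$.

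\emph{The intrinsic relation and the main obstacle.} To obtain \eqref{eq_8_3_1}, differentiate the candidate linear FBODE in $y$; using $\p_m\phi(\mu)(y)=\p_y\frac{\delta\phi}{\delta m}(\mu)(y)$ from Propositions \ref{prop_3_7} and \ref{Ldif} together with the second-order regularity of $f$, $g$, $k$ in $\mu$ built into Assumptions ${\bf(a1)}$--${\bf(a3)}$ (Definition \ref{def_3_10}), one checks term by term that $\big(\p_y\frac{\delta}{\delta m}(X^{t,m}_s(x))(y),\p_y\frac{\delta}{\delta m}(Z^{t,m}_s(x))(y)\big)$ solves precisely the linear FBODE \eqref{eq_8_5_new}--\eqref{eq_8_6_new} for $\big(\p_m\big(X^{t,m}_s(x)\big)(y),\p_m\big(Z^{t,m}_s(x)\big)(y)\big)$, with the same initial and terminal data; uniqueness of solutions to that linear system --- which follows from the global-uniqueness argument of Theorem \ref{Thm6_4} applied to the linearized equations, or directly from its decoupling structure via a short-interval contraction glued over the uniform subintervals --- then forces the two to agree, which is \eqref{eq_8_3_1}. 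I expect the main obstacle to be the identification step above: because $m_\theta$ is a line segment in $\mathcal{P}_2(\R^{d_x})$ rather than a push-forward perturbation of a fixed base measure, $L$-differentiability cannot be invoked directly, and the G\^ateaux derivative along the segment must be produced from scratch; moreover the three distinct roles of $m$ in the FBODE force the remainder to be split into a state-slot contribution, a measure-slot ($X\ot m$) contribution and an integrating-measure ($dm$) contribution, each estimated by a different ingredient of the earlier theory, before Gronwall and dominated convergence close the argument. Once this remainder is shown to be $o(|\theta'-\theta|)$ uniformly on $[0,1]$, the remaining steps are routine bookkeeping.
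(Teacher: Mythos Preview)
Your overall strategy---construct a candidate, verify the growth bound of Definition \ref{first_LFD}(i), identify via a Gr\"onwall remainder estimate along the convex segment $m_\theta=\theta\nu_1+(1-\theta)\nu_2$, then obtain \eqref{eq_8_3_1} by comparing with the linear system \eqref{eq_8_5_new} for the $L$-derivative and invoking uniqueness---matches the paper's. The identification step (your ``core step'') and the intrinsic-relation step are carried out essentially as you describe.

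The paper departs from you in how the candidate is produced: rather than solving a new linear FBODE by fixed point and pasting, it simply \emph{defines}
\[
\mathcal{D}_m X^{t,m}_s(x,y):=\int_0^1 \p_m \big(X^{t,m}_s(x)\big)(\theta y)\cdot y\,d\theta,\qquad \mathcal{D}_m\gamma(s,x,m)(y):=\int_0^1 \p_m\gamma(s,x,m)(\theta y)\cdot y\,d\theta,
\]
using the $L$-derivatives already built globally in Theorem \ref{GlobalSol}, and then checks by direct substitution (from the second-order regular differentiability of $f,g,k$) that this pair satisfies the expected linear system---precisely the one you propose to write down. This buys two things. First, global existence on $[0,T]$ and the quadratic-in-$y$ growth bound are immediate from the uniform bounds \eqref{Bp_mX_G} and $\vertiii{\gamma}_2\le L^*_0$, with no new pasting argument. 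Second---and this is where your sketch is thin---your claim that pasting for the new linear FBODE goes through ``exactly as in the proof of Theorem \ref{GlobalSol}'' is not fully justified: that proof hinges on the crucial estimate of Theorem \ref{Crucial_Estimate}, which controls $\vertiii{\gamma}_2$ at subinterval endpoints and is tailored to the nonlinear system \eqref{MPIT}. For your linear FBODE you would need an analogous uniform-in-$t$ bound on the terminal data $\frac{\delta}{\delta m}\gamma(t_i,\cdot,\cdot)(\cdot)$, which you do not supply. The integral-formula shortcut sidesteps this issue entirely. Your route can be made to work, but you should either produce the missing a priori estimate for the linear decoupling field, or---more economically---observe that the integral formula above furnishes a global solution to your linear system and then invoke local uniqueness to identify it with your fixed-point construction.
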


\begin{proof}
Step 1. Define 
\begin{align}\label{eq_8_5}
\begin{cases}
\mathcal{D}_mX^{t,m}_s(x,y):=\int_0^1 \p_m \big(X^{t,m}_s(x)\big)(\theta 
y)\cdot y d\theta,\\
\mathcal{D}_m\gamma (s,x,m)(y):=\int_0^1 \p_m \gamma(s,x,m)(\theta y)\cdot y d\theta,
\end{cases}
\end{align}
where $\Big(\p_m \big(X^{t,m}_s(x)\big)(y),\p_m \gamma(s,x,m)(y)\Big)$ is the $L$-derivative of $\Big(X^{t,m}_s(x),\gamma(s,x,m)\Big)$ that solves \eqref{gammaeq}-\eqref{xeq} with the terminal condition $$p(X^{s,m}_T(x),X^{s,m}_T\ot m)=\int_{\R^{d_x}} \p_\mu k(X^{s,m}_T(\wt{x}),X^{s,m}_T\ot m)(X^{s,m}_T(x))dm(\wt{x})+\p_x k(X^{s,m}_T(x),X^{s,m}_T\ot m).$$ 
Recall the notations defined in Appendix \eqref{notation_1} and Table \ref{notation_2} and define 
\begin{align*}
&\llbracket \mathcal{D}_m\gamma\rrbracket^{s,m }_\tau(x,y):=\mathcal{D}_m\gamma\big(\tau,X^{s,m }_\tau(x),X^{s,m }_\tau\ot m\big)\big(X^{s,m }_\tau(y)\big),\\
&\llbracket \p_x\gamma\rrbracket^{s,m}_\tau(y):=\p_x \gamma(s,x,\mu)|_{x=X^{s,m}_\tau(y),\mu=X^{s,m}_\tau\ot m},\\
&\llbracket \p_\mu\gamma\rrbracket^{s,m}_\tau(x,\wt{x}):=\p_\mu \gamma(s,y,\mu)(\wt{y})|_{y=X^{s,m}_\tau(x),\mu=X^{s,m}_\tau\ot m,\wt{y}=X^{s,m}_\tau(\wt{x})}.
\end{align*} 
Since $f(x,\mu,\alpha)$, $g(x,\mu,\alpha)$ and $k(x,\mu,\alpha)$ are assumed to be regularly differentiable in $\mu\in\mathcal{P}_2(\R^{d_x})$ up to  the second order in Assumptions $\bf{(a1)}$-$\bf{(a3)}$, one can check directly that $\Big(\mathcal{D}_mX^{t,m}_s(x,y),\mathcal{D}_m\gamma (s,x,m)(y)\Big)$ defined by \eqref{eq_8_5} solves the following linear FBODE system with the following backward equation:
\footnotesize\begin{align}\no
&\mathcal{D}_m\gamma (s,x,m)(y)\\\no
= &\ \int_{\R^{d_x}} \bigg(\big(\p_x\p_\mu k\big)^{s,m}_T(\wt{x},x)+\big(\p_\mu\p_x k\big)^{s,m}_T(x,\wt{x})+\int_{\R^{d_x}}\big(\p_\mu\p_\mu k\big)^{s,m}_T(\wh{x},x,\wt{x})dm(\wh{x})\bigg)\cdot \mathcal{D}_mX^{s,m}_T(\wt{x},y)dm(\wt{x})\\
&+\bigg(\int_{\R^{d_x}} \big(\p_{\wt{x}}\p_\mu k\big)^{s,m}_T(\wt{x},x)dm(\wt{x})+\big(\p_x\p_x k\big)^{s,m}_T(x)\bigg)\cdot \mathcal{D}_mX^{s,m}_T(x,y)\\\no
&+\int_{\R^{d_x}} \bigg(\bigg(\frac{\delta}{\delta \mu}\p_\mu k\bigg)^{s,m}_T(\wt{x},x,y)-\bigg(\frac{\delta}{\delta \mu}\p_\mu k\bigg)^{s,m}_T(\wt{x},x,0)\bigg)dm(\wt{x})+\bigg(\frac{\delta}{\delta \mu}\p_x k\bigg)^{s,m}_T(x,y)-\bigg(\frac{\delta}{\delta \mu}\p_x k\bigg)^{s,m}_T(x,0)\\\no
&+ \int_s^T \Bigg(\int_{\R^{d_x}}\bigg(\big(\p_{\wt{x}}\p_\mu f\big)^{s,m }_\tau(\wt{x},x)\cdot  Z^{s,m }_\tau(\wt{x}) + \big(\p_{\wt{x}}\p_\mu g\big)^{s,m }_\tau(\wt{x},x)\bigg)dm(\wt{x})\\\no
&\ \ \ \ \ \ \ \ \ \ \ \ +\Big(\big(\p_\alpha \p_x f\big)^{s,m }_\tau(x)\cdot Z^{s,m }_\tau(x)+\big(\p_\alpha\p_x g\big)^{s,m }_\tau(x)\Big)\Big(\big(\p_x\alpha\big)^{s,m }_\tau(x)+\big(\p_z\alpha\big)^{s,m }_\tau(x)\cdot\llbracket \p_x\gamma\rrbracket^{s,m }_\tau(x)\Big)\\\no 
&\ \ \ \ \ \ \ \ \ \ \ \ +\big(\p_x f\big)^{s,m }_\tau(x)\cdot \llbracket \p_x\gamma\rrbracket^{s,m }_\tau(x)+\big(\p_x\p_x f\big)^{s,m }_\tau(x)\cdot Z^{s,m }_\tau(x)+\big(\p_x \p_x g\big)^{s,m }_\tau(x)\Bigg)\mathcal{D}_mX^{s,m}_\tau(x,y)d\tau
\\\no
&+ \int_s^T \Bigg(\int_{\R^{d_x}}\bigg(\big(\p_x\p_\mu f\big)^{s,m }_\tau(\wt{x},x)\cdot  Z^{s,m }_\tau(\wt{x}) + \big(\p_x\p_\mu g\big)^{s,m }_\tau(\wt{x},x)+\big(\p_\mu\p_x f\big)^{s,m }_\tau(x,\wt{x})\cdot  Z^{s,m }_\tau(x) + \big(\p_\mu\p_x g\big)^{s,m }_\tau(x,\wt{x})\\\no
&\ \ \ \ \ \ \ \ \ \ \ \ \ \ \ \ \ \ \ \ +\int_{\R^{d_x}}\Big(\big(\p_\mu\p_\mu f\big)^{s,m }_\tau(\wh{x},x,\wt{x})\cdot  Z^{s,m }_\tau(\wh{x}) + \big(\p_\mu\p_\mu g\big)^{s,m }_\tau(\wh{x},x,\wt{x})\Big)dm(\wh{x})\\\no
&\ \ \ \ \ \ \ \ \ \ \ \ \ \ \ \ \ \ \ \ +\Big(\big(\p_\alpha \p_\mu f\big)^{s,m }_\tau(\wt{x},x)\cdot Z^{s,m }_\tau(\wt{x})+\big(\p_\alpha\p_\mu g\big)^{s,m }_\tau(\wt{x},x)\Big)\Big(\big(\p_x\alpha\big)^{s,m }_\tau(\wt{x})+\big(\p_z\alpha\big)^{s,m }_\tau(\wt{x})\cdot\llbracket \p_x\gamma\rrbracket^{s,m }_\tau(\wt{x})\Big)\\\no
&\ \ \ \ \ \ \ \ \ \ \ \ \ \ \ \ \ \ \ \ +\int_{\R^{d_x}}\Big(\big(\p_\alpha \p_\mu f\big)^{s,m }_\tau(\wh{x},x)\cdot Z^{s,m }_\tau(\wh{x})+\big(\p_\alpha\p_\mu g\big)^{s,m }_\tau(\wh{x},x)\Big)\Big(\big(\p_\mu\alpha\big)^{s,m }_\tau(\wh{x},\wt{x})+\big(\p_z\alpha\big)^{s,m }_\tau(\wh{x})\cdot\llbracket \p_\mu\gamma\rrbracket^{s,m }_\tau(\wh{x},\wt{x})\Big)dm(\wh{x})\\\no
&\ \ \ \ \ \ \ \ \ \ \ \ \ \ \ \ \ \ \ \ +\big(\p_\mu f\big)^{s,m }_\tau(\wt{x},x)\cdot\llbracket \p_x\gamma\rrbracket^{s,m }_\tau(\wt{x})+\int_{\R^{d_x}}\big(\p_\mu f\big)^{s,m }_\tau(\wh{x},x)\cdot\llbracket \p_\mu\gamma\rrbracket^{s,m }_\tau(\wh{x},\wt{x})dm(\wh{x})\\\no
&\ \ \ \ \ \ \ \ \ \ \ \ \ \ \ \ \ \ \ \ +\Big(\big(\p_\alpha \p_x f\big)^{s,m }_\tau(x)\cdot Z^{s,m }_\tau(x)+\big(\p_\alpha\p_x g\big)^{s,m }_\tau(x)\Big)\Big(\big(\p_\mu\alpha\big)^{s,m }_\tau(x,\wt{x})+\big(\p_z\alpha\big)^{s,m }_\tau(x)\cdot\llbracket \p_\mu\gamma\rrbracket^{s,m }_\tau(x,\wt{x})\Big)\\\no 
&\ \ \ \ \ \ \ \ \ \ \ \ \ \ \ \ \ \ \ \ +\big(\p_x f\big)^{s,m }_\tau(x)\cdot \llbracket \p_\mu\gamma\rrbracket^{s,m }_\tau(x,\wt{x})\bigg)\mathcal{D}_mX^{s,m}_\tau(\wt{x},y)dm(\wt{x})\Bigg)d\tau\\\no
&+ \int_s^T \bigg(\big(\frac{\delta}{\delta \mu}\p_x f\big)^{s,m }_\tau(x,y)\cdot  Z^{s,m }_\tau(x) + \big(\frac{\delta}{\delta \mu}\p_x g\big)^{s,m }_\tau(x,y)\\\no
&\ \ \ \ \ \ \ \ \ \ \ \ +\int_{\R^{d_x}}\Big(\big(\frac{\delta}{\delta \mu}\p_\mu f\big)^{s,m }_\tau(\wh{x},x,y)\cdot  Z^{s,m }_\tau(\wh{x}) + \big(\frac{\delta}{\delta \mu}\p_\mu g\big)^{s,m }_\tau(\wh{x},x,y)\Big)dm(\wh{x})\\\no
&\ \ \ \ \ \ \ \ \ \ \ \ +\int_{\R^{d_x}}\Big(\big(\p_\alpha \p_\mu f\big)^{s,m }_\tau(\wh{x},x)\cdot Z^{s,m }_\tau(\wh{x})+\big(\p_\alpha\p_\mu g\big)^{s,m }_\tau(\wh{x},x)\Big)\Big(\big(\frac{\delta}{\delta \mu}\alpha\big)^{s,m }_\tau(\wh{x},y)+\big(\p_z\alpha\big)^{s,m }_\tau(\wh{x})\cdot\llbracket \mathcal{D}_m\gamma\rrbracket^{s,m }_\tau(\wh{x},y)\Big)dm(\wh{x})\\\no
&\ \ \ \ \ \ \ \ \ \ \ \ +\int_{\R^{d_x}}\big(\p_\mu f\big)^{s,m }_\tau(\wh{x},x)\cdot\llbracket \mathcal{D}_m\gamma\rrbracket^{s,m }_\tau(\wh{x},y)dm(\wh{x})+\big(\p_x f\big)^{s,m }_\tau(x)\cdot \llbracket \mathcal{D}_m\gamma\rrbracket^{s,m }_\tau(x,y)\\\no
&\ \ \ \ \ \ \ \ \ \ \ \ +\Big(\big(\p_\alpha \p_x f\big)^{s,m }_\tau(x)\cdot Z^{s,m }_\tau(x)+\big(\p_\alpha\p_x g\big)^{s,m }_\tau(x)\Big)\Big(\big(\frac{\delta}{\delta \mu}\alpha\big)^{s,m }_\tau(x,y)+\big(\p_z\alpha\big)^{s,m }_\tau(x)\cdot\llbracket \mathcal{D}_m\gamma\rrbracket^{s,m }_\tau(x,y)\Big)\bigg)d\tau\\\no
&-\int_s^T \bigg(\big(\frac{\delta}{\delta \mu}\p_x f\big)^{s,m }_\tau(x,0)\cdot  Z^{s,m }_\tau(x) + \big(\frac{\delta}{\delta \mu}\p_x g\big)^{s,m }_\tau(x,0)\\\no
&\ \ \ \ \ \ \ \ \ \ \ \ +\int_{\R^{d_x}}\Big(\big(\frac{\delta}{\delta \mu}\p_\mu f\big)^{s,m }_\tau(\wh{x},x,0)\cdot  Z^{s,m }_\tau(\wh{x}) + \big(\frac{\delta}{\delta \mu}\p_\mu g\big)^{s,m }_\tau(\wh{x},x,0)\Big)dm(\wh{x})\\\no
&\ \ \ \ \ \ \ \ \ \ \ \ +\int_{\R^{d_x}}\Big(\big(\p_\alpha \p_\mu f\big)^{s,m }_\tau(\wh{x},x)\cdot Z^{s,m }_\tau(\wh{x})+\big(\p_\alpha\p_\mu g\big)^{s,m }_\tau(\wh{x},x)\Big)\Big(\big(\frac{\delta}{\delta \mu}\alpha\big)^{s,m }_\tau(\wh{x},0)+\big(\p_z\alpha\big)^{s,m }_\tau(\wh{x})\cdot \llbracket \mathcal{D}_m\gamma\rrbracket^{s,m }_\tau(\wh{x},0)\Big)dm(\wh{x})\\\no
&\ \ \ \ \ \ \ \ \ \ \ \ +\int_{\R^{d_x}}\big(\p_\mu f\big)^{s,m }_\tau(\wh{x},x)\cdot\llbracket \mathcal{D}_m\gamma\rrbracket^{s,m }_\tau(\wh{x},0)dm(\wh{x})+\big(\p_x f\big)^{s,m }_\tau(x)\cdot \llbracket \mathcal{D}_m\gamma\rrbracket^{s,m }_\tau(x,0)\\\label{d_mgamma}  
&\ \ \ \ \ \ \ \ \ \ \ \ +\Big(\big(\p_\alpha \p_x f\big)^{s,m }_\tau(x)\cdot Z^{s,m }_\tau(x)+\big(\p_\alpha\p_x g\big)^{s,m }_\tau(x)\Big)\Big(\big(\frac{\delta}{\delta \mu}\alpha\big)^{s,m }_\tau(x,0)+\big(\p_z\alpha\big)^{s,m }_\tau(x)\cdot\llbracket \mathcal{D}_m\gamma\rrbracket^{s,m }_\tau(x,0)\Big)\bigg)d\tau,
\end{align}\normalsize
where the couple $\big(X^{t,m}_s(x),Z^{t,m}_s(x)=\gamma(s,X^{t,m}_s(x),X^{t,m}_s\ot m)\big)$ is the unique solution of FBODE \eqref{MPIT} constructed in Theorem \ref{GlobalSol}, and the forward equation:
\small\begin{align}\no
&\mathcal{D}_mX^{t,m }_s(x,y)\\\no
=&\  \int_t^s \bigg(\big(\p_x f\big)^{t,m }_\tau(x)+\big(\p_\alpha f\big)^{t,m }_\tau(x)\Big(\big(\p_x \alpha\big)^{t,m }_\tau(x)+\big(\p_z \alpha\big)^{t,m }_\tau(x)\cdot\llbracket \p_x\gamma\rrbracket^{t,m }_\tau(x)\Big)\bigg)\mathcal{D}_mX^{t,m }_\tau(x,y) d\tau\\\no
&+\int_t^s\int_{\R^{d_x}}\bigg(\big(\p_\mu f\big)^{t,m }_\tau(x,\wt{x})+\big(\p_\alpha f\big)^{t,m }_\tau(x)\Big(\big(\p_\mu \alpha\big)^{t,m }_\tau(x,\wt{x})+\big(\p_z \alpha\big)^{t,m }_\tau(x)\cdot\llbracket \p_\mu\gamma\rrbracket^{t,m }_\tau(x,\wt{x})\Big)\bigg)\mathcal{D}_mX^{t,m }_\tau(\wt{x},y)dm(\wt{x})d\tau\\\no
&+\int_t^s\bigg(\big(\frac{\delta}{\delta \mu} f\big)^{t,m }_\tau(x,y)+\big(\p_\alpha f\big)^{t,m }_\tau(x)\Big(\big(\frac{\delta}{\delta \mu} \alpha\big)^{t,m }_\tau(x,y)+\big(\p_z \alpha\big)^{t,m }_\tau(x)\cdot\llbracket \mathcal{D}_m\gamma\rrbracket^{t,m }_\tau(x,y)\Big)\bigg)d\tau\\\label{d_mX}   
&-\int_t^s\bigg(\big(\frac{\delta}{\delta \mu} f\big)^{t,m }_\tau(x,0)+\big(\p_\alpha f\big)^{t,m }_\tau(x)\Big(\big(\frac{\delta}{\delta \mu} \alpha\big)^{t,m }_\tau(x,0)+\big(\p_z \alpha\big)^{t,m }_\tau(x)\cdot\llbracket \mathcal{D}_m\gamma\rrbracket^{t,m }_\tau(x,0)\Big)\bigg)d\tau.
\end{align}\normalsize
As one may note that the linear functional derivative is uniquely defined up to a constant, and the problem \eqref{d_mgamma}-\eqref{d_mX} is the dynamics governing the evolution of $\Big(\frac{\delta}{\delta m}\big(X^{t,m}_s(x)\big)(y),\frac{\delta}{\delta m}\gamma(s,x,m)(y)\Big)$. Next, we shall verify that $\Big(\frac{\delta}{\delta m}\big(X^{t,m}_s(x)\big)(y),\frac{\delta}{\delta m}\gamma(s,x,m)(y)\Big)=\big(\mathcal{D}_mX^{t,m }_s(x,y),\mathcal{D}_m\gamma (s,x,m)(y)\big)$ up to a constant.

Step 2. We shall show that, for each $t\in[0,T]$, $x\in\R^{d_x}$ and $s\in[t,T]$, there exists a positive constant $C_{t,x,s}:\mathcal{P}_2(\mathbb{R}^{d})\rightarrow [0,\infty)$ which is bounded on any bounded subsets of $\mathcal{P}_2(\R^d)$, such that
\begin{align}\label{eq_8_6}
\begin{cases}
\big|\mathcal{D}_mX^{t,m}_s(x,y)\big|\leq C_{t,x,s}(m)\big(1+|y|^2\big),\\
\big|\mathcal{D}_m\gamma (s,x,m)(y)\big|\leq C_{t,x,s}(m)\big(1+|y|^2\big).
\end{cases}
\end{align}
By using \eqref{Bp_mX_G} and $\vertiii{\gamma}_2\leq L^*_0$, obviously we have
\begin{align*}
\big|\mathcal{D}_mX^{t,m}_s(x,y)\big|&\leq \int_0^1 \big|\p_m \big(X^{t,m}_s(x)\big)(ty)\cdot y \big|dt \leq L_M^{(s-t)}|y|,\\
\big|\mathcal{D}_m\gamma (s,x,m)(y)\big|&\leq \int_0^1 \big|\p_m \gamma(s,x,m)(ty)\cdot y\big|dt \leq L^*_0|y|.
\end{align*}
Therefore, \eqref{eq_8_6} is valid with $C_{t,x,s}(m)=\frac{1}{2}\max\{L_M^{(s-t)},L^*_0\}$ by noting the simple fact of $|y|\leq \frac{1}{2}(1+|y|^2)$.

Step 3. We shall show that, for all $m$, $m'\in \mathcal{P}_2(\mathbb{R}^{d})$,
\begin{align}\label{linD_X}
 &\lim_{\theta\to 0^+}\frac{X^{t,(1-\theta)m+\theta m'}_s(x)-X^{t,m}_s(x)}{\theta}= \int_{\mathbb{R}^{d}}\mathcal{D}_mX^{t,m}_s(x,y)d(m'-m)(y),\\\label{linD_gamma}
&\lim_{\theta\to 0^+}\frac{\gamma(s,x,(1-\theta)m+\theta m')-\gamma(s,x,m)}{\theta}= \int_{\mathbb{R}^{d}}\mathcal{D}_m\gamma (s,x,m)(y)d(m'-m)(y).
\end{align}
First, it follows from \eqref{eq_8_8_new}, \eqref{JFE}, \eqref{Bp_mX_G} and \eqref{eq_9_3} that 
\begin{align}\label{eq_8_9}
&\big|X^{t,(1-\theta)m+\theta m'}_s(x)-X^{t,m}_s(x)\big|\leq L_M^{(s-t)} W_1\big((1-\theta)m+\theta m',m\big)\leq L_M^{(s-t)}(\|m\|_1+\|m'\|_1)\theta,\\\label{eq_8_10}
&\big|\gamma(s,x,(1-\theta)m+\theta m')-\gamma(s,x,m)\big|\leq L^*_0W_1\big((1-\theta)m+\theta m',m\big)\leq L^*_0(\|m\|_1+\|m'\|_1)\theta.
\end{align}
Also note that, by the regularities of $f$ and $\alpha$, 
\begin{align}\no
&f\left(X^{t,m}_\tau(x),X^{t,m}_\tau\ot \big((1-\theta)m+\theta m'\big),\alpha^{t,m}_\tau(x)\right)-f\left(X^{t,m}_\tau(x),X^{t,m}_\tau\ot m,\alpha^{t,m}_\tau(x)\right)\\\label{eq_8_11}
&-\theta\int_{\mathbb{R}^{d}}\bigg(\frac{\delta}{\delta \mu} f\bigg)^{t,m}_\tau(x,y)d(m'-m)(y)=o(|\theta|);\\\no
&\alpha\left(X^{t,m}_\tau(x),X^{t,m}_\tau\ot \big((1-\theta)m+\theta m'\big), Z^{t,m}_\tau(x)\right)-\alpha\left(X^{t,m}_\tau(x),X^{t,m}_\tau\ot m, Z^{t,m}_\tau(x)\right)\\\label{eq_8_12}
&-\theta\int_{\mathbb{R}^{d}}\bigg(\frac{\delta}{\delta \mu} \alpha\bigg)^{t,m}_\tau(x,y)d(m'-m)(y)=o(|\theta|),
\end{align}
where the small-$o\Big(\big|\theta\big|\Big)$ means $o\Big(\big|\theta\big|\Big)/\big|\theta\big|\to 0$ as $\big|\theta\big|\to 0$.
Since $\p_x f(x,\mu,\alpha)$, $\p_\mu f(x,\mu,\alpha)(\wt{x})$, $\p_\alpha f(x,\mu,\alpha)$, $\p_x\alpha(x,\mu,z)$, $\p_\mu \alpha(x,\mu,z)(\wt{x})$ and $\p_z \alpha(x,\mu,z)$ are bounded and continuous in their corresponding arguments, by using \eqref{MPIT}, \eqref{d_mX}, \eqref{eq_8_9}-\eqref{eq_8_12} and Lebesgue's dominated convergence theorem, we have
\small\begin{align}\no
&\Bigg|X^{t,(1-\theta)m+\theta m'}_s(x)-X^{t,m}_s(x)- \theta\int_{\mathbb{R}^{d}}\mathcal{D}_mX^{t,m}_s(x,y)d(m'-m)(y)\Bigg|\\\no
=&\  \Bigg|\int_t^s \bigg(f^{t,(1-\theta)m+\theta m'}_\tau(x) -f^{t,m}_\tau(x)- \theta\int_{\mathbb{R}^{d}}\frac{d}{d\tau}\mathcal{D}_mX^{t,m}_\tau(x,y)d(m'-m)(y)\bigg)d\tau\Bigg|\\\no
\leq &\ \bigg|\int_t^s \bigg(\big(\p_x f\big)^{t,m }_\tau(x)+\big(\p_\alpha f\big)^{t,m }_\tau(x)\Big(\big(\p_x \alpha\big)^{t,m }_\tau(x)+\big(\p_z \alpha\big)^{t,m }_\tau(x)\cdot\llbracket \p_x\gamma\rrbracket^{t,m }_\tau(x)\Big)\bigg)\\\no
&\ \ \ \ \ \ \cdot\bigg(X^{t,(1-\theta)m+\theta m'}_\tau(x)-X^{t,m}_\tau(x)-\theta\int_{\mathbb{R}^{d}}\mathcal{D}_mX^{t,m}_\tau(x,y)d(m'-m)(y)\bigg) d\tau\bigg|\\\no
&+\bigg|\int_t^s\int_{\R^{d_x}}\bigg(\big(\p_\mu f\big)^{t,m }_\tau(x,\wt{x})+\big(\p_\alpha f\big)^{t,m }_\tau(x)\Big(\big(\p_\mu \alpha\big)^{t,m }_\tau(x,\wt{x})+\big(\p_z \alpha\big)^{t,m }_\tau(x)\cdot\llbracket \p_\mu\gamma\rrbracket^{t,m }_\tau(x,\wt{x})\Big)\bigg)\\\no
&\ \ \ \ \ \ \ \ \ \ \ \ \ \ \ \ \cdot\bigg(X^{t,(1-\theta)m+\theta m'}_\tau(\wt{x})-X^{t,m}_\tau(\wt{x})-\theta\int_{\mathbb{R}^{d}}\mathcal{D}_mX^{t,m}_\tau(\wt{x},y)d(m'-m)(y)\bigg)dm(\wt{x})d\tau\bigg|\\\no
&+\bigg|\int_t^s \big(\p_\alpha f\big)^{t,m }_\tau(x) \big(\p_z \alpha\big)^{t,m }_\tau(x)\bigg(\gamma\Big(\tau,X^{t,m}_\tau(x),X^{t,m}_\tau\ot\big((1-\theta)m+\theta m'\big)\Big)-\gamma\Big(\tau,X^{t,m}_\tau(x),X^{t,m}_\tau\ot m\Big)\\\no
&\ \ \ \ \ \ \ \ \ \ \ \ \ \ \ \ \ \ \ \ \ \ \ \ \ \ \ \ \ \ \ \ \ \ \ \ \ \ \ \ \ \ \ -\theta\int_{\R^{d_x}}\llbracket \mathcal{D}_m\gamma\rrbracket^{t,m}_\tau(x,y)d(m'-m)(y) \bigg)d\tau\bigg|+o(|\theta|)\\\no
\leq &\ L_B'\int_t^s \bigg|X^{t,(1-\theta)m+\theta m'}_\tau(x)-X^{t,m}_\tau(x)-\theta\int_{\mathbb{R}^{d}}\mathcal{D}_mX^{t,m}_\tau(x,y)d(m'-m)(y)\bigg|d\tau\\\no
&+L_B'\int_t^s\int_{\R^{d_x}} \bigg|X^{t,(1-\theta)m+\theta m'}_\tau(\wt{x})-X^{t,m}_\tau(\wt{x})-\theta\int_{\mathbb{R}^{d}}\mathcal{D}_mX^{t,m}_\tau(\wt{x},y)d(m'-m)(y) \bigg|dm(\wt{x})d\tau\\\no
&+\Lambda_fL_\alpha\int_t^s \bigg|\gamma\Big(\tau,X^{t,m}_\tau(x),X^{t,m}_\tau\ot\big((1-\theta)m+\theta m'\big)\Big)-\gamma\Big(\tau,X^{t,m}_\tau(x),X^{t,m}_\tau\ot m\Big)\\\label{eq_10_15_1}
&\ \ \ \ \ \ \ \ \ \ \ \ \ \ \ \ \ \ -\theta\int_{\R^{d_x}}\llbracket \mathcal{D}_m\gamma\rrbracket^{t,m}_\tau(x,y)d(m'-m)(y) \bigg|d\tau+o(|\theta|)\\\no
&\text{(by using \eqref{bdd_d1_f}, \eqref{p_xalpha}, \eqref{p_zalpha} and $\vertiii{\gamma}_2\leq L^*_0$),}
\end{align}\normalsize
where the constant $L_B'$ was defined in \eqref{L_Bp}.
Thus, by integrating \eqref{eq_10_15_1} with respect to $x$ and using Gr\"{o}nwall's inequality, we obtain
\footnotesize\begin{align}\no
&\int_{\R^{d_x}}\bigg|X^{t,(1-\theta)m+\theta m'}_s(x)-X^{t,m}_s(x)- \theta\int_{\mathbb{R}^{d}}\mathcal{D}_mX^{t,m}_s(x,y)d(m'-m)(y)\bigg|dm(x)\\\no
\leq &\ \Lambda_fL_\alpha\exp\Big(2L_B'(s-t)\Big)\int_t^s \int_{\R^{d_x}}\bigg|\gamma\Big(\tau,X^{t,m}_\tau(x),X^{t,m}_\tau\ot\big((1-\theta)m+\theta m'\big)\Big)-\gamma\Big(\tau,X^{t,m}_\tau(x),X^{t,m}_\tau\ot m\Big)\\\label{eq_8_13}
&\ \ \ \ \ \ \ \ \ \ \ \ \ \ \ \ \ \ \ \ \ \ \ \ \ \ \ \ \ \ \ \ \ \ \ \ \ \ \ \ \ \ \ -\theta\int_{\R^{d_x}}\llbracket \mathcal{D}_m\gamma\rrbracket^{t,m}_\tau(x,y)d(m'-m)(y) \bigg|dm(x)d\tau+o(|\theta|).
\end{align}\normalsize
Therefore, substituting \eqref{eq_8_13} into \eqref{eq_10_15_1}, and using Gr\"{o}nwall's inequality again, we have
\footnotesize\begin{align}\no
&\bigg|X^{t,(1-\theta)m+\theta m'}_s(x)-X^{t,m}_s(x)- \theta\int_{\mathbb{R}^{d}}\mathcal{D}_mX^{t,m}_s(x,y)d(m'-m)(y)\bigg|\\\no
\leq &\  \Bigg(\Lambda_fL_\alpha\int_t^s \bigg|\gamma\Big(\tau,X^{t,m}_\tau(x),X^{t,m}_\tau\ot\big((1-\theta)m+\theta m'\big)\Big)-\gamma\Big(\tau,X^{t,m}_\tau(x),X^{t,m}_\tau\ot m\Big)-\theta\int_{\R^{d_x}}\llbracket \mathcal{D}_m\gamma\rrbracket^{t,m}_\tau(x,y)d(m'-m)(y) \bigg|d\tau\\\no
&\ \ +L_B'\Lambda_fL_\alpha\exp\Big(2L_B'(s-t)\Big)(s-t)\int_t^s \int_{\R^{d_x}}\bigg|\gamma\Big(\tau,X^{t,m}_\tau(\wt{x}),X^{t,m}_\tau\ot\big((1-\theta)m+\theta m'\big)\Big)-\gamma\Big(\tau,X^{t,m}_\tau(\wt{x}),X^{t,m}_\tau\ot m\Big)\\\no
&\ \ \ \ \ \ \ \ \ \ \ \ \ \ \ \ \ \ \ \ \ \ \ \ \ \ \ \ \ \ \ \ \ \ \ \ \ \ \ \ \ \ \ \ \ \ \ \ \ \ \ \ \ \ \ \ \ \ \ \ \ \ -\theta\int_{\R^{d_x}}\llbracket \mathcal{D}_m\gamma\rrbracket^{t,m}_\tau(\wt{x},y)d(m'-m)(y) \bigg|dm(\wt{x})d\tau\Bigg)\exp\Big(L_B'(s-t)\Big)+o(|\theta|)\\\no
\leq &\  M_1(T)\int_t^s \bigg|\gamma\Big(\tau,X^{t,m}_\tau(x),X^{t,m}_\tau\ot\big((1-\theta)m+\theta m'\big)\Big)-\gamma\Big(\tau,X^{t,m}_\tau(x),X^{t,m}_\tau\ot m\Big)-\theta\int_{\R^{d_x}}\llbracket \mathcal{D}_m\gamma\rrbracket^{t,m}_\tau(x,y)d(m'-m)(y) \bigg|d\tau\\\no
&+M_2(T)(s-t)\int_t^s \int_{\R^{d_x}}\bigg|\gamma\Big(\tau,X^{t,m}_\tau(\wt{x}),X^{t,m}_\tau\ot\big((1-\theta)m+\theta m'\big)\Big)-\gamma\Big(\tau,X^{t,m}_\tau(\wt{x}),X^{t,m}_\tau\ot m\Big)\\\label{eq_8_14}
&\ \ \ \ \ \ \ \ \ \ \ \ \ \ \ \ \ \ \ \ \ \ \ \ \ \ \ \ \ \ \ \ \ \ -\theta\int_{\R^{d_x}}\llbracket \mathcal{D}_m\gamma\rrbracket^{t,m}_\tau(\wt{x},y)d(m'-m)(y) \bigg|dm(\wt{x})d\tau+o(|\theta|),
\end{align}\normalsize
where
\begin{align}\label{M_1}
&M_1(T):=\Lambda_fL_\alpha\exp\Big(L_B'T\Big),\\\label{M_2}
&M_2(T):=L_B'\Lambda_fL_\alpha\exp\Big(3L_B'T\Big).
\end{align}
Also note that 
\footnotesize\begin{align}\no
&\p_\mu f\left(X^{t,m}_\tau(\wt{x}),X^{t,m}_\tau\ot \big((1-\theta)m+\theta m'\big),\alpha^{t,m}_\tau(\wt{x})\right)\left(X^{t,m}_\tau(x)\right)\cdot Z^{t,m}_\tau(\wt{x})\\\no
&+\p_\mu g\left(X^{t,m}_\tau(\wt{x}),X^{t,m}_\tau\ot \big((1-\theta)m+\theta m'\big),\alpha^{t,m}_\tau(\wt{x})\right)\left(X^{t,m}_\tau(x)\right)\\\no
&-\p_\mu f\left(X^{t,m}_\tau(\wt{x}),X^{t,m}_\tau\ot m,\alpha^{t,m}_\tau(\wt{x})\right)\left(X^{t,m}_\tau(x)\right)\cdot Z^{t,m}_\tau(\wt{x})-\p_\mu g\left(X^{t,m}_\tau(\wt{x}),X^{t,m}_\tau\ot m,\alpha^{t,m}_\tau(\wt{x})\right)\left(X^{t,m}_\tau(x)\right)\\\label{eq_8_15}
&-\theta\int_{\mathbb{R}^{d}}\bigg(\bigg(\frac{\delta}{\delta \mu}\p_\mu f\bigg)^{t,m}_s(\wt{x},x,y)\cdot  Z^{t,m}_s(\wt{x})+ \bigg(\frac{\delta}{\delta \mu}\p_\mu g\bigg)^{t,m}_s(\wt{x},x,y)\bigg)d(m'-m)(y)=o(|\theta|);\\\no
&\p_x f\left(X^{t,m}_\tau(x),X^{t,m}_\tau\ot \big((1-\theta)m+\theta m'\big),\alpha^{t,m}_\tau(x)\right)\cdot Z^{t,m}_\tau(\wt{x})+\p_x g\left(X^{t,m}_\tau(x),X^{t,m}_\tau\ot \big((1-\theta)m+\theta m'\big),\alpha^{t,m}_\tau(x)\right)\\\no
&-\p_x f\left(X^{t,m}_\tau(x),X^{t,m}_\tau\ot m,\alpha^{t,m}_\tau(x)\right)\cdot Z^{t,m}_\tau(\wt{x})-\p_x g\left(X^{t,m}_\tau(x),X^{t,m}_\tau\ot m,\alpha^{t,m}_\tau(x)\right)\\\label{eq_8_16}
&-\theta\int_{\mathbb{R}^{d}}\bigg(\bigg(\frac{\delta}{\delta \mu}\p_x f\bigg)^{t,m}_s(x,y)\cdot  Z^{t,m}_s(x)+ \bigg(\frac{\delta}{\delta \mu}\p_x g\bigg)^{t,m}_s(x,y)\bigg)d(m'-m)(y)=o(|\theta|);\\\no
&\p_\mu k\left(X^{t,m}_\tau(\wt{x}),X^{t,m}_\tau\ot \big((1-\theta)m+\theta m'\big)\right)\left(X^{t,m}_\tau(x)\right)-\p_\mu k\left(X^{t,m}_\tau(\wt{x}),X^{t,m}_\tau\ot m\right)\left(X^{t,m}_\tau(x)\right)\\\label{eq_8_17}
&-\theta\int_{\mathbb{R}^{d}}\bigg(\frac{\delta}{\delta \mu}\p_\mu k\bigg)^{t,m}_s(\wt{x},x,y)d(m'-m)(y)=o(|\theta|;\\\no
&\p_x k\left(X^{t,m}_\tau(x),X^{t,m}_\tau\ot \big((1-\theta)m+\theta m'\big)\right)-\p_x k\left(X^{t,m}_\tau(x),X^{t,m}_\tau\ot m\right)\\\label{eq_8_18}
&-\theta\int_{\mathbb{R}^{d}}\bigg(\frac{\delta}{\delta \mu}\p_x k\bigg)^{t,m}_s(x,y)d(m'-m)(y)=o(|\theta|).
\end{align}\normalsize
Similarly, since all the second order derivatives of $f(x,\mu,\alpha)$, $g(x,\mu,\alpha)$ and $k(x,\mu)$, and $\p_x f(x,\mu,\alpha)$, $\p_\mu f(x,\mu,\alpha)(\wt{x})$, $\p_x\alpha(x,\mu,z)$, $\p_\mu \alpha(x,\mu,z)(\wt{x})$, $\p_z \alpha(x,\mu,z)$, $\p_x \gamma(s,x,\mu)$ and $\p_\mu \gamma(s,x,\mu)$ are bounded and continuous in their corresponding arguments, by using \eqref{MPIT}, \eqref{d_mgamma}, \eqref{eq_8_9}, \eqref{eq_8_10}, \eqref{eq_8_12}, \eqref{eq_8_15}-\eqref{eq_8_18} and Lebesgue's dominated convergence theorem, we have
\scriptsize\begin{align*}
&\Bigg|\gamma\Big(s,x,(1-\theta)m+\theta m'\Big)-\gamma\Big(s,x,m\Big)-\theta\int_{\R^{d_x}}\mathcal{D}_m\gamma (s,x,m)(y)d(m'-m)(y) \Bigg|\\
\leq &\ \Bigg|\int_{\R^{d_x}} \bigg(\big(\p_x\p_\mu k\big)^{s,m}_T(\wt{x},x)+\big(\p_\mu\p_x k\big)^{s,m}_T(x,\wt{x})+\int_{\R^{d_x}}\big(\p_\mu\p_\mu k\big)^{s,m}_T(\wh{x},x,\wt{x})dm(\wh{x})\bigg)\\
&\ \ \ \ \ \ \ \ \cdot\bigg(X^{s,(1-\theta)m+\theta m'}_T(\wt{x})-X^{s,m}_T(\wt{x})- \theta\int_{\mathbb{R}^{d}}\mathcal{D}_mX^{s,m}_T(\wt{x},y)d(m'-m)(y)\bigg)dm(\wt{x})\Bigg|\\
&+\Bigg|\bigg(\int_{\R^{d_x}} \big(\p_{\wt{x}}\p_\mu k\big)^{s,m}_T(\wt{x},x)dm(\wt{x})+\big(\p_x\p_x k\big)^{s,m}_T(x)\bigg)\\\no
&\ \ \ \ \cdot\bigg(X^{s,(1-\theta)m+\theta m'}_T(x)-X^{s,m}_T(x)- \theta\int_{\mathbb{R}^{d}}\mathcal{D}_mX^{s,m}_T(x,y)d(m'-m)(y)\bigg)\Bigg|\\
&+ \int_s^T \Bigg|\Bigg(\int_{\R^{d_x}}\bigg(\big(\p_{\wt{x}}\p_\mu f\big)^{s,m }_\tau(\wt{x},x)\cdot  Z^{s,m }_\tau(\wt{x}) + \big(\p_{\wt{x}}\p_\mu g\big)^{s,m }_\tau(\wt{x},x)\bigg)dm(\wt{x})\\\no
&\ \ \ \ \ \ \ \ \ \ \ \ +\Big(\big(\p_\alpha \p_x f\big)^{s,m }_\tau(x)\cdot Z^{s,m }_\tau(x)+\big(\p_\alpha\p_x g\big)^{s,m }_\tau(x)\Big)\Big(\big(\p_x\alpha\big)^{s,m }_\tau(x)+\big(\p_z\alpha\big)^{s,m }_\tau(x)\cdot\llbracket \p_x\gamma\rrbracket^{s,m }_\tau(x)\Big)\\\no 
&\ \ \ \ \ \ \ \ \ \ \ \ +\big(\p_x f\big)^{s,m }_\tau(x)\cdot \llbracket \p_x\gamma\rrbracket^{s,m }_\tau(x)+\big(\p_x\p_x f\big)^{s,m }_\tau(x)\cdot Z^{s,m }_\tau(x)+\big(\p_x \p_x g\big)^{s,m }_\tau(x)\Bigg)\\
&\ \ \ \ \ \ \ \ \ \ \cdot\bigg(X^{s,(1-\theta)m+\theta m'}_\tau(x)-X^{s,m}_\tau(x)- \theta\int_{\mathbb{R}^{d}}\mathcal{D}_mX^{s,m}_\tau(x,y)d(m'-m)(y)\bigg)\Bigg|d\tau\\
&+ \int_s^T \Bigg|\int_{\R^{d_x}}\bigg(\big(\p_x\p_\mu f\big)^{s,m }_\tau(\wt{x},x)\cdot  Z^{s,m }_\tau(\wt{x}) + \big(\p_x\p_\mu g\big)^{s,m }_\tau(\wt{x},x)+\big(\p_\mu\p_x f\big)^{s,m }_\tau(x,\wt{x})\cdot  Z^{s,m }_\tau(x) + \big(\p_\mu\p_x g\big)^{s,m }_\tau(x,\wt{x})\\\no
&\ \ \ \ \ \ \ \ \ \ \ \ \ \ \ \ \ \ \ \ +\int_{\R^{d_x}}\Big(\big(\p_\mu\p_\mu f\big)^{s,m }_\tau(\wh{x},x,\wt{x})\cdot  Z^{s,m }_\tau(\wh{x}) + \big(\p_\mu\p_\mu g\big)^{s,m }_\tau(\wh{x},x,\wt{x})\Big)dm(\wh{x})\\\no
&\ \ \ \ \ \ \ \ \ \ \ \ \ \ \ \ \ \ \ \ +\Big(\big(\p_\alpha \p_\mu f\big)^{s,m }_\tau(\wt{x},x)\cdot Z^{s,m }_\tau(\wt{x})+\big(\p_\alpha\p_\mu g\big)^{s,m }_\tau(\wt{x},x)\Big)\Big(\big(\p_x\alpha\big)^{s,m }_\tau(\wt{x})+\big(\p_z\alpha\big)^{s,m }_\tau(\wt{x})\cdot\llbracket \p_x\gamma\rrbracket^{s,m }_\tau(\wt{x})\Big)\\\no
&\ \ \ \ \ \ \ \ \ \ \ \ \ \ \ \ \ \ \ \ +\int_{\R^{d_x}}\Big(\big(\p_\alpha \p_\mu f\big)^{s,m }_\tau(\wh{x},x)\cdot Z^{s,m }_\tau(\wh{x})+\big(\p_\alpha\p_\mu g\big)^{s,m }_\tau(\wh{x},x)\Big)\Big(\big(\p_\mu\alpha\big)^{s,m }_\tau(\wh{x},\wt{x})+\big(\p_z\alpha\big)^{s,m }_\tau(\wh{x})\cdot\llbracket \p_\mu\gamma\rrbracket^{s,m }_\tau(\wh{x},\wt{x})\Big)dm(\wh{x})\\\no
&\ \ \ \ \ \ \ \ \ \ \ \ \ \ \ \ \ \ \ \ +\big(\p_\mu f\big)^{s,m }_\tau(\wt{x},x)\cdot\llbracket \p_x\gamma\rrbracket^{s,m }_\tau(\wt{x})+\int_{\R^{d_x}}\big(\p_\mu f\big)^{s,m }_\tau(\wh{x},x)\cdot\llbracket \p_\mu\gamma\rrbracket^{s,m }_\tau(\wh{x},\wt{x})dm(\wh{x})\\
&\ \ \ \ \ \ \ \ \ \ \ \ \ \ \ \ \ \ \ \ +\Big(\big(\p_\alpha \p_x f\big)^{s,m }_\tau(x)\cdot Z^{s,m }_\tau(x)+\big(\p_\alpha\p_x g\big)^{s,m }_\tau(x)\Big)\Big(\big(\p_\mu\alpha\big)^{s,m }_\tau(x,\wt{x})+\big(\p_z\alpha\big)^{s,m }_\tau(x)\cdot\llbracket \p_\mu\gamma\rrbracket^{s,m }_\tau(x,\wt{x})\Big)\\\no 
&\ \ \ \ \ \ \ \ \ \ \ \ \ \ \ \ \ \ \ \ +\big(\p_x f\big)^{s,m }_\tau(x)\cdot \llbracket \p_\mu\gamma\rrbracket^{s,m }_\tau(x,\wt{x})\bigg)\bigg(X^{s,(1-\theta)m+\theta m'}_\tau(\wt{x})-X^{s,m}_\tau(\wt{x})- \theta\int_{\mathbb{R}^{d}}\mathcal{D}_mX^{s,m}_\tau(\wt{x},y)d(m'-m)(y)\bigg)dm(\wt{x})\Bigg|d\tau\\
&+ \int_s^T \Bigg|\int_{\R^{d_x}}\bigg(\Big(\big(\p_\alpha \p_\mu f\big)^{s,m }_\tau(\wh{x},x)\cdot Z^{s,m }_\tau(\wh{x})+\big(\p_\alpha\p_\mu g\big)^{s,m }_\tau(\wh{x},x)\Big)\big(\p_z\alpha\big)^{s,m }_\tau(\wh{x})+\big(\p_\mu f\big)^{s,m }_\tau(\wh{x},x)\bigg)\\
&\ \ \ \ \ \ \ \ \ \ \ \ \ \ \ \ \ \ \ \ \cdot\bigg(\gamma\Big(\tau,X^{s,m}_\tau(\wh{x}),X^{s,m}_\tau\ot\big((1-\theta)m+\theta m'\big)\Big)-\gamma\Big(\tau,X^{s,m}_\tau(\wh{x}),X^{s,m}_\tau\ot m\Big)-\theta\int_{\R^{d_x}}\llbracket \mathcal{D}_m\gamma\rrbracket^{s,m}_\tau(\wh{x},y)d(m'-m)(y)\bigg)dm(\wh{x})\\\no
&\ \ \ \ \ \ \ \ \ +\bigg(\big(\p_x f\big)^{s,m }_\tau(x)+\Big(\big(\p_\alpha \p_x f\big)^{s,m }_\tau(x)\cdot Z^{s,m }_\tau(x)+\big(\p_\alpha\p_x g\big)^{s,m }_\tau(x)\Big)\big(\p_z\alpha\big)^{s,m }_\tau(x)\bigg)\\
&\ \ \ \ \ \ \ \ \ \ \ \ \cdot\bigg(\gamma\Big(\tau,X^{s,m}_\tau(x),X^{s,m}_\tau\ot\big((1-\theta)m+\theta m'\big)\Big)-\gamma\Big(\tau,X^{s,m}_\tau(x),X^{s,m}_\tau\ot m\Big)-\theta\int_{\R^{d_x}}\llbracket \mathcal{D}_m\gamma\rrbracket^{s,m}_\tau(x,y)d(m'-m)(y)\bigg)\Bigg|d\tau+o(|\theta|)\\
\leq &\  3\Lambda_k\int_{\R^{d_x}} \bigg|X^{s,(1-\theta)m+\theta m'}_T(\wt{x})-X^{s,m}_T(\wt{x})- \theta\int_{\mathbb{R}^{d}}\mathcal{D}_mX^{s,m}_T(\wt{x},y)d(m'-m)(y)\bigg|dm(\wt{x})\\
&+2\Lambda_k\bigg|X^{s,(1-\theta)m+\theta m'}_T(x)-X^{s,m}_T(x)- \theta\int_{\mathbb{R}^{d}}\mathcal{D}_mX^{s,m}_T(x,y)d(m'-m)(y)\bigg|\\
&+M_3\int_s^T \int_{\R^{d_x}}\bigg|X^{s,(1-\theta)m+\theta m'}_\tau(\wt{x})-X^{s,m}_\tau(\wt{x})- \theta\int_{\mathbb{R}^{d}}\mathcal{D}_mX^{s,m}_\tau(\wt{x},y)d(m'-m)(y)\bigg|dm(\wt{x})d\tau\\
&+M_4\int_s^T\bigg|X^{s,(1-\theta)m+\theta m'}_\tau(x)-X^{s,m}_\tau(x)- \theta\int_{\mathbb{R}^{d}}\mathcal{D}_mX^{s,m}_\tau(x,y)d(m'-m)(y)\bigg|d\tau\\
&+ \Big((\wb{l}_fL^*_0+\wb{l}_g)L_\alpha+\Lambda_f\Big)\int_s^T \int_{\R^{d_x}}\bigg|\gamma\Big(\tau,X^{s,m}_\tau(\wh{x}),X^{s,m}_\tau\ot\big((1-\theta)m+\theta m'\big)\Big)-\gamma\Big(\tau,X^{s,m}_\tau(\wh{x}),X^{s,m}_\tau\ot m\Big)\\
&\ \ \ \ \ \ \ \ \ \ \ \ \ \ \ \ \ \ \ \ \ \ \ \ \ \ \ \ \ \ \ \ \ \ \ \ \ \ \ \ \ \ \ \ \ \ \ -\theta\int_{\R^{d_x}}\llbracket \mathcal{D}_m\gamma\rrbracket^{s,m}_\tau(\wh{x},y)d(m'-m)(y)\bigg|dm(\wh{x})d\tau\\\no
&+ \Big((\wb{l}_fL^*_0+\wb{l}_g)L_\alpha+\Lambda_f\Big)\int_s^T \bigg|\gamma\Big(\tau,X^{s,m}_\tau(x),X^{s,m}_\tau\ot\big((1-\theta)m+\theta m'\big)\Big)-\gamma\Big(\tau,X^{s,m}_\tau(x),X^{s,m}_\tau\ot m\Big)\\
&\ \ \ \ \ \ \ \ \ \ \ \ \ \ \ \ \ \ \ \ \ \ \ \ \ \ \ \ \ \ \ \ \ \ \ \ \ \ \ \ \ \ \ \ \ \ \ -\theta\int_{\R^{d_x}}\llbracket \mathcal{D}_m\gamma\rrbracket^{s,m}_\tau(x,y)d(m'-m)(y)\bigg|d\tau+o(|\theta|),\\\no
&\text{(by using \eqref{bdd_d1_f}, \eqref{bdd_d2_f}, \eqref{bdd_d2_g_1}, \eqref{bdd_d2_g_2}, \eqref{bdd_d2_k_1}, \eqref{p_xalpha}, \eqref{p_zalpha} and $\vertiii{\gamma}_2\leq L^*_0$)}\\
\leq &\ \Big(\big(3\Lambda_k+M_3(T-s)\big)M_1(T)+\big(5\Lambda_k+(M_3+M_4)(T-s)\big)M_2(T)(T-s)+(\wb{l}_fL^*_0+\wb{l}_g)L_\alpha+\Lambda_f\Big)\\
&\cdot\int_s^T \int_{\R^{d_x}}\bigg|\gamma\Big(\tau,X^{s,m}_\tau(\wh{x}),X^{s,m}_\tau\ot\big((1-\theta)m+\theta m'\big)\Big)-\gamma\Big(\tau,X^{s,m}_\tau(\wh{x}),X^{s,m}_\tau\ot m\Big)-\theta\int_{\R^{d_x}}\llbracket \mathcal{D}_m\gamma\rrbracket^{s,m}_\tau(\wh{x},y)d(m'-m)(y)\bigg|dm(\wh{x})d\tau\\\no
&+ \Big(\big(2\Lambda_k+M_4(T-s)\big)M_1(T)+(\wb{l}_fL^*_0+\wb{l}_g)L_\alpha+\Lambda_f\Big)\\
&\cdot\int_s^T \bigg|\gamma\Big(\tau,X^{s,m}_\tau(x),X^{s,m}_\tau\ot\big((1-\theta)m+\theta m'\big)\Big)-\gamma\Big(\tau,X^{s,m}_\tau(x),X^{s,m}_\tau\ot m\Big)-\theta\int_{\R^{d_x}}\llbracket \mathcal{D}_m\gamma\rrbracket^{s,m}_\tau(x,y)d(m'-m)(y)\bigg|d\tau+o(|\theta|),\\\no
&\text{(by using \eqref{eq_8_14}),}
\end{align*}\normalsize
where $M_1(T)$ was defined in \eqref{M_1}, $M_2(T)$ was defined in \eqref{M_2}, $M_3:=3(\wb{l}_fL^*_0+\Lambda_g)+3(\wb{l}_fL^*_0+\wb{l}_g)L_\alpha(1+L^*_0)+3\Lambda_fL^*_0$, and $M_4:=2(\wb{l}_fL^*_0+\Lambda_g)+(\wb{l}_fL^*_0+\wb{l}_g)L_\alpha(1+L^*_0)+\Lambda_fL^*_0$. 
Thus, by using Gr\"{o}nwall's inequality and \eqref{coro_6_4}, we have
\begin{align*}
&\int_{\R^{d_x}}\bigg|\gamma\Big(s,x,(1-\theta)m+\theta m'\Big)-\gamma\Big(s,x,m\Big)-\theta\int_{\R^{d_x}}\mathcal{D}_m\gamma (s,x,m)(y)d(m'-m)(y) \bigg|dm(x)=o(|\theta|),
\end{align*}\normalsize
and then
\begin{align*}
&\bigg|\gamma\Big(s,x,(1-\theta)m+\theta m'\Big)-\gamma\Big(s,x,m\Big)-\theta\int_{\R^{d_x}}\mathcal{D}_m\gamma (s,x,m)(y)d(m'-m)(y) \bigg|=o(|\theta|).
\end{align*}\normalsize
Therefore, \eqref{linD_gamma} is valid. Moreover, by \eqref{eq_8_14}, we also have $$\bigg|X^{t,(1-\theta)m+\theta m'}_s(x)-X^{t,m}_s(x)- \theta\int_{\mathbb{R}^{d}}\mathcal{D}_mX^{t,m}_s(x,y)d(m'-m)(y)\bigg|=o(|\theta|).$$ Thus \eqref{linD_X} is also valid.

In summary, by Steps 1-3, we have shown that $X^{t,m}_s(x)$, $\gamma(s,x,m)$, and thus $Z^{t,m}_s(x)=\gamma(s,X^{t,m}_s(x),X^{t,m}_s\ot m)$, have linear functional derivatives with respect to $m\in\mc{P}_2(\R^{d_x})$, and $\Big(\frac{\delta}{\delta m}\big(X^{t,m}_s(x)\big)(y),\frac{\delta}{\delta m}\gamma(s,x,m)(y)\Big)=\big(\mathcal{D}_mX^{t,m }_s(x,y),\mathcal{D}_m\gamma (s,x,m)(y)\big)$ up to  a constant. In addition, \eqref{eq_8_3_1} follows from the uniqueness (see Theorem \ref{Thm6_4}) of the solution of the FBODE system \eqref{MPIT}.
\end{proof}

Therefore, we have the following global unique existence of the solution of master equation \eqref{master}:
\begin{theorem}\label{thm_master}
(Global existence of the master equation \eqref{master}). Under Assumption ${\bf(a1)}$-${\bf(a3)}$ and Hypothesis ${\bf(h1)}$-${\bf(h2)}$, the master equation \eqref{master} has a global solution $u(t,x,m)$, which is differentiable in $t\in[0,T]$, $x\in\R^{d_x}$ and is $L$-differentiable in $m\in\mc{P}_2(\R^{d_x})$ with its corresponding derivatives $\p_t u(t,x,m)$, $\p_x u(t,x,m)$ and $\p_m u(t,x,m)(y)$ being continuous in their corresponding arguments. In addition, (i) $\p_t u(t,x,m)$ is differentiable in $x\in\R^{d_x}$; (ii) $\p_x u(t,x,m)$ is differentiable in $t\in[0,T]$, $x\in\R^{d_x}$ and $L$-differentiable in $m\in\mc{P}_2(\R^{d_x})$; (iii) and $\p_m u(t,x,m)(y)$ is differentiable in $x\in\R^{d_x}$.
\end{theorem}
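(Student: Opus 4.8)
The plan is to set $u(t,x,m):=\frac{\delta}{\delta m}v(t,m)(x)$, the linear functional derivative of the value function $v$ produced in Theorem~\ref{GlobalSol_Bellman}, and then to verify that this candidate solves the master equation~\eqref{master} by differentiating the Bellman equation~\eqref{bellman}. The existence of $\frac{\delta}{\delta m}v$ is furnished by Proposition~\ref{prop_3_7}: indeed $\partial_m v(t,m)(x)=Z^{t,m}_t(x)=\gamma(t,x,m)$ by \eqref{p_m_v}, which is globally Lipschitz in $(x,m)$ by \eqref{eq_7_43} (read at $s=t$), so the Fr\'echet derivative of the lifted version of $v(t,\cdot)$ is uniformly Lipschitz, while $\partial_m v(t,\cdot)(\cdot)=\gamma(t,\cdot,\cdot)$ is jointly continuous by Theorem~\ref{GlobalSol}; hence Proposition~\ref{prop_3_7} applies and gives \eqref{eq:4-111}, i.e. $\partial_x u(t,x,m)=\partial_m v(t,m)(x)=\gamma(t,x,m)$. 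The normalization~\eqref{eq_3_5_n} pins $u$ down, and since $v$ is unique by Theorem~\ref{Unique_B}, so is $u$.

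Next I would differentiate~\eqref{bellman}. The terminal datum $v(T,m)=\int_{\R^{d_x}}k(x,m)\,dm(x)$ has linear functional derivative $k(x,m)+\int_{\R^{d_x}}\frac{\delta}{\delta m}k(\wt{x},m)(x)\,dm(\wt{x})$ by the product rule for functionals of the form $\mu\mapsto\int\phi(\cdot,\mu)\,d\mu$, which is exactly $u(T,x,m)$. For the evolution part, writing $H(t,m):=\int_{\R^{d_x}}h(x,m,\partial_m v(t,m)(x))\,dm(x)$, using the interchange $\frac{\delta}{\delta m}\partial_t v=\partial_t\frac{\delta}{\delta m}v=\partial_t u$ (legitimate by the joint regularity of $v$ in $(t,m)$ from Theorem~\ref{GlobalSol_Bellman}) and applying the same product rule to $H$ — splitting the $m$-dependence of $h(x,m,\partial_m v(t,m)(x))$ into its explicit second argument and the dependence through its third argument $\partial_m v(t,m)(x)=\partial_x u(t,x,m)$ — one arrives at
\begin{align*}
\partial_t u(t,x,m)&+h(x,m,\partial_x u(t,x,m))\\
&+\int_{\R^{d_x}}\Big(\frac{\delta}{\delta m}h(\wt{x},m,\partial_{\wt{x}}u(t,\wt{x},m))(x)+\partial_z h(\wt{x},m,\partial_{\wt{x}}u(t,\wt{x},m))\cdot\frac{\delta}{\delta m}\big(\partial_m v(t,m)(\wt{x})\big)(x)\Big)dm(\wt{x})=0.
\end{align*}
It then remains to identify $\frac{\delta}{\delta m}\big(\partial_m v(t,m)(\wt{x})\big)(x)$ with $\partial_m u(t,x,m)(\wt{x})$: since $\partial_m v(t,m)(\wt{x})=\partial_{\wt{x}}\frac{\delta}{\delta m}v(t,m)(\wt{x})$, this equals $\partial_{\wt{x}}\frac{\delta^2}{\delta m^2}v(t,m)(\wt{x},x)$, and the symmetry of the second-order linear functional derivative together with~\eqref{eq:4-111} turns it into $\partial_{\wt{x}}\frac{\delta^2}{\delta m^2}v(t,m)(x,\wt{x})=\partial_m u(t,x,m)(\wt{x})$. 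The existence of $\frac{\delta^2}{\delta m^2}v$ used here is precisely what the theorem preceding this one supplies, through the linear functional differentiability of $Z^{t,m}_t(x)=\partial_m v(t,m)(x)$ and an integration in $x$. Finally one needs the envelope-type identities $\partial_z h(x,\mu,z)=f(x,\mu,\wh\alpha(x,\mu,z))$, $\partial_x h(x,\mu,z)=\partial_x f(x,\mu,\wh\alpha)\cdot z+\partial_x g(x,\mu,\wh\alpha)$ and $\frac{\delta}{\delta m}h(x,\mu,z)(\wt{x})=\frac{\delta}{\delta m}f(x,\mu,\wh\alpha)(\wt{x})\cdot z+\frac{\delta}{\delta m}g(x,\mu,\wh\alpha)(\wt{x})$ for the Hamiltonian~\eqref{hamilton1}, all of which follow from the first order condition~\eqref{first_order_condition} and the regularity of $\wh\alpha$ in Proposition~\ref{A5}; substituting these into the displayed identity yields~\eqref{master}.

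For the regularity claims: $\partial_x u=\partial_m v=Z^{t,m}_t=\gamma(t,\cdot,\cdot)$ is differentiable in $t$, $x$ and $L$-differentiable in $m$ with continuous derivatives by Theorem~\ref{GlobalSol}, which is (ii); $\partial_m u(t,x,m)(y)$ satisfies $\partial_x\big(\partial_m u(t,x,m)(y)\big)=\partial_m\big(\partial_x u(t,x,m)\big)(y)=\partial_m\big(Z^{t,m}_t(x)\big)(y)=\partial_m\gamma(t,x,m)(y)$, continuous in $(t,x,m,y)$, which gives (iii); and $\partial_t u=\frac{\delta}{\delta m}\partial_t v$, whose existence and continuity come from the regularity of $\partial_t v$ in Theorem~\ref{GlobalSol_Bellman}, is differentiable in $x$ by reading it off the master equation $\partial_t u=-h(x,m,\partial_x u)-\int_{\R^{d_x}}(\cdots)\,dm$, whose right-hand side is differentiable in $x$ using the differentiability in $x$ of $h$, of $\partial_x u=\gamma$, of $\frac{\delta}{\delta m}h(\wt{x},m,\cdot)(x)$ (from the second-order regular differentiability in Assumptions~${\bf(a1)}$--${\bf(a3)}$) and of $\partial_m u(t,x,m)(\wt{x})$ just obtained, which is (i).

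I expect the main obstacle to be the rigorous bookkeeping of the functional chain and product rules in differentiating $H(t,m)=\int h(x,m,\partial_m v(t,m)(x))\,dm(x)$ — in particular, justifying the interchange of $\frac{\delta}{\delta m}$ with $\partial_t$ and with the $m$-integration, and the symmetry argument identifying $\frac{\delta}{\delta m}\big(\partial_m v(t,m)(\wt{x})\big)(x)$ with $\partial_m u(t,x,m)(\wt{x})$; these all rest on the second-order linear functional differentiability of $v$ delivered by the preceding theorem rather than on any fresh a priori estimate, so the genuinely new content is organisational rather than analytic.
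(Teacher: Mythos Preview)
Your proposal is correct and follows essentially the same approach as the paper: define $u(t,x,m):=\frac{\delta}{\delta m}v(t,m)(x)$ and verify it solves~\eqref{master} by taking the linear functional derivative of the Bellman equation, with the required linear functional differentiability of $v$ and of $\partial_m v$ supplied by Proposition~\ref{prop_3_7} and the preceding theorem respectively. The paper's own proof is in fact a two-line sketch (``define $u=\frac{\delta}{\delta m}v$ and check directly''), so your write-up already contains more detail than the original; the organisational bookkeeping you flag as the main obstacle is exactly what the paper leaves to the reader.
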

\begin{proof}
Define $u(t,x,m):=\frac{\delta}{\delta m}v(t,m)(x)$ where $v(t,m)$ is the global solution of the Bellman equation \eqref{nomin_1} constructed in Theroem \ref{GlobalSol_Bellman}. Then one can check directly that this $u(t,x,m)$ is a global solution to the master equation \eqref{master}, and it has the regularity deduced from that of $v(t,m)(x)$.
\end{proof}

Moreover, since $\p_x u(t,x,m)=\p_m v(t,m)(x)=Z^{t,m}_t(x)=\gamma(t,x,m)$, and the corresponding pair $\Big(X^{t,m}_s(x),Z^{t,m}_s(x)=\gamma(s,X^{t,m}_s(x),X^{t,m}_s\ot m)\Big)$ solves the FBODE system \eqref{MPIT} whose solution is unique by Theorem \ref{Thm6_4}, the solution of master equation \eqref{master} constructed in Theorem \ref{thm_master} is also unique. More precisely, suppose there exist two different solutions to the master equation \eqref{master}, both of which have the regularity stated in Theorem \ref{thm_master}, let us denote them by $u(t,x,m)$ and $\wb{u}(t,x,m)$, and define $\gamma(t,x,m):=\p_x u(t,x,m)$, $\wb{\gamma}(t,x,m):=\p_x \wb{u}(t,x,m)$. Denote by $X^{t,m }_s(x)$ and by $\wb{X}^{t,m }_s(x)$ the respective unique solutions to 
\small\begin{align*}
X^{t,m }_s(x) = x+\int_t^s f\Big(X^{t,m }_\tau(x),X^{t,m }_\tau\ot m,\alpha\big(X^{t,m }_\tau(x),X^{t,m }_\tau\ot m,\gamma(\tau,X^{t,m }_\tau(x),X^{t,m }_\tau\ot m)\big)\Big)d\tau,\\
\wb{X}^{t,m }_s(x) = x+\int_t^s f\Big(\wb{X}^{t,m }_\tau(x),\wb{X}^{t,m }_\tau\ot m,\alpha\big(\wb{X}^{t,m }_\tau(x),\wb{X}^{t,m }_\tau\ot m,\wb{\gamma}(\tau,\wb{X}^{t,m }_\tau(x),\wb{X}^{t,m }_\tau\ot m)\big)\Big)d\tau.
\end{align*}\normalsize
Also define $Z^{s,m }_\tau(x):=\gamma\big(\tau,X^{s,m }_\tau(x),X^{s,m }_\tau\ot m\big)$ and $\wb{Z}^{s,m }_\tau(x):=\wb{\gamma}\big(\tau,\wb{X}^{s,m }_\tau(x),\wb{X}^{s,m }_\tau\ot m\big)$. Then, by taking the Euclidean derivative $\p_x$ to the master equation \eqref{master} and then applying the method of characteristics, similar to establishing \eqref{eq_9_17_1}, the derived equation, one can find that both $\Big(X^{t,m}_s(x),Z^{t,m}_s(x)\Big)$ and $\Big(\wb{X}^{t,m}_s(x),\wb{Z}^{t,m}_s(x)\Big)$ solve the same FBODE system \eqref{MPIT} and thus, by Theorem \ref{Thm6_4}, $\Big(X^{t,m}_s(x),Z^{t,m}_s(x)\Big)=\Big(\wb{X}^{t,m}_s(x),\wb{Z}^{t,m}_s(x)\Big)$, and hence, $\gamma(t,x,m)=\wb{\gamma}(t,x,m)$. Therefore, by using the master equation \eqref{master} again,
we have $\p_t\big(u(t,x,m)-\wb{u}(t,x,m)\big)=0$ for all $t\in[0,T]$ and $u(T,x,m)-\wb{u}(T,x,m)=0$, and this implies that $u(t,x,m)=\wb{u}(t,x,m)$ for all $t$.

In summary, we have the following uniqueness of the solution to the master equation \eqref{master}.
\begin{theorem}\label{Unique_M} (Uniqueness of the solution to the master equation \eqref{master})
Under Assumption ${\bf(a1)}$-${\bf(a3)}$ and Hypothesis ${\bf(h1)}$-${\bf(h2)}$, the master equation \eqref{master} has at most one solution $u(t,x,m)$ that has the regularity stated in Theorem \ref
{thm_master}.
\end{theorem}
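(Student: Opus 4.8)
The plan is to reduce uniqueness of classical solutions of the master equation \eqref{master} to the global uniqueness of the FBODE system \eqref{MPIT}, which is already available from Theorem \ref{Thm6_4}. Suppose $u(t,x,m)$ and $\wb{u}(t,x,m)$ are two solutions with the regularity asserted in Theorem \ref{thm_master}. Set $\gamma(t,x,m):=\p_x u(t,x,m)$ and $\wb{\gamma}(t,x,m):=\p_x \wb{u}(t,x,m)$; by the stated regularity both are differentiable in $x$, $L$-differentiable in $m$, and jointly continuous, so the forward flows $X^{t,m}_s(x)$ and $\wb{X}^{t,m}_s(x)$ driven by $\gamma$ and $\wb\gamma$ through \eqref{xeq} are well defined, and we put $Z^{t,m}_s(x):=\gamma(s,X^{t,m}_s(x),X^{t,m}_s\ot m)$ and $\wb{Z}^{t,m}_s(x):=\wb{\gamma}(s,\wb{X}^{t,m}_s(x),\wb{X}^{t,m}_s\ot m)$.

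First I would differentiate the master equation \eqref{master} in the state variable $x$, using $h(x,\mu,z)=f(x,\mu,\wh\alpha)\cdot z+g(x,\mu,\wh\alpha)$ together with the first order condition \eqref{first_order_condition}, which annihilates the $\p_\alpha$-contributions; this yields a transport-type equation for $\gamma=\p_x u$. Evaluating that equation along the characteristic curves $s\mapsto X^{t,m}_s(x)$ — exactly as was done to pass from \eqref{nomin_1} to \eqref{eq_9_17_1} in the proof of Theorem \ref{Unique_B} — one checks that $\frac{d}{ds}Z^{t,m}_s(x)$ coincides term-by-term with the right-hand side of the backward equation in \eqref{MPIT}, while differentiating the terminal datum $u(T,x,m)=k(x,m)+\int_{\R^{d_x}}\frac{\delta}{\delta m}k(\wt{x},m)(x)\,dm(\wt{x})$ in $x$ gives $Z^{t,m}_T(x)=\p_x k(X^{t,m}_T(x),X^{t,m}_T\ot m)+\int_{\R^{d_x}}\p_\mu k(X^{t,m}_T(\wt{x}),X^{t,m}_T\ot m)(X^{t,m}_T(x))\,dm(\wt{x})$. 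Hence $(X^{t,m}_s(x),Z^{t,m}_s(x))$ solves \eqref{MPIT}, and the identical computation shows $(\wb{X}^{t,m}_s(x),\wb{Z}^{t,m}_s(x))$ solves the same system. By Theorem \ref{Thm6_4} the two pairs coincide, so $\gamma\equiv\wb\gamma$, i.e. $\p_x u=\p_x\wb{u}$ on $[0,T]\times\R^{d_x}\times\mc{P}_2(\R^{d_x})$; plugging this (and the consequent equality of the Hamiltonian terms) back into \eqref{master} gives $\p_t\big(u(t,x,m)-\wb{u}(t,x,m)\big)=0$ for all $(t,x,m)$, while the common terminal data force $u(T,\cdot,\cdot)=\wb{u}(T,\cdot,\cdot)$, so integrating in $t$ yields $u\equiv\wb{u}$.

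The main obstacle is the rigorous execution of the characteristics step. One must verify that the regularity in Theorem \ref{thm_master} is precisely enough to (i) differentiate \eqref{master} in $x$ so that all the $\frac{\delta}{\delta m}h$ and $\p_z h$ terms are meaningful, (ii) legitimately restrict the resulting equation to the flow $X^{t,m}_s(x)$ and identify it term-by-term with the backward ODE of \eqref{MPIT}, handling with care the mean-field integrals against the pushed-forward measure $X^{t,m}_s\ot m$, and (iii) control the $\p_m u(t,x,m)(\wt{x})$ contribution, which must be identified with $\p_m\big(Z^{t,m}_t(x)\big)(\wt{x})=\p_m\gamma(t,x,m)(\wt{x})$ as in the proof of Theorem \ref{Unique_B}. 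Once these identifications are in place, the conclusion is a mechanical consequence of the already-established global uniqueness of \eqref{MPIT} in Theorem \ref{Thm6_4} and of the Bellman-equation uniqueness machinery in Theorem \ref{Unique_B}.
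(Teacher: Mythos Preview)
Your proposal is correct and follows essentially the same route as the paper: define $\gamma=\p_x u$ and $\wb\gamma=\p_x\wb u$, build the associated forward flows and set $Z=\gamma\circ X$, differentiate the master equation in $x$ (using the first order condition to kill the $\p_\alpha$-terms) and run the method of characteristics as in the derivation of \eqref{eq_9_17_1} to see both pairs solve \eqref{MPIT}, invoke Theorem \ref{Thm6_4}, and finish by $\p_t(u-\wb u)=0$ together with the shared terminal datum. The paper's argument is exactly this, with the characteristics identification sketched by reference to \eqref{eq_9_17_1} rather than spelled out; your list of technical checkpoints (i)--(iii) accurately captures what that reference is covering.
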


\section{A Non-linear-quadratic Example}\label{sec:nonLQ}
We here provide a non-trivial non-linear-quadratic example in which the drift function $f$, the running cost function $g$ and the terminal cost function $k$ are defined as follows; for simplicity, we just consider the process $x_t$ living in $\R$. Let $\varepsilon_2\in(0,1/4]$, $\varepsilon_3\in(0,1/8]$, $\varepsilon_4\in(0,1/2]$ be fixed constants. Define, for any $x$, $\alpha\in\R$, $\mu\in\mathcal{P}_2(\R)$, 
\begin{align}\label{example_f}
f(x,\mu,\alpha):= &\ x+ \alpha +\int_{\R} yd\mu(y)+\varepsilon_1 x \exp\Big(-x^2-\alpha^2-\Big(\int_\R \phi(y)d\mu(y)\Big)^2\Big),\\\label{example_g}
g(x,\mu,\alpha):=&\ \frac{1}{2}\alpha^2+\frac{1}{2} x^2-\varepsilon_2\bigg(\int_{\R} yd\mu(y)\bigg)^2+\varepsilon_3\alpha \int_{\R} y d\mu(y),\\\label{example_k}
k(x,\mu):=&\ \frac{1}{2}x^2-\varepsilon_4 x \int_{\R} yd\mu(y),
\end{align}
where the constant $\varepsilon_1\in(0,1)$ will be chosen sufficiently small below so that $(\bf{h2})$ will be satisfied, and 
\begin{align}
\phi(y):=\begin{cases}
|y|,\ \text{ for }|y|\geq 1;\\
-\frac{1}{8}y^4+\frac{3}{4}y^2+\frac{3}{8},\ \text{ for }|y|<1.
\end{cases}
\end{align}
\begin{remark}
The example of the drift function $f(x,\mu,\alpha)$ is clearly nonlinear and non-separable in all $x$, $\mu$ and $\alpha$. The running cost $g(x,\mu,\alpha)$ is non-separable in $\alpha$ and $\mu$ and it is not convex in $\mu$ but it is of quadratic-growth in $x$. The terminal cost $k(x,\mu)$ is not convex in $\mu$ yet it is of quadratic-growth in $x$. To the best of our knowledge, this example cannot be covered as a workable one in the theory proposed by the existing literature, such as \cite{cardaliaguet2019master}, \cite{gangbo2022mean} and \cite{gangbo2020global}.
\end{remark}
One can check that $\phi\in C^2(\R)$ satisfies, $\phi(y)\geq |y|$ for all $y\in\R$, 
\begin{align}
\phi'(y)=\begin{cases}
1,\ \text{ for }y\geq 1;\\
-1,\ \text{ for }y\leq 1;\\
-\frac{1}{2}y^3+\frac{3}{2}y,\ \text{ for }|y|<1,
\end{cases}\text{ and  }\ \  
\phi''(y)=\begin{cases}
0,\ \text{ for }|y|\geq 1,\\
-\frac{3}{2}y^2+\frac{3}{2},\ \text{ for }|y|<1.
\end{cases}
\end{align}
In addition, we can check that $\|\mu\|_1\leq \int_\R \phi(y)d\mu(y)$. 
It follows from routine calculations that
\small\begin{align*}
(i)\ &\p_x f(x,\mu,\alpha)=1+\varepsilon_1(1-2 x^2) \exp\Big(-x^2-\alpha^2-\Big(\int_\R \phi(y)d\mu(y)\Big)^2\Big);\\ 
&\p_\mu f(x,\mu,\alpha)(\wt{x})= 1-2\varepsilon_1 \phi'(\wt{x}) x \int_\R \phi(y)d\mu(y)  \exp\Big(-x^2-\alpha^2-\Big(\int_\R \phi(y)d\mu(y)\Big)^2\Big);\\ 
&\p_\alpha f(x,\mu,\alpha)= 1-2\varepsilon_1 \alpha x \exp\Big(-x^2-\alpha^2-\Big(\int_\R \phi(y)d\mu(y)\Big)^2\Big);\\
&\p_x\p_x f(x,\mu,\alpha)=-2x\Big(3-2x^2\Big)\varepsilon_1  \exp\Big(-x^2-\alpha^2-\Big(\int_\R \phi(y)d\mu(y)\Big)^2\Big);\\
&\p_x\p_\mu f(x,\mu,\alpha)(\wt{x})=-2\varepsilon_1 \phi'(\wt{x}) (1-2x^2) \int_\R \phi(y)d\mu(y)  \exp\Big(-x^2-\alpha^2-\Big(\int_\R \phi(y)d\mu(y)\Big)^2\Big)=\p_\mu \p_x f(x,\mu,\alpha)(\wt{x});\\ 
&\p_\mu\p_\mu f(x,\mu,\alpha)(\wt{x},\wh{x})=-2\varepsilon_1 \phi'(\wt{x})\phi'(\wh{x}) x (1-2\Big(\int_\R \phi(y)d\mu(y)\Big)^2)  \exp\Big(-x^2-\alpha^2-\Big(\int_\R \phi(y)d\mu(y)\Big)^2\Big);\\ 
&\p_{\wt{x}}\p_\mu f(x,\mu,\alpha)(\wt{x})=-2\varepsilon_1 \phi''(\wt{x}) x \int_\R \phi(y)d\mu(y)  \exp\Big(-x^2-\alpha^2-\Big(\int_\R \phi(y)d\mu(y)\Big)^2\Big);\\
&\p_x\p_\alpha f(x,\mu,\alpha)=-2\varepsilon_1 \alpha (1-2x^2) \exp\Big(-x^2-\alpha^2-\Big(\int_\R \phi(y)d\mu(y)\Big)^2\Big)=\p_\alpha \p_x f(x,\mu,\alpha);\\
&\p_\alpha\p_\mu f(x,\mu,\alpha)(\wt{x})=4\varepsilon_1 \phi'(\wt{x}) x\alpha \int_\R \phi(y)d\mu(y)  \exp\Big(-x^2-\alpha^2-\Big(\int_\R \phi(y)d\mu(y)\Big)^2\Big)=\p_\mu\p_\alpha f(x,\mu,\alpha)(\wt{x});\\ 
&\p_\alpha\p_\alpha f(x,\mu,\alpha)=-2\varepsilon_1 (1-2\alpha^2) x \exp\Big(-x^2-\alpha^2-\Big(\int_\R \phi(y)d\mu(y)\Big)^2\Big).\\ 
(ii)\ &\p_x g(x,\mu,\alpha)=x,\ \p_\alpha g(x,\mu,\alpha)=\alpha+\varepsilon_3\int_{\R} y d\mu(y);\\
&\p_\mu g(x,\mu,\alpha)(\wt{x})=-2\varepsilon_2\int_{\R} yd\mu(y)+\varepsilon_3\alpha;\\
&G(x,\mu,\alpha)=g(x,\mu,\alpha)+\int_{\R}\frac{\delta}{\delta\mu}g(\wt{x},\mu,\alpha)(x)d\mu(\wt{x})\\
&\ \ \ \ \ \ \ \ \ \ \ \ \ =\frac{1}{2}\alpha^2+\frac{1}{2} x^2-\varepsilon_2\bigg(\int_{\R} yd\mu(y)\bigg)\left(2x+\int_{\R} yd\mu(y)\right)+\varepsilon_3\alpha \left(x+\int_{\R} y d\mu(y)\right);\\
&\p_\alpha\p_\alpha g(x,\mu,\alpha)=\p_x\p_x g(x,\mu,\alpha)=1,\ \p_\alpha\p_\mu g(x,\mu,\alpha)(\wt{x})=\p_\mu\p_\alpha g(x,\mu,\alpha)(\wt{x})=\varepsilon_3;\\ 
&\p_x\p_\alpha g(x,\mu,\alpha)=\p_\alpha\p_x g(x,\mu,\alpha)=\p_{x}\p_\mu g(x,\mu,\alpha)(\wt{x})=\p_{\mu} \p_x g(x,\mu,\alpha)(\wt{x})=\p_{\wt{x}}\p_\mu g(x,\mu,\alpha)(\wt{x})=0;\\
&\p_\mu\p_\mu g(x,\mu,\alpha)(\wt{x},\wh{x})=-2\varepsilon_2;\\
&\p_x\p_x G(x,\mu,\alpha)=1,\ \p_x\p_\mu G(x,\mu,\alpha)(\wt{x})=\p_\mu\p_x G(x,\mu,\alpha)(\wt{x})=-2\varepsilon_2.\\
(iii)\ &\p_x k(x,\mu)=x-\varepsilon_4 \int_{\R} y d\mu(y),\ \p_\mu k(x,\mu)(\wt{x})= -\varepsilon_4 x,\ \p_{xx} k(x,\mu)=1,\ \p_{\mu\mu} k(x,\mu)(\wt{x},\wh{x})=0;\\
&\p_{x}\p_\mu k(x,\mu)(\wt{x})=\p_{\mu} \p_x k(x,\mu)(\wt{x})=-\varepsilon_4,\ \p_{\wt{x}}\p_\mu k(x,\mu)(\wt{x})=0;\\
&K(x,\mu)=k(x,\mu)+\int_{\R}\frac{\delta}{\delta\mu}k(\wt{x},\mu)(x)d\mu(\wt{x})=\left(\frac{1}{2}-\varepsilon_4\right)x^2-\varepsilon_4 x \int_{\R} yd\mu(y);\\
&\p_x\p_x K(x,\mu)=1,\ \p_x\p_\mu K(x,\mu)(\wt{x})=\p_\mu\p_x K(x,\mu)(\wt{x})=-\varepsilon_4.
\end{align*}\normalsize
All of these functions are clearly jointly Lipschitz continuous in their corresponding arguments.  Note also that $0\leq x\exp\big(-x^2\big)\leq \frac{1}{\sqrt{2}}\exp\big(-\frac{1}{2}\big)\leq \frac{1}{2}$ and $0\leq x^2\exp\big(-x^2\big)\leq \exp\big(-1\big)\leq \frac{1}{2}$. We emphisize here that both $G(x,\mu,\alpha)$ and $K(x,\mu)$ do not satisfy Lasry-Lions monotonicity conditions since $\p_x\p_\mu G(x,\mu,\alpha)(\wt{x})=-2\varepsilon_2<0$ and $\p_x\p_\mu K(x,\mu)(\wt{x})=-\varepsilon_4<0$.

Now, we shall check that whether the $f$, $g$ and $k$ defined in \eqref{example_f}-\eqref{example_k} do satisfy Assumption ${\bf(a1)}$-${\bf(a3)}$ and Hypothesis ${\bf(h1)}$-${\bf(h2)}$ in the following:\\
${\bf(a1)}$(i) Since $|\p_\alpha f(x,\mu,\alpha)|^2 \geq (1-\frac{1}{2}\varepsilon_1)^2$, then \eqref{positive_f} is valid with $\lambda_f:=\frac{1}{4}$ for any $0<\varepsilon_1\leq 1$.\\
(ii) Since $|\p_x f(x,\mu,\alpha)| \leq 1+2\varepsilon_1$, $|\p_\mu f(x,\mu,\alpha)(\wt{x})| \leq  1+\frac{1}{2}\varepsilon_1$ and $|\p_\alpha f(x,\mu,\alpha)| \leq 1+\frac{1}{2}\varepsilon_1$, then \eqref{bdd_d1_f} is valid with $\Lambda_f:=3$ for any $0<\varepsilon_1\leq 1$.\\
(iii) Using the decay nature of $\exp\Big(-x^2-\alpha^2-\Big(\int_\R \phi(y)d\mu(y)\Big)^2\Big)$ and the fact that $\|\mu\|_1\leq \int_\R \phi(y)d\mu(y)$, one can show that there exists a generic positive constant $C_1$ such that \eqref{bdd_d2_f} is valid with $\wb{l}_f:=C_1\varepsilon_1$.\\
(iv) The Lipschitz continuity can be verified in the same manner as in ${\bf(a1)}$(iii).\\
${\bf(a2)}$(i) Since $\p_x\p_x g(x,\mu,\alpha)=1$, \eqref{positive_g_alpha} is valid with $\lambda_g:=1$.\\
(ii) First of all, since $\p_x\p_x G(x,\mu,\alpha)=1$, \eqref{positive_g_x} is valid with $\lambda_g=1$; since $\p_x\p_\mu G(x,\mu,\alpha)(\wt{x})=\p_\mu\p_x G(x,\mu,\alpha)(\wt{x})\\=-2\varepsilon_2$, \eqref{positive_g_mu_1} is valid with $l_g:=2\varepsilon_2\leq \frac{1}{2}\lambda_g=\frac{1}{2}$ for any $\varepsilon_2\leq\frac{1}{4}$; in addition, $\int_{\R}G(x,\mu,\alpha)-G(x,\mu',\alpha)d(\mu-\mu')(x)=-2\varepsilon_2\left(\int_{\R} yd(\mu-\mu')(y)\right)^2$, which implies \eqref{positive_g_mu}.\\
(iii) Using the explicit formulae for all second-order derivatives of $g$, one can check that \eqref{bdd_d2_g_1} is valid with $\Lambda_g:=3/2$ for any $\varepsilon_2\leq\frac{1}{4}$ and \eqref{bdd_d2_g_2} is valid with $\wb{l}_g:=\varepsilon_3$.\\
${\bf(a3)}$ (i) Since $\p_x\p_x K(x,\mu)=1$, \eqref{positive_k} are valid with $\lambda_k:=1$.\\
(ii) Since $\p_x\p_\mu K(x,\mu)(\wt{x})=\p_\mu\p_x K(x,\mu)(\wt{x})=-\varepsilon_4$, \eqref{positive_k_mu_1} is valid with $l_k:=\varepsilon_4\leq \frac{1}{2}\lambda_k=\frac{1}{2}$ for any $\varepsilon_4\leq\frac{1}{2}$; in addition, $\int_{\R}K(x,\mu)-K(x,\mu')d(\mu-\mu')(x)=-\varepsilon_4 \left(\int_{\R} yd(\mu-\mu')(y)\right)^2$, which implies \eqref{positive_k_mu}.\\
(iii) Using the explicit formulae for all second-order derivatives of $k$, one can check that \eqref{bdd_d2_k_1} is valid with $\Lambda_k:=3/2$ for any $\varepsilon_4\leq\frac{1}{2}$.\\
${\bf(h1)}$ Evaluating \eqref{example_f} and formulae for derivatives of $g$ and $k$ at $(x,\mu,\alpha,\wt{x})=(0,\delta_0,0,0)$ shows the fulfillment of ${\bf(h1)}$.\\
${\bf(h2)}$ It is worth noting that for the $\wb{l}_g$ and $\lambda_g$ chosen above, we have $\wb{l}_g=\varepsilon_3\leq \frac{1}{8}=\frac{1}{8}\lambda_g$. Furthermore, the constant $L^*_0$ defined in \eqref{L_star_0} 
depends only on $\lambda_f=1/4$, $\Lambda_f=3$, $\lambda_g=1$, $\Lambda_g=3/2$, $\lambda_k=1$ and $\Lambda_k=3/2$ but not on $\wb{l}_f$ or $\varepsilon_1$, so we can always choose an $\varepsilon_1$ small enough, such that $\wb{l}_f=C_1\varepsilon_1\leq \frac{1}{40 \max\{\wb{L}_k,L^*_0\}}\lambda_g=\frac{1}{40 L^*_0}$. Therefore, ${\bf(h2)}$ is satisfied.

\section*{Acknowledgment}
Alain Bensoussan is supported by the National Science Foundation under grants NSF-DMS-1905449
and NSF-DMS-2204795, and grant from the SAR Hong Kong RGC GRF 14301321.
Tak Kwong Wong was partially supported by the HKU Seed Fund for Basic Research under the project code 201702159009, the Start-up Allowance for Croucher Award Recipients, and Hong Kong General Research Fund (GRF) grants with project numbers 17306420, 17302521, and 17315322.
Phillip Yam acknowledges the financial supports from HKSAR-GRF 14301321 with the project title ``General Theory for Infinite Dimensional Stochastic Control: Mean Field and Some Classical Problems'', HKSAR-GRF 14300123 with the project title ``Well-posedness of Some Poisson-driven Mean Field Learning Models and their Applications'', and NSF-DMS-2204795. He also thanks Columbia University for the kind invitation to be a visiting faculty member in the Department of Statistics during his sabbatical leave. Hongwei Yuan thanks the Department of Statistics of The Chinese University of Hong Kong and the Department of Mathematics of University of Macau for the financial support.

\section{Appendix}
\subsection{Proving the Identity \eqref{lifted:estimate_1}}\label{lifted:estimate_1_detail}

By further differentiating the first order condition in the lifted version
\eqref{lifted:First_Order_Con}, 
we obtain, for any $\wt{A}\in L^{2,d_x,d_\alpha}_{m_0}$, $\wt{X},\,\wt{Z}\in L^{2,d_x}_{m_0}$,
\begin{align}\no
(i)\ &\left\langle D_XD_A  F (X,A (X,Z))(\wt{X},\wt{A})+D_AD_A  F (X,A (X,Z))\big(D_XA (X,Z)(\wt{X}),\wt{A}\big), Z\right\rangle_{L^{2,d_x}_{m_0}}  \\\label{D_XFOC}
&\ \ + D_XD_A \wb{G} (X,A (X,Z))(\wt{X},\wt{A})+D_AD_A \wb{G} (X,A (X,Z))\big(D_X A (X,Z)(\wt{X}),\wt{A}\big)=0,\\\no
(ii)\ &\left\langle D_AD_A  F (X,A (X,Z))\big(D_ZA (X,Z)(\wt{Z}),\wt{A}\big), Z\right\rangle_{L^{2,d_x}_{m_0}}+\left\langle D_A  F (X,A (X,Z))(\wt{A}), \wt{Z}\right\rangle_{L^{2,d_x}_{m_0}}  \\\label{D_AFOC}
&\ \ +D_AD_A \wb{G} (X,A (X,Z))\big(D_Z A (X,Z)(\wt{Z}),\wt{A}\big)=0.
\end{align}
We next use Riesz representation theorem to define the inverse mapping of 
\small\begin{align}\label{D_AA_L_App}
\widecheck{A}\in L^{2,d_x,d_\alpha}_{m_0}\mapsto &\wh{A}:=\left\langle D_AD_A F (X,A(X,Z))(\widecheck{A},\cdot) , Z\right\rangle_{L^{2,d_x}_{m_0}}+ D_AD_A \wb{G}(X,A(X,Z))(\widecheck{A},\cdot)\in \mathcal{L}(L^{2,d_x,d_\alpha}_{m_0};\R)\simeq L^{2,d_x,d_\alpha}_{m_0};
\end{align}\normalsize 
under the assumptions that (i) the running cost function is convex in the control (also see \eqref{positive_g_alpha}), that is $D_AD_A \wb{G}(X,A)(\wb{A},\wb{A})\geq \Lambda_g \|\wb{A}\|_{L^{2,d_x,d_\alpha}_{m_0}}^2$ for all $\wb{A}\in L^{2,d_x,d_\alpha}_{m_0}$; and (ii) the second order derivative of the drift function with respect to the control is bounded and decays at the proximity of infinity (also see \eqref{bdd_d2_f}), i.e. $\|D_AD_A F(X,A)(\wb{A},\wb{A})\|_{L^{2,d_x}_{m_0}}\leq l_F\|\wb{A}\|_{L^{2,d_x,d_\alpha}_{m_0}}^2/(1+\|X\|_{L^{2,d_x}_{m_0}})$ for all $\wb{A}\in L^{2,d_x,d_\alpha}_{m_0}$; these two assumptions can then warrant the Hilbert space isomorphism of \eqref{D_AA_L_App} when $(X,Z)$ satisfies a {\it cone condition} $\|Z\|_{L^{2,d_x}_{m_0}}\leq C_0(1+\|X\|_{L^{2,d_x}_{m_0}})$ for some positive constant $C_0$ (also see \eqref{eq_7_44}) and $\Lambda_g-C_0l_F>0$.
By then we can define the inverse mapping
\small
\begin{align}\label{InvMap}
&\bigg(\left\langle D_AD_A F (X,A(X,Z)), Z\right\rangle_{L^{2,d_x}_{m_0}} + D_AD_A \wb{G} (X,A(X,Z))\bigg)^{-1}:\\\no
&\wh{A} \in L^{2,d_x,d_\alpha}_{m_0}\mapsto \widecheck{A}:=\bigg(\left\langle D_AD_A F (X,A(X,Z)), Z\right\rangle_{L^{2,d_x}_{m_0}} + D_AD_A \wb{G} (X,A(X,Z))\bigg)^{-1}(\wh{A},\cdot)\in \mathcal{L}(L^{2,d_x,d_\alpha}_{m_0};\R)\simeq L^{2,d_x,d_\alpha}_{m_0},
\end{align}\normalsize
where the isomorphism $\mathcal{L}(L^{2,d_x,d_\alpha}_{m_0};\R)\simeq L^{2,d_x,d_\alpha}_{m_0}$ actually means, for all $\wb{A}\in L^{2,d_x,d_\alpha}_{m_0}$, \small$$\left\langle \widecheck{A},\wb{A}\right\rangle_{L^{2,d_x,d_\alpha}_{m_0}}=\left(\left\langle D_AD_A F (X,A(X,Z)), Z\right\rangle_{L^{2,d_x}_{m_0}} + D_AD_A \wb{G} (X,A(X,Z))\right)^{-1}(\wh{A},\wb{A}).\ $$\normalsize In other words, \eqref{InvMap} means, for all $\wb{A}\in L^{2,d_x,d_\alpha}_{m_0}$,\small
\begin{align}
\left\langle \wh{A},\wb{A}\right\rangle_{L^{2,d_x,d_\alpha}_{m_0}}=\left\langle D_AD_A F (X,A(X,Z))(\wt{A},\wh{A}) , Z\right\rangle_{L^{2,d_x}_{m_0}} + D_AD_A \wb{G} (X,A(X,Z))(\widecheck{A},\wb{A}).
\end{align}
\normalsize
Therefore, \eqref{D_XFOC} and \eqref{D_AFOC} means that, for any $\wt{A}\in L^{2,d_x,d_\alpha}_{m_0}$, $\wt{X},\,\wt{Z}\in L^{2,d_x}_{m_0}$,
\footnotesize\begin{align}\no
\left\langle D_XA(X,Z)(\wt{X}),\wt{A}\right\rangle_{L^{2,d_x,d_\alpha}_{m_0}}=&\ \bigg(\left\langle  D_AD_A  F (X,A(X,Z)) , Z\right\rangle_{L^{2,d_x}_{m_0}} 
  +D_AD_A \wb{G} (X,A(X,Z))\bigg)^{-1}\\\label{DYA}
&\ \ \bigg(-\left\langle D_XD_A  F (X,A(X,Z))(\wt{X},\wt{A}), Z\right\rangle_{L^{2,d_x}_{m_0}}-D_XD_A \wb{G} (X,A(X,Z))(\wt{X},\wt{A}),\wt{A}\bigg),\\\no
\left\langle D_ZA(X,Z)(\wt{Z}),\wt{A}\right\rangle_{L^{2,d_x,d_\alpha}_{m_0}}
=&\ \bigg(\left\langle  D_AD_A  F (X,A(X,Z)) , Z\right\rangle_{L^{2,d_x}_{m_0}} 
  +D_AD_A \wb{G} (X,A(X,Z))\bigg)^{-1}\\\label{DZA}
&\ \ \bigg(-\left\langle D_{A}  F (X,A(X,Z))(\wt{A}), \wt{Z}\right\rangle_{L^{2,d_x}_{m_0}},\wt{A}\bigg).
\end{align}\normalsize
For any $\wt{Y}\in L^{2,d_x}_{m_0}$,
\scriptsize\begin{align*}
&\bigg\langle D_Y Z^{t,Y}_T(\wt{Y}),D_Y X^{t,Y}_T(\wt{Y})\bigg\rangle_{ L^{2,d_x}_{m_0}}-\bigg\langle D_Y Z^{t,Y}_t(\wt{Y}),D_Y X^{t,Y}_t(\wt{Y})\bigg\rangle_{ L^{2,d_x}_{m_0}}\\
=&\ \int_t^T \frac{d}{ds}\bigg\langle D_Y Z^{t,Y}_s(\wt{Y}),D_Y X^{t,Y}_s(\wt{Y})\bigg\rangle_{ L^{2,d_x}_{m_0}}ds\\
=&\ \int_t^T \bigg(\bigg\langle D_Y Z^{t,Y}_s(\wt{Y}),\frac{d}{ds}D_Y X^{t,Y}_s(\wt{Y})\bigg\rangle_{ L^{2,d_x}_{m_0}}+\bigg\langle \frac{d}{ds}D_Y Z^{t,Y}_s(\wt{Y}),D_Y X^{t,Y}_s(\wt{Y})\bigg\rangle_{ L^{2,d_x}_{m_0}}\bigg)ds\text{ (by using \eqref{lifted:D_YXexistence})}\\
=&\ {\color{orange}\int_t^T} \bigg(\bigg\langle D_Y Z^{t,Y}_s(\wt{Y}),D_X F\left(X^{t,Y}_s,A\left(X^{t,Y}_s,Z^{t,Y}_s\right)\right)(D_Y X^{t,Y}_s(\wt{Y}))\bigg\rangle_{ L^{2,d_x}_{m_0}}\\
&{\color{blue}\ \ \ \ \ \ \ \ -\bigg\langle  D_AD_A F\left(X^{t,Y}_s,A\left(X^{t,Y}_s,Z^{t,Y}_s\right)\right)\Big(D_Z A\left(X^{t,Y}_s,Z^{t,Y}_s\right)(D_Y Z^{t,Y}_s(\wt{Y})),}\\
&{\color{purple}\ \ \ \ \ \ \ \ \ \ \ \ \ \ \ \ \ \ \ \ \ \ \ \ \ \ \ \ \ \ \ \ \ \ \ \ \ \ \ \ \ \ \ \ \ \ \ \ \ \ \ \ \ \ \ \ D_X A\left(X^{t,Y}_s,Z^{t,Y}_s\right)(D_Y X^{t,Y}_s)}{\color{blue}+D_Z A\left(X^{t,Y}_s,Z^{t,Y}_s\right)(D_Y Z^{t,Y}_s(\wt{Y})) \Big),Z^{t,Y}_s\bigg\rangle_{ L^{2,d_x}_{m_0}}}\\
&{\color{blue}\ \ \ \ \ \ \ \ - D_AD_A \wb{G}\left(X^{t,Y}_s,A\left(X^{t,Y}_s,Z^{t,Y}_s\right)\right)\Big(D_Z A\left(X^{t,Y}_s,Z^{t,Y}_s\right)(D_Y Z^{t,Y}_s(\wt{Y})),} \\
&{\color{purple}\ \ \ \ \ \ \ \ \ \ \ \ \ \ \ \ \ \ \ \ \ \ \ \ \ \ \ \ \ \ \ \ \ \ \ \ \ \ \ \ \ \ \ \ \ \ \ \ \ \ \ \ \ \ \ \ D_X A\left(X^{t,Y}_s,Z^{t,Y}_s\right)(D_Y X^{t,Y}_s)}{\color{blue}+D_Z A\left(X^{t,Y}_s,Z^{t,Y}_s\right)(D_Y Z^{t,Y}_s) \Big)}\ \text{ (by using \eqref{D_AFOC})}\\
&\ \ \ \ \ \ \ \ -\left\langle D_XD_X F\left(X^{t,Y}_s,A\left(X^{t,Y}_s,Z^{t,Y}_s\right)(D_Y X^{t,Y}_s(\wt{Y}),D_Y X^{t,Y}_s(\wt{Y}))\right),Z^{t,Y}_s\right\rangle_{L^{2,d_x}_{m_0}}\\
&\ \ \ \ \ \ \ \ -\left\langle D_AD_X F\left(X^{t,Y}_s,A\left(X^{t,Y}_s,Z^{t,Y}_s\right)\right)\left(D_X A\left(X^{t,Y}_s,Z^{t,Y}_s\right)(D_Y X^{t,Y}_s(\wt{Y}))+{\color{purple}D_Z A\left(X^{t,Y}_s,Z^{t,Y}_s\right)(D_Y Z^{t,Y}_s(\wt{Y}))},D_Y X^{t,Y}_s(\wt{Y})\right),Z^{t,Y}_s\right\rangle_{L^{2,d_x}_{m_0}}\\
&\ \ \ \  \ \ \ \ - \left\langle D_X F\left(X^{t,Y}_s,A\left(X^{t,Y}_s,Z^{t,Y}_s\right)\right)(D_Y X^{t,Y}_s(\wt{Y})),D_Y Z^{t,Y}_s(\wt{Y})\right\rangle_{L^{2,d_x}_{m_0}}\\
&\ \ \ \ \ \ \ \ -  D_XD_X \wb{G}\left(X^{t,Y}_s,A\left(X^{t,Y}_s,Z^{t,Y}_s\right)\right)(D_Y X^{t,Y}_s(\wt{Y}),D_Y X^{t,Y}_s(\wt{Y}))\\
&\ \ \ \ \ \ \ \ - D_AD_X \wb{G}\left(X^{t,Y}_s,A\left(X^{t,Y}_s,Z^{t,Y}_s\right)\right)\left(D_X A\left(X^{t,Y}_s,Z^{t,Y}_s\right)(D_Y X^{t,Y}_s(\wt{Y}))+{\color{purple}D_Z A\left(X^{t,Y}_s,Z^{t,Y}_s\right)(D_Y Z^{t,Y}_s(\wt{Y}))},D_Y X^{t,Y}_s(\wt{Y})\right)\bigg){\color{orange}ds}\\
=&\ {\color{orange}\int_t^T} \bigg( -\bigg\langle  D_AD_A F\left(X^{t,Y}_s,A\left(X^{t,Y}_s,Z^{t,Y}_s\right)\right)\Big(D_Z A\left(X^{t,Y}_s,Z^{t,Y}_s\right)(D_Y Z^{t,Y}_s(\wt{Y})),D_Z A\left(X^{t,Y}_s,Z^{t,Y}_s\right)(D_Y Z^{t,Y}_s(\wt{Y})) \Big),Z^{t,Y}_s\bigg\rangle_{ L^{2,d_x}_{m_0}} \\
& \ \ \ \ \ \ \ \ - D_AD_A \wb{G}\left(X^{t,Y}_s,A\left(X^{t,Y}_s,Z^{t,Y}_s\right)\right)\Big(D_Z A\left(X^{t,Y}_s,Z^{t,Y}_s\right)(D_Y Z^{t,Y}_s(\wt{Y})), D_Z A\left(X^{t,Y}_s,Z^{t,Y}_s\right)(D_Y Z^{t,Y}_s) \Big)  \\
&\ \ \ \ \ \ \ \ -\left\langle D_XD_X F\left(X^{t,Y}_s,A\left(X^{t,Y}_s,Z^{t,Y}_s\right)(D_Y X^{t,Y}_s(\wt{Y}),D_Y X^{t,Y}_s(\wt{Y}))\right),Z^{t,Y}_s\right\rangle_{L^{2,d_x}_{m_0}}\\
&\ \ \ \ \ \ \ \ -  D_XD_X \wb{G}\left(X^{t,Y}_s,A\left(X^{t,Y}_s,Z^{t,Y}_s\right)\right)(D_Y X^{t,Y}_s(\wt{Y}),D_Y X^{t,Y}_s(\wt{Y}))\\
&  \ \ \ \ \ \ \ \ -\left\langle D_AD_X F\left(X^{t,Y}_s,A\left(X^{t,Y}_s,Z^{t,Y}_s\right)\right)\left(D_X A\left(X^{t,Y}_s,Z^{t,Y}_s\right)(D_Y X^{t,Y}_s(\wt{Y})),D_Y X^{t,Y}_s(\wt{Y})\right),Z^{t,Y}_s\right\rangle_{L^{2,d_x}_{m_0}} \\
& \ \ \ \ \ \ \ \ - D_AD_X \wb{G}\left(X^{t,Y}_s,A\left(X^{t,Y}_s,Z^{t,Y}_s\right)\right)\left(D_X A\left(X^{t,Y}_s,Z^{t,Y}_s\right)(D_Y X^{t,Y}_s(\wt{Y})) ,D_Y X^{t,Y}_s(\wt{Y})\right) \bigg){\color{orange}ds}\text{ (using \eqref{D_XFOC}, purple terms can be removed)}\\
=&\ {\color{orange}\int_t^T} \bigg({\color{red}\bigg\langle  D_{A} F\left(X^{t,Y}_s,A\left(X^{t,Y}_s,Z^{t,Y}_s\right)\right)\Big(D_Z A\left(X^{t,Y}_s,Z^{t,Y}_s\right)(D_Y Z^{t,Y}_s(\wt{Y})) \Big),D_Y Z^{t,Y}_s(\wt{Y})\bigg\rangle_{ L^{2,d_x}_{m_0}}}\\
&\ \ \ \ \ \ \ \ -\left\langle D_XD_X F\left(X^{t,Y}_s,A\left(X^{t,Y}_s,Z^{t,Y}_s\right)(D_Y X^{t,Y}_s(\wt{Y}),D_Y X^{t,Y}_s(\wt{Y}))\right),Z^{t,Y}_s\right\rangle_{L^{2,d_x}_{m_0}}\\
&\ \ \ \ \ \ \ \ -  D_XD_X \wb{G}\left(X^{t,Y}_s,A\left(X^{t,Y}_s,Z^{t,Y}_s\right)\right)(D_Y X^{t,Y}_s(\wt{Y}),D_Y X^{t,Y}_s(\wt{Y}))\\
&{\color{blue} \ \ \ \ \ \ \ \ +\left\langle D_AD_A F\left(X^{t,Y}_s,A\left(X^{t,Y}_s,Z^{t,Y}_s\right)\right)\left(D_X A\left(X^{t,Y}_s,Z^{t,Y}_s\right)(D_Y X^{t,Y}_s(\wt{Y})),D_X A\left(X^{t,Y}_s,Z^{t,Y}_s\right)(D_Y X^{t,Y}_s(\wt{Y}))\right),Z^{t,Y}_s\right\rangle_{L^{2,d_x}_{m_0}}}\\
&{\color{blue}\ \ \ \ \ \ \ \ + D_AD_A \wb{G}\left(X^{t,Y}_s,A\left(X^{t,Y}_s,Z^{t,Y}_s\right)\right)\left(D_X A\left(X^{t,Y}_s,Z^{t,Y}_s\right)(D_Y X^{t,Y}_s(\wt{Y})) ,D_X A\left(X^{t,Y}_s,Z^{t,Y}_s\right)(D_Y X^{t,Y}_s(\wt{Y}))\right)}\bigg){\color{orange}ds}\\
&\text{ (using \eqref{D_AFOC} for the first red term and using \eqref{D_XFOC} for the last two blue terms in the last expression)},\\
=&\ -\int_t^T {\color{red}\left(D_AD_A L\left(X^{t,Y}_s,A\left(X^{t,Y}_s,Z^{t,Y}_s\right);Z^{t,Y}_s\right)\right)^{-1}\bigg(\left\langle  D_{A} F\left(X^{t,Y}_s,A\left(X^{t,Y}_s,Z^{t,Y}_s\right)\right), D_Y Z^{t,Y}_s(\wt{Y})\right\rangle_{ L^{2,d_x,d_\alpha}_{m_0}},}\\
&\ \ \ \ \ \ \ \ \ \ \ \ \ \ \ \ \ \ \ \ \ \ \ \ \ \ \ \ \ \ \ \ \ \ \ \ \ \ \ \ \ \ \ \ \ \ \ \ \ \ \ \ \ \ \ \ \ \ \ \ \ \ \ \ \ \ \ \ \ \ \ \ \ {\color{red}\left\langle  D_{A} F\left(X^{t,Y}_s,A\left(X^{t,Y}_s,Z^{t,Y}_s\right)\right), D_Y Z^{t,Y}_s(\wt{Y})\right\rangle_{ L^{2,d_x,d_\alpha}_{m_0}}\bigg)}ds\\
&-\int_t^T\bigg(D_XD_X L\left(X^{t,Y}_s,A\left(X^{t,Y}_s,Z^{t,Y}_s\right);Z^{t,Y}_s\right)\big(D_Y X^{t,Y}_s(\wt{Y}),D_Y X^{t,Y}_s(\wt{Y})\big)\\
&\ \ \ \ \ \ \ \ \ \ \ \ -\left(D_AD_A L\left(X^{t,Y}_s,A\left(X^{t,Y}_s,Z^{t,Y}_s\right);Z^{t,Y}_s\right)\right)^{-1}\bigg(\left\langle D_AD_X L\left(X^{t,Y}_s,A\left(X^{t,Y}_s,Z^{t,Y}_s\right);Z^{t,Y}_s\right),D_Y X^{t,Y}_s(\wt{Y}) \right\rangle_{L^{2,d_x,d_\alpha}_{m_0}},\\
&\ \ \ \ \ \ \ \ \ \ \ \ \ \ \ \ \ \ \ \ \ \ \ \ \ \ \ \ \ \ \ \ \ \ \ \ \ \ \ \ \ \ \ \ \ \ \ \ \ \ \ \ \ \ \ \ \ \ \ \ \ \ \ \ \ \ \ \ \ \ \ \ \ \ \ \ \ \ \ \left\langle D_XD_A L\left(X^{t,Y}_s,A\left(X^{t,Y}_s,Z^{t,Y}_s\right);Z^{t,Y}_s\right),D_Y X^{t,Y}_s(\wt{Y}) \right\rangle_{L^{2,d_x,d_\alpha}_{m_0}}\bigg)ds,
\end{align*}\normalsize
where we use \eqref{DYA} and \eqref{DZA} to replace $D_X A(X,Z)$ and $D_Z A(X,Z)$ respectively in the last equality.

\subsection{Proving the Inequality \eqref{lifted:estimate_2}}\label{lifted:estimate_2_detail}
The assumptions that the Schur complement is positive-definite, the terminal cost function $\wb{K}(X)$ is convex in $X$ and $D_{A} F(X,A)^\top D_{A} F(X,A)$ is positive-definite,  implies that there exist positive constants $\lambda_{Schur}$, $\lambda_K$ and $\lambda_F$ such that
\small\begin{align*}
&D_XD_X L\big(D_Y X^{t,Y}_s(\wt{Y}),D_Y X^{t,Y}_s(\wt{Y})\big)-\left(D_AD_A L\right)^{-1}\bigg(\left\langle D_AD_X L,D_Y X^{t,Y}_s(\wt{Y}) \right\rangle_{L^{2,d_x,d_\alpha}_{m_0}},\left\langle D_XD_A L,D_Y X^{t,Y}_s(\wt{Y}) \right\rangle_{L^{2,d_x,d_\alpha}_{m_0}}\\
&\geq \lambda_{Schur}\|D_Y X^{t,Y}_s(\wt{Y})\|_{L^{2,d_x}_{m_0}}^2,\\
&D_XD_X\wb{K}(X)(\wt{X},\wt{X})\geq \lambda_K\|\wt{X}\|_{L^{2,d_x}_{m_0}},\\
&\left\|\left\langle  D_{A} F, D_Y Z^{t,Y}_s(\wt{Y})\right\rangle_{ L^{2,d_x,d_\alpha}_{m_0}}\right\|_{L^{2,d_x,d_\alpha}_{m_0}}\geq \lambda_F\|D_Y Z^{t,Y}_s(\wt{Y})\|_{L^{2,d_x}_{m_0}}^2,
\end{align*}\normalsize
where the arguments $\left(X^{t,Y}_s,A\left(X^{t,Y}_s,Z^{t,Y}_s\right);Z^{t,Y}_s\right)$ of the Lagrangian $L(X,A;Z)$ and the arguments $\left(X^{t,Y}_s,A\left(X^{t,Y}_s,Z^{t,Y}_s\right)\right)$ of the drift function $F(X,A)$ are omitted for simplicity if there is no cause of ambiguity. Thus, by \eqref{lifted:estimate_1} and the initial condition and terminal condition of \eqref{lifted:D_YXexistence}, we have
\small\begin{align}\no
&\lambda_K\|D_Y X^{t,Y}_T(\wt{Y})\|_{L^{2,d_x}_{m_0}}-\bigg\langle D_Y Z^{t,Y}_t(\wt{Y}),\wt{Y}\bigg\rangle_{ L^{2,d_x}_{m_0}}\\\no
\leq &\ \bigg\langle D_XD_X \wb{K} (X^{t,Y}_T)(D_Y X^{t,Y}_T(\wt{Y})),D_Y X^{t,Y}_T(\wt{Y})\bigg\rangle_{ L^{2,d_x}_{m_0}}-\bigg\langle D_Y Z^{t,Y}_t(\wt{Y}),\wt{Y}\bigg\rangle_{ L^{2,d_x}_{m_0}}\\\no
=&\ -\int_t^T \left(D_AD_A L\right)^{-1}\bigg(\left\langle  D_{A} F, D_Y Z^{t,Y}_s(\wt{Y})\right\rangle_{ L^{2,d_x,d_\alpha}_{m_0}},\left\langle  D_{A} F, D_Y Z^{t,Y}_s(\wt{Y})\right\rangle_{ L^{2,d_x,d_\alpha}_{m_0}}\bigg)ds\\\no
&-\int_t^T\bigg(D_XD_X L\big(D_Y X^{t,Y}_s(\wt{Y}),D_Y X^{t,Y}_s(\wt{Y})\big)\\\no
&\ \ \ \ \ \ \ \ \ \ \ \ -\left(D_AD_A L\right)^{-1}\bigg(\left\langle D_AD_X L,D_Y X^{t,Y}_s(\wt{Y}) \right\rangle_{L^{2,d_x,d_\alpha}_{m_0}},\left\langle D_XD_A L,D_Y X^{t,Y}_s(\wt{Y}) \right\rangle_{L^{2,d_x,d_\alpha}_{m_0}}\bigg)ds\\\no
\leq &\ -\frac{\lambda_F}{\Lambda_g+C_0l_F}\int_t^T\|D_Y Z^{t,Y}_s(\wt{Y})\|_{L^{2,d_x}_{m_0}}^2 ds-\lambda_{Schur}\int_t^T\|D_Y X^{t,Y}_s(\wt{Y})\|_{L^{2,d_x}_{m_0}}^2 ds
\end{align}\normalsize
where the constants $\Lambda_g$, $C_0$ and $l_F$ are defined in Section \ref{lifted:estimate_1_detail}; thus \eqref{lifted:estimate_2} is valid with $$c=\min\left\{\frac{\lambda_F}{\Lambda_g+C_0l_F},\lambda_{Schur},\lambda_K\right\}.$$

\subsection{Proof of Propositions in Section \ref{properties}}\label{App:prop}

\begin{proof}[Proof of Proposition \ref{A1}]
First note that,  
for a function $h:\mu\in\mathcal{P}_2(\R^{d_x})\mapsto h(\mu)\in\R^l$, where the dimension $l$ can be any positive integer such as $1$ or $d_x$, having a linear functional derivative $\frac{\delta}{\delta \mu}h(\mu)(\wt{x})$ which is also continuously differentiable in $\wt{x}$ and satisfies the linear growth of $\displaystyle\sup_{\mu\in\mathcal{P}_2(\R^{d_x})}\Big|\p_{\wt{x}} \frac{\delta}{\delta \mu}h(\mu)(\wt{x})\Big|\leq l_1+l_2|\wt{x}|$ for some positive constants $l_1$ and $l_2$, and for any $\mu,\,\mu'\in\mathcal{P}_2(\R^{d_x})$ such that $\pi\in \Pi(\mu,\mu')$, the space of all joint measures such that the respective marginal measures are $\mu$ and $\mu'$, we have
\small\begin{align*}\no
&|h(\mu)-h(\mu')|\\\no
=&\ \left|\int_0^1 \int_{\R^{d_x}} \frac{\delta}{\delta \mu}h(\theta\mu+(1-\theta)\mu')(\wt{x})\,d(\mu-\mu')(\wt{x})d\theta\right|\\\no
=&\ \left|\int_0^1 \left(\int_{\R^{d_x}\times \R^{d_x}} \frac{\delta}{\delta \mu}h(\theta\mu+(1-\theta)\mu')(\wt{x})\,d\pi(\wt{x},\wt{x}')-\int_{\R^{d_x}\times \R^{d_x}} \frac{\delta}{\delta \mu}h(\theta\mu+(1-\theta)\mu')(\wt{x}')\,d\pi(\wt{x},\wt{x}')\right)d\theta \right|\\\no
=&\ \left|\int_0^1 \int_0^1  \int_{\R^{d_x}\times \R^{d_x}} \p_{\wt{x}} \frac{\delta}{\delta \mu}h(\theta\mu+(1-\theta)\mu')(\lambda \wt{x}+(1-\lambda)\wt{x}')\cdot (\wt{x}-\wt{x}')\,d\pi(\wt{x},\wt{x}') \,d\lambda\, d\theta\right|\\
\leq &\ \int_0^1 \int_0^1  \int_{\R^{d_x}\times \R^{d_x}} \Big(l_1+l_2\big|\lambda \wt{x}+(1-\lambda)\wt{x}'\big|\Big)\big|\wt{x}-\wt{x}'\big|\,d\pi(\wt{x},\wt{x}') \,d\lambda\, d\theta\\
\leq &\ l_1 \int_{\R^{d_x}\times \R^{d_x}} \big|\wt{x}-\wt{x}'\big|\,d\pi(\wt{x},\wt{x}')  +l_2 \int_{\R^{d_x}\times \R^{d_x}} \Big(\big|\wt{x}\big|+\big|\wt{x}'\big|\Big)\big|\wt{x}-\wt{x}'\big|\,d\pi(\wt{x},\wt{x}') \\
\leq &\  \ l_1\int_{\R^{d_x}\times \R^{d_x}} \big|\wt{x}-\wt{x}'\big|\,d\pi(\wt{x},\wt{x}') +l_2\left(\|\mu\|_2+\|\mu'\|_2 \right) \left(  \int_{\R^{d_x}\times \R^{d_x}} \left|\wt{x}-\wt{x}'\right|^2\,d\pi(\wt{x},\wt{x}')  \right)^{1/2},
\end{align*} \normalsize
which implies that, by taking infimum over all possible $\pi\in \Pi(\mu,\mu')$, 
\begin{align}\label{eq_9_2}
|h(\mu)-h(\mu')|\leq  l_1W_1(\mu,\mu')+l_2\left( \|\mu\|_2+\|\mu'\|_2 \right) W_2(\mu,\mu').
\end{align}
Particularly, when $l_2=0$, by the same argument as above, we also obtain
\begin{align}\label{eq_9_3}
|h(\mu)-h(\mu')|\leq  l_1  W_1(\mu,\mu').
\end{align}

With $h=f$, under Assumption \eqref{bdd_d1_f}, $l_2=0$ for this $f$, and also using \eqref{eq_9_3}, we have
\begin{align*}
&\left|f(x,\mu,\alpha) - f(x',\mu',\alpha')\right|\\
\leq &\ \left|f(x,\mu,\alpha) - f(x,\mu',\alpha)\right|+\left|f(x,\mu',\alpha) - f(x',\mu',\alpha)\right|+\left|f(x',\mu',\alpha) - f(x',\mu',\alpha')\right|\\
\leq &\  \Lambda_f \big(|x-x'|+|\alpha-\alpha'|+W_1(\mu,\mu')\big);
\end{align*}
and by picking $(x',\mu',\alpha')=(0,\delta_0,0)$, we further have
\begin{align*}
|f(x,\mu,\alpha)|\leq L_f \left(1+|x|+ \|\mu\|_1+|\alpha| \right),
\end{align*}
where $L_f:=\max\{\Lambda_f,f(0,\delta_0,0)\}$.
\end{proof}

\begin{proof}[Proof of Proposition \ref{A2}]
By using \eqref{bdd_d2_g_1}, \eqref{bdd_d2_g_2} and \eqref{eq_9_3}, we also have
\begin{align*}
&\left|\p_\mu g(x,\mu,\alpha)(\wt{x}) - \p_\mu g(x',\mu',\alpha')(\wt{x}')\right|\\
\leq &\ \left|\p_\mu g(x,\mu,\alpha)(\wt{x}) - \p_\mu g(x,\mu',\alpha)(\wt{x})\right|+\left|\p_\mu g(x,\mu',\alpha)(\wt{x}) - \p_\mu g(x',\mu',\alpha)(\wt{x})\right|\\
&+\left|\p_\mu g(x',\mu',\alpha)(\wt{x}) - \p_\mu g(x',\mu',\alpha')(\wt{x})\right|+\left|\p_\mu g(x',\mu',\alpha')(\wt{x}) - \p_\mu g(x',\mu',\alpha')(\wt{x}')\right|\\
\leq &\   \Lambda_g W_1(\mu,\mu')+\wb{l}_g|x-x'|+\wb{l}_g|\alpha-\alpha'|+\Lambda_g|\wt{x}-\wt{x}'|\\
\leq &\   \max\{\Lambda_g,\wb{l}_g\} \left(W_1(\mu,\mu')+|x-x'|+|\alpha-\alpha'|+|\wt{x}-\wt{x}'|\right);
\end{align*}
and now setting $(x',\mu',\alpha',\wt{x}')=(0,\delta_0,0,0)$, we then have
\begin{align}\label{pmugp}
|\p_\mu g(x,\mu,\alpha)(\wt{x})|\leq \max\Big\{\p_\mu g(0,\delta_0,0)(0),\Lambda_g,\wb{l}_g\Big\}\cdot \left(1+|x|+ \|\mu\|_1+|\alpha|  +|\wt{x}|\right).
\end{align}
By the same argument as above, we have,
\begin{align*}
&\left|\p_x g(x,\mu,\alpha) - \p_x g(x',\mu',\alpha')\right|\leq \max\{\Lambda_g,\wb{l}_g\} \left(W_1(\mu,\mu')+|x-x'|+|\alpha-\alpha'|\right),\\
&\left|\p_\alpha g(x,\mu,\alpha) - \p_\alpha g(x',\mu',\alpha')\right|\leq \max\{\Lambda_g,\wb{l}_g\} \left(W_1(\mu,\mu')+|x-x'|+|\alpha-\alpha'|\right);
\end{align*}
\begin{align*}
&|\p_x g(x,\mu,\alpha)|,\,|\p_\alpha g(x,\mu,\alpha)|\leq \max\Big\{\p_x g(0,\delta_0,0),\p_\alpha g(0,\delta_0,0),\Lambda_g,\wb{l}_g\Big\}\cdot \left(1+|x|+ \|\mu\|_1+|\alpha|  \right).
\end{align*}
Therefore, we have \eqref{LGgDerivatives}.
Similarly, by using \eqref{bdd_d2_k_1} and \eqref{eq_9_3}, we can deduce \eqref{LGkDerivatives}.
\end{proof}

\begin{proof}[Proof of Proposition \ref{h3}]
It follows from \eqref{positive_g_alpha} that $\xi^\top\p_\alpha\p_\alpha g(x,\mu,\alpha)\xi\geq \lambda_g|\xi|^2>0$, so, by the Taylor series expansion, we have
\begin{align*}
g(x,\mu,\alpha)=&\ g(x,\mu,0)+\p_\alpha g(x,\mu,0)\alpha+\int_0^1\int_0^1 \lambda\alpha^\top \p_\alpha\p_\alpha g(x,\mu,\theta\lambda\alpha)\alpha d\theta d\lambda\\
\geq &\frac{1}{2}\lambda_g|\alpha|^2-|g(x,\mu,0)|-|\p_\alpha g(x,\mu,0)||\alpha|.
\end{align*}
Recall \eqref{ligf} that $|f(x,\mu,\alpha)|\leq L_f \left(1+|x|+ \|\mu\|_1+|\alpha| \right)$. Therefore, for each fixed $(x,\mu,z)\in\R^{d_x}\times \mc{P}_2(\R^{d_x})\times \R^{d_x}$,
\begin{align*}
&\lim_{|\alpha|\to \infty}\Big(f(x,\mu,\alpha)\cdot z+g(x,\mu,\alpha)\Big)\\
\geq &\lim_{|\alpha|\to \infty}\Big(\frac{1}{2}\lambda_g|\alpha|^2-|g(x,\mu,0)|-|\p_\alpha g(x,\mu,0)||\alpha|-L_f \left(1+|x|+ \|\mu\|_1+|\alpha| \right)|z|\Big)\to \infty.
\end{align*} 
\end{proof}

\begin{proof}[Proof of Proposition \ref{A5}]
By using \eqref{positive_g_alpha}, \eqref{bdd_d2_g_1} and \eqref{p_aa_f}, we have, for each $(x,\mu,z)\in c_{k_0}$ and any $\xi\in\R^{d_\alpha}$, 
\small\begin{align*}
\Big(\Lambda_g+\frac{1}{20}\lambda_g\Big)|\xi|^2\geq\xi^\top\Big(\p_\alpha\p_\alpha f(x,\mu,\alpha)\cdot z+\p_\alpha\p_\alpha g(x,\mu,\alpha)\Big)\xi \geq \frac{19}{20}\lambda_g|\xi|^2,
\end{align*}\normalsize
that is, the function $f(x,\mu,\alpha)\cdot z+g(x,\mu,\alpha)$ is convex in $\alpha$. Since $\displaystyle\lim_{|\alpha|\to \infty}f(x,\mu,\alpha)\cdot z+g(x,\mu,\alpha)\to \infty$, the coercive nature of the function $f(x,\mu,\alpha)\cdot z  + g(x,\mu,\alpha)$ guarantees a unique interior minimizer $\wh{\alpha}(x,\mu,z)\in\R^{d_\alpha}$ for each $(x,\mu,z)\in c_{k_0}$, which satisfies the first order condition $\p_\alpha f(x,\mu,\wh{\alpha})\cdot z  + \p_\alpha g(x,\mu,\wh{\alpha})=0$. By Theorem \ref{GIFT}, the generalized Implicit Function Theorem (catering the presence of the measure argument), to be shown in the rest of this appendix section, we have
\begin{align*}
\partial_x \wh{\alpha}(x,\mu,z)=&\ -\left(\p_\alpha\p_\alpha f(x,\mu,\wh{\alpha})\cdot z+\p_\alpha\p_\alpha g(x,\mu,\wh{\alpha})\right)^{-1} \left(\p_x\p_\alpha f(x,\mu,\wh{\alpha})\cdot z+\p_x\p_\alpha g(x,\mu,\wh{\alpha})\right);\\
\partial_z \wh{\alpha}(x,\mu,z)=&\ -\left(\p_\alpha\p_\alpha f(x,\mu,\wh{\alpha})\cdot z+\p_\alpha\p_\alpha g(x,\mu,\wh{\alpha})\right)^{-1} \p_\alpha f(x,\mu,\wh{\alpha});\\
\p_\mu \wh{\alpha}(x,\mu,z)(\wt{x})=&\ -\left(\p_\alpha\p_\alpha f(x,\mu,\wh{\alpha})\cdot z+\p_\alpha\p_\alpha g(x,\mu,\wh{\alpha})\right)^{-1}\left(\p_\mu\p_\alpha f(x,\mu,\wh{\alpha})(\wt{x})\cdot z+\p_\mu\p_\alpha g(x,\mu,\wh{\alpha})(\wt{x})\right).
\end{align*}
In addition, by using \eqref{bdd_d1_f}, \eqref{bdd_d2_f}, \eqref{bdd_d2_g_2} and \eqref{positive_h}, we have
\begin{align*}
&\Big\|\p_{x}\wh{\alpha}(x,\mu,z)\Big\|_{\mathcal{L}(\R^{d_x};\R^{d_\alpha})}\leq \frac{20\big(\wb{l}_g+\frac{1}{2}k_0\cdot\wb{l}_f\big) }{19\lambda_g},\ \Big\|\p_{\mu}\wh{\alpha}(x,\mu,z)(\wt{x})\Big\|_{\mathcal{L}(\R^{d_x};\R^{d_\alpha})}\leq \frac{20\big(\wb{l}_g+\frac{1}{2}k_0\cdot\wb{l}_f\big) }{19\lambda_g},\\
&\Big\|\p_{z}\wh{\alpha}(x,\mu,z)\Big\|_{\mathcal{L}(\R^{d_x};\R^{d_\alpha})}\leq \frac{20\Lambda_f}{19\lambda_g}.
\end{align*}\normalsize
Furthermore, since, for all $\theta\in[0,1]$, we have $\theta (x,\mu,z)+(1-\theta)(x',\mu',z')\in c_{k_0}$ and $\Big|\frac{d}{d\theta}\wh \alpha\Big(\theta x+(1-\theta)x',\theta\mu+(1-\theta)\mu',\theta z+(1-\theta) z'\Big)\Big|\leq L_\alpha (|x-x'|+W_1(\mu,\mu')+|z-z'|)$, thus, we obtain \eqref{lipalpha_old}; particularly, since, for any $(x,\mu,z)\in c_{k_0}$, any $\mu',z'$ such that $|z'|\leq \frac{1}{2}k_0(1+\|\mu'\|_1)$ and all $\theta\in[0,1]$, we have $\theta (x,\mu,z)+(1-\theta)(0,\mu',z')\in c_{k_0}$ and then we obtain \eqref{linalpha} by taking $x'=0$, $\mu'=\delta_0$ and $z'=0$ in \eqref{lipalpha_old}. 
In addition, for any $x,\,x'\in\R^{d_x}$, $\mu,\,\mu'\in\mc{P}_2(\R^{d_x})$ and all $\theta\in[0,1]$, we have $\Big(\theta x+(1-\theta)x',\theta\mu+(1-\theta)\mu',\Gamma\big(\theta x+(1-\theta)x'+\theta\mu+(1-\theta)\mu'\big)\Big)\in c_{k_0}$ and $\Big|\frac{d}{d\theta}\wh \alpha\Big(\theta x+(1-\theta)x',\theta\mu+(1-\theta)\mu',\Gamma\big(\theta x+(1-\theta)x'+\theta\mu+(1-\theta)\mu'\big)\Big)\Big|\leq L_\alpha(1+L_\Gamma)(|x-x'|+W_1(\mu,\mu'))$; and thus \eqref{lipalpha} is valid.
Moreover, by Assumption ${\bf(a1)}(iv)$ and since the second order derivatives of $g$ are jointly Lipschitz continuous in their corresponding arguments, we can deduce that the first order derivatives of $\wh{\alpha}$ are jointly Lipschitz continuous in their corresponding arguments.
\end{proof}

\begin{proof}[Proof of Proposition \ref{A6}]
By using \eqref{bdd_d2_f} and  \eqref{bdd_d2_g_1}, we have, for all $(x,\mu,z)\in c_{k_0}$ and any $\xi\in\R^{d_x}$, 
\small\begin{align*}
\Big(\Lambda_g+\frac{1}{2}k_0\cdot \wb{l}_f\Big)|\xi|^2\geq\xi^\top\Big(\p_\alpha\p_\alpha f(x,\mu,\alpha)\cdot z+\p_\alpha\p_\alpha g(x,\mu,\alpha)\Big)\xi,
\end{align*}\normalsize
which further implies that
\begin{align*}
&\xi^\top\Big(\sum_{i=1}^{d_\alpha}\p_z \wh{\alpha}_i(x,\mu,z)\otimes \p_{\alpha_i} f(x,\mu,\wh{\alpha})\Big)\xi\\
=&\ -\xi^\top\Big(\p_\alpha\p_\alpha f(x,\mu,\wh{\alpha})\cdot z+\p_\alpha\p_\alpha g(x,\mu,\wh{\alpha})\Big)^{-1}\Big(\sum_{i=1}^{d_\alpha}\p_{\alpha_i} f(x,\mu,\wh{\alpha})\otimes \p_{\alpha_i} f(x,\mu,\wh{\alpha})\Big)\xi\\
\leq&\  -\frac{\lambda_f^2}{\Lambda_g+\frac{1}{2}k_0\cdot \wb{l}_f}|\xi|^2,
\end{align*}\normalsize
by using \eqref{positive_f} in the last inequality.
\end{proof}

\begin{theorem}\label{GIFT}(Implicit function theorem with a measure argument)
Let $\mathcal{D}$ be an open set in $\R^{d_x}\times \mathcal{P}_2(\R^{d_x})\times \R^{d_x}$ and $\wt{q}:(x,\mu,z,\alpha)\in \mathcal{D}\times\R^{d_\alpha}\mapsto \wt{q}(x,\mu,z,\alpha)\in\R^{d_\alpha}$ be jointly continuous in $(x,\mu,z,\alpha)$ and partially differentiable in $\alpha$ for each fixed $(x,\mu,z)\in\mathcal{D}$, such that for some $(x_0,\mu_0,z_0)\in \mathcal{D}$ and an $\alpha_0\in\R^{d_\alpha}$, we have $\wt{q}(x_0,\mu_0,z_0,\alpha_0)=0$. Assume that\\
(i) $\partial_\alpha \wt{q}(x,\mu,z,\alpha)$ is jointly continuous in $(x,\mu,z,\alpha)$ and is also Lipschitz continuous in $\alpha$ uniformly in $(x,\mu,z)\in\mathcal{D}$;\\
(ii) $\partial_\alpha \wt{q}(x_0,\mu_0,z_0,\alpha_0)$ is an invertible $\R^{d_\alpha}\times\R^{d_\alpha}$ matrix.\\ 
Then there is an open neighborhood $\mathcal{D}_0$ of $(x_0,\mu_0,z_0)$ in $\mathcal{D}$ such that there is a unique jointly continuous function $\alpha:(x,\mu,z)\in \mathcal{D}_0\mapsto \alpha(x,\mu,z)\in \R^{d_\alpha}$ satisfying $\alpha(x_0,\mu_0,z_0)=\alpha_0$ and $\wt{q}(x,\mu,z,\alpha(x,\mu,z))=0$ for all $(x,\mu,z)\in \mathcal{D}_0$. Moreover, if $\wt{q}$ is differentiable in $(x,z)$ for each $\mu$ and $\alpha$, it is also $L$-differentiable in $\mu$ for each $(x,z,\alpha)$, and its derivatives are also jointly continuous in $(x,\mu,z,\alpha)$, then $\alpha$ is differentiable in $(x,z)$ for every $\mu$, it is also $L$-differentiable in $\mu$ for every $(x,z)$, and its derivatives are:
\begin{align*}
\partial_x \alpha(x,\mu,z)=&\ -\left(\partial_\alpha \wt{q}(x,\mu ,z, \alpha(x,\mu,z))\right)^{-1} \p_x \wt{q}(x,\mu ,z, \alpha(x,\mu,z));\\
\partial_z \alpha(x,\mu,z)=&\ -\left(\partial_\alpha \wt{q}(x,\mu ,z, \alpha(x,\mu,z))\right)^{-1} \p_z \wt{q}(x,\mu ,z, \alpha(x,\mu,z));\\
\p_\mu \alpha(x,\mu,z)(\wt{x})=&\ -\left(\partial_\alpha \wt{q}(x,\mu ,z, \alpha(x,\mu,z))\right)^{-1}\p_\mu \wt{q}(x,\mu ,z,\alpha(x,\mu,z))(\wt{x});
\end{align*}
moreover, all of these are jointly continuous in $(x,\mu,z)\in \mathcal{D}_0$. 
\end{theorem}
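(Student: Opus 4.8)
The statement to prove is Theorem \ref{GIFT}, a version of the implicit function theorem where one of the ``variables'' lives in the Wasserstein space $\mathcal{P}_2(\R^{d_x})$ rather than in a Euclidean space. The plan is to separate the argument into two parts: first establish the existence, local uniqueness, and joint continuity of the implicit map $\alpha(x,\mu,z)$ by a Banach fixed point / contraction argument (which does not require any differentiable structure on $\mathcal{P}_2$), and then, once the map is in hand, derive its differentiability and the stated formulas for $\p_x\alpha$, $\p_z\alpha$ and $\p_\mu\alpha$ by a standard linearization-plus-estimate argument, taking advantage of the fact that the lifted/intrinsic derivatives of $\wt q$ are all assumed jointly continuous.

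For the existence and local uniqueness part, I would fix $M_0:=\big(\p_\alpha\wt q(x_0,\mu_0,z_0,\alpha_0)\big)^{-1}$ and define, for each $(x,\mu,z)$ near $(x_0,\mu_0,z_0)$, the map $\Phi_{(x,\mu,z)}(\alpha):=\alpha-M_0\,\wt q(x,\mu,z,\alpha)$ on a small closed ball $\overline{B}(\alpha_0,r)\subset\R^{d_\alpha}$. Using the uniform Lipschitz continuity of $\p_\alpha\wt q$ in $\alpha$ and the joint continuity of $\p_\alpha\wt q$, one shows that for $r$ small and $(x,\mu,z)$ in a small neighborhood $\mathcal{D}_0$ (measured via $|x-x_0|$, $W_2(\mu,\mu_0)$, $|z-z_0|$), the map $\Phi_{(x,\mu,z)}$ is a contraction on $\overline{B}(\alpha_0,r)$ that maps this ball into itself — here one needs $\|\p_\alpha\wt q(x,\mu,z,\alpha)-\p_\alpha\wt q(x_0,\mu_0,z_0,\alpha_0)\|$ to be small, which follows from joint continuity, plus $|\wt q(x,\mu,z,\alpha_0)|$ small, which follows from $\wt q(x_0,\mu_0,z_0,\alpha_0)=0$ and joint continuity. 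The Banach fixed point theorem then gives a unique $\alpha(x,\mu,z)\in\overline{B}(\alpha_0,r)$ with $\wt q(x,\mu,z,\alpha(x,\mu,z))=0$, and $\alpha(x_0,\mu_0,z_0)=\alpha_0$. Joint continuity of $(x,\mu,z)\mapsto\alpha(x,\mu,z)$ follows from the standard argument that a uniformly-contractive family of maps depending continuously on a parameter has fixed points depending continuously on that parameter; the only point requiring care is that the parameter space is the metric space $\R^{d_x}\times\mathcal{P}_2(\R^{d_x})\times\R^{d_x}$, but since only the metric structure (via $W_2$) is used, nothing changes.

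For the differentiability part, I would treat the three derivatives uniformly: fix $(x,\mu,z)\in\mathcal{D}_0$, perturb one argument at a time, and write $\wt q(x+h,\mu,z,\alpha(x+h,\mu,z))-\wt q(x,\mu,z,\alpha(x,\mu,z))=0$; expanding $\wt q$ to first order in both its $x$-slot and its $\alpha$-slot (legitimate by the assumed joint continuity of the derivatives, via a mean-value / fundamental-theorem-of-calculus estimate along the segment joining $\alpha(x,\mu,z)$ to $\alpha(x+h,\mu,z)$, whose length is $O(|h|)$ by the Lipschitz bound coming from the contraction constant), and inverting $\p_\alpha\wt q$ — which stays invertible on $\mathcal{D}_0$ by shrinking $\mathcal{D}_0$ further and using continuity of matrix inversion — yields $\p_x\alpha(x,\mu,z)=-\big(\p_\alpha\wt q\big)^{-1}\p_x\wt q$, and likewise for $z$. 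For the $\mu$-derivative, I would instead invoke the definition of $L$-differentiability: lift everything to $\mathcal{H}^{d_x}$, differentiate the identity $\wt q(x,\mathbb{L}_X,z,\alpha(x,\mathbb{L}_X,z))=0$ in $X$ using the chain rule for $L$-derivatives together with the joint continuity hypotheses, and read off $\p_\mu\alpha(x,\mu,z)(\wt x)=-\big(\p_\alpha\wt q\big)^{-1}\p_\mu\wt q(x,\mu,z,\alpha(x,\mu,z))(\wt x)$. Joint continuity of all three derivatives then follows from joint continuity of $\p_x\wt q$, $\p_z\wt q$, $\p_\mu\wt q$, $\p_\alpha\wt q$ (and of $\alpha$ itself, already proven) composed with continuity of inversion. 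The main obstacle I anticipate is the bookkeeping in the $\mu$-direction: one must be careful that the first-order expansion of $\wt q$ in its measure slot (in the sense of \eqref{prop33} / the $L$-derivative) combines correctly with the first-order expansion in the $\alpha$-slot when $\alpha$ itself is being perturbed through $\mu$, i.e. that the error terms are genuinely $o\big(W_2(\mu',\mu)\big)$; this requires the joint continuity (not mere existence) of $\p_\mu\wt q$ and $\p_\alpha\wt q$, which is exactly why it is hypothesized, and the argument parallels the $L$-differentiability proofs already carried out for $X^{t,m}_s(x)$ in Lemma \ref{lem6_3}, Step 8.
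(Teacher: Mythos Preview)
Your proposal is correct and follows essentially the same route as the paper's proof: a Newton-type contraction $\alpha\mapsto \alpha - M\,\wt q(\cdot,\alpha)$ for existence and uniqueness, followed by a direct first-order expansion of the identity $\wt q(x,\mu,z,\alpha(x,\mu,z))=0$ (lifting to $\mathcal{H}^{d_x}$ for the $\mu$-slot) to read off the derivative formulas. The only cosmetic differences are that the paper freezes the inverse at $\p_\alpha\wt q(x_1,\mu_1,z_1,\alpha_0)$ (parameter-dependent but $\alpha$-frozen) rather than at the full base point, and writes out the $\mu$-expansion explicitly via \eqref{12_11_new} rather than invoking a chain rule; neither changes the substance of the argument.
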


\begin{proof}
Since $\partial_\alpha \wt{q}(x_0,\mu_0,z_0,\alpha_0)$ is an invertible $\R^{d_\alpha}\times\R^{d_\alpha}$ matrix, $\wt{q}(x_0,\mu_0,z_0,\alpha_0)=0$ and both $\wt{q}(x,\mu,z,\alpha_0)$, $\partial_\alpha \wt{q}(x,\mu,z,\alpha_0)$ are continuous in$(x,\mu,z)$, then, for any small $\eps>0$, there exists a neighborhood $\wt{\mathcal{D}}_0(\eps)$ of $(x_0,\mu_0,z_0)$ in $\mathcal{D}$ such that $\partial_\alpha \wt{q}(x,\mu,z,\alpha_0)$ is also an invertible $\R^{d_\alpha}\times\R^{d_\alpha}$ matrix for every $(x,\mu,z)\in\wt{\mathcal{D}}_0(\eps)$ such that $\|\partial_\alpha \wt{q}(x,\mu,z,\alpha_0)^{-1}\|\leq \|\partial_\alpha \wt{q}(x_0,\mu_0,z_0,\alpha_0)^{-1}\|+\eps$, where the matrix norm $\|\cdot\|$ stands for the operator norm induced by the vector norm $|\cdot|$, and $|\wt{q}(x,\mu,z,\alpha_0)|\leq \eps$. For any $(x_1,\mu_1,z_1)\in\wt{\mathcal{D}}_0(\eps)$, we define 
$$\wh{q}(\alpha):=-\p_\alpha \wt{q}(x_1,\mu_1,z_1,\alpha_0)^{-1}\wt{q}(x_1,\mu_1,z_1,\alpha)+\alpha,$$
and then $\p_\alpha \wh{q}(\alpha_0)=0$. Consider the iteration map $I:\alpha^{(n)}\in\R^{d_\alpha}\mapsto \alpha^{(n+1)}=\wh{q}(\alpha^{(n)})\in\R^{d_\alpha}$ with $\alpha^{(0)}=\alpha_0$. Let $C_{\wt{q}}$ be the Lipschitz constant of $\p_\alpha \wt{q}(x_1,\mu_1,z_1,\alpha)$ with respect to $\alpha$. Define $$\eps_0:=\min\left\{1,\dfrac{1}{8C_{\wt{q}}\left(\|\partial_\alpha \wt{q}(x_0,\mu_0,z_0,\alpha_0)^{-1}\|+1\right)^2}\right\},$$ and suppose that 
\begin{align*}
\left|\alpha^{(n)}-\alpha_0 \right|\leq \fr1{4 C_{\wt{q}} \left(\|\partial_\alpha \wt{q}(x_0,\mu_0,z_0,\alpha_0)^{-1}\|+\eps_0\right)},
\end{align*}
which is clearly valid when $n=0$, then, if $\eps\leq \eps_0$, by induction,
\small\begin{align*}
|\alpha^{(n+1)}-\alpha_0|
=&\ \left|\wh{q}(\alpha^{(n)})-\wh{q}(\alpha_0)+\p_\alpha \wt{q}(x_1,\mu_1,z_1,\alpha_0)^{-1}\wt{q}(x_1,\mu_1,z_1,\alpha_0)\right|\\
\leq&\ \left|\left(1-\p_\alpha \wt{q}(x_1,\mu_1,z_1,\alpha_0)^{-1}\int_0^1\p_\alpha \wt{q}(x_1,\mu_1,z_1,\theta \alpha^{(n)}+(1-\theta)\alpha_0)d\theta\right)\cdot(\alpha^{(n)}-\alpha_0)\right|\\
&+\eps_0(\|\partial_\alpha \wt{q}(x_0,\mu_0,z_0,\alpha_0)^{-1}\|+\eps_0)\\
=&\ \Bigg|\bigg(\p_\alpha \wt{q}(x_1,\mu_1,z_1,\alpha_0)^{-1}\int_0^1\p_\alpha \wt{q}(x_1,\mu_1,z_1,\alpha_0)d\theta\\
&-\p_\alpha \wt{q}(x_1,\mu_1,z_1,\alpha_0)^{-1}\int_0^1\p_\alpha \wt{q}(x_1,\mu_1,z_1,\theta \alpha^{(n)}+(1-\theta)\alpha_0)d\theta\bigg)\cdot(\alpha^{(n)}-\alpha_0)\Bigg|\\
&+\eps_0\left(\|\partial_\alpha \wt{q}(x_0,\mu_0,z_0,\alpha_0)^{-1}\|+\eps_0\right)\\
\leq&\ \left(\|\partial_\alpha \wt{q}(x_0,\mu_0,z_0,\alpha_0)^{-1}\|+\eps_0\right)\left(C_{\wt{q}}\left|\alpha^{(n)}-\alpha_0 \right|^2+\eps_0\right)\\
\leq&\  \fr14\left|\alpha^{(n)}-\alpha_0 \right|+ \eps_0\left(\|\partial_\alpha \wt{q}(x_0,\mu_0,z_0,\alpha_0)^{-1}\|+\eps_0\right)\\
\leq&\  \fr1{4 C_{\wt{q}} \left(\|\partial_\alpha \wt{q}(x_0,\mu_0,z_0,\alpha_0)^{-1}\|+\eps_0\right)},
\end{align*}\normalsize
Thus,
\small\begin{align*}
&\left|\wh{q}(\alpha^{(n+1)})-\wh{q}(\alpha^{(n)})\right|\\
=&\ \left|\left(1-\p_\alpha \wt{q}(x_1,\mu_1,z_1,\alpha_0)^{-1}\int_0^1\p_\alpha \wt{q}(x_1,\mu_1,z_1,\theta \alpha^{(n+1)}+(1-\theta)\alpha^{(n)})d\theta\right)\cdot(\alpha^{(n+1)}-\alpha^{(n)})\right|\\
\leq&\ \left|\p_\alpha \wt{q}(x_1,\mu_1,z_1,\alpha_0)^{-1}\int_0^1\left(\p_\alpha \wt{q}(x_1,\mu_1,z_1,\theta \alpha^{(n+1)}+(1-\theta)\alpha^{(n)})-\p_\alpha \wt{q}(x_1,\mu_1,z_1,\alpha_0)\right)d\theta \right|\cdot\left|\alpha^{(n+1)}-\alpha^{(n)}\right|\\
\leq&\ (\|\partial_\alpha \wt{q}(x_0,\mu_0,z_0,\alpha_0)^{-1}\|+\eps_0)\cdot C_{\wt{q}}\left(\int_0^1 \theta |\alpha^{(n+1)}-\alpha_0|+(1-\theta)|\alpha^{(n)}-\alpha_0|d\theta \right)\cdot\left|\alpha^{(n+1)}-\alpha^{(n)}\right|\\
\leq&\  (\|\partial_\alpha \wt{q}(x_0,\mu_0,z_0,\alpha_0)^{-1}\|+\eps_0)\cdot C_{\wt{q}}\cdot\fr1{4 C_{\wt{q}} \left(\|\partial_\alpha \wt{q}(x_0,\mu_0,z_0,\alpha_0)^{-1}\|+\eps_0\right)}\left|\alpha^{(n+1)}-\alpha^{(n)}\right|,\\
=&\ \fr14 \left|\alpha^{(n+1)}-\alpha^{(n)}\right|,
\end{align*}\normalsize
which implies the map $I$ is contractive. Therefore, by Banach Fixed Point Theorem, for any $(x_1,\mu_1,z_1)\\ \in\mathcal{D}_0:=\wt{\mathcal{D}}_0(\eps_0)$, there exists a unique $\alpha_1=\alpha(x_1,\mu_1,z_1)$ such that $\wh{q}(\alpha_1)=\alpha_1$, that is, $\wt{q}(x_1,\mu_1,z_1,\alpha_1)=0$. Moreover, there exists a unique jointly continuous function $\alpha(x,\mu,z)$, and for every $(x,\mu,z)\in\mathcal{D}_0$, $\wt{q}(x_,\mu_,z_,\alpha(x,\mu,z))=0$ since $\wt{q}$ itself is jointly continuous in $(x,\mu,z,\alpha)$.

If $\wt{q}$ is $L$-differentiable in $\mu$, then, by Definition \ref{def_L_diff}, its lifted version $\wt{Q}(x,X,z,\alpha):=\wt{q}(x,\mu=\mathbb{L}_X,z,\alpha)$ is Fr\'echet differentiable in $X\in\mathcal{H}^{d_x}$. Thus, for any random variable $X\in\mathcal{H}^{d_x}$ such that $\mathbb{L}_X=\mu_0$ and any sequence of random variables $Y_n\in\mathcal{H}^{d_x}$ with $\|Y_n\|_{\mathcal{H}^{d_x}}\to 0$ as $n \to \infty$ and $(x_0,\mu_n:=\mathbb{L}_{X+Y_n},z_0)\in\mathcal{D}_0$,
\begin{align}\no 
&\wt{Q}(x_0,X+Y_n,z_0,\alpha(x_0,\mu_n,z_0))-\wt{Q}(x_0,X,z_0,\alpha(x_0,\mu_0,z_0))\\\no
&-D_X\wt{Q}(x_0,X,z_0,\alpha(x_0,\mu_0,z_0))(Y_n)\\\no
&-\int_0^1 \partial_\alpha \wt{q}(x_0,\mu_n,z_0,\theta \alpha(x_0,\mu_n,z_0)+(1-\theta)\alpha(x_0,\mu_0,z_0))d\theta\cdot \left(\alpha(x_0,\mu_n,z_0)- \alpha(x_0,\mu_0,z_0)\right)\\\label{12_11_new}
&=o(\|Y_n\|_{\mathcal{H}^{d_x}}),
\end{align}
where the small-$o$ function $o(\|Y_n\|_{\mathcal{H}^{d_x}})$ means that the term tends to $0$ as $\|Y_n\|_{\mathcal{H}^{d_x}}$ goes to $0$. Then \eqref{12_11_new} implies that
\footnotesize\begin{align*} 
&\alpha(x_0,\mu_n,z_0)- \alpha(x_0,\mu_0,z_0)\\
=&\ -\left(\int_0^1 \partial_\alpha \wt{q}(x_0,\mu_n,z_0,\theta \alpha(x_0,\mu_n,z_0)+(1-\theta)\alpha(x_0,\mu_0,z_0))d\theta\right)^{-1}D_X\wt{Q}(x_0,X,z_0,\alpha(x_0,\mu_0,z_0))(Y_n)-o(\|Y_n\|_{\mathcal{H}^{d}})\\
=&\ -\left(\int_0^1 \partial_\alpha \wt{q}(x_0,\mu_n,z_0,\theta \alpha(x_0,\mu_n,z_0)+(1-\theta)\alpha(x_0,\mu_0,z_0))d\theta\right)^{-1}\int_{\R^{d_x}}\p_\mu\wt{q}(x_0,\mu_0,z_0,\alpha(x_0,\mu_0,z_0))\big(X(\omega)\big)\cdot Y_n(\omega)d\mathbb{P}(\omega)\\
&-o(\|Y_n\|_{\mathcal{H}^{d}}).
\end{align*}\normalsize
Therefore, $\alpha(x_0,\mu,z_0)$ is $L$-differentiable in $\mu$ at $\mu_0$ and
\begin{align}
\p_\mu \alpha(x,\mu,z)(\wt{x})=-\left(\partial_\alpha \wt{q}(x,\mu ,z, \alpha(x,\mu,z))\right)^{-1}\p_\mu \wt{q}(x,\mu ,z,\alpha(x,\mu,z))(\wt{x}),
\end{align}
which is jointly continuous in $(x,\mu,z)$ since $\p_\mu \wt{q}$ is jointly continuous in $(x,\mu,z,\alpha)$ and $\p_\alpha \wt{q}$ is jointly continuous in $(x,\mu,z,\alpha)$ and $\alpha$ is jointly continuous in $(x,\mu,z)$.

If $\wt{q}$ is partially differentiable in $x$, then, for any $x\in\R^{d_x}$ such that $(x,\mu_0,z_0)\in\mathcal{D}_0$,
\begin{align*} 
0=&\ \wt{q}(x,\mu_0,z_0,\alpha(x,\mu_0,z_0))-\wt{q}(x_0,\mu_0,z_0,\alpha(x_0,\mu_0,z_0))\\
=&\ \int_0^1 \p_x \wt{q}(\theta x+(1-\theta)x_0,\mu_0,z_0,\alpha(x_0,\mu_0,z_0))d\theta\cdot(x-x_0)\\
&+\int_0^1 \partial_\alpha \wt{q}(x,\mu_0,z_0,\theta \alpha(x,\mu_0,z_0)+(1-\theta)\alpha(x_0,\mu_0,z_0))d\theta\cdot \left(\alpha(x,\mu_0,z_0)- \alpha(x_0,\mu_0,z_0)\right),
\end{align*}
which implies that
\begin{align*} 
&\alpha(x,\mu_0,z_0)- \alpha(x_0,\mu_0,z_0)\\
=-&\left(\int_0^1 \partial_\alpha \wt{q}(x,\mu_0,z_0,\theta \alpha(x,\mu_0,z_0)+(1-\theta)\alpha(x_0,\mu_0,z_0))d\theta\right)^{-1}\\
&\cdot \int_0^1 \p_x \wt{q}(\theta x+(1-\theta)x_0,\mu_0,z_0,\alpha(x_0,\mu_0,z_0))d\theta\cdot(x-x_0).
\end{align*}
Therefore, by Lebesgue's dominated convergence theorem, $\alpha(x,\mu_0,z_0)$ is differentiable in $x$ at $x_0$ so that
\begin{align}
\partial_x \alpha(x,\mu,z)=&\ -\left(\partial_\alpha \wt{q}(x,\mu ,z, \alpha(x,\mu,z))\right)^{-1} \p_x \wt{q}(x,\mu ,z, \alpha(x,\mu,z)),
\end{align}
which is jointly continuous in $(x,\mu,z)$ since $\p_x \wt{q}$ is jointly continuous in $(x,\mu,z,\alpha)$.
By the same method as above, one can also show that
$\alpha(x_0,\mu_0,z)$ is differentiable in $z$ at $z_0$ and
\begin{align}
\partial_z \alpha(x,\mu,z)=&\ -\left(\partial_\alpha \wt{q}(x,\mu ,z, \alpha(x,\mu,z))\right)^{-1} \p_z \wt{q}(x,\mu ,z, \alpha(x,\mu,z)),
\end{align}
which is jointly continuous in $(x,\mu,z)$ since $\p_z  \wt{q}$ is jointly continuous in $(x,\mu,z,\alpha)$. 
\end{proof}

\begin{corollary}
Under the hypotheses in Theorem \ref{GIFT}, we have the following expression:
\begin{align}\no
&\alpha(x_1,\mu_1,z_1)-\alpha(x_0,\mu_0,z_0)\\\no
=&\ \alpha(x_1,\mu_1,z_1)-\alpha(x_0,\mu_1,z_1)+\alpha(x_0,\mu_1,z_1)-\alpha(x_0,\mu_1,z_0)+\alpha(x_0,\mu_1,z_0)-\alpha(x_0,\mu_0,z_0)\\\no
=&\ \int_0^1 \p_x \alpha(\theta x_1+(1-\theta)x_0, \mu_1, z_1)d\theta\cdot(x_1-x_0)\\\no
&+\int_0^1 \p_z \alpha(x_0,\mu_1,\theta z_1+(1-\theta)z_0)d\theta\cdot(z_1-z_0)\\\label{expansion}
&+\int_0^1 \mathbb{E}\left[ \p_\mu \alpha(x_0,\mathbb{L}_{\theta \xi_1+(1-\theta)\xi_0},z_0)(\theta \xi_1+(1-\theta) \xi_0)\cdot(\xi_1-\xi_0) \right]d\theta,
\end{align}
where $\xi_0$ and $\xi_1$ are independent random variables such that the laws $\mathbb{L}_{\xi_i}=\mu_i,\, i=0,1$.
\end{corollary}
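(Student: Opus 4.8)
The plan is to prove the chain-decomposition formula \eqref{expansion} as a direct consequence of Theorem \ref{GIFT}. The key observation is that \eqref{expansion} is simply a telescoping identity: I would first write
\[
\alpha(x_1,\mu_1,z_1)-\alpha(x_0,\mu_0,z_0) = \big[\alpha(x_1,\mu_1,z_1)-\alpha(x_0,\mu_1,z_1)\big] + \big[\alpha(x_0,\mu_1,z_1)-\alpha(x_0,\mu_1,z_0)\big] + \big[\alpha(x_0,\mu_1,z_0)-\alpha(x_0,\mu_0,z_0)\big],
\]
which is the first equality in the statement and requires nothing beyond adding and subtracting the two intermediate points. I would, of course, need to make sure that the line segments joining these intermediate points stay inside the neighborhood $\mathcal{D}_0$ on which Theorem \ref{GIFT} guarantees differentiability; since $\mathcal{D}_0$ is open and (in the application to Proposition \ref{A5}, replacing $\mathcal{D}_0$ by the cone $c_{k_0}$) convex, this is automatic, and I would state it as a hypothesis inherited from Theorem \ref{GIFT}.

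Next I would handle each of the three bracketed differences by the fundamental theorem of calculus applied along a straight-line path in the single argument that is being varied. For the first difference, the path $\theta \mapsto \theta x_1 + (1-\theta)x_0$ (holding $\mu_1, z_1$ fixed) gives
\[
\alpha(x_1,\mu_1,z_1)-\alpha(x_0,\mu_1,z_1) = \int_0^1 \p_x \alpha(\theta x_1+(1-\theta)x_0, \mu_1, z_1)\,d\theta\cdot(x_1-x_0),
\]
using the differentiability of $\alpha$ in $x$ and joint continuity of $\p_x\alpha$ from Theorem \ref{GIFT} (so the integrand is continuous in $\theta$ and the integral is well-defined). The $z$-difference is handled identically with the path $\theta \mapsto \theta z_1 + (1-\theta)z_0$. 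For the measure difference, the subtlety is that $\mathcal{P}_2(\R^{d_x})$ is not a vector space, so I would lift: choosing independent random variables $\xi_0,\xi_1$ with $\mathbb{L}_{\xi_i}=\mu_i$, the curve $\theta \mapsto \mathbb{L}_{\theta\xi_1+(1-\theta)\xi_0}$ connects $\mu_0$ to $\mu_1$ in $\mathcal{P}_2(\R^{d_x})$, and differentiating $\theta \mapsto \alpha(x_0,\mathbb{L}_{\theta\xi_1+(1-\theta)\xi_0},z_0)$ via the $L$-derivative (equivalently, the Fréchet derivative of the lifted function, exactly as used in the proof of the $L$-differentiability part of Theorem \ref{GIFT}) yields
\[
\frac{d}{d\theta}\alpha(x_0,\mathbb{L}_{\theta\xi_1+(1-\theta)\xi_0},z_0) = \mathbb{E}\big[\p_\mu \alpha(x_0,\mathbb{L}_{\theta\xi_1+(1-\theta)\xi_0},z_0)(\theta\xi_1+(1-\theta)\xi_0)\cdot(\xi_1-\xi_0)\big],
\]
and integrating over $\theta \in [0,1]$ gives the third term.

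The main obstacle, modest as it is, is justifying the differentiation-under-the-integral and the chain rule along the curve of measures: I must check that $\theta \mapsto \alpha(x_0,\mathbb{L}_{\theta\xi_1+(1-\theta)\xi_0},z_0)$ is genuinely $C^1$ in $\theta$, which follows because the lifted map $X \mapsto \alpha(x_0,\mathbb{L}_X,z_0)$ is Fréchet differentiable with jointly continuous derivative (Theorem \ref{GIFT}), the curve $\theta \mapsto \theta\xi_1+(1-\theta)\xi_0$ is a smooth affine curve in $\mathcal{H}^{d_x}$, and the composition of a $C^1$ map with a $C^1$ curve is $C^1$ with the expected derivative; the joint continuity of $\p_x\alpha$, $\p_z\alpha$, $\p_\mu\alpha$ then lets me invoke Lebesgue's dominated convergence / continuity of the integrand to make sense of all three integrals. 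Once these regularity points are in place, the corollary is immediate, so I would keep the write-up short, citing Theorem \ref{GIFT} for differentiability and the representation of the $L$-derivative, and simply assembling the three fundamental-theorem-of-calculus identities into the telescoping sum.
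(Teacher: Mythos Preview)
Your proposal is correct and is essentially the same approach the paper takes: the paper's proof is a one-line reference to the jointly continuous differentiability established in Theorem \ref{GIFT} and cites Section 5.3.4 of \cite{carmona2018probabilistic} for the details, which amounts precisely to the telescoping-plus-fundamental-theorem-of-calculus argument (with the lift to $\mathcal{H}^{d_x}$ for the measure variable) that you spell out.
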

\begin{proof}
The validity of \eqref{expansion} is a consequence of the jointly continuous differentiability, see Section 5.3.4 in \cite{carmona2018probabilistic} for details.
\end{proof}

\subsection{Proof of Theorem \ref{Verification}}\label{proofVerification}
Also see \cite{bensoussan2015master} for the second order case. For an admissible feedback control $\alpha\in\mathbb{A}$, we solve \eqref{SDE1}-\eqref{SDE2} subject to the control $\alpha$, and solve for $(x^{t,\xi,\alpha}_s,x^{t,x,m,\alpha}_s)$, $s\in [t,T]$. Define $X_s^{t,m,\alpha}:x\in\R^{d_x}\mapsto X_s^{t,m,\alpha}(x):=x^{t,x,m,\alpha}_s\in\R^{d_x}$ and $X_s^{t,m,\alpha}\in L^2_m$ since $\sup_{s\in[t,T]}\mathbb{E}[|x^{t,\xi,\alpha}_s|^2]< \infty$ which was described in the paragraph after \eqref{SDE1}-\eqref{SDE2}. 

We consider, by using \eqref{bellman},
\begin{align*}
&\frac{d}{ds}v(s,X_s^{t,m,\alpha}\ot m)\\
=&\ \p_sv(s,X_s^{t,m,\alpha}\ot m) + \int_{\R^{d_x}} \p_\mu v (s,X_s^{t,m,\alpha}\ot m) (y)\cdot f(y,X_s^{t,m,\alpha}\ot m,\alpha(s,y,X_s^{t,m,\alpha}\ot m))d (X_s^{t,m,\alpha}\ot m)(y)\\
=&\  - \int_{\R^{d_x}} \displaystyle\min_{\wt{\alpha}\in \R^{d_\alpha}} \left(f(y, X_s^{t,m,\alpha}\ot m, \wt{\alpha})\cdot \p_\mu v (s,X_s^{t,m,\alpha}\ot m)(y)  + g(y, X_s^{t,m,\alpha}\ot m, \wt{\alpha})\right)  d(X_s^{t,m,\alpha}\ot m)(y) \\
&+ \int_{\R^{d_x}} \p_\mu v (s,X_s^{t,m,\alpha}\ot m) (x)\cdot f(y,X_s^{t,m,\alpha}\ot m,\alpha(s,y,X_s^{t,m,\alpha}\ot m))d (X_s^{t,m,\alpha}\ot m)(y)\\
\geq &-\int_{\R^{d_x}} g(y,X_s^{t,m,\alpha}\ot m,\alpha(s,y,X_s^{t,m,\alpha}\ot m))d (X_s^{t,m,\alpha}\ot m)(y).
\end{align*}
Integrating on both sides and rearranging the terms, it yields
\begin{align*}
v(t,m) &\leq \int_{\R^{d_x}} k(y,X_T^{t,m,\alpha}\ot m)d(X_T^{t,m,\alpha}\ot m)(y) + \int_t^T \int_{\R^{d_x}} g(y,X_s^{t,m,\alpha}\ot m,\alpha(s,y,X_s^{t,m,\alpha}\ot m))d (X_s^{t,m,\alpha}\ot m)(y)\\
&= j(t,m,\alpha),
\end{align*}
therefore, $v(t,m)\leq \min_{\alpha} j(t,m,\alpha)$. In order to show $v(t,m)=\min_{\alpha} j(t,m,\alpha)$, it suffices to find an admissible feedback control $\wb{\alpha}(s,x,\mu)$ such that $v(t,m) = j(t,m,\wb{\alpha})$. Define the feedback optimal control $\wb{\alpha}(s,x,\mu)$ by
\begin{align*}
\wb{\alpha}(s,x,\mu):= \wh{\alpha}\left(x,\mu,\p_\mu v(s,\mu)(x)\right),
\end{align*}
and let $(x^{t,\xi,\wb{\alpha}}_s,x^{t,x,m,\wb{\alpha}}_s)$ be the pair of solutions to 
\begin{align}
x^{t,\xi,\wb{\alpha}}_s =&\  \xi+\int_t^s f\left(x^{t,\xi,\wb{\alpha}}_\tau,\mathbb{L}_{x^{t,\xi,\wb{\alpha}}_\tau},\wh{\alpha}\left(x^{t,\xi,\wb{\alpha}}_\tau,\mathbb{L}_{x^{t,\xi,\wb{\alpha}}_\tau},\p_\mu v(s,\mathbb{L}_{x^{t,\xi,\wb{\alpha}}_\tau})(x^{t,\xi,\wb{\alpha}}_\tau)\right)\right)d\tau,\ s\in[t,T],\\
x^{t,x,m,\wb{\alpha}}_s =&\  x+\int_t^s f\left(x^{t,x,m,\wb{\alpha}}_\tau,\mathbb{L}_{x^{t,\xi,\wb{\alpha}}_\tau},\wh{\alpha}\left(x^{t,x,m,\wb{\alpha}}_\tau,\mathbb{L}_{x^{t,\xi,\wb{\alpha}}_\tau},\p_\mu v(s,\mathbb{L}_{x^{t,\xi,\wb{\alpha}}_\tau})(x^{t,x,m,\wb{\alpha}}_\tau)\right)\right)d\tau,\ s\in[t,T],
\end{align}
and $X_s^{t,m,\wb{\alpha}}:x\in\R^{d_x}\mapsto X_s^{t,m,\wb{\alpha}}(x):=x^{t,x,m,\wb{\alpha}}_s\in\R^{d_x}$ so that $X_s^{t,m,\wb{\alpha}}\in L^2_m$.
Then,
\begin{align*}
&\frac{d}{ds}v(s,X_s^{t,m,\wb{\alpha}}\ot m)\\
=&\ \p_sv(s,X_s^{t,m,\wb{\alpha}}\ot m) \\
&+ \int_{\R^{d_x}} \p_\mu v (s,X_s^{t,m,\wb{\alpha}}\ot m) (y)\cdot f(y,X_s^{t,m,\wb{\alpha}}\ot m,\wh{\alpha}(y,X_s^{t,m,\wb{\alpha}}\ot m,\p_\mu v(s,X_s^{t,m,\wb{\alpha}}\ot m)(y)))d (X_s^{t,m,\wb{\alpha}}\ot m)(y)\\
=&\  - \int_{\R^{d_x}} \displaystyle\min_{ \alpha \in \R^{d_\alpha}} \left(f(y, X_s^{t,m,\wb{\alpha}}\ot m,  \alpha )\cdot \p_\mu v (s,X_s^{t,m,\wb{\alpha}}\ot m)(y)  + g(y, X_s^{t,m,\wb{\alpha}}\ot m,  \alpha )\right)  d(X_s^{t,m,\wb{\alpha}}\ot m)(y) \\
&+ \int_{\R^{d_x}} \p_\mu v (s,X_s^{t,m,\wb{\alpha}}\ot m) (y)\cdot f(y,X_s^{t,m,\wb{\alpha}}\ot m,\wh{\alpha}(y,X_s^{t,m,\wb{\alpha}}\ot m,\p_\mu v(s,X_s^{t,m,\wb{\alpha}}\ot m)(y)))d (X_s^{t,m,\wb{\alpha}}\ot m)(y)\\
=&\  -\int_{\R^{d_x}} g(y,X_s^{t,m,\wb{\alpha}}\ot m,\wh{\alpha}(y,X_s^{t,m,\wb{\alpha}}\ot m,\p_\mu v(s,X_s^{t,m,\wb{\alpha}}\ot m)(y)))d (X_s^{t,m,\wb{\alpha}}\ot m)(y);
\end{align*}
again integrating both sides yields:
\begin{align*}
v(t,m) &= \int_{\R^{d_x}} k(y,X_T^{t,m,\wb{\alpha}}\ot m)d(X_T^{t,m,\wb{\alpha}}\ot m)(y) + \int_t^T \int_{\R^{d_x}} g(y,X_s^{t,m,\wb{\alpha}}\ot m,\wb{\alpha}(s,y,X_s^{t,m,\wb{\alpha}}\ot m))d (X_s^{t,m,\wb{\alpha}}\ot m)(y) \\
&= j(t,m,\wb{\alpha}) .
\end{align*}
Thus $v(t,m)$ is the value function defined by \eqref{valfun} as expected.

\subsection{Proof of Theorem \ref{MPBE}}\label{App:MPBE}
For simplicity, we make use of the following notations, so that these functions omit their corresponding arguments but only keep the parameters $t,\,m,\,x$ and $s$:
\begin{align}\label{notation_1}
&\begin{cases}
\alpha^{t,m}_s(x):= \alpha(X^{t,m}_s(x), X^{t,m}_s\ot m,Z^{t,m}_s(x)),\ g^{t,m}_s(x):=g(X^{t,m}_s(x),X^{t,m}_s\ot m,\alpha^{t,m}_s(x)),\\
f^{t,m}_s(x):=f(X^{t,m}_s(x),X^{t,m}_s\ot m,\alpha^{t,m}_s(x)),\ k^{t,m}_s(x):=k(X^{t,m}_s(x),X^{t,m}_s\ot m).
\end{cases}\ \ \ \  
\end{align} 
Likewise, we adopt the same style of notations for the derivatives of $\alpha$ and $f$, as shown in the following Table \ref{notation_2}: 
\begin{center}
\centering\scriptsize
\renewcommand\arraystretch{2}
\begin{tabular}{|l|l|l|}
\hline
$\big(\p_x\alpha\big)^{t,m}_s(y)=\p_x \alpha(x,\mu,z)$&
$\big(\p_\mu\alpha\big)^{t,m}_s(y,\wt{y})=\p_\mu \alpha(x,\mu,z)(\wt{x})$&
$\big(\p_z\alpha\big)^{t,m}_s(y)=\p_z \alpha(x,\mu,z)$\\ \hline
$\big(\p_x f\big)^{t,m}_s(y)=\p_x f(x,\mu,\alpha)$&
$\big(\p_\mu f\big)^{t,m}_s(y,\wt{y})=\p_\mu f(x,\mu,\alpha)(\wt{x})$&
$\big(\p_\alpha f\big)^{t,m}_s(y)=\p_\alpha f(x,\mu,\alpha)$\\\hline
$\big(\p_x\p_x f\big)^{t,m}_s(y)=\p_x\p_x f(x,\mu,\alpha)$&
$\big(\p_x\p_\mu f\big)^{t,m}_s(y,\wt{y})=\p_x\p_\mu f(x,\mu,\alpha)(\wt{x})$&
$\big(\p_{\wt{x}}\p_\mu f\big)^{t,m}_s(y,\wt{y})=\p_{\wt{x}}\p_\mu f(x,\mu,\alpha)(\wt{x})$\\\hline
$\big(\p_\alpha\p_\alpha f\big)^{t,m}_s(y)=\p_\alpha\p_\alpha f(x,\mu,\alpha)$&
$\big(\p_\alpha\p_\mu f\big)^{t,m}_s(y,\wt{y})=\p_\alpha\p_\mu f(x,\mu,\alpha)(\wt{x})$&
$\big(\p_x\p_\alpha f\big)^{t,m}_s(y)=\p_x\p_\alpha f(x,\mu,\alpha)$\\\hline
\multicolumn{3}{|c|}{$\big(\p_\mu\p_\mu f\big)^{t,m}_s(y,\wt{y},\wh{y})=\p_\mu\p_\mu f(x,\mu,\alpha)(\wt{x},\wh{x})$} \\\hline
\end{tabular}
\captionof{table}{\scriptsize Abbreviations for the derivatives of $\alpha$ and $f$, where $y,\,\wt{y},\,\wh{y}\in\R^{d_x}$, $t\in[0,T]$, $m\in\mathcal{P}_2(\R^{d_x})$, $s\in[t,T]$ and we also evaluated all of them at $(x,\mu,z,\alpha,\wt{x},\wh{x})=(X^{t,m}_s(y), X^{t,m}_s\ot m,Z^{t,m}_s(y),\alpha^{t,m}_s(y),X^{t,m}_s(\wt{y}),X^{t,m}_s(\wh{y}))\in \R^{d_x}\times \mathcal{P}_2(\R^{d_x})\times \R^{d_x}\times\R^{d_\alpha}\times\R^{d_x}\times\R^{d_x}$.}
\label{notation_2}
\end{center}
Using these notations and the chain rule, one can write: 
\footnotesize\begin{equation}
\begin{aligned}
\p_t\big(\alpha^{t,m}_s(y)\big)=&\ \big(\p_x  \alpha\big)^{t,m}_s(y)\cdot\p_t\big(X^{t,m}_s(y)\big)+\displaystyle\int_{\R^{d_x}}\big(\p_\mu \alpha\big)^{t,m}_s(y,\wt{y})\cdot \p_t\big(X^{t,m}_s(\wt{y})\big)dm(\wt{y})+\big(\p_z \alpha\big)^{t,m}_s(y)\cdot\p_t\big(Z^{t,m}_s(y)\big);\\
\p_t\big(f^{t,m}_s(y)\big)=&\ \big(\p_x f\big)^{t,m}_s(y)\cdot\p_t\big(X^{t,m}_s(y)\big)+\displaystyle\int_{\R^{d_x}}\big(\p_\mu f\big)^{t,m}_s(y,\wt{y})\cdot\p_t\big(X^{t,m}_s(\wt{y})\big)dm(\wt{y})+ \big(\p_\alpha f\big)^{t,m}_s(y)\cdot\p_t\big(\alpha^{t,m}_s(y)\big);\\
\p_t\big(g^{t,m}_s(y)\big)=&\ \big(\p_x g\big)^{t,m}_s(y)\cdot\p_t\big(X^{t,m}_s(y)\big)+\displaystyle\int_{\R^{d_x}}\big(\p_\mu g\big)^{t,m}_s(y,\wt{y})\cdot\p_t\big(X^{t,m}_s(\wt{y})\big)dm(\wt{y})+\big(\p_\alpha g\big)^{t,m}_s(y)\cdot\p_t\big(\alpha^{t,m}_s(y)\big);\\
\p_t\big(k^{t,m}_s(y)\big)=&\ \big(\p_x k\big)^{t,m}_s(y)\cdot\p_t\big(X^{t,m}_s(y)\big)+\displaystyle\int_{\R^{d_x}}\big(\p_\mu k\big)^{t,m}_s(y,\wt{y})\cdot\p_t\big(X^{t,m}_s(\wt{y})\big)dm(\wt{y});
\end{aligned}\label{calculus1}
\end{equation}\normalsize
similarly, we also have a set of similar equations for $\p_m \big(\alpha^{t,m}_s(y)\big)(\wt{y})$, $\p_m \big(f^{t,m}_s(y)\big)(\wt{y})$, $\p_m \big(g^{t,m}_s(y)\big)(\wt{y})$ and $\p_m \big(k^{t,m}_s(y)\big)(\wt{y})$, and $\p_x \big(\alpha^{t,m}_s(y)\big)$, $\p_x \big(f^{t,m}_s(y)\big)$, $\p_x \big(g^{t,m}_s(y)\big)$, $\p_x \big(k^{t,m}_s(y)\big)$. For instance, $\p_m \big(\alpha^{t,m}_s(y)\big)(\wt{y})$, $\p_m \big(f^{t,m}_s(y)\big)(\wt{y})$, $\p_x \big(\alpha^{t,m}_s(y)\big)$, $\p_x \big(f^{t,m}_s(y)\big)$ can be written as, for any $y,\,\wt{y}\in\R^{d_x}$, $t\in[0,T]$, $m\in\mathcal{P}_2(\R^{d_x})$ and $s\in[t,T]$,
\footnotesize\begin{equation}
\begin{aligned}
\p_m\big(\alpha^{t,m}_s(y)\big)(\wt{y})=&\ \big(\p_x  \alpha\big)^{t,m}_s(y)\cdot\p_m\big(X^{t,m}_s(y)\big)(\wt{y})+\big(\p_\mu \alpha\big)^{t,m}_s(y,\wt{y})\cdot \p_{\wt{y}}\big(X^{t,m}_s(\wt{y})\big)\\
&+\displaystyle\int_{\R^{d_x}}\big(\p_\mu \alpha\big)^{t,m}_s(y,\wh{y})\cdot \p_m\big(X^{t,m}_s(\wh{y})\big)(\wt{y})dm(\wh{y})+\big(\p_z \alpha\big)^{t,m}_s(y)\cdot\p_m\big(Z^{t,m}_s(y)\big)(\wt{y});\\
\p_m\big(f^{t,m}_s(y)\big)(\wt{y})=&\ \big(\p_x f\big)^{t,m}_s(y)\cdot\p_m\big(X^{t,m}_s(y)\big)(\wt{y})+\big(\p_\mu f\big)^{t,m}_s(y,\wt{y})\cdot \p_{\wt{y}}\big(X^{t,m}_s(\wt{y})\big)\\
&+\displaystyle\int_{\R^{d_x}} \big(\p_\mu f\big)^{t,m}_s(y,\wh{y})\cdot\p_m\big(X^{t,m}_s(\wh{y})\big)(\wt{y}) dm(\wh{y})+ \big(\p_\alpha f\big)^{t,m}_s(y)\cdot \p_m\big(\alpha^{t,m}_s(y)\big)(\wt{y});\\
\p_y\big(\alpha^{t,m}_s(y)\big)=&\ \big(\p_x  \alpha\big)^{t,m}_s(y)\cdot\p_y\big(X^{t,m}_s(y)\big)+\big(\p_z \alpha\big)^{t,m}_s(y)\cdot\p_y\big(Z^{t,m}_s(y)\big);\\
\p_y \big(f^{t,m}_s(y)\big)=&\ \big(\p_x f\big)^{t,m}_s(y)\cdot\p_y \big(X^{t,m}_s(y)\big)+ \big(\p_\alpha f\big)^{t,m}_s(y)\cdot \p_y \big(\alpha^{t,m}_s(y)\big).
\end{aligned}\label{calculus2}
\end{equation}\normalsize
Note that, by using the second last equation of \eqref{MPIT} and Fubini's theorem, we have \small
\begin{align}\no
&\int_{\R^{d_x}}\p_t\big(g^{t,m}_s(x)\big)dm(x)\\\no
=&\ \int_{\R^{d_x}}\bigg(\big(\p_x g\big)^{t,m}_s(x)\cdot\p_t\big(X^{t,m}_s(x)\big)+\int_{\R^{d_x}}\big(\p_\mu g\big)^{t,m}_s(x,\wt{x})\cdot\p_t\big(X^{t,m}_s(\wt{x})\big)dm(\wt{x})+ \p_t\big(\alpha^{t,m}_s(x)\big)\cdot \big(\p_\alpha g\big)^{t,m}_s(x)\bigg)dm(x)\\\no
=&\ \int_{\R^{d_x}}\p_t\big(\alpha^{t,m}_s(x)\big)\cdot \big(\p_\alpha g\big)^{t,m}_s(x) dm(x)\\\label{p_tg}
&-\int_{\R^{d_x}}\bigg(\dfrac{d}{ds}Z^{t,m}_s(x)+\big(\p_x f\big)^{t,m}_s(x)\cdot Z^{t,m}_s(x)+\int_{\R^{d_x}}\big(\p_\mu f\big)^{t,m}_s(\wt{x},x)\cdot Z^{t,m}_s(\wt{x})dm(\wt{x})\bigg)\cdot \p_t\big(X^{t,m}_s(x)\big) dm(x),
\end{align}\normalsize
and
\small\begin{align}\nonumber
&\p_m \left(\int_{\R^{d_x}} g^{t,m}_s(\wt{x})d m(\wt{x})\right)(x)={\color{blue}\int_{\R^{d_x}} \p_m \big(g^{t,m}_s(\wt{x})\big)(x)d m(\wt{x})}+{\color{red}\p_x \big(g^{t,m}_s(x)\big)}\\\nonumber
=&\ {\color{blue}\int_{\R^{d_x}} \bigg(\big(\p_x g\big)^{t,m}_s(\wt{x})\cdot\p_m\big(X^{t,m}_s(\wt{x})\big)(x)+\big(\p_\mu g\big)^{t,m}_s(\wt{x},x)\cdot \p_x\big(X^{t,m}_s(x)\big)}\\\no
& \ \ \ \ \ \ \ \ {\color{blue}+\int_{\R^{d_x}} \big(\p_\mu g\big)^{t,m}_s(\wt{x},\wh{x})\cdot\p_m\big(X^{t,m}_s(\wh{x})\big)(x) dm(\wh{x})+\big(\p_\alpha g\big)^{t,m}_s(\wt{x})\cdot \p_m\big(\alpha^{t,m}_s(\wt{x})\big)(x)\bigg)dm(\wt{x})}\\\no
&{\color{red}+\big(\p_x g\big)^{t,m}_s(x)\cdot\p_x \big(X^{t,m}_s(x)\big)+ \big(\p_\alpha g\big)^{t,m}_s(x)\cdot \p_x \big(\alpha^{t,m}_s(x)\big)}\\\no
=&\ {\color{blue}\int_{\R^{d_x}} \big(\p_x g\big)^{t,m}_s(\wh{x})\cdot\p_m\big(X^{t,m}_s(\wh{x})\big)(x)dm(\wh{x})+\int_{\R^{d_x}} \int_{\R^{d_x}}\big(\p_\mu g\big)^{t,m}_s(\wt{x},\wh{x}) dm(\wt{x})\cdot\p_m\big(X^{t,m}_s(\wh{x})\big)(x)dm(\wh{x})}\\\no
&+{\color{red}\big(\p_x g\big)^{t,m}_s(x)\cdot\p_x \big(X^{t,m}_s(x)\big)}+{\color{blue}\int_{\R^{d_x}}\big(\p_\mu g\big)^{t,m}_s(\wt{x},x)dm(\wt{x})\cdot \p_x\big(X^{t,m}_s(x)\big)}\\\no
&+{\color{red}\big(\p_\alpha g\big)^{t,m}_s(x)\cdot \p_x \big(\alpha^{t,m}_s(x)\big)}+{\color{blue}\int_{\R^{d_x}} \big(\p_\alpha g\big)^{t,m}_s(\wt{x})\cdot \p_m\big(\alpha^{t,m}_s(\wt{x})\big)(x)dm(\wt{x})}\\\no
=&\ -\int_{\R^{d_x}}\bigg(\dfrac{d}{ds}Z^{t,m}_s(\wh{x})+\big(\p_x f\big)^{t,m}_s(\wh{x})\cdot Z^{t,m}_s(\wh{x})+\int_{\R^{d_x}}\big(\p_\mu f\big)^{t,m}_s(\wt{x},\wh{x})\cdot Z^{t,m}_s(\wt{x})dm(\wt{x})\bigg)\cdot \p_m\big(X^{t,m}_s(\wh{x})\big)(x)dm(\wh{x})\\\no
&-\bigg(\dfrac{d}{ds}Z^{t,m}_s(x)+\big(\p_x f\big)^{t,m}_s(x)\cdot Z^{t,m}_s(x)+\int_{\R^{d_x}}\big(\p_\mu f\big)^{t,m}_s(\wt{x},x)\cdot Z^{t,m}_s(\wt{x})dm(\wt{x})\bigg)\cdot \p_x\big(X^{t,m}_s(x)\big)\\\label{p_mg}
&+\big(\p_\alpha g\big)^{t,m}_s(x)\cdot \p_x \big(\alpha^{t,m}_s(x)\big)+\int_{\R^{d_x}} \big(\p_\alpha g\big)^{t,m}_s(\wt{x})\cdot \p_m\big(\alpha^{t,m}_s(\wt{x})\big)(x)dm(\wt{x}),
\end{align}\normalsize
where the last equality holds by using the backward equation of \eqref{MPIT}.
We can write \eqref{MPIT} alternatively as a system of integral equations: 
\small\begin{align}
\begin{cases}       
X^{t,m}_s(x)= x+\displaystyle\int_t^s f^{t,m}_\tau(x)d\tau,\ \text{ for } s\in[t,T]; \\
Z^{t,m}_s(x)=\displaystyle{\color{red}\int_{\R^{d_x}} \big(\p_\mu k\big)^{t,m}_T(\wt{x},x)dm(\wt{x})}+{\color{blue}\big(\p_x k\big)^{t,m}_T(x)}\\
\ \ \ \ \ \ \ \ \ \ \ \ \ \ +\displaystyle\int_s^T\bigg( {\color{orange}\int_{\R^{d_x}}\bigg(\big(\p_\mu f\big)^{t,m}_\tau(\wt{x},x)\cdot  Z^{t,m}_\tau(\wt{x})+ \big(\p_\mu g\big)^{t,m}_\tau(\wt{x},x)\bigg)dm(\wt{x})}\\
\ \ \ \ \ \ \ \ \ \ \ \ \ \ \ \ \ \ \ \ \ \ \ \ +{\color{purple}\big(\p_x f\big)^{t,m}_\tau(x)\cdot Z^{t,m}_\tau(x)+\big(\p_x g\big)^{t,m}_\tau(x)}\bigg)d\tau;
\end{cases}
\end{align}\normalsize
and by taking derivatives on both sides, we can see that $\Big(\p_t\big(X^{t,m}_s(x)\big),\p_t\big(Z^{t,m}_s(x)\big)\Big)$, $\Big(\p_m\big(X^{t,m}_s(x)\big),\\\p_m\big(Z^{t,m}_s(x)\big)\Big)$ and $\Big(\p_x\big(X^{t,m}_s(x)\big),\p_x\big(Z^{t,m}_s(x)\big)\Big)$ satisfy the following systems of \eqref{p_tX}, \eqref{p_mX} and \eqref{p_xX} respectively; the discussion on the local (in time) solvability and global (in time) solvability of these systems are given in Section \ref{sec:local} and Section \ref{sec:global}, respectively.\\
\scriptsize\begin{align}
\label{p_tX}
\left\{ \begin{aligned}
	\frac{d}{ds}\p_t\big(X^{t,m}_s(x)\big) =&\  \p_t\big(f^{t,m}_s(x)\big),\\
	\p_t\big(X^{t,m}_s(x)\big)\Big|_{s=t} =&\  -f^{t,m}_t(x),\\
	-\frac{d}{ds}\p_t\big(Z^{t,m}_s(x)\big)  =&\  {\color{orange}\int_{\R^{d_x}}\p_t\big(X^{t,m}_s(\wt{x})\big)^\top\bigg(\big(\p_x\p_\mu f\big)^{t,m}_s(\wt{x},x)\cdot  Z^{t,m}_s(\wt{x})+ \big(\p_x\p_\mu g\big)^{t,m}_s(\wt{x},x)\bigg)dm(\wt{x})}\\
&+{\color{orange}\p_t\big(X^{t,m}_s(x)\big)^\top\int_{\R^{d_x}}\bigg(\big(\p_{\wt{x}}\p_\mu f\big)^{t,m}_s(\wt{x},x)\cdot  Z^{t,m}_s(\wt{x})+ \big(\p_{\wt{x}}\p_\mu g\big)^{t,m}_s(\wt{x},x)\bigg)dm(\wt{x})}\\
&+{\color{orange}\int_{\R^{d_x}}\int_{\R^{d_x}}\p_t\big(X^{t,m}_s(\wh{x})\big)^\top\bigg(\big(\p_\mu\p_\mu f\big)^{t,m}_s(\wt{x},x,\wh{x})\cdot  Z^{t,m}_s(\wt{x})+ \big(\p_\mu\p_\mu g\big)^{t,m}_s(\wt{x},x,\wh{x})\bigg)dm(\wh{x})dm(\wt{x})}\\
&+{\color{orange}\int_{\R^{d_x}}\p_t\big(\alpha^{t,m}_s(\wt{x})\big)^\top\bigg(\big(\p_\alpha\p_\mu f\big)^{t,m}_s(\wt{x},x)\cdot  Z^{t,m}_s(\wt{x})+ \big(\p_\alpha\p_\mu g\big)^{t,m}_s(\wt{x},x)\bigg)dm(\wt{x})}\\
&+{\color{orange}\int_{\R^{d_x}} \big(\p_\mu f\big)^{t,m}_s(\wt{x},x)\cdot  \p_t\big(Z^{t,m}_s(\wt{x})\big)dm(\wt{x})}\\
&+{\color{purple}\p_t\big(X^{t,m}_s(x)\big)^\top\bigg(\big(\p_x\p_x f\big)^{t,m}_s(x)\cdot Z^{t,m}_s(x)+\big(\p_x\p_x g\big)^{t,m}_s(x)\bigg)}\\
&+{\color{purple}\int_{\R^{d_x}}\p_t\big(X^{t,m}_s(\wt{x})\big)^\top\bigg(\big(\p_\mu\p_x f\big)^{t,m}_s(x,\wt{x})\cdot Z^{t,m}_s(x)+\big(\p_\mu\p_x g\big)^{t,m}_s(x,\wt{x})\bigg)dm(\wt{x})}\\
&+{\color{purple}\p_t\big(\alpha^{t,m}_s(x)\big)^\top\bigg(\big(\p_\alpha\p_x f\big)^{t,m}_s(x)\cdot Z^{t,m}_s(x)+\big(\p_\alpha\p_x g\big)^{t,m}_s(x)\bigg)}+{\color{purple}\big(\p_x f\big)^{t,m}_s(x)\cdot \p_t\big(Z^{t,m}_s(x)\big)},\\
\p_t\big(Z^{t,m}_T(x)\big)=&\  \displaystyle{\color{red}\int_{\R^{d_x}} \big(\p_x\p_\mu k\big)^{t,m}_T(\wt{x},x)\cdot \p_t\big(X^{t,m}_T(\wt{x})\big)dm(\wt{x})+\displaystyle\int_{\R^{d_x}} \big(\p_{\wt{x}}\p_\mu k\big)^{t,m}_T(\wt{x},x)dm(\wt{x})\cdot \p_t\big(X^{t,m}_T(x)\big)}\\
&+\displaystyle{\color{red}\int_{\R^{d_x}} \int_{\R^{d_x}}\big(\p_\mu\p_\mu k\big)^{t,m}_T(\wt{x},x,\wh{x})\cdot \p_t\big(X^{t,m}_T(\wh{x})\big)dm(\wh{x})dm(\wt{x})}+{\color{blue}\big(\p_x\p_x k\big)^{t,m}_T(x)\cdot \p_t\big(X^{t,m}_T(x)\big)}\\
&+\displaystyle{\color{blue}\int_{\R^{d_x}}\big(\p_\mu\p_x k\big)^{t,m}_T(x,\wt{x})\cdot \p_t\big(X^{t,m}_T(\wt{x})\big)dm(\wt{x})};
\end{aligned} \right.
\end{align}\normalsize
\scriptsize\begin{align}
\label{p_mX}
\left\{ \begin{aligned}
	\frac{d}{ds}\p_m\big(X^{t,m}_s(x)\big)(y) =&\  \p_m \big(f^{t,m}_s(x)\big)(y),\\
	\p_m \big(X^{t,m}_t(x)\big)(y) =&\ 0,\\
-\frac{d}{ds}\p_m\big(Z^{t,m}_s(x)\big)(y)=&\  {\color{orange}\int_{\R^{d_x}}\p_m\big(X^{t,m}_s(\wt{x})\big)(y)^\top\bigg(\big(\p_x\p_\mu f\big)^{t,m}_s(\wt{x},x)\cdot  Z^{t,m}_s(\wt{x})+ \big(\p_x\p_\mu g\big)^{t,m}_s(\wt{x},x)\bigg)dm(\wt{x})}\\
&+{\color{orange}\p_m\big(X^{t,m}_s(x)\big)(y)^\top\int_{\R^{d_x}}\bigg(\big(\p_{\wt{x}}\p_\mu f\big)^{t,m}_s(\wt{x},x)\cdot  Z^{t,m}_s(\wt{x})+ \big(\p_{\wt{x}}\p_\mu g\big)^{t,m}_s(\wt{x},x)\bigg)dm(\wt{x})}\\
&+{\color{orange}\int_{\R^{d_x}}\int_{\R^{d_x}}\p_m\big(X^{t,m}_s(\wh{x})\big)(y)^\top\bigg(\big(\p_\mu\p_\mu f\big)^{t,m}_s(\wt{x},x,\wh{x})\cdot  Z^{t,m}_s(\wt{x})+ \big(\p_\mu\p_\mu g\big)^{t,m}_s(\wt{x},x,\wh{x})\bigg)dm(\wh{x})dm(\wt{x})}\\
&+{\color{orange}\int_{\R^{d_x}}\p_y\big(X^{t,m}_s(y)\big)^\top\bigg(\big(\p_\mu\p_\mu f\big)^{t,m}_s(\wt{x},x,y)\cdot  Z^{t,m}_s(\wt{x})+ \big(\p_\mu\p_\mu g\big)^{t,m}_s(\wt{x},x,y)\bigg)dm(\wt{x})}\\
&+{\color{orange}\int_{\R^{d_x}}\p_m\big(\alpha^{t,m}_s(\wt{x})\big)(y)^\top\bigg(\big(\p_\alpha\p_\mu f\big)^{t,m}_s(\wt{x},x)\cdot  Z^{t,m}_s(\wt{x})+ \big(\p_\alpha\p_\mu g\big)^{t,m}_s(\wt{x},x)\bigg)dm(\wt{x})}\\
&+{\color{orange}\int_{\R^{d_x}} \big(\p_\mu f\big)^{t,m}_s(\wt{x},x)\cdot  \p_m\big(Z^{t,m}_s(\wt{x})\big)(y)dm(\wt{x})}\\
&+{\color{purple}\p_m\big(X^{t,m}_s(x)\big)(y)^\top\bigg(\big(\p_x\p_x f\big)^{t,m}_s(x)\cdot Z^{t,m}_s(x)+\big(\p_x\p_x g\big)^{t,m}_s(x)\bigg)}\\
&+{\color{purple}\int_{\R^{d_x}}\p_m\big(X^{t,m}_s(\wt{x})\big)(y)^\top\bigg(\big(\p_\mu\p_x f\big)^{t,m}_s(x,\wt{x})\cdot Z^{t,m}_s(x)+\big(\p_\mu\p_x g\big)^{t,m}_s(x,\wt{x})\bigg)dm(\wt{x})}\\
&+{\color{purple}\p_y\big(X^{t,m}_s(y)\big)^\top\bigg(\big(\p_\mu\p_x f\big)^{t,m}_s(x,y)\cdot Z^{t,m}_s(x)+\big(\p_\mu\p_x g\big)^{t,m}_s(x,y)\bigg)}\\
&+{\color{purple}\p_m\big(\alpha^{t,m}_s(x)\big)(y)^\top\bigg(\big(\p_\alpha\p_x f\big)^{t,m}_s(x)\cdot Z^{t,m}_s(x)+\big(\p_\alpha\p_x g\big)^{t,m}_s(x)\bigg)}+{\color{purple}\big(\p_x f\big)^{t,m}_s(x)\cdot \p_m\big(Z^{t,m}_s(x)\big)(y)}\\
\p_m\big(Z^{t,m}_T(x)\big)(y)=&\  \displaystyle{\color{red}\int_{\R^{d_x}} \big(\p_x\p_\mu k\big)^{t,m}_T(\wt{x},x)\cdot \p_m\big(X^{t,m}_T(\wt{x})\big)(y)dm(\wt{x})+\displaystyle\int_{\R^{d_x}} \big(\p_{\wt{x}}\p_\mu k\big)^{t,m}_T(\wt{x},x)dm(\wt{x})\cdot \p_m\big(X^{t,m}_T(x)\big)(y)}\\
&+\displaystyle{\color{red}\int_{\R^{d_x}} \int_{\R^{d_x}}\big(\p_\mu\p_\mu k\big)^{t,m}_T(\wt{x},x,\wh{x})\cdot \p_m\big(X^{t,m}_T(\wh{x})\big)(y)dm(\wh{x})dm(\wt{x})}\\
&+\displaystyle{\color{red}\int_{\R^{d_x}} \big(\p_\mu\p_\mu k\big)^{t,m}_T(\wt{x},x,y)\cdot \p_y\big(X^{t,m}_T(y)\big)dm(\wt{x})}+{\color{blue}\big(\p_x\p_x k\big)^{t,m}_T(x)\cdot \p_m\big(X^{t,m}_T(x)\big)(y)}\\
&+\displaystyle{\color{blue}\int_{\R^{d_x}}\big(\p_\mu\p_x k\big)^{t,m}_T(x,\wt{x})\cdot \p_m\big(X^{t,m}_T(\wt{x})\big)(y)dm(\wt{x})+\big(\p_\mu\p_x k\big)^{t,m}_T(x,y)\cdot \p_y\big(X^{t,m}_T(y)\big)};
\end{aligned} \right.
\end{align}\normalsize
and
\scriptsize\begin{align}
\label{p_xX}
\left\{ \begin{aligned}
	\frac{d}{ds}\p_x\big(X^{t,m}_s(x)\big) =&\  \p_x \big(f^{t,m}_s(x)\big),\\
	\p_x \big(X^{t,m}_t(x)\big) =&\ \mathcal{I}_{d_x\times d_x},\\
	-\frac{d}{ds}\p_x \big(Z^{t,m}_s(x)\big)  =&\ {\color{orange}\p_x\big(X^{t,m}_s(x)\big)^\top\int_{\R^{d_x}}\bigg(\big(\p_{\wt{x}}\p_\mu f\big)^{t,m}_s(\wt{x},x)\cdot  Z^{t,m}_s(\wt{x})+ \big(\p_{\wt{x}}\p_\mu g\big)^{t,m}_s(\wt{x},x)\bigg)dm(\wt{x})}\\
&+{\color{purple}\big(\p_x f\big)^{t,m}_s(x)\cdot \p_x\big(Z^{t,m}_s(x)\big)+\p_x\big(X^{t,m}_s(x)\big)^\top\bigg(\big(\p_x\p_x f\big)^{t,m}_s(x)\cdot Z^{t,m}_s(x)+\big(\p_x\p_x g\big)^{t,m}_s(x)\bigg)}\\
&+{\color{purple}\p_x\big(\alpha^{t,m}_s(x)\big)^\top\bigg(\big(\p_\alpha\p_x f\big)^{t,m}_s(x)\cdot Z^{t,m}_s(x)+\big(\p_\alpha\p_x g\big)^{t,m}_s(x)\bigg)}\\
\p_x\big(Z^{t,m}_T(x)\big)=&\  \displaystyle{\color{red}\int_{\R^{d_x}} \big(\p_{\wt{x}}\p_\mu k\big)^{t,m}_T(\wt{x},x)dm(\wt{x})\cdot \p_x\big(X^{t,m}_T(x)\big)}+{\color{blue}\big(\p_x\p_x k\big)^{t,m}_T(x)\cdot \p_x\big(X^{t,m}_T(x)\big)};
\end{aligned} \right.
\end{align}\normalsize
where $\mathcal{I}_{d_x\times d_x}$ is the $d_x\times d_x$ identity matrix.
Note that, $v(t,m)$ was defined in \eqref{definev}, and we next show that this $v(t,m)$ satisfies the Bellman equation \eqref{nomin_1}. Since $X^{T,m}_T(x)\equiv x$, by using \eqref{definev} and Fubini's theorem, we have $v(T,m)=\int_{\R^{d_x}} k(x,m)dm(x)$ and 
\begin{align}\nonumber
&\int_{\R^{d_x}} k(x,m)dm(x)-v(t,m)= \int_t^T \p_sv(s,m)ds \\\no
=&\  \int_t^T \p_s\bigg(\int_{\R^{d_x}}k^{s,m}_T(x)dm(x)+\int_s^T\int_{\R^{d_x}}g^{s,m}_\tau(x)dm(x)d\tau\bigg)ds\\\nonumber
=&\  \int_t^T \bigg(\int_{\R^{d_x}}{\color{blue}\p_s\big(k^{s,m}_T(x)\big)dm(x)}+\int_s^T{\color{red}\int_{\R^{d_x}}\p_s\big(g^{s,m}_\tau(x)\big)dm(x)}d\tau-\int_{\R^{d_x}}g^{s,m}_s(x)dm(x)\bigg)ds\\\nonumber
=&\  \int_t^T \bigg(\int_{\R^{d_x}} {\color{blue}\bigg(\int_{\R^{d_x}} \big(\p_\mu k\big)^{s,m}_T(\wt{x},x)dm(\wt{x})+\big(\p_x k\big)^{s,m}_T(x)\bigg)\cdot \p_s\big(X^{s,m}_T(x)\big)}dm(x)-\int_{\R^{d_x}}g^{s,m}_s(x)dm(x)\bigg)ds\\\nonumber
&+ \int_t^T \int_s^T\bigg({\color{red}\int_{\R^{d_x}}\p_s\big(\alpha^{s,m}_\tau(x)\big)\cdot \big(\p_\alpha g\big)^{s,m}_\tau(x) dm(x)}\\\no
&\ \ \ \ \ \ \ \ \ \ \ \ \ \ \ \ {\color{red}-\int_{\R^{d_x}}\bigg(\dfrac{d}{d\tau}Z^{s,m}_\tau(x)+\big(\p_x f\big)^{s,m}_\tau(x)\cdot Z^{s,m}_\tau(x)}\\\label{eq_52}
&\ \ \ \ \ \ \ \ \ \ \ \ \ \ \ \ \ \ \ \ \ \ \ \ \ \ \ \ \ \ {\color{red}+\int_{\R^{d_x}}\big(\p_\mu f\big)^{s,m}_\tau(\wt{x},x)\cdot Z^{s,m}_\tau(\wt{x})dm(\wt{x})\bigg)\cdot \p_s\big(X^{s,m}_\tau(x)\big) dm(x)}\bigg)d\tau ds,
\end{align}
where we use \eqref{p_tg} to obtain the red terms of the last equality.
To simplify \eqref{eq_52}, we note that, by using Fubini's theorem, the last equation in \eqref{MPIT} and \eqref{p_tX}, the following integrand in \eqref{eq_52} can be rewritten as
\small\begin{align}\no
&\int_s^T \int_{\R^{d_x}}\dfrac{d}{d\tau}Z^{s,m}_\tau(x)\cdot \p_s\big(X^{s,m}_\tau(x)\big) dm(x)d\tau=\int_{\R^{d_x}}\int_s^T \dfrac{d}{d\tau}Z^{s,m}_\tau(x)\cdot \p_s\big(X^{s,m}_\tau(x)\big)d\tau dm(x)\\\no
=&\ \int_{\R^{d_x}} {\color{blue}Z^{s,m}_T(x)\cdot \p_s\big(X^{s,m}_T(x)\big)}dm(x)-\int_{\R^{d_x}} Z^{s,m}_s(x)\cdot \p_s\big(X^{s,m}_s(x)\big)dm(x)\\\no
&-\int_{\R^{d_x}}\int_s^T {\color{purple}Z^{s,m}_\tau(x)\cdot \dfrac{d}{d\tau}\p_s\big(X^{s,m}_\tau(x)\big)}d\tau dm(x)\\\no
=&\ \int_{\R^{d_x}} {\color{blue}\bigg(\int_{\R^{d_x}} \big(\p_\mu k\big)^{s,m}_T(\wt{x},x)dm(\wt{x})+\big(\p_x k\big)^{s,m}_T(x)\bigg)\cdot \p_s\big(X^{s,m}_T(x)\big)}dm(x)+\int_{\R^{d_x}} Z^{s,m}_s(x)\cdot f^{s,m}_s(x)dm(x)\\\no
&-\int_{\R^{d_x}}\int_s^T {\color{purple}Z^{s,m}_\tau(x)\cdot \p_s\big(f^{s,m}_\tau(x)\big)}d\tau dm(x)\\\no
=&\ \int_{\R^{d_x}}{\color{blue} \bigg(\int_{\R^{d_x}} \big(\p_\mu k\big)^{s,m}_T(\wt{x},x)dm(\wt{x})+\big(\p_x k\big)^{s,m}_T(x)\bigg)\cdot \p_s\big(X^{s,m}_T(x)\big)}dm(x)+\int_{\R^{d_x}} Z^{s,m}_s(x)\cdot f^{s,m}_s(x)\big)dm(x)\\\no
&-\int_{\R^{d_x}}\int_s^T {\color{purple}Z^{s,m}_\tau(x)\cdot \bigg(\big(\p_x f\big)^{s,m}_\tau(x)\cdot\p_s\big(X^{s,m}_\tau(x)\big)+\displaystyle\int_{\R^{d_x}}\big(\p_\mu f\big)^{s,m}_\tau(x,\wt{x})\cdot\p_s\big(X^{s,m}_\tau(\wt{x})\big)dm(\wt{x})}\\\label{eq_12_28}
&\ \ \ \ \ \ \ \ \ \ \ \ \ \ \ \ \ \ \ \ \ \ \ \ \ \ \ \ \ \ {\color{purple}+ \p_s\big(\alpha^{s,m}_\tau(x)\big)\cdot \big(\p_\alpha f\big)^{s,m}_\tau(x)\bigg)}d\tau dm(x),
\end{align}\normalsize
where we use  the second equation in the chain rule of \eqref{calculus1} to obtain the purple terms in the last equality. 
Substituting \eqref{eq_12_28} into \eqref{eq_52} so as to cancel some similar terms, one has
\begin{align*}
&\int_{\R^{d_x}} k(x,m)dm(x)-v(t,m)\\
=&\ \int_t^T \int_s^T\int_{\R^{d_x}}\bigg(\p_s\big(\alpha^{s,m}_\tau(x)\big)\cdot \Big(\big(\p_\alpha g\big)^{s,m}_\tau(x)+\big(\p_\alpha f\big)^{s,m}_\tau(x)\cdot Z^{s,m}_\tau(x)\Big)\bigg) dm(x)d\tau ds\\
&-\int_t^T \int_{\R^{d_x}}\bigg(Z^{s,m}_s(x)\cdot f^{s,m}_s(x)+g^{s,m}_s(x)\bigg)dm(x)ds\\
=&\ -\int_t^T \int_{\R^{d_x}}\bigg(Z^{s,m}_s(x)\cdot f^{s,m}_s(x)+g^{s,m}_s(x)\bigg)dm(x)ds,
\end{align*}
where we use the first order condition \eqref{usolve} in the last equality,
from which, by differentiation, we deduce that
\small\begin{align}\label{p_t_v}
\begin{cases}
\p_tv(t,m)+\displaystyle\int_{\R^{d_x}}\bigg(Z^{t,m}_t(x)\cdot f(x,m,\alpha(x,m,Z^{t,m}_t(x)))+g(x,m,\alpha(x,m,Z^{t,m}_t(x)))\bigg)dm(x)=0,\\
v(T,m)=\displaystyle\int_{\R^{d_x}} k(x,m)dm(x).
\end{cases}
\end{align}
Now, to show that \eqref{p_t_v} is equivalent to \eqref{nomin_1}, it remains to establish that $\p_m v (t,m)(x)= Z^{t,m}_t(x)$.
To this end, by differentiating \eqref{definev} with respect to $m$ and using \eqref{p_mg}, one has
\footnotesize
\begin{align}\nonumber
&\p_m v (t,m)(x)={\color{blue}\p_x \big(k^{t,m}_T(x)\big)}+  {\color{red}\int_{\R^{d_x}} \p_m \big(k^{t,m}_T(\wt{x})\big)(x)dm(\wt{x})}+{\color{orange}\int_t^T\p_m \left(\int_{\R^{d_x}} g^{t,m}_s(\wt{x})d m(\wt{x})\right)(x)ds}\\\nonumber
=&\ {\color{blue}\big(\p_x k\big)^{t,m}_T(x)\cdot \p_x\big(X^{t,m}_T(x)\big)}+{\color{red}\int_{\R^{d_x}}\big(\p_x k\big)^{t,m}_T(\wt{x})\cdot \p_m \big(X^{t,m}_T(\wt{x})\big)(x) dm(\wt{x})}\\\no
&+  {\color{red}\int_{\R^{d_x}} \int_{\R^{d_x}}\big(\p_\mu k\big)^{t,m}_T(\wt{x},\wh{x})\cdot \p_m \big(X^{t,m}_T(\wh{x})\big)(x)dm(\wh{x})dm(\wt{x})+\int_{\R^{d_x}}\big(\p_\mu k\big)^{t,m}_T(\wt{x},x)dm(\wt{x})\cdot \p_x \big(X^{t,m}_T(x)\big)}\\\nonumber
&+{\color{orange}\int_t^T\bigg(-\int_{\R^{d_x}}\bigg(\dfrac{d}{ds}Z^{t,m}_s(\wh{x})+\big(\p_x f\big)^{t,m}_s(\wh{x})\cdot Z^{t,m}_s(\wh{x})+\int_{\R^{d_x}}\big(\p_\mu f\big)^{t,m}_s(\wt{x},\wh{x})\cdot Z^{t,m}_s(\wt{x})dm(\wt{x})\bigg)\cdot \p_m\big(X^{t,m}_s(\wh{x})\big)(x)dm(\wh{x})}\\\no
&\ \ \ \ \ \ \ \ \ \ \ \ {\color{orange}-\bigg(\dfrac{d}{ds}Z^{t,m}_s(x)+\big(\p_x f\big)^{t,m}_s(x)\cdot Z^{t,m}_s(x)+\int_{\R^{d_x}}\big(\p_\mu f\big)^{t,m}_s(\wt{x},x)\cdot Z^{t,m}_s(\wt{x})dm(\wt{x})\bigg)\cdot \p_x\big(X^{t,m}_s(x)\big)}\\\label{p_mv}
&\ \ \ \ \ \ \ \ \ \ \ \ {\color{orange}+\big(\p_\alpha g\big)^{t,m}_s(x)\cdot \p_x \big(\alpha^{t,m}_s(x)\big)+\int_{\R^{d_x}} \big(\p_\alpha g\big)^{t,m}_s(\wt{x})\cdot \p_m\big(\alpha^{t,m}_s(\wt{x})\big)(x)dm(\wt{x})\bigg) ds.}
\end{align}\normalsize
To simplify \eqref{p_mv}, we note that, by using Fubini's theorem, the last terminal condition in \eqref{MPIT} and \eqref{p_mX}, the following integrand in \eqref{p_mv} can be rewritten as
\scriptsize\begin{align}\no
&\int_t^T\int_{\R^{d_x}}\fr{d}{ds}Z^{t,m}_s(\wh{x})\cdot \p_m\big(X^{t,m}_s(\wh{x})\big)(x)dm(\wh{x})ds\\\no
=&\ \int_{\R^{d_x}}\int_t^T\fr{d}{ds}Z^{t,m}_s(\wh{x})\cdot \p_m\big(X^{t,m}_s(\wh{x})\big)(x)dsdm(\wh{x})\\\no
=&\ \int_{\R^{d_x}} \Bigg({\color{blue}Z^{t,m}_T(\wh{x})\cdot \p_m \big(X^{t,m}_T(\wh{x})\big)(x)}-Z^{t,m}_t(\wh{x})\cdot \p_m \big(X^{t,m}_t(\wh{x})\big)(x)-{\color{red}\int_t^T Z^{t,m}_s(\wh{x})\cdot \fr{d}{ds}\p_m \big(X^{t,m}_s(\wh{x})\big)(x) ds}\Bigg)dm(\wh{x})\\\no
=&\ \int_{\R^{d_x}} \Bigg({\color{blue}\int_{\R^{d_x}}\big(\p_\mu k\big)^{t,m}_T(\wt{x},\wh{x})dm(\wt{x})\cdot \p_m \big(X^{t,m}_T(\wh{x})\big)(x)+\big(\p_x k\big)^{t,m}_T(\wh{x})\cdot \p_m \big(X^{t,m}_T(\wh{x})\big)(x)}\\\no
&\ \ \ \ \ \ \ \ -{\color{red}\int_t^T Z^{t,m}_s(\wh{x})\cdot \p_m \big(f^{t,m}_s(\wh{x})\big)(x) ds }\Bigg)dm(\wh{x})\\\no
=&\ \int_{\R^{d_x}} \Bigg({\color{blue}\int_{\R^{d_x}}\big(\p_\mu k\big)^{t,m}_T(\wt{x},\wh{x})dm(\wt{x})\cdot \p_m \big(X^{t,m}_T(\wh{x})\big)(x)+\big(\p_x k\big)^{t,m}_T(\wh{x})\cdot \p_m \big(X^{t,m}_T(\wh{x})\big)(x)}\\\no
&\ \ \ \ \ \ \ \ -{\color{red}\int_t^T Z^{t,m}_s(\wh{x})\cdot \bigg(\big(\p_x f\big)^{t,m}_s(\wh{x})\cdot\p_m\big(X^{t,m}_s(\wh{x})\big)(x)+\big(\p_\mu f\big)^{t,m}_s(\wh{x},x)\cdot\p_x\big(X^{t,m}_s(x)\big) dm(\wt{x})}\\\label{eq_12_31}
&\ \ \ \ \ \ \ \ \ \ \ \ \ \ \ \ \ \ \ \ \ \ \ \ \ \ \ \ \ \ \ \ {\color{red}+\int_{\R^{d_x}} \big(\p_\mu f\big)^{t,m}_s(\wh{x},\wt{x})\cdot\p_m\big(X^{t,m}_s(\wt{x})\big)(x) dm(\wt{x})+ \big(\p_\alpha f\big)^{t,m}_s(\wh{x})\cdot \p_m\big(\alpha^{t,m}_s(\wh{x})\big)(x)\bigg) ds} \Bigg)dm(\wh{x}),
\end{align}\normalsize
where we use the second equation in the chain rule of \eqref{calculus2} to obtain the red terms in the last equality; and, by using the last terminal condition in \eqref{MPIT} and \eqref{p_xX},
\begin{align}\no
&\int_t^T \fr{d}{ds}Z^{t,m}_s(x)\cdot\p_x  \big(X^{t,m}_s(x)\big) ds\\\no
=&\ {\color{blue}Z^{t,m}_T(x)\cdot\p_x  \big(X^{t,m}_T(x)\big)}-Z^{t,m}_t(x)\cdot\p_x  \big(X^{t,m}_t(x)\big)-{\color{red}\int_t^T Z^{t,m}_s(x)\cdot \fr{d}{ds}\p_x\big(X^{t,m}_s(x)\big) ds}\end{align}
\begin{align}\no
=&\ {\color{blue}\int_{\R^{d_x}}\big(\p_\mu k\big)^{t,m}_T(\wt{x},x)dm(\wt{x})\cdot \p_x\big(X^{t,m}_T(x)\big)+\big(\p_x k\big)^{t,m}_T(x)\cdot \p_x \big(X^{t,m}_T(x)\big)}-Z^{t,m}_t(x)\\\no
&-{\color{red}\int_t^T Z^{t,m}_s(x)\cdot \p_x \big(f^{t,m}_s(x) \big)ds}\\\no
=&\ {\color{blue}\int_{\R^{d_x}}\big(\p_\mu k\big)^{t,m}_T(\wt{x},x)dm(\wt{x})\cdot \p_x\big(X^{t,m}_T(x)\big)+\big(\p_x k\big)^{t,m}_T(x)\cdot \p_x \big(X^{t,m}_T(x)\big)}-Z^{t,m}_t(x)\\\label{eq_12_32}
&-{\color{red}\int_t^T Z^{t,m}_s(x)\cdot \bigg(\big(\p_x f\big)^{t,m}_s(x)\cdot\p_x \big(X^{t,m}_s(x)\big)+ \big(\p_\alpha f\big)^{t,m}_s(x)\cdot \p_x \big(\alpha^{t,m}_s(x)\big)\bigg)ds},
\end{align}\normalsize
where we use the chain rule to obtain the red terms in the last equality. 
Substitute \eqref{eq_12_31} and \eqref{eq_12_32} into \eqref{p_mv} so as to cancel some similar terms, one has
\begin{align}\no
\p_m v (t,m)(x)=&\ Z^{t,m}_t(x)+\int_t^T \bigg(\big(\p_\alpha f\big)^{t,m}_s(x)\cdot Z^{t,m}_s(x)+\big(\p_\alpha g\big)^{t,m}_s(x)\bigg)\cdot \p_x \big(\alpha^{t,m}_s(x)\big)ds\\\no
&+\int_t^T \int_{\R^{d_x}}\bigg(\big(\p_\alpha f\big)^{t,m}_s(\wt{x})\cdot Z^{t,m}_s(\wt{x})+\big(\p_\alpha g\big)^{t,m}_s(\wt{x})\bigg)\cdot \p_m \big(\alpha^{t,m}_s(\wt{x})\big)(x)dm(\wt{x})ds\\\label{p_m_v}
=&\ Z^{t,m}_t(x),
\end{align}
where we use the first order condition \eqref{usolve} in the last equality so that both integrals vanish. Based on all of the derivations shown above, therefore $v$ defined by \eqref{definev}
is a classical solution to
the Bellman equation \eqref{nomin_1}.

\bibliography{Bib}
\bibliographystyle{plain}

\end{document}